\setlist[enumerate]{label=\textit{(\roman*)}}
\setlist[enumerate]{itemsep=1pt,topsep=5pt}
\def\bold#1{\textbf{#1}}
\tikzset{every scope/.style={>=angle 60,thick}}
\newcommand\Figref[1]{Figure \ref{#1}\ifthenelse{\value{page}=\pageref{#1}}{}{ on page \pageref{#1}}}
\newcommand\figref[1]{figure \ref{#1}\ifthenelse{\value{page}=\pageref{#1}}{}{ on page \pageref{#1}}}
\def\bt{\begin{center}\begin{tikzpicture}\matrix[matrix of math nodes,column sep=1cm, row sep=1cm]}
\def\bth{\begin{center}\begin{tikzpicture}\matrix[matrix of math nodes,column sep=.5cm, row sep=1cm]}
\def\et{\end{tikzpicture}\end{center}}
\def\btm#1#2{\begin{center}\begin{tikzpicture}\matrix[matrix of math nodes,column sep=#1, row sep=#2}
\def\bsc{\begin{scope}}
\def\esc{\end{scope}}
\def\|#1{\,\begin{tikzpicture}[baseline]\draw[-,thin](0,-3pt)--(0,6pt);\node[anchor=west] at (-1pt,-2pt) {$_{#1}\!\!\!$}; \end{tikzpicture}\,}
\def\hyphen{\,\text{-}\,}
\def\op{\mathit{op}}
\def\co{\mathit{co}}
\def\leq{\leqslant}
\def\geq{\geqslant}
\def\preceq{\preccurlyeq}
\def\succeq{\succcurlyeq}
\def\coloneq{\mathrel{\mathop:}=}
\def\eqcolon{=\mathrel{\mathop:}}
\def\cotensor{\oblong}
\def\otimes{\varotimes}
\def\Aut{\textit{Aut\,}}
\def\im{\mathbf{im}}
\def\Hom{\mathit{Hom}}
\def\ker{\mathsf{ker}}
\def\coker{\mathsf{coker}}
\def\Cent{\mathsf{Cent}}
\def\int{\mathsf{int}}
\def\Span{\mathsf{Span}}
\def\LatT1{\mathsf{Lat_{T1}}}
\newcommand{\id}{\operatorname{id}}
\newcommand{\can}{\operatorname{\mathit{can}}}
\def\k{\mathbf{k}}
\def\R{\mathbf{R}}
\def\bN{\mathbb{N}}
\def\bQ{\mathbb{Q}}
\def\bZ{\mathbb{Z}}
\def\bK{\mathbb{K}}
\def\bL{\mathbb{L}}
\def\bE{\mathbb{E}}
\def\bF{\mathbb{F}}
\def\Gal{\mathsf{Gal}}
\def\ad{\mathsf{ad}}
\def\Fix{\mathsf{Fix}}
\def\Aut{\mathsf{Aut}}
\def\End{\mathsf{End}}
\def\Der{\mathsf{Der}}
\def\Coalg{\mathit{Coalg}}
\def\Alg{\mathit{Alg}}
\def\Sub{\mathsf{Sub}}
\def\Id{\mathsf{Id}}
\def\coId{\mathsf{coId}}
\def\coid#1{\coId_{\mathit{#1}}}
\def\sup{\mathit{sup}}
\def\inf{\mathit{inf}}
\def\Quot{\mathsf{Quot}}
\def\qquot{\mathsf{Quot_{\mathit{gen}}}}
\def\qsub{\mathsf{Sub_{\mathit{gen}}}}
\def\qid{\mathsf{Id_{\mathit{gen}}}}
\def\id{\mathit{id}}
\def\dim{\mathit{dim}}
\def\Con{\mathsf{Con}}
\def\Map{\mathsf{Map}}
\def\Hom{\mathsf{Hom}}
\def\Vect{\mathit{Vect}}
\def\Mod{\mathit{Mod}}
\def\mod#1{\Mod_{#1}}
\def\modulo{\mathit{mod}}
\theoremstyle{break}
\newtheorem{theorem}{Theorem}[section] % [section] pisze sie tylko raz
\newtheorem{proposition}[theorem]{Proposition}
\newenvironment{proof}{\par\noindent\textbf{Proof:}}{\nopagebreak[4]\hfill\(\square\)\medskip\\}
\newenvironment{proofof}[1]{\par\noindent\textbf{Proof of {#1}:}}{\nopagebreak[4]\hfill\(\square\)\medskip\\}
\newtheorem{lemma}[theorem]{Lemma}
\newtheorem{corollary}[theorem]{Corollary}
\newtheorem{definition}[theorem]{Definition}
\theoremstyle{nonumberbreak}
\theoremstyle{nonumberplain}
\theoremstyle{plain}
\newtheorem{example}[theorem]{Example}
\theoremstyle{break}
\newtheorem{examplebr}[theorem]{Example}
\newtheorem{examplesbr}[theorem]{Examples}
\theoremstyle{nonumberplain}
\theoremstyle{plain}
\newtheorem{remark}[theorem]{Remark}
\newtheorem{note}[theorem]{Note}
\theoremstyle{break}
\newtheorem{remarkbr}[theorem]{Remark}
\theoremstyle{nonumberbreak}
\theoremstyle{nonumberplain}
\def\ov{\overline}
\def\fill#1{\hbox to #1pt{\hfill}}
\def\galoisskip{.8mm}
\def\galoislength{7mm}		% controls \galois, \mprr 
\def\mprrskip{1mm}		% controls \mprr, \lmprr and \llmprr
\def\shortmprrskip{0.8mm}     	% controls \smprr
\def\shortgaloislength{5mm} 
\def\longgaloislength{12mm} 
\def\arrowlength{6mm}
\def\arrowhight{2pt}
\def\longelementarrowlength{9mm} 	% controls \lelmap
\def\shortarrowlength{4.75mm}
\def\longarrowlength{10mm}
\def\horizontalskip{3pt}
\def\nodeposition{.5}		% node position for ordinary arrows
\def\shortnodeposition{.3}	% node position for shorter arrows
\def\elmaptipheight{.9mm}	% height of | in |-> 
\def\wmpr#1{\smash{\mathop{\hbox to 12pt{\rightarrowfill}}\limits^{#1}}}
\def\mpr#1{
\hspace{\horizontalskip}\begin{tikzpicture}[baseline=-1pt]
	\draw[->] (0,\arrowhight) -- node[above,pos=\nodeposition]{${\scriptstyle #1}$} (\arrowlength,\arrowhight);
\end{tikzpicture}
\hspace{\horizontalskip}}
\def\ir{
\hspace{\horizontalskip}\begin{tikzpicture}[baseline=-1pt]
	\draw[->] (0,\arrowhight) -- (\arrowlength,\arrowhight);
\end{tikzpicture}
\hspace{\horizontalskip}}
\def\lmonoir{
\hspace{\horizontalskip}\begin{tikzpicture}[baseline=-1pt]
	\draw[>->] (0,\arrowhight) -- (\longarrowlength,\arrowhight);
\end{tikzpicture}
\hspace{\horizontalskip}}
\def\lmonoir{
\hspace{\horizontalskip}\begin{tikzpicture}[baseline=-1pt]
    \draw[>->] (0,\arrowhight) -- (\longarrowlength,\arrowhight);
\end{tikzpicture} \hspace{\horizontalskip}}
\def\smpr#1{
\hspace{\horizontalskip}\begin{tikzpicture}[baseline]
    \draw[->] (0,\arrowhight) -- node[above,pos=\shortnodeposition]{${\scriptstyle #1}$} (\shortarrowlength,\arrowhight);
\end{tikzpicture} \hspace{\horizontalskip}}
\def\sir{
\hspace{\horizontalskip}\begin{tikzpicture}[baseline=-1pt]
    \draw[->] (0,\arrowhight) -- (\shortarrowlength,\arrowhight);
\end{tikzpicture} \hspace{\horizontalskip}}
\def\epr#1{
\hspace{\horizontalskip}\begin{tikzpicture}[baseline=-1pt]
    \draw[->>] (0,\arrowhight) -- node[above,pos=\nodeposition]{${\scriptstyle #1}$} (\arrowlength,\arrowhight);
\end{tikzpicture} \hspace{\horizontalskip}}
\def\eir{
\hspace{\horizontalskip}\begin{tikzpicture}[baseline=-1pt]
    \draw[->>] (0,\arrowhight) -- (\arrowlength,\arrowhight);
\end{tikzpicture} \hspace{\horizontalskip}}
\def\lmpr#1{
\hspace{\horizontalskip}\begin{tikzpicture}[baseline=-1pt]
    \draw[->] (0,\arrowhight) -- node[above,pos=\nodeposition]{${\scriptstyle #1}$} (\longarrowlength,\arrowhight);
\end{tikzpicture} \hspace{\horizontalskip}}
\def\smrp#1{\hspace{\horizontalskip}\smash{\mathop{\hbox to 12pt{\rightarrowfill}}\limits_{#1}}\hspace{\horizontalskip}} 
\def\smrp#1{\hspace{\horizontalskip}\smash{\mathop{\hbox to 12pt{\rightarrowfill}}\limits_{#1}}\hspace{\horizontalskip}} 
\def\mprr#1#2{
\hspace{\horizontalskip}\begin{tikzpicture}[baseline=-4pt]
\begin{scope}
\draw[->] (0,\mprrskip) -- node[above,pos=\nodeposition]{${\scriptstyle #1}$} (\galoislength,\mprrskip);
\draw[->] (0,-\mprrskip) -- node[below,pos=\nodeposition]{${\scriptstyle #2}$} (\galoislength,-\mprrskip);
\end{scope}
\end{tikzpicture}}
\def\smprr#1#2{
\hspace{\horizontalskip}\begin{tikzpicture}[baseline=-4pt]
\begin{scope}
\draw[->] (0,\shortmprrskip) -- node[above,pos=\nodeposition]{${\scriptstyle #1}$} (\shortgaloislength,\shortmprrskip);
\draw[->] (0,-\shortmprrskip) -- node[below,pos=\nodeposition]{${\scriptstyle #2}$} (\shortgaloislength,-\shortmprrskip);
\end{scope}
\end{tikzpicture}}
\def\lmprr#1#2{\begin{tikzpicture}[baseline=-4pt]
\hspace{\horizontalskip}
\begin{scope}
\draw[->] (0,\mprrskip) -- node[above,pos=\nodeposition]{${\scriptstyle #1}$} (\longgaloislength,\mprrskip);
\draw[->] (0,-\mprrskip) -- node[below,pos=\nodeposition]{${\scriptstyle #2}$} (\longgaloislength,-\mprrskip);
\end{scope}
\end{tikzpicture}
\hspace{\horizontalskip}}
\def\elr#1{
\hspace{\horizontalskip}\begin{tikzpicture}[baseline=-1pt]
\draw[->] (0,\arrowhight) -- node[above,pos=\nodeposition]{${\scriptstyle #1}$} (\arrowlength,\arrowhight);
\draw[-] ($(0,\arrowhight)+(0,\elmaptipheight)$) -- ($(0,\arrowhight)+(0,-\elmaptipheight)$);
\end{tikzpicture}
\hspace{\horizontalskip}}
\def\elmap#1{\elr{#1}} % for compatibility.
\def\ell#1{
\hspace{\horizontalskip}\begin{tikzpicture}[baseline=-1pt]
\draw[->] (0,\arrowhight) -- node[above,pos=\nodeposition]{${\scriptstyle #1}$} (-\arrowlength,\arrowhight);
\draw[-] ($(0,\arrowhight)+(0,\elmaptipheight)$) -- ($(0,\arrowhight)+(0,-\elmaptipheight)$);
\end{tikzpicture}
\hspace{\horizontalskip}}
\def\ellmap#1{\ell{#1}} % for compatibility.
\def\eli{
\hspace{\horizontalskip}\begin{tikzpicture}[baseline=-1pt]
\draw[->] (0,\arrowhight) --  (\arrowlength,\arrowhight);
\draw[-] ($(0,\arrowhight)+(0,\elmaptipheight)$) -- ($(0,\arrowhight)+(0,-\elmaptipheight)$);
\end{tikzpicture}
\hspace{\horizontalskip}}
\def\selr#1{
\hspace{\horizontalskip}\begin{tikzpicture}[baseline=-1pt]
\draw[->] (0,\arrowhight) -- node[above,pos=\nodeposition]{${\scriptstyle #1}$} (\shortarrowlength,\arrowhight);
\draw[-] ($(0,\arrowhight)+(0,\elmaptipheight)$) -- ($(0,\arrowhight)+(0,-\elmaptipheight)$);
\end{tikzpicture}
\hspace{\horizontalskip}}
\def\selmap#1{\selr{#1}} % for compatibility
\def\selir{
\hspace{\horizontalskip}\begin{tikzpicture}[baseline=-1pt]
\draw[->] (0,\arrowhight) --  (\shortarrowlength,\arrowhight);
\draw[-] ($(0,\arrowhight)+(0,\elmaptipheight)$) -- ($(0,\arrowhight)+(0,-\elmaptipheight)$);
\end{tikzpicture}
\hspace{\horizontalskip}}
\def\seli{\selir} % for compatibility
\def\lelr#1{
\hspace{\horizontalskip}
\begin{tikzpicture}[baseline=-1pt]
\draw[->] (0,\arrowhight) -- node[above,pos=\nodeposition]{${\scriptstyle #1}$} (\longelementarrowlength,\arrowhight);
\draw[-] ($(0,\arrowhight)+(0,\elmaptipheight)$) -- ($(0,\arrowhight)+(0,-\elmaptipheight)$);
\end{tikzpicture}
\hspace{\horizontalskip}}
\def\lelmap#1{\lelr{#1}}
\def\galois#1#2{
\hspace{\horizontalskip}
\begin{tikzpicture}[baseline=-4pt]
\begin{scope}
\draw[->] (0,\galoisskip) -- node[above,pos=.5]{${\scriptstyle #1}$} (\galoislength,\galoisskip);
\draw[<-] (0,-\galoisskip) -- node[below,pos=.6]{${\scriptstyle #2}$} (\galoislength,-\galoisskip);
\end{scope}
\end{tikzpicture}
\hspace{\horizontalskip}}
\def\lgalois#1#2{
\hspace{\horizontalskip}
\begin{tikzpicture}[baseline=-4pt]
\begin{scope}
\draw[->] (0,\galoisskip) -- node[above,pos=.5]{${\scriptstyle #1}$} (\longgaloislength,\galoisskip);
\draw[<-] (0,-\galoisskip) -- node[below,pos=.5]{${\scriptstyle #2}$} (\longgaloislength,-\galoisskip);
\end{scope}
\end{tikzpicture}
\hspace{\horizontalskip}}
\definecolor{snow} 		{rgb} 	{1, 0.98, 0.980}
\definecolor{GhostWhite} 	{rgb} 	{0.972, 0.972, 1}
\definecolor{WhiteSmoke} 	{rgb} 	{0.96, 0.96, 0.960}
\definecolor{gainsboro} 	{rgb} 	{0.862, 0.862, 0.862}
\definecolor{FloralWhite} 	{rgb} 	{1, 0.98, 0.941}
\definecolor{OldLace} 		{rgb} 	{0.992, 0.96, 0}
\definecolor{linen} 		{rgb} 	{0.98, 0.941, 0.90}
\definecolor{AntiqueWhite} 	{rgb} 	{0.98, 0.921, 0.843}
\definecolor{PapayaWhip} 	{rgb} 	{1, 0.937, 0.835}
\definecolor{BlanchedAlmond} 	{rgb} 	{1, 0.921, 0.8}
\definecolor{bisque} 		{rgb} 	{1, 0.894, 0.768}
\definecolor{PeachPuff} 	{rgb} 	{1, 0.854, 0.725}
\definecolor{NavajoWhite} 	{rgb} 	{1, 0.87, 0.678}
\definecolor{moccasin} 		{rgb} 	{1, 0.894, 0.70}
\definecolor{cornsilk} 		{rgb} 	{1, 0.972, 0.862}
\definecolor{ivory} 		{rgb} 	{1, 1, 0.941}
\definecolor{LemonChiffon} 	{rgb} 	{1, 0.98, 0.80}
\definecolor{seashell} 		{rgb} 	{1, 0.96, 0.933}
\definecolor{honeydew} 		{rgb} 	{0.941, 1, 0.941}
\definecolor{MintCream} 	{rgb} 	{0.96, 1, 0.980}
\definecolor{azure} 		{rgb} 	{0.941, 1, 1}
\definecolor{AliceBlue} 	{rgb} 	{0.941, 0.972, 1}
\definecolor{lavender} 		{rgb} 	{0.9, 0.9, 0.980}
\definecolor{LavenderBlush} 	{rgb} 	{1, 0.941, 0.960}
\definecolor{MistyRose} 	{rgb} 	{1, 0.894, 0.882}
\definecolor{white} 		{rgb} 	{1, 1, 1}
\definecolor{black} 		{rgb} 	{0, 0, 0}
\definecolor{DarkSlateGray} 	{rgb} 	{0.184, 0.3, 0.30}
\definecolor{DimGrey} 		{rgb} 	{0.411, 0.411, 0.411}
\definecolor{SlateGrey} 	{rgb} 	{0.439, 0.5, 0.564}
\definecolor{LightSlateGrey} 	{rgb} 	{0.466, 0.533, 0.6}
\definecolor{grey} 		{rgb} 	{0.745, 0.745, 0.745}
\definecolor{LightGrey} 	{rgb} 	{0.827, 0.827, 0.827}
\definecolor{MidnightBlue} 	{rgb} 	{0.098, 0.098, 0.439}
\definecolor{NavyBlue} 		{rgb} 	{0, 0, 0.5}
\definecolor{CornflowerBlue} 	{rgb} 	{0.392, 0.584, 0.929}
\definecolor{DarkSlateBlue} 	{rgb} 	{0.282, 0.239, 0.545}
\definecolor{SlateBlue} 	{rgb} 	{0.415, 0.352, 0.8}
\definecolor{MediumSlateBlue} 	{rgb} 	{0.482, 0.4, 0.933}
\definecolor{LightSlateBlue} 	{rgb} 	{0.517, 0.439, 1}
\definecolor{MediumBlue} 	{rgb} 	{0, 0, 0.8}
\definecolor{RoyalBlue} 	{rgb} 	{0.254, 0.411, 0.882}
\definecolor{DodgerBlue} 	{rgb} 	{0.117, 0.564, 1}
\definecolor{DeepSkyBlue} 	{rgb} 	{0, 0.749, 1}
\definecolor{SkyBlue} 		{rgb} 	{0.529, 0.8, 0.921}
\definecolor{LightSkyBlue} 	{rgb} 	{0.529, 0.8, 0.980}
\definecolor{SteelBlue} 	{rgb} 	{0.274, 0.5, 0.7}
\definecolor{LightSteelBlue} 	{rgb} 	{0.69, 0.768, 0.870}
\definecolor{LightBlue} 	{rgb} 	{0.678, 0.847, 0.9}
\definecolor{PowderBlue} 	{rgb} 	{0.69, 0.878, 0.9}
\definecolor{PaleTurquoise} 	{rgb} 	{0.686, 0.933, 0.933}
\definecolor{DarkTurquoise} 	{rgb} 	{0, 0.8, 0.819}
\definecolor{MediumTurquoise} 	{rgb} 	{0.282, 0.819, 0.8}
\definecolor{turquoise} 	{rgb} 	{0.25, 0.878, 0.815}
\definecolor{cyan} 		{rgb} 	{0, 1, 1}
\definecolor{LightCyan} 	{rgb} 	{0.878, 1, 1}
\definecolor{CadetBlue} 	{rgb} 	{0.372, 0.619, 0.627}
\definecolor{MediumAquamarine} 	{rgb} 	{0.4, 0.8, 0.666}
\definecolor{aquamarine} 	{rgb} 	{0.498, 1, 0.831}
\definecolor{DarkGreen} 	{rgb} 	{0, 0.392, 0}
\definecolor{DarkOliveGreen} 	{rgb} 	{0.333, 0.419, 0.184}
\definecolor{DarkSeaGreen} 	{rgb} 	{0.56, 0.737, 0.560}
\definecolor{SeaGreen} 		{rgb} 	{0.18, 0.545, 0.341}
\definecolor{MediumSeaGreen} 	{rgb} 	{0.235, 0.7, 0.443}
\definecolor{LightSeaGreen} 	{rgb} 	{0.125, 0.698, 0.666}
\definecolor{PaleGreen} 	{rgb} 	{0.596, 0.984, 0.596}
\definecolor{SpringGreen} 	{rgb} 	{0, 1, 0.498}
\definecolor{LawnGreen} 	{rgb} 	{0.486, 0.988, 0}
\definecolor{green} 		{rgb} 	{0, 1, 0}
\definecolor{chartreuse} 	{rgb} 	{0.498, 1, 0}
\definecolor{MediumSpringGreen} {rgb} 	{0, 0.98, 0.6}
\definecolor{GreenYellow} 	{rgb} 	{0.678, 1, 0.184}
\definecolor{LimeGreen} 	{rgb} 	{0.196, 0.8, 0.196}
\definecolor{YellowGreen} 	{rgb} 	{0.6, 0.8, 0.196}
\definecolor{ForestGreen} 	{rgb} 	{0.133, 0.545, 0.133}
\definecolor{OliveDrab} 	{rgb} 	{0.419, 0.556, 0.137}
\definecolor{DarkKhaki} 	{rgb} 	{0.741, 0.717, 0.419}
\definecolor{khaki} 		{rgb} 	{0.941, 0.9, 0.549}
\definecolor{pale} 		{rgb} 	{0.933, 0.9, 0.666}
\definecolor{PaleGoldenrod} 	{rgb} 	{0.933, 0.9, 0.666}
\definecolor{LightGoldenrodYellow} 	{rgb} 	{0.98, 0.98, 0.823}
\definecolor{LightYellow} 	{rgb} 	{1, 1, 0.878}
\definecolor{yellow} 		{rgb} 	{1, 1, 0}
\definecolor{gold} 		{rgb} 	{1, 0.843, 0}
\definecolor{LightGoldenrod} 	{rgb} 	{0.933, 0.866, 0.5}
\definecolor{goldenrod} 	{rgb} 	{0.854, 0.647, 0.125}
\definecolor{DarkGoldenrod} 	{rgb} 	{0.721, 0.525, 0.43}
\definecolor{RosyBrown} 	{rgb} 	{0.737, 0.56, 0.560}
\definecolor{IndianRed} 	{rgb} 	{0.8, 0.36, 0.360}
\definecolor{SaddleBrown} 	{rgb} 	{0.545, 0.27, 0.74}
\definecolor{sienna} 		{rgb} 	{0.627, 0.321, 0.176}
\definecolor{peru} 		{rgb} 	{0.8, 0.521, 0.247}
\definecolor{burlywood} 	{rgb} 	{0.87, 0.721, 0.529}
\definecolor{beige} 		{rgb} 	{0.96, 0.96, 0.862}
\definecolor{wheat} 		{rgb} 	{0.96, 0.87, 0.7}
\definecolor{SandyBrown} 	{rgb} 	{0.956, 0.643, 0.376}
\definecolor{tan} 		{rgb} 	{0.823, 0.7, 0.549}
\definecolor{chocolate} 	{rgb} 	{0.823, 0.411, 0.117}
\definecolor{FireBrick} 	{rgb} 	{0.698, 0.133, 0.133}
\definecolor{firebrick} 	{rgb} 	{0.698, 0.133, 0.133}
\definecolor{brown} 		{rgb} 	{0.647, 0.164, 0.164}
\definecolor{DarkSalmon} 	{rgb} 	{0.913, 0.588, 0.478}
\definecolor{salmon} 		{rgb} 	{0.98, 0.5, 0.447}
\definecolor{LightSalmon} 	{rgb} 	{1, 0.627, 0.478}
\definecolor{orange} 		{rgb} 	{1, 0.647, 0}
\definecolor{DarkOrange} 	{rgb} 	{1, 0.549, 0}
\definecolor{coral} 		{rgb} 	{1, 0.498, 0.313}
\definecolor{LightCoral} 	{rgb} 	{0.941, 0.5, 0.5}
\definecolor{tomato} 		{rgb} 	{1, 0.388, 0.278}
\definecolor{OrangeRed} 	{rgb} 	{1, 0.27, 0}
\definecolor{red} 		{rgb} 	{1, 0, 0}
\definecolor{HotPink} 		{rgb} 	{1, 0.411, 0.7}
\definecolor{DeepPink} 		{rgb} 	{1, 0.78, 0.576}
\definecolor{pink} 		{rgb} 	{1, 0.752, 0.796}
\definecolor{LightPink} 	{rgb} 	{1, 0.713, 0.756}
\definecolor{PaleVioletRed} 	{rgb} 	{0.858, 0.439, 0.576}
\definecolor{maroon} 		{rgb} 	{0.69, 0.188, 0.376}
\definecolor{MediumVioletRed} 	{rgb} 	{0.78, 0.82, 0.521}
\definecolor{VioletRed} 	{rgb} 	{0.815, 0.125, 0.564}
\definecolor{magenta} 		{rgb} 	{1, 0, 1}
\definecolor{violet} 		{rgb} 	{0.933, 0.5, 0.933}
\definecolor{plum} 		{rgb} 	{0.866, 0.627, 0.866}
\definecolor{orchid} 		{rgb} 	{0.854, 0.439, 0.839}
\definecolor{MediumOrchid} 	{rgb} 	{0.729, 0.333, 0.827}
\definecolor{DarkOrchid} 	{rgb} 	{0.6, 0.196, 0.8}
\definecolor{DarkViolet} 	{rgb} 	{0.58, 0, 0.827}
\definecolor{BlueViolet} 	{rgb} 	{0.541, 0.168, 0.886}
\definecolor{purple} 		{rgb} 	{0.627, 0.125, 0.941}
\definecolor{medium} 		{rgb} 	{0.576, 0.439, 0.858}
\definecolor{MediumPurple} 	{rgb} 	{0.576, 0.439, 0.858}
\definecolor{thistle} 		{rgb} 	{0.847, 0.749, 0.847}
\definecolor{snow1} 		{rgb} 	{1, 0.98, 0.980}
\definecolor{snow2} 		{rgb} 	{0.933, 0.913, 0.913}
\definecolor{snow3} 		{rgb} 	{0.8, 0.788, 0.788}
\definecolor{snow4} 		{rgb} 	{0.545, 0.537, 0.537}
\definecolor{seashell1} 	{rgb} 	{1, 0.96, 0.933}
\definecolor{seashell2} 	{rgb} 	{0.933, 0.898, 0.870}
\definecolor{seashell3} 	{rgb} 	{0.8, 0.772, 0.749}
\definecolor{seashell4} 	{rgb} 	{0.545, 0.525, 0.5}
\definecolor{AntiqueWhite1} 	{rgb} 	{1, 0.937, 0.858}
\definecolor{AntiqueWhite2} 	{rgb} 	{0.933, 0.874, 0.8}
\definecolor{AntiqueWhite3} 	{rgb} 	{0.8, 0.752, 0.690}
\definecolor{AntiqueWhite4} 	{rgb} 	{0.545, 0.513, 0.470}
\definecolor{bisque1} 		{rgb} 	{1, 0.894, 0.768}
\definecolor{bisque2} 		{rgb} 	{0.933, 0.835, 0.717}
\definecolor{bisque3} 		{rgb} 	{0.8, 0.717, 0.619}
\definecolor{bisque4} 		{rgb} 	{0.545, 0.49, 0.419}
\definecolor{PeachPuff1} 	{rgb} 	{1, 0.854, 0.725}
\definecolor{PeachPuff2} 	{rgb} 	{0.933, 0.796, 0.678}
\definecolor{PeachPuff3} 	{rgb} 	{0.8, 0.686, 0.584}
\definecolor{PeachPuff4} 	{rgb} 	{0.545, 0.466, 0.396}
\definecolor{NavajoWhite1} 	{rgb} 	{1, 0.87, 0.678}
\definecolor{NavajoWhite2} 	{rgb} 	{0.933, 0.811, 0.631}
\definecolor{NavajoWhite3} 	{rgb} 	{0.8, 0.7, 0.545}
\definecolor{NavajoWhite4} 	{rgb} 	{0.545, 0.474, 0.368}
\definecolor{LemonChiffon1} 	{rgb} 	{1, 0.98, 0.8}
\definecolor{LemonChiffon2} 	{rgb} 	{0.933, 0.913, 0.749}
\definecolor{LemonChiffon3} 	{rgb} 	{0.8, 0.788, 0.647}
\definecolor{LemonChiffon4} 	{rgb} 	{0.545, 0.537, 0.439}
\definecolor{cornsilk1} 	{rgb} 	{1, 0.972, 0.862}
\definecolor{cornsilk2} 	{rgb} 	{0.933, 0.9, 0.8}
\definecolor{cornsilk3} 	{rgb} 	{0.8, 0.784, 0.694}
\definecolor{cornsilk4} 	{rgb} 	{0.545, 0.533, 0.470}
\definecolor{ivory1} 		{rgb} 	{1, 1, 0.941}
\definecolor{ivory2} 		{rgb} 	{0.933, 0.933, 0.878}
\definecolor{ivory3} 		{rgb} 	{0.8, 0.8, 0.756}
\definecolor{ivory4} 		{rgb} 	{0.545, 0.545, 0.513}
\definecolor{honeydew1} 	{rgb} 	{0.941, 1, 0.941}
\definecolor{honeydew2} 	{rgb} 	{0.878, 0.933, 0.878}
\definecolor{honeydew3} 	{rgb} 	{0.756, 0.8, 0.756}
\definecolor{honeydew4} 	{rgb} 	{0.513, 0.545, 0.513}
\definecolor{LavenderBlush1} 	{rgb} 	{1, 0.941, 0.960}
\definecolor{LavenderBlush2} 	{rgb} 	{0.933, 0.878, 0.898}
\definecolor{LavenderBlush3} 	{rgb} 	{0.8, 0.756, 0.772}
\definecolor{LavenderBlush4} 	{rgb} 	{0.545, 0.513, 0.525}
\definecolor{MistyRose1} 	{rgb} 	{1, 0.894, 0.882}
\definecolor{MistyRose2} 	{rgb} 	{0.933, 0.835, 0.823}
\definecolor{MistyRose3} 	{rgb} 	{0.8, 0.717, 0.7}
\definecolor{MistyRose4} 	{rgb} 	{0.545, 0.49, 0.482}
\definecolor{azure1} 		{rgb} 	{0.941, 1, 1}
\definecolor{azure2} 		{rgb} 	{0.878, 0.933, 0.933}
\definecolor{azure3} 		{rgb} 	{0.756, 0.8, 0.8}
\definecolor{azure4} 		{rgb} 	{0.513, 0.545, 0.545}
\definecolor{SlateBlue1} 	{rgb} 	{0.513, 0.435, 1}
\definecolor{SlateBlue2} 	{rgb} 	{0.478, 0.4, 0.933}
\definecolor{SlateBlue3} 	{rgb} 	{0.411, 0.349, 0.8}
\definecolor{SlateBlue4} 	{rgb} 	{0.278, 0.235, 0.545}
\definecolor{RoyalBlue1} 	{rgb} 	{0.282, 0.462, 1}
\definecolor{RoyalBlue2} 	{rgb} 	{0.262, 0.431, 0.933}
\definecolor{RoyalBlue3} 	{rgb} 	{0.227, 0.372, 0.8}
\definecolor{RoyalBlue4} 	{rgb} 	{0.152, 0.25, 0.545}
\definecolor{blue1} 		{rgb} 	{0, 0, 1}
\definecolor{blue2} 		{rgb} 	{0, 0, 0.933}
\definecolor{blue3} 		{rgb} 	{0, 0, 0.8}
\definecolor{blue4} 		{rgb} 	{0, 0, 0.545}
\definecolor{DodgerBlue1} 	{rgb} 	{0.117, 0.564, 1}
\definecolor{DodgerBlue2} 	{rgb} 	{0.1, 0.525, 0.933}
\definecolor{DodgerBlue3} 	{rgb} 	{0.94, 0.454, 0.8}
\definecolor{DodgerBlue4} 	{rgb} 	{0.62, 0.3, 0.545}
\definecolor{SteelBlue1} 	{rgb} 	{0.388, 0.721, 1}
\definecolor{SteelBlue2} 	{rgb} 	{0.36, 0.674, 0.933}
\definecolor{SteelBlue3} 	{rgb} 	{0.3, 0.58, 0.8}
\definecolor{SteelBlue4} 	{rgb} 	{0.211, 0.392, 0.545}
\definecolor{DeepSkyBlue1} 	{rgb} 	{0, 0.749, 1}
\definecolor{DeepSkyBlue2} 	{rgb} 	{0, 0.698, 0.933}
\definecolor{DeepSkyBlue3} 	{rgb} 	{0, 0.6, 0.8}
\definecolor{DeepSkyBlue4} 	{rgb} 	{0, 0.4, 0.545}
\definecolor{SkyBlue1} 		{rgb} 	{0.529, 0.8, 1}
\definecolor{SkyBlue2} 		{rgb} 	{0.494, 0.752, 0.933}
\definecolor{SkyBlue3} 		{rgb} 	{0.423, 0.65, 0.8}
\definecolor{SkyBlue4} 		{rgb} 	{0.29, 0.439, 0.545}
\definecolor{LightSkyBlue1} 	{rgb} 	{0.69, 0.886, 1}
\definecolor{LightSkyBlue2} 	{rgb} 	{0.643, 0.827, 0.933}
\definecolor{LightSkyBlue3} 	{rgb} 	{0.552, 0.713, 0.8}
\definecolor{LightSkyBlue4} 	{rgb} 	{0.376, 0.482, 0.545}
\definecolor{SlateGray1} 	{rgb} 	{0.776, 0.886, 1}
\definecolor{SlateGray2} 	{rgb} 	{0.725, 0.827, 0.933}
\definecolor{SlateGray3} 	{rgb} 	{0.623, 0.713, 0.8}
\definecolor{SlateGray4} 	{rgb} 	{0.423, 0.482, 0.545}
\definecolor{LightSteelBlue1} 	{rgb} 	{0.792, 0.882, 1}
\definecolor{LightSteelBlue2} 	{rgb} 	{0.737, 0.823, 0.933}
\definecolor{LightSteelBlue3} 	{rgb} 	{0.635, 0.7, 0.8}
\definecolor{LightSteelBlue4} 	{rgb} 	{0.431, 0.482, 0.545}
\definecolor{LightBlue1} 	{rgb} 	{0.749, 0.937, 1}
\definecolor{LightBlue2} 	{rgb} 	{0.698, 0.874, 0.933}
\definecolor{LightBlue3} 	{rgb} 	{0.6, 0.752, 0.8}
\definecolor{LightBlue4} 	{rgb} 	{0.4, 0.513, 0.545}
\definecolor{LightCyan1} 	{rgb} 	{0.878, 1, 1}
\definecolor{LightCyan2} 	{rgb} 	{0.819, 0.933, 0.933}
\definecolor{LightCyan3} 	{rgb} 	{0.7, 0.8, 0.8}
\definecolor{LightCyan4} 	{rgb} 	{0.478, 0.545, 0.545}
\definecolor{PaleTurquoise1} 	{rgb} 	{0.733, 1, 1}
\definecolor{PaleTurquoise2} 	{rgb} 	{0.682, 0.933, 0.933}
\definecolor{PaleTurquoise3} 	{rgb} 	{0.588, 0.8, 0.8}
\definecolor{PaleTurquoise4} 	{rgb} 	{0.4, 0.545, 0.545}
\definecolor{CadetBlue1} 	{rgb} 	{0.596, 0.96, 1}
\definecolor{CadetBlue2} 	{rgb} 	{0.556, 0.898, 0.933}
\definecolor{CadetBlue3} 	{rgb} 	{0.478, 0.772, 0.8}
\definecolor{CadetBlue4} 	{rgb} 	{0.325, 0.525, 0.545}
\definecolor{turquoise1} 	{rgb} 	{0, 0.96, 1}
\definecolor{turquoise2} 	{rgb} 	{0, 0.898, 0.933}
\definecolor{turquoise3} 	{rgb} 	{0, 0.772, 0.8}
\definecolor{turquoise4} 	{rgb} 	{0, 0.525, 0.545}
\definecolor{cyan1} 		{rgb} 	{0, 1, 1}
\definecolor{cyan2} 		{rgb} 	{0, 0.933, 0.933}
\definecolor{cyan3} 		{rgb} 	{0, 0.8, 0.8}
\definecolor{cyan4} 		{rgb} 	{0, 0.545, 0.545}
\definecolor{DarkSlateGray1} 	{rgb} 	{0.592, 1, 1}
\definecolor{DarkSlateGray2} 	{rgb} 	{0.552, 0.933, 0.933}
\definecolor{DarkSlateGray3} 	{rgb} 	{0.474, 0.8, 0.8}
\definecolor{DarkSlateGray4} 	{rgb} 	{0.321, 0.545, 0.545}
\definecolor{aquamarine1} 	{rgb} 	{0.498, 1, 0.831}
\definecolor{aquamarine2} 	{rgb} 	{0.462, 0.933, 0.776}
\definecolor{aquamarine3} 	{rgb} 	{0.4, 0.8, 0.666}
\definecolor{aquamarine4} 	{rgb} 	{0.27, 0.545, 0.454}
\definecolor{DarkSeaGreen1} 	{rgb} 	{0.756, 1, 0.756}
\definecolor{DarkSeaGreen2} 	{rgb} 	{0.7, 0.933, 0.7}
\definecolor{DarkSeaGreen3} 	{rgb} 	{0.6, 0.8, 0.6}
\definecolor{DarkSeaGreen4} 	{rgb} 	{0.411, 0.545, 0.411}
\definecolor{SeaGreen1} 	{rgb} 	{0.329, 1, 0.623}
\definecolor{SeaGreen2} 	{rgb} 	{0.3, 0.933, 0.580}
\definecolor{SeaGreen3} 	{rgb} 	{0.262, 0.8, 0.5}
\definecolor{SeaGreen4} 	{rgb} 	{0.18, 0.545, 0.341}
\definecolor{PaleGreen1} 	{rgb} 	{0.6, 1, 0.6}
\definecolor{PaleGreen2} 	{rgb} 	{0.564, 0.933, 0.564}
\definecolor{PaleGreen3} 	{rgb} 	{0.486, 0.8, 0.486}
\definecolor{PaleGreen4} 	{rgb} 	{0.329, 0.545, 0.329}
\definecolor{SpringGreen1} 	{rgb} 	{0, 1, 0.498}
\definecolor{SpringGreen2} 	{rgb} 	{0, 0.933, 0.462}
\definecolor{SpringGreen3} 	{rgb} 	{0, 0.8, 0.4}
\definecolor{SpringGreen4} 	{rgb} 	{0, 0.545, 0.270}
\definecolor{green1} 		{rgb} 	{0, 1, 0}
\definecolor{green2} 		{rgb} 	{0, 0.933, 0}
\definecolor{green3} 		{rgb} 	{0, 0.8, 0}
\definecolor{green4} 		{rgb} 	{0, 0.545, 0}
\definecolor{chartreuse1} 	{rgb} 	{0.498, 1, 0}
\definecolor{chartreuse2} 	{rgb} 	{0.462, 0.933, 0}
\definecolor{chartreuse3} 	{rgb} 	{0.4, 0.8, 0}
\definecolor{chartreuse4} 	{rgb} 	{0.27, 0.545, 0}
\definecolor{OliveDrab1} 	{rgb} 	{0.752, 1, 0.243}
\definecolor{OliveDrab2} 	{rgb} 	{0.7, 0.933, 0.227}
\definecolor{OliveDrab3} 	{rgb} 	{0.6, 0.8, 0.196}
\definecolor{OliveDrab4} 	{rgb} 	{0.411, 0.545, 0.133}
\definecolor{DarkOliveGreen1} 	{rgb} 	{0.792, 1, 0.439}
\definecolor{DarkOliveGreen2} 	{rgb} 	{0.737, 0.933, 0.4}
\definecolor{DarkOliveGreen3} 	{rgb} 	{0.635, 0.8, 0.352}
\definecolor{DarkOliveGreen4} 	{rgb} 	{0.431, 0.545, 0.239}
\definecolor{khaki1} 		{rgb} 	{1, 0.964, 0.560}
\definecolor{khaki2} 		{rgb} 	{0.933, 0.9, 0.521}
\definecolor{khaki3} 		{rgb} 	{0.8, 0.776, 0.450}
\definecolor{khaki4} 		{rgb} 	{0.545, 0.525, 0.3}
\definecolor{LightGoldenrod1} 	{rgb} 	{1, 0.925, 0.545}
\definecolor{LightGoldenrod2} 	{rgb} 	{0.933, 0.862, 0.5}
\definecolor{LightGoldenrod3} 	{rgb} 	{0.8, 0.745, 0.439}
\definecolor{LightGoldenrod4} 	{rgb} 	{0.545, 0.5, 0.298}
\definecolor{LightYellow1} 	{rgb} 	{1, 1, 0.878}
\definecolor{LightYellow2} 	{rgb} 	{0.933, 0.933, 0.819}
\definecolor{LightYellow3} 	{rgb} 	{0.8, 0.8, 0.7}
\definecolor{LightYellow4} 	{rgb} 	{0.545, 0.545, 0.478}
\definecolor{yellow1} 		{rgb} 	{1, 1, 0}
\definecolor{yellow2} 		{rgb} 	{0.933, 0.933, 0}
\definecolor{yellow3} 		{rgb} 	{0.8, 0.8, 0}
\definecolor{yellow4} 		{rgb} 	{0.545, 0.545, 0}
\definecolor{gold1} 		{rgb} 	{1, 0.843, 0}
\definecolor{gold2} 		{rgb} 	{0.933, 0.788, 0}
\definecolor{gold3} 		{rgb} 	{0.8, 0.678, 0}
\definecolor{gold4} 		{rgb} 	{0.545, 0.458, 0}
\definecolor{goldenrod1} 	{rgb} 	{1, 0.756, 0.145}
\definecolor{goldenrod2} 	{rgb} 	{0.933, 0.7, 0.133}
\definecolor{goldenrod3} 	{rgb} 	{0.8, 0.6, 0.113}
\definecolor{goldenrod4} 	{rgb} 	{0.545, 0.411, 0.78}
\definecolor{DarkGoldenrod1} 	{rgb} 	{1, 0.725, 0.58}
\definecolor{DarkGoldenrod2} 	{rgb} 	{0.933, 0.678, 0.54}
\definecolor{DarkGoldenrod3} 	{rgb} 	{0.8, 0.584, 0.47}
\definecolor{DarkGoldenrod4} 	{rgb} 	{0.545, 0.396, 0.31}
\definecolor{RosyBrown1} 	{rgb} 	{1, 0.756, 0.756}
\definecolor{RosyBrown2} 	{rgb} 	{0.933, 0.7, 0.7}
\definecolor{RosyBrown3} 	{rgb} 	{0.8, 0.6, 0.6}
\definecolor{RosyBrown4} 	{rgb} 	{0.545, 0.411, 0.411}
\definecolor{IndianRed1} 	{rgb} 	{1, 0.415, 0.415}
\definecolor{IndianRed2} 	{rgb} 	{0.933, 0.388, 0.388}
\definecolor{IndianRed3} 	{rgb} 	{0.8, 0.333, 0.333}
\definecolor{IndianRed4} 	{rgb} 	{0.545, 0.227, 0.227}
\definecolor{sienna1} 		{rgb} 	{1, 0.5, 0.278}
\definecolor{sienna2} 		{rgb} 	{0.933, 0.474, 0.258}
\definecolor{sienna3} 		{rgb} 	{0.8, 0.4, 0.223}
\definecolor{sienna4} 		{rgb} 	{0.545, 0.278, 0.149}
\definecolor{burlywood1} 	{rgb} 	{1, 0.827, 0.6}
\definecolor{burlywood2} 	{rgb} 	{0.933, 0.772, 0.568}
\definecolor{burlywood3} 	{rgb} 	{0.8, 0.666, 0.490}
\definecolor{burlywood4} 	{rgb} 	{0.545, 0.45, 0.333}
\definecolor{wheat1} 		{rgb} 	{1, 0.9, 0.729}
\definecolor{wheat2} 		{rgb} 	{0.933, 0.847, 0.682}
\definecolor{wheat3} 		{rgb} 	{0.8, 0.729, 0.588}
\definecolor{wheat4} 		{rgb} 	{0.545, 0.494, 0.4}
\definecolor{tan1} 		{rgb} 	{1, 0.647, 0.3}
\definecolor{tan2} 		{rgb} 	{0.933, 0.6, 0.286}
\definecolor{tan3} 		{rgb} 	{0.8, 0.521, 0.247}
\definecolor{tan4} 		{rgb} 	{0.545, 0.352, 0.168}
\definecolor{chocolate1} 	{rgb} 	{1, 0.498, 0.141}
\definecolor{chocolate2} 	{rgb} 	{0.933, 0.462, 0.129}
\definecolor{chocolate3} 	{rgb} 	{0.8, 0.4, 0.113}
\definecolor{chocolate4} 	{rgb} 	{0.545, 0.27, 0.74}
\definecolor{firebrick1} 	{rgb} 	{1, 0.188, 0.188}
\definecolor{firebrick2} 	{rgb} 	{0.933, 0.172, 0.172}
\definecolor{firebrick3} 	{rgb} 	{0.8, 0.149, 0.149}
\definecolor{firebrick4} 	{rgb} 	{0.545, 0.1, 0.1}
\definecolor{brown1} 		{rgb} 	{1, 0.25, 0.250}
\definecolor{brown2} 		{rgb} 	{0.933, 0.231, 0.231}
\definecolor{brown3} 		{rgb} 	{0.8, 0.2, 0.2}
\definecolor{brown4} 		{rgb} 	{0.545, 0.137, 0.137}
\definecolor{salmon1} 		{rgb} 	{1, 0.549, 0.411}
\definecolor{salmon2} 		{rgb} 	{0.933, 0.5, 0.384}
\definecolor{salmon3} 		{rgb} 	{0.8, 0.439, 0.329}
\definecolor{salmon4} 		{rgb} 	{0.545, 0.298, 0.223}
\definecolor{LightSalmon1} 	{rgb} 	{1, 0.627, 0.478}
\definecolor{LightSalmon2} 	{rgb} 	{0.933, 0.584, 0.447}
\definecolor{LightSalmon3} 	{rgb} 	{0.8, 0.5, 0.384}
\definecolor{LightSalmon4} 	{rgb} 	{0.545, 0.341, 0.258}
\definecolor{orange1} 		{rgb} 	{1, 0.647, 0}
\definecolor{orange2} 		{rgb} 	{0.933, 0.6, 0}
\definecolor{orange3} 		{rgb} 	{0.8, 0.521, 0}
\definecolor{orange4} 		{rgb} 	{0.545, 0.352, 0}
\definecolor{DarkOrange1} 	{rgb} 	{1, 0.498, 0}
\definecolor{DarkOrange2} 	{rgb} 	{0.933, 0.462, 0}
\definecolor{DarkOrange3} 	{rgb} 	{0.8, 0.4, 0}
\definecolor{DarkOrange4} 	{rgb} 	{0.545, 0.27, 0}
\definecolor{coral1} 		{rgb} 	{1, 0.447, 0.337}
\definecolor{coral2} 		{rgb} 	{0.933, 0.415, 0.313}
\definecolor{coral3} 		{rgb} 	{0.8, 0.356, 0.270}
\definecolor{coral4} 		{rgb} 	{0.545, 0.243, 0.184}
\definecolor{tomato1} 		{rgb} 	{1, 0.388, 0.278}
\definecolor{tomato2} 		{rgb} 	{0.933, 0.36, 0.258}
\definecolor{tomato3} 		{rgb} 	{0.8, 0.3, 0.223}
\definecolor{tomato4} 		{rgb} 	{0.545, 0.211, 0.149}
\definecolor{OrangeRed1} 	{rgb} 	{1, 0.27, 0}
\definecolor{OrangeRed2} 	{rgb} 	{0.933, 0.25, 0}
\definecolor{OrangeRed3} 	{rgb} 	{0.8, 0.215, 0}
\definecolor{OrangeRed4} 	{rgb} 	{0.545, 0.145, 0}
\definecolor{red1} 		{rgb} 	{1, 0, 0}
\definecolor{red2} 		{rgb} 	{0.933, 0, 0}
\definecolor{red3} 		{rgb} 	{0.8, 0, 0}
\definecolor{red4} 		{rgb} 	{0.545, 0, 0}
\definecolor{DebianRed}		{rgb} 	{0.843, 0.27, 0.317}
\definecolor{DeepPink1}		{rgb} 	{1, 0.78, 0.576}
\definecolor{DeepPink2}		{rgb} 	{0.933, 0.7, 0.537}
\definecolor{DeepPink3}		{rgb} 	{0.8, 0.62, 0.462}
\definecolor{DeepPink4} 	{rgb} 	{0.545, 0.39, 0.313}
\definecolor{HotPink1} 		{rgb} 	{1, 0.431, 0.7}
\definecolor{HotPink2} 		{rgb} 	{0.933, 0.415, 0.654}
\definecolor{HotPink3} 		{rgb} 	{0.8, 0.376, 0.564}
\definecolor{HotPink4} 		{rgb} 	{0.545, 0.227, 0.384}
\definecolor{pink1} 		{rgb} 	{1, 0.7, 0.772}
\definecolor{pink2} 		{rgb} 	{0.933, 0.662, 0.721}
\definecolor{pink3} 		{rgb} 	{0.8, 0.568, 0.619}
\definecolor{pink4} 		{rgb} 	{0.545, 0.388, 0.423}
\definecolor{LightPink1} 	{rgb} 	{1, 0.682, 0.725}
\definecolor{LightPink2} 	{rgb} 	{0.933, 0.635, 0.678}
\definecolor{LightPink3} 	{rgb} 	{0.8, 0.549, 0.584}
\definecolor{LightPink4} 	{rgb} 	{0.545, 0.372, 0.396}
\definecolor{PaleVioletRed1} 	{rgb} 	{1, 0.5, 0.670}
\definecolor{PaleVioletRed2} 	{rgb} 	{0.933, 0.474, 0.623}
\definecolor{PaleVioletRed3} 	{rgb} 	{0.8, 0.4, 0.537}
\definecolor{PaleVioletRed4} 	{rgb} 	{0.545, 0.278, 0.364}
\definecolor{maroon1} 		{rgb} 	{1, 0.2, 0.7}
\definecolor{maroon2} 		{rgb} 	{0.933, 0.188, 0.654}
\definecolor{maroon3} 		{rgb} 	{0.8, 0.16, 0.564}
\definecolor{maroon4} 		{rgb} 	{0.545, 0.1, 0.384}
\definecolor{VioletRed1} 	{rgb} 	{1, 0.243, 0.588}
\definecolor{VioletRed2} 	{rgb} 	{0.933, 0.227, 0.549}
\definecolor{VioletRed3} 	{rgb} 	{0.8, 0.196, 0.470}
\definecolor{VioletRed4} 	{rgb} 	{0.545, 0.133, 0.321}
\definecolor{magenta1} 		{rgb} 	{1, 0, 1}
\definecolor{magenta2} 		{rgb} 	{0.933, 0, 0.933}
\definecolor{magenta3} 		{rgb} 	{0.8, 0, 0.8}
\definecolor{magenta4} 		{rgb} 	{0.545, 0, 0.545}
\definecolor{orchid1} 		{rgb} 	{1, 0.513, 0.980}
\definecolor{orchid2} 		{rgb} 	{0.933, 0.478, 0.913}
\definecolor{orchid3} 		{rgb} 	{0.8, 0.411, 0.788}
\definecolor{orchid4} 		{rgb} 	{0.545, 0.278, 0.537}
\definecolor{plum1} 		{rgb} 	{1, 0.733, 1}
\definecolor{plum2} 		{rgb} 	{0.933, 0.682, 0.933}
\definecolor{plum3} 		{rgb} 	{0.8, 0.588, 0.8}
\definecolor{plum4} 		{rgb} 	{0.545, 0.4, 0.545}
\definecolor{MediumOrchid1} 	{rgb} 	{0.878, 0.4, 1}
\definecolor{MediumOrchid2} 	{rgb} 	{0.819, 0.372, 0.933}
\definecolor{MediumOrchid3} 	{rgb} 	{0.7, 0.321, 0.8}
\definecolor{MediumOrchid4} 	{rgb} 	{0.478, 0.215, 0.545}
\definecolor{DarkOrchid1} 	{rgb} 	{0.749, 0.243, 1}
\definecolor{DarkOrchid2} 	{rgb} 	{0.698, 0.227, 0.933}
\definecolor{DarkOrchid3} 	{rgb} 	{0.6, 0.196, 0.8}
\definecolor{DarkOrchid4} 	{rgb} 	{0.4, 0.133, 0.545}
\definecolor{Purple1} 		{rgb} 	{0.6, 0.188, 1}
\definecolor{Purple2} 		{rgb} 	{0.568, 0.172, 0.933}
\definecolor{Purple3} 		{rgb} 	{0.49, 0.149, 0.8}
\definecolor{Purple4} 		{rgb} 	{0.333, 0.1, 0.545}
\definecolor{purple1} 		{rgb} 	{0.6, 0.188, 1}
\definecolor{purple2} 		{rgb} 	{0.568, 0.172, 0.933}
\definecolor{purple3} 		{rgb} 	{0.49, 0.149, 0.8}
\definecolor{purple4} 		{rgb} 	{0.333, 0.1, 0.545}
\definecolor{MediumPurple1} 	{rgb} 	{0.67, 0.5, 1}
\definecolor{MediumPurple2} 	{rgb} 	{0.623, 0.474, 0.933}
\definecolor{MediumPurple3} 	{rgb} 	{0.537, 0.4, 0.8}
\definecolor{MediumPurple4} 	{rgb} 	{0.364, 0.278, 0.545}
\definecolor{thistle1} 		{rgb} 	{1, 0.882, 1}
\definecolor{thistle2} 		{rgb} 	{0.933, 0.823, 0.933}
\definecolor{thistle3} 		{rgb} 	{0.8, 0.7, 0.8}
\definecolor{thistle4} 		{rgb} 	{0.545, 0.482, 0.545}
\definecolor{gray0} 		{rgb} 	{0, 0, 0}
\definecolor{grey0} 		{rgb} 	{0, 0, 0}
\definecolor{gray1} 		{rgb} 	{0.11, 0.11, 0.11}
\definecolor{grey1} 		{rgb} 	{0.11, 0.11, 0.11}
\definecolor{gray2} 		{rgb} 	{0.19, 0.19, 0.19}
\definecolor{grey2} 		{rgb} 	{0.19, 0.19, 0.19}
\definecolor{gray3} 		{rgb} 	{0.31, 0.31, 0.31}
\definecolor{grey3} 		{rgb} 	{0.31, 0.31, 0.31}
\definecolor{gray4} 		{rgb} 	{0.39, 0.39, 0.39}
\definecolor{grey4} 		{rgb} 	{0.39, 0.39, 0.39}
\definecolor{gray5} 		{rgb} 	{0.5, 0.5, 0.50}
\definecolor{grey5} 		{rgb} 	{0.5, 0.5, 0.50}
\definecolor{gray6} 		{rgb} 	{0.58, 0.58, 0.58}
\definecolor{grey6} 		{rgb} 	{0.58, 0.58, 0.58}
\definecolor{gray7} 		{rgb} 	{0.7, 0.7, 0.70}
\definecolor{grey7} 		{rgb} 	{0.7, 0.7, 0.70}
\definecolor{gray8} 		{rgb} 	{0.78, 0.78, 0.78}
\definecolor{grey8} 		{rgb} 	{0.78, 0.78, 0.78}
\definecolor{gray9} 		{rgb} 	{0.9, 0.9, 0.90}
\definecolor{grey9} 		{rgb} 	{0.9, 0.9, 0.90}
\definecolor{gray10} 		{rgb} 	{0.1, 0.1, 0.1}
\definecolor{grey10} 		{rgb} 	{0.1, 0.1, 0.1}
\definecolor{gray11} 		{rgb} 	{0.1, 0.1, 0.1}
\definecolor{grey11} 		{rgb} 	{0.1, 0.1, 0.1}
\definecolor{gray12} 		{rgb} 	{0.121, 0.121, 0.121}
\definecolor{grey12} 		{rgb} 	{0.121, 0.121, 0.121}
\definecolor{gray13} 		{rgb} 	{0.129, 0.129, 0.129}
\definecolor{grey13} 		{rgb} 	{0.129, 0.129, 0.129}
\definecolor{gray14} 		{rgb} 	{0.141, 0.141, 0.141}
\definecolor{grey14} 		{rgb} 	{0.141, 0.141, 0.141}
\definecolor{gray15} 		{rgb} 	{0.149, 0.149, 0.149}
\definecolor{grey15} 		{rgb} 	{0.149, 0.149, 0.149}
\definecolor{gray16} 		{rgb} 	{0.16, 0.16, 0.160}
\definecolor{grey16} 		{rgb} 	{0.16, 0.16, 0.160}
\definecolor{gray17} 		{rgb} 	{0.168, 0.168, 0.168}
\definecolor{grey17} 		{rgb} 	{0.168, 0.168, 0.168}
\definecolor{gray18} 		{rgb} 	{0.18, 0.18, 0.180}
\definecolor{grey18} 		{rgb} 	{0.18, 0.18, 0.180}
\definecolor{gray19} 		{rgb} 	{0.188, 0.188, 0.188}
\definecolor{grey19} 		{rgb} 	{0.188, 0.188, 0.188}
\definecolor{gray20} 		{rgb} 	{0.2, 0.2, 0.2}
\definecolor{grey20} 		{rgb} 	{0.2, 0.2, 0.2}
\definecolor{gray21} 		{rgb} 	{0.211, 0.211, 0.211}
\definecolor{grey21} 		{rgb} 	{0.211, 0.211, 0.211}
\definecolor{gray22} 		{rgb} 	{0.219, 0.219, 0.219}
\definecolor{grey22} 		{rgb} 	{0.219, 0.219, 0.219}
\definecolor{gray23} 		{rgb} 	{0.231, 0.231, 0.231}
\definecolor{grey23} 		{rgb} 	{0.231, 0.231, 0.231}
\definecolor{gray24} 		{rgb} 	{0.239, 0.239, 0.239}
\definecolor{grey24} 		{rgb} 	{0.239, 0.239, 0.239}
\definecolor{gray25} 		{rgb} 	{0.25, 0.25, 0.250}
\definecolor{grey25} 		{rgb} 	{0.25, 0.25, 0.250}
\definecolor{gray26} 		{rgb} 	{0.258, 0.258, 0.258}
\definecolor{grey26} 		{rgb} 	{0.258, 0.258, 0.258}
\definecolor{gray27} 		{rgb} 	{0.27, 0.27, 0.270}
\definecolor{grey27} 		{rgb} 	{0.27, 0.27, 0.270}
\definecolor{gray28} 		{rgb} 	{0.278, 0.278, 0.278}
\definecolor{grey28} 		{rgb} 	{0.278, 0.278, 0.278}
\definecolor{gray29} 		{rgb} 	{0.29, 0.29, 0.290}
\definecolor{grey29} 		{rgb} 	{0.29, 0.29, 0.290}
\definecolor{gray30} 		{rgb} 	{0.3, 0.3, 0.3}
\definecolor{grey30} 		{rgb} 	{0.3, 0.3, 0.3}
\definecolor{gray31} 		{rgb} 	{0.3, 0.3, 0.3}
\definecolor{grey31} 		{rgb} 	{0.3, 0.3, 0.3}
\definecolor{gray32} 		{rgb} 	{0.321, 0.321, 0.321}
\definecolor{grey32} 		{rgb} 	{0.321, 0.321, 0.321}
\definecolor{gray33} 		{rgb} 	{0.329, 0.329, 0.329}
\definecolor{grey33} 		{rgb} 	{0.329, 0.329, 0.329}
\definecolor{gray34} 		{rgb} 	{0.341, 0.341, 0.341}
\definecolor{grey34} 		{rgb} 	{0.341, 0.341, 0.341}
\definecolor{gray35} 		{rgb} 	{0.349, 0.349, 0.349}
\definecolor{grey35} 		{rgb} 	{0.349, 0.349, 0.349}
\definecolor{gray36} 		{rgb} 	{0.36, 0.36, 0.360}
\definecolor{grey36} 		{rgb} 	{0.36, 0.36, 0.360}
\definecolor{gray37} 		{rgb} 	{0.368, 0.368, 0.368}
\definecolor{grey37} 		{rgb} 	{0.368, 0.368, 0.368}
\definecolor{gray38} 		{rgb} 	{0.38, 0.38, 0.380}
\definecolor{grey38} 		{rgb} 	{0.38, 0.38, 0.380}
\definecolor{gray39} 		{rgb} 	{0.388, 0.388, 0.388}
\definecolor{grey39} 		{rgb} 	{0.388, 0.388, 0.388}
\definecolor{gray40} 		{rgb} 	{0.4, 0.4, 0.4}
\definecolor{grey40} 		{rgb} 	{0.4, 0.4, 0.4}
\definecolor{gray41} 		{rgb} 	{0.411, 0.411, 0.411}
\definecolor{grey41} 		{rgb} 	{0.411, 0.411, 0.411}
\definecolor{gray42} 		{rgb} 	{0.419, 0.419, 0.419}
\definecolor{grey42} 		{rgb} 	{0.419, 0.419, 0.419}
\definecolor{gray43} 		{rgb} 	{0.431, 0.431, 0.431}
\definecolor{grey43} 		{rgb} 	{0.431, 0.431, 0.431}
\definecolor{gray44} 		{rgb} 	{0.439, 0.439, 0.439}
\definecolor{grey44} 		{rgb} 	{0.439, 0.439, 0.439}
\definecolor{gray45} 		{rgb} 	{0.45, 0.45, 0.450}
\definecolor{grey45} 		{rgb} 	{0.45, 0.45, 0.450}
\definecolor{gray46} 		{rgb} 	{0.458, 0.458, 0.458}
\definecolor{grey46} 		{rgb} 	{0.458, 0.458, 0.458}
\definecolor{gray47} 		{rgb} 	{0.47, 0.47, 0.470}
\definecolor{grey47} 		{rgb} 	{0.47, 0.47, 0.470}
\definecolor{gray48} 		{rgb} 	{0.478, 0.478, 0.478}
\definecolor{grey48} 		{rgb} 	{0.478, 0.478, 0.478}
\definecolor{gray49} 		{rgb} 	{0.49, 0.49, 0.490}
\definecolor{grey49} 		{rgb} 	{0.49, 0.49, 0.490}
\definecolor{gray50} 		{rgb} 	{0.498, 0.498, 0.498}
\definecolor{grey50} 		{rgb} 	{0.498, 0.498, 0.498}
\definecolor{gray51} 		{rgb} 	{0.5, 0.5, 0.5}
\definecolor{grey51} 		{rgb} 	{0.5, 0.5, 0.5}
\definecolor{gray52} 		{rgb} 	{0.521, 0.521, 0.521}
\definecolor{grey52} 		{rgb} 	{0.521, 0.521, 0.521}
\definecolor{gray53} 		{rgb} 	{0.529, 0.529, 0.529}
\definecolor{grey53} 		{rgb} 	{0.529, 0.529, 0.529}
\definecolor{gray54} 		{rgb} 	{0.541, 0.541, 0.541}
\definecolor{grey54} 		{rgb} 	{0.541, 0.541, 0.541}
\definecolor{gray55} 		{rgb} 	{0.549, 0.549, 0.549}
\definecolor{grey55} 		{rgb} 	{0.549, 0.549, 0.549}
\definecolor{gray56} 		{rgb} 	{0.56, 0.56, 0.560}
\definecolor{grey56} 		{rgb} 	{0.56, 0.56, 0.560}
\definecolor{gray57} 		{rgb} 	{0.568, 0.568, 0.568}
\definecolor{grey57} 		{rgb} 	{0.568, 0.568, 0.568}
\definecolor{gray58} 		{rgb} 	{0.58, 0.58, 0.580}
\definecolor{grey58} 		{rgb} 	{0.58, 0.58, 0.580}
\definecolor{gray59} 		{rgb} 	{0.588, 0.588, 0.588}
\definecolor{grey59} 		{rgb} 	{0.588, 0.588, 0.588}
\definecolor{gray60} 		{rgb} 	{0.6, 0.6, 0.6}
\definecolor{grey60} 		{rgb} 	{0.6, 0.6, 0.6}
\definecolor{gray61} 		{rgb} 	{0.611, 0.611, 0.611}
\definecolor{grey61} 		{rgb} 	{0.611, 0.611, 0.611}
\definecolor{gray62} 		{rgb} 	{0.619, 0.619, 0.619}
\definecolor{grey62} 		{rgb} 	{0.619, 0.619, 0.619}
\definecolor{gray63} 		{rgb} 	{0.631, 0.631, 0.631}
\definecolor{grey63} 		{rgb} 	{0.631, 0.631, 0.631}
\definecolor{gray64} 		{rgb} 	{0.639, 0.639, 0.639}
\definecolor{grey64} 		{rgb} 	{0.639, 0.639, 0.639}
\definecolor{gray65} 		{rgb} 	{0.65, 0.65, 0.650}
\definecolor{grey65} 		{rgb} 	{0.65, 0.65, 0.650}
\definecolor{gray66} 		{rgb} 	{0.658, 0.658, 0.658}
\definecolor{grey66} 		{rgb} 	{0.658, 0.658, 0.658}
\definecolor{gray67} 		{rgb} 	{0.67, 0.67, 0.670}
\definecolor{grey67} 		{rgb} 	{0.67, 0.67, 0.670}
\definecolor{gray68} 		{rgb} 	{0.678, 0.678, 0.678}
\definecolor{grey68} 		{rgb} 	{0.678, 0.678, 0.678}
\definecolor{gray69} 		{rgb} 	{0.69, 0.69, 0.690}
\definecolor{grey69} 		{rgb} 	{0.69, 0.69, 0.690}
\definecolor{gray70} 		{rgb} 	{0.7, 0.7, 0.7}
\definecolor{grey70} 		{rgb} 	{0.7, 0.7, 0.7}
\definecolor{gray71} 		{rgb} 	{0.7, 0.7, 0.7}
\definecolor{grey71} 		{rgb} 	{0.7, 0.7, 0.7}
\definecolor{gray72} 		{rgb} 	{0.721, 0.721, 0.721}
\definecolor{grey72} 		{rgb} 	{0.721, 0.721, 0.721}
\definecolor{gray73} 		{rgb} 	{0.729, 0.729, 0.729}
\definecolor{grey73} 		{rgb} 	{0.729, 0.729, 0.729}
\definecolor{gray74} 		{rgb} 	{0.741, 0.741, 0.741}
\definecolor{grey74} 		{rgb} 	{0.741, 0.741, 0.741}
\definecolor{gray75} 		{rgb} 	{0.749, 0.749, 0.749}
\definecolor{grey75} 		{rgb} 	{0.749, 0.749, 0.749}
\definecolor{gray76} 		{rgb} 	{0.76, 0.76, 0.760}
\definecolor{grey76} 		{rgb} 	{0.76, 0.76, 0.760}
\definecolor{gray77} 		{rgb} 	{0.768, 0.768, 0.768}
\definecolor{grey77} 		{rgb} 	{0.768, 0.768, 0.768}
\definecolor{gray78} 		{rgb} 	{0.78, 0.78, 0.780}
\definecolor{grey78} 		{rgb} 	{0.78, 0.78, 0.780}
\definecolor{gray79} 		{rgb} 	{0.788, 0.788, 0.788}
\definecolor{grey79} 		{rgb} 	{0.788, 0.788, 0.788}
\definecolor{gray80} 		{rgb} 	{0.8, 0.8, 0.8}
\definecolor{grey80} 		{rgb} 	{0.8, 0.8, 0.8}
\definecolor{gray81} 		{rgb} 	{0.811, 0.811, 0.811}
\definecolor{grey81} 		{rgb} 	{0.811, 0.811, 0.811}
\definecolor{gray82} 		{rgb} 	{0.819, 0.819, 0.819}
\definecolor{grey82} 		{rgb} 	{0.819, 0.819, 0.819}
\definecolor{gray83} 		{rgb} 	{0.831, 0.831, 0.831}
\definecolor{grey83} 		{rgb} 	{0.831, 0.831, 0.831}
\definecolor{gray84} 		{rgb} 	{0.839, 0.839, 0.839}
\definecolor{grey84} 		{rgb} 	{0.839, 0.839, 0.839}
\definecolor{gray85} 		{rgb} 	{0.85, 0.85, 0.850}
\definecolor{grey85} 		{rgb} 	{0.85, 0.85, 0.850}
\definecolor{gray86} 		{rgb} 	{0.858, 0.858, 0.858}
\definecolor{grey86} 		{rgb} 	{0.858, 0.858, 0.858}
\definecolor{gray87} 		{rgb} 	{0.87, 0.87, 0.870}
\definecolor{grey87} 		{rgb} 	{0.87, 0.87, 0.870}
\definecolor{gray88} 		{rgb} 	{0.878, 0.878, 0.878}
\definecolor{grey88} 		{rgb} 	{0.878, 0.878, 0.878}
\definecolor{gray89} 		{rgb} 	{0.89, 0.89, 0.890}
\definecolor{grey89} 		{rgb} 	{0.89, 0.89, 0.890}
\definecolor{gray90} 		{rgb} 	{0.898, 0.898, 0.898}
\definecolor{grey90} 		{rgb} 	{0.898, 0.898, 0.898}
\definecolor{gray91} 		{rgb} 	{0.9, 0.9, 0.9}
\definecolor{grey91} 		{rgb} 	{0.9, 0.9, 0.9}
\definecolor{gray92} 		{rgb} 	{0.921, 0.921, 0.921}
\definecolor{grey92} 		{rgb} 	{0.921, 0.921, 0.921}
\definecolor{gray93} 		{rgb} 	{0.929, 0.929, 0.929}
\definecolor{grey93} 		{rgb} 	{0.929, 0.929, 0.929}
\definecolor{gray94} 		{rgb} 	{0.941, 0.941, 0.941}
\definecolor{grey94} 		{rgb} 	{0.941, 0.941, 0.941}
\definecolor{gray95} 		{rgb} 	{0.949, 0.949, 0.949}
\definecolor{grey95} 		{rgb} 	{0.949, 0.949, 0.949}
\definecolor{gray96} 		{rgb} 	{0.96, 0.96, 0.960}
\definecolor{grey96} 		{rgb} 	{0.96, 0.96, 0.960}
\definecolor{gray97} 		{rgb} 	{0.968, 0.968, 0.968}
\definecolor{grey97} 		{rgb} 	{0.968, 0.968, 0.968}
\definecolor{gray98} 		{rgb} 	{0.98, 0.98, 0.980}
\definecolor{grey98} 		{rgb} 	{0.98, 0.98, 0.980}
\definecolor{gray99} 		{rgb} 	{0.988, 0.988, 0.988}
\definecolor{grey99} 		{rgb} 	{0.988, 0.988, 0.988}
\definecolor{gray100} 		{rgb} 	{1, 1, 1}
\definecolor{grey100} 		{rgb} 	{1, 1, 1}
\definecolor{DarkGrey} 		{rgb} 	{0.662, 0.662, 0.662}
\definecolor{DarkGray} 		{rgb} 	{0.662, 0.662, 0.662}
\definecolor{DarkBlue} 		{rgb} 	{0, 0, 0.545}
\definecolor{DarkCyan} 		{rgb} 	{0, 0.545, 0.545}
\definecolor{DarkMagenta} 	{rgb} 	{0.545, 0, 0.545}
\definecolor{DarkRed} 		{rgb} 	{0.545, 0, 0}
\definecolor{LightGreen} 	{rgb} 	{0.564, 0.933, 0.564}
\def\arrowhight{2.5pt}
\def\horizontalskip{1pt}
\begin{document}
\checkandfixthelayout
\thispagestyle{empty}

\frontmatter

{
    \setlength{\textheight}{700px}
    \fontsize{16}{20}\selectfont
    \vspace*{3mm}
    \begin{flushleft}
	\hspace*{-1.8cm}\includegraphics[scale=.3, viewport=0 0 80 80]{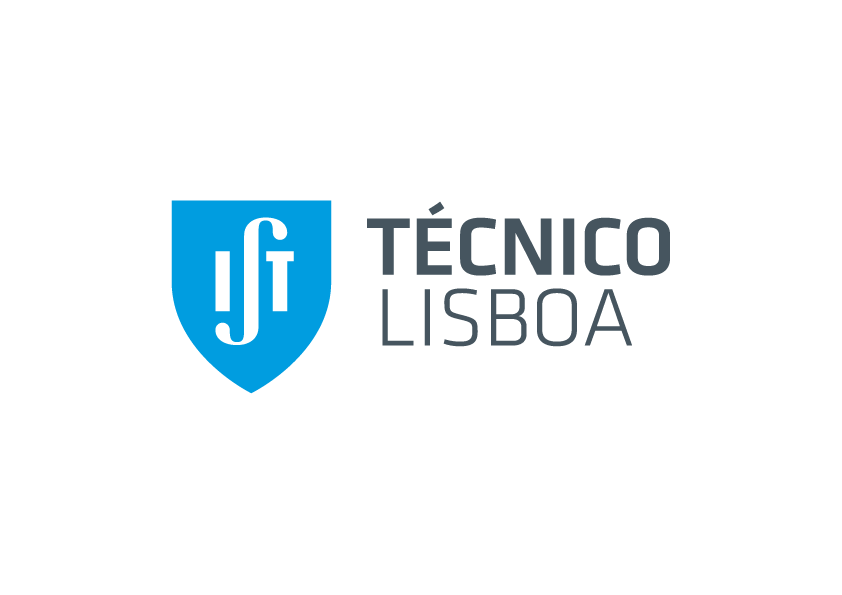}
    \end{flushleft}
    \begin{center}
	\textbf{UNIVERSIDADE T\'{E}CNICA DE LISBOA}\\
	\textbf{INSTITUTO SUPERIOR T\'{E}CNICO}
    \end{center}

    \vspace{5mm}
    \begin{center}
	\textbf{Galois Theory for H-extensions}\\[5mm]
	Marcin Wojciech Szamotulski \\
	\vspace{5mm}
    \end{center}
    \noindent\hspace*{2.5cm}\textsf{Supervisor}: Doctor Roger Francis Picken\\
    \hspace*{2.5cm}\textsf{Co-supervisor}: Doctor Christian Edgar Lomp \\
    \begin{center}
	\vspace{5mm}

	Thesis approved in public session to obtain the PhD Degree in
	Mathematics \\
	\vspace{5mm}
	\textsf{Jury final classification:} Pass with Merit \\
	\vspace{1cm}
	\fontsize{13}{16}\selectfont
	\vspace{1cm}
	\textsf{\Large Jury} \\
    \end{center}
    \fontsize{13}{16}\selectfont
    \textsf{\Large Chairperson}: Chairman of the IST Scientific Board\\[4mm]
    \noindent\textsf{\Large Members of the Committee}:\\[2mm]
    Doctor Roger Francis Picken\\
    Doctor Christian Edgar Lomp\\
    Doctor Paulo Jorge da Rocha Pinto\\
    Doctor Lars David Kadison
    \vspace{4mm}
    \nopagebreak[4]
    \begin{center} 
	\nopagebreak[4]
	\textbf{2013}
    \end{center}
}
\cleardoublepage

{
    \fontsize{14}{18}\selectfont
    \vspace*{3mm}
    \thispagestyle{empty}
    \begin{flushleft}
	\hspace*{-1.8cm}\includegraphics[scale=.3, viewport=0 0 80 80]{Logo_IST_color.png}
    \end{flushleft}
    \vspace*{-1.5cm}
    \begin{center}
	\textbf{UNIVERSIDADE T\'{E}CNICA DE LISBOA}\\
	\textbf{INSTITUTO SUPERIOR T\'{E}CNICO}
    \end{center}

    \begin{center} \textbf{Galois Theory for H-extensions}\\
	Marcin Wojciech Szamotulski \\
    \end{center}
    \noindent\hspace*{3cm}\textsf{Supervisor}: Doctor Roger Francis Picken\\
    \hspace*{3cm}\textsf{Co-supervisor}: Doctor Christian Edgar Lomp \\
    \begin{center}
	Thesis approved in public session to obtain the PhD Degree in
	Mathematics\\[5mm]
	\textsf{Jury final classification}: Pass with Merit\\[5mm]
	\fontsize{13}{16}\selectfont
	\textsf{\Large Jury}\\
    \end{center}
    \fontsize{13}{16}\selectfont
    \textsf{\Large Chairperson}: Chairman of the IST Scientific Board\\[4mm]
    \textsf{\Large Members of the Committee}:\\[4mm]
    \noindent Doctor \textsc{Roger Francis Picken}, Professor Associado do
    \mbox{Instituto} Superior T\'{e}cnico, da Universidade T\'{e}cnica de
    Lisboa;\\[1mm]
    \noindent Doctor \textsc{Christian Edgar Lomp}, Professor Auxiliar da
    \mbox{Faculdade} de Ci\^{e}ncias, da Universidade do Porto;\\[1mm]
    \noindent Doctor \textsc{Paulo Jorge da Rocha Pinto}, Professor
    Auxiliar do \mbox{Instituto} Superior T\'{e}cnico, da Universidade
    T\'{e}cnica de \mbox{Lisboa};\\[1mm]
    \noindent Doctor \textsc{Lars David Kadison}, Investigador Auxiliar da
    \mbox{Faculdade} de Ci\^{e}ncias, da Universidade do Porto.\\
    \nopagebreak[4]
    \begin{center}
	\textsf{Funding Institutions}\\
	Funda\c{c}\~{a}o para a Ci\^{e}ncia e a Tecnologia, Portugal\\
	\textbf{2013}
    \end{center}
}
\cleardoublepage

\begin{center}
\textbf{Teoria de Galois para H-extens\~{o}es}\\
\vspace{0.5cm}
Marcin Szamotulski\\
Doutoramento em Matemática\\
Orientador: Prof. Roger Francis Picken\\
Co-orientador: Prof. Christian Edgar Lomp\\

\vspace{0.5cm}
\textbf{Resumo}
\end{center}
Mostramos que existe uma correspondência de Galois entre subálgebras de uma
álgebra de H-comódulo A sobre um anel base R e cocientes generalizados de uma
álgebra de Hopf H, se ambos A e H são módulos de Mittag-Leffler chatos.
Fornecemos ainda critérios novos para subálgebras e cocientes generalizados
serem elementos fechados da conexão de Galois construída.  Generalizamos
a teoria de objetos admissíveis de Schauenburg a este contexto mais geral.
Depois consideramos coextensões de coálgebras de H-módulo.  Construímos uma
conexão de Galois para elas e provamos que coextensões H-Galois são fechadas.
Aplicamos os resultados obtidos à própria álgebra de Hopf, dando uma prova
simples que existe uma correspondência biunívoca entre ideais coideais
à direita de H e as suas subálgebras de coideal à esquerda, quando H é de
dimensão finita.  Formulamos ainda uma condição necessária e suficiente para
uma correspondência de Galois ser bijetiva quando A=H.  Consideramos também
álgebras de produto cruzado.

\vspace{0.5cm}
\noindent\textbf{Palavras-chave:} 
conexão de Galois, álgebra de Hopf, álgebra de comódulo, coálgebra, biálgebra,
reticulado completo, conjunto partialmente ordenado, elementos fechados,
produto cruzado, propriedade de Mittag--Leffler.
\newpage

\begin{center}
\textbf{Galois theory for \(H\)-extensions}\\
\vspace{0.5cm}
Marcin Szamotulski\\
PhD Degree in Mathematics\\
Supervisor: Prof. Roger Francis Picken\\
Co-supervisor: Prof. Christian Edgar Lomp\\

\vspace{0.5cm}
\textbf{Abstract}
\end{center}
We show that there exists a Galois correspondence between subalgebras of an
H-comodule algebra~A over a base ring R and generalised quotients of a Hopf
algebra H if both A and H are flat Mittag--Leffler modules.  We also provide
new criteria for subalgebras and generalised quotients to be closed elements
of the Galois connection constructed.  We generalise the Schauenburg theory of
admissible objects to this more general context.  Then we consider
coextensions of H-module coalgebras.  We construct a Galois connection for
them and we prove that H-Galois coextensions are closed.  We apply the
results obtained to the Hopf algebra itself and we give a simple proof that
there is a bijective correspondence between right ideal coideals of H and its
left coideal subalgebras when H is finite dimensional.  Furthermore, we
formulate a necessary and sufficient condition for a bijective Galois
correspondence for A=H.  We also consider crossed product algebras.

\vspace{0.5cm}
\noindent\textbf{Key-words:} Galois connection, Hopf algebra, comodule
algebra, coalgebra, bialgebra, complete lattice, partially ordered set, closed
elements, crossed product, Mittag--Leffler property, action by monomorphisms.

\newpage

\begin{center}
\Large{Acknowledgements}
\end{center}
I am deeply thankful to my advisors Professor Roger Picken and Professor
Christian Lomp for their guidance and support throughout my research.  I also
would like to thank IST (Instituto Superior T\'{e}cnico) and FCT
(Funda\c{c}\~{a}o para a Ci\^{e}ncia e a Tecnologia, grant SFRH/BD/44616/2008)
for the financial support essential to successfully complete this thesis.

I would like to thank my friend Dorota Marciniak. Long and deep discussions
with her led to the initial development of this subject.  She also invited me
to study Universal Algebra and Category Theory.  Understanding the Universal
Algebra approach was essential to solve the stated problems.  I also would
like to thank my dear parents and my brother for their support and a longtime
friend of my mother Heike Ortmann.  I would like to thank Stavros Papdakis for
his friendship.  I would also like to thank Prof. Marian Marciniak whose
support and guidance over the years was invaluable.\newpage

\cleardoublepage
\renewcommand{\contentsname}{Table of Contents}
\tableofcontents*
\clearpage
\mainmatter

\chapter*{Introduction}
\addcontentsline{toc}{chapter}{Introduction}
Hopf--Galois extensions have roots in the approach
of~\cite{sc-dh-ar:galois-theory} who generalised the \emph{classical Galois
    Theory} for field extensions to rings which are commutative.
\cite{sc-ms:hopf-algebras-and-galois-theory} extended these ideas to coactions
of Hopf algebras on \emph{commutative algebras} over rings.  The general
definition of a Hopf--Galois extension was first introduced by
\cite{hk-mt:hopf-algebras-and-galois-extensions}.  Under the assumption that
\(H\) is finite dimensional their definition is equivalent to the following
now standard one.
\begin{definition*}
	An $H$-extension $A/A^{co\,H}$ is called an \bold{$H$-Galois extension}
	if the \bold{canonical map} of right $H$-comodules and left
	$A$-modules:
	\begin{equation}
	    \can:A\otimes_{A^{co\,H}} A\sir A\otimes H,\ a\otimes b\eli{}ab_{(0)}\otimes b_{(1)}
	\end{equation}
	is an \emph{isomorphism}\footnote{We use the Sweedler notation for
	    coactions and comultiplications: here \(\delta:A\sir A\otimes H\)
	    and for \(a\in A\), \(\delta(a)\in A\otimes H\) and hence it is
	    a sum of simple tensors \(\sum_{k=1}^n a_k'\otimes h_k\) for some
	    \(a_k'\in A\) and \(h_k\in H\).  In Sweedler notation this tensor
	    is denoted as \(\sum a_{(0)}\otimes a_{(1)}\in A\otimes H\).  We
	    prefer to use the sumless Sweedler notation, e.g. we write
	    \(\delta(a)=a_{(0)}\otimes a_{(1)}\), where the suppressed
	    summation symbol is understood.}, where \(A^{co\,H}:=\{a\in A:
		a_{(0)}\otimes a_{(1)}=a\otimes 1_H\}\). 
\end{definition*}

A breakthrough in Hopf--Galois theory was made  by
\citeauthor{fo-yz:gal-cor-hopf-galois} extending the results of
\cite{sc-ms:hopf-algebras-and-galois-theory} to a noncommutative setting.
They construct a Galois correspondence for Hopf--Galois extensions.
Van~Oystaeyen and Zhang introduced a remarkable construction of an
\emph{associated Hopf algebra} to an $H$-extension $A/A^{co\,H}$, where $A$ as
well as $H$ are supposed to be commutative
(see~\cite[Sec.~3]{fo-yz:gal-cor-hopf-galois}, for a noncommutative
generalisation see:~\cite{ps:hopf-bigalois,ps:gal-cor-hopf-bigal}).  We will
denote this Hopf algebra by $L(A,H)$. 
\citet[Prop.~3.2]{ps:gal-cor-hopf-bigal} generalised the van Oystaeyen and
Zhang correspondence (see also \cite[Thm~6.4]{ps:hopf-bigalois}) to a Galois
connection between generalised quotients of the associated Hopf algebra
\(L(A,H)\) (i.e. quotients by right ideal coideals) and \(H\)-comodule
subalgebras of \(A\).  We denote the poset of generalised quotients of a Hopf
algebra \(H\) by \(\qquot(H)\).  In this work we construct the Galois
correspondence:
\begin{equation}\label{intro_eq:galois-connection}
    \Sub_{\Alg}(A/B)\,\galois{\psi}{\phi}\,\qquot(H),\quad \phi(Q)\coloneq A^{co\,Q}
\end{equation}
where \(B\) is the coinvariants subalgebra of \(A\), without the assumption
that \(B\) is equal to the commutative base ring \(R\) and we also drop the
Hopf--Galois assumption (see Theorem~\ref{thm:existence} on
pager~\pageref{thm:existence}).  We add some module theoretic assumptions on
\(A\) and \(H\).  Instead of the Hopf theoretic approach of van Oystaeyen,
Zhang and Schauenburg we propose to look from the lattice theoretic
perspective.  Using an existence theorem for Galois connections we show that
if a comodule algebra \(A\) and a Hopf algebra \(H\) are flat Mittag--Leffler
\(R\)-modules (Definition~\ref{defi:Mittag-Leffler} on
page~\pageref{defi:Mittag-Leffler}) then the Galois
correspondence~\eqref{intro_eq:galois-connection} exists.  The Mittag--Leffler
property appears here since it turns out that a flat \(R\)-module \(M\) has
this property if and only if the endofunctor \(M\otimes-\) of the category of
left \(R\)-modules preserves arbitrary intersections of submodules (see
Definition~\ref{defi:intersection_property} and
Corollary~\ref{cor:mittag-leffler} on page~\pageref{cor:mittag-leffler}).  We
consider modules with the intersection property in
Chapter~\ref{chap:modules_with_int_property}, where we also give examples of
flat and faithfully flat modules which fail to have it.  For an
\(H\)-extension \(A/A^{co\,H}\) over a field we show that \(\psi(S)=H/K_S^+H\)
where \(K_S\) is the smallest left coideal subalgebra of \(H\) with the
property: \(\delta_A(S)\subseteq A\otimes K_S\), where \(\delta_A\) is the
comodule structure map of \(A\) (see Theorem~\ref{thm:connection_over_field}
on page~\pageref{thm:connection_over_field}).  Then we discuss Galois
closedness of the generalised quotients and subalgebras
in~\eqref{intro_eq:galois-connection}.  We show that a subalgebra
\(S\in\Sub(A/B)\) is closed if and only if the following canonical map:
\(\can_S:S\otimes A\sir A\cotensor_{\psi(S)}H\) \(\can(s\otimes
    a)=sa_{(0)}\otimes a_{(1)}\), is an isomorphism (see
Theorem~\ref{thm:closed_subalgebras} on \pageref{thm:closed_subalgebras}).  We
show that if a generalised quotient \(Q\) is such that \(A/A^{co\,Q}\) is
\(Q\)-Galois then it is necessarily closed under the assumptions that the
canonical map of \(A/A^{co\,H}\) is onto and the unit map \(1_A:R\sir A\) is
a pure monomorphism of left \(R\)-modules (Corollary~\ref{cor:Q-Galois_closed}
on page~\pageref{cor:Q-Galois_closed}).  Later we prove that this is also
a necessary condition for Galois closedness if \(A=H\) or, more generally, if
\(A/A^{co\,H}\) is a crossed product, \(H\) is flat and \(A^{co\,H}\) is
a flat Mittag--Leffler \(R\)-module (Theorem~\ref{thm:cleft-case} on
page~\pageref{thm:cleft-case}).  For \(H\)-Galois extensions over a field we
prove that the canonical map \(\can_Q:A\otimes_{A^{\co\,Q}}A\sir A\otimes Q\)
is isomorphic if and only if \(Q\) is a closed element and the map
\(\delta_A\otimes\delta_A:A\otimes_{A^{\co\,Q}}A\sir(A\otimes
    H)\otimes_{A\otimes H^{\co\,Q}}(A\otimes H)\) is injective
(Theorem~\ref{thm:Q-Galois_closed_over_field} on
page~\pageref{thm:Q-Galois_closed_over_field}).  We also consider the dual
case of \(H\)-module coalgebras, which later gives us a simple proof of
the bijective correspondence between generalised quotients and left coideal
subalgebras of~\(H\) if it is finite dimensional
(Theorem~\ref{thm:newTakeuchi} on page~\pageref{thm:newTakeuchi}):
\begin{equation}
    \qsub(H)\galois{\psi}{\phi}\qquot(H),\quad\phi(Q)\coloneq H^{co\,Q},\;\psi(K)=H/K^+H
\end{equation}
\citet[Thm.~3.10]{ps:gal-cor-hopf-bigal} showed that this Galois
correspondence restricts to a bijection between admissible quotients and
subalgebras, where admissibility assumes, among other things, faithful
flatness (for subobjects) and faithful coflatness (for quotient objects) and
\(H\) is required to be flat over the base ring.  The bijectivity of this
correspondence, without the assumptions of any faithfully (co)flatness was
proved by~\cite{ss:projectivity-over-comodule-algebras}.  He showed this by
proving that a finite dimensional Hopf algebra is free over any of its left
coideal subalgebra.  Our independent proof makes no use of this result.  We
also characterise closed elements of this Galois correspondence for Hopf
algebras which are flat over the base ring (Theorem~\ref{thm:closed-of-qquot}
on page~\pageref{thm:closed-of-qquot}).  As we already mentioned, we show that
a generalised quotient \(Q\) is closed if and only if \(H/H^{co\,Q}\) is
a \(Q\)-Galois extension.  Furthermore, we note that a left coideal
subalgebra~\(K\) is closed if and only if \(H\sir H/K^+H\) is a \(K\)-Galois
coextension (see Definition~\ref{defi:coGalois} on
page~\pageref{defi:coGalois}).  This gives an answer to the question when
a bijective correspondence between generalised quotients over which~\(H\) is
faithfully coflat and coideal subalgebras over which~\(H\) is faithfully flat
extends to a bijective correspondence without (co)flatness assumptions.  We
also note that for any \(H\)-extension \(A/B\) over a field a generalised
quotient \(Q\) which is not of the form \(H/K^+H\) cannot be closed.

There are two preprints of mine on the arxiv which are related with this
thesis.  These are both listed as co-authored with Dorota Marciniak, since we
started investigating this subject together, but she was not much involved in
the subsequent developments and went on to do her PhD in game theory.  The
first preprint (\href{http://arxiv.org/abs/0912.0291}{Galois Theory of Hopf
    Galois Extensions}, \texttt{arXiv:0912.0291}) is about Hopf-Galois
extensions over fields, and does not include the generalisation of
Schauenburg's correspondence between admissible objects. The more recent
preprint (\href{http://arxiv.org/abs/0912.1795}{Galois Theory for H-extensions
    and H-coextensions}, \texttt{arXiv:0912.1785}) will be harmonized with
this thesis, though it does not contain the results of the last chapter (pages
\pageref{chap:coring_approach}--\pageref{chap:coring_approach_end}).  The
latter preprint has a long list of arxiv versions since my first attempt to
build the Galois corres- pondence for extensions over rings turned out to
contain a mistake. The results of Chapter 3 (pages
\pageref{chap:lattices}--\pageref{chap:lattices_end}) are not included in
either of the preprints, except for the completeness of lattices of
generalised quotients and subalgebras of a Hopf algebra.

\chapter{Preliminaries}\label{subsec:basics}
In this chapter we introduce the basic tools and notions we will use.  We
start with an introduction to \emph{posets} (i.e. partially ordered sets),
\emph{lattices}, and \emph{Galois connections}.  Lattices are probably one of
the most ubiquitous objects in mathematics.  We recall the basics of the
theory of \emph{complete} and \emph{algebraic lattices}.  Complete lattices
are the ones that allow for arbitrary infima and suprema, rather than just
finite ones.  We will use the existence theorem of Galois connections which
requires completeness, and thus later on we will prove that the lattices that
appear in the theory of Hopf algebras are complete.  Algebraic lattices can be
characterised as lattices of subalgebras of \textit{universal algebras}. Also
lattices of congruences of \textit{universal algebras} are algebraic.  For
example all lattices of sub/quotient structures of all classical algebraic
structures like: modules or groups, have algebraic lattices of sub/quotient
structures.  In one of the next chapters we are going to show that also some
of the lattices which appear in the theory of Hopf algebras and their
(co)actions are algebraic, usually under strong exactness properties of their
underlying module (flat Mittag--Leffler modules, the property which will be
studied in the next chapter).  Every algebraic lattice is isomorphic to
a lattice of subsets of a set which is closed under infinite intersections and
directed sums (see Theorem~\ref{thm:algebraic_structures} on
page~\pageref{thm:algebraic_structures}).  We will use this theorem later, in
order to show that some of the lattices are algebraic.

In section~\ref{sec:Galois_connections} (on
page~\pageref{sec:Galois_connections}) we introduce the notion of a Galois
connection between posets.  The most important results of this section are:
Proposition~\ref{prop:properties-of-adjunction} (on
page~\pageref{prop:properties-of-adjunction}), where we show basic, but very
useful, properties of Galois connections and the existence theorem of Galois
correspondences between complete lattices
(Theorem~\ref{thm:existence-of-adjunction} on
page~\pageref{thm:existence-of-adjunction}).  The main references for lattice
theory and Galois connections are: \cite{gb-of:represantations_of_lattices},
\cite{gg:lattice-theory}, \cite{bd-hp:introduction-to-lattices} and
\cite{sr:lattices}.

In the final section~\ref{sec:intro_hopf_algebras} we collect basic
definitions in the theory of Hopf algebras and their actions and coactions.
You will find there definitions of a \emph{coalgebra}, a \emph{bialgebra} and
a \emph{Hopf algebra}.  We also introduce the theory of \emph{comodules} in
Subsection~\ref{ssec:comodules} (on page~\pageref{ssec:comodules}).  Then we
pass to the main object of our study \emph{comodule algebras}
(Subsection~\ref{ssec:comodule_algberas} on
page~\pageref{ssec:comodule_algberas}).  Here we also recall the theory of
crossed products.  Their Galois theory we will study later on.  Module
algebras over a bialgebra or a Hopf algebras are recalled in
Subsection~\ref{ssec:module_algebras} (on
page~\pageref{ssec:module_algebras}).  We close this section with a discussion
(Subsection~\ref{ssec:finite_galois_theory_of_fields} on
page~\pageref{ssec:finite_galois_theory_of_fields}) of finite Galois field
extensions in terms of group Hopf algebras.

\section{Posets and Lattices}
\index{poset}
\index{partially ordered set|see{poset}}
\begin{definition}
    A \bold{partially ordered set}, \bold{poset} for short, is a set
    \(P\) together with an order relation \(\preceq\) which is reflexive,
    transitive and antisymmetric.

    \noindent A \bold{down-set} \(D\) of \(P\) is a subset \(D\subseteq
	P\) which satisfies: \(d'\preceq d\in D\Rightarrow d'\in D\),
    dually an \bold{upper-set} \(U\) of \(P\) is a subset \(U\subseteq
	P\) with the property \(U\ni u\preceq u'\Rightarrow u'\in U\).

    \noindent Let \((P,\preceq)\) and \((Q,\leq)\) be two posets. Then
    a map \(\phi:P\sir Q\) is called \bold{monotonic} if \(p\preceq p'\)
    implies \(\phi(p)\leq \phi(p')\) for \(p,p'\in P\).
\end{definition}
Let \(B\) be a subset of \(P\). The smallest (greatest) element \(b\) of
\(B\), if it exists, is defined as the element of \(B\) such that for any
\(b'\in B\) \(b\preceq b'\) (respectively, \(b\succeq b'\)). They are unique
by the antisymmetry of an order relation.  An element \(p\in P\) is called an
upper (lower) bound of \(B\) if for all \(b\in B\) \(p\succeq b\)
(respectively, \(p\preceq b\)). We let \(\sup B\) denote the least upper bound
(supremum for short) of \(B\) and \(\inf B\) the greatest lower bound
(infimum) of \(B\).

\begin{definition}
    Let \((P,\preceq)\) be a poset, then the \bold{opposite poset}
    \((P^\op,\preceq^\op)\) is defined by \(P^\op\coloneq P\) (as sets) and
    \(p\preceq^\op q\iff q\preceq p\) for \(p,q\in P\). We will often write
    \(P^\op\) to denote the opposite poset of a poset \(P\).

    \noindent Let \((P,\preceq)\) and \((Q,\leq)\) be two posets. A map
    \(\phi:P\sir Q\) is called \bold{antimonotonic} if \(\phi\) is a map of
    posets \(\phi:(P^\op,\preceq^\op)\sir (Q,\leq)\).
\end{definition}

\index{lattice}
A \bold{lattice} is a poset in which there exists the supremum and infimum of
any subset with two elements or equivalently of any finite nonempty subset.
A lattice can also be defined as an algebraic structure which has two binary
operations: join (an abstract supremum of two elements) denoted by $\vee$ and
meet (an abstract infimum of two elements) denoted by $\wedge$ which satisfy
the following equalities:
\[\begin{array}{lll} 
	a\wedge a=a              & a\vee a=a              & \text{\textsf idempotent laws}  \\
	a\vee b=b\vee a           & a\wedge b=b\wedge a           & \text{\textsf commutative laws} \\
	a\vee (b\vee c)=(a\vee b)\vee c & a\wedge (b\wedge c)=(a\wedge b)\wedge c & \text{\textsf associative laws} \\
	a\vee (a\wedge b)=a         & a\wedge (a\vee b)=a         & \text{\textsf absorption laws}
\end{array}\]
The correspondence between lattice operations and the lattice order is made
by: for a given order \(\preceq\) we define \(a\wedge b\coloneq\inf\{a,b\}\)
and \(a\vee b\coloneq\sup\{a,b\}\), while for given lattice operations
\((\wedge,\vee)\) one defines an order by \(a\preceq b\iff a=a\wedge b\iff
    b=a\vee b\). Let \((L,\wedge_L,\vee_L)\) and \((M,\wedge_M,\vee_M)\) be
two lattices. Then a map \(f:L\sir M\) is called a \bold{lattice homomorphism}
if for any \(k,l\in L\) we have \(f(k\wedge_L l)=f(k)\wedge_Mf(l)\) and
\(f(k\vee_L l)=f(k)\vee_Mf(l)\). Note that a lattice homomorphism is
necessarily an order preserving map, but the converse might not be true.
A \bold{lattice antihomomorphism} is a map \(f:L\rightarrow M\) such that
\(f\) is a homomorphism  of lattices \(L^\op\) and \(M\), i.e. for any
\(k,l\in L\) we have \(f(k\wedge_L l)=f(k)\vee_Mf(l)\) and \(f(k\vee_L
    l)=f(k)\wedge_Mf(l)\). 

\index{lattice!dual lattice}
If \((L,\vee,\wedge)\) is a lattice then its dual is \(L,\wedge,\vee\).  Note
that this definition is compatible with the definition of a dual poset.

We refer the reader to~\cite{gg:lattice-theory} for the theory of lattices.
\index{lattice!complete}
\begin{definition}
	A lattice $(L,\vee,\wedge)$ is \bold{complete} if for every non empty
	set $B\subseteq L$ there exists $\sup B$ and $\inf B$.
\end{definition}
\begin{remark}\label{rem:complete_lattice}
    In a lattice \(L\) there exists arbitrary infima (suprema) if and only if
    there are arbitrary suprema (infima) of non empty subsets. Clearly, if a poset is closed under
    arbitrary infima (or suprema) then it is a complete lattice, since the
    following formulas hold 
    \begin{alignat*}{2}
	l\vee k & =\bigwedge\{l'\in L:l'\geq l,\,l'\geq k\} & \quad & \text{if
    }L\text{ is closed under infima,}\\
	l\wedge k & =\bigvee\{l'\in L:l'\leq l,\,l'\leq k\} &       & \text{if }L\text{ is closed under suprema}
    \end{alignat*}
    for \(l,k\in L\).  We use the lattice notation for infima and suprema:
    \(\wedge S\) denotes infimum of a subset \(S\subseteq L\) and
    \(\bigwedge S\) its supremum.
\end{remark}
\begin{definition}
    \begin{enumerate}
	\index{lower semilattice}
	\index{filter}
	\item A poset \(P\) which has finite infima is called a \bold{lower
	      semilattice}.  A \bold{filter} of a lower semilattice \(P\) is
	  an upper-set which is closed under finite infima.  The set of
	  filters will be denoted by \(\mathcal{F}(P)\).
	\index{upper semilattice}
	\index{ideal}
	\item A poset \(P\) which has finite suprema is called an \bold{upper
	      semilattice}.  An \bold{ideal} of an upper semilattice \(P\) is
	  a down-set which is closed under finite suprema.  The set of ideals
	  will be denoted by \(\mathcal{I}(P)\). 
    \end{enumerate}
\end{definition}
\index{sublattice}
\index{upper subsemilattice}
\index{lower subsemilattice}
\index{join-semilattice|see{upper semilattice}}
\index{meet-semilattice|see{lower semilattice}}
An upper semilattice is also called join-semilattice, and lower semilattice is
called meet-semilattice.  A \bold{sublattice} of a lattice is a subset closed
under meet and join. An \bold{upper (lower) subsemilattice} of an upper
(lower) semilattice is a subset which is closed under join (meet
respectively). 

Next we state a lemma which will be extensively used in the first part of this
work.
\begin{lemma}\label{lem:lk}
	Let $(M,\wedge_M,\vee_M)$ be a complete lattice and let there be two
	complete lattices which are upper sub-semilattices of $M$:
	$(K,\wedge_K,\vee_M)$ and $(L,\wedge_L,\vee_M)$. Let $K$ and $L$ have
	the same smallest element. Then $(K\cap L,\wedge_{K\cap L},\vee_{K\cap
	    L})$, where
	\begin{align*}
	    a\vee_{K\cap L}b   & \coloneq a\vee_M b\\
	    a\wedge_{K\cap L}b & \coloneq\bigvee_M\{c\in K\cap L:\ c\leq a\wedge_M b\}
	\end{align*}
	is a \bold{complete lattice}.
\end{lemma}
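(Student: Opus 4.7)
The plan is to reduce everything to Remark~\ref{rem:complete_lattice}: once I have shown that $K\cap L$, viewed as a sub-poset of $M$, contains the common smallest element and is closed under taking arbitrary $\vee_M$-suprema of its subsets, that remark immediately hands back a complete-lattice structure on $K\cap L$ whose meet is given by the dual formula. The binary join will then automatically coincide with $\vee_M$ as required, and the stated meet formula will follow from the elementary observation that, in $M$, the conditions $c\le a$ and $c\le b$ together are equivalent to $c\le a\wedge_M b$.

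First I would fix the reading of the hypothesis. The notation $(K,\wedge_K,\vee_M)$ together with the assertion that $K$ is a complete lattice is to be taken to mean that $\vee_M$ is the join operation of the complete lattice $K$ for every arity, i.e.\ that $K$ is closed in $M$ under arbitrary suprema $\bigvee_M$; the same applies to $L$. Granted this, if $S\subseteq K\cap L$, then $S\subseteq K$ forces $\bigvee_M S\in K$ and $S\subseteq L$ forces $\bigvee_M S\in L$, so $\bigvee_M S\in K\cap L$. The shared smallest element lies in $K\cap L$ and plays the role of $\bigvee_M\emptyset$.

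With arbitrary suprema thus available, Remark~\ref{rem:complete_lattice} promotes $K\cap L$ to a complete lattice and yields
\[
    a\wedge_{K\cap L}b=\bigvee_M\{c\in K\cap L:\,c\le a,\,c\le b\}=\bigvee_M\{c\in K\cap L:\,c\le a\wedge_M b\},
\]
which is the expression in the statement, while the binary join is $\bigvee_M\{a,b\}=a\vee_M b$. The lattice identities (idempotent, commutative, associative, absorption) are then inherited from $M$ in the standard way and need not be re-verified by hand.

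The step I expect to be the real obstacle is exactly the closure of $K\cap L$ under arbitrary $\vee_M$-suprema, which is entirely dependent on the reading of the hypothesis above. A complete lattice $K$ whose binary join merely \emph{coincides} with $\vee_M$ need not be closed under infinite $\vee_M$-suprema in $M$, and in that generality the right-hand side of the meet formula can leave $K\cap L$ and completeness of $K\cap L$ can even fail; so it is essential to interpret ``complete lattice which is an upper sub-semilattice of $M$, with join $\vee_M$'' as a complete upper sub-semilattice of $M$. Once this is granted, nothing beyond elementary poset bookkeeping is required.
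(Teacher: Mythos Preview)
Your proposal is correct and follows essentially the same approach as the paper: both arguments hinge on observing that $K\cap L$ contains the common smallest element and is closed under arbitrary $\bigvee_M$-suprema (using that $K$ and $L$ are complete upper sub-semilattices of $M$), and then recover the meet via the dual formula. The only cosmetic difference is that the paper first checks the binary operations are well defined and afterwards proves completeness, whereas you go straight to closure under arbitrary suprema and invoke Remark~\ref{rem:complete_lattice}; your reading of the hypothesis is exactly the one the paper uses implicitly.
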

\begin{proof}
The join \(\wedge_{K\cap L}\) is well defined, because $K$ and $L$ are upper
subsemilattices of the lattice $M$.  Now let us prove that the meet is well
defined as well.  The lattices $K$ and $L$ have the same smallest element, so
the set \mbox{$\{c\in K\cap L:\ c\leq a\ and\ c\leq b\}$}, for any
\mbox{$a,b\in K\cap L$}, is non-empty. The supremum \mbox{$\bigvee_M\{c\in
	K\cap L:\ c\leq a\ and\ c\leq b\}$} exists and belongs to $K\cap L$,
because both $K$ and $L$ are complete. The axioms of lattice operations are
trivially satisfied.

It remains to show that the lattice $(K\cap L,\wedge_{K\cap L},\vee_{K\cap L})$
is complete. Let $B\subseteq K\cap L$. Then
\[\sup_{K\cap L}B=\sup_MB\quad \mathrm{and}\quad \inf_{K\cap L}B=\sup_M\{x\in K\cap L:\forall_{b\in B}\,x\leq b\}.\] 
This infimum and this supremum exist, because $M$ is complete and they belong to
$K\cap L$ because $K$ and $L$ are complete upper sub-semilattices of $M$.
\end{proof}
A subset \mbox{$\{x\in L \,|\;a\leq x\leq b\}$} of the lattice $L$ is called
an \bold{interval} and it will be denoted by \([a,b]\).
\index{lattice!compact element}
\begin{definition}\label{defi:compact}
	An element $z$ of a lattice $L$ is called \bold{compact} if for any
	subset $S\subseteq L$ such that $z\leq\bigvee S$ there exists a finite
	subset $S_f$ of $S$ with the property $z\leq\bigvee S_f$. 
\end{definition}
\begin{example}\label{ex:vector-subspace}
    \begin{enumerate}
	\item Let $V$ be a $\k$-vector space. Then a subspace $W$ of \(V\) is
	      a compact element of the lattice of subspaces
	      $\Sub_{\textit{Vect}}(V)$ if and only if it is finite
	      dimensional.
	\item Let us consider the lattice \(\bN\cup\{\infty\}\):
	    \begin{center}
		\begin{tikzpicture}
		    \fill (0cm, 3cm) circle (2pt) node[left]{\(i_\infty\)};
		    \node at (0cm, 2.6cm) {\(\vdots\)};
		    \fill (0cm, 2cm) circle (2pt) node[left]{\(i_3\)};
		    \fill (0cm, 1cm) circle (2pt) node[left]{\(i_2\)};
		    \fill (0cm, 0cm) circle (2pt) node[left]{\(i_1\)};
		    \draw[-] (0cm,0cm) -- (0cm,2.2cm);
		    \draw[-] (0cm,2.8cm) -- (0cm, 3cm);
		\end{tikzpicture}
	    \end{center}
	    Then each element \(i_k\) for \(k<\infty\) is compact and
	    \(i_\infty\) is not compact though it is a supremum of compact
	    elements.
    \end{enumerate}

\end{example}
\index{lattice!algebraic}
\index{lattice!dually algebraic}
\index{algebraic lattice|see{lattice!algebraic}}
\index{dually algebraic lattice|see{lattice!dually algebraic}}
\begin{definition}\label{defi:alglat}
	A lattice is \bold{algebraic} if it is complete and every element is
	a supremum of compact elements.  A lattice is \bold{dually algebraic}
	if its dual is algebraic.
\end{definition}
\begin{examplebr}
    \begin{enumerate}
	\item Let \(A\) be an \(\R\)-algebra, where \(\R\) is a commutative
	    ring, then the lattices of ideals, left (right) ideals and
	    subalgebras are algebraic. 
	\item The lattice from Example~\ref{ex:vector-subspace}(ii) is
	    an algebraic lattice.
        \item The following complete lattice is not algebraic:
	    \begin{center}
		\begin{tikzpicture}
		    \fill (1cm, 2cm) circle (2pt) node[right]{\(x\)};
		    \fill (0cm, 3cm) circle (2pt) node[left]{\(i_\infty\)};
		    \node at (0cm, 2.6cm) {\(\vdots\)};
		    \fill (0cm, 2cm) circle (2pt) node[left]{\(i_2\)};
		    \fill (0cm, 1cm) circle (2pt) node[left]{\(i_1\)};
		    \draw[-] (0cm, 1cm) -- (1cm, 2cm) -- (0cm, 3cm);
		    \draw[-] (0cm,1cm) -- (0cm,2.2cm);
		    \draw[-] (0cm,2.8cm) -- (0cm, 3cm);
		\end{tikzpicture}
	    \end{center}
	    Then \(x\) is not a compact element, since
	    \(x\leq\bigvee_{k<\infty}i_k\) but for any \(k_0<\infty\)
	    \(\bigvee_{k\leq k_0}i_k=i_{k_0}\) is not comparable with \(x\).
	    The only compact elements are \(i_k\) for \(k<\infty\).
	    Furthermore, \(x\) is not a supremum of compact elements. 
    \end{enumerate}
\end{examplebr}
\index{lattice!algebraic!examples}
It is a well known theorem of Universal Algebra that lattices of subalgebras
(see~\cite[Cor.~3.3]{sb-hs:universal_algebra}) and lattices of congruences
(quotient structures, see~\cite[Thm.~5.5]{sb-hs:universal_algebra}) of any
algebraic structure are algebraic.  In particular, the lattices of sub-objects
and quotient objects of classical algebraic structures like groups,
semi-groups, rings, modules, etc.  are algebraic.  Furthermore, Birkhoff and
Frink proved that any algebraic lattice can be represented as a lattice of
subalgebras of a universal algebra
(see~\cite{gb-of:represantations_of_lattices}).
\begin{remark}\label{rem:alg}
    Let $\Sub_\textit{Vect}(V)$ be the lattice of subspaces of a finite
    dimensional $\k$-vector space $V$.  By Example~\ref{ex:vector-subspace}
    every subspace of a finite dimensional vector space is a compact element
    of $\Sub_\textit{Vect}(V)$.  Thus any sublattice of $\Sub_\textit{Vect}(V)$
    is algebraic.  It is dually algebraic as well, since we have a dual
    isomorphism:
    \[\Sub_\textit{Vect}(V)\simeq\Sub_\textit{Vect}(V^*)\]
    Thus a sublattice of $\Sub_\textit{Vect}(V)$ is anti-isomorphic to an
    algebraic sublattice of the lattice $\Sub_\textit{Vect}(V^*)$.
\end{remark}
\begin{definition}
    Let \(P\) be a poset.  Then a non-empty set \(D\) of \(P\) is
    (upwards) \bold{directed} if for every \(a,b\in D\) there exists \(c\in D\) such
    that \(c\geq a\) and \(c\geq b\).
\end{definition}
\index{\(\cap\overrightarrow{\cup}\)-structure}
\begin{definition}
    Let \(X\) be a set and let \(\mathcal{P}(X)\) denote the power set of
    \(X\).  Note that \(\mathcal{P}(X)\) is a poset with the inclusion order
    of subsets of \(X\).  A subset \(\mathcal{M}\subseteq\mathcal{P}(X)\) is
    called a \(\cap\overrightarrow{\cup}\)-\bold{structure} if \(\mathcal{M}\)
    is closed under arbitrary intersections and unions of directed sets of its
    elements.
\end{definition}
\index{directed unions}
Directed unions of subsets we will denote by \(\overrightarrow{\cup}\).  The
following theorem is an important characterisation of algebraic lattices.
\index{lattice!algebraic!\(\cap\overrightarrow{\cup}\)-structure}
\begin{theorem}[{\cite[Thm.~7.5]{sr:lattices}}]\label{thm:algebraic_structures}
    Let \(L\) be a lattice.  Then \(L\) is algebraic if and only if it is
    isomorphic as a poset to a \(\cap\overrightarrow{\cup}\)-structure.
\end{theorem}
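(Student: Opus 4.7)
The plan is to prove both directions by passing through the standard correspondence between algebraic lattices and closure systems, using the compact elements on one side and closures of finite subsets of the underlying set on the other.

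For the \emph{only if} direction, assume \(L\) is algebraic.  I would take \(X\subseteq L\) to be the set of compact elements of \(L\) and define
\[\phi:L\sir\mathcal{P}(X),\qquad \phi(l)\coloneq\{c\in X:c\leq l\}.\]
Clearly \(\phi\) is monotonic.  Since \(L\) is algebraic, \(l=\bigvee\phi(l)\), so \(\phi\) is injective and its inverse on the image is monotonic as well; hence \(\phi\) is a poset isomorphism onto its image \(\mathcal{M}\coloneq\phi(L)\).  Closure of \(\mathcal{M}\) under arbitrary intersections is immediate from \(\phi(\bigwedge_i l_i)=\bigcap_i\phi(l_i)\) (using that a compact element lies below an infimum iff it lies below each factor).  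For closure under directed unions, given a directed family \(\{l_i\}\) the containment \(\bigcup_i\phi(l_i)\subseteq\phi(\bigvee_i l_i)\) is trivial, and for the reverse, if \(c\in X\) satisfies \(c\leq\bigvee_i l_i\), compactness yields a finite \(F\) with \(c\leq\bigvee_{i\in F}l_i\), and directedness provides a single \(l_{i_0}\) dominating all \(l_i\) for \(i\in F\); thus \(c\leq l_{i_0}\) and \(c\in\phi(l_{i_0})\).

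For the \emph{if} direction, let \(\mathcal{M}\subseteq\mathcal{P}(X)\) be a \(\cap\overrightarrow{\cup}\)-structure and equip it with the inclusion order.  Since \(\mathcal{M}\) is closed under arbitrary intersections (including the empty one, giving \(X\in\mathcal{M}\)), it is a complete lattice with \(\wedge S=\bigcap S\) and \(\vee S\) equal to the closure \(\overline{\bigcup S}\coloneq\bigcap\{M\in\mathcal{M}:\bigcup S\subseteq M\}\).  For each finite \(F\subseteq X\) write \(\overline{F}\) for this closure.  I claim that every \(\overline{F}\) is compact in \(\mathcal{M}\), and that every \(M\in\mathcal{M}\) is the supremum of the \(\overline{F}\) with \(F\subseteq M\) finite.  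The second claim follows because the family \(\{\overline{F}:F\subseteq M\text{ finite}\}\) is directed (via \(F_1,F_2\leq F_1\cup F_2\)), contained in \(M\), and its union already contains every \(x\in M\) via \(\overline{\{x\}}\); by the \(\overrightarrow{\cup}\)-property this directed union lies in \(\mathcal{M}\), hence equals \(M\).

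The heart of the argument, and what I expect to be the main technical step, is the compactness of \(\overline{F}\).  Given \(S\subseteq\mathcal{M}\), consider the family \(\mathcal{T}=\{\vee T:T\subseteq S\text{ finite}\}\), which is directed in \(\mathcal{M}\); by closure under directed unions, \(\bigcup\mathcal{T}\in\mathcal{M}\), and a direct verification shows it is precisely \(\vee S\).  Therefore, if \(\overline{F}\subseteq\vee S=\bigcup\mathcal{T}\), each of the finitely many elements of \(F\) lies in some \(\vee T_x\) with \(T_x\subseteq S\) finite; setting \(T_0=\bigcup_{x\in F}T_x\) (still finite), we get \(F\subseteq \vee T_0\) and hence \(\overline{F}\subseteq\vee T_0\), proving compactness.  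Combining the two claims, \(\mathcal{M}\) is algebraic, and transporting this along the assumed poset isomorphism shows \(L\) is algebraic as well.
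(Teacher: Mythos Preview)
Your proof is correct and follows the same strategy as the paper's: closures of finite subsets give the compact elements of a \(\cap\overrightarrow{\cup}\)-structure, and conversely \(L\) embeds into \(\mathcal{P}(\mathcal{K}(L))\) via \(l\mapsto\{c\text{ compact}:c\leq l\}\). The only cosmetic difference is that the paper explicitly identifies the image of this embedding as the ideal lattice \(\mathcal{I}(\mathcal{K}(L))\) of the join-semilattice of compact elements, whereas you verify the \(\cap\overrightarrow{\cup}\)-closure properties directly on the image; your compactness argument for \(\overline{F}\) against arbitrary (not just directed) families is also spelled out more carefully than in the paper.
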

The proof is after~\cite[Thm.~7.5]{sr:lattices}.  We put it here since
it gives an important point of view on the structure of algebraic
lattices.
\begin{proof} 
    First let us observe that a \(\cap\overrightarrow{\cup}\)-structure
    \(\mathcal{M}\) on a set \(X\) is an algebraic lattice.  Since
    \(\mathcal{M}\) is closed under arbitrary intersections it is a complete
    lattice with meet \(\cap\) and join \(\cup\).  Now, let \(S\subseteq X\)
    then we let \(\langle S\rangle\coloneq\bigcap\{Y\in
	    \mathcal{M}:\,S\subseteq Y\}\).  We first show that \(\langle
	S\rangle\) is a compact element of \(\mathcal{M}\) whenever \(S\) is
    finite.  If \(\langle
	S\rangle\subseteq\overrightarrow{\bigcup}\{M_i:\,M_i\in\mathcal{M},i\in
	    I\}\), for a finite set \(S\), then for some \(i\in I\) we have
    \(S\subseteq M_i\) and thus \(\langle S\rangle\subseteq M_i\).  Conversely,
    let us assume that \(K\in\mathcal{M}\) is a compact element.  The family
    \(\{\langle S\rangle:\,S\in\mathcal{M},\,S\subseteq K,\,S\text{ is
		finite}\}\) is a directed set, since for \(\langle
	S_i\rangle\) (\(i=1,2\)) we have \(\langle S_i\rangle\subseteq\langle
	S_1\cup S_2\rangle\subseteq K\) for \(i=1,2\). Now, we have
    \[K\subseteq\overrightarrow{\bigcup}\{\langle S\rangle:\,S\subseteq K,S\text{ is finite}\}\]
    And thus there exists a finite set \(S\) such that \(K\subseteq\langle
	S\rangle\). The other inclusion is clear, since \(S\subseteq K\). Thus
    we obtain \(K=\langle S\rangle\) for some finite set \(S\).
    
    Now, consider the set \(\mathcal{K}(L)\) of all compact elements of \(L\).
    Then \(\mathcal{K}(L)\) is an upper semilattice which inherits joins from
    the lattice \(L\) (i.e.  supremum in \(L\) of compact elements is
    a compact element). Thus we can consider the set of ideals
    \(\mathcal{I}(\mathcal{K}(L))\) of \(\mathcal{K}(L)\), which is an
    \(\cap\overrightarrow{\cup}\)-structure. Let \((J_i)_{i\in I}\) be
    a family of ideals then the intersection \(\bigcap_{i\in I}J_i\) is an
    ideal: for if \(l\in\bigcap_{i\in I}J_i\) then for every \(i\in I\) \(l\in
	J_i\) and thus if \(l'\leq l\) then \(l'\in J_i\) for every \(i\in
	I\), and thus \(l'\in\bigcap_{i\in I}J_i\). This shows that
    \(\bigcap_{i\in I}J_i\) is a down set.  If \(l\) and \(l'\) belong to
    \(\bigcap_{i\in I}J_i\) then for every \(i\in I\) we have \(l,l'\in J_i\)
    and thus \(l\vee l'\in J_i\) for all \(i\in I\), hence \(l\vee
	l'\in\bigcap_{i\in I}J_i\).  This proves that the set of ideals is
    closed under all intersections.  Now, let \((J_i)_{i\in I}\) be a directed
    system of ideals of \(\mathcal{K}(L)\).  We show that \(\bigcup_{i\in
	    I}J_i\) is an ideal.  For this let \(l\in{\bigcup}_{i\in I}J_i\)
    then there exists \(i\) such that \(l\in J_i\). It follows that
    \({\bigcup}_{i\in I}J_i\) is a down set, since each \(J_i\) is.  Let
    \(l,l'\in{\bigcup}_{i\in I}J_i\). Then there exist \(i,i'\in I\) such that
    \(l\in J_i\) and \(l'\in J_{i'}\). Furthermore, there exists \(i''\in I\)
    such that \(J_i\cup J_{i'}\subseteq J_{i''}\), and thus \(l,l'\in
	J_{i''}\).  In consequence, \(l\vee l'\in
	J_{i''}\subseteq{\bigcup}_{i\in I}J_i\).  Hence \({\bigcup}_{i\in
	    I}J_i\) is an ideal. This shows that the set of ideals of
    \(\mathcal{K}(L)\) is indeed a \(\cap\overrightarrow{\cup}\)-structure. 

    Note that \(\emptyset\in\mathcal{K}(L)\) is the least element of
    \(\mathcal{K}(L)\).  We have a map: \(L\ni
	a\selmap{}\{k\in\mathcal{K}(L):\ k\leq
	    a\}\in\mathcal{I}(\mathcal{K}(L))\). It is clear that this map
    preserves the order. Let us take \(a,a'\in L\), such that
    \(\{k\in\mathcal{K}(L):\ k\leq a\}=\{k\in\mathcal{K}(L):\ k\leq a'\}\).
    Then we get
    \[a=\sup\{k\in\mathcal{K}(L):\ k\leq a\}=\sup\{k\in\mathcal{K}(L):\ k\leq
	    a'\}=a'\]
    where the first and last equalities follow since \(L\) is an algebraic
    lattice. It remains to show that the considered map is an epimorphism. For
    this let \(I\in\mathcal{I}(\mathcal{K}(L))\). Now let \(a\coloneq
	\bigvee I\). Then
    \(I\subseteq\{k\in\mathcal{K}(L):\,k\leq a\}\).
    Now, if \(x\) belongs to the right hand side, then since
    \(x\leq\bigvee I\), since \(x\) is compact, there exists
    \(i\in I\) such that \(x\subseteq i\), and thus \(x\in I\) (since \(I\) is
    an ideal). Since the constructed map is a poset isomorphism it is also
    a lattice isomorphism. This ends the proof.
\end{proof}
The following remark was proven at the beginning of the previous theorem.
\index{lattice!compact element}
\begin{remark}\label{rem:compact_elements}
    Let us stress that the compact elements of
    a \(\cap\overrightarrow{\cup}\)-structure \(X\) are exactly the elements
    of the form \(\langle S\rangle\) where \(S\subseteq X\) is a finite
    subset.
\end{remark}

\section{Galois connections}\label{sec:Galois_connections}

We start with a definition:
\index{Galois connection}
\index{Galois connection!closed element}
\begin{definition}[Galois connection]\label{defi:Galois-connection}
	Let \((P,\preceq)\) and \((Q,\leq)\) be two partially ordered sets.
	Antimonotonic morphisms of posets \(\phi:P\mpr{}Q\) and
	\(\psi:Q\mpr{}P\) establish a \bold{Galois connection} if 
	\begin{equation}\label{eq:galprop}
		\mathop{\forall}\limits_{p\in P\,q\in Q}\ p\preceq \psi\circ \phi(p)\
		and\ q\leq \phi\circ \psi(q)
	\end{equation} 
	We refer to this property as the \bold{Galois property}. An element of
	\(P\) (or \(Q\)) will be called \bold{closed} if it is fixed by
	\(\psi\phi\) (\(\phi\psi\) respectively).  The sets of closed elements
	of \(P\) and \(Q\) will be denoted by \(\ov P\) and \(\ov Q\)
	respectively.  A standard notation for a Galois connection is 
	\[P\galois{\phi}{\psi}Q\] 
	Another name which appear in the literature for this notion is
	\bold{Galois correspondence}, which we use interchangeably.
\end{definition}
Note that the Galois property~\eqref{eq:galprop} is equivalent to:
\begin{equation}\label{eq:adjointness}
    \phi(p)\geq q\iff p\preceq\psi(q)
\end{equation}
for any \(p\in P\) and \(q\in Q\).  Categorically speaking, Galois connections
are the same as contravariant adjunctions between posets, which can be
understood as categories in a straightforward manner.  Note that in the
definition of a Galois connection \(\phi\) and \(\psi\) are not assumed to be
lattice antihomomorphisms.
\index{Galois connection!properties}
\begin{proposition}\label{prop:properties-of-adjunction}
	Let \((\phi,\psi)\) be a Galois connection between posets
	\((P,\preceq)\) and \((Q,\leq)\). Then:
	\begin{enumerate}
		\item $\ov P=\psi(Q)$ and $\ov Q=\phi(P)$,
		\item The restrictions $\phi|_{\ov P}$ and $\psi|_{\ov Q}$ are
		      \textsf{inverse bijections} of $\ov P$ and $\ov Q$ and
		      \(\ov P\) and \(\ov Q\) are the largest subsets such
		      that \(\phi\) and \(\psi\) restricts to inverse
		      bijections.
		\item The map \(\psi\) is \textsf{unique} in the sense that if
		      \((\phi,\psi)\) and \((\phi,\psi')\) form Galois
		      connections then necessarily \(\psi=\psi'\). In
		      a similar way \(\phi\) is \textsf{unique}. 
		\item The map \(\phi\) is \textsf{mono} (\textsf{onto}) if and
		      only if the map \(\psi\) is \textsf{onto}
		      (\textsf{mono}). 
		\item If one of the two maps \(\phi,\psi\) is an
		      \textsf{isomorphism} then the second is its
		      \textsf{inverse}.
		\item The map \(\phi\) is an injection if and only if
		      \(P=\overline P\).
	\end{enumerate}
\end{proposition}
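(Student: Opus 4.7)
My plan is to prove (i) first, since everything else follows quickly from it together with the two identities $\phi\psi\phi=\phi$ and $\psi\phi\psi=\psi$. For (i), the inclusion $\overline P\subseteq \psi(Q)$ is tautological: if $p=\psi\phi(p)$ then $p\in\psi(Q)$. For the reverse inclusion, take $p=\psi(q)$; the Galois property gives $q\leq\phi\psi(q)=\phi(p)$, and applying the antimonotonic $\psi$ yields $\psi\phi(p)\preceq\psi(q)=p$, while the other half of the Galois property gives $p\preceq\psi\phi(p)$. Antisymmetry closes the argument, and the dual computation handles $\overline Q=\phi(P)$.

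Along the way I would record the identities $\phi\psi\phi=\phi$ and $\psi\phi\psi=\psi$, which are obtained by the same sort of squeeze: from $p\preceq\psi\phi(p)$ apply the antimonotonic $\phi$ to get $\phi\psi\phi(p)\leq\phi(p)$, and from the Galois property applied to $\phi(p)\in Q$ get $\phi(p)\leq\phi\psi\phi(p)$. These identities show in particular that $\phi$ sends $\overline P$ into $\overline Q$ and $\psi$ sends $\overline Q$ into $\overline P$, and on these subsets they compose to the identity, proving the bijection statement of (ii). Maximality in (ii) is then cheap: if $\phi,\psi$ restrict to inverse bijections on $P'\subseteq P$ and $Q'\subseteq Q$, then every $p\in P'$ equals $\psi(\phi(p))$, hence lies in $\psi(Q)=\overline P$ by (i).

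For (iii) I would use the adjointness reformulation \eqref{eq:adjointness}: if $(\phi,\psi)$ and $(\phi,\psi')$ are Galois connections, then $p\preceq\psi(q)\iff\phi(p)\geq q\iff p\preceq\psi'(q)$ for all $p$; taking $p=\psi(q)$ and then $p=\psi'(q)$ and using antisymmetry gives $\psi=\psi'$. Part (iv) is then a direct consequence of (i) plus the identity $\psi\phi(p)=p$ on the image of $\psi$: if $\psi$ is onto then $P=\overline P$ and $\phi|_{\overline P}$ is injective by (ii), so $\phi$ is mono; conversely, if $\phi$ is mono, $\phi\psi\phi(p)=\phi(p)$ forces $\psi\phi(p)=p$, so $p\in\psi(Q)$ and $\psi$ is onto. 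The symmetric statement follows by reversing the roles. Part (v) is immediate: if $\phi$ is an isomorphism, (iv) gives $\psi$ both mono and onto, and by (ii) they are mutual inverses on the closed elements, which now exhaust $P$ and $Q$; uniqueness of inverses then forces $\psi=\phi^{-1}$. Finally, (vi) is essentially the mono half of (iv) read sharply: $\phi$ injective is equivalent, by the same cancellation $\phi\psi\phi(p)=\phi(p)\Rightarrow\psi\phi(p)=p$, to every $p$ being closed.

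The only place that requires genuine care is handling the order reversals consistently when applying antimonotonicity, so I will be explicit about which inequality in $P$ becomes which inequality in $Q$ at each step. There is no deeper obstacle: once (i) and the two idempotency identities are in hand, the remaining parts are essentially bookkeeping using \eqref{eq:galprop} and \eqref{eq:adjointness}.
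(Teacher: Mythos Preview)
Your proposal is correct and follows essentially the same route as the paper: establish (i) via the squeeze $\psi\phi\psi(q)=\psi(q)$, record the idempotency identities $\phi\psi\phi=\phi$ and $\psi\phi\psi=\psi$, and derive (ii)--(vi) from these. The only cosmetic difference is in (iii), where you invoke the biconditional~\eqref{eq:adjointness} directly while the paper chains the inequalities $\psi\geq\psi\phi\psi'\geq\psi'$; both arguments are one line.
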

\begin{proof}
    \begin{enumerate}
	\item It is clear that \(\overline{P}\subseteq\psi(Q)\), now let
	      \(p=\psi(q)\) for \(q\in Q\). Then
	      \(\psi\phi(p)=\psi\phi\psi(q)\preceq\psi(q)=p\), since
	      \(\phi\psi(q)\geq q\), furthermore
	      \(\psi\phi(p)=\psi\phi\psi(q)\succeq\psi(q)=p\) by the Galois
	      property~\eqref{eq:galprop} when applied to \(\psi(q)\). The
	      other equality \(\overline{Q}=\phi(P)\) follows in the same way.
	\item We showed that \(\psi\phi\psi(q)=\psi(q)\); a similar argument
	      shows that the following equality holds
	      \(\phi\psi\phi(p)=\phi(p)\).  This together with~(i)
	      proves~(ii).
	\item Let both \((\phi,\psi)\) and \((\phi,\widetilde\psi)\) be Galois
	      connections. We have \(\psi\geq\psi\phi\widetilde\psi\), since
	      \(\id_Q\leq\phi\widetilde\psi\) and
	      \(\psi\phi\widetilde\psi\geq\widetilde\psi\) since
	      \(\psi\phi\succeq\id_P\). Thus \(\psi\geq\widetilde\psi\).
	      Changing the roles of \(\psi\) and \(\widetilde\psi\) we will
	      get \(\widetilde\psi\geq\psi\) and thus indeed
	      \(\psi=\widetilde\psi\).
	\item[\textit{(iv,v)}] Let us assume that \(\phi\) is
	     a monomorphism. Then \(\psi\phi=\id_P\) since we have
	     \(\phi\psi\phi=\phi\). Thus \(\psi\) is a split epimorphism.  If
	     \(\phi\) is an epimorphism, then it follows that
	     \(\phi\psi=\id_Q\) and thus \(\psi\) is a split monomorphism.
	     This proves both~(iv) and~(v).
	\item[\textit{(vi)}]  The last assertion is a consequence of~(iv).
    \end{enumerate}
\end{proof}
\begin{lemma}\label{lem:continuity}
    Let \((\phi,\psi)\) be a Galois connection between two posets
    \((P,\preceq)\) and \((Q,\leq)\).  Then \(\phi\) and \(\psi\) reflect all
    existing suprema into infima.
\end{lemma}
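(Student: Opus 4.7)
The plan is to exploit the adjointness reformulation \eqref{eq:adjointness} of the Galois property, after which the argument becomes the standard verification that contravariant adjoints between posets turn suprema into infima. By the symmetry of the definition it suffices to treat $\phi$; the statement for $\psi$ follows by interchanging the roles of $(P,\preceq)$ and $(Q,\leq)$.

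Fix $S\subseteq P$ and suppose $\sup S$ exists in $P$. I will show that $\phi(\sup S)=\inf\phi(S)$ in $Q$, which in particular gives the existence of the infimum on the right. First, since $s\preceq \sup S$ for every $s\in S$ and $\phi$ is antimonotonic, one obtains $\phi(\sup S)\leq \phi(s)$ for every $s\in S$, so $\phi(\sup S)$ is a lower bound of $\phi(S)$ in $Q$.

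To see it is the greatest lower bound, let $q\in Q$ be any element with $q\leq\phi(s)$ for all $s\in S$. By \eqref{eq:adjointness} this is equivalent to $s\preceq\psi(q)$ for all $s\in S$, so $\psi(q)$ is an upper bound for $S$; by the defining property of a supremum this forces $\sup S\preceq\psi(q)$. Invoking \eqref{eq:adjointness} a second time, now in the reverse direction, yields $q\leq\phi(\sup S)$. Hence $\phi(\sup S)$ dominates every lower bound of $\phi(S)$, which is the definition of $\inf\phi(S)$.

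There is no real obstacle here: the only subtle point is that the existence of $\inf\phi(S)$ is not assumed a priori, but is produced by the argument itself as the element $\phi(\sup S)$, which is precisely the content of the phrase \emph{reflects suprema into infima}. The dual statement for $\psi$ is verified by the same calculation with $\phi$ and $\psi$ interchanged, using that \eqref{eq:adjointness} is symmetric in its two factors.
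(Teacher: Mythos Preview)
Your proof is correct and follows essentially the same route as the paper: both verify that $\phi(\sup S)$ is a lower bound of $\phi(S)$ by antimonotonicity, then use the adjointness equivalence~\eqref{eq:adjointness} twice to show it is the greatest lower bound. The only difference is cosmetic—you spell out the symmetry argument for $\psi$ and remark that existence of the infimum is established rather than assumed, both of which the paper leaves implicit.
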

\begin{proof}
    Let $p_i\in P$ be such that $\bigvee p_i$ exists. Then \(\phi(\bigvee
	p_i)\) is a lower bound of \(\{\phi(p_i):i\in I\}\). Furthermore, if
    \(b\) is a lower bound of \(\{\phi(p_i):i\in I\}\), i.e. \(\phi(p_i)\geq
	b\) for all \(i\in I\), then by~\eqref{eq:adjointness}
    \(p_i\preceq\psi(b)\) and thus \(\bigvee p_i\preceq\psi(b)\). It follows
    that \(\phi(\bigvee p_i)\geq b\), and so \(\phi(\bigvee p_i)=\bigvee
	\phi(p_i)\).
\end{proof}
\begin{corollary}
    Let \((\phi,\psi)\) be a Galois connection between two complete lattices
    \((P,\succeq)\) and \((Q,\geq)\).  Then the posets of closed elements are
    also complete.
\end{corollary}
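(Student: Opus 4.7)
The plan is to show that the set $\overline{P}$ of closed elements in $P$ is closed under arbitrary infima computed in $P$; by Remark~\ref{rem:complete_lattice} this is already enough to conclude that $\overline{P}$ is a complete lattice. The conclusion for $\overline{Q}$ will then follow for free from Proposition~\ref{prop:properties-of-adjunction}(ii), which supplies an order-reversing bijection between $\overline{P}$ and $\overline{Q}$; a poset antiisomorphic to a complete lattice is complete.

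First, I fix an arbitrary subset $S\subseteq \overline{P}$ and set $p\coloneq \inf_P S$, which exists since $P$ is complete. I claim $p\in \overline{P}$, i.e.\ that $\psi\phi(p)=p$. The inequality $p\preceq \psi\phi(p)$ is just the Galois property~\eqref{eq:galprop}. For the reverse inequality, for each $s\in S$ we have $p\preceq s$, so antimonotonicity of $\phi$ yields $\phi(p)\geq \phi(s)$, and antimonotonicity of $\psi$ gives $\psi\phi(p)\preceq \psi\phi(s)=s$, where the last equality uses $s\in \overline{P}$. Taking the infimum over $s\in S$ produces $\psi\phi(p)\preceq p$, and so $p\in\overline{P}$.

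Hence $\overline{P}$ is closed under arbitrary infima taken in $P$. By Remark~\ref{rem:complete_lattice} this makes $(\overline{P},\wedge_P,\vee_{\overline{P}})$ a complete lattice, with join given by
\[
    \bigvee_{\overline{P}} S \;=\; \bigwedge_P\{p\in \overline{P}: p\succeq s\text{ for all }s\in S\},
\]
which equivalently may be written as $\psi\phi\!\bigl(\bigvee_P S\bigr)$ using Lemma~\ref{lem:continuity}. Finally, by Proposition~\ref{prop:properties-of-adjunction}(ii) the restrictions $\phi|_{\overline{P}}$ and $\psi|_{\overline{Q}}$ are mutually inverse antimonotonic bijections; transporting infima in $\overline{P}$ through this bijection gives suprema in $\overline{Q}$, so $\overline{Q}$ is complete as well.

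There is no real obstacle here: the only slightly delicate point is remembering that $\phi$ and $\psi$ are \emph{antimonotonic}, so closedness of the elements of $S$ is used precisely to convert the inequality $\psi\phi(p)\preceq \psi\phi(s)$ into $\psi\phi(p)\preceq s$. Everything else is a direct appeal to the results already established in this section.
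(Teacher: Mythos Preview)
Your proof is correct and follows the same overall strategy as the paper: show that $\overline{P}$ is closed under arbitrary infima computed in $P$, and conclude completeness via Remark~\ref{rem:complete_lattice}. The execution differs slightly. The paper writes $S=\psi(W)$ for some $W\subseteq Q$ (using Proposition~\ref{prop:properties-of-adjunction}(i)) and then invokes Lemma~\ref{lem:continuity} in one stroke: $\inf_P S=\inf_P\psi(W)=\psi(\sup_Q W)\in\psi(Q)=\overline{P}$. You instead verify $\psi\phi(\inf_P S)=\inf_P S$ directly from antimonotonicity and the closedness of the elements of $S$, which is a bit more hands-on but avoids appealing to Lemma~\ref{lem:continuity} for the main step. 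Your additional remark on transporting completeness to $\overline{Q}$ via the bijection of Proposition~\ref{prop:properties-of-adjunction}(ii) is a nice touch the paper leaves implicit.
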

\begin{proof}
    It is enough to show that \(\overline{P}\) is closed under arbitrary
    infima. Let \(S\subseteq \overline{P}\), then \(S=\psi(W)\) for some
    \(W\subseteq Q\). Now we have: \(\inf_P S=\inf_P\psi(W)=\psi(\sup_Q
	W)\in\overline{P}\).
\end{proof}
Below we construct an example of a Galois connection in which infima are not
preserved, even if one of the maps is an antihomomorphism of lattices.
\index{Galois connection!example}
\begin{example}
    Let us consider the following two lattices:
    \begin{center}
	\begin{tikzpicture}
	    \fill (0cm, 1cm) circle (2pt) node[above]{1};
	    \fill (0cm, 0cm) circle (2pt) node[left]{x};
	    \fill (0cm, -1cm) circle (2pt) node[below]{0};
	    \draw[-] (0cm,1cm) -- (0cm,-1cm);
	\end{tikzpicture}
	\hspace{2cm}
	\begin{tikzpicture}
	    \fill (0cm, 1cm) circle (2pt) node[above]{1};
	    \fill (-1cm, 0cm) circle (2pt) node[left]{a};
	    \fill (1cm, 0cm) circle (2pt) node[right]{b};
	    \fill (0cm, -1cm) circle (2pt) node[below]{0};
	    \draw[-] (0cm,1cm) -- (-1cm, 0cm) -- (0cm,-1cm);
	    \draw[-] (0cm,1cm) -- (1cm, 0cm) -- (0cm,-1cm);
	\end{tikzpicture}
    \end{center}
    Then the maps \(\phi(x)=a\), \(\phi(1)=0\), \(\phi(0)=1\), and
    \(\psi(a)=x\), \(\psi(b)=0\), \(\psi(1)=0\), \(\psi(0)=1\)
    define a Galois correspondence. The closed elements are \(\{1,x,0\}\) and
    \(\{1,a,0\}\). Furthermore \(\phi\) is an antihomomorphism of lattices
    (reflects suprema (infima) into infima (suprema)), but \(\psi(a\wedge
	b)=\psi(0)=1\), while \(\psi(a)\vee\psi(b)=x\vee 0=x\).
\end{example}
\index{Galois connection!existence}
\begin{theorem}\label{thm:existence-of-adjunction}
	Let \(P\) and \(Q\) be two posets. Let \(\phi:P\sir Q\) be an
	anti-monotonic map of posets. If \(P\) is complete then there
	\textsf{exists a Galois connection} \((\phi,\psi)\) if and only if
	\textsf{\(\phi\) reflects all suprema into infima}.
\end{theorem}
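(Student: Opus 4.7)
The plan is to handle the two directions separately. The forward implication requires no new work: if $(\phi,\psi)$ is a Galois connection, Lemma~\ref{lem:continuity} immediately gives that $\phi$ reflects any existing supremum in $P$ into an infimum in $Q$.

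For the converse, the definition of $\psi$ is essentially forced. Indeed, by the adjointness reformulation~\eqref{eq:adjointness} together with uniqueness (Proposition~\ref{prop:properties-of-adjunction}\textit{(iii)}), we must have $p\preceq\psi(q)\iff\phi(p)\geq q$, so $\psi(q)$ has to be the largest $p\in P$ with $\phi(p)\geq q$. I therefore propose
\[\psi(q)\coloneq\sup\{p\in P : \phi(p)\geq q\},\]
which is well-defined because $P$ is complete. Antimonotonicity of $\psi$ is immediate: $q\leq q'$ yields the reverse inclusion of the defining sets, whence $\psi(q')\preceq\psi(q)$.

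What remains is to verify the Galois property~\eqref{eq:galprop}. The inequality $p\preceq\psi\phi(p)$ is automatic, since $p$ itself lies in $\{p'\in P : \phi(p')\geq\phi(p)\}$ and is therefore bounded above by the supremum of that set. The opposite inequality $q\leq\phi\psi(q)$ is the decisive step, and it is precisely here that the reflection hypothesis is used: setting $S_q\coloneq\{p\in P : \phi(p)\geq q\}$, the hypothesis gives
\[\phi(\psi(q)) = \phi(\sup S_q) = \inf\phi(S_q) \geq q,\]
the last inequality holding because every element of $\phi(S_q)$ dominates $q$ by construction. I expect this single computation to be the heart of the argument; the rest of the verification is routine bookkeeping.

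The one subtlety to watch is the edge case $S_q=\emptyset$: here $\sup S_q$ is the bottom element of $P$ (which exists by completeness), and the reflection hypothesis, read as covering the empty family, ensures that its $\phi$-image is the top of $Q$, so the Galois property persists at this extreme as well.
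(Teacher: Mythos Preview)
Your proposal is correct and follows essentially the same route as the paper: both invoke Lemma~\ref{lem:continuity} for the forward direction and, for the converse, define $\psi(q)=\sup\{p\in P:\phi(p)\geq q\}$ using completeness of $P$. The only cosmetic difference is that the paper verifies the adjointness characterisation~\eqref{eq:adjointness} directly, whereas you check the two Galois inequalities~\eqref{eq:galprop} separately and spell out more explicitly where the reflection hypothesis enters (in the computation $\phi(\sup S_q)=\inf\phi(S_q)\geq q$); your treatment of the empty edge case is a nice addition that the paper leaves implicit.
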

\begin{proof}
	After Lemma~\ref{lem:continuity}, it remains to prove that if $\phi$
	reflects infinite suprema and $P$ is complete then $\phi$ has a right
	adjoint. 
	
	Let $\phi$ be an antimonotonic map which reflects infinite suprema.
	Then $\psi$ can be defined by the formula:
	\begin{equation}\label{eq:right-adjoint}
	    \psi(q)=\bigvee\{p\in P:\phi(p)\geq q\}\quad\forall_{q\in Q}.
	\end{equation}
	One then easily checks that:
	\[\phi(p)\geq q\iff p\preceq\bigvee\{\tilde p\in P:\phi(\tilde p)\geq q\}:=\psi(q)\]
\end{proof}

\section{Algebras, coalgebras and Hopf algebras}\label{sec:intro_hopf_algebras}
The unadorned tensor product \(\otimes\) will denote the tensor product of
modules over a commutative base ring~\(\R\).
\subsection{Algebras}
\index{algebra}
\begin{definition}
    An \bold{algebra} \(A\) over a ring \(\R\) is an \(\R\)-module together
    with a \(\R\)-bilinear associative multiplication: \(m_A:A\times A\sir
	A\).  We consider all algebras to be unital, that is there exists
    a unit element of \(A\), denoted by \(1_A\), such that \(1_A\cdot
	a=a=a\cdot 1_A\) for all \(a\in A\), where \(\cdot\) is the usual
    notation for a multiplication \(m_A\). Furthermore, the homomorphism
    \(1:R\sir A,r\selmap{} r1_A\) factors through the center of \(A\).  The
    associativity and unitality conditions can be expressed diagrammatically
    by imposing that the following diagrams commute:  
    \begin{center}
	\begin{tikzpicture}
	    \matrix[column sep=1cm, row sep=1cm]{
		\node (A1) {\(A\otimes A\otimes A\)}; & \node (A2) {\(A\otimes A\)};\\
		\node (B1) {\(A\otimes A\)};    & \node (B2) {\(A\)};\\
	    };
	    \draw[->] (A1) --node[above]{\(m_A\otimes\id_A\)} (A2);
	    \draw[->] (A1) --node[left]{\(\id_A\otimes m_A\)} (B1);
	    \draw[->] (A2) --node[right]{\(m_A\)} (B2);
	    \draw[->] (B1) --node[below]{\(m_A\)} (B2);
	\end{tikzpicture}
	\begin{tikzpicture}
	    \matrix[column sep=1cm,row sep=1cm]{
		\node (A1) {\(\R\otimes A\)}; & \node (A2) {\(A\)};     & \node (A3) {\(A\otimes \R\)}; \\
		                       & \node (B) {\(A\otimes A\)};\\
	    };
	    \draw[->] (A1) --node[above]{\(\cong\)} (A2);
	    \draw[->] (A3) --node[above]{\(\cong\)} (A2);
	    \draw[->] (A1) --node[below left]{\( 1\otimes\id_A\)} (B);
	    \draw[->] (A3) --node[below right]{\(\id_A\otimes 1\)} (B);
	    \draw[<-] (A2) --node[right]{\(m_A\)} (B);
	\end{tikzpicture}
    \end{center}
    where \(m_A:A\otimes A-\sir A\) is the \(\R\)-linear map defined by the
    \(\R\)-bilinear multiplication \(\cdot:A\times A\sir A\).

    A morphism of \(\R\)-algebras \(f:(A,m_A)\sir(B,m_B)\) is an \(\R\)-module
    homomorphism from \(f:A\sir B\) such that \(f(a\cdot a')=f(a)\cdot f(a')\).
\end{definition}
Let us note that indeed if the above diagrams commute then the image of
\(1:R\sir A\) lies in the center of \(A\), since:
\begin{equation*}
    \begin{split}
	(r1_A)\cdot a&=m_A(r1_A\otimes a)=m_A(1_A\otimes ra)\\
	&=m_A(ra\otimes 1_A)=m_A(a\otimes r1_A)=a\cdot (r1_A)
    \end{split}
\end{equation*}

For \(a,b\in A\) we will  often write \(ab\) instead of \(a\cdot b\) or
\(m_A(a\otimes b)\).
\subsection{Coalgebras}
\index{coalgebra}
\index{coalgebra!comultiplication}
\index{colagebra!counit}
\begin{definition}\label{defi:coalgebra}
    A \bold{coalgebra} \(C\) over a ring \(\R\), an \(\R\)-coalgebra for
    short, is an \(\R\)-module, together with two maps: comultiplication
    \(\Delta:C\sir C\otimes C\) and counit \(\epsilon:C\sir \R\) such that the
    following diagrams commute:
    \begin{center}
	{\hfill\begin{tikzpicture}
	    \matrix[column sep=1cm, row sep=1cm]{
		\node (A1) {\(C\)};    & \node (A2) {\(C\otimes C\)};\\
		\node (B1) {\(C\otimes C\)}; & \node (B2) {\(C\otimes C\otimes C\)};\\
	    };
	    \draw[->] (A1) --node[above]{\(\Delta\)} (A2);
	    \draw[->] (A1) --node[left]{\(\Delta\)} (B1);
	    \draw[->] (A2) --node[right]{\(\Delta\otimes\id_C\)} (B2);
	    \draw[->] (B1) --node[below]{\(\id_C\otimes\Delta\)} (B2);
	\end{tikzpicture}
	\begin{tikzpicture}
	    \matrix[column sep=1cm,row sep=1cm]{
		\node (A1) {\(\R\otimes C\)}; & \node (A2) {\(C\)}; & \node (A3) {\(C\otimes \R\)}; \\
		                    & \node (B) {\(C\otimes C\)};\\
	    };
	    \draw[<-] (A1) --node[above]{\(\cong\)} (A2);
	    \draw[<-] (A3) --node[above]{\(\cong\)} (A2);
	    \draw[<-] (A1) --node[below left]{\(\epsilon\otimes\id_C\)} (B);
	    \draw[<-] (A3) --node[below right]{\(\id_C\otimes\epsilon\)} (B);
	    \draw[->] (A2) --node[right]{\(\Delta\)} (B);
	\end{tikzpicture}
	\hfill\refstepcounter{equation}\raisebox{11mm}{\normalfont{(\theequation)}}\label{diag:counit}}
    \end{center}
    \index{Sweedler notation}
    \index{coalgebra!Sweedler notation}
    Where \(C\otimes \R\cong C\cong \R\otimes C\) are the canonical
    isomorphisms.  We use the following version of the sumless Sweedler
    notation for the coproduct: \(\Delta(c)=c_{(0)}\otimes c_{(1)}\):
    \(\Delta(c)\) is an element of the tensor product \(C\otimes C\). Thus it
    is a sum of simple tensors \(\sum_{i=1}^nc_i'\otimes c_i''\). In the
    Sweedler notation this sum is written as \(\sum_c c_{(1)}\otimes c_{(2)}\).
    We will also drop the summation sign and we will simply write
    \(c_{(1)}\otimes c_{(2)}\).  Note that particular simple tensors or even
    elements of \(C\) which appears in the sum are not unique. 

    A morphism \(f:C\sir D\) of coalgebras \((C,\Delta_C,\epsilon_C)\) and
    \((D,\Delta_D,\epsilon_D)\) is a map of \(\R\)-modules such that
    \(\Delta_D\circ f = f\otimes f\circ \Delta_C\) and \(\epsilon_D\circ
	f=\epsilon_C\).
\end{definition}
\subsection{Comodules}\label{ssec:comodules}
\index{comodule}
\begin{definition}
    A (\bold{right}) \(C\)-\bold{comodule} \(M\) is an \(\R\)-module, together with
    a coaction defined by a map: \(\delta:M\sir M\otimes C\) such that:
    \begin{center}
	\begin{tikzpicture}
	    \matrix[column sep=1cm, row sep=1cm]{
		\node (A1) {\(M\)};    & \node (A2) {\(M\otimes C\)};\\
		\node (B1) {\(M\otimes C\)}; & \node (B2) {\(M\otimes C\otimes C\)};\\
	    };
	    \draw[->] (A1) --node[above]{\(\delta\)} (A2);
	    \draw[->] (A1) --node[left]{\(\delta\)} (B1);
	    \draw[->] (A2) --node[right]{\(\delta\otimes\id_C\)} (B2);
	    \draw[->] (B1) --node[below]{\(\id_M\otimes\Delta\)} (B2);
	\end{tikzpicture}
	\begin{tikzpicture}
	    \matrix[column sep=1cm,row sep=1cm]{
		\node (A1) {\(M\)}; & \node (A2) {\(M\otimes C\)}; \\
		                    & \node (B) {\(M\otimes \R\)};\\
	    };
	    \draw[->] (A1) --node[above]{\(\delta\)} (A2);
	    \draw[->] (A1) --node[below left]{\(\cong\)} (B);
	    \draw[->] (A2) --node[right]{\(\id_M\otimes\epsilon\)} (B);
	\end{tikzpicture}
    \end{center}
    A morphism \(f:M\sir N\) of right \(C\)-comodules \((M,\delta_M)\) and
    \((N,\delta_N)\) is an \(\R\)-linear homomorphism such that \(\delta_N\circ
	f=f\otimes\id_C\circ\delta_M\).

    In a similar way one defines left \(C\)-comodules and morphisms of them.

    The category of right \(C\)-comodules with the above morphism 
    will be denoted by \(\Mod^C\), the category of left \(C\)-comodules we
    will denote by \(^C\Mod\).
\end{definition}
\index{comodule!Sweedler notation}
We will use the Sweedler's notation: for a right \(C\)-comodule \(M\), \(m\in
    M\), \(\delta(m)=m_{(0)}\otimes m_{(1)}\) and for a left \(C\)-comodule
\(N\), \(n\in N\), \(\delta(n)=n_{(-1)}\otimes n_{(0)}\). 

\index{convolution}
Then \(C^*=\Hom_\R(C,\R)\) is a unital \(\R\)-algebra, with \bold{convolution}
as multiplication: \((f\ast g)(c)\coloneq f(c_{(1)})g(c_{(2)})\) for \(f,g\in
C^*\). Clearly, \(\epsilon:C\sir\R\) is the unit, since
\((f\ast\epsilon)(c)=f(c_{(1)})\epsilon(c_{(2)})=f(c_{(1)}\epsilon(c_{(2)}))=f(c)\)
and similarity \(\epsilon\ast f=f\). Associativity is a consequence of
coassociativity of the comultiplication in \(C\):
\begin{align*}
    \bigl((f\ast g)\ast h\bigr)(c) & = \bigl(f\ast g\bigr)(c_{(1)})h(c_{(2)}) \\
                   & = f({c_{(1)}}_{(1)})g({c_{(1)}}_{(2)})h(c_{(2)}) \\
                   & = f({c_{(1)}})g({c_{(2)}})h(c_{(3)}) \\
                   & = f(c_{(1)})g({c_{(2)}}_{(1)})h({c_{(2)}}_{(2)}) \\
                   & = f(c_{(1)})\bigl(g\ast h\bigr)(c_{(2)}) \\
                   & = \bigl(f\ast (g\ast h)\bigr)(c)
\end{align*}

Let \(M\) be a right \(C\)-comodule.  Then \(M\) is a left
\(C^*=\Hom_\R(C,\R)\) module, with the action \(c^*\cdot m\coloneq
    m_{(0)}c^*(m_{(1)})\) where \(m\in M\) and \(c^*\in C^*\). Associativity of
this action follows from coassociativity of the \(C\)-coaction on \(M\):
\(\delta:M\sir M\otimes C\).  Since the use of the dual space \(C^*\) we will
assume that \(C\) is a projective \(\R\)-module.

Let \(M\) be a left \(C^*\)-module and let \(\eta_M:C^*\otimes M\sir M\),
\(\eta_M(c^*\otimes m)=c^*m\) for \(c^*\in C^*\) and \(m\in M\), be its module
structure map.  We also define
\[\rho_M:M\ir\Hom_\R(C^*,M),\,\rho_M(m)(c^*)\coloneq c^*m.\]
Let \(\mathit{ev}:C\sir C^{**}\) be the map
\(\mathit{ev}(c)(c^*)\coloneq c^*(c)\), and
\[f_M:M\otimes C^{**}\ir\Hom_\R(C^*,M),\,f_M(m\otimes c^{**})(c^*)=c^{**}(c^*)m\]
where \(m\in M\), \(c^*\in C^*\) and \(c^{**}\in C^{**}\).  Finally we define:
\[\mu_M:M\otimes C\ir\Hom_\R(C^*,M),\,\mu_M(m\otimes c)(c^*)=c^*(c)m\]
for \(m\in M\), \(c\in C\) and \(c^*\in C^*\).  It follows that \(\mu_M\) is an
injective map.  This follows from projectivity of \(C\): since we can choose
a basis \(\{d_i\}\) of \(C\) and a dual basis \(\{d_i^*\}\) of \(C^*\) such
that for every \(c\in C\) \(\sum_id_i^*(c)d_i=c\).  Now let \(\mu_M(\sum_\alpha
    m_\alpha\otimes c_\alpha)=0\), hence for every \(d_j^*\) we have
\(0=\mu_M(\sum_\alpha m_\alpha\otimes c_\alpha)(d_j^*)=d_j^*(c_\alpha)m_\alpha\).
Thus:
\[0=\sum_{\alpha,j} d_j^*(c_\alpha)m_\alpha\otimes d_j = \sum_{\alpha}m_\alpha\otimes c_\alpha\]
\index{comodule!rational}
\index{rational comodule|see{comodule!rational}}
\begin{definition}
    Let \(M\) be a left \(C^*\)-module with action \(\eta_M:C^*\otimes M\sir
    M\). We call it \bold{rational} if the following inclusion holds:
    \[\rho_M(M)\subseteq\mu_M(M\otimes C)\]
    We let \(\mathsf{Rat}(_{C^*}\Mod)\) denote the full subcategory of
    \(_{C^*}\Mod\) consisting of all rational modules. 
\end{definition}
\index{comodule!rational}
\begin{remarkbr}\label{rem:rational_moduiles}
    \begin{enumerate}
	\item Let \(M\) be a left \(C^*\)-module then it is rational if and
	      only if there exist two finite families of \(m_i\in M\) and
	      \(c_i\in C\) (\(i=1,\dots,n\)) such that \(c^*m=\sum_i
		  m_ic^*(c_i)\) for every \(m\in M\) and any \(c\in C\).
	\item Furthermore, if \(M\) is a \(C^*\)-rational left module and
	      \(\{m_i\}_{i=1,\dots,n}\), \(\{c_i\}_{i=1,\dots,n}\) and
	      \(\{m_i'\}_{i=1,\dots,n}\), \(\{c_i'\}_{i=1,\dots,n}\) are two
	      such families then \(\sum_{i=1}^nm_i\otimes
		  c_i=\sum_{i=1}^nm_i'\otimes c_i'\) in \(M\otimes C\), since
	      \(\mu_M(\sum_{i=1}^nm_i\otimes c_i)=\mu_M(\sum_{i=1}^nm_i'\otimes
		  c_i')\) and \(\mu_M\) is injective.
    \end{enumerate}
\end{remarkbr}
By point (i) every \(C^*\)-module which comes from a \(C\)-comodule \(M\) is
rational. The two families \(\{m_i\}_{i=1,\dots,n}\) and
\(\{c_i\}_{i=1,\dots,n}\) of elements of \(M\) and \(C\) respectively, we get
by imposing \(\delta(m)=\sum_{i=1}^nm_i\otimes c_i\), where \(\delta\) is the
structure map of the \(C\)-comodule \(M\). It turns out that the above remark
allows to construct a \(C\)-comodule structure on a rational \(C^*\)-module.
This leads to the following result.
\begin{theorem}
    Let \(C\) be a \(\k\)-coalgebra. Then the functor which sends
    \(M\in\Mod^C\) to the corresponding rational \(C^*\)-module \(M\) is an
    isomorphism of categories
    \[\Mod^C\cong\mathsf{Rat}(_{C^*}\Mod).\]
\end{theorem}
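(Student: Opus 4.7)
The plan is to exhibit two mutually inverse functors $F:\Mod^C \to \mathsf{Rat}(_{C^*}\Mod)$ and $G:\mathsf{Rat}(_{C^*}\Mod)\to\Mod^C$, both acting as the identity on underlying $\k$-modules and $\k$-linear maps. For the forward direction, given $(M,\delta)\in\Mod^C$, set $F(M)$ to be the left $C^*$-module with action $c^*\cdot m := m_{(0)} c^*(m_{(1)})$ already discussed in the text, whose associativity was verified. Rationality of $F(M)$ is immediate from Remark \ref{rem:rational_moduiles}(i): writing $\delta(m)=\sum_{i=1}^n m_i\otimes c_i$ one gets $c^*\cdot m = \sum_i m_i c^*(c_i)$ for every $c^*$. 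On a comodule morphism $f:M\to N$, the relation $\delta_N\circ f=(f\otimes \id_C)\circ \delta_M$ instantly gives $C^*$-linearity, so $F(f):=f$ is well defined.

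In the opposite direction, let $M$ be a rational left $C^*$-module. By Remark \ref{rem:rational_moduiles}(i), for every $m\in M$ there exist finite families $\{m_i\},\{c_i\}$ with $c^*\cdot m=\sum_i m_i c^*(c_i)$ for all $c^*\in C^*$; by part (ii) the element $\sum_i m_i\otimes c_i\in M\otimes C$ depends only on $m$, since $\mu_M$ is injective (over a field $\k$, projectivity of $C$ is automatic). Define $G(M):=(M,\delta)$ with $\delta(m):=\sum_i m_i\otimes c_i$. The map $\delta$ is $\k$-linear: $\delta(m+\lambda m')$ and $\delta(m)+\lambda\delta(m')$ both implement the action by $m+\lambda m'$ in the sense of (i), hence coincide by (ii).

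Counitality is checked by $(\id_M\otimes\epsilon)\delta(m) = \sum_i m_i\epsilon(c_i) = \epsilon\cdot m = m$, using that $\epsilon$ is the unit of the convolution algebra $C^*$. Coassociativity is obtained by testing both $(\delta\otimes\id_C)\delta(m)$ and $(\id_M\otimes\Delta)\delta(m)$ against an arbitrary $c^*\otimes d^*\in C^*\otimes C^*$ via the natural map $M\otimes C\otimes C\to\Hom_\k(C^*\otimes C^*,M)$, which is injective by the same argument as for $\mu_M$. On one side one obtains $c^*\cdot(d^*\cdot m)$; on the other, using the definition of convolution, $(c^*\ast d^*)\cdot m$; these agree by the associativity of the $C^*$-action. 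Hence $\delta$ satisfies both comodule axioms. For a morphism $f:M\to N$ in $\mathsf{Rat}(_{C^*}\Mod)$, one verifies that $f$ intertwines the constructed coactions again by injectivity of $\mu_N$, reducing the equality $\delta_N(f(m))=(f\otimes\id_C)\delta_M(m)$ to the $C^*$-linearity of $f$.

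Finally, $F\circ G$ and $G\circ F$ are the identity functors essentially by construction: the coaction and the induced action encode the same data $\sum_i m_i\otimes c_i$, read once as a tensor and once as a collection of coefficients against $c^*$. Since both functors act as the identity on morphisms, the pair $(F,G)$ is an isomorphism of categories. The main obstacle is the well-definedness, linearity and coassociativity of the coaction built from a rational module; all three reduce, in a uniform way, to the injectivity of $\mu_M$ and its two-fold analogue, which in turn hinges on the projectivity of $C$ over the base --- cost-free here since the base is a field.
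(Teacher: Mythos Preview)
Your proof is correct and complete. The paper itself does not prove this theorem at all --- it merely cites \cite[Thm~2.2.5]{sd-cn-sr:hopf-alg} --- so you have in fact supplied what the paper defers to the literature. Your argument is the standard one: build the coaction on a rational module via the element guaranteed by Remark~\ref{rem:rational_moduiles}, and reduce every verification (linearity, coassociativity, functoriality on morphisms) to the injectivity of $\mu_M$ and its iterated analogue $M\otimes C\otimes C\hookrightarrow\Hom_\k(C^*\otimes C^*,M)$. That reduction is exactly how the cited textbook proceeds, so there is no genuine methodological difference to compare.
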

\begin{proof}
    See~\cite[Thm~2.2.5]{sd-cn-sr:hopf-alg}.
\end{proof}
For a corresponding statement for \(C\)-comodules over a commutative ring see
Theorem~\ref{thm:Wisbauer_category_and_comodules} on
page~\pageref{thm:Wisbauer_category_and_comodules}.

Let us note that if \(M\) is a rational \(C^*\)-module then so is any
submodule and quotient module of it. See~\cite[Prop.~2.2.6]{sd-cn-sr:hopf-alg}
for the proof of this statement.

\index{cotensor product}
\index{comodule!cotensor product}
Finally, we will need the cotensor product of two \(\R\)-comodules.  Let us
consider a right \(C\)-comodule \((M,\delta_M)\) and a left
\(C\)-comodule\((N,\delta_N)\).  Then \(M\cotensor_CN\) is the equaliser (in
the category of \(\R\)-modules) of the following diagram:
\[M\cotensor_CN\sir M\otimes N\lmprr{\delta_M\otimes\id_N}{\id_M\otimes\delta_N}M\otimes C\otimes N\]

\subsection{Bialgebras and Hopf algebras}
\index{bialgebra}
\begin{definition}
    A \bold{bialgebra} \(B\) is a tuple \((B,m,u,\Delta,\epsilon)\) such that
    \((B,m,u)\) is an associative \(\R\)-algebra with multiplication
    \(m:B\otimes B\sir B\) and unit \(u:R\sir B\), \((B,\Delta,\epsilon)\) is
    an \(R\)-coalgebra with comultiplication \(\Delta\) and counit
    \(\epsilon\). Furthermore, the following compatibility conditions has to be
    satisfied, i.e. the following diagrams commute: 
    \begin{center}
	\begin{tikzpicture}
	    \matrix[column sep=2cm,row sep=1cm]{
		\node (A1) {\(B\otimes B\)}; & \node (A2) {\(B\otimes B\otimes B\otimes B\)}; & \node (A3) {\(B\otimes B\otimes B\otimes B\)};  \\
		\node (B1) {\(B\)};    &                              & \node (B3) {\(B\otimes B\)};\\
	    };
	    \draw[->] (A1) --node[above]{\(\Delta\otimes\Delta\)} (A2);
	    \draw[->] (A2) --node[above]{\(\id_B\otimes\tau\otimes\id_B\)} (A3);
	    \draw[->] (A1) --node[left]{\(m\)} (B1);
	    \draw[->] (B1) --node[below]{\(\Delta\)} (B3);
	    \draw[->] (A3) --node[right]{\(m\otimes m\)} (B3);
	\end{tikzpicture}
    \end{center}
    where \(\tau:B^{\otimes 2}\sir B^{\otimes 2}\) is the flip:
    \(\tau(a\otimes b)=b\otimes a\), 
    \begin{center}
	\begin{tikzpicture}
	    \matrix[column sep=1cm,row sep=1cm]{
		\node (A1) {\(B\)}; & \node (A2) {\(B\otimes B\)};\\
		\node (B1) {\(\R\)}; & \node (B2) {\(\R\otimes \R\)};\\
	    };
	    \draw[->] (A1) --node[above]{\(\Delta\)} (A2);
	    \draw[->] (B1) --node[left]{\(u\)} (A1);
	    \draw[->] (B2) --node[right]{\(u\otimes u\)} (A2);
	    \draw[->] (B1) --node[below]{\(\cong\)} (B2);
	\end{tikzpicture}
	\begin{tikzpicture}
	    \matrix[column sep=1cm,row sep=1cm]{
		\node (A1) {\(B\otimes B\)}; & \node (A2) {\(B\)};\\
		\node (B1) {\(\R\otimes \R\)}; & \node (B2) {\(\R\)};\\
	    };
	    \draw[->] (A1) --node[above]{\(m\)} (A2);
	    \draw[->] (A1) --node[left]{\(\epsilon\otimes\epsilon\)} (B1);
	    \draw[->] (A2) --node[right]{\(\epsilon\)} (B2);
	    \draw[->] (B1) --node[below]{\(\cong\)} (B2);
	\end{tikzpicture}
    \end{center}
    and finally: \(\epsilon\circ u=\id_\R\).
\end{definition}
The commutative diagrams show that \(\Delta\) and \(\epsilon\) are
morphisms of \(\R\)-algebras or equivalently that \(m\) and \(u\) are
morphisms of \(R\)-coalgebras \(B\) and \(B\otimes B\).
\index{Hopf algebra}
\index{anitpode}
\begin{definition}\label{defi:Hopf_algebra}
    A \bold{Hopf algebra} \(H\) is a tuple \((H,m,u,\Delta,\epsilon,S)\)
    such that \((H,m,u,\Delta,\epsilon)\) is a bialgebra and \(S:H\sir H\) is
    the antipode for which the following diagram commutes:
    \begin{center}
	\begin{tikzpicture}
	    \node (A2) at (0,0) {\(H\)};
	    \node (B1) at (-2,-1.5) {\(H\otimes H\)}; 
	    \node (B2) at (0,-2.5) {\(\R\)}; 
	    \node (B3) at (2,-1.5) {\(H\otimes H\)};
	    \node (C1) at (-2,-3.5) {\(H\otimes H\)}; 
	    \node (C3) at (2,-3.5) {\(H\otimes H\)};
	    \node (D2) at (0,-5) {\(H\)}; 
	    \draw[->] (A2) --node[above left]{\(\Delta\)} (B1);
	    \draw[->] (A2) --node[above right]{\(\Delta\)} (B3);
	    \draw[->] (A2) --node[right]{\(\epsilon\)} (B2);
	    \draw[->] (B1) --node[left]{\(S\otimes\id_H\)} (C1);
	    \draw[->] (B3) --node[right]{\(\id_H\otimes S\)} (C3);
	    \draw[->] (B2) --node[right]{\(u\)} (D2);
	    \draw[->] (C1) --node[below left]{\(m\)} (D2);
	    \draw[->] (C3) --node[below right]{\(m\)} (D2);
	\end{tikzpicture}
    \end{center}
    or in Sweedler notation:
    \(S(h_{(0)})h_{(1)}=\epsilon(h)1_H=h_{(0)}S(h_{(1)})\). 
\end{definition}
Let us note that the axioms of a Hopf algebra are self dual.  If
\((H,m,u,\Delta,\epsilon,S)\) is a finite dimensional Hopf algebra then its
dual \((H^*,\Delta^*,\epsilon^*,m*,u*,S^{-1})\)is a Hopf algebra.  The
antipode of \(H^*\) is inverse of \(S\), and it is well defined since for any
finite dimensional Hopf algebra \(S\) is a bijection.
\index{Hopf algebra!examples}
\begin{examplesbr}[Hopf algebras]\label{ex:Hopf_algebras}
    \begin{enumerate}
	\index{group algebra}
	\item Let \(G\) be a group and let \(\k\) be a field. Then the group
	      algebra \(\k[G]\), which is a vector spaces spanned by \(G\) with
	      multiplication induced from the group multiplication, is a Hopf
	      algebra, where \(\Delta(g)=g\otimes g\) and \(\epsilon(g)=1\).
	      Elements \(c\) of a coalgebra \(C\) which satisfy the above
	      conditions, i.e. \(\Delta(c)=c\otimes c\) and \(\epsilon(c)\) are
	      called group-like.  The antipode \(S\) in \(\k[G]\) is given by
	      \(S(g)=g^{-1}\) for every \(g\in G\). 
	\item Let \(G\) be a finite group, then \(\k[G]^*\) is a Hopf algebra
	      with basis \(\delta_g\in \k[G]^*\), which is a dual basis to the
	      basis of \(\k[G]\) given by elements of \(G\). The multiplication
	      is given by: \(\delta_g\cdot\delta_h=\delta_{g,h}\delta_g\),
	      where $\delta_{g,h}$ is the Kronecker symbol given by
	      \(\delta_{g,k}=\left\{
		\begin{smallmatrix}
		    1 &\text{iff }g=h\hfill\\ 0 &\text{otherwise}\hfill
		\end{smallmatrix}\right.\).  The unit of the algebra structure
	    is \(\sum_{g\in G}\delta_g\) (note that we can write the unit in
	    this form since \(G\) is finite). The comultiplication is set by:
	    \[\Delta(\delta_g)=\sum_{\substack{(h,k)\in G\times G\\
			hk=g}}\delta_h\otimes\delta_k\] and
	    \(\epsilon(\delta_g)\coloneq\delta_g(1)=\delta_{g,1}\).  The
	    antipode \(S\) is given by \(S(\delta_g)=\delta(g^{-1})\). 
	\index{affine group scheme}
	\item An affine group scheme (over a field \(\k\)) as a representable
	      functor from the category of commutative algebras over \(\k\)
	      \(\mathsf{cAlg}_\k\) to the category of groups \(\mathsf{Gr}\).
	      It turns out that the category of affine group schemes is
	      equivalent to the category of commutative \(\k\)-Hopf algebras
	      denoted by \(\mathsf{cHopf}_\k\) via{\footnote{This just follows
		      from the Yoneda lemma. We refer
		      to~\cite{md-pg:groupes-algebriques} for the theory of
		      affine group schemes.}}:
	     \[\mathsf{cHopf}_\k\ni H\selmap{}\Bigl(\mathsf{cAlg}_\k\ni A\selmap{}\Hom_{\mathsf{cAlg}_\k}(H,A)\in\mathsf{Gr}\Bigr)\]
	     The group structure on \(\Hom_{\mathsf{cAlg}_\k}(H,A)\) is the
	     convolution product: for \(f,g\in\Hom_{\mathsf{cAlg}_\k}(H,A)\)
	     \((f\ast g)(h)\coloneq f(h_{(1)})\cdot g(h_{(2)})\) (where
	     "\(\cdot\)"~denotes the multiplication in the algebra \(A\)) and
	     \(f^{-1}(h)=f(S(h))\).  Here \(S\) denotes the antipode of \(H\).
	     Affine group schemes form a category with morphisms being natural
	     transformations which consist of group homomorphisms for every
	     commutative algebra. This category we denote by
	     \(\mathsf{GrSch}_\k\). 
	    
	     Let us note that the group scheme represented by a Hopf algebra
	     \(H\) is commutative (i.e. has values in commutative groups) if
	     and only if \(H\) is a cocommutative Hopf algebra. The category
	     of commutative group schemes, which is a full subcategory of
	     \(\mathsf{GrSch}_\k\), we denote by \(\mathsf{cGrSch}_\k\).
	\index{universal enveloping algebra}
	\item\label{ex:Hopf_algebras_env} Let \(\mathfrak{g}\) be a Lie
	     algebra over a field \(\k\).  Then the enveloping algebra
	     \(\mathcal{U}(\mathfrak{g})\) is defined as a quotient of the
	     unital free algebra \(F(\mathfrak{g})\) on the set of basis
	     elements of \(\mathfrak{g}\) by the ideal generated by \(x\cdot
		 y-y\cdot x-[x,y]\) for \(x,y\in\mathfrak{g}\) where
	     "\(\cdot\)" denotes multiplication in \(F(\mathfrak{g})\) and
	     \([-,-]\) is the Lie bracket of the Lie algebra \(\mathfrak{g}\),
	     i.e.
	    \[\mathcal{U}(\mathfrak{g})\coloneq F(\mathfrak{g})/\langle x\cdot y-y\cdot x-[x,y]:x,y\in\mathfrak{g}\rangle\]
	    There is a standard Hopf algebra structure on
	    \(\mathcal{U}(\mathfrak{g})\) given by
	    \(\Delta(x)=x\otimes1+1\otimes x\) for all \(x\in\mathfrak{g}\).
	    Indeed, this rule defines a map
	    \(\mathfrak{g}\sir\mathfrak{g}\otimes\mathfrak{g}\), which
	    uniquely determines a map \(\Delta':F(\mathfrak{g})\sir
		\mathcal{U}(\mathfrak{g})\otimes\mathcal{U}(\mathfrak{g})\)
	    since it is an algebra map and \(\mathfrak{g}\) generates
	    \(F(\mathfrak{g})\). This map in turn uniquely determines
	    \(\Delta:\mathcal{U}(\mathfrak{g})\sir\mathcal{U}(\mathfrak{g})\otimes\mathcal{U}(\mathfrak{g})\)
	    since:
	    \[\Delta'(x\cdot y-y\cdot x)=(x\cdot y-y\cdot x)\otimes1+1\otimes(x\cdot y-y\cdot x)\]
	    and \(\Delta'([x,y])=[x,y]\otimes1+1\otimes[x,y]\) since
	    \([x,y]\in\mathfrak{g}\).  The antipode \(S\) is uniquely
	    determined by the rule \(S(x)=-x\) for every \(x\in\mathcal{g}\). 
    \end{enumerate}
\end{examplesbr}

\subsection{$H$-comodule algebras}\label{ssec:comodule_algberas}
\index{comodule algebra}
\index{comodule algebra!coinvariants}
\index{H-extension|see{comodule algebra}}
\begin{definition}\label{defi:comodule_algebra}
    Let \(H\) be a \(\R\)-Hopf algebra, and \(A\) a \(\R\)-algebra and a right
    (left) \(H\)-comodule, i.e. there is a counital and coassociative map
    \(\delta:A\sir A\otimes H\) for the right \(H\)-comodule version. The
    algebra \(A\) is called an \bold{\(H\)-comodule algebra} if the structure
    map \(\delta\) is an algebra homomorphism.
    
    \bold{Coinvariants} of the \(H\)-coaction (\(H\)-comodule) are defined
    by \(A^{co\,H}:=\{a\in A:\delta(a)=a\otimes 1_H\}\). They form
    a subalgebra. An extension of the type \(A/A^{co\,H}\) we will call an
    \bold{\(H\)-extension}. Coinvariants of left \(H\)-comodule algebras are
    defined similarly.
\end{definition}
Let us note that the coinvariants fit into the following equaliser diagram:
\[A^{co\,H}\lmonoir A\lmprr{\delta}{\id_A\otimes1_H}A\otimes H.\]
\begin{examplesbr}[Comodule algebras]\label{ex:comodule_algebras}
    \begin{enumerate}
	\index{comodule algebra!examples!Hopf algebra}
	\item Every Hopf algebra \(H\) (over a commutative ring \(\R\)) is
	    an \(H\)-comodule algebra both left and right. The
	    \(H\)-comodule structure is set by the comultiplication \(\Delta\)
	    of \(H\).  Furthermore, \(H^{co\,H}=\R\). For this let us assume
	    that \(h\in H^{co\,H}\), then \(\Delta(h)=h_{(1)}\otimes
		h_{(2)}=h\otimes 1_H\). Now computing \(\epsilon\otimes\id_H\)
	    of both sides we get \(h=\epsilon(h)1\in \R\). The other inclusion
	    is clear.
	\index{comodule algebra!examples!G-graded algebra}
	\item Let \(G\) be a group and \(A\) a \(G\) graded \(\R\)-algebra.
	    That is \(A=\oplus_{g\in G} A_g\), and furthermore \(A_g\cdot
		A_h\subseteq A_{gh}\).  For \(e\in G\) the unit element of
	    \(G\), \(A_e\) is a subalgebra of \(A\). Then \(A\) is an
	    \(\R[G]\)-comodule algebra via: \(\delta:A\sir A\otimes \k[G]\),
	    \(\delta(a)=a\otimes g\) for all \(a\in A_g\) for \(g\in G\).
	    Clearly, we have \(A^{co\,\R[G]}=A_e\) since \(e\in \k[G]\) is the
	    unit element of the Hopf algebra \(\k[G]\).

	    It is easy to observe that every \(\R[G]\)-comodule algebra is of
	    this form. For this one defines \(A_g\coloneq\{a\in
		    A:\delta(a)=a\otimes g\}\). Since \(\delta\) is
	    a monomorphism we get that \(A_g\cap A_h=\{0\}\) if \(g\neq h\)
	    (for \(g,h\in G\)). The inclusion \(A_g\cdot A_h\subseteq A_{gh}\)
	    holds since \(\delta\) is an algebra homomorphism. The equality
	    \(A=\oplus_{g\in G}A_g\) follows now from coassociativity of the
	    coaction \(\delta\): let \(a\in A\), then \(\delta(a)=\sum_{g\in
		    G}a_g\otimes g\) for some \(a_g\in A\) and \(g\in G\),
	    where only finitely many \(a_g\neq 0\). Then by coassociativity of
	    \(\delta\) we get \(\sum_{g\in G}\delta(a_g)\otimes g=\sum_{g\in
		    G}a_g\otimes  g\otimes g\). Now since \(G\) form an
	    \(\R\)-basis of \(\R[G]\) we get \(\delta(a_g)=a_g\otimes g\),
	    i.e.  \(a_g\in A_g\) for all \(g\in G\). In this way
	    \(A=\oplus_{g\in G}A_g\).
	\index{crossed product}
	\item\label{itm:crossed_product} Let \(B\) be an \(\R\)-algebra and \(H\) a Hopf algebra. Let us
	    assume that \(H\) \bold{acts weakly} on \(B\) that is there is a morphism
	    \(H\otimes B\sir B,\,h\otimes a\selmap{}h\cdot a\) such that:
	    \begin{enumerate}[label=\textbf{(\alph{enumii})}]
		\item \(h\cdot(ab)=(h_{(1)}\cdot a)(h_{(2)}\cdot b)\),
		\item \(h\cdot 1=\epsilon(h)1\),
		\item \(1\cdot a=a\).
	    \end{enumerate}
	    Note that \(B\) might not to be an \(H\)-module, however if this
	    is the case a weak action is called an \bold{action}. Furthermore,
	    let \(\sigma:H\otimes H\sir B\) be an \(\R\)-linear, convolution
	    invertible map\footnote{That is there exists a map \(\nu:H\otimes
		    H\sir B\) such that for all \(h\otimes k\in H\otimes H\)
		we have \(\sigma(h_{(1)}\otimes k_{(1)})\nu(h_{(2)}\otimes
		    k_{(2)})=\epsilon(h)\epsilon(k)1_B=\nu(h_{(1)}\otimes
		    k_{(1)})\sigma(h_{(2)}\otimes k_{(2)})\)}. The
	    \bold{crossed product} \(B\#_\sigma H\) is \(B\otimes H\) as an
	    \(\R\)-module, with multiplication: \[a\#_\sigma h\cdot b\#_\sigma
		k:= a\left(h_{(1)}\cdot b\right)\sigma\left(h_{(2)}\otimes
		    k_{(1)}\right)\#_\sigma h_{(3)}k_{(2)}\]
	    for all \(a,b\in B\) and \(h,k\in H\), where \(a\#_\sigma h\)
	    denotes \(a\otimes h\) as an element of \(B\#_\sigma H\).  If the
	    cocycle \(\sigma\) is trivial, i.e.
	    \(\sigma(-,-)=\epsilon(-)\otimes\epsilon(-)1_B\), then the crossed
	    product \(B\#_\sigma H\) is called a \bold{smash product} and it
	    is denoted by \(B\# H\). 
	    \index{crossed product!associativity}
	    \begin{lemma}\label{lem:associativity_of_crossed_products}
		The crossed product \(B\#_\sigma H\) (over a commutative ring \(\R\)) is an
		associative algebra with unit \(1_B\# 1_H\) if and only if the following
		two conditions are satisfied:
		\begin{enumerate}[label=\textbf{(\roman{enumii})}]
		    \item  \(1_H\cdot a=a\), for all \(a\in B\), and
			   \[\bigl(h_{(1)}\cdot(k_{(1)}\cdot a)\bigr)\sigma(h_{(2)},k_{(2)})=\sigma(h_{(1)},k_{(1)})\bigl((h_{(2)}k_{(2)})\cdot a\bigr)\]
			   for all \(h,k\in H\) and \(a\in B\).  \(B\) is then
			   called a \textsf{twisted \(H\)-module}.
		    \item \(\sigma(h,1_H)=\sigma(1_H,h)=\epsilon(h)1_B\), for
			  all \(h\in H\), and 
			  \[\bigl(h_{(1)}\cdot\sigma(k_{(1)},l_{(1)})\bigr)\sigma(h_{(2)},k_{(2)}l_{(2)})=\sigma(h_{(1)},k_{(1)})\sigma(h_{(2)}k_{(2)},l)\]
			  for all \(h,k,l\in H\).  \(\sigma\) is then called
			  a \textsf{cocycle}.
		\end{enumerate}
	    \end{lemma}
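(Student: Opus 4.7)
The plan is to prove both implications by direct manipulation of the triple product in Sweedler notation, combined with short tests against the unit $1_B\#_\sigma 1_H$. For the forward direction ($\Rightarrow$) I would specialise associativity to carefully chosen triples; for the reverse direction ($\Leftarrow$) I would expand the two triple products in full and rewrite one into the other using (i) and then (ii).

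For ($\Rightarrow$), the unit axiom $(1_B\#_\sigma 1_H)(a\#_\sigma h)=a\#_\sigma h$, together with the weak action axiom $1_H\cdot a=a$, simplifies to $\sigma(1_H,h_{(1)})\#_\sigma h_{(2)}=1_B\#_\sigma h$; applying $\id_B\otimes\epsilon$ yields $\sigma(1_H,h)=\epsilon(h)1_B$, and the opposite-sided unit axiom analogously gives $\sigma(h,1_H)=\epsilon(h)1_B$. To extract the twisted module condition I would apply associativity to the triple $(1_B\#_\sigma h,\,1_B\#_\sigma k,\,c\#_\sigma 1_H)$: using $h\cdot 1_B=\epsilon(h)1_B$ on the left factor and the freshly proved normalisation $\sigma(-,1_H)=\epsilon(-)1_B$ on the right, both sides collapse to an equality in $B\otimes H$ which, after applying $\id_B\otimes\epsilon$, is exactly the twisted module identity. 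The cocycle condition is obtained in the same spirit by testing associativity on $(1_B\#_\sigma h,\,1_B\#_\sigma k,\,1_B\#_\sigma l)$, where the action terms $h\cdot 1_B$ etc.\ collapse via counit and what remains is precisely the cocycle equation.

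For ($\Leftarrow$), the unit axiom is a direct verification from the normalisations $\sigma(h,1_H)=\sigma(1_H,h)=\epsilon(h)1_B$ and the weak action axioms. For associativity, expand both triple products using the definition of the multiplication twice. Since $\Delta$ is multiplicative on $H$ and the weak action distributes over the multiplication of $B$, the two sides produce
\[((a\#_\sigma h)(b\#_\sigma k))(c\#_\sigma l)=a(h_{(1)}\cdot b)\sigma(h_{(2)},k_{(1)})\bigl(h_{(3)}k_{(2)}\cdot c\bigr)\sigma(h_{(4)}k_{(3)},l_{(1)})\#_\sigma h_{(5)}k_{(4)}l_{(2)},\]
\[(a\#_\sigma h)((b\#_\sigma k)(c\#_\sigma l))=a(h_{(1)}\cdot b)\bigl(h_{(2)}\cdot(k_{(1)}\cdot c)\bigr)\bigl(h_{(3)}\cdot\sigma(k_{(2)},l_{(1)})\bigr)\sigma(h_{(4)},k_{(3)}l_{(2)})\#_\sigma h_{(5)}k_{(4)}l_{(3)}.\]
Applying the twisted module condition of (i) to the segment $\sigma(h_{(2)},k_{(1)})(h_{(3)}k_{(2)}\cdot c)$ in the first expression produces $(h_{(2)}\cdot(k_{(1)}\cdot c))\sigma(h_{(3)},k_{(2)})$; the remaining pair $\sigma(h_{(3)},k_{(2)})\sigma(h_{(4)}k_{(3)},l_{(1)})$ is then converted by the cocycle identity of (ii) into $(h_{(3)}\cdot\sigma(k_{(2)},l_{(1)}))\sigma(h_{(4)},k_{(3)}l_{(2)})$, after which the two expressions coincide termwise by coassociativity.

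The main obstacle is the Sweedler-index bookkeeping: each expansion uses up to five legs of $\Delta$ on $h$, four on $k$, and three on $l$, and the identities (i) and (ii) have to be applied at the correct position so that the remaining coalgebra legs realign via coassociativity. Fixing the order of rewriting --- first use (i) to move the $(hk)\cdot c$ action past $\sigma(h,k)$, then use (ii) to absorb the resulting pair of cocycle values into one cocycle twisted by an action --- keeps the accounting manageable, and the reverse direction of the argument shows that testing on triples where two of the three $B$-components are $1_B$ is enough to recover (i) and (ii) separately.
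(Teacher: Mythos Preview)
Your proposal is correct and is essentially the standard argument. Note, however, that the paper does not actually supply a proof of this lemma: immediately after the statement it simply cites \cite[Lem.~10]{yd-mt:cleft-comodule-algebras} and \cite{rb-mc-sm:crossed_products}. The computation you outline --- extracting the normalisations from the unit axiom, the twisted-module condition from associativity on $(1\#_\sigma h,\,1\#_\sigma k,\,c\#_\sigma 1_H)$, and the cocycle condition from $(1\#_\sigma h,\,1\#_\sigma k,\,1\#_\sigma l)$, then for the converse expanding both triple products and applying (i) followed by (ii) --- is exactly the argument found in those references, so there is nothing to compare.

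One minor bookkeeping remark: in your $(\Leftarrow)$ display the first expression uses only $l_{(1)},l_{(2)}$ while the second uses $l_{(1)},l_{(2)},l_{(3)}$; when you apply the cocycle identity with $l$ replaced by $l_{(1)}$, the left-hand side of (ii) splits this $l_{(1)}$ once more, and coassociativity then relabels to $l_{(1)},l_{(2)},l_{(3)}$ in agreement with the second expression. You clearly have this in mind, but it is worth making the relabelling explicit when you write it up.
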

	    See~\cite[Lem.~10]{yd-mt:cleft-comodule-algebras}
	    or~\cite{rb-mc-sm:crossed_products}.  In the rest of this work we
	    assume that all crossed products are associative with identity
	    \(1_B\#_\sigma1_H\), and thus that the conditions of the lemma are
	    satisfied.

	    Now, \(B\#_\sigma H\) becomes an \(H\)-comodule algebra by
	    setting:
	    \[\delta(a\#_\sigma h)=a\#_\sigma h_{(1)}\otimes h_{(2)},\text{ for }a\in B,h\in H.\]
    \end{enumerate}
\end{examplesbr}
\index{Hopf Galois extension|see{\(H\)-Galois extension}}
\index{H-Galois extension}
\begin{definition}\label{defi:Hopf-Galois_extension}
	An $H$-extension $A/A^{co\,H}$ is called \bold{$H$-Hopf--Galois
	    extension} (\bold{$H$-Galois extension}) if the \bold{canonical
	    map} of right $H$-comodules and left $A$-modules:
	\begin{equation}\label{eq:canonical-map}
	    \can:A\otimes_{A^{co\,H}} A\sir A\otimes H,\ a\otimes b\eli{}ab_{(0)}\otimes b_{(1)}
	\end{equation}
	is an \emph{isomorphism}.
\end{definition}
\index{H-Galois extension!examples}
\begin{examplesbr}[Hopf--Galois extensions]
    \begin{enumerate}
	\item[Ex.~\ref{ex:comodule_algebras} \textbf{(i)}\phantom{ii}] Let
	     \(H\) be a Hopf algebra, then it is an \(H\)-Galois extension of
	     the base ring \(\R\) with \(\can^{-1}(h\otimes k)\coloneq
		 hS(k_{(1)})\otimes k_{(2)}\).
	\item[Ex.~\ref{ex:comodule_algebras} \textbf{(ii)}\phantom{i}]
	     A \(G\)-graded \(\k\)-algebra \(A\) is a \(\k[G]\)-Galois
	     extension if and only if \(A\) is strongly graded, that is
	     \(A_g\cdot A_h=A_{gh}\). This was first discovered
	     by~\cite{ku:galoiserweiterungen}.
	\item[Ex.~\ref{ex:comodule_algebras} \textbf{(iii)}] A crossed product
	     \(B\#_\sigma H\) over a ring \(\R\) is a Hopf--Galois extension
	     as we show below.
    \end{enumerate}
\end{examplesbr}
\index{cleft extension}
\index{normal basis property}
\begin{definition}\label{defi:cleft_&_normal_basis}
    \begin{itemize}[noitemsep]
	\item An \(H\)-extension \(A/B\) (over a ring \(\R\)) is called
	    \bold{cleft} if there exists a convolution invertible
	    \(H\)-comodule map \(\gamma:H\sir A\).
	\item An \(H\)-extension \(A/B\) has the \bold{normal basis
	    property} if and only if $A$ is isomorphic to $B\otimes H$ as
	    a left $B$-module and right $H$-comodule.
    \end{itemize}
\end{definition}
Let us note that the normal basis property is equivalent to the classical
notion if \(H\) is finite dimensional. We defer the explanation until
introducing \(H\)-module algebras.  We say that a \(R\)-algebra \(A\) is flat
if it is flat \(R\)-module, that is the functor \(A\otimes
    -:\mod{R}\sir\mod{R}\) preserves exact sequences.
\index{cleft extension!characterisation}
\begin{theorem}\label{thm:characterisation_of_cleftness}
    Let \(A\) be an \(H\)-comodule algebra over \(\R\) such that \(B=A^{co\,H}\) is a flat
    \(\R\)-algebra. Then the following are equivalent:
    \begin{enumerate}
	\item the extension \(B\subseteq A\) is equivalent to a crossed
	    product \(B\subseteq B\#_\sigma H\) for some weak action of
	    \(H\) on \(B\) and some invertible cocycle \(\sigma\) satisfying
	    the twisted module condition;
	\item \(B\subseteq A\) is cleft;
	\item \(B\subseteq A\) is \(H\)-Galois with normal basis property.
    \end{enumerate}
\end{theorem}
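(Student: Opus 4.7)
The plan is to prove the cycle of implications (i) $\Rightarrow$ (ii) $\Rightarrow$ (iii) $\Rightarrow$ (i), which is the classical Doi--Takeuchi / Blattner--Montgomery approach. The cleaving map $\gamma:H\sir A$ is the central object throughout: it exists tautologically in (i), by hypothesis in (ii), and it has to be reconstructed in (iii).

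For (i) $\Rightarrow$ (ii), given $A = B\#_\sigma H$, define $\gamma(h) := 1_B\#_\sigma h$. That $\gamma$ is an $H$-comodule map is immediate from the formula $\delta(a\#_\sigma h)=a\#_\sigma h_{(1)}\otimes h_{(2)}$. A convolution inverse is produced from the cocycle inverse and the antipode by setting $\gamma^{-1}(h) := \sigma^{-1}(S(h_{(2)}),h_{(3)})\#_\sigma S(h_{(1)})$; one checks $\gamma\ast\gamma^{-1}=\gamma^{-1}\ast\gamma=u\epsilon$ using the cocycle and twisted module axioms from Lemma~\ref{lem:associativity_of_crossed_products}.

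For (ii) $\Rightarrow$ (iii), given a cleaving map $\gamma$, I would write down explicit inverses. First, the map $\Phi:B\otimes H\sir A$, $\Phi(b\otimes h)=b\gamma(h)$, is clearly left $B$-linear and right $H$-colinear. Its inverse is $\Phi^{-1}(a)=a_{(0)}\gamma^{-1}(a_{(1)})\otimes a_{(2)}$; the small point to verify is that $a_{(0)}\gamma^{-1}(a_{(1)})\in B$, which follows from the identity $\delta\circ\gamma^{-1}(h)=\gamma^{-1}(h_{(2)})\otimes S(h_{(1)})$ (a formal consequence of $\gamma$ being an $H$-comodule map and convolution invertibility of $\gamma$). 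Second, an inverse to the canonical map is $\can^{-1}(a\otimes h)=a\gamma^{-1}(h_{(1)})\otimes_B\gamma(h_{(2)})$, which is well-defined because the first tensorand actually lies in $B$, and a direct Sweedler computation shows $\can\circ\can^{-1}=\id$ and $\can^{-1}\circ\can=\id$.

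For (iii) $\Rightarrow$ (i), the normal basis isomorphism $\Phi:B\otimes H\simto A$ of left $B$-modules and right $H$-comodules produces a candidate cleaving map $\gamma(h):=\Phi(1_B\otimes h)$. The Hopf--Galois hypothesis then supplies a convolution inverse via $\gamma^{-1}(h):=(\id_A\otimes\epsilon_H)\circ\can^{-1}(1_A\otimes h)$ composed with the correct projection; this requires checking $\gamma\ast\gamma^{-1}=u\epsilon$ using that $\can$ intertwines the two natural coactions. Once $\gamma$ and $\gamma^{-1}$ are in place, define the weak action and cocycle by
\[
h\cdot b := \gamma(h_{(1)})\,b\,\gamma^{-1}(h_{(2)}), \qquad
\sigma(h\otimes k) := \gamma(h_{(1)})\gamma(k_{(1)})\gamma^{-1}(h_{(2)}k_{(2)}),
\]
and verify the twisted module and cocycle conditions of Lemma~\ref{lem:associativity_of_crossed_products}; these are formal manipulations with Sweedler indices and the identity $\delta\gamma^{-1}(h)=\gamma^{-1}(h_{(2)})\otimes S(h_{(1)})$. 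The map $B\#_\sigma H\sir A$, $b\#_\sigma h\selmap{}b\gamma(h)$ then intertwines the multiplications by construction, is $H$-colinear, and is bijective because it equals the composition of $\Phi$ with the identity on $B\otimes H$.

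The main technical obstacle is in (iii) $\Rightarrow$ (i): showing that $h\cdot b\in B$ and that $\sigma(h\otimes k)\in B$, which is where the flatness of $B$ as an $\R$-module enters. Flatness ensures that the equaliser defining $A^{co\,H}$ is preserved after tensoring, so that the computation $\delta(h\cdot b)=(h\cdot b)\otimes 1_H$ can actually be checked inside $A\otimes H$ and interpreted as membership in $B$; without flatness the Sweedler calculus produces the right element in $A$ but one cannot conclude it is coinvariant. Once this landing inside $B$ is secured, the remaining verifications reduce to bookkeeping with the comodule map identities for $\gamma$ and $\gamma^{-1}$.
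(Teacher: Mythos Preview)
Your cycle (i) $\Rightarrow$ (ii) $\Rightarrow$ (iii) $\Rightarrow$ (i) is the classical Doi--Takeuchi/Blattner--Montgomery route and is essentially sound, but it differs from the paper's argument. The paper defers to Doi--Takeuchi for (ii) $\Leftrightarrow$ (iii) and to Doi--Takeuchi/Blattner--Montgomery for (ii) $\Rightarrow$ (i), and then supplies a direct proof of (i) $\Rightarrow$ (iii): starting from $B\#_\sigma H$, it uses flatness of $B$ to tensor the equaliser $\R=H^{co\,H}\to H\rightrightarrows H\otimes H$ by $B$ and conclude $(B\#_\sigma H)^{co\,H}=B\#_\sigma 1_H$; the normal basis property is then immediate, and the Galois property is obtained by factoring $\can_{B\#_\sigma H}$ as a composite $\beta\circ\alpha$ of two explicit isomorphisms built from the cocycle $\sigma$, its convolution inverse, and the canonical map $\can_H$ of $H$ itself. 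Your approach has the merit of being self-contained; the paper's isolates the single place where the hypothesis on $B$ does work.

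Your identification of where flatness enters, however, is wrong. Checking $h\cdot b\in B$ and $\sigma(h\otimes k)\in B$ in (iii) $\Rightarrow$ (i) needs no flatness: $B=A^{co\,H}$ is \emph{by definition} the equaliser of $\delta$ and $\id_A\otimes 1_H$, so the Sweedler computation $\delta(x)=x\otimes 1_H$ in $A\otimes H$ already gives $x\in B$ outright --- there is no tensoring of the equaliser involved. (Indeed the Doi--Takeuchi arguments you are reproducing hold over any commutative base ring.) In the paper, flatness of $B$ is used only in the direction (i) $\Rightarrow$ (iii), precisely to push the equaliser $\R\to H\rightrightarrows H\otimes H$ through $B\otimes(-)$ and identify the coinvariants of the crossed product with $B$. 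A smaller point: your formula for $\gamma^{-1}$ in (iii) $\Rightarrow$ (i), namely ``$(\id_A\otimes\epsilon_H)\circ\can^{-1}(1_A\otimes h)$ composed with the correct projection'', is ill-typed as written, since $\can^{-1}(1_A\otimes h)$ lives in $A\otimes_BA$ where there is no $H$-tensorand for $\epsilon_H$ to act on; one leg must first be fed through the normal-basis isomorphism $\Phi^{-1}:A\to B\otimes H$.
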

This theorem generalises~\cite[Thm~1.18]{rb-sm:crossed-products} where it is
proven for algebras over a field \(\k\). Let us note that the proof
of~\cite{rb-sm:crossed-products} also works in the more general context,
however we show a different proof.
\begin{proof}
    The equivalence between (ii) and (iii) was shown
    in~\cite{yd-mt:cleft-comodule-algebras}. The implication
    \(\text{(ii)}\Rightarrow\text{(i)}\) follows
    from~\cite[Thm~11]{yd-mt:cleft-comodule-algebras} as shown in the proof
    of~\cite[Thm~1.18]{rb-sm:crossed-products}. Instead of proving
    \(\text{(i)}\Rightarrow\text{(ii)}\) as it is done by
    \citeauthor{rb-sm:crossed-products}, we show that
    \(\text{(i)}\Rightarrow\text{(iii)}\).

    First, if \(B\) is flat over \(\R\) we have \((B\#_\sigma
	H)^{co\,H}=B\#_\sigma 1_H\subseteq B\#_\sigma H\): the diagram 
    \[\R=H^{co\,H}\lmonoir H\lmprr{\Delta}{\id_H\otimes 1_H}H\otimes H\]
    is an equaliser and \(B\) is flat over \(\R\) thus the fork
    \[B=B\otimes \R\lmonoir B\#_\sigma H\lmprr{\delta}{\id_{B\#_\sigma H}\otimes 1_H}B\#_\sigma H\otimes H\]
    is an equaliser as well. Hence, indeed, \((B\#_\sigma
	H)^{co\,H}=B\#_\sigma 1_H\cong B\).  Now it easily follows that
    \(B\#_\sigma H\) has the normal basis property, since \(a\#_\sigma1\cdot
	b\#_\sigma h=(ab)\#_\sigma h\), for \(a,b\in B,\ h\in H\), and
    \(\delta=\id_B\otimes\Delta:B\#_\sigma H\sir B\#_\sigma H\otimes H\).  To
    prove the Galois property let us observe that we have a commutative
    diagram:
    \begin{center}
	\begin{tikzpicture}
	    \matrix[matrix of nodes, column sep=1cm, row sep=1cm]{
		|(A1)| \((B\#_\sigma H)\otimes_B(B\#_\sigma H)\) & |(A2)| \(B\#_\sigma H\otimes H\) \\
		|(B1)| \(B\otimes H\otimes H\)                    & \\
	    };
	    \draw[->] (A1) --node[above]{\(\can_{B\#_\sigma H}\)} (A2);
	    \draw[->] (A1) --node[left]{\(\alpha\)} (B1);
	    \draw[->] (B1) --node[below right]{\(\beta\)} (A2);
	\end{tikzpicture}
    \end{center}
    where \(\alpha:(B\#_\sigma H)\otimes_B(B\#_\sigma H)\sir B\#_\sigma
	H\otimes H\) is defined by: \(\alpha(a\#_\sigma h\otimes_B b\#_\sigma
	k)=(a\#_\sigma h\cdot b\#_\sigma 1_h)\otimes k=a(h_{(1)}\cdot
	b)\#_\sigma h_{(2)}\otimes k\).  It is an isomorphism with inverse
    \(B\#_\sigma H\otimes H\ni a\otimes h\otimes k\selmap{}a\#_\sigma h\otimes
	1_A\#_\sigma k\in(B\#_\sigma H)\otimes_B(B\#_\sigma H)\). The map
    \(\beta\) is explicitly given by:
    \begin{align*}
B\otimes H\otimes H\ni a\otimes h\otimes k\selmap{\beta} & a\sigma(h_{(1)},k_{(1)})\otimes\can_H(h_{(2)}\otimes k_{(2)}) \\
		      & =a\sigma(h_{(1)},k_{(1)})\otimes h_{(2)}k_{(2)}\otimes k_{(3)}\in B\#_\sigma H\otimes H
    \end{align*}
    It is invertible with inverse \(\beta^{-1}\) the composition:
    \begin{align*}
	B\#_\sigma H\otimes H\ni a\#_\sigma h\otimes k\selmap{} & a\otimes\can_H^{-1}(h\otimes k)\\
	                                & =a\otimes hS(k_{(1)})\otimes k_{(2)} \\
	\hfill\selmap{}                 & a\sigma^{-1}(h_{(1)}S(k_{(2)}),k_{(3)})\otimes h_{(2)}S(k_{(1)})\otimes k_{(4)}\\
	                                & \hspace{1cm}\in B\otimes H\otimes H
    \end{align*}
    where \(\sigma^{-1}\) is the convolution inverse of \(\sigma\). It now
    follows that \(\can_{B\#_\sigma H}\) is invertible and thus \(B\subseteq
	B\#_\sigma H\) is \(H\)-Galois with the normal basis property.
\end{proof}
\begin{corollary}
    Let \(A/B\) be an \(H\)-extension, such that both \(A\) and \(H\) are
    finite dimensional.  Furthermore, let us assume that \(A\) has the normal
    basis property.  Then \(A\) is a crossed product if and only if \(\can_A\)
    is an epimorphism or monomorphism.
\end{corollary}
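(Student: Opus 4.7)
The strategy is to reduce the statement to a dimension count, using the normal basis property to match the dimensions of the source and target of $\can_A$, so that injectivity and surjectivity become equivalent to bijectivity. Once $\can_A$ is shown to be an isomorphism, Theorem~\ref{thm:characterisation_of_cleftness} immediately delivers the crossed product structure.

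The forward implication is immediate: if $A$ is a crossed product over $B$, then by Theorem~\ref{thm:characterisation_of_cleftness} the extension $B\subseteq A$ is $H$-Galois (with normal basis), so $\can_A$ is an isomorphism and in particular both mono and epi. Note that the flatness hypothesis of Theorem~\ref{thm:characterisation_of_cleftness} on $B$ is automatic here since finite dimensionality forces us over a field.

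For the converse, assume that $A$ has the normal basis property, so there is an isomorphism $A\cong B\otimes H$ of left $B$-modules. Substituting this isomorphism into the second tensor factor yields
\[
A\otimes_B A\;\cong\;A\otimes_B(B\otimes H)\;\cong\;A\otimes H
\]
as $k$-vector spaces, by identifying $A\otimes_B B\cong A$. In particular
\[
\dim_k(A\otimes_B A)\;=\;\dim_k(A)\cdot\dim_k(H)\;=\;\dim_k(A\otimes H).
\]
Since $\can_A\colon A\otimes_B A\sir A\otimes H$ is a $k$-linear map between finite dimensional vector spaces of equal dimension, it is a monomorphism if and only if it is an epimorphism, and in either case it is an isomorphism. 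Therefore $B\subseteq A$ is an $H$-Galois extension with the normal basis property, and a second application of Theorem~\ref{thm:characterisation_of_cleftness} identifies it with a crossed product $B\#_\sigma H$.

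There is essentially no technical obstacle here; the only point requiring a little care is that the vector space isomorphism $A\otimes_B A\cong A\otimes H$ obtained from the normal basis property is \textit{not} the canonical map~\eqref{eq:canonical-map} (it has no reason to be an $H$-comodule or $A$-module map), so it cannot be used to conclude the Galois property directly. It is used only to count dimensions, after which the standard rank argument on $\can_A$ does the rest.
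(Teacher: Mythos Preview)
Your proof is correct and follows essentially the same approach as the paper: use the normal basis isomorphism $A\cong B\otimes H$ to deduce $\dim_k(A\otimes_B A)=\dim_k(A\otimes H)$, so that mono or epi of $\can_A$ forces bijectivity, and then invoke Theorem~\ref{thm:characterisation_of_cleftness}. Your added remark that the normal basis isomorphism is not itself the canonical map (and is used only for the dimension count) is a helpful clarification that the paper leaves implicit.
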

\begin{proof}
    Since \(A\cong B\otimes H\) as a left \(B\)-module and a right
    \(H\)-comodule we have \(A\otimes_BA\cong A\otimes_B(B\otimes H)\cong
	A\otimes H\).  Thus \(\dim_kA\otimes_BA=\dim_kA\otimes H\).  It
    follows that \(\can_A:A\otimes_BA\ir A\otimes H\) is an isomorphism (and
    hence \(A\) is a crossed product) if and only if it is a monomorphism or
    an epimorphism.
\end{proof}

\subsection{$H$-module algebras}\label{ssec:module_algebras}
Another set of examples of \(H\)-comodule algebras can be built from
\(H\)-module algebras.
\index{module algebra}
\index{H-module algebra|see{module algebra}}
\begin{definition}\label{defi:H-module-algabra}
    Let \(A\) be an algebra and \(H\) a Hopf algebra. We say that \(A\) is an
    \(H\)-module algebra if \(H\) measures \(A\), that is there is an action
    \(H\otimes A\sir A\) such that \(h\cdot (ab)=(h_{(1)}\cdot a)(h_{(2)}\cdot
	b)\) which makes \(A\) an \(H\)-module, or equivalently the following
    diagram commutes:
    \begin{center}
	{\hfill\begin{tikzpicture}
	    \matrix[matrix of nodes, column sep=1cm, row sep=1cm]{
		|(A1)|	\(H\otimes A\otimes A\) & |(A2)| \(H\otimes A\) \\
		|(B1)|	\(A\otimes A\) & |(B2)|	\(A\) \\
	    };
	    \draw[->] (A1) --node[above]{\(m\)} (A2);
	    \draw[->] (A2) -- (B2);
	    \draw[->] (A1) -- (B1);
	    \draw[->] (B1) --node[below]{\(m\)} (B2);
	\end{tikzpicture}
	\hfill\refstepcounter{equation}\raisebox{10mm}{(\theequation)}\label{diag:H-module_algebra}\\
	}
    \end{center}
    where \(m:A\otimes A\sir A\) is the multiplication of \(A\) and the
    vertical maps are the \(H\)-action.
    The \bold{invariant} subalgebra \(A^H\) is defined as
    \(A^H\coloneq\{a\in A:\forall_{h\in H}h\cdot a=\epsilon(h)a\}\). 
\end{definition}
Let us recall the following theorem:
\begin{proposition}\label{prop:H-comodule_algebras}
    Let \(H\) be a finite dimensional Hopf algebra (over a field \(\k\)), and
    let \(A\) be a \(\k\)-algebra.  Then \(A\) is a right \(H\)-comodule
    algebra if and only if \(A\) is a left \(H^*\)-module algebra.
    Furthermore, in this case we have \(A^{co\,H}=A^{H^*}\). 
\end{proposition}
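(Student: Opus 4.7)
The plan is to invoke the equivalence $\Mod^H \cong \mathsf{Rat}({}_{H^*}\Mod)$ recorded in the theorem preceding this proposition, observe that in finite dimensions the rationality condition becomes vacuous, and then translate the algebra compatibility axioms through the duality that identifies the multiplication of $H$ with the comultiplication of $H^*$.

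The first step is to check that when $\dim_\k H < \infty$ the map $\mu_M : M \otimes H \to \Hom_\k(H^*, M)$ introduced in the preliminaries is an isomorphism for every $\k$-vector space $M$: under the canonical identification $H \cong H^{**}$ it is just $\mathrm{id}_M \otimes \mathrm{ev}$, which is bijective in finite dimensions. Consequently every left $H^*$-module $M$ automatically satisfies $\rho_M(M) \subseteq \mu_M(M \otimes H)$ and is therefore rational. The cited equivalence thus specialises to a bijection between right $H$-comodule structures and left $H^*$-module structures on $A$; explicitly, a coaction $\delta$ yields the action $h^* \cdot a = a_{(0)} h^*(a_{(1)})$, and conversely an action yields $\delta(a) = \sum_i (h_i^* \cdot a) \otimes h_i$, where $\{h_i\}$ is a basis of $H$ with dual basis $\{h_i^*\}$.

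Next I would match the multiplicative axioms. Using $\Delta_{H^*}(h^*)(h \otimes k) = h^*(hk)$ and $\epsilon_{H^*}(h^*) = h^*(1_H)$, the comodule algebra identities $\delta(ab) = \delta(a)\delta(b)$ and $\delta(1_A) = 1_A \otimes 1_H$ read, after pairing with $h^*$,
\[ h^* \cdot (ab) = a_{(0)} b_{(0)} h^*_{(1)}(a_{(1)}) h^*_{(2)}(b_{(1)}) = (h^*_{(1)} \cdot a)(h^*_{(2)} \cdot b), \qquad h^* \cdot 1_A = h^*(1_H) 1_A = \epsilon_{H^*}(h^*) 1_A, \]
which are precisely the $H^*$-module algebra axioms of Definition~\ref{defi:H-module-algabra}. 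Since each equality is reversible using the dual-basis formula for $\delta$ from the previous paragraph, the two algebra structures determine one another. The equality of coinvariants then falls out directly: $\delta(a) = a \otimes 1_H$ gives $h^* \cdot a = \epsilon_{H^*}(h^*) a$ at once, while for $a \in A^{H^*}$, writing $\delta(a) = \sum_i a_i \otimes h_i$ yields $a_j = h_j^* \cdot a = h_j^*(1_H) a$, whence $\delta(a) = a \otimes \sum_j h_j^*(1_H) h_j = a \otimes 1_H$.

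There is no serious obstacle in this argument; the only point requiring attention is the observation in the first step that finite dimensionality forces every $H^*$-module to be rational, after which the whole proposition reduces to a short Sweedler-notation verification that the comodule/module correspondence respects the multiplicative structure on $A$.
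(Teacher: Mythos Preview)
Your argument is correct and follows essentially the same approach as the paper: the paper merely cites a reference and records the two mutually inverse constructions \(h^*\cdot a = a_{(0)}h^*(a_{(1)})\) and \(\delta(a)=\sum_i(e_i^*\cdot a)\otimes e_i\), which are exactly the formulas you derive. Your write-up is more complete, supplying the rationality argument, the verification of the module-algebra axioms, and the coinvariants equality that the paper leaves to the cited source.
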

\begin{proof}
    The proof can be found in~\cite[Thm~6.2.4]{sd-cn-sr:hopf-alg}.  We only
    sketch the constructions.  Let \(A\) be an \(H\)-comodule algebra with the
    \(H\)-comodule structure map \(\delta:A\sir A\otimes H\).  Then \(A\) is
    an \(H^*\)-module algebra by:
    \[f\cdot a\coloneq \id_A\otimes f\circ\delta(a)=a_{(0)}f(a_{(1)})\]
    Conversely, if \(A\) is an \(H^*\)-module algebra, then we pick a dual
    basis of \(H^*\) and \(H\): \(\{e_i^*\}\) and \(\{e_i\}\) respectively,
    where \(i=1,\dots,n=\dim H\).  Then the \(H\)-comodule structure map is
    set by:
    \[\delta:A\sir A\otimes H,\quad\delta(a)=\sum_{i=1}^{n}(e_i^*\cdot a)\otimes e_i.\]
\end{proof}
We are now ready to explain the connection between the non-classical and
classical normal basis properties (see
Definition~\ref{defi:cleft_&_normal_basis}).
\index{normal basis property!comparison with the classical version}
\begin{lemma}
    Let \(A\) be an \(H\)-comodule algebra over a field \(\k\), with \(H\)
    a finite dimensional Hopf algebra. Let us consider \(A\) as an
    \(H^*\)-module algebra.  Then \(A\) has the normal basis property if and
    only if there exists \(a\in A\) and a basis \(\{f_i\}_{i=1,\dots,n}\) of
    \(H^*\) such that \(\{f_i\cdot a\}_{i=1,\dots,n}\) is a basis of 
    \(A\) as a left \(H\)-module.
\end{lemma}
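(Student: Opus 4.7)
The plan is to pass through the equivalence between right $H$-comodule structures and left $H^{*}$-module structures (Proposition~\ref{prop:H-comodule_algebras}), so that the action $f\cdot a=a_{(0)}f(a_{(1)})$ makes $A$ a left $H^{*}$-module with $B=A^{co\,H}=A^{H^{*}}$. Via this dictionary the normal basis property (Definition~\ref{defi:cleft_&_normal_basis}) becomes the existence of an isomorphism $\phi:B\otimes H\to A$ that is simultaneously $B$-linear and $H^{*}$-linear, where $H^{*}$ acts on the right tensor factor of $B\otimes H$ by $f\cdot h:=h_{(1)}f(h_{(2)})$ (the action coming from the comodule structure $\id_{B}\otimes\Delta$).

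The crucial input is the classical Larson--Sweedler theorem that every finite dimensional Hopf algebra is Frobenius, which I would use in the following form. Fix a nonzero right integral $t\in H$; then the $\k$-linear map
\[
\tau:H^{*}\longrightarrow H,\qquad \tau(f):=t_{(1)}f(t_{(2)})=f\cdot t,
\]
is bijective, and the identity $\tau(f\ast g)=(f\ast g)\cdot t=f\cdot(g\cdot t)=f\cdot\tau(g)$ exhibits it as an isomorphism of left $H^{*}$-modules with $H^{*}$ acting on itself by convolution.

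For the direction $(\Rightarrow)$, given such $\phi$, set $a:=\phi(1_{B}\otimes t)$. The $H^{*}$-linearity of $\phi$ gives $f\cdot a=\phi(1_{B}\otimes(f\cdot t))=\phi(1_{B}\otimes\tau(f))$, so for any basis $\{f_{i}\}$ of $H^{*}$ the family $\{f_{i}\cdot a\}$ is the image under the $B$-linear isomorphism $\phi(1_{B}\otimes-)$ of the $\k$-basis $\{\tau(f_{i})\}$ of $H$, hence is a $B$-basis of $A$. For the direction $(\Leftarrow)$, given $a$ and $\{f_{i}\}$ as hypothesised, I would define
\[
\psi:B\otimes H^{*}\longrightarrow A,\qquad \psi(b\otimes f):=b(f\cdot a);
\]
it is a $B$-module isomorphism by hypothesis, and a short calculation combining $\delta(b)=b\otimes 1_{H}$ for $b\in B$ (which gives $f\cdot(bx)=b(f\cdot x)$) with the associativity $f\cdot(g\cdot a)=(f\ast g)\cdot a$ of the left $H^{*}$-action on $A$ shows $\psi$ is in addition left $H^{*}$-linear for $f\cdot(b\otimes g):=b\otimes(f\ast g)$. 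Composing $\psi$ with $\id_{B}\otimes\tau^{-1}:B\otimes H\to B\otimes H^{*}$ yields an isomorphism $B\otimes H\to A$ of $B$-modules and left $H^{*}$-modules, i.e. of $B$-modules and right $H$-comodules.

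The main conceptual obstacle is that the \emph{naive} identification $H\cong H^{*}$ coming from a choice of dual bases is \emph{not} $H^{*}$-equivariant and therefore would fail to transport the comodule structure $\id_{B}\otimes\Delta$ on $B\otimes H$ to the convolution structure on $B\otimes H^{*}$; only the Frobenius isomorphism $\tau$ performs this translation correctly. Once this point is appreciated, both implications reduce to bookkeeping with $\tau$.
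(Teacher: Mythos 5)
Your proof is correct. Note, though, that the paper itself contains no argument at this point: its ``proof'' is only the pointer to Lemma~3.5 of Montgomery's Hopf--Galois survey, so what you have written is a complete proof of a statement the thesis leaves to the literature, and it follows what is essentially the standard route that the cited sketch takes. The two pillars are exactly right: (a) the dictionary of Proposition~\ref{prop:H-comodule_algebras}, under which the normal basis property becomes the existence of a simultaneous $B$-linear and $H^*$-linear isomorphism $B\otimes H\to A$, with $H^*$ acting on $B\otimes H$ through $\id_B\otimes\Delta$ (and, since $H$ is finite dimensional, $H^*$-linearity of a map of comodules is the same as $H$-colinearity); and (b) the Larson--Sweedler/Frobenius fact that for a nonzero integral $t$ the map $\tau(f)=t_{(1)}f(t_{(2)})$ is a bijection $H^*\to H$, which your convolution identity makes into an isomorphism of left $H^*$-modules. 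With these, the forward direction is the transport of the free generator $1_B\otimes t$, and the converse is your map $\psi(b\otimes f)=b(f\cdot a)$ composed with $\id_B\otimes\tau^{-1}$; the verification that $f\cdot(bx)=b(f\cdot x)$ for $b\in B=A^{co\,H}$ and the associativity of the $H^*$-action are indeed all that is needed. Two small remarks: the lemma's phrase ``basis of $A$ as a left $H$-module'' is a slip in the thesis (it should read left $B$-module, $B=A^{co\,H}$), and you silently adopt the correct reading; and the sidedness of the integral is immaterial here --- for finite dimensional $H$ the form $(f,g)\mapsto f(t_{(1)})g(t_{(2)})$ is nondegenerate for a nonzero integral $t$ of either side (one passes between the two sides with the bijective antipode), so your choice of a right integral causes no trouble.
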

\begin{proof}
    A sketch of the proof can be found
    in~\cite[Lemma~3.5]{sm:hopf-galois-survey}.
\end{proof}
\index{module algebra!examples}
\begin{examplesbr}[\(H\)-module algebras]
    \begin{enumerate}
	\item Let \(G\) be a group acting by automorphisms on an
	      \(\R\)-algebra \(A\).  Then \(A\) is an \(\R[G]\)-module algebra
	      via the induced action: \(\R[G]\otimes A\sir A\) from the
	      \(G\)-action.  Furthermore, \(\R[G]\)-module structures on \(A\)
	      are in one to one correspondence with actions of \(G\) on \(A\)
	      via the automorphism group of the algebra \(A\). In this case
	      \(A^{\k[G]}\) is the subalgebra of elements of \(A\) which are
	      fixed by all elements of \(G\) (sum of all one element orbits of
	      the \(G\)-action).
	\item If in the previous example we assume that \(G\) is a finite
	      group and we assume that \(\R=\k\) is a field, we get that \(A\)
	      is a \(\k[G]^*\)-comodule algebra (by
	      Proposition~\ref{prop:H-comodule_algebras}) and
	      \(\k[G]^*\)-comodule algebra structures on \(A\) are in one to
	      one correspondence with \(G\)-actions on \(A\) via the
	      automorphism group of \(A\). It also follows that
	      \(A^{co\,\k[G]^*}\) is the algebra of fixed elements of the
	      corresponding \(G\)-action.
	\item This is a dual case of Example~\ref{ex:comodule_algebras}(ii).
	      Let, as before, \(A\) be a \(G\) graded \(\k\)-algebra, where
	      \(G\) is a finite group. Let \(H=\k[G]^*\), and let \(\{e_g:g\in
		      G\}\) be the dual basis of \(\k[G]^*\) to the basis
	      \(\{g:g\in G\}\) of \(\k[G]\). The comultiplication of
	      \(\k[G]^*\) can be described by the following formula:
	      \[\Delta(e_g)=\sum_{h\in G}e_{gh^{-1}}\otimes e_h\]
	      the counit is set by \(\epsilon(e_g)=0\) if \(g\neq e\), where
	      \(e\in G\) is the unit element of \(G\) and \(\epsilon(e)=1\).
	      The antipode is set by \(S(e_g)=e_{g^{-1}}\). For \(a\in A\) we
	      have \(a=\sum_{g\in G}a_g\) where \(a_g\in A_g\) for all \(g\in
		  G\).  The \(\k[G]^*\)-module algebra structure on
	      \(A=\oplus_{g\in G}A_g\) is set by \(e_g\cdot a=a_g\), where
	      \(a_g\) is the \(A_g\) component of \(a\). Indeed, the
	      \(H\)-module algebra condition is satisfied, since
	      \[e_g\cdot (ab)=(ab)_{g}=\sum_{h\in G}a_{gh^{-1}}b_{h}=\sum_{h\in G}(e_{gh^{-1}}\cdot a)(e_h\cdot b)\]
	      The subalgebra of invariants is \(A_e\) (where \(e\in G\) is the
	      unit element of the group).
	\item Let \(\mathfrak{g}\) be a \(\k\)-Lie algebra and let \(A\) be
	      a \(\k\)-algebra such that \(\mathfrak{g}\) acts on \(A\) by
	      derivations, i.e. there exists a homomorphism
	      \(\alpha:\mathfrak{g}\sir\Der_\k(A)\).  Here \(\Der_\k(A)\)
	      denotes the set of \(\k\)-linear endomorphisms of \(A\), such
	      that \(f(ab)=f(a)b+af(b)\).  Let \(\mathcal{U}(\mathfrak{g})\)
	      be the universal enveloping (Hopf) algebra of \(\mathfrak{g}\).
	      Since \(\mathcal{U}(\mathfrak{g})\) is generated (as a vector
	      space) by monomials \(g_1\dots g_n\), \(g_i\in\mathfrak{g}\), we
	      put
	      \begin{equation}\label{eq:U(g)-action}
		    (g_1\dots g_n)\cdot a=\alpha(g_1)(\dots\alpha(g_n)(a)\dots)
	      \end{equation}
	      In this way \(A\) becomes a \(\mathcal{U}(\mathfrak{g})\)-module
	      algebra. Furthermore, this sets up a one to one correspondence
	      between \(\mathcal{U}(\mathfrak{g})\)-comodule algebra
	      structures on \(A\) and \(\mathfrak{g}\) actions through
	      derivations on \(A\).  If \(\mathcal{U}(\mathfrak{g})\otimes
		  A\sir A\) is a \(\mathcal{U}(\mathfrak{g})\)-module algebra
	      structure then for each \(g\in\mathfrak{g}\) we have:
	      \(g\cdot(ab)=(g_{(1)}\cdot a)(g_{(2)}\cdot b)=(g\cdot a)b+
		  a(g\cdot b)\) since \(\Delta(g)=g\otimes 1+1\otimes g\) in
	      \(\mathcal{U}(\mathfrak{g})\). Thus
	      a \(\mathcal{U}(\mathfrak{g})\)-module algebra structure
	      restricts to an action of \(\mathfrak{g}\) through derivations,
	      which is the one we started with. Conversely, if we have an
	      \(\mathcal{U}(\mathfrak{g})\)-module algebra structure, then it
	      must satisfy the formula~\ref{eq:U(g)-action}, since it is
	      associative (it is a \(\mathcal{U}(\mathfrak{g})\)-module). Thus
	      a \(\mathcal{U}(\mathfrak{g})\)-module algebra structure is
	      uniquely determined by the underlying \(\mathfrak{g}\)-action.
	\item There is an adjoint action of \(H\) on itself, which makes \(H\)
	      an \(H\)-module algebra:
	      \[h\cdot k\coloneq\ad(h)(k)\coloneq h_{(1)}kS(h_{(2)}).\]
	      Indeed:
	      \begin{equation*}
		  \begin{split}
		    \ad(h)(kl)\coloneq h_{(1)}klS(h_{(2)})  & = h_{(1)}kS(h_{(2)})h_{(3)}lS(h_{(4)})\\
							    & = \ad(h_{(1)})(k)\ad(h_{(2)})(l)
		  \end{split}
	      \end{equation*}
	      In the case \(H=\k[G]\) it is given by \(h\cdot k\coloneq
		  hkh^{-1}\) for \(h,k\in G\), and in the case
	      \(H=\mathcal{U}(\mathfrak{g})\) it is determined by the adjoint
	      action: \(\ad(g)(h)\coloneq gh-hg\) for \(g\in\mathfrak{g},h\in
		  H\). We have \(H^H=Z(H)\) the center of \(H\). Indeed, for
	      \(h\in H^H\) and \(k\in H\) we have
	      \[kh=k_{(1)}\epsilon(k_{(2)})h=k_{(1)}hS(k_{(2)})k_{(3)}=\epsilon(k_{(1)})hk_{(2)}=hk.\]
	      The other inclusion easily follows from the antipode axiom (see
	      Definition~\ref{defi:Hopf_algebra}).
    \end{enumerate}
\end{examplesbr}

\subsection{Finite Galois field extensions}\label{ssec:finite_galois_theory_of_fields}
In this subsection we introduce Galois field extensions and show how they can
be understood using the Hopf--Galois approach.
\index{Galois extension}
\begin{definition}\label{defi:Galois_field_extention}
    Let \(\bE/\bF\) be an algebraic field extension. It is called
    \bold{Galois} if there exists a subgroup \mbox{\(G<\Aut(\bE)\)} such that
    \(\bF=\bE^G\coloneq\{e\in\bE:\forall_{g\in G}g(e)=e\}\). 
\end{definition}
If \(\bE/\bF\) is a Galois extension then it turns out that the group
\(G=\Aut(\bE/\bF)\), i.e. the group of automorphisms of \(\bE\) which preserve
the subfield \(\bF\). Let \(\bE/\bF\) be a field extension. Let us denote by
\(\Gal(\bE/\bF)\) the subgroup of \(\Aut(\bE)\) of all automorphisms \(\phi\)
such that for all \(f\in\bF\) \(\phi(f)=f\). We let
\([\bE:\bF]\coloneq\dim_\bF\bE\) denote the \bold{degree} of the field
extension \(\bE/\bF\). An extension is called \bold{finite} if the degree is
finite. It turns out that an algebraic extension is Galois if and only if
\(\bE^{\Gal(\bE/\bF)}=\bF\) and as a consequence \(\bE/\bF\) is a finite
Galois extension if and only if \(\bigl[\bE:\bF\bigr]=|\Gal(\bE/\bF)|\).
Furthermore, let us note that Galois extensions have the classical basis
property (see~\cite[chapter~4.14]{nj:basic_algebra_I}), i.e. there exists
\(e\in\bE\) such that \(\{g(e):g\in\Gal(\bE/\bF)\}\) is a basis over \(\bF\).

Now let us assume that \(\bE/\bF\) is a finite field extension, and thus
\(\Gal(\bE/\bF)\) is a finite group. We already know that the
\(\Gal(\bE/\bF)\)-action on \(\bE\) induces a \(\bF[\Gal(\bE/\bF)]\)-module
algebra structure on \(\bE\). Furthermore, let \(\k\subseteq\bF\) be a finite
field extension. Then \(\bE\) possesses
a \(\k\bigl[\Gal(\bE/\bF)\bigr]\)-module algebra structure.
\index{H-Galois extension!Galois theory for finite field extensions}
\begin{proposition}\label{prop:can_H_for_field_ext}
    Let \(\bE/\bF\) be a finite field extension and \(\k\subseteq\bF\) be
    as above. Then  the extension \(\bE/\bF\) is Galois if and only if it is
    a \(\k[G]^*\)-Hopf--Galois extension for some group \(G\), i.e.
    \(\bF=\bE^{co\,\k[G]^*}\) and the canonical map
    \[\can_\bE:\bE\otimes_\bF\bE\sir\bE\otimes \k[G]^*\]
    is bijective. Furthermore, if this is the case, then \(G=\Gal(\bE/\bF)\).
\end{proposition}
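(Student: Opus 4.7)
The plan is to translate between $\k[G]^*$-comodule algebra and $\k[G]$-module algebra structures via Proposition~\ref{prop:H-comodule_algebras}, and to close each implication with a dimension count together with one classical input from field-theoretic Galois theory.

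For the direction $(\Rightarrow)$, take $G=\Gal(\bE/\bF)$; since $\k\subseteq\bF$, each $g\in G$ is automatically a $\k$-algebra automorphism of $\bE$, so $\bE$ is a $\k[G]$-module algebra and therefore, by Proposition~\ref{prop:H-comodule_algebras}, a $\k[G]^*$-comodule algebra with coaction $\delta(a)=\sum_{g\in G}(g\cdot a)\otimes e_g$, where $\{e_g\}$ is the basis of $\k[G]^*$ dual to $G\subseteq\k[G]$. Unpacking gives $\bE^{co\,\k[G]^*}=\bE^{\k[G]}=\bE^{G}=\bF$ by the Galois hypothesis. After identifying $\bE\otimes\k[G]^*\cong\bE^{|G|}$ via the dual basis, the canonical map reads
\[
a\otimes b\;\longmapsto\;\bigl(a\,g(b)\bigr)_{g\in G}.
\]
Both sides have $\bE$-dimension $[\bE:\bF]=|G|$, so bijectivity follows once surjectivity is established, which I would deduce from Dedekind's lemma on the $\bE$-linear independence of distinct field automorphisms (equivalently, from the classical normal basis theorem recalled earlier in the text).

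Conversely, given a $\k[G]^*$-Hopf--Galois structure on $\bE/\bF$ with $\bF=\bE^{co\,\k[G]^*}$, Proposition~\ref{prop:H-comodule_algebras} delivers a $G$-action on $\bE$ by $\k$-algebra automorphisms, and the same identification $\bE^{co\,\k[G]^*}=\bE^{G}$ then forces $\bE^{G}=\bF$. By Definition~\ref{defi:Galois_field_extention} this already exhibits $\bE/\bF$ as a Galois extension, with the image of $G$ in $\Aut(\bE)$ playing the role of the Galois subgroup. To identify $G$ with $\Gal(\bE/\bF)$ itself, bijectivity of $\can_\bE$ yields $[\bE:\bF]^2=[\bE:\bF]\cdot|G|$, hence $|G|=[\bE:\bF]=|\Gal(\bE/\bF)|$. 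The remaining step, and the main obstacle of this direction, is faithfulness of the $G$-action: if distinct $g_1,g_2\in G$ acted identically on $\bE$, then the image of $\can_\bE$ would be contained in the proper $\bE$-subspace of $\bE\otimes\k[G]^*$ where the $e_{g_1}$- and $e_{g_2}$-coordinates coincide, contradicting surjectivity. Faithfulness then yields an injection $G\hookrightarrow\Gal(\bE/\bF)$ of finite groups of equal order, so the injection is an isomorphism.

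The only non-formal ingredient in the whole argument is the bijectivity of $\can_\bE$ in the direction $(\Rightarrow)$, which imports Dedekind's lemma (or equivalently the normal basis theorem) from classical Galois theory; everything else is a direct unpacking through Proposition~\ref{prop:H-comodule_algebras} combined with a dimension count.
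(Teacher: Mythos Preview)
Your proof is correct and follows essentially the same architecture as the paper's: translate via Proposition~\ref{prop:H-comodule_algebras}, invoke Dedekind's lemma together with a dimension count for the forward direction, and in the converse direction show $\bE^G=\bF$, compare $|G|=[\bE:\bF]=|\Gal(\bE/\bF)|$, and establish faithfulness of the $G$-action to conclude $G=\Gal(\bE/\bF)$. Two minor differences are worth noting. First, the paper uses Dedekind to prove \emph{injectivity} of $\can_\bE$ (via the invertibility of the matrix $(g_j(f_i))$ on an $\bF$-basis $\{f_i\}$) and then appeals to equal $\k$-dimensions, whereas you argue \emph{surjectivity} and count $\bE$-dimensions; these are equivalent. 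Second, for faithfulness the paper invokes the Kreimer--Takeuchi result that $\bE\otimes\k[G]\to\End_\bF(\bE)$ is an isomorphism, while your observation that two elements $g_1\neq g_2$ acting identically would force $\mathrm{im}\,\can_\bE$ into the proper hyperplane $\{x:x_{g_1}=x_{g_2}\}$ is more direct and self-contained. Both routes are valid; yours avoids the external citation.
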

Though the proof is well known (see for example~\cite{sm:hopf-galois-survey}),
for the sake of completeness we put it below.
\begin{proof}
    First assume that \(\bE/\bF\) is a Galois extension
    (Definition~\ref{defi:Galois_field_extention}).  Let
    \(n=|\Gal(\bE,\bF)|\), let \(\Gal(\bE/\bF)=\{g_1,\dots,g_n\}\) and let
    \(\{\phi_i\}_{i=1,\dots,n}\) be the dual basis of
    \(\k\bigl[\Gal(\bE/\bF)\bigr]\).  Since \(\bE/\bF\) is Galois it has an
    \(\bF\)-basis \(\{f_i\}_{i=1,\dots,n}\) of \(n\) elements.  The
    \(\Gal(\bE/\bF)\)-action on \(\bE\) determines a coaction as shown in
    Proposition~\ref{prop:H-comodule_algebras}, that is
    \(\delta:\bE\sir\bE\otimes \k\bigl[\Gal(\bE/\bF)\bigr]^*\),
    \(\delta(e)=\sum_{i=1}^ng_i(e)\otimes\phi_i\).  Furthermore,
    \(\bF=\bE^{\Gal(\bE/\bF)}=\bE^{co\,\k[\Gal(\bE/\bF)]^*}\).  The canonical
    map is given by \(\can_\bE(e\otimes_\bF
	e')=\sum_{i=1}^neg_i(e')\otimes\phi_i\).  Thus if \(\sum_{i=1}^n
	e_i\otimes f_i\in\ker\can_\bE\), then \(\sum_{i=1}^ne_ig_j(f_i)=0\)
    for all \(j=1,\dots,n\), since \(\{\phi_j\}_{j=1,\dots,n}\) are all
    independent.  Now let us consider the \(n\times n\)-matrix \(\mathbb{A}\)
    with entries \(\mathbb{A}_{ij}\coloneq g_j(e_i)\).  The Dedekind lemma on
    independence of automorphisms implies that its columns are independent,
    and thus it is invertible.  It follows that \(e_i=0\) for all
    \(i=1,\dots,n\).  Thus \(\can_\bE\) is injective.  Now,
    \(\dim_\bF(\bE\otimes_\bF\bE)=\bigl[\bE:\bF\bigr]^2\), hence
    \(\dim_\k(\bE\otimes_\bF\bE)=\bigl[\bE:\bF\bigr]^2\bigl[\bF:\k\bigr]\) and
    \(\dim_\k(\bE\otimes
	\k\bigl[\Gal(\bE/\bF)\bigr])=[\bE:\k]|\Gal(\bE/\bF)|=\bigl[\bE:\bF\bigr]^2\bigl[\bF:\k\bigr]\).
    The canonical map \(\can_\bE\) is an isomorphism since both
    \(\bE\otimes_\bF\bE\) and \(\bE\otimes \k\bigl[\Gal(\bE/\bF)\bigr]\) are
    of the same finite dimension.

    Now, let us  assume that \(\bE/\bF\) is \(\k[G]\)-Galois. Then by
    Proposition~\ref{prop:H-comodule_algebras} there exists a \(\k[G]\)-module
    algebra structure on \(\bE\), with \(\bE^{\k[G]}=\bE^{co\,\k[G]^*}=\bF\).
    It is uniquely determined by a \(G\)-action on \(\bE\) and moreover
    \(\bE^G=\bE^{\k[G]}=\bF\). Thus \(\bE/\bF\) is a Galois extension. Since
    \(\can_\bE\) is an isomorphism comparing the \(\k\)-dimensions of
    \(\bE\otimes_\bF\bE\) and \(\bE\otimes \k[G]\) we get
    \(\bigl[\bE:\bF\bigr]=|G|\).
    By~\cite[Thm~1.7]{hk-mt:hopf-algebras-and-galois-extensions} we get that
    \[\bE\otimes\k[G]\sir\End_\bF(\bE),\quad e\otimes g\elmap{}\bigl(\bE\ni e'\selmap{}eg(e')\in\bE\bigr)\] 
    is an isomorphism. This implies that the map \(G\ni
	g\selmap{}\bigl(e\selmap{}g(e)\bigr)\in\Aut(\bE)\) is a monomorphism.
    In consequence we can write \(G\leq\Aut(\bE)\). Furthermore,
    \(G\leq\Gal(\bE/\bF)\), since \(\bE^G=\bF\). Now, since
    \(|G|=[\bE:\bF]=|\Gal(\bE/\bF)|\) (since \(\bE/\bF\) is a Galois extension)
    we get that \(G=\Gal(\bE/\bF)\).
\end{proof}
\index{Galois extension!crossed product}
\index{crossed product!finite Galois field extension}
\begin{corollary}
    Let \(\bE/\bF\) be a finite Galois extension of fields.  Then \(\bE\) is
    a crossed product, i.e. there exists a cocycle \(\sigma\) and a weak
    action of \(\k\bigl[\Gal(\bE/\bF)\bigr]^*\) on \(\bF\) such that
    \[\bE\cong\bF\#_\sigma \k\bigl[\Gal(\bE/\bF)\bigr]^*.\]
\end{corollary}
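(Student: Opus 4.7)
The plan is to combine two results already established in this section: Proposition~\ref{prop:can_H_for_field_ext}, which identifies a finite Galois field extension with a \(\k[G]^*\)-Hopf--Galois extension for \(G=\Gal(\bE/\bF)\), and Theorem~\ref{thm:characterisation_of_cleftness}, which equates the crossed product property with being \(H\)-Galois with the normal basis property (under flatness of the coinvariants subalgebra, which here is automatic since~\(\bF\) is a field over~\(\k\)). Hence the only real content to verify is that the \(H\)-extension \(\bE/\bF\) has the normal basis property.

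First, by Proposition~\ref{prop:can_H_for_field_ext}, the finite Galois extension \(\bE/\bF\) carries the structure of a \(\k[G]^*\)-Hopf--Galois extension with \(G=\Gal(\bE/\bF)\); equivalently, by Proposition~\ref{prop:H-comodule_algebras}, \(\bE\) is a left \(\k[G]\)-module algebra with \(\bE^{\k[G]}=\bF\), and the \(G\)-action coincides with the natural action of the Galois group on \(\bE\).

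Next I would invoke the classical normal basis theorem for finite Galois field extensions (recalled in Subsection~\ref{ssec:finite_galois_theory_of_fields}): there exists \(e\in\bE\) such that \(\{g(e):g\in G\}\) is an \(\bF\)-basis of \(\bE\). Via the lemma comparing the classical and Hopf-theoretic normal basis properties (using that \((\k[G]^*)^*\cong\k[G]\) and that the \(\k[G]\)-action on \(\bE\) agrees with the Galois action), this exhibits \(\bE\) as \(\bF\otimes\k[G]^*\) as a left \(\bF\)-module and right \(\k[G]^*\)-comodule, which is precisely the normal basis property in the sense of Definition~\ref{defi:cleft_&_normal_basis}.

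Finally, with \(A=\bE\), \(H=\k[G]^*\), and \(B=A^{co\,H}=\bF\) (flat as a \(\k\)-algebra), the hypotheses of Theorem~\ref{thm:characterisation_of_cleftness} are met and \emph{(iii)} holds; the implication \emph{(iii)}\(\Rightarrow\)\emph{(i)} then yields a weak action of \(\k[G]^*\) on \(\bF\) and an invertible cocycle \(\sigma\) such that \(\bE\cong\bF\#_\sigma\k[G]^*\). There is no real obstacle beyond correctly matching up the classical normal basis theorem with the Hopf-theoretic formulation; everything else is a direct application of the previously proved results.
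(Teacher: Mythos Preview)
Your proof is correct and follows the same route as the paper: establish that \(\bE/\bF\) is \(\k[\Gal(\bE/\bF)]^*\)-Hopf--Galois via Proposition~\ref{prop:can_H_for_field_ext}, invoke the classical normal basis theorem, and then apply Theorem~\ref{thm:characterisation_of_cleftness}. The paper's proof is just the one-line version of what you wrote.
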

\begin{proof}
    Finite Galois extensions are
    \(\k\bigl[\Gal(\bE/\bF)\bigr]^*\)-Hopf--Galois and have a normal basis,
    thus the result follows from
    Theorem~\ref{thm:characterisation_of_cleftness}.
\end{proof}

\chapter{Modules with the intersection property}\label{chap:modules_with_int_property}
\setcounter{section}{1}

In this chapter we investigate some properties of the functor \(M\otimes-\)
for an \(\R\)-module \(M\). The result obtained will play a crucial role in
further studies.  We first recall a result that for a flat \(\R\)-module \(M\)
the functor \(M\otimes-\) preserves all finite intersections.  Then we
consider a new property, which we call the \emph{intersection property}
(Definition~\ref{defi:intersection_property} on
page~\pageref{defi:intersection_property}).  A module has this property if the
tensor functor preserves all intersections, not just finite ones.  This
property will be used later to show that the lattice of subcomodules of
a \(C\)-comodule is algebraic if the coalgebra \(C\) is a flat Mittag--Leffler
module.  Moreover, this property will play a vital role in the construction of
a Galois correspondence for comodule algebras over a ring.  We show that
direct sums of modules which have the intersection property also have it, thus
free modules have it and we show that direct summands of modules with this
property also share it.  Hence projective modules have the intersection
property. In Proposition~\ref{prop:Mittag-Leffler_intersection} (on
page~\pageref{prop:Mittag-Leffler_intersection}) we show that flat
Mittag--Leffler modules also have the intersection property.  The
Mittag--Leffler condition was first studied
in~\citep{mr-lg:platitude-et-projectivte}.  The authors have shown that flat
Mittag--Leffler modules preserve filtered limits of monomorphisms
(Proposition~\ref{prop:ML_and_limits} on page~\pageref{prop:ML_and_limits}).
Since intersection is this kind of limit flat Mittag--Leffler modules have the
intersection property.  We give here an independent proof of this fact.  Based
on the results of \citeauthor{mr-lg:platitude-et-projectivte} we note that
flat modules have this property if and only if they satisfy the
Mittag--Leffler condition (Corollary~\ref{cor:mittag-leffler} on
page~\pageref{cor:mittag-leffler}).  \citet{dh-jt:mittag-leffler} showed that
\(\kappa\)-projective modules (see Definition~\ref{defi:kappa_projective} on
page~\pageref{defi:kappa_projective}) are flat Mittag--Leffler and hence they
possess the intersection property.  We also prove that locally projective
modules (Definition~\ref{prop:int_prop_for_locally_projectives} on
page~\pageref{prop:int_prop_for_locally_projectives}) have this property.
Then we show that the intersection property is stable under pure submodules
(Proposition~\ref{prop:int_prop_pure-submodules} on
page~\pageref{prop:int_prop_pure-submodules}).  The very last
Theorem~\ref{thm:intersection_prop_for_all_modules} (on
page~\pageref{thm:intersection_prop_for_all_modules}) shows a condition that
must be fulfilled in order for every flat \(R\)-module to have the
intersection property.  Though we rather tend to think that this theorem might
be useful the other way around.  Knowing a ring for which all the flat modules
have the intersection property we gain another tool (a class of exact
sequences) to study its modules.  For example every flat right \(R\)-module is
projective (and hence has the intersection property) if (and only if) the ring
is right perfect (by a theorem of Bass).

\bigskip
\index{intersection property!flat modules}
\begin{proposition}[{\cite[40.16(2)]{tb-rw:corings-and-comodules}}]\label{prop:flat_intersection}
	    Let \(M',M''\subseteq M\) be \(\R\)-submodules and let
	    \(K\) be a flat \(\R\)-module. Then 
	    \[\bigl(M'\otimes K\bigr)\cap\bigl(M''\otimes K\bigr)=(M'\cap M'')\otimes K\]
\end{proposition}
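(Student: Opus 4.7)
The plan is to exploit flatness of $K$ by finding a short exact sequence whose first term is $M'\cap M''$ and whose middle terms are $M'$ and $M''$, then tensoring with $K$ and reading off the intersection as a kernel.

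Concretely, I would start from the exact sequence of $R$-modules
\begin{equation*}
    0\sir M'\cap M''\xrightarrow{\Delta}M'\oplus M''\xrightarrow{\sigma}M,
\end{equation*}
where $\Delta(x)=(x,x)$ (using the inclusions $M'\cap M''\subseteq M'$ and $M'\cap M''\subseteq M''$) and $\sigma(y,z)=y-z$ (using the inclusions $M',M''\subseteq M$). Exactness at $M'\cap M''$ is immediate from the definition of $\Delta$, and exactness at $M'\oplus M''$ follows since $\sigma(y,z)=0$ in $M$ means $y=z$ as elements of $M$, hence $y=z\in M'\cap M''$.

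Since $K$ is flat the functor $-\otimes K$ preserves this exactness, so
\begin{equation*}
    0\sir (M'\cap M'')\otimes K\xrightarrow{\Delta\otimes K}(M'\otimes K)\oplus(M''\otimes K)\xrightarrow{\sigma\otimes K}M\otimes K
\end{equation*}
is exact. Again by flatness of $K$ the canonical maps $M'\otimes K\mpr{}M\otimes K$ and $M''\otimes K\mpr{}M\otimes K$ are monomorphisms, so we may view both $M'\otimes K$ and $M''\otimes K$ as submodules of $M\otimes K$. Under this identification $\ker(\sigma\otimes K)$ consists of those pairs $(u,v)\in(M'\otimes K)\oplus(M''\otimes K)$ with $u=v$ in $M\otimes K$, which is exactly the diagonal copy of $(M'\otimes K)\cap(M''\otimes K)$.

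Combining the two descriptions of this kernel gives the desired natural isomorphism $(M'\cap M'')\otimes K\cong(M'\otimes K)\cap(M''\otimes K)$, induced by the inclusion $M'\cap M''\hookrightarrow M$. The only subtle point, which I would check explicitly, is that the map $\Delta\otimes K$ really lands in the diagonal of $(M'\otimes K)\cap(M''\otimes K)$ viewed inside $M\otimes K$; this is clear because $\Delta$ factors through both $M'\hookrightarrow M$ and $M''\hookrightarrow M$ with equal images. No step is genuinely difficult: the main idea is simply choosing the right short exact presentation of $M'\cap M''$ so that flatness does the work.
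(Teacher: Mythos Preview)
Your proof is correct and takes a genuinely different route from the paper's. The paper sets up the two-row diagram
\[
\begin{tikzpicture}
    \matrix[column sep=8mm,row sep=8mm]{
\node (B1) {\(0\)}; & \node (B2) {\(\bigl(M'\cap M''\bigr)\otimes K\)}; & \node (B3) {\(M'\otimes K\)}; & \node (B4) {\(M'/(M'\cap M'')\otimes K\)}; & \node (B5) {\(0\)}; \\
\node (C1) {\(0\)}; & \node (C2) {\(M''\otimes K\)};       & \node (C3) {\(M\otimes K\)};  & \node (C4) {\(M/M''\otimes K\)};  & \node (C5) {\(0\)}; \\
    };
    \draw[->] (B2) -- (C2);
    \draw[->] (B3) -- (C3);
    \draw[->] (B4) -- (C4);
    \draw[->] (B1) -- (B2);
    \draw[->] (B2) -- (B3);
    \draw[->] (B3) -- (B4);
    \draw[->] (B4) -- (B5);
    \draw[->] (C1) -- (C2);
    \draw[->] (C2) -- (C3);
    \draw[->] (C3) -- (C4);
    \draw[->] (C4) -- (C5);
\end{tikzpicture}
\]
with exact rows (by flatness) and monic verticals, and then argues by a diagram chase that the left-hand square is a pullback; since pullbacks along monomorphisms in a module category are intersections, the claim follows. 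Your approach instead packages \(M'\cap M''\) as the kernel of the difference map \(M'\oplus M''\to M\) and reads off the result directly after tensoring. Both arguments exploit the same principle---flat tensoring preserves the relevant finite limit---but yours avoids quotient modules and the explicit pullback verification, making it slightly more self-contained; the paper's version, on the other hand, is closer in spirit to the later Proposition on Mittag--Leffler modules, where a similar two-row diagram and a snake-lemma argument are used.
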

Since the proof in~\cite{tb-rw:corings-and-comodules} is omitted we put it
here.
\begin{proof}
	    Consider the following commutative diagram:
		\begin{center}
		    \hspace*{-1cm}\begin{tikzpicture}
			\matrix[column sep=1cm,row sep=1cm]{
    \node (B1) {\(0\)}; & \node (B2) {\(\bigl(M'\cap M''\bigr)\otimes K\)}; & \node (B3) {\(M'\otimes K\)}; & \node (B4) {\(M'/(M'\cap M'')\otimes K\)}; & \node (B5) {\(0\)}; \\
    \node (C1) {\(0\)}; & \node (C2) {\(M''\otimes K\)};       & \node (C3) {\(M\otimes K\)};  & \node (C4) {\(M/M''\otimes K\)};  & \node (C5) {\(0\)}; \\
			};
			\draw[->] (B2) --node[left]{\(i_2\)} (C2);
			\draw[->] (B3) --node[right]{\(i'\otimes\id_K\)} (C3);
			\draw[->] (B4) -- (C4);
			\draw[->] (B1) -- (B2);
			\draw[->] (B2) --node[above]{\(i_1\)} (B3);
			\draw[->] (B3) --node[above]{\(p'\otimes\id_K\)} (B4);
			\draw[->] (B4) -- (B5);
			\draw[->] (C1) -- (C2);
			\draw[->] (C2) --node[below]{\(i''\otimes\id_K\)} (C3);
			\draw[->] (C3) --node[below]{\(p''\otimes\id_K\)}(C4);
			\draw[->] (C4) -- (C5);
		    \end{tikzpicture}
		\end{center}
		where both rows are exact and every vertical map is
		a monomorphism (monomorphisms in the category of modules are
		just injective homomorphisms).  Thus the left commutative
		square is a pullback: let as assume that we have two
		homomorphisms: \(f:N\sir M'\otimes K\) and \(g:N\sir
		    M''\otimes K\) such that \((i'\otimes\id_K)\circ
		    f=(i''\otimes\id_K)\circ g\). Then one can easily see that
		\((p'\otimes\id_K)\circ f=0\) (since the far most vertical
		homomorphism is a monomorphism). Thus there exists a map
		\(h:N\sir (M'\cap M'')\otimes K\) such that \(f=i_1\circ h\).
		By commutativity of the left square and since
		\(i''\otimes\id_K\) is a monomorphism it follows that
		\(i_2\circ h=g\). Uniqueness of \(h\) is a consequence of the
		fact that \(i_1\) is an injection. The proposition follows
		since pullbacks in the category of modules along monomorphisms
		are given by intersection.
\end{proof}
Furthermore, it is not hard to show that tensoring with a flat module
preserves all finite limits. Indeed, tensoring with a flat module preserves
equalisers and finite products (which in the category of \(\R\)-modules are
equal to finite sums, which are preserved by the tensor functor since it has
a right adjoint).  In this chapter we show that there is a large class of
modules for which the tensor product functor preserves arbitrary
intersections. We will also construct examples of flat and faithfully flat
modules without this property.

Let $N'$ be a submodule of $N$, $i:N'\subseteq N$, and let $M$ be an
$\R$-module. Then the \bold{canonical image} of $M\otimes N'$ in $M\otimes N$
is the image of \(M\otimes N'\) under the map $\id_M\otimes i$. It will be
denoted by $\im(M\otimes N')$.
\index{intersection property}
\begin{definition}\label{defi:intersection_property}
    Let \(M,N\) be \(\R\)-modules, and let \((N_\alpha)_{\alpha\in I}\) be
    a family of \(\R\)-submodules of \(N\).  We say that a module \(M\) has
    the \bold{intersection property with respect to \(N\)} if the
    homomorphism:
       \[\im\bigl(M\otimes\bigl(\bigcap_{\alpha\in I}N_\alpha\bigr)\bigr)\ir \bigcap_{\alpha\in I}\im\bigl(M\otimes N_\alpha\bigr)\] 
    is an isomorphism for any family of submodules \((N_\alpha)_{\alpha\in
	    I}\). We say that \(M\) has the \bold{intersection property} if
    the above condition holds for any \(\R\)-module~\(N\).
\end{definition}
Note that if \(M\) is flat then it has the intersection property if and only
if the map \(M\otimes(\bigcap_{\alpha\in I}N_\alpha)\ir \bigcap_{\alpha\in
	I}(M\otimes N_\alpha)\) is an isomorphism for any family
\(\{N_\alpha\}\) of submodules of a module \(N\).

\index{intersection property!direct sums}
\begin{proposition}\label{prop:ip_direct_sums}
    The intersection property is closed under direct sums.
\end{proposition}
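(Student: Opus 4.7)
The plan is to reduce the statement to the analogous properties for the individual summands by exploiting the commutativity of tensor product with direct sums. Let $\{M_i\}_{i\in I}$ be a family of $R$-modules each having the intersection property, let $M \coloneq \bigoplus_{i\in I} M_i$, and fix a module $N$ with a family of submodules $(N_\alpha)_{\alpha \in J}$. I first want to unravel both sides of the desired equality
\[
\im\bigl(M \otimes \bigcap_{\alpha} N_\alpha\bigr) = \bigcap_\alpha \im(M \otimes N_\alpha)
\]
using the canonical isomorphism $M \otimes N \cong \bigoplus_i (M_i \otimes N)$, under which $\im(M \otimes N_\alpha)$ corresponds to $\bigoplus_i \im(M_i \otimes N_\alpha) \subseteq \bigoplus_i (M_i \otimes N)$.

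The key auxiliary observation, which I would state and verify first, is that for any family of submodules $A_i^\alpha \subseteq B_i$ indexed over $i \in I$ and $\alpha \in J$ one has
\[
\bigcap_\alpha \bigoplus_i A_i^\alpha = \bigoplus_i \bigcap_\alpha A_i^\alpha
\]
as submodules of $\bigoplus_i B_i$. This is a direct check: an element $(x_i)_i$ of the left-hand side has, in each coordinate, $x_i \in A_i^\alpha$ for every $\alpha$, hence $x_i \in \bigcap_\alpha A_i^\alpha$; the reverse inclusion is immediate from the definition of the direct sum.

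Applying this to $A_i^\alpha = \im(M_i \otimes N_\alpha) \subseteq M_i \otimes N = B_i$, I get
\[
\bigcap_\alpha \im(M \otimes N_\alpha) = \bigcap_\alpha \bigoplus_i \im(M_i \otimes N_\alpha) = \bigoplus_i \bigcap_\alpha \im(M_i \otimes N_\alpha).
\]
Now the intersection property of each individual $M_i$ lets me rewrite $\bigcap_\alpha \im(M_i \otimes N_\alpha) = \im\bigl(M_i \otimes \bigcap_\alpha N_\alpha\bigr)$, and finally collecting the direct sum back through the canonical isomorphism gives $\bigoplus_i \im\bigl(M_i \otimes \bigcap_\alpha N_\alpha\bigr) = \im\bigl(M \otimes \bigcap_\alpha N_\alpha\bigr)$, which is the desired equality.

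There is no real obstacle here: the argument is essentially bookkeeping with the fact that $-\otimes N$ commutes with arbitrary coproducts of modules, combined with the elementary distributivity of intersections over direct sums of submodules. The only point that requires any care is keeping track of the canonical identifications so as not to confuse $\im(M\otimes N_\alpha)$ with $M \otimes N_\alpha$ itself (since $M$ is not assumed flat), which is why the statement of the intersection property is phrased in terms of canonical images in the first place.
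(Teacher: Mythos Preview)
Your proof is correct and follows essentially the same approach as the paper: both arguments reduce to the identification $\bigcap_\alpha \im\bigl((\bigoplus_i M_i)\otimes N_\alpha\bigr) \cong \bigoplus_i \bigcap_\alpha \im(M_i\otimes N_\alpha)$ and then invoke the intersection property of each $M_i$. The only cosmetic difference is that the paper establishes this identification by explicitly tracking the canonical projections $\pi_i$ and sections $s_i$ through the intersections, whereas you isolate the underlying lattice-theoretic fact $\bigcap_\alpha \bigoplus_i A_i^\alpha = \bigoplus_i \bigcap_\alpha A_i^\alpha$ as a standalone observation, which makes the write-up a bit cleaner.
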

\begin{proof}
    Let \(X=\oplus_{i\in I}X_i\), be a direct sum of modules with the
    intersection property. We let \(\pi_i:\oplus_{i\in I}X_i\sir X_i\) be the
    canonical projection on the \(i\)-th factor and \(s_i:X_i\sir\oplus_{i\in
	    I}X_i\) be the canonical section. Let \((N_\alpha)_{\alpha\in J}\)
    be a family of submodules of an \(\R\)-module \(N\). Then we have a split
    epimorphism \((\oplus_{i\in I}\pi_i)\otimes\id_N\) with a right inverse
    \((\oplus_{i\in I}s_i)\otimes\id_N\). Also for each \(\alpha\in J\) the
    map \((\oplus_{i\in I}\pi_i)\otimes\id_{N_\alpha}\) is a split epimorphism
    with right inverse \((\oplus_{i\in I}s_i)\otimes\id_{N_\alpha}\).
    Furthermore, we have a family of split epimorphisms, whose sections are
    jointly surjective:
    \begin{center}
	\begin{tikzpicture}
	    \matrix[column sep=2cm]{
		\node (A) {\(\im\left((\oplus_{i\in I}X_i)\otimes N_\alpha\right)\)}; & \node (B) {\(\im(X_i\otimes N_\alpha)\)};\\
	    };
	    \draw[->>] (A) --node[below]{\(\pi_i\otimes\id_{N}\)} (B);
	    \draw[<-<] ($(A)+(1.4cm,3mm)$) .. controls +(8mm,5mm) and +(-8mm,5mm) ..node[above]{\(s_i\otimes\id_{N}\)} ($(B)+(-1.0cm,3mm)$);
	\end{tikzpicture}
    \end{center}
    They induce the following family of projections with jointly surjective
    sections:
    \begin{center}
	\begin{tikzpicture}
	    \matrix[column sep=2cm]{
		\node (A) {\(\bigcap_{\alpha\in J}\im\left((\oplus_{i\in I}X_i)\otimes N_\alpha\right)\)}; & \node (B) {\(\bigcap_{\alpha\in J}\im(X_i\otimes N_\alpha)\)};\\
	    };
	    \draw[->>] (A) --node[below]{\(\pi_i\otimes\id_{N}\)} (B);
	    \draw[<-<] ($(A)+(1.8cm,4mm)$) .. controls +(8mm,5mm) and +(-8mm,5mm) ..node[above]{\(s_i\otimes\id_{N}\)} ($(B)+(-1.4cm,4mm)$);
	\end{tikzpicture}
    \end{center}
    For this let \(x\in\bigcap_{\alpha\in J}\im\left((\oplus_{i\in
		I}X_i)\otimes N_\alpha\right)\). Then, for each \(\alpha\in
	J\), there exists \(y_\alpha\in\oplus_{i\in I}\im(X_i\otimes
	N_\alpha)\) such that \(\left(\oplus_{i\in
		I}s_i\otimes\id_N\right)(y_\alpha)=\sum_{i\in
	    I}s_i\otimes\id_{N}(y_\alpha)=x\). Since \(\oplus_{i\in
	    I}s_i\otimes\id_N\) is a monomorphism we get
    \(y=y_\alpha\in\bigcap_{\alpha\in J}\oplus_{i\in I}\im(X_i\otimes
	N_\alpha)\) for all \(\alpha\in J\).  It follows that:
    \[\bigcap_{\alpha\in J}\im\left((\oplus_{i\in I}X_i)\otimes N_\alpha\right)\lmpr{\oplus_{i\in I}\pi_i\otimes\id_N}\oplus_{i\in I}\bigcap_{\alpha\in J}\im(X_i\otimes N_\alpha)\] 
    is an isomorphism with inverse \(\oplus_{i\in I}s_i\otimes\id_N\). 
    Now the proposition will follow from the commutativity of the diagram:
    \begin{center}
	{\hfill\begin{tikzpicture}
	    \matrix[column sep=-1.2cm,row sep=1.3cm]{
	    \node (A0) {\(\im\left((\oplus_{i\in I}X_i)\otimes(\bigcap_{\alpha\in J}N_\alpha)\right)\)}; &                                                  & \node (B0) {\(\bigcap_{\alpha\in J}\im\left((\oplus_{i\in I}X_i)\otimes N_\alpha\right)\)};\\
	    \node (A1) {\(\im(\oplus_{i\in I}(X_i\otimes(\bigcap_{\alpha\in J}N_\alpha)))\)}; & & \node (B2) {\(\oplus_{i\in I}\bigcap_{\alpha\in J}\im(X_i\otimes N_\alpha)\)};\\
	    & \node (A2) {\(\oplus_{i\in I}\im(X_i\otimes(\bigcap_{\alpha\in J}N_\alpha))\)}; &\\
	    };
	    \draw[->] (A0) -- (B0);
	    \draw[->] (A0) --node[left]{\(\simeq\)} (A1);
	    \draw[->] (A1) --node[below]{\(=\)} (A2);
	    \draw[->] (A2) --node[below]{\(\simeq\)} (B2);
	    \draw[->] (B2) --node[right]{\(\simeq\)} (B0);
	\end{tikzpicture}\nolinebreak[4]
	\hfill\refstepcounter{equation}\raisebox{23mm}{(\theequation)}\label{diag:proof-sums-and-intersection}}
    \end{center}
    We will go around this diagram from the top left corner to the top
    right one and prove that all the maps on the way are isomorphisms. The
    first map is an isomorphism since tensor product commutes with
    colimits. 
    Clearly the second map is an isomorphism as well. 
    The bottom right arrow in~\eqref{diag:proof-sums-and-intersection} is an
    isomorphism since all \(X_i\) (\(i\in I\)) have the intersection property
    and we already showed that the last homomorphism is an isomorphism.
\end{proof}
\index{intersection property!direct summands}
\begin{proposition}\label{prop:ip_direct_summands}
    The intersection property is stable under taking direct summands.
\end{proposition}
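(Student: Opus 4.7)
The plan is to transfer the intersection property from $X$ to $Y$ by exploiting a splitting $X \simeq Y\oplus Z$ with section $s\colon Y\hookrightarrow X$ and retraction $\pi\colon X\twoheadrightarrow Y$ satisfying $\pi\circ s=\id_Y$. Tensoring with any module $N$ preserves this splitting, so $s\otimes\id_N$ is a split monomorphism with left inverse $\pi\otimes\id_N$. Given a family of submodules $(N_\alpha)_{\alpha\in I}$ of an $R$-module $N$ with inclusions $i_\alpha\colon N_\alpha\hookrightarrow N$ and $j\colon\bigcap_\alpha N_\alpha\hookrightarrow N$, the key is to check that $s\otimes\id_N$ and $\pi\otimes\id_N$ restrict cleanly to the canonical images.

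First I would record the naturality squares
\[ (s\otimes\id_N)\circ(\id_Y\otimes i_\alpha)=(\id_X\otimes i_\alpha)\circ(s\otimes\id_{N_\alpha}) \]
and the analogous one with $\pi$ in place of $s$. These force $s\otimes\id_N$ to carry $\im(Y\otimes N_\alpha)$ into $\im(X\otimes N_\alpha)$ and, dually, $\pi\otimes\id_N$ to carry $\im(X\otimes N_\alpha)$ into $\im(Y\otimes N_\alpha)$. Intersecting over $\alpha$, both maps restrict to morphisms between $\bigcap_\alpha\im(Y\otimes N_\alpha)$ and $\bigcap_\alpha\im(X\otimes N_\alpha)$, the restriction of $\pi\otimes\id_N$ being a left inverse of the restriction of $s\otimes\id_N$.

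The core of the proof is then a short diagram chase. Take $y\in\bigcap_\alpha\im(Y\otimes N_\alpha)$; its image $(s\otimes\id_N)(y)$ lies in $\bigcap_\alpha\im(X\otimes N_\alpha)$, which coincides with $\im(X\otimes\bigcap_\alpha N_\alpha)$ because $X$ has the intersection property. Writing $(s\otimes\id_N)(y)=(\id_X\otimes j)(z)$ for some $z\in X\otimes\bigcap_\alpha N_\alpha$, I would apply $\pi\otimes\id_N$ and use $\pi s=\id_Y$ together with naturality of $\pi\otimes-$ to obtain
\[ y=(\pi\otimes\id_N)(\id_X\otimes j)(z)=(\id_Y\otimes j)\bigl((\pi\otimes\id_{\bigcap_\alpha N_\alpha})(z)\bigr), \]
so that $y\in\im(Y\otimes\bigcap_\alpha N_\alpha)$. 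The reverse inclusion is automatic, and this is exactly the intersection property for $Y$ with respect to $N$.

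The argument is entirely a naturality chase with a split pair, so no step should present a real obstacle. The one subtlety worth being careful about is that, without assuming $Y$ flat, $\im(Y\otimes N_\alpha)$ must be treated as a subobject of $Y\otimes N$ rather than identified with $Y\otimes N_\alpha$ itself; but the splitting $(s,\pi)$ is sufficiently natural in its second argument that this bookkeeping is harmless and the transfer from $X$ to $Y$ goes through mechanically.
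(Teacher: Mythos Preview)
Your proposal is correct and takes essentially the same approach as the paper: both exploit the splitting of $X$ to transfer the intersection property to the summand. The paper packages this as a chain of isomorphisms using both summands $M\oplus M'$ simultaneously, whereas you do an equivalent element chase using only the section/retraction pair, but the underlying idea is identical.
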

\begin{proof}
    Let \(M\) be a direct summand in a module \(P\) which has the intersection
    property. Let \(M'\) be a complement of \(M\) in \(P\). Then we have
    a chain of isomorphisms:
    \begin{align*}
	\im\Bigl(M\otimes\bigcap_{\alpha\in J}N_\alpha\Bigr)\oplus\im\Bigl(M'\otimes\bigcap_{\alpha\in J}N_\alpha\Bigr) & \simeq \im\Bigl((M\oplus M')\otimes(\bigcap_{\alpha\in J}N_\alpha)\Bigr) \\
	                                                   & \hspace{-6mm}=\bigcap_{\alpha\in J}\im\Bigl((M\oplus M')\otimes N_\alpha\Bigr) \\
	                                                   & \hspace{-6mm}\simeq\bigcap_{\alpha\in J}\im\left(M\otimes N_\alpha\oplus M'\otimes N_\alpha\right) \\
	                                                   & \hspace{-6mm}\simeq\bigcap_{\alpha\in J}\im\left(M\otimes N_\alpha\right)\oplus\bigcap_{\alpha\in J}\im(M'\otimes N_\alpha)
    \end{align*}
    Since every isomorphism in the above diagram commutes with projection onto
    the first and second factor the composition also does. Thus it is the direct
    sum of the two natural maps: 
    \[\im\Bigl(M\otimes\bigcap_{\alpha\in J}N_\alpha\Bigr)\sir\bigcap_{\alpha\in J}\im\left(M\otimes N_\alpha\right),\] 
    \[\im\Bigl(M'\otimes\bigcap_{\alpha\in J}N_\alpha\Bigr)\sir\bigcap_{\alpha\in J}\im\left(M'\otimes N_\alpha\right).\] 
    It follows that both maps are isomorphisms, hence both \(M\) and \(M'\)
    have the intersection property. 
\end{proof}
Let us note that \(\R\) itself has the intersection property (since
\(\R\otimes -\) is naturally isomorphic to the identity functor). Every free
\(\R\)-module has the intersection property by
Proposition~\ref{prop:ip_direct_sums} and by
Proposition~\ref{prop:ip_direct_summands} every projective \(\R\)-module has
it too.  Under one of the following conditions on the ring~\(\R\): \(\R\) is
a left noetherian local ring, or \(\R\) is a domain satisfying the strong rank
condition, i.e. for any \(n\in\bN\), any set of \(n+1\) elements of \(\R^n\)
is linearly dependent, then any finitely generated flat left \(\R\)-module is
projective (see~\cite[Thm~4.38]{tl-modules}). Thus it has the
\emph{intersection property}.
\index{Mittag--Leffler module}
\begin{definition}\label{defi:Mittag-Leffler}
    Let \(M\) be an \(R\)-module.  It is a \bold{Mittag--Leffler module} if for
    any family of \(R\)-modules \(M_\lambda\) (\(\lambda\in\Lambda\)) the
    canonical map
    \[M\otimes\prod_{\lambda\in\Lambda}M_\lambda\sir\prod_{\lambda\in\Lambda}\bigl(M\otimes M_\lambda\bigr),\ m\otimes(m_\lambda)_{\lambda\in\Lambda}\selmap{}(m\otimes m_\lambda)_{\lambda\in\Lambda}\]
    where \(m\in M\) and \(m_\lambda\in M_\lambda\) for each
    \(\lambda\in\Lambda\), is a monomorphism. The above condition is also
    called the Mittag--Leffler condition. 
\end{definition}
\begin{proposition}[{\cite[Cor.~2.1.7]{mr-lg:platitude-et-projectivte}}]\label{prop:ML_and_limits}
    Let \(M\) be a Mittag--Leffler \(\R\)-module (where \(\R\) is a not
    necessarily commutative ring).  For every projective filtered system of
    \(\R^\op\)-modules \((Q_r,u_{rs})\) the canonical map \((\lim
	Q_r)\otimes_\R M\ir\lim(Q_r\otimes_\R M)\) is injective.  Furthermore,
    it is bijective if all the maps \(u_{rs}\) are injective. 
\end{proposition}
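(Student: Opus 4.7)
The plan is to reduce the statement to the Mittag--Leffler property applied to products by presenting $\lim Q_r$ as a kernel. Recall that for any projective filtered system $(Q_r, u_{rs})$ there is a canonical left exact sequence
\[
0 \longrightarrow \lim Q_r \longrightarrow \prod_r Q_r \xrightarrow{\Phi} \prod_{r \leq s} Q_s,
\]
with $\Phi\bigl((q_r)\bigr)_{r\leq s} = q_s - u_{rs}(q_r)$. I would apply $-\otimes_\R M$ and compare with the analogous sequence for the system $(Q_r \otimes M)$; this yields a commutative diagram whose bottom row is exact at the middle term (the kernel definition of $\lim$) and whose two right-hand vertical arrows
\[
\beta: \bigl(\prod Q_r\bigr)\otimes M \longrightarrow \prod(Q_r\otimes M), \qquad \gamma: \bigl(\prod_{r\leq s} Q_s\bigr)\otimes M \longrightarrow \prod_{r\leq s}(Q_s\otimes M)
\]
are injective by the Mittag--Leffler hypothesis.

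To deduce injectivity of $\alpha: (\lim Q_r)\otimes M \to \lim(Q_r\otimes M)$, the plan is to observe that the composition $\iota \circ \alpha$, where $\iota: \lim(Q_r\otimes M) \hookrightarrow \prod(Q_r\otimes M)$ is the canonical inclusion, factors as $\beta \circ a$ with $a$ induced by the inclusion $\lim Q_r \hookrightarrow \prod Q_r$. Given $x \in \ker\alpha$, one then has $\beta(a(x)) = 0$, whence $a(x) = 0$ by injectivity of $\beta$. The delicate step is to upgrade $a(x) = 0$ to $x = 0$ inside $(\lim Q_r)\otimes M$: I would work with an explicit presentation $x = \sum_i l_i \otimes m_i$ (with $l_i \in \lim Q_r$, $m_i \in M$) and combine Mittag--Leffler cancellation with the componentwise vanishing $\sum_i \pi_r(l_i)\otimes m_i = 0$ in each $Q_r\otimes M$ to produce relations that already witness the vanishing of $x$ inside the limit tensor.

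For the bijectivity claim when all $u_{rs}$ are injective, the injective transition maps give an intersection-style description of $\lim Q_r$ along the cofinal sub-systems. I would apply right exactness of $-\otimes M$ to a short exact presentation $0 \to \lim Q_r \to \prod Q_r \to \im\Phi \to 0$ and exploit the interaction of the Mittag--Leffler property with such intersections to lift every compatible family in $\lim(Q_r\otimes M)$ back into $(\lim Q_r)\otimes M$. The main obstacle throughout is the passage from ``$a(x) = 0$ in the product tensor'' to ``$x = 0$ in the limit tensor'', because the inclusion $\lim Q_r \hookrightarrow \prod Q_r$ is a monomorphism that need not be pure; the full strength of Mittag--Leffler (rather than mere flatness) is exactly what makes this passage go through, and this is where I expect the real content of the argument to lie.
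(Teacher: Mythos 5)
Your outline does not yet contain a proof; the decisive step is missing in both halves. For injectivity, the reduction through \(\beta\) gains nothing: since \(\iota\) is a monomorphism and \(\beta\) is injective by the Mittag--Leffler hypothesis, one has \(\ker\alpha=\ker a\) automatically, so ``\(\beta(a(x))=0\), hence \(a(x)=0\)'' merely restates the problem. What is to be proved is exactly that \(a:(\lim Q_r)\otimes M\ir(\prod_r Q_r)\otimes M\) is injective, i.e.\ that tensoring the (generally non-pure) inclusion \(\lim Q_r\subseteq\prod_r Q_r\) with \(M\) stays injective; the product-form definition of Mittag--Leffler does not hand you this, and the promised ``Mittag--Leffler cancellation'' on a presentation \(\sum_i l_i\otimes m_i\) is never exhibited. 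Carrying it out is not a routine manipulation: one needs a finer characterisation of the Mittag--Leffler property (Raynaud--Gruson's dominated systems of finitely presented modules, or, for flat Mittag--Leffler modules, the fact that any finite subset is contained in a suitable projective/pure submodule, as used elsewhere in this chapter). You correctly diagnose that the failure of purity is the crux and that full Mittag--Leffler strength must enter there, but diagnosing the obstacle is not the same as overcoming it, and as written the central claim is left unproven.

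The bijectivity half is likewise only a plan, and the route you sketch (right exactness applied to \(0\ir\lim Q_r\ir\prod_r Q_r\ir\im\Phi\ir0\)) does not visibly use the injectivity of the transition maps \(u_{rs}\), which is the hypothesis that makes surjectivity true. The paper proceeds differently: fix an index \(r\); injectivity of the \(u_{rs}\) identifies \(\lim Q_s\) with a submodule of \(Q_r\), and one compares the tensored sequence \(0\ir(\lim Q_s)\otimes M\ir Q_r\otimes M\ir\bigl(\lim(Q_r/Q_s)\bigr)\otimes M\ir0\) with \(0\ir\lim(Q_s\otimes M)\ir Q_r\otimes M\ir\lim\bigl((Q_r/Q_s)\otimes M\bigr)\), the outer vertical arrows being the canonical maps, which are injective by the first part (applied in particular to the quotient system \((Q_r/Q_s)_s\)); the snake lemma then kills the cokernel of \(\alpha\). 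Some argument of this kind --- or the original Raynaud--Gruson one --- is required; your proposal stops precisely where the real work begins, as you yourself acknowledge.
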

\begin{proof}
    The first part follows from the definition of Mittag--Leffler modules that
    we took. Note that~\citeauthor{mr-lg:platitude-et-projectivte} use
    another equivalent definition of Mittag--Leffler modules
    (see~\cite[Prop.~2.1.5]{mr-lg:platitude-et-projectivte}). The second claim
    follows from the snake lemma applied to the following exact diagram:
    \begin{center}
	\begin{tikzpicture}
	    \matrix[column sep=9mm,row sep=1cm]{
		\node (A0) {\(0\)}; & \node (A1) {\((\lim Q_r)\otimes M\)}; & \node (A2) {\(Q_r\otimes M\)}; & \node (A3) {\(\bigl(\lim(Q_r/Q_s)\bigr)\otimes_\R M\)}; & \node (A4) {\(0\)}; \\
		\node (B0) {\(0\)}; & \node (B1) {\(\lim(Q_r\otimes M)\)};  & \node (B2) {\(Q_r\otimes M\)}; & \node (B3) {\(\lim\bigl((Q_r/Q_s)\otimes M\bigr)\)}; & \\
	    };
	    \draw[->] (A0) -- (A1);
	    \draw[->] (A1) -- (A2);
	    \draw[->] (A2) -- (A3);
	    \draw[->] (A3) -- (A4);
	    \draw[->] (B0) -- (B1);
	    \draw[->] (B1) -- (B2);
	    \draw[->] (B2) -- (B3);
	    \draw[->] (A1) -- (B1);
	    \draw[->] (A2) -- (B2);
	    \draw[->] (A3) -- (B3);
	\end{tikzpicture}
    \end{center}
    where \(r\) is a fixed index.
\end{proof}
\begin{proposition}\label{prop:Mittag-Leffler_intersection}
    Let \(M\) be a flat Mittag--Leffler \(\R\)-module.  Then \(M\) has the
    intersection property.
\end{proposition}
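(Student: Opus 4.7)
The plan is to reduce the intersection property to a kernel computation and then invoke the Mittag--Leffler property on a suitable product.

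First, I would rewrite the intersection \(\bigcap_{\alpha\in I}N_\alpha\) as a kernel: it sits in the exact sequence
\[
0\ir\bigcap_{\alpha\in I}N_\alpha\ir N\lmpr{\pi}\prod_{\alpha\in I}N/N_\alpha,
\]
where \(\pi\) has components the quotient maps \(N\sir N/N_\alpha\). Since \(M\) is flat, tensoring by \(M\) preserves this left exact sequence, giving
\[
0\ir M\otimes\bigcap_{\alpha\in I}N_\alpha\ir M\otimes N\lmpr{\id_M\otimes\pi}M\otimes\prod_{\alpha\in I}N/N_\alpha.
\]
Also by flatness, each \(M\otimes N_\alpha\sir M\otimes N\) is injective, so the canonical map on images that appears in Definition~\ref{defi:intersection_property} is literally the inclusion \(M\otimes N_\alpha\subseteq M\otimes N\), and I only need to show
\[
M\otimes\bigcap_{\alpha\in I}N_\alpha\;=\;\bigcap_{\alpha\in I}\bigl(M\otimes N_\alpha\bigr)
\]
as submodules of \(M\otimes N\).

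Next I would bring in the Mittag--Leffler hypothesis via the commutative square
\[
\begin{array}{ccc}
M\otimes N & \lmpr{\id_M\otimes\pi} & M\otimes\prod_{\alpha}N/N_\alpha \\[2pt]
\Big\| & & \Big\downarrow{\scriptstyle\mu} \\[2pt]
M\otimes N & \lmpr{\rho} & \prod_{\alpha}\bigl(M\otimes N/N_\alpha\bigr),
\end{array}
\]
where \(\mu\) is the canonical map and \(\rho\) has components \(\id_M\otimes(N\sir N/N_\alpha)\). By Definition~\ref{defi:Mittag-Leffler}, \(\mu\) is injective, hence the square forces \(\ker(\id_M\otimes\pi)=\ker\rho\).

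The right-hand kernel is easy to identify: by flatness, \(\ker\bigl(\id_M\otimes(N\sir N/N_\alpha)\bigr)=M\otimes N_\alpha\), and the kernel of a product of maps is the intersection of the individual kernels, so \(\ker\rho=\bigcap_{\alpha\in I}(M\otimes N_\alpha)\). The left-hand kernel was already computed to be \(M\otimes\bigcap_{\alpha\in I}N_\alpha\). Combining these two identifications gives the desired equality and finishes the proof. The only real content lies in spotting that Mittag--Leffler is exactly the injectivity statement needed to go from ``arbitrary products of quotients'' to ``arbitrary intersections of submodules''; everything else is straightforward diagram chasing using flatness. No step looks like a serious obstacle, but care is needed to be sure that the Mittag--Leffler condition is applied to the family \(\{N/N_\alpha\}\) of quotients of \(N\), rather than to the \(N_\alpha\) themselves.
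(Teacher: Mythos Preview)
Your proof is correct. The core idea---applying the Mittag--Leffler injectivity to the family \(\{N/N_\alpha\}\) of quotients---is exactly what the paper does as well, so in spirit this is the same argument.

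Where you differ is in the packaging. You work directly on kernels: identify \(M\otimes\bigcap_\alpha N_\alpha\) as \(\ker(\id_M\otimes\pi)\) by flatness, identify \(\bigcap_\alpha(M\otimes N_\alpha)\) as \(\ker\rho\), and let the Mittag--Leffler injection \(\mu\) force these two kernels to coincide. The paper instead passes to cokernels: it shows that the induced map \(G\colon M\otimes(N/\bigcap_\alpha N_\alpha)\to(M\otimes N)/\bigcap_\alpha(M\otimes N_\alpha)\) is injective (using the same composite \(gfi\) with the Mittag--Leffler map in the middle), and then invokes the snake lemma on the pair of short exact sequences over \(M\otimes N\) to conclude that the inclusion \(H\colon M\otimes\bigcap_\alpha N_\alpha\hookrightarrow\bigcap_\alpha(M\otimes N_\alpha)\) is surjective. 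Your route is more elementary in that it avoids the snake lemma entirely; the paper's route makes the map \(H\) and its cokernel explicit, which fits the style of the surrounding arguments (e.g.\ Proposition~\ref{prop:int_prop_pure-submodules}). Both are equally valid and equally short once written out.
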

The above result follows from Proposition~\ref{prop:ML_and_limits} by
\citeauthor{mr-lg:platitude-et-projectivte} but we present here another proof.
\begin{proof}
    Let \(N_\alpha\) for \(\alpha\in I\) be a family of submodules of an
    \(\R\)-module \(N\). Let us consider the following diagram:
    \begin{center}
	\begin{tikzpicture}
	    \matrix[column sep=9mm,row sep=1cm]{
		\node (A) {\(0\)}; & \node (B) {\(M\otimes\left(N/\bigcap_{\alpha\in I}N_\alpha\right)\)};          & \node (C) {\(M\otimes\left(\prod_{\alpha\in I}N/N_\alpha\right)\)}; \\
				   &                                             & \node (D) {\(\prod_{\alpha\in I}\bigl(M\otimes (N/N_\alpha)\bigr)\)}; \\
		\node (G) {\(0\)}; & \node (F) {\((M\otimes N)/\bigcap_{\alpha\in I}\im(M\otimes N_\alpha)\)}; & \node (E) {\(\prod_{\alpha\in I}(M\otimes N)/\im(M\otimes N_\alpha)\)}; \\
	    };
	    \draw[->] (A) -- (B);
	    \draw[->] (B) --node[above]{\(i\)} (C);
	    \draw[->] (C) --node[right]{\(f\)} (D);
	    \draw[->] (D) --node[right]{\(g\)} (E);
	    \draw[<-] (E) --node[below]{\(j\)} (F);
	    \draw[<-] (F) -- (G);
	    \draw[->,dashed] (B) --node[left]{\(G\)} (F);
	\end{tikzpicture}
    \end{center}
    Where \(i\) and \(j\) are the canonical embeddings:
    \begin{align*}
            i\Bigl(m\otimes(n+\bigcap_{\alpha\in I}N_\alpha)\Bigr) & \coloneq m\otimes(n+N_\alpha)_{\alpha\in I}\\
            \intertext{and}
            j\Bigl((m\otimes n)+\bigcap_{\alpha\in I}\im\left(M\otimes N_\alpha\right)\Bigr) & \coloneq\left(m\otimes n+\im(M\otimes N_\alpha)\right)_{\alpha\in I}
    \end{align*}
    for \(m\in M\) and \(n\in N\).  While \(f\) sends
    \(m\otimes(n_\alpha+N_\alpha)_{\alpha\in I}\) to \(\left(m\otimes
	    (n_\alpha+N_\alpha)\right)_{\alpha\in I}\) and \(g\) is the
    canonical isomorphism. Note that \(\im(gfi)\subseteq\im(j)\) and hence if
    \(M\) is a flat Mittag--Leffler module, then \(G:=gfi\) can be considered
    an embedding \(G:M\otimes\left(N/\bigcap_{\alpha\in
		I}N_\alpha\right)\sir(M\otimes N)/\bigcap_{\alpha\in
	    I}\im(M\otimes N_\alpha)\). Hence we get the exact diagram:
    \begin{center}
	\hspace*{-1cm}\begin{tikzpicture}
	    \matrix[column sep=5mm,row sep=1cm]{
		    & \node (A1) {\(0\)};                    & \node (A2) {\(0\)};    & \node (A3) {\(0\)};                          & \\
\node (B0) {\(0\)}; & \node (B1) {\(\im\left(M\otimes\left(\bigcap_{\alpha\in I}N_\alpha\right)\right)\)}; & \node (B2) {\(M\otimes N\)}; & \node (B3) {\(M\otimes\left(N/\bigcap_{\alpha\in I}N_\alpha\right)\)};          & \node (B4) {\(0\)}; \\
\node (C0) {\(0\)}; & \node (C1) {\(\bigcap_{\alpha\in I}\im\left(M\otimes N_\alpha\right)\)};  & \node (C2) {\(M\otimes N\)}; & \node (C3) {\((M\otimes N)/\bigcap_{\alpha\in I}\im\left(M\otimes N_\alpha\right)\)}; & \node (C4) {\(0\)}; \\
		    & \node (D1) {\(\coker(H)\)};            & \node (D2) {\(0\)};    & \node (D3) {\(\coker(G)\)};                  & \\
	    };
	    \draw[->] (B0) -- (B1);
	    \draw[->] (B1) -- (B2);
	    \draw[->] (B2) -- (B3);
	    \draw[->] (B3) -- (B4);
	    \draw[->] (C0) -- (C1);
	    \draw[->] (C1) -- (C2);
	    \draw[->] (C2) -- (C3);
	    \draw[->] (C3) -- (C4);

	    \draw[->] (A1) -- (B1);
	    \draw[->] (A2) -- (B2);
	    \draw[->] (A3) -- (B3);

	    \draw[->] (B1) --node[left]{\(H\)} (C1);
	    \draw[->] (B2) --node[left]{\(=\)} (C2);
	    \draw[->] (B3) --node[right]{\(G\)} (C3);

	    \draw[->] (C1) -- (D1);
	    \draw[->] (C2) -- (D2);
	    \draw[->] (C3) -- (D3);
	\end{tikzpicture}
    \end{center}
    where \(H\) is the canonical embedding of
    \(\im\left(M\otimes\left(\bigcap_{\alpha\in I}N_\alpha\right)\right)\)
    into the module \(\im\left(\bigcap_{\alpha\in I}\left(M\otimes
		N_\alpha\right)\right)\) as submodules of \(M\otimes N\). By
    the snake lemma we get the following short exact sequence: 
    \[0=\ker(G)\sir\coker(H)\sir 0\]
    Thus the monomorphism~\(H\) is onto.
\end{proof}
There is also another class of modules with the intersection property. To
define it we need the following
\index{dense system of submodules}
\begin{definition}
    Let \(\R\) be a ring, \(M\) an \(\R\)-module and \(\kappa\) be a regular
    uncountable cardinal.  A direct system \(C\) of submodules of \(M\) is
    said to be a \(\kappa\)-\bold{dense system} in \(M\) if:
    \begin{enumerate}
	\item  \(C\) is closed under unions of well-ordered ascending
	    chains of length \(<\kappa\),
	\item every subset of \(M\) of cardinality \(<\kappa\) is
	    contained in an element of \(C\).
    \end{enumerate}
\end{definition}
\index{\(\kappa\)-projectve module}
\begin{definition}\label{defi:kappa_projective}
    Let \(\R\), \(M\) and \(\kappa\) be as above. Then \(M\) is
    \(\kappa\)-\bold{projective} if it has a \(\kappa\)-dense system \(C\) of
    \(<\kappa\)-generated projective modules.
\end{definition}
\index{Mittag--Leffler module!\(\aleph_1\)-projective}
A module is flat Mittag--Leffler if and only if it is \(\aleph_1\)-projective
as is shown in~\cite[Thm.~2.9]{dh-jt:mittag-leffler}. Thus we get the
following 
\index{intersection property!\(\aleph_1\)-projective}
\begin{corollary}
    Any \(\aleph_1\)-projective module has the intersection property.
\end{corollary}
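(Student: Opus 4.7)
The proof plan is a one-line composition of two already-stated facts, so the only real task is to unpack it cleanly and point out that nothing else is needed. First I would invoke the theorem of Herbera and Trlifaj (\cite[Thm.~2.9]{dh-jt:mittag-leffler}, cited in the paragraph immediately before the corollary), which says that an \(R\)-module \(M\) is \(\aleph_1\)-projective if and only if \(M\) is flat Mittag--Leffler. Given an \(\aleph_1\)-projective \(M\), this identification converts the hypothesis of the corollary into the hypothesis of Proposition~\ref{prop:Mittag-Leffler_intersection}.

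Second, I would apply Proposition~\ref{prop:Mittag-Leffler_intersection} directly: it states that every flat Mittag--Leffler \(R\)-module has the intersection property, i.e.\ for every module \(N\) and every family \((N_\alpha)_{\alpha\in I}\) of submodules the canonical map
\[
    \im\Bigl(M\otimes\bigl(\bigcap_{\alpha\in I}N_\alpha\bigr)\Bigr)\ir\bigcap_{\alpha\in I}\im\bigl(M\otimes N_\alpha\bigr)
\]
is an isomorphism. Composing the two facts yields the conclusion immediately.

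There is no real obstacle here: neither step requires reworking any diagram, since the snake-lemma argument that establishes the intersection property for flat Mittag--Leffler modules has already been carried out in the proof of Proposition~\ref{prop:Mittag-Leffler_intersection}, and the Herbera--Trlifaj equivalence is quoted, not reproved. The only substantive remark worth making in the writeup is that this gives a genuinely new class of examples beyond the projective and free cases of Propositions~\ref{prop:ip_direct_sums} and~\ref{prop:ip_direct_summands}, because \(\aleph_1\)-projectivity is strictly weaker than projectivity. I would therefore keep the proof to two sentences, and perhaps add a parenthetical remark that the same argument gives the intersection property for any class of modules known to be contained in the flat Mittag--Leffler modules (e.g.\ locally projective modules, as will be shown later in the chapter).
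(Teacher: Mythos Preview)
Your proposal is correct and matches the paper exactly: the corollary is stated without a separate proof environment, deriving it from the sentence immediately preceding it (the Herbera--Trlifaj equivalence \cite[Thm.~2.9]{dh-jt:mittag-leffler}) combined with Proposition~\ref{prop:Mittag-Leffler_intersection}. There is nothing to add or change.
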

We would like now to recall a result
of~\citeauthor{mr-lg:platitude-et-projectivte} which is relevant. 
\begin{proposition}[{\cite[Prop.~2.1.8]{mr-lg:platitude-et-projectivte}}]\label{prop:ML_and_coefficient_modules}
    Let \(M\) be a flat \(\R\)-module, such that for every finitely generated
    free \(\R^\op\)-module \(L\) and for each \(x\in L\otimes_\R M\), the set
    of submodules \(Q\) of \(L\) such that \(x\in Q\otimes_\R M\) has
    a smallest element.  Then \(M\) is a Mittag--Leffler module.
\end{proposition}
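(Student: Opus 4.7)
The plan is to reduce the Mittag--Leffler injectivity statement for the canonical map $\mu: M \otimes_\R \prod_\lambda M_\lambda \to \prod_\lambda (M \otimes_\R M_\lambda)$, which involves an arbitrary product, to the hypothesized minimality condition, which only speaks of tensor products with a single finitely generated free module. The bridge is to encode an element of the source of $\mu$ by data on a fixed finite free module equipped with a family of maps out of it.

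Concretely, I would start with an element $y \in M \otimes_\R \prod_\lambda M_\lambda$ satisfying $\mu(y) = 0$ and write $y = \sum_{i=1}^n m_i \otimes (x_{i,\lambda})_{\lambda\in\Lambda}$ with finitely many $m_i \in M$. Set $L := \R^n$, and form the auxiliary element $\tilde y := \sum_i m_i \otimes e_i \in M \otimes_\R L$ together with the $\R$-linear map $\varphi: L \to \prod_\lambda M_\lambda$, $e_i \mapsto (x_{i,\lambda})_\lambda$, and its components $\varphi_\lambda: L \to M_\lambda$. By construction one has $y = (\id_M \otimes \varphi)(\tilde y)$ and $\ker(\varphi) = \bigcap_\lambda \ker(\varphi_\lambda)$.

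The next step is to use flatness of $M$ to show $\tilde y \in M \otimes \ker(\varphi_\lambda)$ for every $\lambda$. Tensoring the short exact sequence $0 \to \ker(\varphi_\lambda) \to L \to \im(\varphi_\lambda) \to 0$ with the flat module $M$ remains exact, and flatness also yields an injection $M \otimes \im(\varphi_\lambda) \hookrightarrow M \otimes M_\lambda$. Since the image of $\tilde y$ in the latter is the $\lambda$-component of $\mu(y)$ and therefore vanishes, it already vanishes in $M \otimes \im(\varphi_\lambda)$, so $\tilde y$ lifts to $M \otimes \ker(\varphi_\lambda)$.

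Finally, by hypothesis the family $\{Q \subseteq L : \tilde y \in M \otimes Q\}$ admits a smallest element $Q_0$. The previous paragraph shows that every $\ker(\varphi_\lambda)$ lies in this family, so $Q_0 \subseteq \ker(\varphi_\lambda)$ for each $\lambda$, hence $Q_0 \subseteq \bigcap_\lambda \ker(\varphi_\lambda) = \ker(\varphi)$. Since $\tilde y \in M \otimes Q_0$ and $\varphi|_{Q_0} = 0$, we conclude $y = (\id_M \otimes \varphi)(\tilde y) = 0$, proving injectivity. The only delicate point is the very last inference: for a finite index set $\Lambda$ the conclusion would already follow from the flat intersection property of Proposition~\ref{prop:flat_intersection}, but to treat arbitrary $\Lambda$ one genuinely needs the existence of the smallest submodule $Q_0$, and that is precisely what the hypothesis supplies.
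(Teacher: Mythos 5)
Your argument is correct. Note that the thesis does not prove this proposition at all---it is quoted verbatim from Raynaud--Gruson---so there is no in-paper proof to compare against; what you have written is essentially the standard argument from that source. All three steps check out: encoding a finite representation $y=\sum_i m_i\otimes(x_{i,\lambda})_\lambda$ by $\tilde y\in M\otimes L$ with $L=\R^n$ and $y=(\id_M\otimes\varphi)(\tilde y)$; using flatness of $M$ twice (on $0\to\ker\varphi_\lambda\to L\to\im\varphi_\lambda\to 0$ and on the inclusion $\im\varphi_\lambda\subseteq M_\lambda$) to conclude $\tilde y\in M\otimes\ker\varphi_\lambda$ inside $M\otimes L$ for every $\lambda$; and then letting the hypothesized smallest element $Q_0$ of $\{Q\subseteq L:\tilde y\in M\otimes Q\}$ force $Q_0\subseteq\bigcap_\lambda\ker\varphi_\lambda=\ker\varphi$, so that $y=(\id_M\otimes\varphi)(\tilde y)=0$. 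You also correctly pinpoint that the minimality hypothesis is exactly the substitute for Proposition~\ref{prop:flat_intersection} when the index family is infinite. One cosmetic remark: since the statement (following Raynaud--Gruson) takes $L$ to be a finitely generated free $\R^{\op}$-module so as to cover noncommutative $\R$, in that generality one should write the tensor factors in the order $L\otimes_\R M$ and take the $M_\lambda$ to be right modules; over the commutative base ring used throughout the thesis your ordering is harmless.
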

Now using the above result we obtain:
\index{intersection property!flat Mittag--Leffler module}
\begin{corollary}\label{cor:mittag-leffler}
    A flat module has the intersection property if and only if it is
    Mittag--Leffler.
\end{corollary}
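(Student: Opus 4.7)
The forward direction (flat plus Mittag--Leffler implies the intersection property) is already established in Proposition~\ref{prop:Mittag-Leffler_intersection}, so the plan is to focus on the converse: a flat module $M$ with the intersection property is Mittag--Leffler. My strategy is to verify the hypothesis of Proposition~\ref{prop:ML_and_coefficient_modules}, which characterises flat Mittag--Leffler modules by the existence, for every finitely generated free $R^{op}$-module $L$ and every $x \in L \otimes_R M$, of a smallest submodule $Q \subseteq L$ with $x \in Q \otimes_R M$.

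First I would fix such $L$ and $x$, and set
\[ \mathcal{S} \coloneq \{Q \subseteq L : x \in \im(Q \otimes_R M)\}, \qquad Q_0 \coloneq \bigcap_{Q \in \mathcal{S}} Q. \]
Because $M$ is flat, each inclusion $Q \hookrightarrow L$ induces an injection $Q \otimes_R M \hookrightarrow L \otimes_R M$, so the canonical image $\im(Q \otimes_R M)$ is naturally identified with $Q \otimes_R M$, and similarly for $Q_0$. Thus the condition $x \in Q \otimes_R M$ is unambiguous as a statement inside $L \otimes_R M$.

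Next I would apply the intersection property of $M$ to the family $\mathcal{S}$ inside $L$: it gives
\[ Q_0 \otimes_R M = \im\Bigl(M \otimes \bigcap_{Q \in \mathcal{S}} Q\Bigr) = \bigcap_{Q \in \mathcal{S}} \im(M \otimes Q). \]
Since $x$ lies in every $\im(M \otimes Q)$ for $Q \in \mathcal{S}$, it follows that $x \in Q_0 \otimes_R M$, so $Q_0 \in \mathcal{S}$ and is by construction the minimum element. Proposition~\ref{prop:ML_and_coefficient_modules} then immediately yields that $M$ is Mittag--Leffler.

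There is no real obstacle here: the only subtlety is using flatness twice—once to identify the images $\im(Q \otimes_R M)$ with honest submodules $Q \otimes_R M$ of $L \otimes_R M$ (which is what makes the minimality statement in Proposition~\ref{prop:ML_and_coefficient_modules} and the intersection-property conclusion talk about the same objects), and once implicitly because Proposition~\ref{prop:ML_and_coefficient_modules} itself requires flatness in its hypothesis. Everything else is a direct book-keeping exercise combining the two cited propositions.
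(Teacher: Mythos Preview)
Your proposal is correct and follows essentially the same approach as the paper: both directions invoke Proposition~\ref{prop:Mittag-Leffler_intersection} for the forward implication, and for the converse both verify the hypothesis of Proposition~\ref{prop:ML_and_coefficient_modules} by using the intersection property to show that the intersection of all submodules $Q$ with $x\in Q\otimes_R M$ is itself such a submodule. Your write-up is simply more explicit about the role of flatness in identifying $\im(Q\otimes_R M)$ with $Q\otimes_R M$ inside $L\otimes_R M$.
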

\begin{proof}
    If a module is flat and Mittag--Leffler then it has the intersection
    property by Proposition~\ref{prop:Mittag-Leffler_intersection}. On the
    other hand if \(M\) is flat and has the intersection property then it
    satisfies the Mittag--Leffler condition by the preceding result of
    \citeauthor{mr-lg:platitude-et-projectivte}: take the set of submodules
    \(Q\) of \(L\) (where \(L\) and \(M\) are as in the previous proposition)
    such that for a given \(x\in L\otimes_\R M\), \(x\in Q\otimes_\R M\). Then
    the smallest such \(Q\), due to the intersection property, is just the
    intersection of all such submodules \(Q\).
\end{proof}

\index{intersection property!flat module without the intersection property}
\begin{example}\label{ex:flat_without_ip}
    Let \(p\) be a prime ideal of \(\bZ\). Then \(\bigcap_ip^i=\{0\}\).  We
    let \(\bZ_p\) denote the ring of fractions of \(\bZ\)  with respect to
    \(p\).  The \(\bZ\)-module \(\bZ_p\) is flat, and
    \(\bZ_p\otimes_\bZ\bigcap_ip^i=\{0\}\). On the other hand
    \(\bigcap_i\bZ_p\otimes_\bZ p^i\cong\bZ_p\).  By a similar argument
    \(\bQ\) doesn't posses the intersection property, even though it is flat
    over~\(\bZ\).  The problem is that, the intersection property is stable
    under arbitrary direct sums but not under cokernels.  Now it is easy to
    construct a faithfully flat module which does not have the intersection
    property.  The \(\bZ\)-modules \(\bZ\oplus\bZ_p\) and \(\bZ\oplus\bQ\) are
    the examples.
\end{example}
In the proof of Propositions~\ref{prop:ip_direct_sums}
and~\ref{prop:ip_direct_summands} we showed that the intersection property is
stable under split exact sequences.  However, the above examples show that the
intersection property is not stable under pure (exact)
sequences \cite[Def.~4.83]{tl-modules}, i.e. whenever \(0\sir M'\sir M\sir
    M''\sir 0\) is a pure exact sequence and \(M\) has the intersection
property then~\(M''\) might not have it.  It is well known that if \(M''\) is
flat then \(M'\subseteq M\) is pure~\cite[Thm~4.85]{tl-modules},
hence~\ref{ex:flat_without_ip} is indeed a source of counter examples.
However, we can show the following proposition:
\index{intersection property!pure submodules}
\begin{proposition}\label{prop:int_prop_pure-submodules}
    Let \(M'\) be a pure submodule of a module \(M\) with the intersection
    property.  Then \(M'\) has the intersection property.
\end{proposition}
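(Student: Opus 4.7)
The plan is to identify the image modules associated to the pure submodule \(M'\) with the corresponding image modules for \(M\), intersected with \(M'\otimes N\), and then transport the intersection property of \(M\) along this identification. Fix a family \((N_\alpha)_{\alpha\in I}\) of submodules of an \(\R\)-module \(N\), and write
\[
    I'_\alpha\coloneq\im(M'\otimes N_\alpha\ir M'\otimes N),\quad I_\alpha\coloneq\im(M\otimes N_\alpha\ir M\otimes N),
\]
and analogously \(I'\coloneq\im\bigl(M'\otimes\bigcap_\alpha N_\alpha\ir M'\otimes N\bigr)\), \(I\coloneq\im\bigl(M\otimes\bigcap_\alpha N_\alpha\ir M\otimes N\bigr)\). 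The goal is to show \(I'=\bigcap_\alpha I'_\alpha\) inside \(M'\otimes N\).

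The first step is to observe that, for every submodule \(L\subseteq N\), we have
\[
    I'_L=I_L\cap(M'\otimes N)
\]
inside \(M\otimes N\). Indeed, consider the commutative square
\begin{center}
    \begin{tikzpicture}
        \matrix[column sep=1cm,row sep=8mm]{
            \node (A1) {\(M'\otimes N\)}; & \node (A2) {\(M'\otimes(N/L)\)}; \\
            \node (B1) {\(M\otimes N\)};  & \node (B2) {\(M\otimes(N/L)\)};  \\
        };
        \draw[->] (A1) -- (A2);
        \draw[->] (B1) -- (B2);
        \draw[->] (A1) -- (B1);
        \draw[->] (A2) -- (B2);
    \end{tikzpicture}
\end{center}
Both vertical maps are injective by purity of \(M'\subseteq M\) applied to \(N\) and to \(N/L\). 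The horizontal maps are surjective with kernels \(I'_L\) and \(I_L\) respectively, since tensor product is right exact. A diagram chase (equivalently, the snake lemma) then gives \(I'_L=I_L\cap(M'\otimes N)\). Applying this to each \(N_\alpha\) and to \(\bigcap_\alpha N_\alpha\) yields \(I'_\alpha=I_\alpha\cap(M'\otimes N)\) and \(I'=I\cap(M'\otimes N)\).

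Next, since \(M\) has the intersection property, \(I=\bigcap_\alpha I_\alpha\) inside \(M\otimes N\). Intersecting both sides with \(M'\otimes N\) and using that intersections commute with intersections,
\[
    I'=I\cap(M'\otimes N)=\Bigl(\bigcap_\alpha I_\alpha\Bigr)\cap(M'\otimes N)=\bigcap_\alpha\bigl(I_\alpha\cap(M'\otimes N)\bigr)=\bigcap_\alpha I'_\alpha,
\]
which is exactly the intersection property for \(M'\) relative to \((N_\alpha)_{\alpha\in I}\). The only delicate point in this plan is the first step: verifying \(I'_L=I_L\cap(M'\otimes N)\) requires genuine use of purity rather than mere flatness of \(M'\), because purity of \(M'\subseteq M\) is precisely what ensures the right-hand vertical map \(M'\otimes(N/L)\hookrightarrow M\otimes(N/L)\) is injective; once this is in place the rest is formal.
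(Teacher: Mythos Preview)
Your proof is correct and follows essentially the same line as the paper's argument: both exploit that purity of \(M'\subseteq M\) makes \(M'\otimes(N/L)\hookrightarrow M\otimes(N/L)\) injective, and then transfer the intersection property from \(M\) to \(M'\) via a diagram chase. Your version is somewhat cleaner in that it isolates the identity \(I'_L=I_L\cap(M'\otimes N)\) as a reusable lemma and reduces the conclusion to a one-line intersection manipulation, whereas the paper carries out the same logic inside a single large commutative diagram without factoring out this step; the underlying mechanism is identical.
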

\begin{proof}
    We have the following commutative diagram with exact rows:
    \begin{center}
	\hspace*{-2cm}
	\begin{tikzpicture}[style={>=angle 60,thin},cross line/.style={preaction={draw=white, -, line width=5pt}}]
	    \node (B0) at (0,0) {\(0\)};
	    \node (B1) at (3.2,0) {\(\im\left(M\otimes\left(\bigcap_{\alpha\in I}N_\alpha\right)\right)\)};
	    \node (B2) at (7,0) {\(M\otimes N\)};
	    \node (B3) at (11,0) {\(M\otimes\left(N/\bigcap_{\alpha\in I}N_\alpha\right)\)};
	    \node (B4) at (14.5,0) {\(0\)};
	    \node (C0) at (0,-4) {\(0\)};
	    \node (C1) at (3.2,-4){\(\bigcap_{\alpha\in I}\im\left(M\otimes N_\alpha\right)\)};
	    \node (C2) at (7,-4) {\(M\otimes N\)};
	    \node (C3) at (11,-4){\((M\otimes N)/\bigcap_{\alpha\in I}\im\left(M\otimes N_\alpha\right)\)};
	    \node (C4) at (14.5,-4) {\(0\)};
	    \draw[->] (B0) -- (B1);
	    \draw[->] (B1) -- (B2);
	    \draw[->] (B2) --node[fill=white]{\(g\)} (B3);
	    \draw[->] (B3) -- (B4);
	    \draw[->] (C0) -- (C1);
	    \draw[->] (C1) -- (C2);
	    \draw[->] (C2) -- (C3);
	    \draw[->] (C3) -- (C4);

	    \draw[->] (B1) --node[left]{\(H\)}node[above,rotate=-90]{\(\simeq\)} (C1);
	    \draw[->] (B2) --node[above,rotate=-90]{\(=\)} (C2);
	    \draw[->] (B3) --node[right]{\(G\)} (C3);

	    \node[xshift=-2cm,yshift=-1.4cm] (D0) at (0,0) {\(0\)};
	    \node[xshift=-2cm,yshift=-1.4cm] (D1) at (3.2,0) {\(\im\left(M'\otimes\left(\bigcap_{\alpha\in I}N_\alpha\right)\right)\)};
	    \node[xshift=-2cm,yshift=-1.4cm] (D2) at (7,0) {\(M'\otimes N\)};
	    \node[xshift=-2cm,yshift=-1.4cm] (D3) at (11,0) {\(M'\otimes\left(N/\bigcap_{\alpha\in I}N_\alpha\right)\)};
	    \node[xshift=-2cm,yshift=-1.4cm] (D4) at (14.5,0) {\(0\)};
	    \node[xshift=-2cm,yshift=-1.4cm] (E0) at (0,-4) {\(0\)};
	    \node[xshift=-2cm,yshift=-1.4cm] (E1) at (3.2,-4){\(\bigcap_{\alpha\in I}\im\left(M'\otimes N_\alpha\right)\)};
	    \node[xshift=-2cm,yshift=-1.4cm] (E2) at (7,-4) {\(M'\otimes N\)};
	    \node[xshift=-2cm,yshift=-1.4cm] (E3) at (11,-4){\((M'\otimes N)/\bigcap_{\alpha\in I}\im\left(M'\otimes N_\alpha\right)\)};
	    \node[xshift=-2cm,yshift=-1.4cm] (E4) at (14.5,-4) {\(0\)};
	    \draw[->,cross line] (D0) -- (D1);
	    \draw[->,cross line] (D1) -- (D2);
	    \draw[->,cross line] (D2) --node[fill=white]{\(f\)} (D3);
	    \draw[->,cross line] (D3) -- (D4);
	    \draw[->,cross line] (E0) -- (E1);
	    \draw[->,cross line] (E1) -- (E2);
	    \draw[->,cross line] (E2) -- (E3);
	    \draw[->,cross line] (E3) -- (E4);

	    \draw[->,cross line] (D1) --node[left]{\(H'\)} (E1);
	    \draw[->,cross line] (D2) --node[above,rotate=-90]{\(=\)} (E2);
	    \draw[->] (D3) --node[right]{\(G'\)} (E3);

	    \draw[>->] (D1) -- (B1);
	    \draw[>->] (D2) -- (B2);
	    \draw[>->] (D3) -- (B3);

	    \draw[>->] (E1) -- (C1);
	    \draw[>->] (E2) -- (C2);
	    \draw[->] (E3) -- (C3);
	\end{tikzpicture}
    \end{center}
    It easily follows that \(H'\) is a monomorphism.  Let
    \(x\in\bigcap_{\alpha\in I}\im\left(M'\otimes N_\alpha\right)\). To prove
    that \(x\) is in the image of \(H'\) it is enough to show that it goes to
    \(0\) under \(f\). Now since \(H\) is an isomorphism it goes to \(0\)
    under \(g\) and thus it belongs to the kernel of \(f\).
\end{proof}
Note that for pure submodules of flat Mittag--Leffler modules the above
Proposition follows easily since they necessarily are flat Mittag--Leffler.
\index{intersection property!locally projective modules}
\begin{proposition}\label{prop:int_prop_for_locally_projectives}
    Let \(M\) be a locally projective \(\R\)-module. Then \(M\) is a flat
    Mittag--Leffler module, so it has the intersection property.
\end{proposition}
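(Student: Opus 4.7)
The plan is to apply Corollary~\ref{cor:mittag-leffler}: it suffices to show that a locally projective module is both flat and satisfies the Mittag--Leffler condition, since then the intersection property follows for free. Flatness of locally projective modules is a classical result of Zimmermann--Huisgen, obtained directly from the dual basis characterisation of local projectivity, so the genuine content of the proposition lies in the Mittag--Leffler assertion. To verify this, I would invoke the Raynaud--Gruson criterion recorded in Proposition~\ref{prop:ML_and_coefficient_modules}: for a flat module \(M\), it is enough to exhibit, for every finitely generated free \(\R^\op\)-module \(L\) and every element \(x\in L\otimes_\R M\), the smallest submodule \(Q\subseteq L\) such that \(x\in Q\otimes_\R M\).

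To construct this minimal \(Q\), write \(x=\sum_{i=1}^n l_i\otimes m_i\) with \(l_i\in L\), \(m_i\in M\). Local projectivity, applied to the finite family \((m_i)_{i=1}^n\), provides elements \(y_1,\dots,y_k\in M\) and linear forms \(f_1,\dots,f_k\in\Hom_\R(M,\R)\) such that
\[
    m_i \;=\; \sum_{j=1}^k f_j(m_i)\,y_j \qquad\text{for every } i=1,\dots,n.
\]
Substituting into \(x\) and collecting terms yields \(x=\sum_{j=1}^k \xi_j\otimes y_j\) with \(\xi_j\coloneq\sum_{i=1}^n l_i\, f_j(m_i)\in L\). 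The natural candidate for the smallest submodule is then \(Q_0\coloneq\langle\xi_1,\dots,\xi_k\rangle\subseteq L\), which by construction already satisfies \(x\in Q_0\otimes_\R M\).

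The key step is the minimality. Given any submodule \(Q\subseteq L\) with \(x\in Q\otimes_\R M\), flatness of \(M\) ensures that \(Q\otimes_\R M\hookrightarrow L\otimes_\R M\) is an injection, so \(x\) may be viewed as a common element of both tensor products. Applying the maps \(\mathrm{id}_L\otimes f_j\colon L\otimes_\R M\to L\otimes_\R\R\cong L\), which by naturality restrict to \(Q\otimes_\R M\to Q\), one computes that \(x\) is sent to \(\xi_j\) on the one hand, while that same image must lie in the submodule \(Q\) on the other. Hence \(\xi_j\in Q\) for each \(j\), so \(Q_0\subseteq Q\), as required. This verifies the hypothesis of Proposition~\ref{prop:ML_and_coefficient_modules} and shows that \(M\) is Mittag--Leffler; combined with its flatness, Corollary~\ref{cor:mittag-leffler} then yields the intersection property.

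The main obstacle is largely bookkeeping rather than conceptual: if \(\R\) is not assumed commutative one must be careful about the side on which \(f_j(m_i)\) acts on \(l_i\) when forming the \(\xi_j\), and the map \(\mathrm{id}_L\otimes f_j\) must be verified to be well defined under this convention. Once these conventions are fixed, the chain of implications Zimmermann--Huisgen dual basis \(\Rightarrow\) Raynaud--Gruson criterion \(\Rightarrow\) Mittag--Leffler \(\Rightarrow\) intersection property runs through without further complication.
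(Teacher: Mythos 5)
Your proof is correct, but it takes a genuinely different route from the paper. The paper argues directly from Definition~\ref{defi:locally_projective}: it fixes a free cover \(\pi:F\sir M\), uses the lifting \(h:M\sir F\) with \(\pi\circ h|_{M_0}=\id\) on the finitely generated submodule \(M_0\) containing the relevant elements, and then verifies flatness and the Mittag--Leffler condition (in its product form, Definition~\ref{defi:Mittag-Leffler}) by sandwiching \(M\otimes X\sir F\otimes X\sir M\otimes X\) and exploiting that the middle term is free. You instead pass to the partial dual-basis form of local projectivity and verify the Raynaud--Gruson coefficient criterion, Proposition~\ref{prop:ML_and_coefficient_modules}, by exhibiting the smallest submodule \(Q_0=\langle\xi_1,\dots,\xi_k\rangle\) explicitly; your minimality argument via \(\id_L\otimes f_j\) and the commuting square \(Q\otimes M\sir L\otimes M\), \(Q\sir L\) is sound. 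Two remarks. First, the dual-basis statement you invoke should be tied back to Definition~\ref{defi:locally_projective}; this is immediate (take \(N=M\), \(f=\id_M\), \(L\) a free cover, and read off the finitely many coordinates of \(h\) on the generators \(m_i\)), and it is in fact exactly the lift the paper's own proof constructs, so nothing is missing conceptually, but the step deserves a line. Second, you outsource flatness to a citation of Zimmermann--Huisgen, whereas the paper proves it; the same dual basis (or the paper's sandwich through \(F\)) gives flatness in a few lines, so for self-containedness you should include it rather than cite it. What your route buys is a shorter, more structural treatment of the Mittag--Leffler part, with the minimal coefficient submodule made explicit; what the paper's route buys is a self-contained argument working directly from its own definition and its own definition of the Mittag--Leffler property, without needing the equivalence with the Raynaud--Gruson criterion up front.
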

\begin{proof}
    We first show that \(M\) is flat. Let \(i:N\subseteq N'\) be an
    \(\R\)-submodule. And let \(\sum_i m_i\otimes n_i\in\ker\id_M\otimes i\).
    Let \(\pi:F\sir M\) be an epimorphism, where \(F\) is a free module. Let
    us put \(M_0\subseteq M\) the submodule generated by \(\{m_i\}\). Since
    \(M\) is locally--projective we have
    \begin{center}
	{\hfill\begin{tikzpicture}
	    \matrix[column sep=1cm,row sep=1cm]{
		\node (A0) {\(0\)}; & \node (A1) {\(M_0\)}; & \node (A2) {\(M\)}; & \\
		                    & \node (B1) {\(F\)}; & \node (B2) {\(M\)}; & \node (B3) {\(0\)}; \\
	    };
	    \draw[->] (A0) -- (A1);
	    \draw[->] (A1) --node[above]{\(i_0\)} (A2);
	    \draw[->] (B1) --node[below]{\(\pi\)} (B2);
	    \draw[->] (B2) -- (B3);
	    \draw[->] (A2) --node[right]{\(\id_M\)} (B2);
	    \draw[->,dashed] (A2) --node[left]{\(\exists h\)} (B1);
	\end{tikzpicture}
	\hfill\refstepcounter{equation}\raisebox{1.25cm}{(\theequation)}\label{diag:locally_projective_flat}}
    \end{center}
    and we have \(\pi\circ h|_{M_0}=\id_M|_{M_0}\), hence \(\pi\circ
    h(m_i)=m_i\). We have a commutative diagram:
    \begin{center}
	\begin{tikzpicture}
	    \matrix[column sep=1.3cm,row sep=1cm]{
		\node (A1) {\(M\otimes N\)}; & \node (A2) {\(M\otimes N'\)}; \\
		\node (B1) {\(F\otimes N\)}; & \node (B2) {\(F\otimes N'\)}; \\
		\node (C1) {\(M\otimes N\)}; & \node (C2) {\(M\otimes N'\)}; \\
	    };
	    \draw[->] (A1) --node[above]{\(\id_M\otimes i\)} (A2);
	    \draw[>->] (B1) --node[above]{\(\id_F\otimes i\)} (B2);
	    \draw[->] (C1) --node[below]{\(\id_M\otimes i\)} (C2);
	    \draw[->] (A1) --node[left]{\(h\otimes\id_N\)} (B1);
	    \draw[->] (A2) --node[right]{\(h\otimes\id_{N'}\)} (B2);
	    \draw[->] (B1) --node[left]{\(\pi\otimes\id_N\)} (C1);
	    \draw[->] (B2) --node[right]{\(\pi\otimes\id_{N'}\)} (C2);
	\end{tikzpicture}
    \end{center}
    Now we have \((\pi\circ h)\otimes\id_N\bigl(\sum_im_i\otimes
	n_i\bigr)=\sum_im_i\otimes n_i\) because \(\sum_im_i\otimes
	n_i\in\im(i_0\otimes\id_N)\). Since \(\sum_i m_i\otimes
	n_i\in\ker(\id_M\otimes i)\) and \(\id_F\otimes i\) is a monomorphism
    (a free module is flat) thus \((h\otimes\id_N)\bigl(\sum_im_i\otimes
	n_i\bigr)=0\) so 
    \[\sum_im_i\otimes n_i=(\pi\otimes\id_N)\circ( h\otimes\id_N)\bigl(\sum_im_i\otimes n_i\bigr)=0\]
    Hence \(\ker\id_M\otimes i=\{0\}\).

    Now we show that \(M\) has the Mittag--Leffler property. Let \(M_i,i\in
	I\) be a family of \(\R\)-modules and let us consider the canonical
    map
    \[M\otimes\prod_{i}M_i\sir\prod_i(M\otimes M_i)\]
    Let \(\sum_i m_i\otimes(m_i^j)_{j\in I}\) be in the kernel of this map,
    where \(m_i\in M\) and \(m^j_i\in M_j\). As before let \(M_0\) be the
    submodule of \(M\) generated by all the elements \(m_i\) and we set
    \(\pi\), \(h\) as in~\eqref{diag:locally_projective_flat}. We conclude as
    before with the commutative diagram:
    \begin{center}
	\begin{tikzpicture}
	    \matrix[column sep=1.3cm,row sep=1cm]{
		\node (A1) {\(M\otimes\prod_i M_i\)}; & \node (A2) {\(\prod_i(M\otimes M_i)\)}; \\
		\node (B1) {\(F\otimes\prod_i M_i\)}; & \node (B2) {\(\prod_i(F\otimes M_i)\)}; \\
		\node (C1) {\(M\otimes\prod_i M_i\)}; & \node (C2) {\(\prod_i(M\otimes M_i)\)}; \\
	    };
	    \draw[->] (A1) -- (A2);
	    \draw[>->] (B1) -- (B2);
	    \draw[->] (C1) -- (C2);
	    \draw[->] (A1) --node[left]{\(h\otimes\id_{\prod_iM_i}\)} (B1);
	    \draw[->] (A2) --node[right]{\(\prod_ih\otimes\id_{M_i}\)} (B2);
	    \draw[->] (B1) --node[left]{\(\pi\otimes\id_{\prod_iM_i}\)} (C1);
	    \draw[->] (B2) --node[right]{\(\prod_i\pi\otimes\id_{M_i}\)} (C2);
	\end{tikzpicture}
    \end{center}
\end{proof}
\index{intersection property!when every flat module has the intersection
    property}
\begin{theorem}\label{thm:intersection_prop_for_all_modules}
    Every flat \(\R\)-module \(M\) has the intersection property (or
    equivalently has the Mittag--Leffler property) if and only if for any
    exact sequence: 
    \[0\sir M'\sir M\sir M''\sir 0\] 
    with \(M'\), \(M\) projective, \(M''\) flat and any family of submodules
    \((N_\alpha)_{\alpha\in I}\) of an \(\R\)-module \(N\) the sequence:
    \begin{equation}\label{eq:every_flat}
            0\sir \bigcap_\alpha \left(M'\otimes N_\alpha\right)\sir\bigcap_\alpha \left(M\otimes N_\alpha\right)\sir\bigcap_\alpha \left(M''\otimes N_\alpha\right)\sir 0
    \end{equation}
    is exact.
\end{theorem}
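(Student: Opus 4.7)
The forward direction will be a direct consequence of the intersection property applied to each of the three flat modules in the short exact sequence. Assuming every flat $\R$-module has the intersection property, I would tensor $0\to M'\to M\to M''\to 0$ with $\bigcap_\alpha N_\alpha$; flatness of $M''$ preserves exactness. Since $M',M,M''$ are all flat (projective modules being flat), the intersection property identifies $L\otimes\bigcap_\alpha N_\alpha$ with $\bigcap_\alpha\im(L\otimes N_\alpha)$ for each $L\in\{M',M,M''\}$, and substituting these three identifications reproduces exactly~\eqref{eq:every_flat}.

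For the reverse direction, my first observation is that~\eqref{eq:every_flat} immediately yields IP for any flat module $M''$ of projective dimension at most one: given projective $M',M$ with $M''=M/M'$ flat, the assumed exactness of~\eqref{eq:every_flat} combined with $\bigcap_\alpha\im(L\otimes N_\alpha)=L\otimes\bigcap_\alpha N_\alpha$ for the projective terms, and the exactness of the original SES tensored with $\bigcap_\alpha N_\alpha$ (using flatness of $M''$), forces $\bigcap_\alpha\im(M''\otimes N_\alpha)=M''\otimes\bigcap_\alpha N_\alpha$. Equivalently, by Corollary~\ref{cor:mittag-leffler} the flat module $M''$ is Mittag--Leffler.

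To extend to an arbitrary flat module $M''$, I would fix a presentation $0\to M'\to F\to M''\to 0$ with $F$ free; then $M'$ is flat but in general not projective. Since $M''$ is flat, the inclusion $M'\hookrightarrow F$ is pure by \cite[Thm~4.85]{tl-modules}, so Proposition~\ref{prop:int_prop_pure-submodules} applied to the free module $F$ shows that $M'$ also has IP. I would then take a further projective presentation $0\to L\to P\to M'\to 0$ with $P$ projective, assemble the resulting four-term exact sequence $0\to L\to P\to F\to M''\to 0$, apply the hypothesis~\eqref{eq:every_flat} at the ``projective stage'' $0\to L\to P\to M'\to 0$ (once $L$ is itself projective, otherwise iterating), and run a simultaneous diagram chase across the two stages to propagate the intersection identity down to $M''$.

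The main obstacle will be precisely this final transfer. Knowing IP for $M'$ together with IP for $F$ is \emph{not} by itself sufficient to deduce IP for $M''$ from $0\to M'\to F\to M''\to 0$: one needs the surjectivity $\bigcap_\alpha\im(F\otimes N_\alpha)\twoheadrightarrow\bigcap_\alpha\im(M''\otimes N_\alpha)$, which is only guaranteed by~\eqref{eq:every_flat} when $M'$ is projective. Bridging the gap from ``flat of projective dimension $\leq 1$'' to ``arbitrary flat'' is therefore the technical heart of the argument, and I expect it to require the full strength of Proposition~\ref{prop:int_prop_pure-submodules} together with a careful compatibility check between the lifting in the projective approximation of $M'$ and the lifting across $F\twoheadrightarrow M''$.
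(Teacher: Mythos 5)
Your forward direction and your first reduction are exactly the paper's argument: for a presentation \(0\to M'\to M\to M''\to 0\) with \(M'\), \(M\) projective and \(M''\) flat, one compares the row \(0\to M'\otimes\bigcap_\alpha N_\alpha\to M\otimes\bigcap_\alpha N_\alpha\to M''\otimes\bigcap_\alpha N_\alpha\to 0\) (exact because \(M'\subseteq M\) is pure, \(M''\) being flat) with~\eqref{eq:every_flat}; the two left-hand canonical vertical maps are isomorphisms because projective modules have the intersection property, so by the short five lemma~\eqref{eq:every_flat} is exact if and only if the canonical map \(M''\otimes\bigcap_\alpha N_\alpha\to\bigcap_\alpha\bigl(M''\otimes N_\alpha\bigr)\) is an isomorphism. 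This settles both implications for every flat \(M''\) that admits such a presentation, i.e.\ has projective dimension at most one.

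The genuine gap is the one you flag yourself, and the repair you sketch cannot close it. From \(0\to M'\to F\to M''\to 0\) with \(F\) free, purity and Proposition~\ref{prop:int_prop_pure-submodules} only give that the kernel \(M'\) has the intersection property; \(M'\) is again merely flat, and every further projective presentation reproduces the same situation (the new kernel is flat and pure in a projective, but not projective), so the hypothesis~\eqref{eq:every_flat} never becomes applicable at any finite stage and the planned ``simultaneous diagram chase'' has no input -- as you yourself observe, intersection property for \(M'\) and \(F\) does not yield the surjectivity onto \(\bigcap_\alpha(M''\otimes N_\alpha)\). So the proposal, as written, proves the reverse implication only for flat modules of projective dimension at most one. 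The paper bridges precisely this point not by iteration but by producing the presentation outright: every flat module is a colimit of projective modules (Lazard), and such a colimit is computed as a cokernel of a map between coproducts of projectives (MacLane), whence the paper takes a sequence \(0\to M'\to M\to M''\to 0\) with both \(M'\) and \(M\) projective as given and then runs exactly your ladder; your Step 1 is the entire proof once that presentation is granted. Your unease does locate the crux, though: the cokernel construction a priori only gives a right-exact presentation \(P_1\to P_0\to M''\to 0\), whose kernel is a quotient of \(P_1\), so the projectivity of \(M'\) is the one step the paper treats very tersely, and any complete write-up must either justify that presentation or find a genuinely different route to pass from the projective-dimension-one case to arbitrary flat modules.
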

\begin{proof}
    Every flat module is a colimit of projective modules. Any colimit of
    projective modules can be computed as a cokernel of a map between
    projective modules, by~\cite[Thm~1, Chap.~V,
    \S2]{smc:categories-for-the-working-mathematician}.  So let \(M''\) be
    a flat module and let 
    \[\mathcal{E}:\ 0\sir M'\sir M\sir M''\sir 0\]
    be an exact sequence, where \(M'\) and \(M\) are projective modules. The
    extension \(\mathcal{E}\) is pure, since \(M''\) is
    flat~\cite[Thm~4.85]{tl-modules}. Let \((N_\alpha)_{\alpha\in I}\) be
    a family of submodules of an \(\R\)-module \(N\). We have a commutative
    diagram:
    \begin{center}
	\hspace*{-3mm}\begin{tikzpicture}
	    \matrix[matrix, column sep=1cm, row sep=1cm]{
		\node(A) {\(0\)}; & \node(B) {\(M'\otimes\bigcap_\alpha N_\alpha\)};   & \node(C) {\(M\otimes\bigcap_\alpha N_\alpha\)};   & \node(D) {\(M''\otimes\bigcap_\alpha N_\alpha\)};   & \node(E) {\(0\)};\\
		\node(U) {\(0\)}; & \node(W) {\(\bigcap_\alpha\left(M'\otimes N_\alpha\right)\)}; & \node(X) {\(\bigcap_\alpha\left(M\otimes N_\alpha\right)\)}; & \node(Y) {\(\bigcap_\alpha\left(M''\otimes N_\alpha\right)\)}; & \node(Z) {\(0\)};\\
	    };
	    \draw[->] (A) -- (B);
	    \draw[->] (B) -- (C);
	    \draw[->] (C) -- (D);
	    \draw[->] (D) -- (E);
	    \draw[->] (U) -- (W);
	    \draw[->] (W) -- (X);
	    \draw[->] (X) -- (Y);
	    \draw[->] (Y) -- (Z);
	    \draw[->] (B) --node[left]{\rotatebox{90}{\(\cong\)}} (W);
	    \draw[->] (C) --node[left]{\rotatebox{90}{\(\cong\)}} (X);
	    \draw[->] (D) --node[right]{\(f\)} (Y);
	\end{tikzpicture}
    \end{center}
    The upper row is exact by purity of \(M'\subseteq M\). Thus the lower row
    is exact if and only if the canonical map \(f\) is an isomorphism.
\end{proof}
The exactness of~\eqref{eq:every_flat} is rather difficult to obtain but it
might be useful once we know that every flat module is Mittag--Leffler.  For
example, by the classical Bass results, every flat left module is projective if
(and only if) the ring is left perfect.

\chapter{Lattices of quotients and subobjects}\label{chap:lattices}
This chapter contains important lattice--theoretical results.  We focus on
lattices that appear in the theory of bi- or Hopf algebras and their
(co)actions.  We examine two properties: \textit{completeness} and being an
(dually) \textit{algebraic lattice}\footnote{Dually algebraic lattices are the
    ones which opposite lattices are algebraic.}.  Completeness of many
lattices ought to be known, though the other property seems not to be
addressed by any previous studies.  Note that in complete lattices the join
(the meet) is uniquely determined by the meet (the join).  Some of the lattice
operations might seem counter-intuitive, since everything seems to be dual to
the classical structures.  For example the infimum of subalgebras is given by
intersection while the supremum is the intersection of all subalgebras which
contain both of them.  However, the coideals of a coalgebra behave
differently: the supremum of two coideals is their sum and their meet is the
sum of all coideals contained in the intersection.  The appearance of the
\emph{intersection property} shall be explained by
Theorem~\ref{thm:algebraic_structures}.  Let us note that generally we work
over a commutative ring, though some of the results are only proved over
a field.  We begin by showing that a lattice of coideals of a coalgebra,
ordered by inclusion, is complete.  Then we recall the duality between the
quotients of a coalgebra \(C\) and closed subalgebras of the topological
algebra \(A=C^*\).  From this duality we derive that the lattice of coideal is
dually algebraic (Proposition~\ref{thm:quot(C)-algebraic} on
page~\pageref{thm:quot(C)-algebraic}), by showing that the lattice of coideals
is dually isomorphic to the lattice of closed subalgebras of the dual algebra
\(C^*\).  We also construct a simple example
(Example~\ref{ex:lattices_of_coideals} on
page~\pageref{ex:lattices_of_coideals}) of coalgebras (finite dimensional
algebras) whose lattices of quotients are not distributive or not modular.  It
turns out that the lattice of of subcoalgebras of a coalgebra over a field is
algebraic: the meet is given by intersection and join is the sum
(Proposition~\ref{thm:subcoalgebras_algebraic} on
page~\pageref{thm:subcoalgebras_algebraic}).  It turns out that subcoalgebras
of a \(\k\)-coalgebra form also a dually algebraic lattice which is isomorphic
to the lattice of closed ideals of the topological algebra \(A=C^*\).  From
the decomposition theorem of commutative coalgebras over a field \(\k\), we
derive a decomposition theorem for their lattices of subcoalgebras
(Corollary~\ref{cor:decomposition_of_subcoalgebras} on
page~\pageref{cor:decomposition_of_subcoalgebras}).  Then we discuss the
theory of subcomodules of a \(C\)-comodule over a ring \(R\).  We show that
the lattice of subcomodules is complete and if the coalgebra \(C\) is a flat
Mittag--Leffler module then it is also algebraic.  The join is simply given by
the sum, and in the latter case the meet (also the infinite one) is given by
intersection.  Our further results heavily depend on this statement.  Wisbauer
showed that the category of comodules is equivalent to the smallest
Grothendieck subcategory of \(C^*\)-modules which contain \(C\) (with its
natural \(C^*\)-modules structure) if and only if \(C\) is locally projective.
In this case it follows that the lattice of subcomodules of a comodule \(M\)
is isomorphic to the algebraic lattice of \(C^*\)-submodules of the
\(C^*\)-module \(M\).

In section~\ref{sec:lattices-bi_and_Hopf_algebras} we analyse the lattice of
quotients and substructures of bialgebras and Hopf algebras, including  normal
and conormal coideals.  We also define generalised quotients and generalised
subalgebras of bialgebras
(Definition~\ref{defi:generalised_quotients_and_subalgebras} on
page~\pageref{defi:generalised_quotients_and_subalgebras}), and we show that
their lattices are complete (Proposition~\ref{prop:qquot_complete}).  This is
important for the construction of the Galois correspondence for comodule
algebras over bialgebras in the following chapter.  The lattice of generalised
subalgebras is algebraic by Proposition~\ref{prop:qsub_algebraic} (on
page~\pageref{prop:qsub_algebraic}).  Note that the lattice of generalised
quotients and generalised subbialgebras are isomorphic in some cases, for
example when the bialgebra is finite dimensional over a field
(Theorem~\ref{thm:newTakeuchi}\ref{itm:newTakeuchi_finite} on
page~\pageref{thm:newTakeuchi}) 

We end the chapter with some important examples of the lattices of generalised
quotients and generalised subalgebras.  For the group Hopf algebra \(\k[G]\)
we show that the lattice of generalised quotients is anti-isomorphic to the
lattice of subgroups of the group \(G\), while the lattice of Hopf algebra
quotients is isomorphic to the lattice of normal subgroups
(Proposition~\ref{prop:group_algebra_quotients} on
page~\pageref{prop:group_algebra_quotients}).  If the group is finite it
follows that generalised quotients of \(k[G]^*\) is anti-isomorphic to the
lattice of subgroups (Proposition~\ref{prop:dual_gruop_algebra_quotients} on
page~\pageref{prop:dual_gruop_algebra_quotients}).  Both propositions we
formulate using \(G\)-sets rather than subgroups. The reason is that we get an
isomorphism rather than an anti-isomorphism, secondly and most importantly,
\(G\)-sets appear very naturally in the Grothendieck approach to Galois
theory: the equivalence of categories of \(G\)-sets and split algebras over
a Galois extension of fields.

The final example is the lattice of generalised quotients of the enveloping
algebra of a Lie algebra \(\mathfrak{g}\).  It turns out that the result is
very similar to the case of a group algebra.  Here the lattice of generalised
quotients turns out to be anti-isomorphic to the lattice of Lie subalgebras of
\(\mathfrak{g}\), while the lattice of Hopf algebra quotients is
anti-isomorphic to the lattice of Lie ideals of \(\mathfrak{g}\). 

In the last section we note that the lattice of \(H\)-comodule subalgebras
of an \(H\)-comodule algebra \(A\) is algebraic if \(H\) is a flat
Mittag--Leffler module (Proposition~\ref{prop:comodule_subalg_algebraic} on
page~\pageref{prop:comodule_subalg_algebraic}).

We end the chapter with a helpful table which lists all the fifteen lattices
that we discuss, with the list of properties we prove together with references
to the statements in this chapter.

\section{Coalgebras}
\index{coalgebra!coideal}
\index{coalgebra!poset of coideals}
\index{coalgebra!left coideal}
\index{coalgebra!right coideal}
\begin{definition}\label{defi:coideal}
    Let \(C\) be an \(\R\)-coalgebra.  A \bold{coideal} \(I\) is a submodule
    of \(C\) such that \(C/I\) is a coalgebra and the natural epimorphism
    \(C\sir C/I\) is a map of  coalgebras.  Furthermore, we say that
    \(I\subseteq C\) is a \bold{right} (\bold{left}) \bold{coideal} if
    \(C/I\) is a right (left) comodule and the quotient map \(C\sir C/I\) is
    a map of right (left) \(C\)-comodules.
\end{definition}
Note that~\citeauthor{tb-rw:corings-and-comodules} define coideals as kernels
of a surjective coalgebra homomorphism and then they show that these two
definitions are equivalent.  Let us cite this result here:
\begin{proposition}[{\cite{tb-rw:corings-and-comodules}}]
    Let \(C\) be an \(\R\)-coalgebra and let \(I\) be an \(\R\)-submodule of
    \(C\). We let \(\pi:C\sir C/I\) be the quotient \(\R\)-module map. Then the
    following conditions are equivalent: 
    \begin{enumerate}
	\item \(I\) is a coideal in the sense of Definition~\ref{defi:coideal},
	\item \(I\) is a kernel of a surjective coalgebra map,
	\item \(\Delta(I)\subseteq\ker(\pi\otimes\pi)\), and \(I\subseteq\ker\epsilon\).
    \end{enumerate}
    Furthermore, if \(I\subseteq C\) is a pure submodule, then (i)-(iii) are
    equivalent to:
    \begin{enumerate}
	\item \(\Delta(I)\subseteq I\otimes C+C\otimes I\).
    \end{enumerate}
    If (i) holds then \(C/I\) is cocommutative provided \(C\) is.
\end{proposition}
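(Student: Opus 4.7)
The plan is to prove the circular chain (i) $\Rightarrow$ (ii) $\Rightarrow$ (iii) $\Rightarrow$ (i), then handle the pure-submodule equivalence with (iv), and finally the cocommutativity claim. The first implication is immediate: the canonical projection $\pi \colon C \to C/I$ is itself a surjective coalgebra map whose kernel is $I$. For (ii) $\Rightarrow$ (iii), factor $f = \overline f \circ \pi$ where $\overline f \colon C/I \to D$ is the induced $R$-module isomorphism. Applying the coalgebra-map equation $(f \otimes f) \Delta_C = \Delta_D f$ to elements of $I$ gives $(\overline f \otimes \overline f)(\pi \otimes \pi)\Delta(I) = 0$, and injectivity of $\overline f \otimes \overline f$ forces $(\pi \otimes \pi)(\Delta(I)) = 0$. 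Similarly $\epsilon_D \circ f = \epsilon_C$ forces $I \subseteq \ker \epsilon_C$.

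The constructive core is (iii) $\Rightarrow$ (i). I would define $\Delta_{C/I}$ as the unique map satisfying $\Delta_{C/I} \circ \pi = (\pi \otimes \pi) \circ \Delta_C$, which exists by the universal property of the quotient precisely because $\Delta(I) \subseteq \ker(\pi \otimes \pi)$; analogously, $\epsilon_{C/I}$ is the unique factorization of $\epsilon_C$ through $\pi$, well-defined by $I \subseteq \ker \epsilon_C$. Coassociativity and the counit axioms for $(C/I, \Delta_{C/I}, \epsilon_{C/I})$ then follow from the corresponding axioms on $C$ by a routine diagram chase, exploiting the surjectivity of $\pi$ and of $\pi \otimes \pi \otimes \pi$. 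By construction $\pi$ is a coalgebra map.

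For (iii) $\Leftrightarrow$ (iv) under the purity hypothesis, the key step is to compute $\ker(\pi \otimes \pi)$ explicitly. Writing $\pi \otimes \pi = (\pi \otimes \mathrm{id}) \circ (\mathrm{id} \otimes \pi)$ and invoking right-exactness of $C \otimes -$ and $-\otimes (C/I)$, one obtains
\[
\ker(\pi \otimes \pi) \;=\; \mathrm{im}(I \otimes C \to C \otimes C) + \mathrm{im}(C \otimes I \to C \otimes C)
\]
without any assumption on $I$. The purity hypothesis enters only to identify these two images with $I \otimes C$ and $C \otimes I$ as genuine submodules of $C \otimes C$, so that the expression in (iv) is literally meaningful; this identification is the only subtle point in the whole argument. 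Finally, for cocommutativity, if $\tau \circ \Delta_C = \Delta_C$ then the defining equation $\Delta_{C/I} \circ \pi = (\pi \otimes \pi) \circ \Delta_C$ combined with naturality of $\tau$ gives $\tau \circ \Delta_{C/I} \circ \pi = \Delta_{C/I} \circ \pi$, and surjectivity of $\pi$ forces $\tau \circ \Delta_{C/I} = \Delta_{C/I}$. I do not expect a serious obstacle anywhere; the care needed at the image-identification step in the purity argument is the only non-routine point.
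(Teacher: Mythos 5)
Your cyclic chain (i) \(\Rightarrow\) (ii) \(\Rightarrow\) (iii) \(\Rightarrow\) (i) is correct: the projection itself witnesses (ii); tensoring the induced module isomorphism \(\overline f\) with itself is again an isomorphism, so no flatness issue arises in (ii) \(\Rightarrow\) (iii); and the factorisation of \(\Delta\) and \(\epsilon\) through \(\pi\) together with surjectivity of \(\pi^{\otimes 2}\), \(\pi^{\otimes 3}\) settles (iii) \(\Rightarrow\) (i). The kernel identity \(\ker(\pi\otimes\pi)=\mathrm{im}(I\otimes C\to C\otimes C)+\mathrm{im}(C\otimes I\to C\otimes C)\) is also true without hypotheses on \(I\) (the terse "right-exactness" hint does expand into the pullback computation \(\ker(\pi\otimes\pi)=(\pi\otimes\mathrm{id})^{-1}\ker(\mathrm{id}\otimes\pi)\)), and the cocommutativity argument via naturality of the flip is fine. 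Note that the paper gives no proof of this proposition at all — it defers to \cite[Prop.~2.4]{tb-rw:corings-and-comodules} — so yours is the only argument under review.

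The genuine gap is in the direction (iv) \(\Rightarrow\) (iii). What you actually establish under purity is the equivalence of the two comultiplication conditions, \(\Delta(I)\subseteq\ker(\pi\otimes\pi)\) and \(\Delta(I)\subseteq I\otimes C+C\otimes I\); but (iii) contains a second clause, \(I\subseteq\ker\epsilon\), which you silently drop when coming back from (iv). It cannot be recovered: (iv) as literally stated does not imply \(\epsilon(I)=0\). Take \(C\) the group algebra over a field of a group of order two, with basis the group-like elements \(1\) and \(g\), and \(I\) the span of \(g\). Then \(I\) is pure, \(\Delta(g)=g\otimes g\in I\otimes C\), so (iv) holds, yet \(\epsilon(g)=1\), so no counit can factor through \(\pi\) and \(I\) is not a coideal; thus (iv) does not imply (i)--(iii). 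The resolution is that the pure-case condition must carry the counit clause along — the cited source states it as \(\Delta(I)\subseteq I\otimes C+C\otimes I\) \emph{and} \(\epsilon(I)=0\), and the thesis's (iv) has simply lost the second half. A complete write-up should either reinstate \(\epsilon(I)=0\) in (iv), after which your purity argument does close the equivalence, or explicitly flag the omission; as written, your proof asserts an equivalence that is false for the literal (iv), and the missing counit condition is exactly where it breaks.
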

\begin{proof}
    For the proof see~\cite[Prop.~2.4]{tb-rw:corings-and-comodules}.
\end{proof}
\index{coalgebra!poset of coideals!complete}
\begin{proposition}\label{prop:coid-complete}
    For a coalgebra \(C\) over a commutative ring~\(\R\) the set of all
    coideals, denoted by \(\coId(C)\), forms a \textsf{complete lattice} with
    inclusion as the order relation.  The lattice operations in \(\coId(C)\)
    are given by
    \begin{align*}
	I_1\vee I_2 & =I_1+I_2\\
	I_1\wedge I_2 & =\sum_{\substack{I\in \coId(C)\\I\subseteq I_1\cap I_2}}I
    \end{align*}
\end{proposition}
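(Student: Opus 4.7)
The plan is to invoke Remark~\ref{rem:complete_lattice}: it suffices to show that $\coId(C)$ is closed under arbitrary suprema (with supremum given by the sum), because then the infimum formula is forced, and the displayed binary formulas drop out as a special case. So the entire proof reduces to one fact: \emph{an arbitrary sum of coideals is a coideal.}

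To establish this, I would use the characterisation (iii) from the cited Proposition of \cite{tb-rw:corings-and-comodules}, which says that $I \subseteq C$ is a coideal iff $\Delta(I)\subseteq\ker(\pi\otimes\pi)$ and $I\subseteq\ker\epsilon$, where $\pi\colon C\sir C/I$ is the canonical module projection. Given a family $\{I_\lambda\}_{\lambda\in\Lambda}$ of coideals, set $I=\sum_\lambda I_\lambda$ and $\pi\colon C\sir C/I$. The condition $I\subseteq\ker\epsilon$ is immediate from $I_\lambda\subseteq\ker\epsilon$ for each~$\lambda$. For the other condition: since $I_\lambda\subseteq I$, the map $\pi$ factors as $C\sir C/I_\lambda\sir C/I$, which I will denote $\pi=\pi_\lambda'\circ\pi_\lambda$. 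For any $x\in I_\lambda$ one has $\Delta(x)\in\ker(\pi_\lambda\otimes\pi_\lambda)$, hence $(\pi\otimes\pi)\Delta(x)=(\pi_\lambda'\otimes\pi_\lambda')(\pi_\lambda\otimes\pi_\lambda)\Delta(x)=0$. Since every element of $I$ is a finite sum of elements drawn from various $I_\lambda$'s, $\R$-linearity gives $\Delta(I)\subseteq\ker(\pi\otimes\pi)$. Thus $I$ is a coideal.

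With this in hand, $\sum_\lambda I_\lambda$ is the supremum: it is a coideal containing each $I_\lambda$, and any coideal containing every $I_\lambda$ contains their sum. Applying Remark~\ref{rem:complete_lattice}, $\coId(C)$ is a complete lattice, with
\[
    \bigwedge_{\lambda\in\Lambda}I_\lambda
    =\sum\bigl\{I\in\coId(C):I\subseteq I_\mu\text{ for every }\mu\in\Lambda\bigr\},
\]
and this is in fact a coideal (again a sum of coideals) contained in $\bigcap_\lambda I_\lambda$; it is the largest such because any coideal inside $\bigcap_\lambda I_\lambda$ is one of the summands. Specialising to $\Lambda=\{1,2\}$ gives precisely the two displayed formulas.

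There is no serious obstacle. The only point worth being careful about is that I cannot use the more familiar criterion $\Delta(I)\subseteq I\otimes C+C\otimes I$ to characterise coideals, since that equivalence requires $I$ to be a pure submodule, and an arbitrary sum of pure submodules need not be pure over a commutative ring~$\R$. Using the kernel-of-quotient formulation (iii) instead bypasses this issue cleanly and keeps the argument valid over any commutative base ring.
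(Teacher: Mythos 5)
Your proposal is correct and follows essentially the same route as the paper: reduce via Remark~\ref{rem:complete_lattice} to showing that an arbitrary sum \(I=\sum_\lambda I_\lambda\) of coideals is a coideal, verify \(I\subseteq\ker\epsilon\) trivially, and check \((\pi\otimes\pi)\Delta(I)=0\) by factoring the projection \(C\sir C/I\) through each \(C\sir C/I_\lambda\). Your closing remark about avoiding the criterion \(\Delta(I)\subseteq I\otimes C+C\otimes I\), which needs purity, is exactly the right caution over a general commutative base ring.
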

\begin{proof}
    It is enough to show that any infinite suprema exist.  Let
    \(\pi_\lambda:C\sir C_\lambda\) (\(\lambda\in\Lambda\)) be a family of
    coalgebra epimorphisms with kernels \(I_\lambda\).  Let us take
    \(I=\sum_{\lambda\in\Lambda}I_\lambda\) and let \(p:C\sir C/I\) be the
    natural projection.  By the previous proposition it is enough to show that
    \(p\otimes p\circ\Delta(I)=0\), since \(I\subseteq\ker\epsilon\).  Let us
    take \(c\in I\).  It is a sum \(c=\sum_{\lambda\in\Lambda}c_\lambda\),
    where each \(c_\lambda\in I_\lambda\) and only finitely many of them are
    non--zero.  Then 
    \[p\otimes p\circ\Delta(c)=\sum_{\lambda\in\Lambda}p({c_\lambda}_{(1)})p({c_\lambda}_{(2)})=0\]
    The last equality follows since \(p\) factorises through each
    \(p_\lambda\) and for each \(\lambda\) we have 
    \[p_\lambda({c_\lambda}_{(1)})p_\lambda({c_\lambda}_{(2)})=0\]
    since \(I_\lambda\) is a coideal.
\end{proof}

\index{coalgebra!dual algebra}
If \(C\) is a \(\k\)-coalgebra then the dual space \(C^*\) is a unital algebra
with the following multiplication (the \textit{convolution product}):
\[f\ast g(c)\coloneq f(c_{(1)})g(c_{(2)}),\quad f,g\in C^*\]
The unit of the convolution product is the counit \(\epsilon:C\sir\k\).  Let
\(V\) be a \(\k\)-vector space and let \(V^*\) be its dual.  For a subspace
\(W\subseteq V\) we let \(W^\perp\coloneq\{f\in V^*:\,f|_W=0\}\), while for
\(W\subseteq V^*\) we will write \(W^\perp\coloneq\bigcap_{f\in W}\ker\,f\).
Note that if \(V\) is a finite dimensional space then the above maps define
a bijective correspondence (which reverses the inclusion order) between the
subspaces of \(V\) and the subspaces of its dual.
\begin{proposition}\label{prop:right_coideals-right_ideals}
    Let \(C\) be a coalgebra over a field \(\k\). Then
    \begin{enumerate}
	\item \(J\subseteq C\) is a right (left) coideal if and only if
	    \(J^\perp\) is a right (left) ideal in \(C^*\).
	\item if \(I\subseteq C^*\) is a right (left) ideal in \(C^*\),
	    then \(I^\perp\) is a right (left) coideal in \(C\);
    \end{enumerate}
\end{proposition}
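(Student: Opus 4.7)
My plan is to handle only the right-handed statements; the left versions follow by a fully symmetric argument (tensor factors exchanged, convolution order reversed). The key preliminary fact I would invoke, already noted in the proposition immediately preceding the one at hand, is that $J \subseteq C$ is a right coideal exactly when $\Delta(J) \subseteq J \otimes C$. I would also need the standard double-annihilator identity $(J^\perp)^\perp = J$ for subspaces $J$ of the $\k$-vector space $C$, which holds by elementary linear algebra: any vector outside $J$ can be separated from $J$ by a linear functional.

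For the ``only if'' direction of~(i), I would assume $\Delta(J) \subseteq J \otimes C$ and take arbitrary $g \in J^\perp$, $f \in C^*$, and $c \in J$. Writing $\Delta(c) = \sum_i j_i \otimes c_i$ with $j_i \in J$, the convolution
\[
(g * f)(c) \;=\; g(c_{(1)})\, f(c_{(2)}) \;=\; \sum_i g(j_i)\, f(c_i)
\]
vanishes termwise because $g(j_i) = 0$ for each $i$. Hence $g * f \in J^\perp$, showing that $J^\perp$ is a right ideal of $C^*$.

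The remaining content---the ``if'' direction of~(i) together with the whole of~(ii)---I would deduce from a single statement: whenever $I \subseteq C^*$ is a right ideal, one has $\Delta(I^\perp) \subseteq I^\perp \otimes C$. To prove this, take $c \in I^\perp$ and choose a presentation $\Delta(c) = \sum_{i=1}^n c_i' \otimes c_i''$ in which the $c_i''$ are linearly independent in $C$ (always possible over a field). For any $g \in I$ and any $f \in C^*$, the right-ideal hypothesis gives $g * f \in I$, whence
\[
0 \;=\; (g * f)(c) \;=\; \sum_{i=1}^n g(c_i')\, f(c_i'').
\]
Linear independence of the $c_i''$ lets me select, for each fixed $j$, an $f \in C^*$ with $f(c_i'') = \delta_{ij}$, yielding $g(c_j') = 0$ for every $g \in I$, i.e.\ $c_j' \in I^\perp$. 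Therefore $\Delta(c) \in I^\perp \otimes C$. Applied directly to an arbitrary right ideal $I$ this gives~(ii); applied to $I = J^\perp$ and combined with $(J^\perp)^\perp = J$ it gives the ``if'' half of~(i).

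There is no genuine obstacle in the argument; the only mild subtlety worth flagging is the asymmetry between~(i) and~(ii). Over a field one has $(J^\perp)^\perp = J$ for subspaces of $C$, but $(I^\perp)^\perp$ can be strictly larger than $I$ for $I \subseteq C^*$---it is the weak-$*$ closure of $I$---which is precisely why~(ii) is not stated as a biconditional and why the duality between coideals of $C$ and ideals of $C^*$ in the next subsection will be phrased in terms of \emph{closed} subalgebras/ideals of the topological algebra~$C^*$.
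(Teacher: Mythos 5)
Your proof is correct and is essentially the argument the paper relies on: the paper gives no proof of its own but cites \cite[Prop.~1.4.5]{ms:hopf-alg}, where exactly this convolution computation and the dual-functional trick on a presentation of \(\Delta(c)\) with linearly independent second tensor factors are used, and where the ``if'' half of (i) is likewise obtained from (ii) together with \(J^{\perp\perp}=J\). One small correction: the characterisation of a right coideal as a subspace \(J\) with \(\Delta(J)\subseteq J\otimes C\) is not stated in the proposition immediately preceding this one (that proposition concerns two-sided coideals); over a field it follows instead directly from Definition~\ref{defi:coideal}, because \(\ker(\pi\otimes\id_C)=J\otimes C\) and the induced coaction on \(C/J\) is forced by surjectivity of \(\pi\). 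Your closing remark that \((I^\perp)^\perp\) is only the closure of \(I\) in the finite topology is correct and is precisely the point the paper exploits later via Theorem~\ref{thm:duality_for_subspace} and Theorem~\ref{thm:quot(C)-algebraic}.
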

\begin{proof}
    See~\cite[Prop.~1.4.5]{ms:hopf-alg}.
\end{proof}
If follows that for a finite dimensional coalgebra \(C\) the poset of right
(left) coideals of \(C\) is anti-isomorphic to the poset of right (left)
ideals of \(C^*\).
\begin{proposition}\label{prop:coideals-subalgebras}
    Let \(C\) be a coalgebra over a field \(\k\).  Then
    \begin{enumerate}
	\item \(J\subseteq C\) is a coideal if and only if
	    \(J^\perp\subseteq C^*\) is a subalgebra.
	\item if \(S\subseteq C^*\) is a subalgebra of \(C^*\), then
	    \(S^\perp\) is a coideal of \(C\);
    \end{enumerate}
\end{proposition}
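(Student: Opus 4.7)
The proof will derive (i) directly from the intrinsic characterisation $\Delta(J)\subseteq J\otimes C+C\otimes J$ together with $J\subseteq\ker\epsilon$ (both available from the proposition just above, since over a field every subspace is pure), and then obtain (ii) from (i) by a reduction to finite dimensions. For the forward implication of (i), the condition $\epsilon\in J^\perp$ is a restatement of $J\subseteq\ker\epsilon$, and for $f,g\in J^\perp$ and $j\in J$ I would write $\Delta(j)=\sum_i a_i\otimes c_i+\sum_k c_k'\otimes b_k'$ with $a_i,b_k'\in J$ and expand $(f\ast g)(j)=f(j_{(1)})g(j_{(2)})$; each resulting term contains a factor $f(a_i)$ or $g(b_k')$ that vanishes, so $f\ast g\in J^\perp$. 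For the converse, assume $J^\perp$ is a unital subalgebra of $C^\ast$: then $\epsilon\in J^\perp$ gives $J\subseteq\ker\epsilon$, and I would examine the composite
\[
\bar\Delta\colon J\hookrightarrow C\xrightarrow{\Delta} C\otimes C\twoheadrightarrow (C\otimes C)/(J\otimes C+C\otimes J)\cong(C/J)\otimes(C/J),
\]
where the last isomorphism is the standard one valid over a field. For any $\bar f,\bar g\in(C/J)^\ast$ the lifts $f,g$ lie in $J^\perp$, so $f\ast g\in J^\perp$ and hence $(\bar f\otimes\bar g)(\bar\Delta(j))=(f\ast g)(j)=0$. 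Since functionals of the form $\bar f\otimes\bar g$ separate points of $(C/J)\otimes(C/J)$ (a standard fact about tensor products of vector spaces), $\bar\Delta(j)=0$, which is exactly $\Delta(j)\in J\otimes C+C\otimes J$.

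For (ii) the naive approach is to apply (i) to $J:=S^\perp$, but this fails because $S^{\perp\perp}\neq S$ in general when $S$ is infinite-dimensional. Instead I would reduce to finite dimensions via the fundamental theorem of coalgebras: every $c\in C$ lies in some finite-dimensional subcoalgebra $D\subseteq C$. Restriction $f\mapsto f|_D$ is a unital algebra homomorphism $C^\ast\to D^\ast$, well-defined because $\Delta(D)\subseteq D\otimes D$, so $S_D:=\{f|_D:f\in S\}$ is a subalgebra of $D^\ast$; moreover $(S_D)^\perp=S^\perp\cap D$ inside $D$. In finite dimensions $((S_D)^\perp)^\perp=S_D$, so applying the already-proved equivalence (i) to the subspace $(S_D)^\perp\subseteq D$ shows that $(S_D)^\perp$ is a coideal of $D$. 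Hence for any $c\in S^\perp\cap D$ we have $\Delta(c)\in(S_D)^\perp\otimes D+D\otimes(S_D)^\perp\subseteq S^\perp\otimes C+C\otimes S^\perp$; combined with the fact that $\epsilon\in S$ forces $S^\perp\subseteq\ker\epsilon$, this proves $S^\perp$ is a coideal.

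The main obstacle is the converse direction of (i): translating closure of $J^\perp$ under convolution into the coalgebraic condition $\Delta(J)\subseteq J\otimes C+C\otimes J$ hinges on the elementary but essential separating property that $V^\ast\otimes W^\ast$ separates points of $V\otimes W$ for arbitrary vector spaces. Once this is in hand, the finite-dimensional reduction used for (ii) is routine, though one must resist the temptation to simply apply $\perp$ twice in the infinite-dimensional setting.
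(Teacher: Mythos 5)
Your proof is correct, and in fact the paper offers no argument of its own here --- it simply cites \cite[Prop.~1.4.6]{ms:hopf-alg} --- so your write-up is a genuine, self-contained replacement. Part (i) is the standard duality argument: the forward direction is the computation you give, and the converse rests on the fact that functionals of the form \(\bar f\otimes\bar g\) separate points of \((C/J)\otimes(C/J)\), together with the identification of the kernel of \(C\otimes C\to (C/J)\otimes(C/J)\) with \(J\otimes C+C\otimes J\) over a field; both steps are fine, and your tacit use of unitality (\(\epsilon\in J^\perp\), \(\epsilon\in S\)) matches the paper's convention that all algebras are unital. For (ii), you are right to resist applying \(\perp\) twice, since \(S^{\perp\perp}\neq S\) in general, and your reduction via the Fundamental Theorem of Coalgebras (Theorem~\ref{thm:fundamental_theorem_of_coalgebras}) works: restriction to a finite-dimensional subcoalgebra \(D\) is a unital algebra map \(C^*\to D^*\), one has \((S_D)^\perp=S^\perp\cap D\), double-perp is harmless in finite dimensions, and the resulting containment \(\Delta(c)\in (S^\perp\cap D)\otimes D+D\otimes(S^\perp\cap D)\) lands inside \(S^\perp\otimes C+C\otimes S^\perp\). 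An alternative route, closer in spirit to the machinery the paper develops later (Lemma~\ref{lem:topological_algebra}), is to note that \(S^{\perp\perp}\) is the closure of \(S\) in the finite topology and that the closure of a subalgebra is again a subalgebra because convolution is separately continuous; then part (i) applies directly to \(J=S^\perp\). Your finite-dimensional reduction buys a more elementary argument with no topology, while the topological route generalises more smoothly to the statements about closed subalgebras and closed ideals of \(C^*\) used elsewhere in the chapter.
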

\begin{proof}
    See~\cite[Prop.~1.4.6]{ms:hopf-alg}.
\end{proof}
In the finite dimensional case we have the following result:
\index{coalgebra!poset of coideals!dually algebraic}
\begin{proposition}\label{prop:coid_subag}
    Let \(C\) be a finite dimensional coalgebra over a field \(\k\). Then we
    have a dual isomorphism of lattices:
    \[(\coId(C),\wedge,+)\cong(\Sub_\Alg(C^*),\cap,\vee)\]
    and thus \(\coId(C)\) is a dually algebraic lattice.
\end{proposition}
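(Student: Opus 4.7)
The plan is to upgrade the bijection between coideals of $C$ and subalgebras of $C^*$ supplied by Proposition~\ref{prop:coideals-subalgebras} to an order-reversing lattice isomorphism, and then to invoke the general fact that lattices of subalgebras of universal algebras are algebraic (cited in this chapter after Theorem~\ref{thm:algebraic_structures}).

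First I would record that, for a finite dimensional $\k$-vector space $V$, the annihilator map $W\mapsto W^\perp$ is an inclusion-reversing bijection between the subspaces of $V$ and those of $V^*$, with inverse $U\mapsto U^\perp$, and that it satisfies
\[(W_1+W_2)^\perp = W_1^\perp\cap W_2^\perp,\qquad (W_1\cap W_2)^\perp = W_1^\perp+W_2^\perp.\]
The first identity holds in arbitrary dimension, and the second one follows in finite dimension from the first by applying $(-)^\perp$ to $W_1^\perp+W_2^\perp$ and using $(W^\perp)^\perp=W$.

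Next, by Proposition~\ref{prop:coideals-subalgebras}, $J\subseteq C$ is a coideal if and only if $J^\perp\subseteq C^*$ is a subalgebra, and if $S\subseteq C^*$ is a subalgebra, then $S^\perp$ is a coideal. Since $C$ is finite dimensional, the involution $(-)^\perp$ gives a mutually inverse bijection between $\coId(C)$ and $\Sub_\Alg(C^*)$, which is inclusion-reversing by the remarks above. Combining the two annihilator identities with the description of the lattice operations in $\coId(C)$ given in Proposition~\ref{prop:coid-complete} and with the obvious description in $\Sub_\Alg(C^*)$, one checks that the bijection sends $I_1\vee I_2=I_1+I_2$ to $I_1^\perp\cap I_2^\perp$, and conversely sends $S_1\cap S_2$ to $S_1^\perp+S_2^\perp$. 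This gives the required dual lattice isomorphism $(\coId(C),\wedge,+)\cong(\Sub_\Alg(C^*),\cap,\vee)$.

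Finally, the lattice $\Sub_\Alg(C^*)$ is algebraic because subalgebra lattices of universal algebras are always algebraic (see \cite[Cor.~3.3]{sb-hs:universal_algebra}, cited after Theorem~\ref{thm:algebraic_structures}); hence $\coId(C)$, being dually isomorphic to it, is dually algebraic. The only nontrivial point in the whole argument is the compatibility of $(-)^\perp$ with the meet in $\coId(C)$, since the meet is not simply the set-theoretic intersection but the largest coideal contained in it; however this is forced automatically by the fact that $(-)^\perp$ is already known to be a bijection which reverses inclusion and takes sums to intersections, so the remaining operation must match up as well. No further computation is required.
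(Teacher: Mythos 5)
Your proof is correct and takes essentially the same route as the paper: the annihilator correspondence of Proposition~\ref{prop:coideals-subalgebras} combined with the finite-dimensional identities \((W_1+W_2)^\perp=W_1^\perp\cap W_2^\perp\) and \(W^{\perp\perp}=W\) gives the inclusion-reversing bijection, and dual algebraicity follows since \(\Sub_\Alg(C^*)\) is algebraic. The only (cosmetic) difference is that where you note meet-compatibility is automatic for an order anti-isomorphism of lattices, the paper checks \((I_1\wedge I_2)^\perp=I_1^\perp\vee I_2^\perp\) by a direct computation using the explicit formula for the meet of coideals from Proposition~\ref{prop:coid-complete}; both arguments are valid.
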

\begin{proof}
    The isomorphism of lattices is given by \(\coId(C)\ni
	I\selmap{}I^\perp\coloneq\{f\in C^*:f|_I=0\}\in\Sub_\Alg(C^*)\). We
    have \((I_1+I_2)^\perp=I_1^\perp\cap I_2^\perp\) for \(I_i\in\coId(C)\)
    (\(i=1,2\)), but furthermore this formula works for infinite joins of
    coideals. From this we get
    \begin{alignat*}{2}
	(I_1\wedge I_2)^\perp & = \bigl(\sum_{\substack{I\in\coId(C)\\I\subseteq I_1\cap I_2}}I\bigr)^\perp & \,=\, & \bigl(\bigcap_{\substack{I\in\coId(C)\\ I\subseteq I_1\cap I_2}}I^\perp\bigr)\\
	& = \bigl(\bigcap_{\substack{A\in\Sub_\Alg(C^*)\\ I_1^\perp\cup
		I_2^\perp\subseteq A}}A\bigr) & & {\small\text{(since
	    }\coId(C^*)^\op\cong\Sub_\Alg(C^*)\text{)}}\\
	& = I_1^\perp\vee I_2^\perp & &\\
    \end{alignat*}
\end{proof}
Thus \(\Quot(C)\) is an algebraic lattice (see Definition~\ref{defi:alglat} on
page~\pageref{defi:alglat}) if \(C\) is finite dimensional. We are going to
show that it is algebraic regardless of the dimension of \(C\).  Let us note
that every complete upper subsemilattice of the lattice of subvector spaces of
a finite dimensional vector space (like \(\coId(C)\) for a finite dimensional
coalgebra \(C\)) is algebraic, since every vector subspace is a compact
element of the lattice of subvector spaces.  
\begin{remark}\label{rem:sublattices_of_sub_vect}
    Every complete sublattice of the lattice of subspaces of a finite
    dimensional vector space is dually algebraic, since the lattice of
    subvector spaces of a finite dimensional vector space \(V\) is
    anti-isomorphic to the lattice of subvector spaces of the dual vector
    space \(V^*\).  
\end{remark}
The lattices of: \(\k\)-subcoalgebras, \(\k\)-subbialgebras, \(\k\)-subHopf
algebras, as we will see later, are sublattices of the lattice of subvector
spaces.

In order to show that the lattice of coideals is algebraic regardless of the
dimension we will need some finer tools to study the dual algebra.  The first
of these is the fundamental theorem of comodules.
\index{fundamental theorem of comodules}
\begin{theorem}[{\textbf{Fundamental Theorem of Comodules}}]\label{thm:fundamental_thm_of_comodules}
    Let \(C\) be a coalgebra over a field~\(\k\) and let \(M\) be a right
    \(C\)-comodule. Any element \(m\in M\) belongs to a finite dimensional
    subcomodule.
\end{theorem}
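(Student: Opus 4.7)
The plan is to exhibit, for a given \(m\in M\), an explicit finite dimensional subcomodule of \(M\) that contains \(m\), built from the elements appearing in the coaction \(\delta(m)\). The key tool is coassociativity combined with linear independence of a chosen basis of \(C\).

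First I would fix a \(\k\)-basis \(\{e_\alpha\}_{\alpha\in A}\) of \(C\) and, using the identification \(M\otimes C\cong\bigoplus_{\alpha\in A}M\otimes\k e_\alpha\), write \(\delta(m)=\sum_{i=1}^n m_i\otimes e_{\alpha_i}\) with the indices \(\alpha_1,\ldots,\alpha_n\in A\) pairwise distinct. Let \(N\coloneq\mathrm{span}_\k\{m_1,\ldots,m_n\}\subseteq M\), which is a finite dimensional subspace. Applying \(\id_M\otimes\epsilon\) to the above expression and using the counit axiom for comodules gives \(m=\sum_{i=1}^n\epsilon(e_{\alpha_i})m_i\in N\), so \(m\) already lies in \(N\).

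The heart of the argument is to show that \(\delta(N)\subseteq N\otimes C\), so that \(N\) is a subcomodule. For this I would invoke coassociativity \((\delta\otimes\id_C)\circ\delta=(\id_M\otimes\Delta)\circ\delta\) applied to \(m\), which yields
\[\sum_{i=1}^n\delta(m_i)\otimes e_{\alpha_i}=\sum_{i=1}^n m_i\otimes\Delta(e_{\alpha_i}).\]
For each fixed \(i\), let \(\phi_i\in C^*\) be the linear functional dual to \(e_{\alpha_i}\) with respect to the chosen basis, i.e.\ \(\phi_i(e_\alpha)=1\) if \(\alpha=\alpha_i\) and \(0\) otherwise. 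Applying \(\id_M\otimes\id_C\otimes\phi_i\) to both sides and using that the \(\alpha_j\) are distinct, the left hand side collapses to \(\delta(m_i)\), while the right hand side becomes \(\sum_{j=1}^n m_j\otimes(\id_C\otimes\phi_i)\Delta(e_{\alpha_j})\), which visibly lies in \(N\otimes C\). Hence \(\delta(m_i)\in N\otimes C\) for every \(i\), and \(N\) is the desired finite dimensional subcomodule.

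The only real difficulty I anticipate is careful bookkeeping in the coefficient-extraction step; conceptually the argument is very short, but it relies essentially on working over a field. It is precisely the basis decomposition \(M\otimes C\cong\bigoplus_\alpha M\otimes\k e_\alpha\), and the consequent ability to read off a single \(e_{\alpha_i}\)-component via the dual functional \(\phi_i\), that makes the proof go through. Over a general commutative ring this decomposition fails, and one would need an additional flatness or projectivity hypothesis on \(C\) to justify the same coefficient comparison.
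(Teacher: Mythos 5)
Your argument is correct: writing \(\delta(m)=\sum_i m_i\otimes e_{\alpha_i}\) over a basis, using the counit axiom to get \(m\in N=\mathrm{span}\{m_i\}\), and extracting components of the coassociativity identity with the dual functionals \(\phi_i\) to get \(\delta(m_i)\in N\otimes C\) is exactly the standard proof. The paper itself does not prove this statement but delegates it to the cited textbooks, and your proof coincides with the argument given there, so there is nothing to compare beyond noting that you have supplied the omitted details correctly.
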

The proof can be found in many text books: it follows
from~\cite[Cor.~2.1.4]{ms:hopf-alg} or is proved
in~\cite[Thm.~2.1.7]{sd-cn-sr:hopf-alg}.
\index{coalgebra!poset of quotients}
\begin{definition}
    Let \(C\) be an \(\R\)-coalgebra.  We let \(\Quot(C)=\{C/I:\ I-\text{
		a coideal of }C\}\) with order relation \(C/I_1\succcurlyeq
	C/I_2\Leftrightarrow I_1\subseteq I_2\). 
\end{definition}
\index{coalgebra!poset of quotients!lattice}
Clearly \(\Quot(C)\) is anti-isomorphic to the lattice \(\coId(C)\) and thus,
by Proposition~\ref{prop:coid-complete}, it is a complete lattice. 

We will now study the dual algebra \(C^*\) in some more detail.  It turns out
that it is a topological algebra.  Let \(X\) and \(Y\) be non empty sets. The
\bold{finite topology} on the mapping space \(Y^X\) is the product topology
when we view \(Y^X\) as a product of \(Y_x\coloneq Y\) for \(x\in X\), where
each \(Y_x\) is regarded as a discrete space. A basis for open sets in this
topology is given by the sets of the form
\[\mathcal{U}_{g,x_1,\ldots,x_n}=\left\{f\in Y^X:\,f(x_i)=g(x_i), i=1,\dots,n\right\}\]
where \(g\in Y^X\) and \(\{x_i:\,i=1,\dots,n\}\subseteq X\) is a finite
subset.  Every open set is a union of open sets of this form. Now, if \(X\)
and \(Y\) are \(\k\)-vector spaces then \(\Hom_\k(X,Y)\) is a subspace of
\(Y^X\) and we will consider the topology induced by the finite topology of
\(Y^X\). This topology on \(\Hom_\k(X,Y)\) is also called the \bold{finite
    topology}.  The following theorem holds:
\begin{theorem}\label{thm:duality_for_subspace}
    Let \(V\) be a vector space. Then the maps \(\Sub(V)\ni
	W\selmap{}W^\perp\in\Sub(V^*)\) and \(\Sub(V^*)\ni
	W\selmap{}W^\perp\in\Sub(V)\) form a Galois connection. Furthermore,
    the map \(\Sub(V)\ni W\selmap{}W^\perp\in\Sub(V^*)\) is a monomorphism and
    \(W\in\Sub(V^*)\) is closed in this Galois connection if and only if \(W\)
    is closed in the finite topology on \(V^*=\Hom_\k(V,\k)\).
\end{theorem}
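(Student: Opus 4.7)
The plan is to verify the three assertions in order, with the topological equivalence being the main work. The Galois property is essentially definitional: both $W \mapsto W^\perp$ and $W' \mapsto {W'}^\perp$ are antimonotonic by inspection, and the two inclusions $W \subseteq {W'}^\perp$ (for $W \in \Sub(V)$) and $W' \subseteq W^\perp$ (for $W' \in \Sub(V^*)$) both unfold to the single condition that $f(w)=0$ for every $w \in W$ and $f \in W'$, which verifies the adjointness form~\eqref{eq:adjointness} of the Galois property.

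To establish injectivity of $W \mapsto W^\perp$ on $\Sub(V)$, by Proposition~\ref{prop:properties-of-adjunction}(vi) it is enough to show that every $W \in \Sub(V)$ is closed, that is $W = W^{\perp\perp}$. One inclusion is the Galois property itself. For the reverse one, given any $v \in V \setminus W$, I would extend a basis of $W$ by $v$ to a basis of $V$ and let $f \in V^*$ be the dual functional sending $v$ to $1$ and vanishing on all other basis elements; then $f \in W^\perp$ but $f(v) \neq 0$, so $v \notin W^{\perp\perp}$.

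The main step is to prove that for every $W \in \Sub(V^*)$ the finite-topology closure $\overline{W}$ coincides with $W^{\perp\perp}$; the topological characterization of Galois-closed elements follows immediately, since the closed elements on the $V^*$-side are by definition those with $W = W^{\perp\perp}$. That $W^{\perp\perp}$ is itself closed in the finite topology is straightforward: for $f \notin W^{\perp\perp}$ there is $v \in W^\perp$ with $f(v) \neq 0$, and then $\{g \in V^* : g(v) = f(v)\}$ is a basic open neighborhood of $f$ disjoint from $W^{\perp\perp}$ (every $g$ in the biperp annihilates $v$). Since $W \subseteq W^{\perp\perp}$, this yields $\overline{W} \subseteq W^{\perp\perp}$.

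The reverse inclusion $W^{\perp\perp} \subseteq \overline{W}$ is where the real work lies. Given $f \notin \overline{W}$, there exist $v_1, \dots, v_n \in V$ such that $\mathcal{U}_{f, v_1, \dots, v_n} \cap W = \emptyset$. I would consider the linear map $\Phi : W \to \k^n$, $g \mapsto (g(v_1), \dots, g(v_n))$; the disjointness condition says exactly that $(f(v_1), \dots, f(v_n))$ does not lie in the subspace $\Phi(W) \subseteq \k^n$. Finite-dimensional linear algebra then produces a linear form $\sum_i a_i x_i$ on $\k^n$ vanishing on $\Phi(W)$ but non-zero at $(f(v_1), \dots, f(v_n))$; putting $v := \sum_i a_i v_i \in V$ gives $v \in W^\perp$ with $f(v) \neq 0$, so $f \notin W^{\perp\perp}$. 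The only non-routine point is this conversion of a topological separation into a finite-dimensional linear separation; everything else is bookkeeping with annihilators.
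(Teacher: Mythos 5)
Your proof is correct and complete: the adjointness check, the biperp computation $W=W^{\perp\perp}$ for subspaces of $V$ via a dual basis vector, and the identification of $\overline{W}$ with $W^{\perp\perp}$ for $W\subseteq V^*$ by reducing the topological separation to a linear separation in $\k^n$ are all sound. The paper itself gives no argument here but only cites \cite[Thm~1.2.6]{sd-cn-sr:hopf-alg}, and your proof is essentially the standard one found in that reference, so there is nothing to reconcile.
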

\begin{proof}
    The proof can be found in~\cite[Thm~1.2.6]{sd-cn-sr:hopf-alg}.
\end{proof}
\begin{corollary}\label{cor:dualit_for_subspaces}
    There is a bijection between subspaces of \(V\) and closed subspaces of
    \(V^*\).
\end{corollary}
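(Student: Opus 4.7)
The plan is to derive the corollary directly from Theorem~\ref{thm:duality_for_subspace} by invoking the general properties of Galois connections established in Proposition~\ref{prop:properties-of-adjunction}. The theorem already gives us the essential ingredients: a Galois connection $(\perp,\perp)$ between $\Sub(V)$ and $\Sub(V^*)$, the injectivity of $\perp : \Sub(V) \to \Sub(V^*)$, and the identification of closed elements of $\Sub(V^*)$ with subspaces that are closed in the finite topology on $V^*$.

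The first step is to identify the closed elements in $\Sub(V)$. By Proposition~\ref{prop:properties-of-adjunction}\textit{(vi)}, since $\perp : \Sub(V) \to \Sub(V^*)$ is injective, every element of $\Sub(V)$ is closed, i.e.\ $\overline{\Sub(V)} = \Sub(V)$. Next, by the second half of Theorem~\ref{thm:duality_for_subspace}, the closed elements of $\Sub(V^*)$ are exactly the subspaces closed in the finite topology; denote this set by $\Sub_{\mathrm{cl}}(V^*)$.

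The second step is to apply Proposition~\ref{prop:properties-of-adjunction}\textit{(ii)}: the restrictions of $\phi = \perp$ and $\psi = \perp$ to the sets of closed elements are mutually inverse bijections
\[
\Sub(V) \,\galois{\perp}{\perp}\, \Sub_{\mathrm{cl}}(V^*).
\]
This is exactly the claimed bijection, so no further work is required.

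There is no real obstacle here, since all the heavy lifting (verifying the Galois property, proving injectivity of $W \mapsto W^\perp$ on $\Sub(V)$, and characterising closed subspaces of $V^*$ as those closed in the finite topology) is already accomplished by Theorem~\ref{thm:duality_for_subspace}. The corollary is thus a purely formal consequence of the general lattice-theoretic machinery of Section~\ref{sec:Galois_connections}.
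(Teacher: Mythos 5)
Your argument is correct and is essentially the paper's own (implicit) reasoning: the corollary is stated without proof precisely because it follows formally from Theorem~\ref{thm:duality_for_subspace} together with Proposition~\ref{prop:properties-of-adjunction}, parts \textit{(vi)} and \textit{(ii)}, exactly as you describe. Nothing is missing.
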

In the following definition we restrict ourselves only to discrete
topological fields.
\index{topological vector space}
\index{topological algebra}
\begin{definition}
    Let \(\k\) be a field considered with the discrete topology.
    \begin{itemize}
	\item Let \(V\) be a \(\k\)-vector space. It is called a
	    \bold{topological vector space} if it is given together with
	    a topology such that the addition of vectors and the scalar
	    multiplication are continuous operations.  
	\item Let \(A\) be an \(k\)-algebra. We say that \(A\) is
	    a \bold{topological algebra} if \(A\) is a topological vector
	    space and the multiplication and the unit are continuous.
    \end{itemize}
\end{definition}
\begin{example}
    Let \(V\) be a vector space. Then \(V^*\) together with the finite
    topology is a topological vector space.
    See~\cite[Prop.~1.2.1]{sd-cn-sr:hopf-alg}.
\end{example}
\index{coalgebra!dual algebra!topological algebra}
\begin{lemma}\label{lem:topological_algebra}
    Let \(C\) be a \(\k\)-coalgebra. Then \(C^*\) together with the finite topology
    is a topological algebra.
\end{lemma}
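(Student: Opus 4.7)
The plan is to reduce the statement to showing continuity of the convolution product and of the unit map, since the topological vector space structure on $C^*$ endowed with the finite topology is exactly the content of the preceding example (Proposition~1.2.1 in Dăscălescu--Năstăsescu--Raianu). So what remains is: (a) continuity of $\ast:C^*\times C^*\to C^*$, and (b) continuity of the unit $\k\to C^*$, $1\mapsto\epsilon$.

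For (b), since $\k$ carries the discrete topology, every map out of $\k$ is automatically continuous; there is nothing to do. For (a), I would fix $f,g\in C^*$ and a basic open neighbourhood of $f\ast g$, namely $\mathcal{U}_{f\ast g,\,c_1,\dots,c_n}$ for finitely many elements $c_1,\dots,c_n\in C$. The key observation is that the convolution
\[
(f\ast g)(c_i)=f({c_i}_{(1)})\,g({c_i}_{(2)})
\]
is a \emph{finite} sum determined by the values of $f$ and $g$ on the finitely many elements appearing in the expansions $\Delta(c_i)=\sum {c_i}_{(1)}\otimes {c_i}_{(2)}$ for $i=1,\dots,n$. Collecting these elements, I obtain finite subsets $X=\{x_1,\dots,x_p\}\subseteq C$ (the first tensor factors) and $Y=\{y_1,\dots,y_q\}\subseteq C$ (the second tensor factors).

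I would then verify that the neighbourhood $\mathcal{U}_{f,\,x_1,\dots,x_p}\times\mathcal{U}_{g,\,y_1,\dots,y_q}$ of $(f,g)$ in $C^*\times C^*$ maps into $\mathcal{U}_{f\ast g,\,c_1,\dots,c_n}$ under $\ast$: indeed, for any $f'$ agreeing with $f$ on $X$ and any $g'$ agreeing with $g$ on $Y$, one has $f'({c_i}_{(1)})g'({c_i}_{(2)})=f({c_i}_{(1)})g({c_i}_{(2)})$, hence $(f'\ast g')(c_i)=(f\ast g)(c_i)$ for every $i$. This gives continuity of $\ast$ at the arbitrary point $(f,g)$ and completes the proof.

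The main (and only genuine) obstacle is notational rather than conceptual: one must be careful that the elements appearing in the Sweedler expansion of each $\Delta(c_i)$ form a well-defined finite set, so that the chosen basic open neighbourhoods of $f$ and $g$ exist in the finite topology. Once this bookkeeping is set up, the verification is immediate from the definition of $\ast$ and the definition of the subbasic open sets in the finite topology.
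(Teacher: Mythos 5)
Your proof is correct, but it follows a genuinely different route from the one in the text. You check continuity of the convolution directly at an arbitrary point \((f,g)\): given a basic neighbourhood \(\mathcal{U}_{f\ast g,\,c_1,\dots,c_n}\), you fix finite representations \(\Delta(c_i)=\sum_j a_{ij}\otimes b_{ij}\) and observe that agreement of \(f'\) with \(f\) on the (finitely many) first tensor factors and of \(g'\) with \(g\) on the second factors forces \((f'\ast g')(c_i)=(f\ast g)(c_i)\); this is sound, since the value \((f'\ast g')(c_i)=(f'\otimes g')\bigl(\Delta(c_i)\bigr)\) does not depend on the chosen representation, so fixing one suffices. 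The text instead works with the basis of neighbourhoods of \(0\) of the form \(\mathcal{O}_V=\{f\in C^*: f|_V=0\}\) for finite dimensional \(V\subseteq C\), and uses the \emph{Fundamental Theorem of Comodules} to enlarge \(V\) to a finite dimensional right subcomodule \(V_r\) and left subcomodule \(V_l\); then \(\Delta(V)\subseteq V_r\otimes V_l\) gives \(\mathcal{O}_{V_r}\ast\mathcal{O}_{V_l}\subseteq\mathcal{O}_V\) (indeed even \(\mathcal{O}_{V_r}\ast C^*\subseteq\mathcal{O}_V\) and \(C^*\ast\mathcal{O}_{V_l}\subseteq\mathcal{O}_V\)). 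Your argument is more elementary and avoids comodule theory altogether; the paper's argument buys the stronger structural fact that the zero sets \(\mathcal{O}_{V_r}\), \(\mathcal{O}_{V_l}\) of finite dimensional subcomodules behave like open one-sided ideals, which is the linearly topologized picture of \(C^*\) exploited afterwards (closed subalgebras and closed ideals of \(C^*\)). Both proofs handle the unit map identically, via discreteness of \(\k\), and both rely on the preceding example for the topological vector space structure.
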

\begin{proof}
    The open sets of the form \(\mathcal{O}_V\coloneq\{f\in C^*:f|_V=0\}\),
    where \(V\subseteq C\) is a finite dimensional subspace, form a basis of
    neighbourhoods of \(0\) and thus it is enough to show that the preimage of
    an open set \(\mathcal{O}_V\) is open. For this let \((f,g)\) be such that
    \(f\ast g\in\mathcal{O}_V\). Using the \textit{Fundamental Theorem of
	Comodules}~\ref{thm:fundamental_thm_of_comodules} there exists \(V_r\)
    a right subcomodule of \(C\) such that \(V\subseteq V_r\), such that
    \(\dim V_r<\infty\), and a finite dimensional left subcomodule of \(C\),
    denoted by \(V_l\), such that \(V\subseteq V_l\).  Then
    \(\mathcal{O}_{V_r}\times\mathcal{O}_{V_l}\) is an open neighbourhood of
    \((f,g)\in C^*\times C^*\) such that
    \(\mathcal{O}_{V_r}\ast\mathcal{O}_{V_l}\subseteq\mathcal{O}_V\). That is,
    for \(a\in\mathcal{O}_{V_r}\) and \(b\in\mathcal{O}_{V_l}\) and \(v\in V\)
    we have \(\Delta(v)\in V_r\otimes C\cap C\otimes V_l=V_r\otimes V_l\) and
    hence \((a\ast b)(v)=a(v_{(1)})b(v_{(2)})=0\), and thus \(a\ast
	b\in\mathcal{O}_V\). Since, \(\k\) is considered as a discrete space
    the unit map \(\k\sir C^*\) is continuous. This shows that \(C^*\) is
    a topological \(\k\)-algebra.
\end{proof}
\index{coalgebra!poset of quotients!algebraic}
\begin{theorem}\label{thm:quot(C)-algebraic}
    Let \(C\) be a coalgebra over a field \(\k\). Then the lattice \(\Quot(C)\)
    is algebraic.
\end{theorem}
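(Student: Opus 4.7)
The plan is to exploit the duality between $\coId(C)$ and the closed subalgebras of the topological algebra $C^{*}$ in its finite topology, and then use the Fundamental Theorem of Comodules to extract enough compact elements.

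First, completeness of $\Quot(C)$ is immediate from Proposition~\ref{prop:coid-complete}, since $\Quot(C)\cong\coId(C)^{\op}$ and $\coId(C)$ is complete. For the duality, Proposition~\ref{prop:coideals-subalgebras} sends each coideal $I$ to the subalgebra $I^{\perp}\subseteq C^{*}$; Theorem~\ref{thm:duality_for_subspace} identifies the image as precisely the closed subspaces of $C^{*}$; and Lemma~\ref{lem:topological_algebra} puts a compatible topological algebra structure on $C^{*}$. Combining these gives an order-reversing bijection between $\coId(C)$ and the lattice of closed subalgebras of $C^{*}$, and hence an order-preserving isomorphism of $\Quot(C)$ with this closed-subalgebra lattice, reducing the problem to its algebraicity.

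For the algebraicity, the key input is the Fundamental Theorem of Comodules (Theorem~\ref{thm:fundamental_thm_of_comodules}) applied to $C$ as a right $C$-comodule: $C$ is the directed union of its finite-dimensional subcoalgebras $\{D_\alpha\}$, so dually $C^{*}=\varprojlim_\alpha D_\alpha^{*}$ topologically, the kernels $D_\alpha^{\perp}$ of the restriction maps $\rho_\alpha\colon C^{*}\twoheadrightarrow D_\alpha^{*}$ forming a neighbourhood basis of $0$ in the finite topology. Each $D_\alpha^{*}$ is finite-dimensional, so by Proposition~\ref{prop:coid_subag} the lattice $\Sub_{\Alg}(D_\alpha^{*})$ is finite and hence algebraic. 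The strategy is then to express every $C/I\in\Quot(C)$ as a supremum of compact elements produced from this finitary data on the $D_\alpha$, and to conclude either directly from Definition~\ref{defi:alglat} or through the $\cap\overrightarrow{\cup}$-characterisation of algebraic lattices in Theorem~\ref{thm:algebraic_structures}.

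The main obstacle is the precise identification of the compact elements of $\Quot(C)$ and the proof that there are enough of them. As the example $C=\k[x]$ with primitive comultiplication shows, finite-dimensional quotient coalgebras are in general too sparse (there $\k$ is the only one), so the compact class cannot be taken to consist of merely finite-dimensional quotients; it must be enlarged to the quotients $C/J$ for which $J$ is ``cocompact'' in $\coId(C)$, which under the duality corresponds to closed subalgebras of $C^{*}$ controlled by finite-dimensional data on some $D_\alpha^{*}$. Showing that these densely generate every closed subalgebra under suprema --- which in the closed-subalgebra lattice involve topological closure in the finite topology --- by a careful glueing of compatible finite-dimensional algebraic data across the directed system $\{D_\alpha\}$ is the technical heart of the argument.
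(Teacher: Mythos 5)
Your first step coincides with the paper's: \(\Quot(C)\) is identified, via Proposition~\ref{prop:coideals-subalgebras}, Theorem~\ref{thm:duality_for_subspace} and Lemma~\ref{lem:topological_algebra}, with the lattice of closed subalgebras of the topological algebra \(C^*\). But after this reduction your argument stops short of a proof: you yourself defer ``the precise identification of the compact elements'' and the proof that they generate everything as ``the technical heart'', and the class you gesture at --- quotients \(C/J\) with \(J\) ``cocompact'' --- is circular (it just renames the compact elements), while the proposed mechanism of glueing ``compatible finite-dimensional algebraic data'' across the directed system \(\{D_\alpha\}\) of finite-dimensional subcoalgebras is neither made precise nor obviously workable: a compatible family of subalgebras of the \(D_\alpha^*\) does not by itself produce a closed subalgebra of \(C^*\) without essentially redoing the closure argument you are trying to avoid. (Two smaller points: \(\Sub_\Alg(D_\alpha^*)\) need not be \emph{finite} for a finite-dimensional algebra --- e.g. \(\k\cdot 1\oplus V\) for any subspace \(V\) of the radical of \(\k[x,y]/(x,y)^2\) --- though it is algebraic since every element is compact; and the Fundamental Theorem of Comodules enters the paper only in the proof of Lemma~\ref{lem:topological_algebra}, not in the compactness argument itself.)

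The missing idea, which is the entire content of the paper's proof, is much simpler than your glueing scheme. For a finite subset \(X\subseteq C^*\) let \(S_X\) denote the smallest \emph{closed} subalgebra of \(C^*\) containing \(X\). These \(S_X\) are the compact elements of the closed-subalgebra lattice, and for any closed subalgebra \(S\) one has
\[S=\bigvee_{\substack{X\subseteq S\\ X\text{ finite}}}S_X,\]
where the generation statement is essentially immediate: each \(S_X\subseteq S\) because \(S\) is a closed subalgebra containing \(X\), and conversely every \(s\in S\) lies in \(S_{\{s\}}\), so the join (the smallest closed subalgebra containing all the \(S_X\)) is \(S\) itself. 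No passage through the directed system \(\{D_\alpha\}\), and no inverse-limit description of \(C^*\), is needed; the only point requiring any care is the compactness of the \(S_X\), and that is exactly the step your proposal leaves unaddressed.
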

\begin{proof}
    First let us observe that \(\Quot(C)\) is isomorphic to the lattice of
    closed subalgebras of \(C^*\). This follows from
    Lemma~\ref{lem:topological_algebra},
    Theorem~\ref{thm:duality_for_subspace} and
    Proposition~\ref{prop:coideals-subalgebras}. Let \(X\subseteq C^*\) be
    a finite set. Then the smallest closed subalgebra of \(C^*\) which
    contains \(X\) (denoted by \(S_X\)) is a compact element of the lattice of
    closed subalgebras of \(C^*\). For a closed subalgebra \(S\) we have
    \(S=\bigvee_{\substack{X\subseteq S\hfill\\ X\text{-finite}\hfill}}S_X\).
\end{proof}
\index{lattice!modular}
\index{modular lattice}
\begin{definition}
    Let \((L,\vee,\wedge)\) be a lattice. We say that it is \bold{modular}
    if for all \(a,b,c\in L\):
    \[a\geq c\Rightarrow a\wedge(b\vee c)=(a\wedge b)\vee c\]
    A lattice \(L\) is \bold{distributive} if for all \(a,b,c\in L\):
    \begin{align*}
	a\wedge(b\vee c) &=(a\wedge b)\vee(a\wedge c)\\
	a\vee(b\wedge c) &= (a\vee b)\wedge(a\vee c)
    \end{align*}
\end{definition}
Let us note that if one of the distributive laws holds then the other is
satisfied. If a lattice is distributive then it is modular. Let us introduce
the following two lattices:
\begin{center}
    \begin{tikzpicture}
	\fill (0cm, 1cm) circle (2pt) node[above]{1};
	\fill (-1cm, 0cm) circle (2pt) node[left]{a};
	\fill (0cm, 0cm) circle (2pt) node[left]{b};
	\fill (1cm, 0cm) circle (2pt) node[right]{c};
	\fill (0cm, -1cm) circle (2pt) node[below]{0};
	\draw[-] (0cm,1cm) -- (0cm,-1cm);
	\draw[-] (0cm,1cm) -- (-1cm, 0cm) -- (0cm,-1cm);
	\draw[-] (0cm,1cm) -- (1cm, 0cm) -- (0cm,-1cm);
	\node at (0cm,-2cm) {\(M_3\)};
    \end{tikzpicture}
    \hspace{1.5cm}
    \begin{tikzpicture}
	\fill (0cm,1cm) circle (2pt) node[above]{1};
	\fill (-1cm, 5mm) circle (2pt) node[left]{a};
	\fill (-1cm, -5mm) circle (2pt) node[left]{c};
	\fill (1cm, 0cm) circle (2pt) node[right]{b};
	\fill (0cm, -1cm) circle (2pt) node[below]{0};
	\draw[-] (0cm,1cm) -- (-1cm,5mm) -- (-1cm,-5mm) -- (0cm, -1cm);
	\draw[-] (0cm,1cm) -- (1cm, 0cm) -- (0cm, -1cm);
	\node at (0cm,-2cm) {\(N_5\)};
    \end{tikzpicture}
\end{center}
One can easily verify that \(M_3\) is not distributive and \(N_5\) is not
modular.
\begin{theorem}
    Let \((L,\vee,\wedge)\) be a lattice. 
    \begin{enumerate}
	\item The lattice \(L\)  is modular if and only if \(L\) does not have
	    a sublattice isomorphic to \(N_5\).
	\item The lattice \(L\)  is distributive if and only if \(L\) does not
	    have a sublattice isomorphic to either \(N_5\) or \(M_3\).
    \end{enumerate}
\end{theorem}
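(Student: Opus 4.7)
The plan is to prove both directions of each equivalence, with the forward directions being easy and the reverse directions requiring an explicit construction of the forbidden sublattice inside any lattice that fails the given identity.

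For the forward implication of (i), I would first verify directly that $N_5$ itself is not modular: with the notation $a,b,c$ as in the Hasse diagram (so that $a>c$, and $a,c$ both sit in the ``chain'' side while $b$ is off to the right), one computes $a\wedge(b\vee c)=a\wedge 1=a$ while $(a\wedge b)\vee c=0\vee c=c$, and $a\neq c$. Since modularity is inherited by sublattices (the condition $a\geq c\Rightarrow a\wedge(b\vee c)=(a\wedge b)\vee c$ is expressible by an equational implication on $\{a,b,c\}$), a modular $L$ cannot contain $N_5$. The forward implication of (ii) is similar: distributivity implies modularity, so no $N_5$ occurs; and $M_3$ fails distributivity because for its three atoms $a,b,c$ one has $a\wedge(b\vee c)=a$ but $(a\wedge b)\vee(a\wedge c)=0$.

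For the reverse direction of (i), suppose $L$ is not modular. Then there exist $a,b,c\in L$ with $a\geq c$ and $(a\wedge b)\vee c<a\wedge(b\vee c)$ (the inequality $\leq$ always holds, using $a\geq c$). Set $d\coloneq(a\wedge b)\vee c$ and $e\coloneq a\wedge(b\vee c)$. I would check the five candidate elements $\{a\wedge b,\,b,\,d,\,e,\,b\vee c\}$ form an $N_5$ sublattice by verifying the following: $b\vee d=b\vee e=b\vee c$ (using $c\leq e$), $b\wedge d=b\wedge e=a\wedge b$ (using $a\wedge b\leq b\leq b\vee c$), and $b$ is incomparable with both $d$ and $e$. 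The incomparability is the one place some care is needed: if $b\leq e$ then $b\leq a$, forcing $a\wedge b=b$ and hence $d=b\vee c=e$, a contradiction; dually, if $b\geq e$ then $a\wedge b\geq a\wedge e=e$, so $d\geq e\vee c=e$, again a contradiction.

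For the reverse direction of (ii), assume $L$ contains no copy of $N_5$ or $M_3$. By (i), $L$ is already modular, so I only need to show modular plus no $M_3$ implies distributive. Suppose to the contrary there are $a,b,c$ with $(a\wedge b)\vee(a\wedge c)<a\wedge(b\vee c)$. The standard construction is to form the ``median'' elements
\[
p\coloneq(a\wedge b)\vee(b\wedge c)\vee(c\wedge a),\qquad q\coloneq(a\vee b)\wedge(b\vee c)\wedge(c\vee a),
\]
and then define
\[
a'\coloneq(a\wedge q)\vee p,\quad b'\coloneq(b\wedge q)\vee p,\quad c'\coloneq(c\wedge q)\vee p.
\]
Using modularity repeatedly one verifies $p\leq a',b',c'\leq q$, that the three pairwise meets among $a',b',c'$ all equal $p$, and that the three pairwise joins all equal $q$; and the failure of distributivity forces $p<q$, whence $p,a',b',c',q$ are five distinct elements forming an $M_3$. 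The delicate step, and the main obstacle in the whole argument, is precisely these modular-law manipulations showing $a'\wedge b'=p$ and $a'\vee b'=q$ (and their symmetric versions), together with the strictness $p<q$; I would carry this out by applying the modular identity several times to rewrite $(a\wedge q)\vee p$ and then absorbing terms against $b\wedge q$ and $p$. Once $M_3\hookrightarrow L$ is produced we have the desired contradiction, completing the proof.
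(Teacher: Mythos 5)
The paper does not actually prove this classical result---its ``proof'' is a one-line reference to Thm.~4.7 of the cited lattice-theory text---so there is no in-paper argument to compare against; what you supply is the standard Dedekind--Birkhoff proof, and it is correct in outline. Your $N_5$ construction (with $d=(a\wedge b)\vee c$, $e=a\wedge(b\vee c)$ and the five elements $a\wedge b,\,d,\,e,\,b,\,b\vee c$) and your $M_3$ construction via the medians $p,q$ and the elements $(a\wedge q)\vee p$, etc., are exactly the usual ones; for the strictness $p<q$ that you flag as the delicate point, note that modularity gives $a\wedge p=(a\wedge b)\vee(a\wedge c)$ and $a\wedge q=a\wedge(b\vee c)$, so $p=q$ would restore distributivity for the chosen triple. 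The one spot you should patch is the incomparability check in part (i): ruling out $b\le e$ and $b\ge e$ settles incomparability of $b$ with $e$ and excludes $b\le d$ (since $d\le e$), but it does not by itself exclude $d\le b$. That remaining case needs one more line, which fortunately follows from the equalities you already verified: if $d\le b$ then $d=b\wedge d=a\wedge b$, hence $c\le a\wedge b\le b$, so $b\vee c=b$ and $e=a\wedge(b\vee c)=a\wedge b=d$, contradicting $d<e$. With that line added, your argument is complete and matches the standard proof that the paper delegates to the literature.
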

\begin{proof}
    See~\cite[Thm.~4.7]{sr:lattices}.
\end{proof}
The lattices of coideals are in general neither modular nor distributive,
since the lattice of submodules in general does not possess these properties.
Below we present some examples.
\index{coalgebra!poset of coideals!non distributive}
\index{coalgebra!poset of coideals!non modular}
\begin{examplesbr}\label{ex:lattices_of_coideals}
    \begin{enumerate}
	\item Let \(V\) be a vector space such that \(\dim V\geq 2\). Then the
	      lattice \(\Sub_{\mathit{Vect}}(V)\) is not distributive.  It is
	      modular, as every lattice of submodules is.
	\item Let \(C\) be a coalgebra such that \(C=C_0\oplus k1\).  For
	      every \(c\in C_0\) we set \(\Delta(c)=c\otimes 1+1\otimes c\),
	      and \(\Delta(1)=1\otimes 1\).  The counit is set by
	      \(\epsilon(c)=0\) for all \(c\in C_0\) and \(\epsilon(1)=1\).
	      The coalgebra \(C\) is cocommutative.  Every subspace of
	      \(C_0=ker\epsilon\) is a coideal of \(C\), i.e.
	      \(\coId(C)=\Sub_{\mathit{Vect}}(C_0)\).  Thus the lattice of
	      coideals is not distributive if \(\dim C_0\geq2\).

	      Let us note that this coalgebra is dual to the commutative
	      unital algebra \(A=A_0\oplus k1\), with unit \(1\) and such that
	      for all
	      \(a,b\in A_0\) we have \(a\cdot b=0\) (with \(\dim A_0=\dim C_0\)).
	\item Let \(A\) be a finite dimensional commutative and unital algebra
	      generated by two elements \(a\) and \(x\) with relations:
	      \[a^2x=a,\ x^2=1,\ a^4=1.\]
	      Then the lattice of subalgebras contains \(N_5\) as
	      a sublattice:
	      \begin{center}
	          \begin{tikzpicture}
	              \fill (0cm,1cm) circle (2pt) node[above]{\(A\)};
	              \fill (-1cm, 5mm) circle (2pt) node[left]{\(\langle a\rangle\)};
	              \fill (-1cm, -5mm) circle (2pt) node[left]{\(\langle a^2\rangle\)};
	              \fill (1cm, 0cm) circle (2pt) node[right]{\(\langle x\rangle\)};
	              \fill (0cm, -1cm) circle (2pt) node[below]{0};
	              \draw[-] (0cm,1cm) -- (-1cm,5mm) -- (-1cm,-5mm) -- (0cm, -1cm);
	              \draw[-] (0cm,1cm) -- (1cm, 0cm) -- (0cm, -1cm);
	          \end{tikzpicture}
	      \end{center}
	      where \(\langle y\rangle\) denotes the subalgebra generated by
	      \(y\in A\).  Hence the lattice \(\Quot(A^*)\cong\Sub_\Alg(A)\)
	      is not modular, where \(A^*\) is a (cocommutative)
	      \(\k\)-coalgebra since \(A\) is a finite dimensional
	      (commutative) \(\k\)-algebra.
    \end{enumerate}
\end{examplesbr}
\index{coalgebra!subcoalgebra}
\index{coalgebra!poset of subcoalgebras}
\index{subcoalgebra|see{coalgebra!subcoalgebra}}
\begin{definition}
    Let \(C\) be an \(\R\)-coalgebra. An \(\R\)-coalgebra \(C'\) is called
    a \bold{subcoalgebra} if 
    \begin{enumerate} 
	\item \(C'\) is a coalgebra,
        \item \(C'\) is an \(\R\)-submodule of \(C\), and 
	\item the inclusion map \(C'\subseteq C\) is a coalgebra map.  
    \end{enumerate}
    The set of all subcoalgebras of a coalgebra \(C\) we will denote by
    \(\Sub_\Coalg(C)\). It is a poset under the following relation: for
    subcoalgebras \(C'\) and \(C''\) of a coalgebra \(C\), \(C'\preceq C''\)
    if and only if \(C'\) is a subcoalgebra of \(C''\).
\end{definition}
If \(C'\) is a pure submodule (for example if \(\R\) is a field) then \(C'\)
is a subcoalgebra of a coalgebra \(C\) if and only if \(\Delta_C(C')\subseteq
    C'\otimes C'\), since by the purity \(C'\otimes C'\subseteq C\otimes C\).
\index{coalgebra!poset of subcoalgebras!complete}
\begin{theorem}\label{thm:subcoalgebras_algebraic}
    Let \(C\) be a coalgebra over a field \(\k\).  Then the poset of
    subcoalgebras \(\Sub_\Coalg(C)\) is a complete poset with lattice
    operations:
    \[D_1\vee D_2\coloneq D_1+D_2,\ D_1\wedge D_2\coloneq D_1\cap D_2\]
    for \(D_i\subseteq C\) (\(i=1,2\)) subcoalgebras of \(C\). Furthermore, it
    is closed under arbitrary intersections and directed (set-theoretic) sums
    and thus it is an algebraic lattice.
\end{theorem}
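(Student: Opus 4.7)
The plan is to establish closure of $\Sub_{\Coalg}(C)$ under arbitrary sums and arbitrary intersections, which gives completeness with the stated operations, and to observe that the sums needed in Theorem~\ref{thm:algebraic_structures} are precisely the directed ones.

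First I would verify that arbitrary sums of subcoalgebras are subcoalgebras. Let $\{D_\alpha\}_{\alpha\in I}$ be a family of subcoalgebras of $C$ and pick $c\in\sum_\alpha D_\alpha$. Then $c$ is a finite sum $c=\sum_{i=1}^n c_{\alpha_i}$ with $c_{\alpha_i}\in D_{\alpha_i}$, so
\[
\Delta(c)=\sum_{i=1}^n\Delta(c_{\alpha_i})\in\sum_{i=1}^n D_{\alpha_i}\otimes D_{\alpha_i}\subseteq\Bigl(\sum_\alpha D_\alpha\Bigr)\otimes\Bigl(\sum_\alpha D_\alpha\Bigr),
\]
and $\epsilon(c)$ is automatic. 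This simultaneously shows that any directed union of subcoalgebras is a subcoalgebra, since a directed union coincides with the sum.

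The main work is to check closure under arbitrary intersections. The hardest step is to show that tensoring commutes with arbitrary intersections of subspaces of $C$; this will be the key technical ingredient. Since $\k$ is a field, $C$ is a free (hence projective, hence flat Mittag--Leffler) $\k$-module, so by Proposition~\ref{prop:Mittag-Leffler_intersection} it has the intersection property. Hence for any family $\{D_\alpha\}$ of subspaces,
\[
\Bigl(\bigcap_\alpha D_\alpha\Bigr)\otimes C=\bigcap_\alpha\bigl(D_\alpha\otimes C\bigr),\qquad C\otimes\Bigl(\bigcap_\alpha D_\alpha\Bigr)=\bigcap_\alpha\bigl(C\otimes D_\alpha\bigr),
\]
inside $C\otimes C$. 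Combined with the identity $D_\alpha\otimes D_\alpha=(D_\alpha\otimes C)\cap(C\otimes D_\alpha)$, which holds by exactness of the tensor product over a field, this yields
\[
\bigcap_\alpha\bigl(D_\alpha\otimes D_\alpha\bigr)=\Bigl(\bigcap_\alpha D_\alpha\Bigr)\otimes\Bigl(\bigcap_\alpha D_\alpha\Bigr).
\]
Now if each $D_\alpha$ is a subcoalgebra and $c\in\bigcap_\alpha D_\alpha$, then $\Delta(c)\in D_\alpha\otimes D_\alpha$ for every $\alpha$, so by the displayed equality $\Delta(c)\in(\bigcap_\alpha D_\alpha)\otimes(\bigcap_\alpha D_\alpha)$; the counit condition is trivial. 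Hence $\bigcap_\alpha D_\alpha\in\Sub_{\Coalg}(C)$.

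Having closure under arbitrary sums and arbitrary intersections, $\Sub_{\Coalg}(C)$ is a complete lattice with $D_1\vee D_2=D_1+D_2$ and $D_1\wedge D_2=D_1\cap D_2$, and the formulas for binary joins and meets follow since each is the smallest subcoalgebra containing (respectively contained in) the two given ones. Finally, the assignment $D\mapsto D$ embeds $\Sub_{\Coalg}(C)$ into the power set of $C$ as a $\cap\overrightarrow{\cup}$-structure (closure under arbitrary intersections was just proved, and directed unions were handled together with arbitrary sums). Hence Theorem~\ref{thm:algebraic_structures} applies and $\Sub_{\Coalg}(C)$ is algebraic. One may further identify the compact elements, via Remark~\ref{rem:compact_elements} together with the Fundamental Theorem of Comodules~\ref{thm:fundamental_thm_of_comodules}, with the finite-dimensional subcoalgebras of $C$, but this is not needed for the statement.
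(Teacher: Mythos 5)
Your proof is correct, but it reaches the key step --- closure of \(\Sub_\Coalg(C)\) under arbitrary intersections --- by a genuinely different route than the paper. You argue directly on the tensor side: since \(C\) is a vector space it has the intersection property (Proposition~\ref{prop:Mittag-Leffler_intersection}, though over a field this is elementary because \(C\) is free), and combining \(\bigl(\bigcap_\alpha D_\alpha\bigr)\otimes C=\bigcap_\alpha(D_\alpha\otimes C)\), \(C\otimes\bigl(\bigcap_\alpha D_\alpha\bigr)=\bigcap_\alpha(C\otimes D_\alpha)\) with the linear-algebra identity \(D_\alpha\otimes D_\alpha=(D_\alpha\otimes C)\cap(C\otimes D_\alpha)\) gives \(\bigcap_\alpha(D_\alpha\otimes D_\alpha)=\bigl(\bigcap_\alpha D_\alpha\bigr)\otimes\bigl(\bigcap_\alpha D_\alpha\bigr)\), whence \(\Delta\bigl(\bigcap_\alpha D_\alpha\bigr)\) lands where it should (and purity over a field lets you conclude it is a subcoalgebra). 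The paper instead dualises: it writes \(\bigcap_i C_i=\bigcap_i C_i^{\perp\perp}=\bigl(\sum_i C_i^\perp\bigr)^\perp\) using Theorem~\ref{thm:duality_for_subspace}, Proposition~\ref{prop:properties-of-adjunction}(vi) and Lemma~\ref{lem:continuity}, notes that a sum of ideals of \(C^*\) is an ideal, and that the perp of an ideal is a subcoalgebra. Your argument is more self-contained and mirrors the technique the paper uses for subcomodules in Theorem~\ref{thm:lattice_of_subcomodules} (and it would even adapt to coalgebras over rings under flat Mittag--Leffler and purity hypotheses), whereas the paper's duality argument costs nothing extra because the \(\perp\)-machinery is needed anyway for Theorem~\ref{thm:subcoalgebras_dualy_algebraic} on the dually algebraic structure via closed ideals of \(C^*\). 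The remainder of your proof (sums, directed unions, the \(\cap\overrightarrow{\cup}\)-structure argument via Theorem~\ref{thm:algebraic_structures}) coincides with the paper's. One inessential slip: the compact elements are identified via the Fundamental Theorem of \emph{Coalgebras} (Theorem~\ref{thm:fundamental_theorem_of_coalgebras}), not of Comodules, but as you say this is not needed for the statement.
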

\begin{proof}
    Note that if \(D,D'\) are subcoalgebras of a coalgebra \(C\) then \(D+D'\)
    is a subcoalgebra of \(C\). Moreover, if \(\mathcal{O}\) is a family of
    subcoalgebras then \(\sum_{D\in\mathcal{O}}D\) is a subcoalgebra of \(C\).
    Thus \(\Sub_\Coalg(C)\) is a complete lattice. Now let us show that if
    \(C_i\) (for \(i\in I\)) is a collection of subcoalgebras, then
    \(\bigcap_{i\in I}C_i\) is a subcoalgebra. We have \(\bigcap_{i\in
	    I}C_i=\bigcap_{i\in I}(C_i^{\perp\perp})=(\sum_{i\in
	    I}C_i^\perp)^\perp\). The first equality follows from
    Theorem~\ref{thm:duality_for_subspace} and
    Proposition~\ref{prop:properties-of-adjunction}(vi) and the second from
    Lemma~\ref{lem:continuity}. The sum \(\sum_{i\in I}C_i^\perp\) is an
    ideal, since \(C_i^\perp\) are ideals of the algebra \(C^*\). Thus
    \(\bigcap_{i\in I}C_i=(\sum_{i\in I}C_i^\perp)^\perp\) is a subcoalgebra
    in \(C\). It follows that \(\Sub_\Coalg(C)\) is
    a \(\cap\overrightarrow{\cup}\)-structure and by
    Theorem~\ref{thm:algebraic_structures} it is an algebraic lattice. 
\end{proof}
Hence \(\Sub_\Coalg(C)\) (for a \(\k\)-coalgebra \(C\)) is a sublattice of
\(\Sub_{\k_\Vect}(C)\) and thus by
Remark~\ref{rem:sublattices_of_sub_vect}, if \(C\) is finite dimensional
this lattice is also dually algebraic. It turns out that this property holds
for any \(\k\)-coalgebra. 
\index{coalgebra!poset of subcoalgebras!dually algebraic}
\begin{theorem}\label{thm:subcoalgebras_dualy_algebraic}
    Let \(C\) be a \(\k\)-coalgebra. Then the lattice \(\Sub_\Coalg(C)\) is
    anti-isomorphic to the lattice of closed ideals of the algebra \(C^*\) and thus
    it is a dually algebraic lattice.
\end{theorem}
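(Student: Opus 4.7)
The claim has two parts, which I would handle in order: first the anti-isomorphism, then the dual algebraicity.

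For the anti-isomorphism, the starting point is Theorem~\ref{thm:duality_for_subspace}: the map \(D\mapsto D^\perp\) embeds \(\Sub(C)\) into \(\Sub(C^*)\), with image exactly the subspaces closed in the finite topology (cf.\ Corollary~\ref{cor:dualit_for_subspaces} together with Lemma~\ref{lem:topological_algebra}). I would then pin down which closed subspaces correspond to subcoalgebras. Over a field one has the familiar identity \((D\otimes C)\cap(C\otimes D)=D\otimes D\) inside \(C\otimes C\); consequently \(D\subseteq C\) is a subcoalgebra iff \(\Delta(D)\subseteq D\otimes C\) \emph{and} \(\Delta(D)\subseteq C\otimes D\), i.e.\ iff \(D\) is simultaneously a right and a left coideal in the sense of Definition~\ref{defi:coideal}. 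Applying Proposition~\ref{prop:right_coideals-right_ideals} on both sides, this is equivalent to \(D^\perp\) being simultaneously a right and a left ideal of \(C^*\)---precisely a two-sided ideal. Since \(D\mapsto D^\perp\) reverses inclusion and both posets are complete lattices (the left-hand side by Theorem~\ref{thm:subcoalgebras_algebraic}), the restriction is an anti-isomorphism of lattices.

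For the dual algebraicity, my plan is to imitate the proof of Theorem~\ref{thm:quot(C)-algebraic}, now on the closed-ideal side. For each finite \(X\subseteq C^*\) write \(I_X\) for the smallest closed two-sided ideal of \(C^*\) containing \(X\). I would argue that each \(I_X\) is a compact element of the lattice of closed two-sided ideals and that every closed ideal \(I\) satisfies \(I=\bigvee\{I_X:X\subseteq I,\ X\text{ finite}\}\). This exhibits the closed-ideal lattice as algebraic; transporting along the anti-isomorphism above makes \(\Sub_\Coalg(C)^{\mathrm{op}}\) algebraic, which is exactly the claim that \(\Sub_\Coalg(C)\) is dually algebraic in the sense of Definition~\ref{defi:alglat}.

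The genuinely delicate step---already implicit in Theorem~\ref{thm:quot(C)-algebraic}---is the compactness of \(I_X\). It reduces to the assertion that whenever \(x\in\overline{\sum_\alpha J_\alpha}\) for a family \((J_\alpha)\) of closed two-sided ideals, one already has \(x\in\overline{\sum_{\alpha\in F}J_\alpha}\) for some finite \(F\). The natural arena for this is the pseudocompact structure of \(C^*\): by the Fundamental Theorem of Comodules (Theorem~\ref{thm:fundamental_thm_of_comodules}) every finite-dimensional subspace of \(C\) sits inside some finite-dimensional subcoalgebra \(F'\), so the annihilators \(F'^\perp\) form a neighbourhood basis of \(0\) consisting of closed two-sided ideals. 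This extra structure is what should let one promote topological approximation by finite sums to honest containment, and is the main hurdle in completing the proof.
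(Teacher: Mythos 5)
The first half of your argument, the anti-isomorphism between \(\Sub_\Coalg(C)\) and the closed two-sided ideals of \(C^*\), is correct and is in substance the paper's own proof: where the paper invokes \cite[Prop.~1.4.3]{ms:hopf-alg} together with Corollary~\ref{cor:dualit_for_subspaces}, you rederive the equivalence ``\(D\) is a subcoalgebra iff \(D^\perp\) is a two-sided ideal'' from Proposition~\ref{prop:right_coideals-right_ideals} and the identity \((D\otimes C)\cap(C\otimes D)=D\otimes D\), which is fine over a field.

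The second half, however, is not a proof, and the step you defer is precisely the one that fails: the smallest closed ideal \(I_X\) containing a finite set \(X\) need \emph{not} be compact. Take \(C\) spanned by group-like elements \(g_1,g_2,\dots\), so \(C^*\cong\k^{\bN}\) with the finite (product) topology, and put \(D_n\coloneq\Span\{g_m:m\geq n\}\). The closed ideals \(D_n^\perp\) form an ascending chain whose join is \(\bigl(\bigcap_n D_n\bigr)^\perp=C^*\), while every finite join equals some \(D_N^\perp\); hence a compact closed ideal must lie in some \(D_N^\perp\), i.e.\ must be finite dimensional. But for any \(f\) with infinite support, \(I_{\{f\}}\) contains \(f\cdot\delta_m=f(g_m)\delta_m\) for all \(m\) (with \(\delta_m\) dual to \(g_m\)), so it is infinite dimensional and not compact, although it is the closure of a principal ideal. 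Thus ``closures of finitely generated ideals are compact'' is false --- note that this is exactly the unproved parenthetical in the paper's own two-line proof, so you are reproducing its assertion rather than supplying the missing argument --- and there is no uniform description of the compact elements: in this example they are the finite-dimensional closed ideals, whereas for \(C^*\cong\k[[x]]\), dual to the divided-power coalgebra, every closed ideal is compact. Moreover, pseudocompactness alone cannot ``promote approximation by finite sums to honest containment'': for the coalgebra \(C=C_0\oplus\k1\) of Example~\ref{ex:lattices_of_coideals}(ii) with \(\dim C_0=\infty\), every subspace of the square-zero part \(C_0^*\subseteq C^*\) is an ideal, and the subcoalgebras \(\Span\{e_m+f:m\geq n\}\oplus\k1\) (with \(\{e_m\}\cup\{f\}\subseteq C_0\) linearly independent, chosen relative to a given proper subcoalgebra) intersect in \(\k1\) while no member of the family lies in that subcoalgebra; dualising, no nonzero proper closed ideal of this \(C^*\) is a join of compact ones, so the algebraicity of the closed-ideal lattice is itself genuinely delicate and does not follow by imitating Theorem~\ref{thm:quot(C)-algebraic}. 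To complete (or correctly delimit) the dual-algebraicity statement you need a different identification of the compact elements, not the ideals \(I_X\), and the heuristic you propose cannot close this gap.
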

\begin{proof}
    The map \(\Sub_\Coalg(C)\ni D\selir
    D^\perp\in\{I\in\Id(A):I\text{-closed}\}\) is a bijection
    by~\cite[Prop.~1.4.3]{ms:hopf-alg} and
    Corollary~\ref{cor:dualit_for_subspaces}. Since the lattice of closed
    ideals of a topological algebra is algebraic (with compact elements:
    closures of finitely generated ideals) the theorem follows.
\end{proof}

Let \(C\) be a coalgebra over a field \(\k\). For \(c\in C\) there exists a
smallest subcoalgebra, denoted by \(C(c)\), such that \(c\in C(c)\).
Furthermore, by the \textit{Fundamental Theorem of Coalgebras} it is finite
dimensional.
\index{fundamental theorem of coalgebras}
\begin{theorem}[{\textbf{Fundamental Theorem of
	    Coalgebras}}]\label{thm:fundamental_theorem_of_coalgebras}
    Let \(C\) be a coalgebra over a field \(\k\) and let \(c\in C\). Then there
    exists a finite dimensional subcoalgebra of \(C\) which contains \(c\).
\end{theorem}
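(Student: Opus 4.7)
The plan is to follow Sweedler's classical argument: produce an explicit finite-dimensional subspace $D$ containing $c$ and closed under $\Delta$, which then inherits a coalgebra structure from $C$ because over a field every subspace is pure, so $D \otimes D \hookrightarrow C \otimes C$ is injective and the counit of $C$ restricts.

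First I would write $\Delta(c) = \sum_{i=1}^{n} x_i \otimes y_i$ with $n$ chosen minimal; by minimality both families $\{x_i\}_{i=1}^n$ and $\{y_i\}_{i=1}^n$ are linearly independent in $C$. Pick functionals $f_k, g_k \in C^*$ with $f_k(x_j) = \delta_{jk}$ and $g_k(y_j) = \delta_{jk}$. Applying $\mathrm{id} \otimes \mathrm{id} \otimes g_k$ and $f_k \otimes \mathrm{id} \otimes \mathrm{id}$ to the coassociativity identity
\[
    \sum_{i} \Delta(x_i) \otimes y_i \;=\; \sum_{i} x_i \otimes \Delta(y_i)
\]
yields two families of elements of $C$, namely $w_{ik} := (\mathrm{id} \otimes g_k)\Delta(y_i)$ and $v_{ik} := (f_k \otimes \mathrm{id})\Delta(x_i)$, together with the formulas
\[
    \Delta(x_k) = \sum_{i} x_i \otimes w_{ik}, \qquad \Delta(y_k) = \sum_{i} v_{ik} \otimes y_i.
\]

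Next I would iterate: applying coassociativity once more to $x_k$ (resp. $y_k$), and then the functionals $f_l \otimes \mathrm{id} \otimes \mathrm{id}$ (resp. $\mathrm{id} \otimes \mathrm{id} \otimes g_l$), one obtains
\[
    \Delta(w_{lk}) = \sum_{i} v_{il} \otimes w_{ik}, \qquad \Delta(v_{lk}) = \sum_{i} v_{ik} \otimes w_{il}.
\]
Define $D := \mathrm{span}\{x_i, y_i, v_{ij}, w_{ij} : 1 \leq i,j \leq n\}$. This is finite-dimensional (at most $2n + 2n^2$), and the four displayed identities show $\Delta(D) \subseteq D \otimes D$. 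Moreover $c = \sum_i \epsilon(x_i) y_i \in D$ by the counit axiom. Hence $(D, \Delta|_D, \epsilon|_D)$ is a finite-dimensional subcoalgebra of $C$ containing $c$.

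The one place requiring care is closure: a naive span of $\{c, x_i, y_i\}$ need not be $\Delta$-stable, so the point of going down two levels and inserting the $v_{ij}, w_{ij}$ is precisely to make the iteration terminate. Minimality of $n$ (which provides the dual functionals $f_k, g_k$) is what keeps the indexing set finite. No further difficulty arises, since passing to subcoalgebras over a field is automatic once $\Delta$-stability is secured.
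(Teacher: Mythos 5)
Your proof is correct, and it checks out line by line: with $n$ minimal in $\Delta(c)=\sum_{i=1}^n x_i\otimes y_i$ both families are indeed independent, the four displayed comultiplication formulas follow exactly as you say from coassociativity applied to $c$, $x_k$ and $y_k$ composed with the dual functionals, and over a field $\Delta$-stability of the finite-dimensional span $D$ is all that is needed for $D$ to be a subcoalgebra (the paper itself records this purity remark just before the theorem), while $c=\sum_i\epsilon(x_i)y_i\in D$ by counitality. The paper gives no argument of its own here — it only cites Theorem~1.4.7 of D\u{a}sc\u{a}lescu--N\u{a}st\u{a}sescu--Raianu — and your argument is essentially the classical Sweedler-type proof found there: apply $\Delta$ twice, write $(\Delta\otimes\mathrm{id})\Delta(c)$ against bases on the outer tensor factors, and take the span of the resulting ``middle'' coefficients. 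Your bookkeeping with two families $v_{ij},w_{ij}$ is slightly redundant but harmless: applying $g_k$ to $\Delta(y_l)=\sum_i v_{il}\otimes y_i$ shows $w_{lk}=v_{kl}$, so your $D$ is just the usual span of $\{x_i,y_i\}$ together with the single matrix of middle terms, of dimension at most $2n+n^2$. No gap.
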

\begin{proof}
    See~\cite[Thm~1.4.7]{sd-cn-sr:hopf-alg}.
\end{proof}
Let \(V\subseteq C\) be a subset. We let \(C(V)\) be the smallest coalgebra
which contains \(V\). Clearly, \(C(V)=C(\Span(V))\), where \(\Span(V)\)
denotes the vector subspace spanned by \(V\). Furthermore, \(C(V)=\sum_{v\in
	V}C(v)\). The compact elements of \(\Sub_\Coalg(C)\) are precisely the
subcoalgebras \(C(V)\) where \(V\) is a finite subset of \(C\), by
Remark~\ref{rem:compact_elements}. Since these subcoalgebras are all finite
dimensional and clearly all finite dimensional subcoalgebras are compact we
conclude with
\index{coalgebra!poset of subcoalgebras!compact elements}
\begin{proposition}
    A subcoalgebra \(B\) of a \(\k\)-coalgebra \(C\) (where \(\k\) is a field)
    is a compact element of \(\Sub_\Coalg(C)\) if and only if \(\dim B<\infty\).
\end{proposition}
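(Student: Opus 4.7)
The plan is to verify both implications using the explicit description of the join in $\Sub_\Coalg(C)$ (namely the sum) together with the two Fundamental Theorems and Remark~\ref{rem:compact_elements}.

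For the ``if'' direction, suppose $B$ is finite-dimensional with basis $b_1,\ldots,b_n$, and let $\{D_i\}_{i\in I}$ be a family of subcoalgebras with $B\leq \bigvee_{i\in I}D_i$. By Theorem~\ref{thm:subcoalgebras_algebraic} the join is the set-theoretic sum $\sum_{i\in I}D_i$, so each basis element $b_j$ lies in a finite partial sum $\sum_{i\in F_j}D_i$ with $F_j\subseteq I$ finite. Setting $F\coloneq\bigcup_{j=1}^n F_j$, which is still finite, gives $B\subseteq \sum_{i\in F}D_i=\bigvee_{i\in F}D_i$, so $B$ is compact by Definition~\ref{defi:compact}.

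For the ``only if'' direction, assume $B$ is compact. Since $\Sub_\Coalg(C)$ is a $\cap\overrightarrow{\cup}$-structure (proved in Theorem~\ref{thm:subcoalgebras_algebraic}), Remark~\ref{rem:compact_elements} says that the compact elements are precisely those of the form $\langle V\rangle=C(V)$ for a finite subset $V\subseteq C$. Using the identity $C(V)=\sum_{v\in V}C(v)$ together with the Fundamental Theorem of Coalgebras (Theorem~\ref{thm:fundamental_theorem_of_coalgebras}), each $C(v)$ is finite-dimensional, and hence so is $B=C(V)$ as a finite sum of finite-dimensional subspaces.

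There is no real obstacle here: the two directions reduce immediately to the facts that joins in $\Sub_\Coalg(C)$ are sums and that every element of $C$ is contained in a finite-dimensional subcoalgebra. The only thing to be careful about is that the characterisation of compact elements from Remark~\ref{rem:compact_elements} requires the lattice to be presented as a $\cap\overrightarrow{\cup}$-structure, which is exactly what Theorem~\ref{thm:subcoalgebras_algebraic} provides.
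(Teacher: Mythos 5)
Your proof is correct and follows essentially the same route as the paper: the compact elements are identified as the subcoalgebras \(C(V)\) for finite \(V\) via Remark~\ref{rem:compact_elements} and shown to be finite dimensional through \(C(V)=\sum_{v\in V}C(v)\) and the Fundamental Theorem of Coalgebras, while the converse uses that joins in \(\Sub_\Coalg(C)\) are sums. You merely spell out the ``finite dimensional \(\Rightarrow\) compact'' step, which the paper dismisses as clear.
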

\begin{example}
    Let \(C=C_0\oplus k1\) be the \(\k\)-coalgebra from
    Example~\ref{ex:lattices_of_coideals}(ii).  Then a subspace \(V\subseteq
	C\) is a subcoalgebra if and only if \(1\in V\).  It is easy to observe
    that every such subspace is indeed a subcoalgebra.  Now, let us assume that
    \(D\subseteq C\) is a subcoalgebra.  Take \(d\in D\) and write it as
    \(d=d_0+\lambda1\) where \(d_0\in C_0\) is non zero and \(\lambda\in \k\).
    Let \(d_0^*\) be an element of \(C^*\) such that \(d_0^*(d_0)=1\) and
    \(d_0(1)=0\). Then \(\Delta(d)=\lambda 1\otimes1+d_0\otimes1+1\otimes
	d_0\in D\otimes D\).  Now we apply \(d_0^*\otimes\id_C\) and we obtain:
    \(1\in D\).  Thus we have an isomorphism of lattices:
    \[\Sub_\Coalg(C)\cong\Sub_\Vect(C_0)\]
    Hence \(\Sub_\Coalg(C)\) is modular and it is not distributive if \(\dim
	C_0\geq 2\).
\end{example} 
More can be said about the lattice of subcoalgebras of a cocommutative
coalgebra. For this we need the following notions:
\index{coalgebra!simple}
\index{coalgebra!irreducible}
\index{coalgebra!irreducible components}
\index{coalgebra!pointed}
\index{simple coalgebra|see{coalgebra!simple}}
\index{irreducible coalgebra|see{coalgebra!irreducible}}
\index{pointed coalgebra|see{coalgebra!pointed}}
\begin{definition}\label{defi:simpe_irreducible_coalgebras}
    Let \(C\) be a coalgebra. It is called:
    \begin{enumerate}
	\item \bold{simple} if it has no proper subcoalgebras, i.e. the
	    only subcoalgebras are \(\{0\}\) and \(C\);
        \item \bold{irreducible} if it has a unique simple subcoalgebra;
        \item \bold{pointed} if every of its simple subcoalgebras is one
	    dimensional.
    \end{enumerate}
    Since a sum of irreducible subcoalgebras which contain a common simple
    subcoalgebra is an irreducible subcoalgebra there exists maximal
    irreducible subcoalgebras. These are called \bold{irreducible components}.
    An irreducible component that is pointed is called a \bold{pointed
	irreducible component}. 
\end{definition}
\index{universal enveloping algebra!pointed}
For example the coalgebra \(\mathcal{U}(\mathfrak{g})\), where
\(\mathfrak{g}\) is a Lie algebra, is a pointed irreducible coalgebra, with
the unique simple subcoalgebra \(\k1\subseteq\mathcal{U}(\mathfrak{g})\). 

Let us note that every subcoalgebra contains a nontrivial simple subcoalgebra.
By the \textit{Fundamental Theorem of Coalgebras}
(Theorem~\ref{thm:fundamental_theorem_of_coalgebras}) it contains a finite
dimensional subcoalgebra. If it is not simple, it contains a nontrivial
subcoalgebra of smaller dimension. There must be a nonzero simple subcoalgebra
by a finite induction. The \textit{Fundamental Theorem of Coalgebras} shows
also that simple coalgebras are all finite dimensional.
\index{coalgebra!irreducible components}
\begin{theorem}[{\cite[Thm~2.4.7]{ea:hopf_algebras}}]\label{thm:irreducible_subcoalgebras}
    Let \(C\) be a coalgebra over a field \(\k\). Then
    \begin{enumerate}
	\item an arbitrary irreducible subcoalgebra of \(C\) is
	    contained in an irreducible component of \(C\);
	\item a sum of distinct irreducible components of \(C\) is
	    a direct sum;
	\item if \(C\) is cocommutative then it is a direct sum of its
	    irreducible components.
    \end{enumerate}
\end{theorem}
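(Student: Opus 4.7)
The plan is to prove the three parts sequentially, using Zorn's lemma, the Fundamental Theorem of Coalgebras (Theorem~\ref{thm:fundamental_theorem_of_coalgebras}), and for (iii) a duality argument reducing to the structure theory of finite-dimensional commutative algebras.

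For (i), let $D$ be an irreducible subcoalgebra with unique simple subcoalgebra $S$, and consider the poset $\mathcal{F}$ of irreducible subcoalgebras of $C$ containing $D$, ordered by inclusion. Given a chain in $\mathcal{F}$, its directed union is a subcoalgebra, and any simple subcoalgebra of it is finite-dimensional by the Fundamental Theorem of Coalgebras, hence already contained in some chain member, where it must equal $S$ by irreducibility. Thus the union is irreducible and lies in $\mathcal{F}$, so Zorn's lemma produces a maximal element, which is an irreducible component containing $D$.

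For (ii), first observe that distinct irreducible components must have distinct simple subcoalgebras: otherwise the remark within Definition~\ref{defi:simpe_irreducible_coalgebras} would force the sum of two components sharing a simple subcoalgebra to be irreducible, contradicting maximality. Let $\{E_\alpha\}$ be distinct components with simple subcoalgebras $\{S_\alpha\}$ and suppose $\sum_\alpha E_\alpha$ is not direct. Picking a minimal offending finite subfamily $E_1,\ldots,E_n$, the intersection $E_n \cap (E_1 + \cdots + E_{n-1})$ is a nonzero subcoalgebra of $E_n$, hence contains a simple subcoalgebra, which by irreducibility of $E_n$ must equal $S_n$, so $S_n \subseteq E_1 + \cdots + E_{n-1}$. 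I would then invoke the classical fact that the coradical $C_0$ is the direct sum of the simple subcoalgebras of $C$, together with $E_i \cap C_0 = S_i$, in order to establish $(E_1 + \cdots + E_{n-1}) \cap C_0 = S_1 \oplus \cdots \oplus S_{n-1}$; this yields $S_n \subseteq \bigoplus_{i<n} S_i$, contradicting directness of the coradical. Proving this intersection formula, which must be done without assuming a priori directness of the sum of the $E_i$, is the main technical obstacle, and is typically handled by induction up the coradical filtration $C_0 \subseteq C_1 \subseteq \cdots$ via the wedge product of subspaces.

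For (iii), assume $C$ is cocommutative and let $c \in C$. By the Fundamental Theorem of Coalgebras $c$ lies in a finite-dimensional subcoalgebra $C(c)$, which inherits cocommutativity, so its dual $C(c)^*$ is a finite-dimensional commutative algebra. The structure theorem for Artinian commutative rings gives $C(c)^* \cong \prod_{j=1}^{r} A_j$ with each $A_j$ local; dualising yields $C(c) = \bigoplus_{j=1}^{r} D_j$ with $D_j \cong A_j^*$ irreducible, since the unique maximal ideal of $A_j$ corresponds via orthogonal complement (Proposition~\ref{prop:coid_subag}) to the unique simple subcoalgebra of $D_j$. By (i) each $D_j$ is contained in a unique irreducible component, so $c$ lies in a finite sum of irreducible components; hence $C$ is the sum of its irreducible components, and this sum is direct by (ii).
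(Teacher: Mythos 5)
The paper gives no proof of this theorem at all --- it is quoted from Abe --- so there is nothing internal to compare your argument against; it can only be judged on its own terms. Parts \textit{(i)} and \textit{(iii)} are correct as you present them. In \textit{(i)} the Zorn argument works because a simple subcoalgebra of the union of a chain is finite dimensional (Theorem~\ref{thm:fundamental_theorem_of_coalgebras}), hence lies in one member of the chain and must there coincide with \(S\), and a maximal element of your poset \(\mathcal{F}\) is automatically a maximal irreducible subcoalgebra of \(C\). In \textit{(iii)} the reduction to the finite-dimensional subcoalgebra \(C(c)\), the decomposition of the commutative Artinian algebra \(C(c)^*\) into local factors, and the dualisation back to a direct sum of irreducible subcoalgebras are all sound; the only slip is the reference: Proposition~\ref{prop:coid_subag} is the coideal--subalgebra duality, whereas what you need is the correspondence between subcoalgebras of \(C(c)\) and ideals of \(C(c)^*\) (the duality used in Theorem~\ref{thm:subcoalgebras_dualy_algebraic}), under which the unique maximal ideal of a local factor \(A_j\) yields the unique simple subcoalgebra of \(A_j^*\).

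The genuine gap is the step you yourself flag in \textit{(ii)}, and it is not closed by the facts you invoke. From \(E_i\cap C_0=S_i\) and directness of the coradical one only gets \(S_1+\cdots+S_{n-1}\subseteq (E_1+\cdots+E_{n-1})\cap C_0\); the inclusion you actually need, \((E_1+\cdots+E_{n-1})\cap C_0\subseteq S_1+\cdots+S_{n-1}\), is precisely the nontrivial statement that the coradical of a sum of subcoalgebras is contained in the sum of their coradicals, and intersection does not distribute over sums, so it does not follow formally. The lemma is true, and it can be proved more cheaply than by induction along the coradical filtration: a simple subcoalgebra of \(D_1+\cdots+D_m\) is finite dimensional, so it lies in \(E=E_1+\cdots+E_m\) for suitable finite-dimensional subcoalgebras \(E_i\subseteq D_i\); dually the restriction maps give an injective algebra map \(E^*\ir E_1^*\times\cdots\times E_m^*\), the ideal of those \(f\in E^*\) vanishing on every \((E_i)_0\) is carried into the nilpotent ideal \(J(E_1^*)\times\cdots\times J(E_m^*)\) and is therefore itself nilpotent, hence contained in \(J(E^*)=(E_0)^\perp\); taking orthogonals in the finite-dimensional \(E\) gives \(E_0\subseteq (E_1)_0+\cdots+(E_m)_0\), which in your situation is exactly \((E_1+\cdots+E_{n-1})\cap C_0= S_1+\cdots+S_{n-1}\). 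With that lemma supplied (and the standard fact, provable by the same dualisation, that the sum of distinct simple subcoalgebras is direct), your contradiction in \textit{(ii)} is complete; as it stands, the write-up gestures at the crucial lemma instead of proving it.
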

Let \(L,K\) be two lattices. Then \(L\times K\) is a lattice with
component wise operations and the order given by: \((l,k)\geq_{L\times
K}(l',k')\) if and only if \(l\geq_L l'\) and \(k\geq_K k'\).  The lattice
\(L\times K\) is called the \textsf{product lattice} of \(L\) and \(K\).
\index{lattice!indecomposable}
\index{indecomposable lattice|see{lattice!indecomposable}}
\begin{definition}
    Let \(L\) be a lattice.  It is called \bold{indecomposable} if it is not
    isomorphic to a product of two lattices.
\end{definition}
As a corollary of Theorem~\ref{thm:irreducible_subcoalgebras}(iii) we get.
\index{coalgebra!poset of subcoalgebras!decomposition (for cocommutative
    coalgebras)}
\begin{corollary}\label{cor:decomposition_of_subcoalgebras}
    Let \(C\) be a cocommutative coalgebra over a field \(\k\).  Then the
    lattice \(\Sub_\Coalg(C)\) has a direct product decomposition into
    indecomposable sublattices.  Let \(C_i\) (\(i\in I\)) be the set of all
    irreducible components of \(C\).  Then the indecomposable components of
    \(\Sub_\Coalg(C)\) are the sublattices \(\Sub_\Coalg(C_i)\).
\end{corollary}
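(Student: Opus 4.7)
The plan is to leverage Theorem~\ref{thm:irreducible_subcoalgebras}(iii), which writes the cocommutative coalgebra as a direct sum of its irreducible components \(C=\bigoplus_{i\in I}C_i\), and to promote this direct sum decomposition to a lattice isomorphism \(\Phi\colon\prod_{i\in I}\Sub_\Coalg(C_i)\sir\Sub_\Coalg(C)\) given by \((D_i)_{i\in I}\elmap{}\bigoplus_{i\in I}D_i\).  First I would verify that \(\Phi\) is well defined: a direct sum of subcoalgebras \(D_i\subseteq C_i\) is a subcoalgebra of \(C\) since \(\Delta\) restricts to \(D_i\otimes D_i\subseteq C_i\otimes C_i\), and the pieces \(C_i\otimes C_j\) for \(i\neq j\) are annihilated by projection onto each summand of \(C\otimes C\).

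The key step is to show that \(\Phi\) is bijective by proving that every subcoalgebra \(D\subseteq C\) satisfies \(D=\bigoplus_{i\in I}(D\cap C_i)\).  The inclusion \(\supseteq\) is immediate.  For the reverse inclusion I would use that \(D\) is itself cocommutative as a subcoalgebra of \(C\), so by Theorem~\ref{thm:irreducible_subcoalgebras}(iii) applied to \(D\) we have \(D=\bigoplus_{j\in J}D_j'\) where each \(D_j'\) is an irreducible component of~\(D\).  Each \(D_j'\) is then an irreducible subcoalgebra of~\(C\), so by Theorem~\ref{thm:irreducible_subcoalgebras}(i) it is contained in a unique irreducible component \(C_{\iota(j)}\) of~\(C\), whence \(D_j'\subseteq D\cap C_{\iota(j)}\) and therefore \(D\subseteq\bigoplus_{i\in I}(D\cap C_i)\).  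Once \(\Phi\) is bijective, it is clear that both \(\Phi\) and \(\Phi^{-1}\) preserve the order, so \(\Phi\) is a lattice isomorphism by Theorem~\ref{thm:subcoalgebras_algebraic}; the joins and meets on both sides are computed componentwise since intersections and sums distribute over the direct sum decomposition.

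Finally, to show that each factor \(\Sub_\Coalg(C_i)\) is indecomposable, I would exploit the defining property of an irreducible coalgebra: \(C_i\) has a unique simple subcoalgebra~\(S_i\), and by an induction using the \emph{Fundamental Theorem of Coalgebras} (Theorem~\ref{thm:fundamental_theorem_of_coalgebras}) every nonzero subcoalgebra of~\(C_i\) contains~\(S_i\).  Hence \(\Sub_\Coalg(C_i)\) has a unique atom \(S_i\) lying below every nonzero element.  Suppose, for contradiction, that \(\Sub_\Coalg(C_i)\cong A\times B\) with both \(A\) and \(B\) having more than one element, via an isomorphism sending \(S_i\) to some \((a_0,b_0)\).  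Picking any nonzero \(a\in A\), the element \((a,0_B)\) must correspond to a nonzero subcoalgebra and hence be \(\geq(a_0,b_0)\), forcing \(b_0=0_B\); by symmetry \(a_0=0_A\), so \(S_i\) corresponds to the bottom element, which is absurd.  The main obstacle I anticipate is the bijectivity of \(\Phi\), which is precisely the point where the cocommutativity of \(C\) enters and is handled by applying Theorem~\ref{thm:irreducible_subcoalgebras} to the subcoalgebra \(D\) itself.
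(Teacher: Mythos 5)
Your proposal is correct and follows essentially the same route as the paper: the indecomposability argument via the unique simple subcoalgebra \(M_i\) sitting below every nonzero subcoalgebra of \(C_i\), contradicting that the bottom of a nontrivial product is a meet of two nonzero elements, is exactly the paper's proof. The only difference is that you spell out the lattice isomorphism \(\prod_i\Sub_\Coalg(C_i)\cong\Sub_\Coalg(C)\) (via \(D=\bigoplus_i(D\cap C_i)\), obtained by applying Theorem~\ref{thm:irreducible_subcoalgebras}(iii) to \(D\) and (i) to place its irreducible components), a step the paper treats as immediate from Theorem~\ref{thm:irreducible_subcoalgebras}(iii); your filling-in of it is correct.
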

\begin{proof}
    It only remains to show that for each irreducible component \(C_i\) of
    \(C\) the lattice \(\Sub_\Coalg(C_i)\) is indecomposable.  For this let
    \(M_i\subseteq C_i\) be the unique simple subcoalgebra of \(C_i\).  Then for
    every \(0\neq D\subseteq C_i\), where \(D\) is a subcoalgebra, we have
    \(M_i\leq D\).  Now let us assume that \(\Sub_\Coalg(C_i)\cong L\times K\),
    where \(L,K\) are sublattices.  Since \(\Sub_\Coalg(C_i)\) is bounded and
    complete so are the sublattices \(L\) and \(K\).  Then we must have
    \((0_K,1_L)\geq M_i\) and \((1_K,0_L)\geq M_i\) and thus \(0_{K\times
	    L}=(0_K,1_L)\wedge(1_K,0_L)\geq M_i\), hence \(M_i=0_{K\times L}\)
    which is a contradiction.
\end{proof}

Let us now pass to \(C\)-comodules.  The following theorem is an important
step for us, since it will allow for implications when we mix algebraic
structures like subalgebras with subcomodules (for example \emph{generalised
    subalgebras} of bialgebras).  It also will be used in the proof of the
construction of a Galois connection for \(H\)-extensions
(Theorem~\ref{thm:existence} on page~\pageref{thm:existence}) and also when we
compare our Galois connection with an earlier result of Schauenburg
(Remark~\ref{rem:comparison_of_adjunctions} on
page~\pageref{rem:comparison_of_adjunctions}).
\index{comodule!poset of subcomodules!complete}
\begin{theorem}\label{thm:lattice_of_subcomodules}
    Let \(M\) be a (right) \(C\)-comodule, for a coalgebra \(C\) over
    a commutative ring \(\R\).  Then the poset of subcomodules of \(M\),
    denoted by \(\Sub_{\Mod^C}(M)\), is a complete lattice.  For
    \(N_i\in\Sub_{\Mod^C}(M)\), \(i\in I\) we have:
    \[\bigvee_{i\in I}N_i=\sum_{i\in I}N_i\]
    Furthermore, if \(C\) is flat as an \(\R\)-module, then \(N_1\wedge
	N_2=N_1\cap N_2\) and the lattice of subcomodules of \(M\) is modular.
    The lattice \(\Sub_{\Mod^C}(M)\) is algebraic if \(C\) is a flat
    Mittag--Leffler \(\R\)-module.  In the latter case we thus have:
    \[\bigwedge_{i\in I}N_i=\bigcap_{i\in I}N_i\]
    for a family of subcomodules \(N_i\subseteq M\), \(i\in I\).
\end{theorem}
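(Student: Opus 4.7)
The plan is to verify the four assertions in turn, reducing everything to module-theoretic facts together with the (Mittag--Leffler) intersection property of $C$ established in Chapter~\ref{chap:modules_with_int_property}.

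First, I would show that arbitrary sums of subcomodules are subcomodules. Given a family $N_i \subseteq M$ of subcomodules, the coaction $\delta$ is $R$-linear and satisfies $\delta(N_i) \subseteq N_i \otimes C$, so
\[
\delta\Bigl(\sum_i N_i\Bigr) \subseteq \sum_i (N_i \otimes C) = \Bigl(\sum_i N_i\Bigr) \otimes C,
\]
where the last equality uses that $-\otimes C$ preserves colimits (in particular sums of submodules). Hence $\sum_i N_i$ is a subcomodule, and it is clearly the least upper bound. By Remark~\ref{rem:complete_lattice}, a poset with arbitrary joins is a complete lattice, which settles completeness and gives the formula for $\bigvee_i N_i$.

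Second, assume $C$ is flat and let $N_1, N_2 \subseteq M$ be subcomodules. The restriction of $\delta$ to $N_1 \cap N_2$ takes values in $(N_1 \otimes C) \cap (N_2 \otimes C)$ viewed inside $M \otimes C$. By Proposition~\ref{prop:flat_intersection},
\[
(N_1 \otimes C) \cap (N_2 \otimes C) = (N_1 \cap N_2) \otimes C,
\]
so $N_1 \cap N_2$ is a subcomodule and hence the meet in $\Sub_{\Mod^C}(M)$. Since the meet is intersection and the join is sum, $\Sub_{\Mod^C}(M)$ is a sublattice of the lattice of $R$-submodules of $M$ sharing the same operations, so modularity is inherited from the modularity of submodule lattices.

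Third, assume $C$ is flat Mittag--Leffler. By Proposition~\ref{prop:Mittag-Leffler_intersection}, $C$ has the intersection property, so for an arbitrary family $\{N_i\}$ of submodules of $M$,
\[
\Bigl(\bigcap_i N_i\Bigr) \otimes C = \bigcap_i (N_i \otimes C)
\]
inside $M \otimes C$ (using flatness to identify $N_i \otimes C$ with its canonical image). If each $N_i$ is a subcomodule, then $\delta(\bigcap_i N_i) \subseteq \bigcap_i (N_i \otimes C) = (\bigcap_i N_i) \otimes C$, so $\bigcap_i N_i$ is a subcomodule. Thus $\Sub_{\Mod^C}(M)$ is closed under arbitrary intersections in the power set of $M$. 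It is also closed under directed unions: for a directed family $N_i$, the union $\bigcup_i N_i$ is already a submodule, and since $-\otimes C$ commutes with filtered colimits, $\delta(\bigcup_i N_i) \subseteq (\bigcup_i N_i)\otimes C$. Therefore $\Sub_{\Mod^C}(M)$ is a $\cap\overrightarrow{\cup}$-structure, and by Theorem~\ref{thm:algebraic_structures} it is an algebraic lattice, with arbitrary meets given by intersection as claimed.

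The only conceptual hurdle is the infinite intersection step: finite intersections already require flatness via Proposition~\ref{prop:flat_intersection}, but for arbitrary intersections nothing short of the intersection property of $C$ will do, which is precisely why the Mittag--Leffler hypothesis enters. Once that is in place, algebraicity follows formally from the characterisation of algebraic lattices as $\cap\overrightarrow{\cup}$-structures.
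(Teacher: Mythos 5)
Your argument is correct and, for the bulk of the statement, follows the same route as the paper: joins are sums (so the poset is complete), finite meets are intersections via Proposition~\ref{prop:flat_intersection} when \(C\) is flat, and arbitrary intersections of subcomodules are subcomodules via Proposition~\ref{prop:Mittag-Leffler_intersection} when \(C\) is flat Mittag--Leffler, whence \(\Sub_{\Mod^C}(M)\) is a \(\cap\overrightarrow{\cup}\)-structure and Theorem~\ref{thm:algebraic_structures} gives algebraicity (you are in fact slightly more careful than the paper in spelling out closure under directed unions). The one genuinely different step is modularity: the paper invokes the fact that for \(C\) flat the category of \(C\)-comodules is a Grothendieck category (citing Brzezi\'{n}ski--Wisbauer) and that subobject lattices in Grothendieck categories are complete and modular, whereas you observe that, since meet is intersection and join is sum, \(\Sub_{\Mod^C}(M)\) is a sublattice of the lattice of \(R\)-submodules of \(M\) and modularity is inherited. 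Your argument is more elementary and self-contained; the paper's citation buys the additional categorical context (a Grothendieck category structure) that is used elsewhere, but for the statement at hand your sublattice observation suffices and is arguably cleaner. One small point of hygiene common to both write-ups: when \(C\) is not assumed flat, expressions like \(N_i\otimes C\) inside \(M\otimes C\) should be read as canonical images, and being a subcomodule means the restricted coaction lifts along \(N\otimes C\sir M\otimes C\); this does not affect the argument where it matters, since flatness is in force for the meet and modularity claims.
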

For \(M=C\), for a coalgebra \(C\) over a field, one can prove this theorem in
the same way as Theorem~\ref{thm:quot(C)-algebraic}, since right coideals of
\(C\) correspond to closed right ideals of \(C^*\) (by
Theorem~\ref{thm:duality_for_subspace}, Lemmas~\ref{lem:topological_algebra}
and~\ref{prop:right_coideals-right_ideals}), which form an algebraic lattice.
Let us note that if the ground ring is a field then the category of
\(C\)-comodules is equivalent to the category of rational \(C^*\)-modules.
Furthermore, a quotient module, and a submodule as well, of a rational module
is rational by~\cite[Thm.~2.2.6]{sd-cn-sr:hopf-alg}.  Hence the lattice of
subcomodules of a \(C\)-comodule \(M\) is isomorphic to the lattice of
submodules of the rational module \(M\) with the induced \(C^*\)-module
structure.
\begin{proofof}{{Theorem~\ref{thm:lattice_of_subcomodules}}}
    First we note that if \((N_i)_{i\in I}\) is a family of subcomodules
    of a \(C\)-comodule \(M\) then 
    \[\bigvee_{i\in I}N_i=\sum_{i\in I}N_i\in\Sub_{\Mod^C}(M)\]
    Thus the poset of subcomodules is a complete lattice.  Let us assume that
    \(C\) is flat.  Then \(N_i\subseteq M\) is a \(C\)-subcomodule if
    \(\delta(N_i)\subseteq N_i\otimes C\subseteq M\otimes C\), where
    \(\delta:M\sir M\otimes C\) is the \(C\)-comodule structure map.  Then by
    Proposition~\ref{prop:flat_intersection} we have \(\delta(N_1\cap
    N_2)\subseteq (N_1\otimes C)\cap(N_2\otimes C)=(N_1\cap N_2)\otimes C\).
    Thus \(N_1\cap N_2\) is a subcomodule and \(N_1\wedge N_2=N_1\cap N_2\).

    Now, if \(C\) is a flat Mittag--Leffler module then for any family of its
    subcomodules \((N_i)_{i\in I}\), we have
    \[\delta\Bigl(\bigcap_{i\in I}N_i\Bigr)\subseteq\bigcap_{i\in I}\bigl(N_i\otimes C\bigr)=\Bigl(\bigcap_{i\in I}N_i\Bigr)\otimes C\]
    by Proposition~\ref{prop:Mittag-Leffler_intersection} and thus
    \(\bigcap_{i\in I}N_i\) is a \(C\)-subcomodule of \(M\).  Now it follows
    that \(\Sub_{\Mod^C}(C)\) is a \(\cap\overrightarrow{\cup}\)-structure and
    thus is an algebraic lattice.

    \citeauthor[3.13]{tb-rw:corings-and-comodules} show that the category of
    of comodules is a Grothendieck category if \(C\) is flat as an
    \(\R\)-module.  In Grothendieck categories the set of subobjects always
    forms a complete modular lattice.
\end{proofof}
Let us note that the lattice of subcomodules in general is not atomic.  An atom
of a lattice \(L\), with the smallest element \(0\),  is an element \(l>0\)
such that if \(l'\in L\) is such that \(l'<l\) then \(l'=0\).  A lattice is
called atomic if every element is a supremum  of a subset of the set of atoms.
Atoms of the lattice of subcomodules are exactly the simple subcomodules.  Thus
the lattice of subcomodules is atomic if and only if \(M\) is semisimple.  For
example let us consider \(C\) as a right \(C\)-comodule.  Then 
\[C\cong\bigoplus_{\substack{N\subseteq C\\N\text{-simple subcomodule}}}E(N)\] 
where \(E(N)\) is the injective envelope of \(N\) (we refer
to~\cite[Thm.~2.4.16]{sd-cn-sr:hopf-alg} for injective envelopes in the
categories of \(C\)-comodules). Since in general \(N\subsetneq E(N)\) and
\(N\) is the unique simple subcomodule which is contained in \(E(N)\) we see
that \(E(N)\) cannot be a sum of simple subcomodules. This shows that the
lattice of right coideals of \(C\) is atomic if and only if every simple right
coideal of \(C\) is an injective \(C\)-comodule.

There is another case in which we can say something about the lattice of
subcomodules. For this we need some module theoretic notions.
\begin{definition}
    \begin{itemize}[noitemsep]
	\item Let \(M,N\) be \(\R\)-modules. We say that \(N\) is
	    \bold{generated} by \(M\) if there is an epimorphism
	    \(\oplus_{\lambda\in\Lambda}M\sir N\) for some set \(\Lambda\).
	\item We say that \(N\) is \bold{subgenerated} by \(M\) if it is
	    isomorphic to a submodule of an \(M\)-generated module.
    \end{itemize}
    The full subcategory of \(\R\)-modules which are subgenerated by \(M\) we
    denote by~\(\sigma_\R\bigl[M\bigr]\).
\end{definition}
\index{locally projective module}
\begin{definition}[{\cite[sec.~4.6]{rw:coalgebras-and-bialgebras}}]\label{defi:locally_projective}
    Let \(M\) be an \(\R\)-module. It is a \bold{locally projective} if for any
    diagram with exact rows of the form:
    \begin{center}
	\begin{tikzpicture}
	    \matrix[column sep=1.2cm,row sep=1.2cm]{
		\node (A0) {\(0\)}; & \node (A1) {\(F\)}; & \node (A2) {\(M\)}; & \\
		                    & \node (B1) {\(L\)}; & \node (B2) {\(N\)}; & \node (B3) {\(0\)}; \\
	    };
	    \draw[->] (A0) -- (A1);
	    \draw[->] (A1) --node[above]{\(i\)} (A2);
	    \draw[->] (B1) --node[below]{\(g\)} (B2);
	    \draw[->] (B2) -- (B3);
	    \draw[->] (A2) --node[right]{\(f\)} (B2);
	    \draw[->,dashed] (A2) --node[fill=white]{\(\exists h\)} (B1);
	\end{tikzpicture}
    \end{center}
    where \(F\) is a finitely generated module, there exists \(h:M\sir L\),
    such that \(g\circ h\circ i = f\circ i\).
\end{definition}
If \(M\) is a right \(C\)-comodule, then it becomes a left \(C^*\)-module with
the action \(f\cdot m=\bigl((\id\otimes f)\circ
    \delta\bigr)(m)=m_{(0)}f(m_{(1)})\). It turns out that this construction
is functorial and in some cases its image in the category of \(C^*\)-modules
is the \(\sigma_{C^*}\bigl[C\bigr]\) subcategory. 
\begin{theorem}[{\cite[sec.~8.3]{rw:coalgebras-and-bialgebras}}]\label{thm:Wisbauer_category_and_comodules}
    Let \(C\) be an \(\R\)-coalgebra. Then the following conditions are
    equivalent:
    \begin{enumerate}
	\item \(\Mod^C\cong\sigma_{C^*}\bigl[C\bigr]\);
	\item \(\Mod^C\) is a full subcategory of \(_{C^*}\Mod\);
	\item \(C\) is locally projective as a left \(\R\)-module;
	\item every left \(C^*\)-submodule of \(C\) is a \(C\)-subcomodule.
    \end{enumerate}
\end{theorem}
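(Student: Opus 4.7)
The plan is to prove the equivalences by establishing the cycle $(i)\Rightarrow(ii)\Rightarrow(iv)\Rightarrow(iii)\Rightarrow(i)$, with the heavy lifting concentrated in $(iv)\Rightarrow(iii)$ and in the rationality characterisation used for $(iii)\Rightarrow(i)$.

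The implication $(i)\Rightarrow(ii)$ is immediate, since $\sigma_{C^*}[C]$ is by definition a full subcategory of ${_{C^*}\Mod}$. For $(ii)\Rightarrow(iv)$ I would take a left $C^*$-submodule $N\subseteq C$ and produce a coaction $\delta_N\colon N\to N\otimes C$ making the inclusion a $C$-comodule map: since $C$ is a right $C$-comodule via $\Delta$ and (ii) asserts that every $C^*$-linear map between comodules is $C$-colinear, the required coaction must be the restriction of $\Delta$, and the only thing to verify is that $\Delta(N)\subseteq N\otimes C$, which is forced by testing $\Delta$ against elements of $C^*$ and using the $C^*$-stability of $N$ together with the map $\mu_C$ from the discussion before Definition~\ref{defi:locally_projective}.

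For $(iii)\Rightarrow(i)$ I would invoke the standard consequence of local projectivity, namely that the canonical injection $\mu_M\colon M\otimes C\to\Hom_\R(C^*,M)$ is an embedding for every $\R$-module $M$ (the so-called $\alpha$-condition, which is the natural generalisation of the injectivity of $\mu_M$ proved in the excerpt for $C$ projective). Under the $\alpha$-condition one shows that a left $C^*$-module is rational precisely when it lies in $\sigma_{C^*}[C]$: one inclusion follows from closure of $\sigma_{C^*}[C]$ under submodules, quotients and sums together with the rationality of $C$ itself, while the reverse inclusion uses the $\alpha$-condition to embed any rational $C^*$-module into a product of copies of $C$, and hence into $\sigma_{C^*}[C]$. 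Combining this with the functorial construction already sketched in the excerpt for the field case, which extends to rings under local projectivity, yields the equivalence $\Mod^C\cong\mathsf{Rat}({_{C^*}\Mod})\cong\sigma_{C^*}[C]$.

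The main obstacle is $(iv)\Rightarrow(iii)$. Here the strategy is to extract from hypothesis (iv) a finite dual basis adequate to verify the factorisation in Definition~\ref{defi:locally_projective}. Given a finitely generated submodule $F\subseteq C$, the $C^*$-submodule $C^*\cdot F$ is, by (iv), a subcomodule of $C$, and I would show that it is already finitely generated as an $\R$-module by using the coaction on generators of $F$, together with the interplay $c^*\cdot c=c_{(0)}c^*(c_{(1)})$ between the $C^*$-action and the coproduct. Writing the coaction on each generator as a finite sum $\Delta(c)=\sum_i c_i\otimes d_i$ produces the candidate elements $c_i\in C$ and dual functionals $\varepsilon(-)\,d_i^*\in C^*$; with these one defines $h\colon C\to L$ by $h(c)=\sum_i d_i^*(c)\,\ell_i$, where $\ell_i\in L$ is any preimage of $f(c_i)\in N$ under $g$, and checks $g\circ h\circ\iota=f\circ\iota$ on $F$. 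Converting the set-up of (iv) into the existence of the dual basis is the genuinely delicate point, and is where the entire equivalence rests.
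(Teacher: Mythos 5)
The paper does not prove this statement at all: it is quoted verbatim from Wisbauer (the citation in the theorem header), and the text only uses it afterwards. So your proposal has to stand on its own, and as it stands it has gaps at exactly the two places where the real content of the theorem lies. First, in your step $(ii)\Rightarrow(iv)$ you claim that $\Delta(N)\subseteq N\otimes C$ is ``forced by testing $\Delta$ against elements of $C^*$'' together with $C^*$-stability of $N$. Over a field this works, but over a general ring it is precisely the $\alpha$-condition: knowing that $(\id\otimes f)\Delta(n)\in N$ for all $f\in C^*$ only says that the image of $\Delta(n)$ in $(C/N)\otimes C$ is killed by the map $(C/N)\otimes C\to\Hom_\R(C^*,C/N)$, and to conclude that this image is zero (hence $\Delta(n)\in\im(N\otimes C)$) you need that map to be injective --- which is equivalent to local projectivity of $C$, i.e. to condition $(iii)$, the thing your cycle only proves later. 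Moreover, fullness of $\Mod^C$ in $_{C^*}\Mod$ is never actually invoked in your argument: $N$ is only a $C^*$-module, so there is no pair of comodules and no $C^*$-linear map between them to which $(ii)$ could be applied. In other words, $(iii)\Rightarrow(iv)$ is the easy implication (via the $\alpha$-condition applied to $C/N$), whereas deducing $(iv)$ directly from $(ii)$ the way you describe is circular.

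Second, your $(iv)\Rightarrow(iii)$ is the crux and is not actually carried out. The observation that $C^*\cdot F$ is a finitely generated $\R$-module (writing $\Delta$ of the generators as finite sums $\sum_i c_i\otimes d_i$ and using $c^*\cdot c=c_{(1)}c^*(c_{(2)})$) is correct and useful, but finite generation is far from what Definition~\ref{defi:locally_projective} requires. The ``dual functionals'' $d_i^*$ with which you define $h(c)=\sum_i d_i^*(c)\,\ell_i$ do not exist a priori: the $d_i$ are just tensor factors in $C$, and the existence of finitely many elements of $C^*$ reproducing $c$ on the finitely generated submodule $F$ is exactly the local dual-basis formulation of local projectivity, i.e. the conclusion you are trying to reach. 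You acknowledge this is ``the genuinely delicate point,'' but no argument is supplied, so the proof of the only hard implication is missing. The remaining pieces ($(i)\Rightarrow(ii)$, and the outline of $(iii)\Rightarrow(i)$ via the $\alpha$-condition and rational modules) are essentially fine, except that a rational module should be realised inside a $C$-generated module via $\delta_M\colon M\to M\otimes C$ and a free presentation of $M$, not embedded into a product of copies of $C$ (comodules are not closed under products).
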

\index{comodule!poset of subcomodules!algebraic}
\begin{corollary}
    Let \(C\) be a coalgebra over a ring \(\R\), such that \(C\) is locally
    projective left \(\R\)-module and let \(M\) be a (right) \(C\)-comodule.
    Then we have an isomorphism of posets
    \(\Sub_{\Mod^C}(M)\cong\Sub_{_{C^*}\Mod}(M)\) and thus
    \(\Sub_{\Mod^C}(M)\) is an algebraic lattice.
\end{corollary}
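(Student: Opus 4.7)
The plan is to leverage Wisbauer's Theorem~\ref{thm:Wisbauer_category_and_comodules} to establish the bijection, and then invoke the classical fact that the lattice of submodules of any module is algebraic.

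First I would set up the order-preserving map in each direction. Both posets are ordered by inclusion, so any bijection of underlying sets is automatically a poset isomorphism. Given a $C$-subcomodule $N\subseteq M$, the inclusion $N\hookrightarrow M$ is a morphism in $\Mod^C$. Since $C$ is locally projective, condition~(ii) of Theorem~\ref{thm:Wisbauer_category_and_comodules} tells us that $\Mod^C$ is a full subcategory of $_{C^*}\Mod$ (via the action $f\cdot m = m_{(0)}f(m_{(1)})$), so this inclusion is a $C^*$-module map, realising $N$ as a $C^*$-submodule of $M$. This gives a well-defined order-preserving map $\Sub_{\Mod^C}(M)\to\Sub_{_{C^*}\Mod}(M)$, which is clearly injective since the underlying sets coincide.

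The heart of the argument is surjectivity: every $C^*$-submodule of $M$ must be a $C$-subcomodule. For this I would use condition~(i) of Theorem~\ref{thm:Wisbauer_category_and_comodules}, namely $\Mod^C\cong\sigma_{C^*}[C]$. Since $M\in\sigma_{C^*}[C]$, any $C^*$-submodule $N\subseteq M$ is a $C^*$-submodule of an object of $\sigma_{C^*}[C]$, hence itself lies in $\sigma_{C^*}[C]\cong\Mod^C$. The equivalence is compatible with the forgetful functor to $_{C^*}\Mod$, so $N$ carries a (unique) $C$-comodule structure for which the inclusion $N\hookrightarrow M$ is a morphism in $\Mod^C$; that is, $N$ is a $C$-subcomodule of $M$. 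This is the step I expect to be the main obstacle, in that one must invoke Wisbauer's theorem in its full relative form rather than just the case $M=C$ recorded as condition~(iv).

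Finally, to conclude that $\Sub_{\Mod^C}(M)$ is algebraic, I would observe that the isomorphism of posets established above identifies it with $\Sub_{_{C^*}\Mod}(M)$. The latter is the lattice of subobjects of a module over the ring $C^*$, which is algebraic by the classical result from universal algebra cited in the excerpt (see \cite[Cor.~3.3]{sb-hs:universal_algebra}, specialised to modules). Transporting this structure along the isomorphism yields the desired conclusion. As a sanity check, this is consistent with Theorem~\ref{thm:lattice_of_subcomodules}, since by Proposition~\ref{prop:int_prop_for_locally_projectives} a locally projective module is flat Mittag--Leffler, and the compact elements of $\Sub_{\Mod^C}(M)$ correspond precisely to the finitely generated $C^*$-submodules, i.e.\ the subcomodules generated by finite subsets of $M$.
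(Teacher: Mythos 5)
Your proposal is correct and follows essentially the same route as the paper: both identify $\Sub_{\Mod^C}(M)$ with $\Sub_{_{C^*}\Mod}(M)$ via Wisbauer's Theorem~\ref{thm:Wisbauer_category_and_comodules} together with the closure of $\sigma_{C^*}\bigl[C\bigr]$ under subobjects, and then conclude algebraicity from the standard fact that submodule lattices are algebraic. The only difference is that you spell out the two directions and the compatibility with the forgetful functor more explicitly than the paper's brief proof does.
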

\begin{proof}
    Since the category \(\sigma_{C^*}\bigl[C]\) is closed under subobjects we
    have that the poset of \(C^*\)-submodules of \(M\) is equal to the poset
    of \(C^*\)-subgenerated submodules of \(M\). Now the corollary follows
    from the above theorem.
\end{proof}

\section{Bialgebras and Hopf Algebras}\label{sec:lattices-bi_and_Hopf_algebras}
\index{bialgebra!biideal}
\index{biideal|see{bialgebra!biideal}}
\index{bialgebra!poset of biideals}
\index{poset of biideals|see{bialgebra!poset of biideals}}
\index{bialgebra!subbialgebra}
\index{subbialgebra|see{bialgebra!subbialgebra}}
\index{bialgebra!poset of subbialgebras}
\index{poset of subbialgebras|see{bialgebra!poset of subbialgebras}}
\index{subbialgebra|see{bialgebra!subbialgebra}}
\index{Hopf algebra!Hopf subalgebra}
\index{subHopf algebra|see{Hopf algebra!Hopf subalgebra}}
\index{Hopf subalgebra|see{Hopf algebra!Hopf subalgebra}}
\index{Hopf algebra!poset of Hopf subalgebras}
\index{poset of Hopf subalgebras|see{Hopf algebra!poset of Hopf subalgebras}}
\begin{definition}
    Let \(B\) be a bialgebra and let \(H\) be a Hopf algebra, both over a commutative ring~\(\R\).
    Then:
    \begin{enumerate}
	\item a \bold{biideal} of \(B\) is a kernel of a surjective morphism
	    of bialgebras with domain~\(B\); the poset of biideals (under
	    inclusion) we will denote by \(\Id_\mathit{bi}(B)\); 
	\item a \bold{subbialgebra} of \(B\) is a subalgebra and
	    a subcoalgebra of \(B\); the poset of subbialgebras (under
	    inclusion) we will denote by \(\Sub_{\mathit{bi}}(B)\);
	\item a \bold{Hopf ideal} of a Hopf algebra~\(H\) is a kernel of
	    a surjective morphism of Hopf algebras with domain~\(H\); the
	    poset of Hopf ideals (under inclusion) we will denote by
	    \(\Id_{\mathit{Hopf}}(H)\); 
	\item a \bold{subHopf algebra} (or \emph{Hopf subalgebra}) \(K\) of
	    a Hopf algebra~\(H\) is a subbialgebra such that \(S_H(K)\subseteq
		K\), where \(S_H\) is the antipode of \(H\); the poset of
	    subHopf algebras (under inclusion) we will denote by
	    \(\Sub_{\mathit{Hopf}}(H)\);.
    \end{enumerate}
\end{definition}
Let \(A\) be a \(\k\)-algebra and \(B\subseteq A\) a subset of \(A\).  Then by
\(\langle B\rangle\) we will denote the smallest subalgebra of \(A\) which
contains \(B\).  Let us note that if \(A\) is a bialgebra, then \(\langle
B\rangle\) is a subbialgebra.  If \(A\) is a Hopf algebra then \(\langle
B\cup S(B)\rangle\) is the smallest Hopf subalgebra of \(A\) which contains \(B\).
\begin{proposition}\label{prop:bi_and_Hopf_algebra_lattices}
    Let \(H\) be a Hopf algebra and \(B\) a bialgebra over a ring \(\R\).
    \begin{enumerate}
	\index{bialgebra!poset of biideals!complete}
	\item\label{prop:biideals_complete} The poset of biideals of \(B\) is
	    a complete lattice. The join and meet are given by
	    \[I\vee J\coloneq I+J,\quad I\wedge J\coloneq\sum_{\substack{K\in\Id_\mathit{bi}(B)\\ K\subseteq I\cap J}}K.\]
	\index{Hopf algebra!poset of Hopf ideals!complete}
	\item\label{prop:Hopf_ideals_complete} The poset of Hopf ideals of \(H\) form
	    a complete lattice with operations: 
	    \[I\vee J\coloneq I+J,\quad I\wedge J\coloneq\sum_{\substack{K\in\Id_\mathit{Hopf}(H)\\ K\subseteq I\cap J}}K.\]
    \end{enumerate}
    Furthermore, if \(\R\) is a field then 
    \begin{enumerate}
	\index{bialgebra!poset of subbialgebras!algebraic}
	\item\label{prop:subbialg_algebraic} the poset of subbialgebras is an algebraic lattice, where meet and
	    join have the form: 
	    \[B_1\vee B_2\coloneq \langle B_1+B_2\rangle,\quad B_1\wedge B_2\coloneq B_1\cap B_2\]
	    for \(B_i\in\Sub_\mathit{bi}(B)\) (\(i=1,2\));
	\index{Hopf algebra!poset of Hopf subalgebras!algebraic}
	\item\label{prop:subHopfalg_algebraic} the poset of subHopf algebras is an algebraic lattice with
	    operations:
	    \[H_1\vee H_2\coloneq \langle H_1+H_2\rangle,\quad H_1\wedge H_2\coloneq H_1\cap H_2\]
	    for \(H_i\in\Sub_\mathit{bi}(H)\) (\(i=1,2\)).
    \end{enumerate}
\end{proposition}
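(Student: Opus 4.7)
The plan is to handle the four parts along two parallel tracks: ideals/biideals on one side and subalgebras/subbialgebras on the other, in each case isolating the one new compatibility condition (coideal structure for biideals, antipode stability for Hopf ideals and Hopf subalgebras) and showing it passes through the relevant sup/meet operation.

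For \ref{prop:biideals_complete}, my strategy is to characterise a biideal $I\subseteq B$ as a submodule which is simultaneously a two-sided ideal and a coideal, so that $B/I$ inherits a bialgebra structure making the projection $B\to B/I$ a bialgebra map. Given a family $(I_\alpha)$ of biideals, the sum $\sum I_\alpha$ is still an ideal (sums of ideals are ideals) and, by Proposition~\ref{prop:coid-complete}, still a coideal, hence a biideal; this gives the join. The meet is forced to be the supremum of all biideals contained in $I\cap J$, which exists because the set is non-empty (it contains $0$) and joins exist by the previous sentence. Completeness of the lattice then follows from Remark~\ref{rem:complete_lattice}. For \ref{prop:Hopf_ideals_complete} I would simply add the observation that the antipode is $\R$-linear, so $S\bigl(\sum I_\alpha\bigr)=\sum S(I_\alpha)\subseteq\sum I_\alpha$ whenever each $I_\alpha$ is Hopf; thus sums preserve Hopf ideals and the same existence-of-meet argument applies.

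For \ref{prop:subbialg_algebraic}, I would combine Theorem~\ref{thm:subcoalgebras_algebraic} with the algebraicity of the lattice of subalgebras. Over a field, an arbitrary intersection of subcoalgebras is a subcoalgebra (Theorem~\ref{thm:subcoalgebras_algebraic}), and intersections of subalgebras are always subalgebras; intersecting these two facts yields that arbitrary intersections of subbialgebras are subbialgebras, which pins down $B_1\wedge B_2 = B_1\cap B_2$. For the join I would verify directly that $\langle B_1+B_2\rangle$ is again a subcoalgebra (since $B_1+B_2$ is one and the subalgebra it generates is a sum of products of subcoalgebras, hence a subcoalgebra). To upgrade ``complete'' to ``algebraic'' I would invoke Theorem~\ref{thm:algebraic_structures} by showing that $\Sub_{\mathit{bi}}(B)$ is a $\cap\overrightarrow{\cup}$-structure in $\Sub_{\Vect_\k}(B)$: intersections have just been handled, and a directed union of subbialgebras is a subbialgebra because multiplication and comultiplication are finitary operations (any finite set of elements lies in one member of the directed family, so both $m(x\otimes y)$ and $\Delta(x)$ fall inside the union).

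For \ref{prop:subHopfalg_algebraic} the same template works with the extra clause $S(K)\subseteq K$. Intersections preserve antipode-stability because $S\bigl(\bigcap K_i\bigr)\subseteq\bigcap S(K_i)\subseteq\bigcap K_i$, and directed unions preserve it because $S$ acts element-wise. The join $\langle H_1+H_2\rangle$ is antipode-stable since $H_1+H_2$ is, and the subalgebra generated by an antipode-stable set is antipode-stable (products $S(a)S(b)=S(ba)$ remain inside). With intersections and directed unions both in the class, Theorem~\ref{thm:algebraic_structures} again gives algebraicity.

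The main obstacle I anticipate is purely notational: making sure to invoke the flatness/field hypothesis precisely where Theorem~\ref{thm:subcoalgebras_algebraic} is used (so that intersections of subcoalgebras really are subcoalgebras), since parts \ref{prop:biideals_complete}--\ref{prop:Hopf_ideals_complete} go through over an arbitrary base ring using only Proposition~\ref{prop:coid-complete}, while parts \ref{prop:subbialg_algebraic}--\ref{prop:subHopfalg_algebraic} genuinely need the field hypothesis to control intersections on the coalgebra side.
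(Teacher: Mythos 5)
Your proposal is correct and follows essentially the same route as the paper: joins of biideals (Hopf ideals) are sums, meets come from Remark~\ref{rem:complete_lattice}, and the algebraicity of $\Sub_{\mathit{bi}}(B)$ and $\Sub_{\mathit{Hopf}}(H)$ over a field is obtained by exhibiting them as $\cap\overrightarrow{\cup}$-structures and invoking Theorem~\ref{thm:algebraic_structures}. You merely spell out details (the ideal-plus-coideal characterisation, antipode stability under sums, intersections and directed unions) that the paper's proof leaves implicit, and your placement of the field hypothesis is exactly where the paper needs it.
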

\begin{proof}
    \begin{enumerate}
	\item A sum of biideals (Hopf ideals) is a biideal (Hopf
	    ideal, respectively) and thus it is their join in
	    \(\Id_\mathit{bi}(B)\) (\(\Id_\mathit{Hopf}(H)\)). Furthermore,
	    both \(\Id_\mathit{bi}(B)\) and \(\Id_\mathit{Hopf}(H)\) are
	    closed under arbitrary joins (sums of \(\R\)-submodules) and hence
	    they are complete lattices. The formulas for the infimum follow
	    from Remark~\ref{rem:complete_lattice}.
    \item The last two statements follow from
	Theorem~\ref{thm:algebraic_structures}, since subbialgebras and
	subHopf algebras (over a field) are closed under intersections and
	directed sums (see Theorem~\ref{thm:subcoalgebras_algebraic}).
    \end{enumerate}
\end{proof}
Let us note that the lattices of subbialgebras (subHopf algebras) are
sublattices of the lattice of subspaces of the underlying vector space. Thus,
by Remark~\ref{rem:sublattices_of_sub_vect}, these lattices are also dually
algebraic if the bialgebra (Hopf algebra) is finite dimensional.
\index{Hopf algebra!normal homomorphism}
\index{Hopf algebra!normal subalgebra}
\index{Hopf algebra!normal Hopf ideal}
\index{Hopf algebra!conormal quotient}
\begin{definition}
    Let \(f:K\sir H\) be a Hopf algebra homomorphism.  Then:
    \begin{enumerate}
	\item \(f\) is called \bold{normal} if for all \(k\in K\) and
	    \(h\in H\) we have:
	    \[h_{(1)}f(k)S(h_{(2)})\in f(K)\text{ and } S(h_{(1)})f(k)h_{(2)}\in f(K)\]
	\item \(f\) is called \bold{conormal} if for all \(k\in \ker\,f\) we
	    have:
	    \[k_{(2)}\otimes S(k_{(1)})k_{(3)}\in\ker\,f\otimes K\text{ and }k_{(2)}\otimes k_{(1)}S(k_{(3)})\in\ker\,f\otimes K\]
    \end{enumerate}
    A Hopf subalgebra \(K\subseteq H\) is called \bold{normal} if the
    inclusion map is normal.  A Hopf ideal \(I\subseteq H\) is called
    \bold{normal} if the quotient map \(H\sir H/I\) is conormal.
\end{definition}
Using Proposition~\ref{prop:bi_and_Hopf_algebra_lattices} we obtain the
following:
\index{Hopf algebra!poset of normal subalgebras!algebraic}
\begin{corollary}
    Let \(H\) be a Hopf algebra over a field \(\k\).  Then the poset of normal
    Hopf subalgebras is an algebraic lattice.  The poset of normal Hopf ideals
    is a complete lattice (for this it is enough that \(\k\) is a commutative
    ring).
\end{corollary}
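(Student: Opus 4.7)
The plan is to show that normality is preserved under the operations that realise the lattice structure in Proposition~\ref{prop:bi_and_Hopf_algebra_lattices}, so that the two subposets inherit completeness (and in the first case, algebraicity) from their ambient lattices. Since both defining conditions of normality are universally quantified pointwise over elements \(k\) of the Hopf subalgebra (or Hopf ideal), the verifications reduce to an inspection of how coproducts and antipodes interact with sums and intersections.

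For the poset \(\Sub_{\mathit{Hopf,n}}(H)\) of normal Hopf subalgebras I would first observe that it sits inside the algebraic lattice \(\Sub_{\mathit{Hopf}}(H)\) of Proposition~\ref{prop:bi_and_Hopf_algebra_lattices}\ref{prop:subHopfalg_algebraic}. By Theorem~\ref{thm:algebraic_structures} it suffices to exhibit it as a \(\cap\overrightarrow{\cup}\)-structure, that is, to verify closure under arbitrary intersections and directed unions. Closure under intersection is immediate from the pointwise definition: if every \(K_i\) is normal and \(k\in\bigcap_iK_i\), then for each \(h\in H\) we have \(h_{(1)}kS(h_{(2)})\in K_i\) for every \(i\), hence in \(\bigcap_iK_i\); the second condition is symmetric. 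For a directed family \((K_\alpha)\) and \(k\in\bigcup_\alpha K_\alpha\) one has \(k\in K_{\alpha_0}\) for some \(\alpha_0\), so \(h_{(1)}kS(h_{(2)})\in K_{\alpha_0}\subseteq\bigcup_\alpha K_\alpha\). Since the ambient Hopf subalgebra lattice already uses intersection as meet and directed unions as directed joins (by Theorem~\ref{thm:subcoalgebras_algebraic} together with closure under products), the subposet of normal Hopf subalgebras is itself algebraic.

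For the poset \(\Id_{\mathit{Hopf,n}}(H)\) of normal Hopf ideals I would follow the template of Proposition~\ref{prop:bi_and_Hopf_algebra_lattices}\ref{prop:Hopf_ideals_complete}: define the join of a family as its sum and the meet of two elements as the sum of all normal Hopf ideals contained in the intersection. The only nontrivial verification is that a sum \(I=\sum_j I_j\) of normal Hopf ideals is again normal. Given \(k\in I\), write \(k=\sum_{j\in F}k^j\) with \(F\) finite and \(k^j\in I_j\); by \(\R\)-linearity of \(\Delta\) we have \(k_{(2)}\otimes S(k_{(1)})k_{(3)}=\sum_{j\in F}k^j_{(2)}\otimes S(k^j_{(1)})k^j_{(3)}\), and each summand lies in \(I_j\otimes H\subseteq I\otimes H\) by normality of \(I_j\); the dual expression \(k_{(2)}\otimes k_{(1)}S(k_{(3)})\) is handled identically. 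Hence the sum of normal Hopf ideals is a normal Hopf ideal, so the supremum computed in \(\Id_{\mathit{Hopf}}(H)\) stays inside \(\Id_{\mathit{Hopf,n}}(H)\), and the infimum formula in Remark~\ref{rem:complete_lattice} produces a well-defined greatest normal Hopf ideal contained in \(I\cap J\).

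There is no serious obstacle; the main point to get right is the elementary but crucial tensor-product computation in the sum step, where one uses that the expressions \(k\mapsto k_{(2)}\otimes S(k_{(1)})k_{(3)}\) and \(k\mapsto k_{(2)}\otimes k_{(1)}S(k_{(3)})\) are \(\R\)-linear maps \(H\to H\otimes H\), so membership in \(I_j\otimes H\) passes to finite sums. Note that neither part of the argument for normal Hopf ideals needed the field hypothesis, only the completeness result from Proposition~\ref{prop:bi_and_Hopf_algebra_lattices}\ref{prop:Hopf_ideals_complete}, which is valid over any commutative ring \(\R\); this accounts for the parenthetical remark in the statement.
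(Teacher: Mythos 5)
Your argument is correct and follows essentially the same route as the paper: normal Hopf subalgebras are shown to be closed under arbitrary intersections and directed unions, so they form a \(\cap\overrightarrow{\cup}\)-structure and are algebraic by Theorem~\ref{thm:algebraic_structures}, while normal Hopf ideals are closed under arbitrary sums inside the complete lattice of Hopf ideals, giving completeness over any commutative ring. The only difference is that you spell out the pointwise normality verifications (for intersections, directed unions, and sums) that the paper merely asserts, which is a welcome addition but not a different method.
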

\begin{proof}
    The lattice of normal Hopf subalgebras is a lower subsemilattice of the
    lattice of Hopf subalgebras (which is closed under infinite meets).
    Furthermore, the lattice of normal Hopf ideals is an upper subsemilattice
    of the complete lattice of Hopf ideals (which is closed under infinite
    joins). Thus both lattices of normal subalgebras/ideals are complete.
    A directed sum of normal Hopf subalgebras is a normal Hopf subalgebra and
    also an intersection of two normal Hopf subalgebras is a normal Hopf
    subalgebra (see Theorem~\ref{thm:subcoalgebras_algebraic} on
    page~\pageref{thm:subcoalgebras_algebraic}). In this way normal subHopf
    algebras  form a \(\cap\overrightarrow{\cup}\)-structure.  The poset of
    subHopf algebras is algebraic by Theorem~\ref{thm:algebraic_structures} .

\end{proof}

\index{bialgebra!generalised quotient}
\index{bialgebra!poset of generalised quotients}
\index{generalised quotient of a bialgebra|see{bialgebra!generalised
	quotient}}
\index{poset of generalised quotients of a bialgebra|see{bialgebra!poset of generalised
	quotients}}
\index{bialgebra!generalised subalgebra}
\index{bialgebra!poset of generalised subalgebras}
\index{generalised subalgebra of a bialgebra|see{bialgebra!generalised subalgebra}}
\index{poset of generalised subalgebras of a bialgebra|see{bialgebra!poset of generalised subalgebras}}
\begin{definition}\label{defi:generalised_quotients_and_subalgebras}
    \begin{enumerate}
	\item A \bold{generalised quotient} \(Q\) of a bialgebra \(B\) is
	    a quotient by a right ideal coideal.  The {poset of generalised
		quotients} will be denoted by \(\qquot(B)\).  The order
	    relation of \(\qquot(B)\) we will denote by \(\succcurlyeq\):
	    \[B/I\succcurlyeq B/J\Leftrightarrow I\subseteq J\]
	    for \(B/I,B/J\in\qquot(B)\).
	\item A \bold{generalised subalgebra}~\(K\) of a bialgebra~\(B\)
	    is a left coideal subalgebra.  The poset of generalised subalgebras
	    will be denoted by~\(\qsub(B)\).
    \end{enumerate}
\end{definition}
The poset \(\qquot(B)\) is dually isomorphic to the poset of right ideals
coideals of \(B\), which will be denoted as \(\qid(B)\).  We define only the
right version of generalised quotients and the left version of generalised
subalgebras since we will consider right \(B\)-comodules (right \(B\)-comodule
algebras).
\index{bialgebra!poset of generalised quotients!complete}
\begin{proposition}\label{prop:qquot_complete}
    Let \(B\) be a bialgebra over a ring \(\R\).  Then the poset \(\qquot(B)\)
    is a complete lattice. 
\end{proposition}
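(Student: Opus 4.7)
The plan is to mirror the strategy used in Proposition~\ref{prop:coid-complete} for the lattice of coideals, and then transport the result across the duality $\qquot(B) \cong \qid(B)^{\op}$, where $\qid(B)$ denotes the poset of right ideal coideals of $B$ ordered by inclusion. Since a bialgebra quotient by a right ideal coideal corresponds exactly to an element of $\qquot(B)$, it suffices to exhibit arbitrary suprema in $\qid(B)$ (these become infima in $\qquot(B)$); arbitrary infima in $\qid(B)$ then follow automatically via Remark~\ref{rem:complete_lattice}.

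First I would show that $\qid(B)$ is closed under arbitrary sums. Given a family $(I_\lambda)_{\lambda \in \Lambda}$ of right ideal coideals, put $I := \sum_{\lambda \in \Lambda} I_\lambda$ and let $\pi : B \sir B/I$ be the canonical projection. The right ideal property is immediate: each $I_\lambda \cdot B \subseteq I_\lambda \subseteq I$, so $I \cdot B \subseteq I$. For the coideal property, clearly $I \subseteq \ker \epsilon$ since each $I_\lambda$ is. It remains to check $(\pi \otimes \pi) \circ \Delta(I) = 0$: any $b \in I$ is a finite sum $b = \sum b_\lambda$ with $b_\lambda \in I_\lambda$, and since $\pi$ factors through each quotient $B \sir B/I_\lambda$, the computation
\[
(\pi \otimes \pi)\Delta(b) = \sum_\lambda (\pi \otimes \pi)\Delta(b_\lambda) = 0
\]
follows exactly as in the proof of Proposition~\ref{prop:coid-complete}.

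Having arbitrary suprema in $\qid(B)$, the formula
\[
\bigwedge_{\lambda \in \Lambda} I_\lambda \;=\; \sum\bigl\{J \in \qid(B) : J \subseteq \bigcap_{\lambda \in \Lambda} I_\lambda\bigr\}
\]
gives arbitrary infima (the sum is taken in $\qid(B)$ and is nonempty since $\{0\} \in \qid(B)$), so $\qid(B)$ is a complete lattice. Dualising the order yields that $\qquot(B)$ is a complete lattice, with infima corresponding to sums of the associated right ideal coideals and suprema corresponding to the above meet construction.

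The only subtlety worth noting is the interaction between the ideal condition and the coideal condition: a priori one might worry that taking infima inside $\qid(B)$ (rather than inside $\coId(B)$) could produce a different result, since intersections of right ideal coideals need not again be right ideal coideals when $\R$ is not flat-friendly. The argument above sidesteps this entirely by defining the infimum as a supremum of lower bounds, so no flatness, purity, or Mittag--Leffler hypothesis on $B$ is needed for mere completeness — these finer hypotheses will only become relevant later when one wants the infimum to coincide with the set-theoretic intersection or the lattice to be algebraic.
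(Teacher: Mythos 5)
Your proof is correct and takes essentially the same route as the paper's: identify \(\qquot(B)\) with \(\qid(B)^\op\), note that an arbitrary sum of right ideal coideals is again a right ideal coideal (the coideal part by exactly the argument of Proposition~\ref{prop:coid-complete}), and recover infima as suprema of lower bounds, which is the paper's formula~\eqref{eq:meet_in_qid}. The paper's proof is simply a terser statement of this same argument, so no further comparison is needed.
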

\begin{proof}
    There is a canonical isomorphism of posets \(\qquot(B)\simeq\qid(B)^\op\).
    The supremum in \(\qid(B)\) is given by the sum of submodules, while the
    infimum is given by the formula:
    \begin{equation}\label{eq:meet_in_qid}
	I\wedge J=\mathop\sum\limits_{\substack{K\subseteq I\cap J\\ K\in\qid(B)}}K
    \end{equation}
    where \(I,J\in\qid(B)\). 
\end{proof}
\index{bialgebra!poset of generalised subbialgebras!algebraic}
\begin{proposition}\label{prop:qsub_algebraic}
    Let \(B\) be a bialgebra over a ring \(\R\) such that \(B\) is a flat
    Mittag--Leffler \(R\)-module.  Then the poset \(\qsub(B)\) is an
    algebraic lattice.
\end{proposition}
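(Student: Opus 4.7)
The plan is to invoke Theorem~\ref{thm:algebraic_structures}, which characterises algebraic lattices as \(\cap\overrightarrow{\cup}\)-structures on a set.  So I would identify \(\qsub(B)\) with a collection of submodules of \(B\) and prove that it is closed under (i) arbitrary intersections and (ii) directed unions.  Recall that a left coideal subalgebra is a submodule \(K\subseteq B\) which is a subalgebra and satisfies \(\Delta(K)\subseteq B\otimes K\) (viewed inside \(B\otimes B\), which is legitimate since \(B\) is flat so that \(B\otimes K\hookrightarrow B\otimes B\) is an injection).

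For directed unions, let \((K_i)_{i\in I}\) be an upward directed family in \(\qsub(B)\).  Then \(\bigcup_i K_i\) is clearly a subalgebra (any two elements lie in a common \(K_i\), which is closed under multiplication).  Because the tensor product commutes with directed colimits, \(B\otimes \bigl(\bigcup_i K_i\bigr)=\bigcup_i\bigl(B\otimes K_i\bigr)\) inside \(B\otimes B\), and therefore
\[
\Delta\!\Bigl(\bigcup_i K_i\Bigr)=\bigcup_i\Delta(K_i)\subseteq \bigcup_i\bigl(B\otimes K_i\bigr)=B\otimes\Bigl(\bigcup_i K_i\Bigr),
\]
showing that \(\bigcup_i K_i\in\qsub(B)\).

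The main step, and the one where the Mittag--Leffler hypothesis enters, is closure under arbitrary intersections.  Let \((K_\alpha)_{\alpha\in A}\) be a family in \(\qsub(B)\).  The intersection is a subalgebra by the usual argument.  For the coideal condition, I would chain the obvious inclusion with the crucial equality furnished by the intersection property:
\[
\Delta\!\Bigl(\bigcap_\alpha K_\alpha\Bigr)\subseteq \bigcap_\alpha\Delta(K_\alpha)\subseteq \bigcap_\alpha\bigl(B\otimes K_\alpha\bigr)=B\otimes\Bigl(\bigcap_\alpha K_\alpha\Bigr).
\]
The last equality inside \(B\otimes B\) is exactly the intersection property of \(B\), applied with \(M=B\), \(N=B\) and submodules \(K_\alpha\subseteq B\); since \(B\) is flat Mittag--Leffler, Proposition~\ref{prop:Mittag-Leffler_intersection} guarantees that \(B\otimes-\) preserves this intersection, and flatness lets us identify all the relevant tensor products with their images in \(B\otimes B\).

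Having verified both closure conditions, \(\qsub(B)\) is a \(\cap\overrightarrow{\cup}\)-structure, so Theorem~\ref{thm:algebraic_structures} immediately yields that it is an algebraic lattice, with meet given by intersection and join by the directed union of the finitely generated sub-left-coideal-subalgebras.  The only delicate point, as indicated, is the preservation of the intersection by \(B\otimes-\); without Mittag--Lefflerness the inclusion \(\Delta(\bigcap K_\alpha)\subseteq B\otimes \bigcap K_\alpha\) need not hold, and the intersection may fail to be a coideal, as the counterexamples in Example~\ref{ex:flat_without_ip} suggest is a genuine obstacle for merely flat modules.
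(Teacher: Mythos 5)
Your proof is correct and follows essentially the same route as the paper: the paper also verifies that \(\qsub(B)\) is closed under arbitrary intersections (citing the argument in the proof of Theorem~\ref{thm:lattice_of_subcomodules}, which rests on the intersection property of the flat Mittag--Leffler module \(B\)) and under directed sums, and then applies Theorem~\ref{thm:algebraic_structures}. You merely spell out in full the intersection and directed-union computations that the paper delegates to earlier results.
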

\begin{proof}
    The poset \(\qsub(B)\) is closed under (set theoretic) intersections (see
    the proof of Proposition~\ref{thm:lattice_of_subcomodules}), thus it is
    a complete lattice. It is also closed under directed sums and thus it is
    a \(\cap\overrightarrow{\cup}\)-structure and by
    Theorem~\ref{thm:algebraic_structures} it is an algebraic lattice.
\end{proof}

\subsection{Quotients of $k[G]$ and its dual Hopf algebra}
Before the next theorem we need the following 
\index{G-set}
\index{G-set!transitive}
\index{G-set!morphism}
\begin{definition}
    Let \(G\) be a group. A left (right) \(G\)\bold{-set} \(S\) is a set
    together with a group homomorphism (anti-homomorphism respectively)
    \(\phi:G\mpr{}\mathsf{Bij}(S)\), where \(\mathsf{Bij}(S)\) is the group of
    bijections of \(S\).  We will write \(gs=g(s)\coloneq\phi(g)(s)\) for left
    \(G\)-sets, and \(sg\coloneq\phi(g)(s)\) for right \(G\)-sets.  
    
    A \(G\)-set \(S\) is called \bold{transitive} if for any two \(s,s'\in S\)
    there exists \(g\in G\) such that \(gs=s'\) (\(sg=s'\) respectively).

    A morphism of \(G\)-sets \(f:S\sir S'\) is a map  of sets such that
    \(f(gs)=gf(s)\) (\(f(sg)=f(s)g\) respectively) for every \(s\in S\).
\end{definition}
The next two propositions describe the poset \(\qquot(H)\) for \(\k[G]\) and
its dual \(k[G]^*\) (if \(G\) is a finite group). It turns out that they are
isomorphic to the poset of isomorphism classes of transitive \(G\)-sets (which
is anti-isomorphic to the poset of subgroups of \(G\)), while \(\Quot(H)\),
the poset of Hopf algebra quotients, is isomorphic to the poset of quotient
subgroups of \(G\).  In the Hopf--Galois theory considered in
Chapter~\ref{chap:Hopf-Galois_theory} we consider \(\qquot(H)\) rather than
\(\Quot(H)\). This parallels the use of subgroups of the Galois group in the
classical Galois theory (or Grothendieck-Galois theory).
\index{group algebra!generalised quotients}
\begin{proposition}\label{prop:group_algebra_quotients}
    Let \(H=\k[G]\) where \(G\) is a group. Then
    \begin{enumerate}
	\item \(\Quot(\k[G])\cong\Quot(G)\), where \(\Quot(G)\) is the poset
	    of quotient groups of \(G\) (which is anti-isomorphic to the poset
	    of normal subgroups);
	\item \(\qquot(\k[G])\cong\Quot_{G\textit{-set}}(G)\), where
	    \(\Quot_{G\textit{-set}}(G)\) is the poset of quotient \(G\)-sets
	    of the free \(G\)-set \(G\) (which is anti-isomorphic to the
	    poset of all subgroups of \(G\)).
    \end{enumerate}
\end{proposition}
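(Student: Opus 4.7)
The plan is to exploit two classical facts: the group-like elements of $\k[G]$ are precisely the elements of $G$, and distinct group-like elements in any coalgebra are linearly independent. Both parts then reduce to tracking group-likes through the quotient map.

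For (i), given a surjective Hopf algebra map $\pi\colon\k[G]\twoheadrightarrow Q$, the image $\pi(G)$ is contained in the set $G(Q)$ of group-like elements of $Q$, and because $\pi$ is multiplicative and intertwines the antipodes, $\pi(G)$ is a subgroup of $G(Q)$. Setting $N:=\ker(G\to\pi(G))$ yields a normal subgroup of $G$. The key identification is $\ker\pi=\mathrm{span}\{g-g':gN=g'N\}$. For any $x=\sum_g c_g g\in\ker\pi$ we have $\sum_g c_g=0$ (since $\ker\pi\subseteq\ker\epsilon$); picking coset representatives $g_i$ of $G/N$, rewrite $x=\sum_i\sum_{g\in g_iN}c_g(g-g_i)+\sum_i\bigl(\sum_{g\in g_iN}c_g\bigr)g_i$. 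Applying $\pi$ to the second summand and using that the distinct group-likes $\pi(g_i)$ are linearly independent forces each inner sum $\sum_{g\in g_iN}c_g$ to vanish, so $x$ lies in the claimed span. This identifies $Q\cong\k[G/N]$ and produces an order-preserving bijection $\Quot(\k[G])\to\Quot(G)$ with inverse $G/N\mapsto\k[G/N]$.

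For (ii), the same strategy works with $\pi$ only a surjective right $\k[G]$-module coalgebra map. The elements $\pi(g)$ are still nonzero group-likes of $Q$ (nonzero because $\ker\pi\subseteq\ker\epsilon$), and the right $\k[G]$-module structure makes $\pi(G)$ a transitive right $G$-set via $\pi(g)\cdot h:=\pi(gh)$, hence a quotient of the free right $G$-set $G$. Conversely, a quotient $G$-set $p\colon G\twoheadrightarrow S$ gives an equivalence relation $g\sim g'\iff p(g)=p(g')$ compatible with right multiplication; the subspace $I_S:=\mathrm{span}\{g-g':g\sim g'\}$ is then a right ideal by this compatibility, lies in $\ker\epsilon$, and on generators satisfies $\Delta(g-g')=(g-g')\otimes g+g'\otimes(g-g')\in I_S\otimes\k[G]+\k[G]\otimes I_S$, so it is a coideal. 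The same linear-independence argument as in (i), now applied to representatives of each $\sim$-class, shows $\ker\pi=I_{\pi(G)}$, so the two constructions are mutually inverse and order-preserving.

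The main obstacle is precisely the reduction step above: once linear independence of distinct group-likes in the quotient coalgebra is used to conclude that $\ker\pi$ is spanned by differences $g-g'$ identified by $\pi$, everything else (the group or $G$-set structure on $\pi(G)$, the right-ideal and coideal conditions for $I_S$, and order-preservation of both maps in both cases) is a routine verification.
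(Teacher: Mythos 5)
Your proof is correct and follows essentially the same route as the paper: both arguments track the images \(\pi(g)\), which are nonzero group-like elements of the quotient coalgebra, and use linear independence of distinct group-likes to reduce everything to a choice of class (or coset) representatives. Your explicit description of \(\ker\pi\) as \(\mathrm{span}\{g-g':\pi(g)=\pi(g')\}\) is just a more computational packaging of the paper's observation that the induced map \(\k\bigl[\mathbf{G}(\k[G]/I)\bigr]\to\k[G]/I\) is injective (by linear independence) and surjective (since the quotient is spanned by group-likes).
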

Let us note that a transitive \(G\)-set \(S\) is of the form \(G/G_0\) where
\(G_0\) is a subgroup of \(G\), with the action induced by multiplication from
the left for left \(G\)-sets and from the right for right \(G\)-sets.

We fix a notation. If \(C\) is a coalgebra, then a nonzero element \(c\in C\)
is called \bold{group-like} if \(\Delta(c)=c\otimes c\). The set  of group-like elements we denote by \(\mathbf{G}(C)\). If the base ring is a field,
then the group-like elements are linearly independent. This fails for rings
with idempotents: for example if the base ring \(\R\) contains an idempotent
\(p\) then if \(c\in C\) is a group-like element then \(pc\) is a group-like
element as well. Furthermore, if \(B\) is a bialgebra over a field, then
\(\mathbf{G}(B)\) is a monoid with unit \(1_B\in B\), since a product of two
group-like element is group-like. If \(H\) is a Hopf algebra and \(g\in H\) is
a group-like element, then \(S(g)\) is a group-like element, and moreover it
is an inverse  of \(g\) in \(\mathbf{G}(H)\). In other words, \(\mathbf{G}(H)\)
is a group. It turns out that this gives rise to a functor from the category
of bialgebras (Hopf algebras) to the category of monoids (groups). In the case
of coalgebras it gives a functor from coalgebras to sets,
adjoint to the functor \(G\selmap{}\k[G]\).
\begin{proofof}{Proposition~\ref{prop:group_algebra_quotients}}
    \begin{enumerate}
	\item First let us note that if \(N\) is a normal subgroup,
	    then \(\k[G/N]\) is a quotient Hopf algebra of the Hopf algebra
	    \(\k[G]\) by the map induced by the projection \(G\sir G/N\). Now,
	    let \(\pi:\k[G]\sir\k[G]/I\) be a Hopf algebra projection, i.e.
	    let \(I\) be a Hopf ideal. Then there exists a set \(G_0\subseteq
		G\) such that \(G'\coloneq\{\pi(g):g\in G_0\}\) is a basis of
	    \(\k[G]/I\). It follows, that \(\mathbf{G}(\k[G]/I)=G'\) is
	    a quotient group of \(G=\mathbf{G}(\k[G])\) via the map
	    \(g\selmap{}\pi(g)\).  Furthermore, since \(\k[G]/I\) has a basis
	    of group-like elements it is a group algebra, and \(\k[G]/I\cong
		\k[G']\). Note that \(G'\) is a quotient group of \(G\). This
	    shows that the following two maps are inverses of each other:
	    \begin{center}
		\begin{tikzpicture}
		    \matrix[matrix of nodes, column sep=2cm, row sep=1cm]{
			|(A1)| \(G\)  & |(A2)| \(\k[G]\) \\
			|(B1)| \(G'\) & |(B2)| \(\k[G']\) \\
		    };
		    \draw[->>] (A1) --node[left]{\(\Quot(G)\ni p\)} (B1);
		    \draw[->>] (A2) --node[right]{\(\k[p]\in\Quot(k[G])\)} (B2);
		    \draw[|->] ($(A1)+(0.3cm,-0.8cm)$) -- +(2.3cm,0);
		\end{tikzpicture}
	    \end{center}
	    \begin{center}
		\begin{tikzpicture}
		    \matrix[matrix of nodes, column sep=2cm, row sep=1cm]{
			|(A1)| \(\k[G]\)  & |(A2)| \(G\) \\
			|(B1)| \(\k[G]/I\) & |(B2)| \(G(\k[G]/I)\) \\
		    };
		    \draw[->>] (A1) --node[left]{\(\Quot(k[G])\ni\pi\)} (B1);
		    \draw[->>] (A2) --node[right]{\(G(\pi)\in\Quot(G)\)} (B2);
		    \draw[|->] ($(A1)+(0.3cm,-0.85cm)$) -- +(3cm,0);
		\end{tikzpicture}
	    \end{center}

	\item Let \(\k[G]/I\) be a quotient of \(\k[G]\) by a coideal
	    right ideal \(I\). The set of group-like elements
	    \(\mathbf{G}\bigl(\k[G]/I\bigr)\) of the coalgebra \(\k[G]/I\) is
	    a \(\mathbf{G}\bigl(\k[G]\bigr)\)-set and hence a \(G\)-set,
	    because \(\mathbf{G}\bigl(\k[G]\bigr)=G\). Since the map
	    \(\k[G]\epr{}\k[G]/I\) is an epimorphism it follows that:
	    \begin{enumerate}
		\item \(\k[G]/I\) is spanned by group-like elements (as
		    a \(\k\)-vector space),
		\item \(\mathbf{G}\bigl(\k[G]/I\bigr)\) is
		    a transitive (right) \(G\)-set.
	    \end{enumerate}
	    On the other hand, if \(S\) is a (right) transitive \(G\)-set then
	    \(\k[S]\) is a right \(\k[G]\)-module through the right
	    \(G\)-action on \(S\) and a coalgebra quotient only if we set each
	    \(s\in S\) to be a group-like element, since \(S\cong G/G_0\),
	    where \(G_0\) is a subgroup of \(G\). Now, let us observe that
	    these two constructions: 
	    \[\qid(\k[G])\ni I\selmap{}\mathbf{G}(\k[G]/I)\in\Quot_{G\textit{-set}}(G)\]
	    and
	    \[\Quot_{G\textit{-set}}(G)\ni S\selmap{}\ker\left(\k[G]\epr{}\k[S]\right)\in\qid(\k[G])\]
	    are inverse to each other, and thus the claim follows. First, let
	    us show that
	    \[I=\ker\left(\k[G]\epr{}\k\bigl[\mathbf{G}\bigl(\k[G]/I\bigr)\bigr]\right).\]
	    This follows since we have a commutative diagram:
		\begin{center}
		\begin{tikzpicture}
		    \matrix[column sep=5mm, row sep=1cm]{
						     & \node (A) {\(\k[G]\)}; & \\
	     \node (B1) {\(\k\bigl[\mathbf{G}\bigl(\k[G]/I\bigr)\bigr]\)}; &                       & \node (B2) {\(\k[G]/I\)}; \\
		    };
		    \draw[->>] (A) -- (B1);
		    \draw[->>] (A) -- (B2);
		    \draw[->] (B1) --node[above]{\(\cong\)}node[below]{\(\alpha\)} (B2);
		\end{tikzpicture}
	    \end{center}
	    where \(\alpha(x)=x\) for all \(x\in\mathbf{G}(\k[G]/I)\), and thus
	    it is a monomorphism. It is an epimorphism by (\textbf{G}1). The
	    remaining equality \(S=\mathbf{G}(\k[S])\) is straightforward and
	    it shows that
	    \(S=\mathbf{G}\bigl(\k[G]/\ker\bigl(\k[G]\eir\k[S]\bigr)\bigr)\) 
    \end{enumerate}
\end{proofof}
\begin{proposition}\label{prop:dual_gruop_algebra_quotients}
    Let \(G\) be a finite group. Then the following map
    \begin{equation}\label{eq:qquot_affine_cordinate_ring_of_G}
	\Quot_{G\textit{-Set}}(G)\ni G/G_0\selmap{}\k[G_0]^*\in\qquot(\k[G]^*)
    \end{equation}
    is an anti-isomorphism of posets.
\end{proposition}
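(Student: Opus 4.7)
The plan is to factor the claimed anti-isomorphism as a composition of two correspondences: first the annihilator duality relating right ideal coideals of the finite-dimensional Hopf algebra $\k[G]^*$ to left coideal subalgebras of $\k[G]$, and then the identification of the latter with subgroups of $G$, followed by Proposition~\ref{prop:group_algebra_quotients}\textit{(ii)}.

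For the first step, since $G$ is finite, $\k[G]^*$ is finite-dimensional, so $(\k[G]^*)^*\cong\k[G]$ and every subspace $V$ of either satisfies $V^{\perp\perp}=V$. Combining Propositions~\ref{prop:right_coideals-right_ideals} and~\ref{prop:coideals-subalgebras} (applied to $C=\k[G]^*$), a subspace $I\subseteq\k[G]^*$ is a right ideal coideal if and only if $I^\perp\subseteq\k[G]$ is simultaneously a right coideal and a subalgebra. Moreover, in $\k[G]$ a right coideal subalgebra coincides with a left coideal subalgebra, namely $\k[G_0]$ for some subgroup $G_0\leq G$. Indeed, if $A\subseteq\k[G]$ is a subalgebra with $\Delta(A)\subseteq A\otimes\k[G]$ (or $\k[G]\otimes A$), then for $a=\sum_g\lambda_g g\in A$ one has $\Delta(a)=\sum_g\lambda_g(g\otimes g)$, so linear independence of the group-likes forces $g\in A$ whenever $\lambda_g\neq0$. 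Thus $A$ is spanned by $A\cap G$, which is a submonoid of $G$ containing $1_G$; finiteness of $G$ makes it a subgroup $G_0$, and $A=\k[G_0]$.

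Consequently I obtain the chain
\begin{equation*}
\qquot(\k[G]^*)\cong\qid(\k[G]^*)^{\mathrm{op}}\cong\qsub(\k[G])\cong\mathrm{Sub}(G),
\end{equation*}
where the middle isomorphism is $I\mapsto I^\perp$ and the two order-reversals (one from $\qquot=\qid^{\mathrm{op}}$ and one from $V\mapsto V^\perp$) cancel. By Proposition~\ref{prop:group_algebra_quotients}\textit{(ii)} the poset $\mathrm{Sub}(G)$ is anti-isomorphic to $\Quot_{G\textit{-Set}}(G)$ via $G_0\mapsto G/G_0$, which yields the anti-isomorphism in the statement. To recover formula~\eqref{eq:qquot_affine_cordinate_ring_of_G}, I would trace $G/G_0\mapsto G_0\mapsto\k[G_0]\mapsto\k[G_0]^\perp\mapsto\k[G]^*/\k[G_0]^\perp\cong\k[G_0]^*$, where the final isomorphism is induced by the surjective restriction map $\k[G]^*\sir\k[G_0]^*$ whose kernel is precisely $\k[G_0]^\perp$. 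The only real bookkeeping subtlety, rather than a substantive obstacle, is to track the two order-reversals carefully so that the resulting composite is an \emph{anti}-isomorphism and not an isomorphism.
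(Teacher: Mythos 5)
Your proof is correct, but it takes a genuinely different route from the one written in the thesis. You dualise first: combining Propositions~\ref{prop:right_coideals-right_ideals} and~\ref{prop:coideals-subalgebras} with the finite-dimensional identity \(V^{\perp\perp}=V\), you translate right ideal coideals of \(\k[G]^*\) into right coideal subalgebras of \(\k[G]\), and then classify the latter by an elementary group-like argument (applying \(\id\otimes\delta_h\) to \(\Delta(a)\in A\otimes\k[G]\) forces \(h\in A\) whenever the coefficient \(\lambda_h\) is nonzero), so that they are exactly the \(\k[G_0]\) for subgroups \(G_0\leq G\); the two order reversals are tracked correctly, and the final identification \(\k[G]^*/\k[G_0]^\perp\cong\k[G_0]^*\) via restriction is exactly what the statement requires. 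The paper instead works inside \(\k[G]^*\) with the dual basis \(\{\delta_g\}\): using \(\delta_g\cdot\delta_h=\delta_{g,h}\delta_g\) it shows every right ideal is spanned by the \(\delta_g\) it contains, and then encodes the coideal condition by requiring \(M_I=\{g:\delta_g\notin I\}\) to be a submonoid (hence a subgroup, by finiteness), giving the explicit inverse pair \(I\selmap{}M_I\) and \(G_0\selmap{}I_{G_0}\). Your route is essentially a self-contained version of the alternative the paper only sketches in the remark preceding its proof (via the perp correspondences and \(\qquot(\k[G])\cong\qsub(\k[G])\)), with the advantage that you bypass Theorem~\ref{thm:newTakeuchi} entirely, since for the cocommutative Hopf algebra \(\k[G]\) the coideal subalgebras can be classified by hand; what the paper's direct construction buys in exchange is the explicit basis description of each right ideal coideal (\(I_{G_0}\) spanned by the \(\delta_g\) with \(g\notin G_0\)), which is the sharper structural fact it appeals to again later when comparing with the coring \(\Map(G,\bE)\). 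The only points worth making explicit in your write-up are the finite-dimensional identifications \((\k[G]^*)^*\cong\k[G]\) compatible with both perps, and that \(1_G\in I^\perp\) (equivalently, your submonoid contains the identity) because a coideal satisfies \(I\subseteq\ker\epsilon\) and the counit of \(\k[G]^*\) is evaluation at \(1_G\); neither is a gap, just bookkeeping consistent with the paper's unitality conventions.
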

Let us note that if \(G\) is finite then \(\qquot(\k[G])\cong\qsub(\k[G])\),
by Theorem~\ref{itm:newTakeuchi_finite}.  In a consequence we have
\(\qquot(\k[G])\cong\qquot(\k[G]^*)\) by
Propositions~\ref{prop:right_coideals-right_ideals}
and~\ref{prop:coideals-subalgebras}. Hence the above proposition follows from
the previous result. However, we present a direct construction.
\begin{proof}
    Let us choose a basis of $\k[G]^*$ consisting of $\delta_g$ given by
    $\delta_g(h)=\delta_{g,h}$, where $\delta_{g,h}$ is the Kronecker symbol
    given by \(\delta_{g,k}=\left\{
	\begin{smallmatrix}
	    1 &\text{iff }g=k\hfill\\ 0 &\text{otherwise}\hfill
	\end{smallmatrix}\right.\). First let us observe that any right ideal
    \(I\) of $\k[G]^*$ must be generated by some subset of this basis. We have
    the equality $\delta_g\cdot\delta_h=\delta_{g,h}\delta_g$ for all \(g,h\in
	G\). Let \(\sum_{l=1}^n\lambda_l\delta_{g_l}\in I\) for some
    coefficients \(\lambda_l\in \k\) and some \(g_l\in G\) (\(l=1,\dots,n\)).
    Then 
    \[\Big( \sum_{l=1}^{n}\lambda_l\delta_{g_l}\Big)\cdot\delta_{g_i}=\lambda_i\delta_{g_i}\]
     and thus $\delta_{g_i}\in I$ for all \(1\leq i\leq n\) such that
     $\lambda_i\neq 0$. A right ideal \(I\) of \(\k[G]^*\) is a coideal if and
     only if the set \mbox{$M_I:=\{g:\delta_g\notin I\}$} is a submonoid of
     $G$, i.e. it is closed under multiplication and contains the unit of $G$
     (since \(G\) is finite it is a subgroup of \(G\)). This is because $I$ is
     a coideal of $\k[G]^*$ if and only if $I^\perp$ is a subalgebra of
     $\k[G]$ (by Proposition~\ref{prop:coideals-subalgebras}(iii)) and $M_I$
     is a basis of $I^\perp$.  On the other hand, a submonoid $M$ of $G$
     defines a right ideal coideal \(I_M\) of $\k[G]^*$.  The right ideal
     coideal $I_M$ is spanned by all the $\delta_g$ for $g\notin M$. We have
     \(I_M=\k[M]^\perp\), thus \(I_M\) is a coideal by
     Proposition~\ref{prop:coideals-subalgebras} and it is a right ideal by
     Proposition~\ref{prop:right_coideals-right_ideals}, since \(\k[M]\) is
     a right coideal subalgebra of \(\k[G]\). We have a bijective
     correspondence between $\qid(\k[G]^*)$ and $\Sub_{\textit{group}}(G)$,
     which is given by \(\qid(\k[G]^*)\ni
	 I\elmap{}M_I\in\Sub_{\textit{group}}(G)\) and
     \(\Sub_{\mathit{group}}(G)\ni G_0\selmap{}I_{G_0}\in\qid(\k[G]^*)\).
     Indeed, this is a pair of inverse bijections: for \(G_0\leq G\)
     a subgroup we have \(M_{I_{G_0}}=G_0\) since all \(\delta_g\), for \(g\in
	 G\), are linearly independent. Moreover, for \(I\in\qid(\k[G]^*)\),
     \(I_{M_I}=I\), since we showed that \(I\) is spanned by the elements
     \(\delta_g\) which form the basis of \(I_{M_I}\) (by definition of
     \(I_{M_I}\) its basis is \(\{\delta_g:g\notin
	     M_I\}=\{\delta_g:\delta_g\in I\}\)). Now, the claim follows:
     \[\Quot_{G\textit{-set}}(G)^\op\cong\Sub_{\mathit{group}}(G)\cong\qid(\k[G]^*)^\op\cong\qquot(\k[G]^*)\]
     This is indeed the map~\eqref{eq:qquot_affine_cordinate_ring_of_G}, since
     \(\k[G]^*/I_{G_0}\cong\k[G_0]^*\). 
\end{proof}

\subsection{Quotients of $\mathcal{U}(\mathfrak{g})$}

In this section we will consider the universal enveloping Lie algebra
\(\mathcal{U}(\mathfrak{g})\) where \(\mathfrak{g}\) is a finite dimensional
Lie algebra.  We will show how to construct all the generalised quotients and
we shall prove the Poincar\'{e}--Birkhoff--Witt theorem for them.  First let
us state the Poincar\'{e}--Birkhoff--Witt theorem for universal enveloping
algebras:
\index{Poincar\'{e}--Birkhoff--Witt theorem}
\begin{theorem}[Poincar\'{e}--Birkhoff--Witt]\label{thm:pbw_theorem}
    Let \(\mathfrak{g}\) be a finite dimensional \(\k\)-Lie algebra and
    \(\{X_\lambda:\lambda\in\Lambda\}\) a totally ordered basis of
    \(\mathfrak{g}\) indexed by a set \(\Lambda\). Let
    \(\mathcal{U}(\mathfrak{g})\) be the enveloping algebra of
    \(\mathfrak{g}\). Then the set 
    \[\{1\}\cup\bigl\{X_{\lambda_1}\cdot\ldots\cdot X_{\lambda_k}\in\mathcal{U}(\mathfrak{g}): X_{\lambda_1}\leq\dots\leq X_{\lambda_n},\;\lambda_1,\ldots,\lambda_n\in\Lambda,\;n\in\bN^+\bigr\}\] 
    is a \(\k\)-linear basis of \(\mathcal{U}(\mathfrak{g})\). 
\end{theorem}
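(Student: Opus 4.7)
The plan is to split the argument into the easy spanning part and the substantive linear independence part.

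\textbf{Spanning.} Since $\mathcal{U}(\mathfrak{g})$ is generated as an algebra by $\mathfrak{g}$ and hence spanned by arbitrary (unordered) monomials $X_{\mu_1}\cdots X_{\mu_n}$ in the basis elements, I will reduce every such monomial to a linear combination of ordered monomials of degree $\leq n$. The reduction is by double induction: on the degree $n$, and within a fixed $n$, on the number of inversions $\#\{i : \mu_i > \mu_{i+1}\}$. Whenever an adjacent inversion occurs, apply the defining relation $X_{\mu_i}X_{\mu_{i+1}} = X_{\mu_{i+1}}X_{\mu_i} + [X_{\mu_i}, X_{\mu_{i+1}}]$: the first summand keeps degree $n$ but strictly decreases inversions, while the second has degree $n-1$ and is handled by the outer induction.

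\textbf{Linear independence.} The heart of the proof is to exhibit a $\k$-module on which the ordered monomials act as visibly independent operators. Let $V$ be the free $\k$-module on symbols $z_M$ indexed by the ordered monomials $M = (\lambda_1 \leq \dots \leq \lambda_n)$ in $\Lambda$ (with $z_\emptyset = 1$). The plan is to construct a Lie algebra representation $\rho : \mathfrak{g} \to \End_\k(V)$ with the property that
\begin{equation*}
    \rho(X_\mu)(z_M) = z_{(\mu, \lambda_1, \dots, \lambda_n)} \quad\text{whenever } \mu \leq \lambda_1.
\end{equation*}
For $\mu > \lambda_1$, the action must be forced by the Lie bracket constraint, so I define $\rho(X_\mu)$ recursively on the degree of $M$ by
\begin{equation*}
    \rho(X_\mu)(z_{(\lambda_1, \dots, \lambda_n)}) = \rho(X_{\lambda_1}) \rho(X_\mu)(z_{(\lambda_2, \dots, \lambda_n)}) + \rho([X_\mu, X_{\lambda_1}])(z_{(\lambda_2, \dots, \lambda_n)}),
\end{equation*}
where $\rho$ on arbitrary Lie algebra elements is extended $\k$-linearly from its values on the basis $\{X_\lambda\}$. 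This formula is consistent by construction when $\mu \leq \lambda_1$ (both sides equal $z_{(\mu,\lambda_1,\dots,\lambda_n)}$ once one checks the bracket term cancels the adjusted first term).

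\textbf{The main obstacle.} The delicate point is verifying that $\rho$ is a well-defined Lie algebra homomorphism, i.e. that $\rho([X_\mu, X_\nu]) = \rho(X_\mu)\rho(X_\nu) - \rho(X_\nu)\rho(X_\mu)$ on every basis vector $z_M$. I will prove this by induction on $\deg M$. The base case $M = \emptyset$ is immediate from the definition. For the inductive step, expanding both sides using the recursive definition produces a discrepancy that, after application of the induction hypothesis, reduces to the cyclic sum $[X_\mu,[X_\nu, X_{\lambda_1}]] + [X_\nu,[X_{\lambda_1}, X_\mu]] + [X_{\lambda_1},[X_\mu, X_\nu]]$, which vanishes by the Jacobi identity in $\mathfrak{g}$. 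This is precisely where and only where the Jacobi identity enters.

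\textbf{Conclusion.} Once $\rho$ is established, it extends by universality to an algebra homomorphism $\rho : \mathcal{U}(\mathfrak{g}) \to \End_\k(V)$. If $\sum_{M} c_M X_M$ (with $X_M$ the ordered monomial indexed by $M$) is zero in $\mathcal{U}(\mathfrak{g})$, then applying $\rho$ to it and evaluating at $z_\emptyset$ yields $\sum_M c_M z_M = 0$ in $V$, forcing every $c_M$ to vanish. Combined with the spanning argument, this shows that the ordered monomials form a basis.
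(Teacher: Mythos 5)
Your proof is correct in outline, and it is the classical Birkhoff--Witt argument: spanning by reduction of inversions, and linear independence via a faithful representation of \(\mathfrak{g}\) on the free \(\k\)-module spanned by the ordered monomials, with the Jacobi identity entering exactly once to verify that the constructed action is a Lie homomorphism. Note, however, that the paper does not prove this theorem at all: it states it as classical background and only proves the \emph{generalised} Poincar\'{e}--Birkhoff--Witt theorem for the quotients \(\mathcal{Q}(\mathfrak{g}/\mathfrak{h})\) (Theorem~\ref{thm:gen_pbw_theorem}), whose proof takes the classical statement as input. So your contribution is a self-contained proof of the quoted background result rather than an alternative to an argument in the text; what it buys is independence from any citation, and it works verbatim whenever \(\mathfrak{g}\) is free as a \(\k\)-module, finite dimensionality playing no role.

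One point in your plan needs to be made explicit, since as written the recursion is not obviously well founded. When \(\mu>\lambda_1\) you define \(\rho(X_\mu)z_{(\lambda_1,\dots,\lambda_n)}\) through \(\rho(X_{\lambda_1})\bigl(\rho(X_\mu)z_{(\lambda_2,\dots,\lambda_n)}\bigr)\), and the inner term is a combination of basis vectors of degree up to \(n\); applying \(\rho(X_{\lambda_1})\) to the degree-\(n\) part is only covered by the already-defined case if you know that this part is precisely \(z_{(\mu,\lambda_2,\dots,\lambda_n)}\), whose leading index \(\mu\) exceeds \(\lambda_1\). So you must carry along, as part of the same induction on degree, the auxiliary claim that \(\rho(X_\mu)z_M\equiv z_{\mu M}\) modulo the span of monomials of degree \(\leq\deg M\) (and in particular lands in degree \(\leq\deg M+1\)). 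With that clause added, the recursive definition, the homomorphism check via the Jacobi identity, and the evaluation at \(z_\emptyset\) go through exactly as you describe, and \(\rho(X_{\lambda_1}\cdots X_{\lambda_n})z_\emptyset=z_{(\lambda_1,\dots,\lambda_n)}\) gives the independence of the ordered monomials.
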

\index{universal enveloping algebra!generalised quotient}
Now let us give a construction of a generalised quotient of
\(\mathcal{U}(\mathfrak{g})\).  Let \(\mathfrak{h}\) be a \(\k\)-Lie
subalgebra of \(\mathfrak{g}\) and let
\(\pi:\mathfrak{g}\eir\mathfrak{g}/\mathfrak{h}\) be the \(\k\)-linear
quotient map.  Choose a linear basis \(\mathsf{C}\) of the quotient space
\(\mathfrak{g}/\mathfrak{h}\).  Define
\(\mathcal{Q}(\mathfrak{g}/\mathfrak{h})\) to be the quotient of the free
\(\mathcal{U}(\mathfrak{g})\)-module \(\mathsf{F}(\k
    u\oplus\mathfrak{g}/\mathfrak{h})\) generated by the basis \(\mathsf{C}\)
by the following relation:
\begin{equation}\label{eq:ggenquot_relation}
    u\cdot X = \pi(X)
\end{equation}
for all \(X\in\mathfrak{g}\) such that \(\pi(X)\in\mathsf{C}\).  It then
follows that the above relation is satisfied for all \(X\in\mathfrak{g}\).
There exists a right \(\mathcal{U}(\mathfrak{g})\)-module homomorphism:
\[\mathcal{Q}(\pi):\mathcal{U}(\mathfrak{g})\sir\mathcal{Q}(\mathfrak{g}/\mathfrak{h})\]
which is uniquely determined by \(\mathcal{Q}(\pi)(X)=\pi(X)\) for
\(X\in\mathfrak{g}\), where \(\pi(X)\in\mathfrak{g}/\mathfrak{h}\) is treated
as an element of \(\mathcal{Q}(\mathfrak{g}/\mathfrak{h})\) and
\(\mathcal{Q}(\pi)(1)=u\).  The map \(\mathcal{Q}(\pi)\) is well defined by
the above Poincar\'{e}--Birkhoff--Witt Theorem and the following simple
computation:
\begin{align*}
    \mathcal{Q}(\pi)(XY-YX) & = \mathcal{Q}(\pi)(X)Y-\mathcal{Q}(\pi)(Y)X \\
                          & = \mathcal{Q}(\pi)(1)XY-\mathcal{Q}(\pi)(1)YX \\
                          & = \mathcal{Q}(\pi)(1)(XY-YX) \\
                          & = \mathcal{Q}(\pi)(1)([X,Y]) \\
			  & = \mathcal{Q}(\pi)([X,Y])
\end{align*}
\begin{note}\label{note:U(G)_gen_quotients}
    Note that the following relation is satisfied in
    \(\mathcal{Q}(\mathfrak{g}/\mathfrak{h})\) (which is a consequence
    of~\ref{eq:ggenquot_relation}):
    \begin{equation}\label{eq:ggenquot_consequence}
	\mathcal{Q}(\pi)(X)Y-\mathcal{Q}(\pi)(Y)X = \mathcal{Q}(\pi)([X,Y])
    \end{equation}
    for all \(X,Y\in\mathfrak{g}\). This relation holds for every generalised
    quotient of \(\mathcal{U}(\mathfrak{g})\). From this relation it follows that
    for any generalised quotient \(p:\mathcal{U}(\mathfrak{g})\sir Q\) the
    subspace \(\ker p\cap\mathfrak{g}\) is a \(\k\)-Lie subalgebra of
    \(\mathfrak{g}\).  This explains why we have assumed that
    \(\mathfrak{h}\subseteq\mathfrak{g}\) is a Lie subalgebra. 
\end{note}
Now we show, that
\(\mathcal{Q}(\mathfrak{g}/\mathfrak{h})\) is indeed a generalised quotient of
\(\mathcal{U}(\mathfrak{g})\). 
\index{universal enveloping algebra!generalised quotient!coalgebra structure}
\begin{lemma}
    Let \(\mathfrak{g}\) be a finite dimensional \(\k\)-Lie algebra,
    \(\mathfrak{h}\) its Lie subalgebra, and
    \(\pi:\mathfrak{g}\sir\mathfrak{g}/\mathfrak{h}\) the \(\k\)-linear
    quotient map. Then
    \(\mathcal{Q}(\pi):\mathcal{U}(\mathfrak{g})\sir\mathcal{Q}(\mathfrak{g}/\mathfrak{h})\)
    is a generalised quotient. The coalgebra structure of
    \(\mathcal{Q}(\mathfrak{g}/\mathfrak{h})\) is uniquely determined by:
    \begin{equation}\label{eq:coalgebra_U(h)}
            \Delta_{\mathcal{Q}(\mathfrak{g}/\mathfrak{h})}\bigl(\mathcal{Q}(\pi)(X)\bigr)=\mathcal{Q}(\pi)(X)\otimes u+u\otimes\mathcal{Q}(\pi)(X)
    \end{equation}
    for all \(X\in\mathfrak{g}\) and the requirement that
    \(u\in\mathcal{Q}(\mathfrak{h})\) is a group-like element. 
\end{lemma}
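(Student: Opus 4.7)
The plan is to identify $\mathcal{Q}(\mathfrak{g}/\mathfrak{h})$ as the quotient $\mathcal{U}(\mathfrak{g})/J$ for the right ideal $J=\mathfrak{h}\cdot\mathcal{U}(\mathfrak{g})$ generated by $\mathfrak{h}$ and then to verify that $J$ is also a coideal. By construction $\mathcal{Q}(\mathfrak{g}/\mathfrak{h})$ is a cyclic right $\mathcal{U}(\mathfrak{g})$-module generated by $u$ (after applying the defining relation each element of $\mathsf{C}$ becomes $u\cdot X$ for some lift $X$ of it), and the relation $u\cdot X=\pi(X)$ forces $u\cdot X=0$ whenever $X\in\mathfrak{h}$. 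Hence there is a canonical surjection $\mathcal{U}(\mathfrak{g})/J\twoheadrightarrow\mathcal{Q}(\mathfrak{g}/\mathfrak{h})$ sending $1+J$ to $u$. To show this is an isomorphism I would invoke the Poincar\'e--Birkhoff--Witt theorem~\ref{thm:pbw_theorem}: extending an ordered basis of $\mathfrak{h}$ to one of $\mathfrak{g}$ gives a vector space decomposition $\mathcal{U}(\mathfrak{g})=J\oplus V$, where $V$ is spanned by the ordered monomials in the basis elements lying outside $\mathfrak{h}$, and these monomials match, via repeated application of $u\cdot X=\pi(X)$, the natural spanning set of $\mathcal{Q}(\mathfrak{g}/\mathfrak{h})$ indexed by ordered monomials in $\mathsf{C}$.

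Once this identification is in place, the next step is to check that $J$ is a coideal. The key observation is that every $X\in\mathfrak{h}$ is primitive in $\mathcal{U}(\mathfrak{g})$, so for $X\in\mathfrak{h}$ and $a\in\mathcal{U}(\mathfrak{g})$ one computes
\[
    \Delta(Xa)=(X\otimes 1+1\otimes X)\Delta(a)=\sum Xa_{(1)}\otimes a_{(2)}+\sum a_{(1)}\otimes Xa_{(2)},
\]
which lies in $J\otimes\mathcal{U}(\mathfrak{g})+\mathcal{U}(\mathfrak{g})\otimes J$, and $\epsilon(Xa)=\epsilon(X)\epsilon(a)=0$. Hence $J$ is a right ideal coideal, the quotient $\mathcal{Q}(\mathfrak{g}/\mathfrak{h})\cong\mathcal{U}(\mathfrak{g})/J$ inherits a coalgebra structure, and $\mathcal{Q}(\pi)$ becomes a surjective coalgebra map, i.e.\ a generalised quotient.

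The formulas in the statement are then immediate consequences of $\mathcal{Q}(\pi)$ being a coalgebra map: applying $\mathcal{Q}(\pi)\otimes\mathcal{Q}(\pi)$ to $\Delta(1)=1\otimes 1$ yields $\Delta_{\mathcal{Q}}(u)=u\otimes u$, and applying it to $\Delta(X)=X\otimes 1+1\otimes X$ yields formula~\eqref{eq:coalgebra_U(h)}. Uniqueness of the coalgebra structure subject to these requirements is forced by surjectivity of $\mathcal{Q}(\pi)$: any comultiplication on $\mathcal{Q}(\mathfrak{g}/\mathfrak{h})$ for which $\mathcal{Q}(\pi)$ is a coalgebra map must satisfy $\Delta_{\mathcal{Q}}\circ\mathcal{Q}(\pi)=(\mathcal{Q}(\pi)\otimes\mathcal{Q}(\pi))\circ\Delta$, which determines it on the entire image.

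The main obstacle is the identification of the kernel carried out in the first paragraph: this is where PBW does real work, since it is precisely the PBW basis that guarantees $J=\mathfrak{h}\cdot\mathcal{U}(\mathfrak{g})$ collapses no more than the defining relations $u\cdot X=\pi(X)$ already demand. The coideal verification and the uniqueness statement are then routine once the module-theoretic identification is secured.
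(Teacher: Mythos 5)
Your route is genuinely different from the paper's. The paper never identifies \(\mathcal{Q}(\mathfrak{g}/\mathfrak{h})\) with a quotient of \(\mathcal{U}(\mathfrak{g})\) at this stage: it defines \(\Delta_{\mathcal{Q}(\mathfrak{g}/\mathfrak{h})}\) by an explicit formula on elements \(\mathcal{Q}(\pi)(X_0\cdots X_k)\), checks directly that this respects the relation \(u\cdot X=\pi(X)\), and obtains uniqueness from the identity \(\Delta_Q(\pi(h)k)=\Delta_Q(\pi(h))\cdot\Delta_H(k)\) valid for any generalised quotient. You instead reduce everything to the claim \(\mathcal{Q}(\mathfrak{g}/\mathfrak{h})\cong\mathcal{U}(\mathfrak{g})/\mathfrak{h}\mathcal{U}(\mathfrak{g})\) as right \(\mathcal{U}(\mathfrak{g})\)-modules and then observe that \(\mathfrak{h}\mathcal{U}(\mathfrak{g})\) is a right ideal coideal because \(\mathfrak{h}\) consists of primitive elements. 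Your coideal computation, the derivation of~\eqref{eq:coalgebra_U(h)} by applying \(\mathcal{Q}(\pi)\otimes\mathcal{Q}(\pi)\) to \(\Delta(X)\), and the uniqueness-from-surjectivity argument are all correct.

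The gap is in the identification itself, which you flag as the place where ``PBW does real work'' -- but ordinary PBW (Theorem~\ref{thm:pbw_theorem}) does not do that work. It gives the decomposition \(\mathcal{U}(\mathfrak{g})=\mathfrak{h}\mathcal{U}(\mathfrak{g})\oplus V\), hence a basis of \(\mathcal{U}(\mathfrak{g})/\mathfrak{h}\mathcal{U}(\mathfrak{g})\), and shows that the induced surjection onto \(\mathcal{Q}(\mathfrak{g}/\mathfrak{h})\) carries this basis onto the natural spanning family \(\{u\}\cup\{\pi(Z_0)\cdot Z_1\cdots Z_n\}\). Matching spanning sets, however, only re-proves surjectivity; injectivity needs this family to be linearly independent \emph{inside} \(\mathcal{Q}(\mathfrak{g}/\mathfrak{h})\), i.e.\ that the defining relations collapse the free module no further -- and that is precisely the generalised PBW theorem (Theorem~\ref{thm:gen_pbw_theorem}), which the paper proves \emph{after} this lemma by a separate argument (the retraction \(L\) on \(\mathsf{F}\)). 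As written, your key step is therefore circular or at least incomplete. The repair is short and needs no PBW at all: use the presentation of \(\mathcal{Q}(\mathfrak{g}/\mathfrak{h})\) as \(\mathsf{F}(\k u\oplus\mathfrak{g}/\mathfrak{h})\) modulo the relations to construct the inverse module map, sending \(u\mapsto\overline{1}\) and \(c\mapsto\overline{X_c}\) for a chosen lift \(X_c\) of \(c\in\mathsf{C}\); each relation \(u\cdot X-\pi(X)\) is killed because \(X-X_c\in\mathfrak{h}\subseteq\mathfrak{h}\mathcal{U}(\mathfrak{g})\), and the two maps are mutually inverse on generators. With that fix your argument is complete, arguably cleaner than the paper's well-definedness check for \(\Delta_{\mathcal{Q}(\mathfrak{g}/\mathfrak{h})}\), and as a by-product it gives the identification of Proposition~\ref{prop:enveloping-algebra-quotients_} without the characteristic-zero, primitive-element machinery used there.
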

\begin{proof}        
    The map \(\mathcal{Q}(\pi)\) is \(\mathcal{U}(\mathfrak{g})\)-module
    surjection by the construction of
    \(\mathcal{Q}(\mathfrak{g}/\mathfrak{h})\).  We set the comultiplication
    of \(\mathcal{U}(\mathfrak{h})\) by:
    \[\Delta_{\mathcal{Q}(\mathfrak{g}/\mathfrak{h})}\bigl(\mathcal{Q}(\pi)(X_0\cdot\ldots\cdot X_k)\bigr)\coloneq\bigl(\mathcal{Q}(\pi)(X_0)\otimes u+u\otimes\mathcal{Q}(\pi)(X_0)\bigr)\cdot\bigl(\Delta_{\mathcal{U}(\mathfrak{g})}(X_1\cdot\ldots\cdot X_k)\bigr)\]
    and \(\Delta_{\mathcal{Q}(\mathfrak{g}/\mathfrak{h})}(u)=u\otimes u\). It
    is well defined since the relation~\ref{eq:ggenquot_relation} is
    preserved:
    \begin{align*}
	\Delta_{\mathcal{Q}(\mathfrak{g}/\mathfrak{h})}(u\cdot X) & \coloneq (u\otimes u)\cdot (X\otimes 1+1\otimes X)\\
	                                                 & = (u\cdot X)\otimes u+u\otimes(u\cdot X)\\
	                                                 & = \mathcal{Q}(\pi)(X)\otimes u+u\otimes\mathcal{Q}(\pi)(X)\\
	                                                 & \eqcolon\Delta_{\mathcal{Q}(\mathfrak{g}/\mathfrak{h})}\bigl(\mathcal{Q}(\pi)(X)\bigr)
    \end{align*}
    Clearly,
    \(\mathcal{Q}(\pi):\mathcal{U}(\mathfrak{g})\sir\mathcal{Q}(\mathfrak{g}/\mathfrak{h})\)
    is a coalgebra map. Formula~\eqref{eq:coalgebra_U(h)} uniquely determines
    the coalgebra structure of
    \(\mathcal{Q}(\mathfrak{g})/\mathfrak{h})\) since for any Hopf
    algebra \(H\) and its generalised quotient \(\pi:H\sir Q\) we have:
    \begin{align*}
	\Delta_{Q}(\pi(h)k) & =\Delta_{Q}(\pi(hk)) \\
					     & = \pi\otimes\pi\circ\Delta_{H}(hk) \\
					     & = \pi\otimes\pi\circ\bigl(\Delta_{H}(h)\cdot \Delta_{H}(k)\bigr) \\
					     & = (\pi\otimes\pi\circ\Delta_{H}(h))\cdot \Delta_{H}(k) \\
					     & = \Delta_{Q}(h)\cdot\Delta_{H}(k)
    \end{align*}
    for any \(h,k\in H\). 
\end{proof}

\index{generalised Poincar\'{e}--Birkhoff--Witt theorem}
\index{Poincar\'{e}--Birkhoff--Witt theorem (generalised)}
\index{universal enveloping algebra!generalised quotient!Poincar\'{e}--Birkhoff--Witt basis}
\begin{theorem}[generalised Poincar\'{e}--Birkhoff--Witt]\label{thm:gen_pbw_theorem}
    Let \(\mathfrak{h}\subseteq\mathfrak{g}\) and
    \(\pi:\mathfrak{g}\sir\mathfrak{g}/\mathfrak{h}\) be as above (in
    particular \(\mathfrak{g}\) is finite dimensional).  Let \(\mathsf{B}\) be
    a totally ordered basis of \(\mathfrak{g}\). Then the set
    \begin{equation}\label{eq:gen_pbw_basis}
	\left\{\begin{array}{rl}
	\pi(Z_0)\cdot Z_1\cdot\ldots\cdot Z_n\in\mathcal{Q}(\mathfrak{g}/\mathfrak{h}):& Z_0,\ldots, Z_n\in\mathsf{B} \\
	                                              & Z_0\leq\dots\leq Z_n,\;n\in\bN \\
						      & \pi(Z_0)\cdot Z_1\cdot\ldots\cdot Z_n\neq 0
	\end{array}\right\}
    \end{equation} 
    together with the single element \(u\) forms a \(\k\)-linear basis of
    \(\mathcal{Q}(\mathfrak{g}/\mathfrak{h})\). 
\end{theorem}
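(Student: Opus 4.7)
The plan is to identify $\mathcal{Q}(\mathfrak{g}/\mathfrak{h})$ with the quotient right $\mathcal{U}(\mathfrak{g})$-module $\mathcal{U}(\mathfrak{g})/\mathfrak{h}\cdot\mathcal{U}(\mathfrak{g})$, and then to deduce the basis from the classical Poincar\'{e}--Birkhoff--Witt Theorem~\ref{thm:pbw_theorem} by choosing a basis of $\mathfrak{g}$ compatible with $\mathfrak{h}$.

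First I would establish the identification $\mathcal{Q}(\mathfrak{g}/\mathfrak{h})\cong\mathcal{U}(\mathfrak{g})/\mathfrak{h}\cdot\mathcal{U}(\mathfrak{g})$. The key observation is that the defining relation $u\cdot X=\pi(X)$ for $X\in\mathfrak{g}$ with $\pi(X)\in\mathsf{C}$, combined with right $\mathcal{U}(\mathfrak{g})$-linearity, forces $u\cdot Y=0$ for every $Y\in\mathfrak{h}$: for $c\in\mathsf{C}$ with a chosen lift $\tilde c$, both $\tilde c$ and $\tilde c+Y$ have image $c\in\mathsf{C}$ under $\pi$, so applying the relation to each and subtracting yields $u\cdot Y=0$. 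Hence $\mathfrak{h}\cdot\mathcal{U}(\mathfrak{g})\subseteq\ker\mathcal{Q}(\pi)$, giving a surjective right $\mathcal{U}(\mathfrak{g})$-module map $\mathcal{U}(\mathfrak{g})/\mathfrak{h}\cdot\mathcal{U}(\mathfrak{g})\twoheadrightarrow\mathcal{Q}(\mathfrak{g}/\mathfrak{h})$. For the inverse I would use the universal property of the free module $\mathsf{F}(\k u\oplus\mathfrak{g}/\mathfrak{h})$: send $u\mapsto\overline{1}$ and $c\mapsto\overline{\tilde c}$, checking that the relations $u\cdot X=\pi(X)$ are respected because any two lifts of $c$ differ by an element of $\mathfrak{h}$, which is killed in the target.

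Given this identification, spanning is immediate: $\mathcal{Q}(\pi)$ is surjective and the classical PBW theorem shows that the monomials $Z_0\cdots Z_n$ with $Z_0\leq\dots\leq Z_n$ in $\mathsf{B}$, together with $1$, span $\mathcal{U}(\mathfrak{g})$, so their images span $\mathcal{Q}(\mathfrak{g}/\mathfrak{h})$ and discarding vanishing ones preserves this. For linear independence I would first settle a compatible-basis case, partitioning $\mathsf{B}=\mathsf{B}_\mathfrak{h}\sqcup\mathsf{B}_\mathsf{C}$ where $\mathsf{B}_\mathfrak{h}$ is an ordered basis of $\mathfrak{h}$ and $\mathsf{B}_\mathsf{C}$ a set of representatives of $\mathsf{C}$. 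PBW applied to the pair $(\mathfrak{g},\mathfrak{h})$ then yields a vector-space decomposition $\mathcal{U}(\mathfrak{g})\cong\mathcal{U}(\mathfrak{h})\otimes\mathrm{span}\{Z_I\}_{Z_I\text{ ordered monomial in }\mathsf{B}_\mathsf{C}}$, from which $\mathfrak{h}\cdot\mathcal{U}(\mathfrak{g})$ can be read off as the span of those PBW basis elements whose $\mathcal{U}(\mathfrak{h})$-factor lies in the augmentation ideal. Hence $\mathcal{U}(\mathfrak{g})/\mathfrak{h}\cdot\mathcal{U}(\mathfrak{g})$ has $\{\overline{Z_I}\}$ as a $\k$-basis, translating back via the identification to the claimed basis~\eqref{eq:gen_pbw_basis}.

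The main obstacle is handling arbitrary orderings of $\mathsf{B}$, for which the compatible-basis argument does not apply directly. To deal with this I would filter $\mathcal{U}(\mathfrak{g})$ by total degree and pass to the associated graded $\mathrm{gr}\,\mathcal{U}(\mathfrak{g})\cong\mathrm{Sym}(\mathfrak{g})$; the induced filtration on $\mathcal{Q}(\mathfrak{g}/\mathfrak{h})$ has associated graded isomorphic to $\mathrm{Sym}(\mathfrak{g})/\mathfrak{h}\cdot\mathrm{Sym}(\mathfrak{g})\cong\mathrm{Sym}(\mathfrak{g}/\mathfrak{h})$, a polynomial ring. A leading-symbol argument then reduces the linear independence of the proposed set to the linear independence of the corresponding symmetric monomials in $\mathrm{Sym}(\mathfrak{g}/\mathfrak{h})$, which is classical; a nonzero image in $\mathcal{Q}(\mathfrak{g}/\mathfrak{h})$ has nonzero symbol precisely when the leading monomial survives in the polynomial quotient, and these surviving symbols are independent, concluding the argument.
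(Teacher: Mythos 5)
Your first two steps are correct, and they constitute a genuinely different route from the paper's: you identify \(\mathcal{Q}(\mathfrak{g}/\mathfrak{h})\) with \(\mathcal{U}(\mathfrak{g})/\mathfrak{h}\mathcal{U}(\mathfrak{g})\) as right \(\mathcal{U}(\mathfrak{g})\)-modules directly (the subtraction trick giving \(u\cdot Y=0\) for \(Y\in\mathfrak{h}\) is exactly the point), whereas the paper works inside the free module \(\mathsf{F}\) with a linear retraction \(L\) and only reaches this description of \(\mathcal{Q}(\mathfrak{g}/\mathfrak{h})\) much later (Proposition~\ref{prop:enveloping-algebra-quotients_}, and only in characteristic zero, via primitive elements). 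Your adapted-basis case, with \(\mathsf{B}=\mathsf{B}_{\mathfrak{h}}\sqcup\mathsf{B}_{\mathsf{C}}\) and the \(\mathfrak{h}\)-part placed first, so that ordered monomials factor as \(M_{\mathfrak{h}}M'\) and \(\mathfrak{h}\mathcal{U}(\mathfrak{g})\) is spanned by those with \(M_{\mathfrak{h}}\) nontrivial, is complete and clean.

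The last step, however, has a genuine gap that cannot be filled: the reduction to \(\mathrm{Sym}(\mathfrak{g}/\mathfrak{h})\) ends with the claim that the surviving leading symbols are linearly independent, and this is false for a basis not adapted to \(\mathfrak{h}\) --- indeed the statement itself fails in that generality. Take \(\mathfrak{g}\) two-dimensional abelian over a field of characteristic zero, with basis \(X<Y\) and \(\mathfrak{h}=\k(X+Y)\); by your identification \(\mathcal{Q}(\mathfrak{g}/\mathfrak{h})\cong\k[X,Y]/(X+Y)\), and the set~\eqref{eq:gen_pbw_basis} contains \(\pi(X)\) and \(\pi(Y)=-\pi(X)\), as well as \(\pi(X)\cdot X\), \(\pi(X)\cdot Y=-\pi(X)\cdot X\) and \(\pi(Y)\cdot Y=\pi(X)\cdot X\), all nonzero, so it is not linearly independent; correspondingly the symbols \(\pi(Z_0)\cdots\pi(Z_n)\) of distinct ordered monomials are independent only when the \(\pi(Z)\) occurring are part of a basis of \(\mathfrak{g}/\mathfrak{h}\), i.e.\ essentially only in the adapted case you already treated. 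A second defect of the leading-symbol reduction is that a nonzero class may have vanishing top symbol and so is invisible to it: for the Heisenberg algebra \([X,Y]=Z\) with \(\mathfrak{h}=\k Y\) and order \(X<Z<Y\) one has \(\pi(X)\cdot Y=\overline{Z}\neq 0\) while \(\pi(X)\pi(Y)=0\) in \(\mathrm{Sym}(\mathfrak{g}/\mathfrak{h})\); moreover \(\pi(X)\cdot X\cdot Y=2\,\overline{XZ}\) and \(\pi(X)\cdot Z=\overline{XZ}\) both lie in~\eqref{eq:gen_pbw_basis}, so even an adapted basis fails when the \(\mathfrak{h}\)-part is not initial in the order. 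The correct, provable statement is thus the one your compatible-basis argument gives (a subset of \(\mathsf{B}\) spanning \(\mathfrak{h}\) and placed first), which is also all that the paper's later applications require; note that the paper's own proof for arbitrary \(\mathsf{B}\) stumbles at the same spot, since it asserts that the elements \(\pi(Z_0)\cdot Z_1\cdots Z_n\) are linearly independent in \(\mathsf{F}\) and that \(\ker p\) is spanned by the ordered differences, neither of which holds in the examples above.
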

\begin{proof}
    Let us denote the free \(\mathcal{U}(\mathfrak{g})\)-module on \(\k
	u\oplus\mathfrak{g}/\mathfrak{h}\) by \(\mathsf{F}\). By definition of
    \(\mathcal{U}(\mathfrak{g}/\mathfrak{h})\) we have an epimorphism
    \(p:\mathsf{F}\sir\mathcal{U}(\mathfrak{g}/\mathfrak{h})\). Since
    \(\mathsf{F}\) is just a sum of copies of \(\mathcal{U}(\mathfrak{g})\),
    for which Theorem~\ref{thm:pbw_theorem} holds, we can construct a map:
    \(L:\mathsf{F}\sir\mathsf{F}\) such that
    \begin{align*}
            L(\pi(Z_0)\cdot Z_1\cdot\ldots\cdot Z_n)    & =\pi(Z_0)\cdot Z_1\cdot\ldots\cdot Z_n \\
            L(u\cdot Z_0\cdot\ldots\cdot Z_n) & =\pi(Z_0)\cdot Z_1\cdot\ldots\cdot Z_n
    \end{align*}
    where \(Z_0,\ldots,Z_n\in\mathsf{B}\) and satisfy the condition
    \(Z_0\leq\dots\leq Z_n\), and \(L(u)=u\). The kernel of
    \(p:\mathsf{F}\sir\mathcal{Q}(\mathfrak{g}/\mathfrak{h})\) is generated as
    a \(\mathcal{U}(\mathfrak{g})\)-submodule by the elements of the form:
    \(u\cdot X-\pi(X)\) for \(X\in\mathsf{B}\) and hence it is spanned as
    a \(\k\)-vector space by the elements of the form: \(u\cdot
	Z_0\cdot\ldots\cdot Z_n-\pi(Z_0)\cdot Z_1\cdot\ldots\cdot Z_n\) where
    \(Z_0,\ldots,Z_n\in\mathsf{B}\) and \(Z_0\leq\cdots\leq Z_n\). By
    definition of \(L\) we get
    \(L|_{\ker(\mathsf{F}\sir\mathcal{U}(\mathfrak{h}))}=0\).  This proves
    that the elements~\eqref{eq:gen_pbw_basis} are linearly independent.
    Suppose on the contrary that there exists a linear relation:
    \[\sum_{i=1}^N\lambda_i p\bigl(\pi(Z_0^i)\cdot Z_1^i\cdot\ldots\cdot Z_{n_i}^i\bigr)=0\]
    where all the summands satisfy the conditions in~\ref{eq:gen_pbw_basis}.
    Then \(\sum_{i}\lambda_i\pi(Z_0^i)\cdot Z_1^i\cdot\ldots\cdot
	Z_{n_i}^i\in\ker p\) and hence \(\sum_{i}\lambda_i\pi(Z_0^i)\cdot
	Z_1^i\cdot\ldots\cdot
	Z_{n_i}^i=L\bigl(\sum_{i}\lambda_i\pi(Z_0^i)\cdot
	Z_1^i\cdot\ldots\cdot Z_{n_i}^i\bigr)=0\) in \(\mathsf{F}\).  But the
    set:
    \begin{equation*}
	\left\{\begin{array}{rl}
		\pi(Z_0)\cdot Z_1\cdot\ldots\cdot Z_n\in\mathsf{F}: & Z_0,\ldots, Z_n\in\mathsf{B} \\
		                               & Z_1\leq\dots\leq Z_n,\;n\in\bN
	\end{array}\right\}
    \end{equation*} 
    is linearly independent by Theorem~\ref{thm:pbw_theorem} and hence
    \(\lambda_i=0\) for \(i=1,\ldots,N\).  The claim that the
    set~\eqref{eq:gen_pbw_basis} and \(u\) span
    \(\mathcal{Q}(\mathfrak{g}/\mathfrak{h})\) is straightforward. 
\end{proof}
\begin{corollary}
    Let \(\mathfrak{g}\sir\mathfrak{g}/\mathfrak{h}\) be as above.  Then the
    natural map
    \(\mathfrak{g}/\mathfrak{h}\sir\mathcal{Q}(\mathfrak{g}/\mathfrak{h})\) is
    injective.
\end{corollary}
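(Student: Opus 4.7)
The plan is to deduce the injectivity directly from the generalised Poincar\'{e}--Birkhoff--Witt theorem (Theorem~\ref{thm:gen_pbw_theorem}) by choosing a basis of $\mathfrak{g}$ adapted to the short exact sequence $0\sir\mathfrak{h}\sir\mathfrak{g}\sir\mathfrak{g}/\mathfrak{h}\sir 0$. First, I would fix a $\k$-linear basis $\mathsf{B}_\mathfrak{h}$ of $\mathfrak{h}$ and a $\k$-linear section $s:\mathfrak{g}/\mathfrak{h}\sir\mathfrak{g}$ of $\pi$, and take $\mathsf{B}\coloneq \mathsf{B}_\mathfrak{h}\cup s(\mathsf{C})$, where $\mathsf{C}$ is a $\k$-linear basis of $\mathfrak{g}/\mathfrak{h}$ (we may assume $\mathsf{C}$ is the one used to construct $\mathcal{Q}(\mathfrak{g}/\mathfrak{h})$). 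This is a $\k$-linear basis of $\mathfrak{g}$; totally order it so that every element of $\mathsf{B}_\mathfrak{h}$ precedes every element of $s(\mathsf{C})$.

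Next, I would apply the generalised Poincar\'{e}--Birkhoff--Witt theorem to this ordered basis. Specialising the basis~\eqref{eq:gen_pbw_basis} to the case $n=0$ gives the elements $\pi(Z_0)\in \mathcal{Q}(\mathfrak{g}/\mathfrak{h})$ for $Z_0\in\mathsf{B}$ such that $\pi(Z_0)\neq 0$. By the very construction of $\mathsf{B}$, the condition $\pi(Z_0)\neq 0$ holds precisely when $Z_0\in s(\mathsf{C})$, since $\pi$ vanishes on $\mathsf{B}_\mathfrak{h}$. Thus $\{\pi(s(c)):c\in\mathsf{C}\}$ is a subset of the Poincar\'{e}--Birkhoff--Witt basis of $\mathcal{Q}(\mathfrak{g}/\mathfrak{h})$ and is therefore $\k$-linearly independent.

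Finally, the natural map $\iota:\mathfrak{g}/\mathfrak{h}\sir\mathcal{Q}(\mathfrak{g}/\mathfrak{h})$ is, by the definition of $\mathcal{Q}(\pi)$ (and the fact that $\mathcal{Q}(\pi)(X)=\pi(X)$ for $X\in\mathfrak{g}$), the unique linear map sending $c\in\mathsf{C}$ to $\pi(s(c))$. Since it carries the basis $\mathsf{C}$ of $\mathfrak{g}/\mathfrak{h}$ to a linearly independent set, it is injective. The only subtle point is to verify that $\iota$ is actually well defined, i.e. that $\pi(X)=0\Rightarrow \mathcal{Q}(\pi)(X)=0$ in $\mathcal{Q}(\mathfrak{g}/\mathfrak{h})$; this is immediate from the defining relation~\eqref{eq:ggenquot_relation}, because for $X\in\mathfrak{h}$ we have $\mathcal{Q}(\pi)(X)=u\cdot X=\pi(X)=0$. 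I do not anticipate any real obstacle; the only mild care needed is in producing an ordered basis of $\mathfrak{g}$ that is compatible with the section $s$ so that the Poincar\'{e}--Birkhoff--Witt basis of $\mathcal{Q}(\mathfrak{g}/\mathfrak{h})$ manifestly contains a copy of $\mathsf{C}$.
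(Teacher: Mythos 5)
Your proposal is correct and is essentially the argument the paper intends: the corollary is stated as an immediate consequence of the generalised Poincar\'{e}--Birkhoff--Witt theorem, and your choice of a basis of \(\mathfrak{g}\) adapted to \(\mathfrak{h}\) makes the \(n=0\) part of the basis~\eqref{eq:gen_pbw_basis} precisely the image of a basis of \(\mathfrak{g}/\mathfrak{h}\), whence injectivity. The well-definedness check at the end is a non-issue (the map is induced directly from the inclusion of the generators \(\mathfrak{g}/\mathfrak{h}\) into \(\mathsf{F}\)), but including it does no harm.
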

\index{universal enveloping algebra!generalised quotient!pointed}
\index{universal enveloping algebra!generalised quotient!irreducible}
\begin{corollary}\label{cor:Q_irr_pointed}
    Let \(\mathfrak{g}\sir\mathfrak{g}/\mathfrak{h}\) be as above.  Then
    \(\mathcal{Q}(\mathfrak{g}/\mathfrak{h})\) is an irreducible pointed
    coalgebra. 
\end{corollary}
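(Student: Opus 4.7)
The plan is to exhibit a coalgebra filtration of $\mathcal{Q}(\mathfrak{g}/\mathfrak{h})$ whose zeroth term is the one-dimensional subspace $\k u$. Since the coradical of a coalgebra is contained in $F_0$ for any coalgebra filtration, and $u$ is a group-like element, this will force the coradical to equal $\k u$ and so $\mathcal{Q}(\mathfrak{g}/\mathfrak{h})$ to be pointed irreducible.

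Explicitly, let $\mathsf{B}$ be the totally ordered basis of $\mathfrak{g}$ used in Theorem~\ref{thm:gen_pbw_theorem}, and define, for each integer $k \geq 0$,
\[F_k \coloneq \k u \,+\, \mathrm{span}_\k\bigl\{\pi(Z_0)\cdot Z_1\cdots Z_n : Z_0 \leq \cdots \leq Z_n \text{ in } \mathsf{B},\ n+1 \leq k\bigr\}.\]
By Theorem~\ref{thm:gen_pbw_theorem}, $F_0 = \k u$, the $F_k$ form an increasing chain, and $\bigcup_k F_k = \mathcal{Q}(\mathfrak{g}/\mathfrak{h})$. To verify the coalgebra filtration property $\Delta(F_k) \subseteq \sum_{i+j=k} F_i \otimes F_j$, I use $\pi(Z_0)\cdot Z_1\cdots Z_n = u\cdot Z_0 Z_1 \cdots Z_n$ from the defining relation~\eqref{eq:ggenquot_relation}, and the fact that $\mathcal{Q}(\pi)$ is a coalgebra map with $\Delta(u)=u\otimes u$. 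Since each $Z_i$ is primitive in $\mathcal{U}(\mathfrak{g})$, a direct computation gives
\[\Delta(u \cdot Z_0 \cdots Z_n) = (u \otimes u)\cdot\!\!\sum_{S \subseteq \{0,\ldots,n\}}\!\! Z_S \otimes Z_{S^c} = \sum_{S} (u \cdot Z_S) \otimes (u \cdot Z_{S^c}),\]
where $Z_S$ denotes the product of $Z_i$ for $i \in S$ taken in increasing order of $i$. Because any subset of an increasing sequence is itself increasing, each $u \cdot Z_S$ is either $u$ (when $S=\varnothing$) or a PBW basis element of degree $|S|$, so the summand lies in $F_{|S|} \otimes F_{|S^c|}$, and $|S|+|S^c|=n+1$, confirming the filtration property.

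Invoking the standard fact that the coradical is contained in $F_0$ of every coalgebra filtration, the coradical of $\mathcal{Q}(\mathfrak{g}/\mathfrak{h})$ is contained in $\k u$; conversely $\k u$ is itself a simple subcoalgebra since $u$ is group-like, so the coradical equals $\k u$ exactly. Thus $\mathcal{Q}(\mathfrak{g}/\mathfrak{h})$ has a unique simple subcoalgebra, which is one-dimensional, and is therefore both irreducible and pointed. The main technical point is the verification of the filtration property: it is essential that restricting the PBW ordered product $Z_0\cdots Z_n$ to the indices in $S$ again produces a PBW ordered product, which is what keeps every tensor factor inside the filtration level prescribed by $|S|$ and no commutator corrections are needed.
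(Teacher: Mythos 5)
Your argument is correct, but it takes a genuinely different route from the one in the text. The paper deduces pointedness from the fact that \(\mathcal{Q}(\mathfrak{g}/\mathfrak{h})\) is a quotient of the pointed coalgebra \(\mathcal{U}(\mathfrak{g})\), and then gets irreducibility by showing that \(u\) is the \emph{only} group-like element: a would-be group-like is expanded in the generalised PBW basis of Theorem~\ref{thm:gen_pbw_theorem}, the counit kills all basis elements other than \(u\), and linear independence of distinct group-likes finishes the job. You instead pin down the coradical itself: the PBW length filtration \(F_k\) is a coalgebra filtration because the subset (shuffle) formula \(\Delta(u\cdot Z_0\cdots Z_n)=\sum_{S}(u\cdot Z_S)\otimes(u\cdot Z_{S^c})\) --- which is equation~\eqref{eq:delta_on_b}, and follows as you say from \(\mathcal{Q}(\pi)\) being both a module map and a coalgebra map, i.e. \(\Delta(u\cdot w)=(u\otimes u)\cdot\Delta_{\mathcal{U}(\mathfrak{g})}(w)\) --- keeps each tensor factor of the form \(u\cdot Z_S\) with \(Z_S\) an increasing word of length \(|S|\); your key observation that subwords of increasing words are again increasing is exactly what makes this work, the only cosmetic caveat being that \(u\cdot Z_S\) may be \(0\) rather than a basis element, which does not affect the inclusion into \(F_{|S|}\otimes F_{|S^c|}\). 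The standard lemma that the coradical is contained in the bottom term of any exhaustive coalgebra filtration (see e.g.\ \cite[Chap.~5]{sm:hopf-alg}), together with \(F_0=\k u\) and \(u\neq0\) group-like by Theorem~\ref{thm:gen_pbw_theorem}, then gives that the coradical equals \(\k u\), so pointedness and irreducibility drop out simultaneously. The trade-off: the paper's proof is shorter but leans on the (equally coradical-theoretic) fact that a quotient of a pointed coalgebra is pointed and only establishes uniqueness of group-likes, whereas your proof avoids classifying group-likes, computes the coradical exactly, and records that the PBW length filtration dominates the coradical filtration --- a remark that sits naturally beside Proposition~\ref{prop:primitive_elems_of_Q} and the later appeal to \cite[Thm~2.4.11]{ea:hopf_algebras}. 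Both proofs ultimately rest on the same inputs, namely Theorem~\ref{thm:gen_pbw_theorem} and the comultiplication formula for \(\mathcal{Q}(\mathfrak{g}/\mathfrak{h})\).
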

\begin{proof}
    The coalgebra \(\mathcal{Q}(\mathfrak{g}/\mathfrak{h})\) is a quotient of
    the (irreducible) pointed coalgebra \(\mathcal{U}(\mathfrak{g})\).  Hence
    it is pointed.  Furthermore, it is irreducible, since it has only one
    group-like element \(\mathcal{Q}(\pi)(1)\).  Let us assume that the
    following element is group-like:
    \[x=\lambda_uu+\sum_{b\in\mathsf{B}}\lambda_bb\]
    where \(\mathsf{B}\) is the set~\eqref{eq:gen_pbw_basis}, only finitely
    many \(\lambda_b\in\k\) are nonzero, and \(\lambda_u\in\k\).  All the
    elements of \(\mathsf{B}\) belong to
    \(\ker\,\epsilon_{\mathcal{Q}(\mathfrak{g}/\mathfrak{h})}\).  We get
    \(x=\lambda_u u\), by computing
    \(\epsilon\otimes\id\circ\Delta_{\mathcal{Q}(\mathfrak{g}/\mathfrak{h})}(x)\).
    We must have \(x=u\) since distinct group-like elements are linearly
    independent.
\end{proof}
Now we are going to show that if \(\mathfrak{h}\) is a Lie ideal then the
enveloping algebras \(\mathcal{U}(\mathfrak{g}/\mathfrak{h})\) and
\(\mathcal{Q}(\mathfrak{g}/\mathfrak{h})\) coincide, i.e. are isomorphic as
generalised quotients of \(\mathcal{U}(\mathfrak{g})\).
\index{universal enveloping algebra!generalised quotient!Hopf algebra quotient}
\begin{proposition}\label{prop:U_and_Q}
    Let \(\mathfrak{g}\) be a finite dimensional Lie algebra, \(\mathfrak{h}\)
    an ideal of \(\mathcal{g}\) and
    \(\pi:\mathfrak{g}\sir\mathfrak{g}/\mathfrak{h}\) the quotient Lie algebra
    homomorphism.  Then we have an isomorphism (of generalised quotients):
    \begin{center}
	{\hfill\begin{tikzpicture}
	    \matrix[matrix of nodes, column sep=1cm,row sep=1cm]{
						 & |(A)| \(\mathcal{U}(\mathfrak{g})\) & \\
|(C)| \(\mathcal{Q}(\mathfrak{g}/\mathfrak{h})\) &                                    & |(B)| \(\mathcal{U}(\mathfrak{g}/\mathfrak{h})\) \\
 	    };
	    \draw[->] (A) --node[above left]{\(\mathcal{Q}(\pi)\)} (C);
	    \draw[->] (A) --node[above right]{\(\mathcal{U}(\pi)\)} (B);
	    \draw[->] (C) --node[below]{\(\cong\)} (B);
	\end{tikzpicture}
	\hfill\refstepcounter{equation}\raisebox{12mm}{(\theequation)}\label{diag:gquot_of_ug}\\
	}
    \end{center}
    where \(\mathcal{U}(\pi)\) is the algebra homomorphism induced by the
    map~\(\pi\). 
\end{proposition}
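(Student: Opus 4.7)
The plan is to produce the desired isomorphism as a right $\mathcal{U}(\mathfrak{g})$-module map $g\colon\mathcal{U}(\mathfrak{g}/\mathfrak{h})\sir\mathcal{Q}(\mathfrak{g}/\mathfrak{h})$ filling in the triangle~\eqref{diag:gquot_of_ug}, and then to use the Poincar\'e--Birkhoff--Witt theorems to identify bases and conclude bijectivity. Since $\mathcal{Q}(\pi)$ and $\mathcal{U}(\pi)$ are both surjective and $\mathcal{U}(\pi)$ is in addition a coalgebra map, the map $g$ (satisfying $g\circ\mathcal{U}(\pi)=\mathcal{Q}(\pi)$) will exist precisely when $\ker\mathcal{U}(\pi)\subseteq\ker\mathcal{Q}(\pi)$, and it will automatically be a right $\mathcal{U}(\mathfrak{g})$-module map and a coalgebra map (the latter because coalgebra structure descends along the surjection $\mathcal{U}(\pi)$). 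Because $\mathfrak{h}$ is a Lie ideal, $\ker\mathcal{U}(\pi)=\mathcal{U}(\mathfrak{g})\cdot\mathfrak{h}$, so the construction of $g$ reduces to verifying that $\mathcal{Q}(\pi)(\xi\cdot h)=0$ for every $\xi\in\mathcal{U}(\mathfrak{g})$ and $h\in\mathfrak{h}$; equivalently, using that $\mathcal{Q}(\pi)$ is a right $\mathcal{U}(\mathfrak{g})$-module map, that $\mathcal{Q}(\pi)(\xi)\cdot h=0$.

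I would establish this vanishing by induction on the length of a PBW monomial. For $\xi=1$ we have $u\cdot h=\pi(h)=0$ directly from the defining relation~\eqref{eq:ggenquot_relation}. For $\xi=X\in\mathfrak{g}$, the consequence~\eqref{eq:ggenquot_consequence} valid in any generalised quotient of $\mathcal{U}(\mathfrak{g})$ (see~\ref{note:U(G)_gen_quotients}) yields $\pi(X)\cdot h-\pi(h)\cdot X=\pi([X,h])$; here the crucial use of the \emph{ideal} hypothesis enters, since $[X,h]\in\mathfrak{h}$ forces $\pi([X,h])=0$, and combined with $\pi(h)=0$ this gives $\pi(X)\cdot h=0$. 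For the inductive step on $\xi=X_1\cdots X_n$ we rewrite $X_n\cdot h=h\cdot X_n+[X_n,h]$ in $\mathcal{U}(\mathfrak{g})$, observe that $[X_n,h]\in\mathfrak{h}$, and apply the induction hypothesis separately to $X_1\cdots X_{n-1}\cdot h$ and to $X_1\cdots X_{n-1}\cdot[X_n,h]$.

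Once $g$ is in hand, I would check bijectivity by exhibiting a PBW-type basis correspondence. Choose an ordered basis $\mathsf{B}=\mathsf{B}_\mathfrak{h}\cup\mathsf{B}_{\mathfrak{g}/\mathfrak{h}}$ of $\mathfrak{g}$ in which every element of $\mathsf{B}_\mathfrak{h}$ precedes every lift from $\mathsf{B}_{\mathfrak{g}/\mathfrak{h}}$. The generalised Poincar\'e--Birkhoff--Witt Theorem~\ref{thm:gen_pbw_theorem} then describes $\mathcal{Q}(\mathfrak{g}/\mathfrak{h})$ by the basis $\{u\}$ together with the nonzero monomials $\pi(Z_0)\cdot Z_1\cdots Z_n$ with $Z_0\le\cdots\le Z_n$ in $\mathsf{B}$; the choice of ordering forces $Z_0\in\mathsf{B}_{\mathfrak{g}/\mathfrak{h}}$ in every nonzero such element (else $\pi(Z_0)=0$), and hence $Z_i\in\mathsf{B}_{\mathfrak{g}/\mathfrak{h}}$ for all $i$. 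Meanwhile the classical Poincar\'e--Birkhoff--Witt Theorem~\ref{thm:pbw_theorem} applied to $\mathfrak{g}/\mathfrak{h}$ yields the basis $\{1\}\cup\{\pi(Z_0)\pi(Z_1)\cdots\pi(Z_n):Z_i\in\mathsf{B}_{\mathfrak{g}/\mathfrak{h}},\,Z_0\le\cdots\le Z_n\}$ of $\mathcal{U}(\mathfrak{g}/\mathfrak{h})$. A direct computation $g(\pi(Z_0)\cdots\pi(Z_n))=\mathcal{Q}(\pi)(Z_0\cdots Z_n)=u\cdot Z_0\cdots Z_n=\pi(Z_0)\cdot Z_1\cdots Z_n$ shows that $g$ sends the PBW basis of $\mathcal{U}(\mathfrak{g}/\mathfrak{h})$ bijectively to the generalised PBW basis of $\mathcal{Q}(\mathfrak{g}/\mathfrak{h})$.

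The main obstacle is the inductive step showing that $\mathcal{Q}(\pi)$ annihilates $\mathcal{U}(\mathfrak{g})\cdot\mathfrak{h}$: it is exactly here that the assumption that $\mathfrak{h}$ is an \emph{ideal}, not merely a subalgebra, is used, via the relation~\eqref{eq:ggenquot_consequence} that is available in any generalised quotient. Everything else is bookkeeping with the two PBW bases.
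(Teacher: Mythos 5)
Your construction of $g$ is sound: reducing the existence of $g$ to the inclusion $\ker\mathcal{U}(\pi)\subseteq\ker\mathcal{Q}(\pi)$, identifying $\ker\mathcal{U}(\pi)$ with $\mathcal{U}(\mathfrak{g})\cdot\mathfrak{h}$ (standard for a Lie ideal), and proving the vanishing $\mathcal{Q}(\pi)(\xi\cdot h)=0$ by induction on the length of $\xi$ via $X_nh=hX_n+[X_n,h]$ is correct, and it is in substance the same computation as in the paper's proof (there it appears as an induction on the smallest index of a factor lying in $\mathfrak{h}$, and it is used for the map going the other way). The gap is in your bijectivity step. With your orientation, injectivity of $g$ is exactly the reverse inclusion $\ker\mathcal{Q}(\pi)\subseteq\ker\mathcal{U}(\pi)$, and your basis comparison only yields it if \emph{every} ordered monomial $\pi(Z_0)\cdot Z_1\cdot\ldots\cdot Z_n$ with all $Z_i\in\mathsf{B}_{\mathfrak{g}/\mathfrak{h}}$ is nonzero in $\mathcal{Q}(\mathfrak{g}/\mathfrak{h})$. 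Theorem~\ref{thm:gen_pbw_theorem} as stated does not give this: it asserts only that those monomials which happen to be nonzero form, together with $u$, a basis. If one of your candidate monomials vanished, $g$ would annihilate the corresponding PBW basis vector of $\mathcal{U}(\mathfrak{g}/\mathfrak{h})$; your matching would still give surjectivity (which is free anyway, since $g\circ\mathcal{U}(\pi)=\mathcal{Q}(\pi)$ is onto), but not injectivity. So the last paragraph tacitly assumes the nonvanishing it needs.

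The gap is patchable in two ways. Either re-open the proof of Theorem~\ref{thm:gen_pbw_theorem}: the map $L$ constructed there fixes each monomial $\pi(Z_0)\cdot Z_1\cdot\ldots\cdot Z_n$ with $\pi(Z_0)\neq0$ and annihilates $\ker\bigl(\mathsf{F}\sir\mathcal{Q}(\mathfrak{g}/\mathfrak{h})\bigr)$, so no such monomial can lie in that kernel, i.e.\ all your candidates are nonzero, and your basis matching then goes through. Or, more economically (this is the paper's route, which dispenses with the basis comparison entirely), construct the inverse map directly: the assignment $u\selmap{}1$, $\pi(X)\selmap{}\mathcal{U}(\pi)(X)$ defines a right $\mathcal{U}(\mathfrak{g})$-module map on the free module $\mathsf{F}$ which respects the relation~\eqref{eq:ggenquot_relation}, hence descends to $h\colon\mathcal{Q}(\mathfrak{g}/\mathfrak{h})\sir\mathcal{U}(\mathfrak{g}/\mathfrak{h})$ with $h\circ\mathcal{Q}(\pi)=\mathcal{U}(\pi)$; since $\mathcal{U}(\pi)$ and $\mathcal{Q}(\pi)$ are surjective, $h\circ g=\id$ and $g\circ h=\id$ follow immediately. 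In that organisation the only non-formal input is precisely the kernel inclusion your induction establishes, and the ideal hypothesis enters there and only there.
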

\begin{proof}
    First we want to show that we have a natural epimorphism:
     \[\mathcal{Q}(\mathfrak{g}/\mathfrak{h})\sir\mathcal{U}(\mathfrak{g}/\mathfrak{h})\]
     For this we use the definition of
     \(\mathcal{Q}(\mathfrak{g}/\mathfrak{h})\) as a quotient of a free
     \(\mathcal{U}(\mathfrak{g})\)-module \(\mathsf{F}\) generated by \(\k
	 u\oplus\mathfrak{g}/\mathfrak{h}\). We have
     a \(\mathcal{U}(\mathfrak{g})\)-module homomorphism
     \(\mathsf{F}(\pi):\mathsf{F}\sir\mathcal{U}(\mathfrak{g}/\mathfrak{h})\) which sends
     a generator \(X\) of \(\mathsf{F}\), i.e.
     \(X\in\mathfrak{g}/\mathfrak{h}\), to its image in
     \(\mathcal{U}(\mathfrak{g}/\mathfrak{h})\), and
     \(\mathsf{F}(\pi)(u)=\mathcal{U}(\pi)(1)\). This map descends to the
     quotient \(\mathcal{Q}(\mathfrak{g}/\mathfrak{h})\). For this it is
     enough to check that:
     \begin{align*}
	 \mathsf{F}(\pi)(u\cdot X) & = \mathsf{F}(\pi)(u)\cdot X \\
	                     & = \mathcal{U}(\pi)(1)\cdot X \\
	                     & = \mathcal{U}(\pi)(X) \\
			     & = \mathsf{F}(\pi)(X)
     \end{align*}
     Hence we get the map
     \(\mathcal{Q}(\mathfrak{g}/\mathfrak{h})\sir\mathcal{U}(\mathfrak{g}/\mathfrak{h})\).
     This map is an epimorphism since the diagram~\eqref{diag:gquot_of_ug}
     commutes. It remains to shows that it is a monomorphism. For this let us
     choose a totally ordered basis \(\mathsf{B}\) of \(\mathfrak{g}\) such
     that a subset of \(B\) spans the Lie ideal \(\mathfrak{h}\). Let us
     consider the associated Poincare--Birkhoff--Witt basis of
     \(\mathcal{U}(\mathfrak{g})\). The kernel of \(\mathcal{U}(\pi)\) is
     spanned by all the PBW basis elements of the form \(Z_1\cdot\ldots\cdot
	 Z_n\) such that \(Z_1\leq\ldots\leq Z_n\),
     \(Z_1,\ldots,Z_n\in\mathsf{B}\) and there is at least one \(i\) such that
     \(Z_i\in\mathfrak{h}\) (this follows from the
     Poincar\'{e}--Birkhoff--Witt theorem for
     \(\mathcal{U}(\mathfrak{g}/\mathfrak{h})\)). We claim that, for such an
     element, \(\pi(Z_1)\cdot Z_2\cdot\ldots\cdot Z_n=0\) as an element of
     \(\mathcal{Q}(\mathfrak{g}/\mathfrak{h})\).  We will prove this by
     induction under the smallest index \(i\) such that
     \(Z_i\in\mathfrak{h}\). If \(Z_1\in\mathfrak{h}\) then clearly
     \(\pi(Z_1)\cdot Z_2\cdot\ldots\cdot Z_n=0\) in
     \(\mathcal{Q}(\mathfrak{g}/\mathfrak{h})\). Also if
     \(Z_2\in\mathfrak{h}\) then 
     \[\pi(Z_1)\cdot Z_2\cdot\ldots\cdot Z_n=\pi(Z_2)\cdot Z_1\cdot\ldots\cdot Z_n+\pi([Z_1,Z_2])\cdot Z_3\cdot\ldots\cdot Z_n=0\]
     since \(\mathfrak{h}\) is a Lie ideal. If the induction hypothesis holds
     for \(i\) and the smallest index \(j\) such that \(Z_j\in\mathfrak{h}\)
     is equal to \(i+1\) then by commuting \(Z_i\) and \(Z_{i+1}\) we get the
     claim under the induction hypothesis:
     \begin{align*}
      \pi(Z_1)\cdot Z_2\cdot\ldots\cdot Z_i\cdot Z_{i+1}\cdot\ldots & =\pi(Z_1)\cdot Z_2\cdot\ldots\cdot Z_{i+1}\cdot Z_i\cdot\ldots \\
				    & \phantom{=}+\pi(Z_1)\cdot Z_2\cdot\ldots\cdot [Z_i,Z_{i+1}]\cdot\ldots \\
				    & =0
     \end{align*}
\end{proof}
An element \(p\) of a Hopf algebra \(H\) is called \bold{primitive} if
\(\Delta(p)=p\otimes1+1\otimes p\). The set of primitive elements  of a Hopf
algebra \(H\), which we denote by \(P(H)\), forms a Lie algebra with the usual
bracket \([p,q]=pq-qp\). Furthermore, \(H\selmap{}P(H)\) defines a functor
from the category of Hopf algebras to the category of Lie algebras, that is if
\(f:H\sir K\) is a Hopf algebra morphism then it restricts to a morphism of
Lie algebras \(P(f):P(H)\sir P(K)\). It is the right adjoint functor to
\(\mathcal{U}\) which assigns the enveloping Hopf algebra of a Lie algebra
discussed in Example~\ref{ex:Hopf_algebras}. Let us also note that in
characteristic zero we have \(P(\mathcal{U}(\mathfrak{g}))=\mathfrak{g}\),
while in characteristic \(p\) \(X^p\in P(\mathcal{U}(\mathfrak{g}))\) for any
\(X\in\mathfrak{g}\). 

There is another important consequence of the PBW basis for generalised
quotients.
\index{universal enveloping algebra!generalised quotient!primitive elements}
\begin{proposition}\label{prop:primitive_elems_of_Q}
    Let \(\k\) be a field of characteristic \(0\).  Let \(\mathfrak{g}\) be
    a finite dimensional \(\k\)-Lie algebra and \(\mathfrak{h}\) a Lie
    subalgebra of \(\mathcal{g}\).  Then
    \(P(\mathcal{Q}(\mathfrak{g}/\mathfrak{h}))=\mathfrak{g}/\mathfrak{h}\). 
\end{proposition}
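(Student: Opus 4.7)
The plan is to establish the two inclusions separately. For the easy inclusion $\mathfrak{g}/\mathfrak{h}\subseteq P(\mathcal{Q}(\mathfrak{g}/\mathfrak{h}))$, I would simply note that the defining comultiplication formula \(\Delta(\mathcal{Q}(\pi)(X))=\mathcal{Q}(\pi)(X)\otimes u+u\otimes\mathcal{Q}(\pi)(X)\) for \(X\in\mathfrak{g}\) exhibits each \(\mathcal{Q}(\pi)(X)\) as a \(u\)-primitive element, and by the corollary preceding the statement the canonical map \(\mathfrak{g}/\mathfrak{h}\hookrightarrow\mathcal{Q}(\mathfrak{g}/\mathfrak{h})\) is injective, so \(\mathfrak{g}/\mathfrak{h}\) embeds as a subspace of primitives.

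For the reverse inclusion, I would exploit the generalised Poincar\'e--Birkhoff--Witt basis (Theorem~\ref{thm:gen_pbw_theorem}) together with an induced grading. Define \(\deg(u)\coloneq 0\) and \(\deg(\pi(Z_0)\cdot Z_1\cdots Z_n)\coloneq n+1\); from the recursive defining formula of \(\Delta\) in the previous lemma (with each \(Z_i\) primitive in \(\mathcal{U}(\mathfrak{g})\)) one verifies that this is a coalgebra grading, i.e.\ \(\Delta(\mathcal{Q}_k)\subseteq\bigoplus_{i+j=k}\mathcal{Q}_i\otimes\mathcal{Q}_j\). Since \(\mathcal{Q}(\mathfrak{g}/\mathfrak{h})\) is pointed irreducible with unique group-like \(u\) (Corollary~\ref{cor:Q_irr_pointed}), any primitive \(q\) satisfies \(\epsilon(q)=0\), so \(q=\sum_{k\geq 1}q_k\) in this grading. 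The primitivity condition \(\Delta(q)=q\otimes u+u\otimes q\) separates by bidegree: the bidegree-\((k,0)\) and bidegree-\((0,k)\) components of the right-hand side come only from \(q_k\), so by comparing bidegrees each \(q_k\) must itself be primitive. Thus it suffices to show that no nonzero primitive of degree \(k\geq 2\) exists.

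For this last step, I would compute, for a PBW basis element \(w=\pi(Z_0)\cdot Z_1\cdots Z_{k-1}\) of degree \(k\), the bidegree-\((1,k-1)\) component of \(\Delta(w)\), using the explicit formula of the previous lemma together with \(\Delta_{\mathcal{U}}(Z_1\cdots Z_{k-1})=\sum_{S\subseteq[k-1]}Z_S\otimes Z_{S^c}\); only the terms with \(|S|=0\) in the \((\pi(Z_0)\otimes u)\)-part and \(|S|=1\) in the \((u\otimes\pi(Z_0))\)-part contribute, giving
\[
    \Delta(w)_{(1,k-1)}=\pi(Z_0)\otimes\pi(Z_1)Z_2\cdots Z_{k-1}+\sum_{i=1}^{k-1}\pi(Z_i)\otimes\pi(Z_0)Z_1\cdots\widehat{Z_i}\cdots Z_{k-1}.
\]
Summed over a general degree-\(k\) element \(q_k=\sum_\alpha\lambda_\alpha w_\alpha\), the primitive condition forces this expression to vanish in \(\mathcal{Q}_1\otimes\mathcal{Q}_{k-1}\). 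The main obstacle is then the combinatorial core of the argument: showing that this linear map \(\partial_k\colon\mathcal{Q}_k\to\mathfrak{g}/\mathfrak{h}\otimes\mathcal{Q}_{k-1}\) is injective for \(k\geq 2\). This I would prove by fixing the first tensor factor \(\pi(Z_0)\) and reading off, via linear independence of the PBW basis of \(\mathcal{Q}_{k-1}\) in the second factor, that each coefficient \(\lambda_\alpha\) must vanish --- here the assumption of characteristic zero enters because one needs to divide by the symmetry multiplicities arising from repeated indices among \(Z_0,\dots,Z_{k-1}\) when one symmetrises the contributions. Alternatively, one may dispense with the combinatorics by invoking the Cartier--Kostant--Milnor--Moore structure theorem in characteristic zero, which identifies the pointed irreducible cocommutative coalgebra \(\mathcal{Q}(\mathfrak{g}/\mathfrak{h})\) with the cofree cocommutative coalgebra on its primitives, and then use the generalised PBW basis to match Hilbert series and conclude \(P(\mathcal{Q}(\mathfrak{g}/\mathfrak{h}))=\mathfrak{g}/\mathfrak{h}\).
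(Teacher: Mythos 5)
Your main route is correct in substance and draws on the same ingredients as the paper --- the generalised Poincar\'e--Birkhoff--Witt basis, the shuffle formula~\eqref{eq:delta_on_b}, and the fact that positive integer multiplicities are invertible in characteristic zero --- but it organises the computation differently. The paper works with the full shuffle expansion at once: for a primitive $\sum_i\alpha_iZ_i$ with all $m_i>1$ it fixes a pair $(b,b')$ of basis elements in the tensor square and shows, by comparing multiplicities of repeated letters, that all contributions to $b\otimes b'$ come from a single monomial and a single $p$, whence $\alpha_{i_{b,b'}}\cdot\#I(b,b')=0$. You instead note that PBW length gives a coalgebra grading, split a primitive into homogeneous primitives, and inspect only the bidegree $(1,k-1)$ component; there the ``no interference between distinct monomials'' step is immediate, since the removed letter together with the remaining sorted monomial determines the original monomial as a multiset, and the coefficient of $\pi(Z)\otimes m$ is $\lambda_\alpha$ times the multiplicity of $Z$ in $w_\alpha$. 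This buys a much lighter combinatorial core (you avoid the paper's count-comparison over all shuffles) at the cost of first verifying the grading claim; that claim is exactly as legitimate as the paper's own assertion in~\eqref{eq:delta_on_b} that both shuffle factors are again basis elements (in practice one takes the ordered basis adapted to $\mathfrak{h}$, e.g.\ with the $\mathfrak{h}$-letters smallest, so nonzero basis monomials involve only complementary letters), so you assume nothing beyond what the paper's proof already uses.

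Two small caveats. First, ``fixing the first tensor factor $\pi(Z_0)$'' should read ``for each possible removed letter'': a monomial is recovered from the pair (removed letter, remaining monomial), not from its first letter alone, though this does not change the argument. Second, your fallback via Cartier--Kostant--Milnor--Moore is misstated: in characteristic zero a pointed irreducible cocommutative coalgebra only embeds as a subcoalgebra of the cofree cocommutative coalgebra on its primitives, it need not equal it (think of a finite-dimensional subcoalgebra of the divided-power coalgebra), so the proposed Hilbert-series identification does not come for free and would have to be replaced by a comparison of coradical filtrations. Since your graded argument stands on its own, this flaw only affects the optional alternative, not the correctness of the main route.
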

\begin{proof}
    The proof goes exactly the same way as for \(\mathcal{U}(\mathfrak{g})\).
    We include it here for completeness.  Let \(\mathsf{B}\) be a totally
    ordered basis of \(\mathfrak{g}\) and let us consider the
    basis~\eqref{eq:gen_pbw_basis}.  For a basis element \(b=\pi(Z_0)\cdot
	Z_1\cdot\ldots\cdot Z_m\) of
    \(\mathcal{Q}(\mathfrak{g}/\mathfrak{h})\) we have:
    \begin{align}\label{eq:delta_on_b}
\Delta_{\mathcal{Q}(\mathfrak{g}/\mathfrak{h})}(b)= & b\otimes1+1\otimes b+\\
& \hspace{-2cm}+\sum_{p=1}^{m-1}\sum_{\sigma\textit{ - }(p,m)\textit{-shuffle}}\pi(Z_{\sigma(1)})Z_{\sigma(2)}\cdots Z_{\sigma(p)}\otimes\pi(Z_{\sigma(p+1)})Z_{\sigma(2)}\cdots Z_{\sigma(m)} \notag
    \end{align}
    where a \((p,m)\)-shuffle \(\sigma\) is a permutation such that:
    \[\sigma(1)<\sigma(2)<\cdots<\sigma(p)\quad\textrm{and}\quad\sigma(p+1)<\sigma(p+2)<\cdots<\sigma(m)\]
    Note that since \(Z_0<\cdots<Z_m\) both factors of the tensor
    \(\pi(Z_{\sigma(1)})Z_{\sigma(2)}\cdots Z_{\sigma(p)}\) and
    \(\pi(Z_{\sigma(p+1)})Z_{\sigma(2)}\cdots Z_{\sigma(m)}\) are basis
    elements (they are non zero, see~\eqref{eq:gen_pbw_basis}).  By definition
    of the comultiplication
    \(\Delta_{\mathcal{Q}(\mathfrak{g}/\mathfrak{h})}\) it follows that
    \(\mathfrak{g}/\mathfrak{h}\) is a subset of the set of primitive elements
    of \(\mathcal{Q}(\mathfrak{g}/\mathfrak{h})\).  Now, let
    \[u=\sum_{i=1}^l\alpha_iZ_i\]
    where \(\alpha_i\in\k\) and each \(Z_i=\pi(Z_{i,1})Z_{i,2}\cdot\ldots\cdot
	Z_{i,m_i}\) is a basis element which belongs
    to~\eqref{eq:gen_pbw_basis}.  Let \(v=\sum_{i:\alpha_i=1}
	Z_i\in\pi(\mathfrak{g})\).  Then \(u-v\) is also a primitive element.
    Hence without loss of generality we can assume that \(m_i>1\) for all
    \(i=1,\dots,l\).  For such a primitive element \(u\) we will show that
    \(u=0\).  By equation~\eqref{eq:delta_on_b} and since \(u\) is a primitive
    element we have:
    {\small\begin{align}\label{eq:primitive_condition}
	\sum_{i=1}^l\sum_{p=1}^{m_i-1}\sum_{\sigma\textit{ - }(p,m)\textit{-shuffle}} \pi(Z_{i,\sigma(1)})Z_{i,\sigma(2)}\cdots Z_{i,\sigma(p)}\otimes\pi(Z_{i,\sigma(p+1)})Z_{i,\sigma(p+2)}\cdots Z_{i,\sigma(m_i)} & \\
	&\hspace*{-8cm}= \Delta_{\mathcal{Q}(\mathfrak{g}/\mathfrak{h})}(u)-u\otimes1-1\otimes u=0\notag
    \end{align}}
    Let \(b\) and \(b'\) be basis elements which belong
    to~\eqref{eq:gen_pbw_basis} and let
    \[I(b,b')\coloneq\left\{(i,p,\sigma):\begin{array}{l}
	    \sigma\textit{ is a }(p,m_i)\textit{ shuffle},\\
	    b=\pi(Z_{i,\sigma(1)})Z_{i,\sigma(2)}\cdots Z_{i,\sigma(p)},\\
	    b'=\pi(Z_{i,\sigma(p+1)})Z_{i,\sigma(p+2)}\cdots Z_{i,\sigma(m_i)}
	\end{array}\right\}\]
    Then the condition~\eqref{eq:primitive_condition} is satisfied if and only
    if for any pair of basis elements \(b,b'\) we have:
    \[\sum_{(i,p,\sigma)\in I(b,b')}\alpha_i=0\]
    We use the convention that the sum over an empty set is zero.  Let us take
    \((i,p,\sigma),(j,q,\tau)\in I(b,b')\).  Then we have:
    \begin{align*}
	\pi(Z_{i,\sigma(1)})Z_{i,\sigma(2)}\cdots Z_{i,\sigma(p)}=&b=\pi(Z_{j,\tau(1)})Z_{j,\tau(2)}\cdots Z_{j,\tau(q)}\\
	\pi(Z_{i,\sigma(p+1)})Z_{i,\sigma(p+2)}\cdots Z_{i,\sigma(m_i)}=&b'=\pi(Z_{j,\tau(q+1)})Z_{j,\tau(q+2)}\cdots Z_{j,\tau(m_j)}
    \end{align*}
    It follows that \(p=q\) and \(m_i=m_j\) and for all \(1\leq t\leq m_i\):
    \[Z_{i,\sigma(t)}=Z_{j,\tau(t)}\]
    Now we have:
    \begin{equation}\label{eq:primitive_comp}
	\#\{t: Z_{i,t}=X\} = \#\{t: Z_{j,t}=X\} 
    \end{equation}
    Now, if \(Z_i\neq Z_j\) then there exists a minimal index \(t'\) such that
    \(Z_{i,t'}\neq Z_{j,t'}\) and \(Z_{i,t}=Z_{j,t}\) for all \(t<t'\).
    Without loss of generality we can assume that \(Z_{i,t'}<Z_{j,t'}\).
    Since \(Z_{j,s}\leq Z_{j,s+1}\) for all \(s\) we have:
    \begin{align*}
	\#\{t:Z_{j,t}=Z_{i,t'}\} & =\#\{t: Z_{j,t}=Z_{i,t'}\text{ and }t<t'\}\\
	                         & =\#\{t:Z_{i,t}=Z_{i,t'}\text{ and }t<t'\}\\
				 & <\#\{t:Z_{i,t}=Z_{i,t'}\}
    \end{align*}
    But this contradicts equation~\eqref{eq:primitive_comp}.  Thus
    \(Z_{i,t}=Z_{j,t}\) for all \(t\) and hence \(Z_i=Z_j\).  In consequence
    \(i=j\).  This shows that if \(I(b,b')\) is non-empty, then there exist
    uniquely determined numbers \(i\) and \(p\) such that any element of
    \(I(b,b')\) looks like \((i,p,\sigma)\) for some \((p,m_i)\)-shuffle
    \(\sigma\).  Let us denote this number by \(i_{b,b'}\).
    Equation~\eqref{eq:primitive_condition} is equivalent to
    \(\alpha_{i_{b,b'}}\cdot\#I(b,b')=0\) for all pairs \(b,b'\) of basis
    elements of \(\mathcal{Q}(\mathfrak{g}/\mathfrak{h})\).  Since
    \(\#I_{b,b'}\) is a natural number and the characteristic of the field
    \(\k\) is zero we get \(\alpha_i=0\) for all \(i\).  Thus \(u=0\).  This
    proves that all the primitive elements of
    \(\mathcal{Q}(\mathfrak{g}/\mathfrak{h})\) are those of
    \(\mathfrak{g}/\mathfrak{h}\). 
\end{proof}

For a (one-sided) coideal subalgebra \(K\) of a Hopf algebra \(H\) we let
\(K^+\coloneq K\cap\ker\epsilon\).
\index{universal enveloping algebra!generalised quotient!Takeuchi correspondence}
\begin{proposition}\label{prop:enveloping-algebra-quotients_}
    Let \(\mathfrak{g}\) be a finite dimensional Lie algebra over a field
    \(\k\) of characteristic zero and let \(\mathfrak{h}\) be a Lie
    subalgebra.  Then we have isomorphism of generalised quotients: 
    \begin{center}
	{\hfill\begin{tikzpicture}
	    \matrix[matrix of nodes, column sep=1cm,row sep=1cm]{
		& |(A)| \(\mathcal{U}(\mathfrak{g})\) & \\
|(C)| \(\mathcal{U}(\mathfrak{g})/\mathcal{U}(\mathfrak{h})^+\mathcal{U}(\mathfrak{g})\) & & |(B)| \(\mathcal{U}(\mathfrak{g}/\mathfrak{h})\) \\
 	    };
	    \draw[->] (A) --node[above left]{\(p\)} (C);
	    \draw[->] (A) --node[above right]{\(\mathcal{Q}(\pi)\)} (B);
	    \draw[->] (B) --node[below]{\(\cong\)} (C);
	\end{tikzpicture}
	\hfill\refstepcounter{equation}\raisebox{12mm}{(\theequation)}\label{diag:gquot_of_ug_}\\
	}
    \end{center}
    where
    \(p:\mathcal{U}(\mathfrak{g})\sir\mathcal{U}(\mathfrak{g})/\mathcal{U}(\mathfrak{h})^+\mathcal{U}(\mathfrak{g})\)
    is the natural projection.
\end{proposition}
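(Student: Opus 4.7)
The plan is to construct the isomorphism directly from the universal properties of both sides as right $\mathcal{U}(\mathfrak{g})$-modules, avoiding a PBW-basis comparison. Write $I \coloneq \mathcal{U}(\mathfrak{h})^+\mathcal{U}(\mathfrak{g})$. First I would verify that $I$ is a right ideal coideal of $\mathcal{U}(\mathfrak{g})$, so that $p$ is indeed a generalised quotient map. The right-ideal property is immediate; the coideal property comes down to checking $I \subseteq \ker\epsilon$ together with $\Delta(I) \subseteq I \otimes \mathcal{U}(\mathfrak{g}) + \mathcal{U}(\mathfrak{g}) \otimes I$, which follows from $\mathcal{U}(\mathfrak{h})$ being a sub-bialgebra of $\mathcal{U}(\mathfrak{g})$ together with the standard identity $\Delta(h) \in h\otimes 1 + 1\otimes h + \mathcal{U}(\mathfrak{h})^+\otimes \mathcal{U}(\mathfrak{h})^+$ for $h \in \mathcal{U}(\mathfrak{h})^+$.

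Next I would show that $\mathcal{Q}(\pi)$ factors through $p$, yielding a map $\bar q : \mathcal{U}(\mathfrak{g})/I \sir \mathcal{Q}(\mathfrak{g}/\mathfrak{h})$. Since $\mathcal{Q}(\pi)$ is a right $\mathcal{U}(\mathfrak{g})$-module map with $\mathcal{Q}(\pi)(1) = u$ and since $\mathcal{Q}(\pi)(X) = u\cdot X = \pi(X) = 0$ for every $X \in \mathfrak{h}$, a straightforward induction on the length of monomials in elements of $\mathfrak{h}$ (using the right-module property) yields $\mathcal{Q}(\pi)|_{\mathcal{U}(\mathfrak{h})^+} = 0$, and right-$\mathcal{U}(\mathfrak{g})$-linearity then extends this to $\mathcal{Q}(\pi)|_{I} = 0$. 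The induced map $\bar q$ is automatically a surjective coalgebra map completing the stated triangle.

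For injectivity I would construct an explicit inverse $\alpha$ using the presentation of $\mathcal{Q}(\mathfrak{g}/\mathfrak{h})$ as the quotient of the free right $\mathcal{U}(\mathfrak{g})$-module on $\k u \oplus \mathfrak{g}/\mathfrak{h}$ by the relation $u\cdot X = \pi(X)$ for $X \in \mathfrak{g}$. Set $\alpha(u) \coloneq \overline{1}$ and $\alpha(\pi(X)) \coloneq \overline{X}$; the latter is well-defined on $\mathfrak{g}/\mathfrak{h}$ because $\overline{X} = 0$ whenever $X \in \mathfrak{h} \subseteq I$. Extending by right $\mathcal{U}(\mathfrak{g})$-linearity, the defining relation is preserved: $\alpha(u\cdot X) = \overline{1}\cdot X = \overline{X} = \alpha(\pi(X))$, so $\alpha$ descends to $\alpha : \mathcal{Q}(\mathfrak{g}/\mathfrak{h}) \sir \mathcal{U}(\mathfrak{g})/I$. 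Both compositions $\alpha\circ\bar q$ and $\bar q\circ\alpha$ are then right $\mathcal{U}(\mathfrak{g})$-module endomorphisms of cyclic modules sending the distinguished cyclic generator ($\overline{1}$, respectively $u$) to itself, and hence equal the identity. The resulting bijection, being a coalgebra map, yields the sought isomorphism of generalised quotients. The most delicate step is the coideal verification for $I$ in the opening; the rest of the argument is purely formal, and in particular the characteristic-zero hypothesis is not actually used here, appearing to be inherited from the preceding Proposition~\ref{prop:primitive_elems_of_Q}.
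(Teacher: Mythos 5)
Your proof is correct, but it takes a genuinely different route from the paper's. The paper argues in the opposite direction: it defines a map \(\phi:\mathcal{Q}(\mathfrak{g}/\mathfrak{h})\to\mathcal{U}(\mathfrak{g})/\mathcal{U}(\mathfrak{h})^+\mathcal{U}(\mathfrak{g})\) on the generalised Poincar\'{e}--Birkhoff--Witt basis of Theorem~\ref{thm:gen_pbw_theorem}, reads off surjectivity from the commuting triangle, and gets injectivity by restricting to primitives: \(P(\mathcal{Q}(\mathfrak{g}/\mathfrak{h}))=\mathfrak{g}/\mathfrak{h}\) (Proposition~\ref{prop:primitive_elems_of_Q}, which is where characteristic zero enters) injects into \(P(\mathcal{U}(\mathfrak{g})/\mathcal{U}(\mathfrak{h})^+\mathcal{U}(\mathfrak{g}))\), and since \(\mathcal{Q}(\mathfrak{g}/\mathfrak{h})\) is pointed irreducible (Corollary~\ref{cor:Q_irr_pointed}) a coalgebra map injective on primitives is injective. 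You instead exploit that both sides are cyclic right \(\mathcal{U}(\mathfrak{g})\)-modules and that \(\mathcal{Q}(\mathfrak{g}/\mathfrak{h})\) is presented by the single relation \(u\cdot X=\pi(X)\): the factorisation of \(\mathcal{Q}(\pi)\) through \(p\), together with the explicit inverse \(\alpha\) (well defined exactly because \(\mathfrak{h}\subseteq\mathcal{U}(\mathfrak{h})^+\mathcal{U}(\mathfrak{g})\)), gives mutually inverse right module maps fixing the cyclic generators, and the coalgebra compatibility is automatic since \(\bar q\circ p=\mathcal{Q}(\pi)\) with \(p\) surjective. What your route buys is elementarity and generality: it needs neither the generalised PBW basis nor the primitives computation, and, as you observe, it shows the characteristic-zero hypothesis is superfluous for this particular proposition (the paper needs it only because of its detour through Proposition~\ref{prop:primitive_elems_of_Q}, and again later in Theorem~\ref{thm:enveloping-algebra-quotients}); what the paper's route buys is that it exercises precisely the PBW and primitive-element machinery that the subsequent classification of \(\qquot(\mathcal{U}(\mathfrak{g}))\) requires anyway. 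Two minor remarks: your opening coideal verification is actually the least delicate step, since \(\mathcal{U}(\mathfrak{h})\) is a left coideal (indeed Hopf) subalgebra of \(\mathcal{U}(\mathfrak{g})\), so \(\mathcal{U}(\mathfrak{h})^+\mathcal{U}(\mathfrak{g})\) is a right ideal coideal by the general argument already given in the proof of the Galois connection~\eqref{eq:galois-for-hopf-alg}; and you correctly read the node labelled \(\mathcal{U}(\mathfrak{g}/\mathfrak{h})\) in the statement's diagram as \(\mathcal{Q}(\mathfrak{g}/\mathfrak{h})\), which is what is meant, since \(\mathfrak{h}\) is only assumed to be a Lie subalgebra.
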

\begin{proof}
    Let choose a Poincar\'{e}--Birkhoff--Witt basis for
    \(\mathcal{Q}(\mathfrak{g}/\mathfrak{h})\) as given in
    equation~\eqref{eq:gen_pbw_basis}.  Then we define a \(\k\)-linear map
    \(\phi:\mathcal{Q}(\mathfrak{g}/\mathfrak{h})\sir\mathcal{U}(\mathfrak{g})/\mathcal{U}(\mathfrak{h})^+\mathcal{U}(\mathfrak{g})\),
    by \(\phi(\pi(Z_0)\cdot Z_1\cdot\ldots\cdot Z_n)=p(Z_0)\cdot
	Z_1\cdot\ldots\cdot Z_m\), where \(\pi(Z_0)\cdot Z_1\cdot\ldots\cdot
	Z_n\) is a basis element which belongs to~\eqref{eq:gen_pbw_basis}.
    Clearly, \(\phi\) is a morphism of generalised quotients and hence, by
    commutativity of diagram~\eqref{diag:gquot_of_ug_}, it is an epimorphism.
    Now let us consider
    \(\phi|_{P(\mathcal{Q}(\mathfrak{g}/\mathfrak{h}))}:P(\mathcal{Q}(\mathfrak{g}/\mathfrak{h}))\sir
	P(\mathcal{U}(\mathfrak{g})/\mathcal{U}(\mathfrak{h})^+\mathcal{U}(\mathfrak{g}))\).
    We already know that
    \(P(\mathcal{Q}(\mathfrak{g}/\mathfrak{h}))=\mathfrak{g}/\mathfrak{h}\).
    Furthermore, \(\mathfrak{g}/\mathfrak{h}\subseteq
	P(\mathcal{U}(\mathfrak{g})/\mathcal{U}(\mathfrak{h})^+\mathcal{U}(\mathfrak{g}))\),
    which follows from the Poincar\'{e}--Birkhoff--Witt theorem for
    \(\mathcal{U}(\mathfrak{g})\).  It follows that
    \(\phi|_{P(\mathcal{Q}(\mathfrak{g}/\mathfrak{h}))}:P(\mathcal{Q}(\mathfrak{g}/\mathfrak{h}))\sir
	P(\mathcal{U}(\mathfrak{g})/\mathcal{U}(\mathfrak{h})^+\mathcal{U}(\mathfrak{g}))\)
    is a monomorphism.  Now since \(\mathcal{Q}(\mathfrak{g}/\mathfrak{h})\)
    is a pointed irreducible coalgebra by Corollary~\eqref{cor:Q_irr_pointed}
    it follows that \(\phi\) is an injective homomorphism (by~\cite[Thm
    2.4.11]{ea:hopf_algebras}).
\end{proof}
\index{universal enveloping algebra!generalised quotient!characterisation theorem}
\begin{theorem}\label{thm:enveloping-algebra-quotients}
    Let \(\mathfrak{g}\) be a finite dimensional Lie algebra over a field
    \(\k\) of characteristic \(0\). Then
    \(\Quot(\mathcal{U}(\mathfrak{g}))\cong\Quot_\mathit{Lie}(\mathfrak{g})\),
    where \(\Quot_{\mathit{Lie}}(\mathfrak{g})\) denotes the poset of quotient
    Lie algebras of the Lie algebra \(\mathfrak{g}\). Furthermore, there is an
    order reversing isomorphism
    \(\qquot(\mathcal{U}(\mathfrak{g}))\cong\Sub_{\mathit{Lie}}(\mathfrak{g})\),
    where \(\Sub_{\mathit{Lie}}(\mathfrak{g})\) is the poset of Lie
    subalgebras of the Lie algebra \(\mathfrak{g}\). 
\end{theorem}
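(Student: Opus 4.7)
The plan is to obtain both claims as consequences of a single bijection at the level of generalised quotients, together with Proposition~\ref{prop:U_and_Q} to pass to the Hopf algebra quotients. Concretely, I set up the correspondences
\[
\Phi\colon\Sub_{\mathit{Lie}}(\mathfrak{g})\sir\qquot(\mathcal{U}(\mathfrak{g})),\quad\mathfrak{h}\selmap{}\mathcal{Q}(\mathfrak{g}/\mathfrak{h})
\]
and
\[
\Psi\colon\qquot(\mathcal{U}(\mathfrak{g}))\sir\Sub_{\mathit{Lie}}(\mathfrak{g}),\quad(p\colon\mathcal{U}(\mathfrak{g})\sir Q)\selmap{}\ker p\cap\mathfrak{g},
\]
the second being a Lie subalgebra by Note~\ref{note:U(G)_gen_quotients}. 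Both maps manifestly reverse inclusion. The identity $\Psi\circ\Phi=\id$ is the easy direction: by the generalised Poincar\'e--Birkhoff--Witt theorem (Theorem~\ref{thm:gen_pbw_theorem}), the image $\pi(X)$ of any $X\in\mathfrak{g}\setminus\mathfrak{h}$ is a nonzero basis element of $\mathcal{Q}(\mathfrak{g}/\mathfrak{h})$, so $\ker\mathcal{Q}(\pi)\cap\mathfrak{g}=\mathfrak{h}$.

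The main work is $\Phi\circ\Psi=\id$, i.e. reconstructing an arbitrary generalised quotient $p\colon\mathcal{U}(\mathfrak{g})\sir Q$ from $\mathfrak{h}=\ker p\cap\mathfrak{g}$. Using the presentation of $\mathcal{Q}(\mathfrak{g}/\mathfrak{h})$ as the quotient of the free $\mathcal{U}(\mathfrak{g})$-module on $\k u\oplus\mathfrak{g}/\mathfrak{h}$ by the relation $u\cdot X=\pi(X)$, I define a right $\mathcal{U}(\mathfrak{g})$-linear map $\phi\colon\mathcal{Q}(\mathfrak{g}/\mathfrak{h})\sir Q$ by $u\selmap{}p(1_{\mathcal{U}(\mathfrak{g})})$ and $\pi(X)\selmap{}p(X)$; well-definedness amounts to checking $\phi(u\cdot X)=p(1)p(X)=p(X)=\phi(\pi(X))$, and $\phi\circ\mathcal{Q}(\pi)=p$ gives surjectivity. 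That $\phi$ is a coalgebra map reduces via right $\mathcal{U}(\mathfrak{g})$-linearity and the identity used in the lemma following equation~\eqref{eq:coalgebra_U(h)} to the checks on $u$ (sent to a group-like) and on $\pi(X)$ (sent to the primitive $p(X)$), both of which are immediate.

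Injectivity of $\phi$ will then be the crux, and I would handle it exactly as in the proof of Proposition~\ref{prop:enveloping-algebra-quotients_}. Both coalgebras are pointed irreducible: $\mathcal{Q}(\mathfrak{g}/\mathfrak{h})$ by Corollary~\ref{cor:Q_irr_pointed}, and $Q$ because it is a coalgebra quotient of the pointed irreducible coalgebra $\mathcal{U}(\mathfrak{g})$. Hence by~\cite[Thm~2.4.11]{ea:hopf_algebras} it suffices to show that $\phi$ is injective on primitives. By Proposition~\ref{prop:primitive_elems_of_Q} the primitive space of the source equals $\mathfrak{g}/\mathfrak{h}$, and the restriction $\phi|_{\mathfrak{g}/\mathfrak{h}}$ sends $\pi(X)$ to $p(X)$, whose kernel is $\pi(\ker p\cap\mathfrak{g})=\pi(\mathfrak{h})=0$. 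This yields the second assertion of the theorem.

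Finally, for the first isomorphism I restrict the bijection of part~(ii) to Hopf algebra quotients. If $p\colon\mathcal{U}(\mathfrak{g})\sir H$ is a Hopf algebra quotient then $\mathfrak{h}\coloneq\ker p\cap\mathfrak{g}$ is a Lie ideal, since for $X\in\mathfrak{h}$, $Y\in\mathfrak{g}$ one has $p([X,Y])=p(X)p(Y)-p(Y)p(X)=0$; Proposition~\ref{prop:U_and_Q} then identifies $H\cong\mathcal{Q}(\mathfrak{g}/\mathfrak{h})\cong\mathcal{U}(\mathfrak{g}/\mathfrak{h})$. Conversely, a Lie ideal $\mathfrak{h}\subseteq\mathfrak{g}$ produces the Hopf algebra quotient $\mathcal{U}(\mathfrak{g}/\mathfrak{h})$, so the correspondences of part~(ii) restrict to mutually inverse order-reversing bijections between Lie ideals and Hopf algebra quotients, hence to an order-preserving isomorphism $\Quot(\mathcal{U}(\mathfrak{g}))\cong\Quot_{\mathit{Lie}}(\mathfrak{g})$.
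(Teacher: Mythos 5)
Your proof is correct and takes essentially the same route as the paper: the same pair of order-reversing assignments, an epimorphism \(\mathcal{Q}(\mathfrak{g}/\mathfrak{h})\sir Q\) shown injective by checking primitives (Proposition~\ref{prop:primitive_elems_of_Q}) together with pointed irreducibility (Corollary~\ref{cor:Q_irr_pointed} and \cite[Thm~2.4.11]{ea:hopf_algebras}), and Proposition~\ref{prop:U_and_Q} to pass to Hopf algebra quotients and Lie ideals. The only quibble is notational: \(p(1)p(X)\) should be read as the right module action \(p(1)\cdot X=p(X)\), since \(Q\) carries no algebra structure.
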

\begin{proof}
    We will show that the following two order reversing maps:
    \begin{gather*}
	\qquot(\mathcal{U}(\mathfrak{g}))\ni Q\selmap{}\ker(\mathfrak{g}\sir P(Q))\in\Sub_{\mathit{Lie}}(\mathfrak{g})\\
	\Sub_{\mathit{Lie}}(\mathfrak{g})\ni\mathfrak{h}\selmap{}\mathcal{Q}(\mathfrak{g}/\mathfrak{h})\in\qquot(\mathcal{U}(\mathfrak{g}))
    \end{gather*}
    are inverse bijections.  Let \(\pi:\mathcal{U}(\mathfrak{g})\epr{}Q\) be
    a generalised quotient.  Since \(\pi(\mathfrak{g})\) generates \(Q\) as
    a \(\mathcal{U}(\mathfrak{g})\)-module we have an epimorphism
    \(\phi:\mathcal{Q}(\pi(\mathfrak{g}))\sir Q\) induced by the inclusion
    \(\pi(\mathfrak{g})\subseteq P(Q)\).  The coalgebra
    \(\mathcal{Q}(\pi(\mathfrak{g}))\) is pointed irreducible (see
    Corollary~\ref{cor:Q_irr_pointed} on page~\pageref{cor:Q_irr_pointed}).
    The map \(\phi|_{P(\mathcal{U}(\pi(\mathfrak{g})))}\) is the inclusion
    \(P(\mathcal{Q}(\pi(\mathfrak{g})))=\pi(\mathfrak{g})\subseteq Q\) when
    \(\k\) has characteristic zero (see
    Proposition~\ref{prop:primitive_elems_of_Q} on
    page~\pageref{prop:primitive_elems_of_Q}).  It follows that \(\phi\) is
    a monomorphism by~\cite[Thm~2.4.11]{ea:hopf_algebras}.  Thus \(\phi\) is
    an isomorphism.  We get:
    \(P(Q)=P(\mathcal{Q}(\pi(\mathfrak{g})))=\pi(\mathfrak{g})\), and hence
    \(P(Q)\) is indeed a quotient Lie algebra of \(\mathfrak{g}\) (or
    a quotient of \(\mathfrak{g}\) by a Lie subalgebra, see
    Note~\ref{note:U(G)_gen_quotients}) and
    \(\mathcal{Q}(P(Q))=\mathcal{Q}(\pi(\mathfrak{g}))\cong Q\) as generalised
    quotients of \(\mathcal{U}(\mathfrak{h})\).  On the other hand if
    \(\mathfrak{h}\) is a quotient of \(\mathfrak{g}\) by a Lie subalgebra,
    then \(\mathcal{Q}(\mathfrak{h})\in\qquot(\mathcal{U}(\mathfrak{g}))\) and
    \(P(\mathcal{Q}(\mathfrak{h}))=\mathfrak{h}\), since \(\k\) has
    characteristic zero (see Proposition~\ref{prop:primitive_elems_of_Q}).

    Once we have shown the result for generalised quotients the claim for Hopf
    algebra quotients follows from Proposition~\ref{prop:U_and_Q} (on
    page~\pageref{prop:U_and_Q}).  Note that if \(Q\) is a Hopf algebra
    quotient then the map \(\mathfrak{g}=P(\mathcal{U}(\mathfrak{g}))\sir
	P(Q)\) is an epimorphism of Lie algebras.  Thus its kernel is a Lie
    ideal.
\end{proof}

\subsection{Quantum enveloping algebras}

\citeauthor{ih-ck:homogeneous_right_coideal_subalgebras} classify all
homogeneous right coideal subalgebras of a quantised enveloping algebra of
a complex semisimple Lie algebra \(\mathfrak{g}\).  A (one-sided) coideal
subalgebra is called homogeneous if it contains the Hopf subalgebra of all
group-like elements.  Interestingly the classification is based on the Weyl
group \(W\) of \(\mathfrak{g}\).  The first hint that such a classification
was possible was given by~\cite{vk-as:right_coideal_subalgebras_of_Uq(sl)}.
They provided a complete classification of homogeneous right coideal
subalgebras of the multiplier version of \(U_q(\mathfrak{sl}_{n+1})\) (with
\(q\) not a root of unity).  This led to the conjecture that the number of
homogeneous right coideal subalgebras of the Borel Hopf subalgebra of
\(U_q(\mathfrak{g})\) coincides with the order of the Weyl group of
\(\mathfrak{g}\).  Kharchenko proved this conjecture for \(\mathfrak{g}\) of
type \(A_n\) and \(B_n\)
(\cite{vk-as:right_coideal_subalgebras_of_Uq(sl),vk:right_coideal_subalgebras_of_Uq(so)}).
The classification of right coideal subalgebras of \(U(\mathfrak{g})\) done by
\citeauthor{ih-ck:homogeneous_right_coideal_subalgebras} is based on the
results of~\cite{ih-hs:right_coideal_subalgebras_of_Nichols_algebras}.  Let us
note that in the classical case \(\mathcal{U}(\mathfrak{h})\) is a Hopf
subalgebra of \(\mathcal{U}(\mathfrak{g})\) for any Lie subalgebra
\(\mathfrak{h}\) of \(\mathfrak{g}\) (and
\(\mathcal{U}(\mathfrak{h})=\mathcal{U}(\mathfrak{g})^{\co\,\mathcal{Q}(\mathfrak{g}/\mathfrak{h})}\),
which follows from Example~\ref{ex:Takeuchi_for_U(g)}).  But in the quantum
case this is no longer true: \(\mathcal{U}_q(\mathfrak{h})\), when defined,
might not be isomorphic to a Hopf subalgebra of
\(\mathcal{U}_q(\mathfrak{g})\).  This actually led to the investigation of
one sided coideal subalgebras, since there are analogs of
\(\mathcal{U}(\mathfrak{h})\) in \(\mathcal{U}_q(\mathfrak{g})\) which are one
sided coideal subalgebras
(see~\cite{gl:coideal_subalgebras_and_quantum_symmetric_pairs}). 

Let us note that in the classical situation: for a separable field extension
\(\bE/\bF\) which is a Hopf Galois with Hopf algebra \(H\) (over a subfield
\(\k\subseteq\bF\), there exists an extension \(\bL\) of \(\k\) such that
\(\bL\otimes_\k H\) is isomorphic to a group Hopf algebra
(see~\cite{cg-bp:separable_field_extensions}).  It seems that group-like
elements are essential in the commutative situation,  though this might not be
the case for non commutative extensions.  Nevertheless let us note that the
coideal right ideals which correspond (via Takeuchi correspondence,
Theorem~\ref{thm:newTakeuchi}) to one sided coideal subalgebras classified by
\citeauthor{ih-ck:homogeneous_right_coideal_subalgebras} do not contain any
group-like elements other than the image of \(1\in H\).  It is so since all
the classified one sided coideal subalgebras contain the radical.

\section{Comodule subalgebras}
\index{comodule algebra!poset of comodule subalgebras}
Let \(A\) be an \(\R\)- algebra. The \emph{poset of all subalgebras} will be
denoted by \(\Sub_{\Alg}(A)\) and \(\Sub_{\Alg}(A/B)\) is the interval
\([B,A]\) in \(\Sub_{\Alg}(A)\). Both lattices are algebraic. The compact
elements of \(\Sub_\Alg(A/B)\) are the finitely generated subalgebras, and the
compact elements of \(\Sub_{\Alg}(A/B)\) are the algebras which are generated
by \(B\) and a finite set of elements of \(A\backslash B\).

Let \(Q\) be a generalised quotient of a bialgebra \(H\). Then \(A\) is
a \(Q\)-comodule with the structure map
\(\delta_Q:=(\id\otimes\pi_Q)\circ\delta_A\), where \(\pi_Q:H\sir Q\) is the
projection. If $Q$ is a bialgebra quotient then $A$ is a \(Q\)-comodule
algebra. The \bold{poset of \(H\)-comodule subalgebras} will be denoted by
\(\Sub_{{\Alg}^H}(A)\).  Let us define the following interval in
\(\Sub_{\Alg^H}(A)\):
\[\Sub_{\Alg^H}(A/A^{co\,H})\coloneq\{B\in\Sub_{\Alg^H}(A):A^{co\,H}\subseteq B\}\]
The lattice of \(H\)-comodule subalgebras of~\(A\) is an upper subsemilattice
of \(\Sub_\Alg(A)\) and thus it is complete. If \(H\) is flat as an
\(\R\)-module, then for \(S_i\in\Sub_{\Alg^H}(A)\) (\(i=1,2\)) we have
\(S_1\wedge S_2=S_1\cap S_2\), which follows from
Theorem~\ref{thm:lattice_of_subcomodules}.
\index{comodule algebra!poset of comodule subalgebras!algebraic}
\begin{proposition}\label{prop:comodule_subalg_algebraic}
    Let \(H\) be a bialgebra and let \(A\) be an \(H\)-comodule algebra such
    that \(H\) is a flat Mittag--Leffler \(\R\)-module.  Then the posets
    \(\Sub_{\Alg^H}(A)\) are algebraic lattices.
\end{proposition}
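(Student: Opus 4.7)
The plan is to verify the hypotheses of Theorem~\ref{thm:algebraic_structures}, that is, to exhibit \(\Sub_{\Alg^H}(A)\) as a \(\cap\overrightarrow{\cup}\)-structure on the underlying set \(A\). Since the lattice is in any case a subposet of the powerset \(\mathcal{P}(A)\) ordered by inclusion, it suffices to check two closure properties: closure under arbitrary (set-theoretic) intersections, and closure under directed unions.

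First I would handle the intersection property. An \(H\)-comodule subalgebra of \(A\) is by definition simultaneously a subalgebra of \(A\) and an \(H\)-subcomodule of \(A\). The intersection of an arbitrary family of subalgebras is clearly again a subalgebra. For the subcomodule side, this is exactly the point where the hypothesis on \(H\) is used: by Theorem~\ref{thm:lattice_of_subcomodules}, since \(H\) is flat Mittag--Leffler, the lattice \(\Sub_{\Mod^H}(A)\) is closed under arbitrary intersections, with
\[
\bigwedge_{i\in I} N_i = \bigcap_{i\in I} N_i.
\]
Combining these two observations, an arbitrary intersection of \(H\)-comodule subalgebras is again an \(H\)-comodule subalgebra.

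Next I would handle directed unions. Given a directed system \((B_i)_{i\in I}\) of \(H\)-comodule subalgebras of \(A\), the set-theoretic union \(\bigcup_{i\in I} B_i\) is already a subalgebra by directedness (any two elements lie in a common \(B_j\), so are multipliable and addable within the union). For the comodule structure, the key observation is that the tensor product functor \(-\otimes H\) commutes with filtered (in particular directed) colimits, which holds without any hypothesis on \(H\); hence \(\delta_A\bigl(\bigcup_i B_i\bigr)=\bigcup_i \delta_A(B_i)\subseteq \bigcup_i (B_i\otimes H)\), and this last directed union is naturally identified with \(\bigl(\bigcup_i B_i\bigr)\otimes H\) inside \(A\otimes H\). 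Thus the directed union is an \(H\)-subcomodule, and so an \(H\)-comodule subalgebra.

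Having shown both closure properties, \(\Sub_{\Alg^H}(A)\) is a \(\cap\overrightarrow{\cup}\)-structure on \(A\) in the sense of the definition preceding Theorem~\ref{thm:algebraic_structures}, and that theorem then yields that it is an algebraic lattice. There is no genuine obstacle here: the only substantive input is the intersection property for \(H\)-subcomodules, which is precisely what the flat Mittag--Leffler hypothesis on \(H\) buys us via Proposition~\ref{prop:Mittag-Leffler_intersection} and Theorem~\ref{thm:lattice_of_subcomodules}; the rest is formal. One may additionally note from Remark~\ref{rem:compact_elements} that the compact elements are exactly the \(H\)-comodule subalgebras generated (as subalgebras and \(H\)-subcomodules) by finite subsets of \(A\).
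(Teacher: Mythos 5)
Your proposal is correct and follows essentially the same route as the paper: the paper's (very terse) proof likewise invokes Theorem~\ref{thm:lattice_of_subcomodules} to see that \(\Sub_{\Alg^H}(A)\) is a \(\cap\overrightarrow{\cup}\)-structure and then concludes by Theorem~\ref{thm:algebraic_structures}; you have simply spelled out the two closure verifications (arbitrary intersections via the flat Mittag--Leffler hypothesis, directed unions formally) that the paper leaves implicit.
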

\begin{proof}
    It  follows from Theorem~\ref{thm:lattice_of_subcomodules} that both
    \(\Sub_\Alg(A)\) and \(\Sub_{\Alg^H}(A)\) are
    \(\cap\overrightarrow{\cup}\)-structures. 
\end{proof}

\section{Summary}
Let us bring together all the lattices considered:
\begin{center}
    \begin{longtable}{l|p{7cm}|p{4cm}}
	\textbf{\textsc{poset}}     & \textbf{\textsc{objects}}                                                                           & \textbf{\textsc{properties}}\\\hline
	\(\coId(C)\)                & coideals of a coalgebra \(C\) & \hyperref[prop:coid-complete]{complete} and \hyperref[thm:quot(C)-algebraic]{dually algebraic} (p.\pageref{thm:quot(C)-algebraic}) \\
	\(\Quot(C)\)                & quotient coalgebras of a coalgebra \(C\) & \hyperref[thm:quot(C)-algebraic]{algebraic} (p.\pageref{thm:quot(C)-algebraic}) \\
	\(\Sub_\Coalg(C)\)          & subcoalgebras of a coalgebra \(C\) & \hyperref[thm:subcoalgebras_algebraic]{algebraic} (p.\pageref{thm:subcoalgebras_algebraic}) and \hyperref[thm:subcoalgebras_dualy_algebraic]{dually algebraic} (p.\pageref{thm:subcoalgebras_dualy_algebraic}) {\footnotesize (over a field)} \\[3pt]
	\(\coId_r(C)\)              & right coideals of a coalgebra \(C\) & \hyperref[thm:lattice_of_subcomodules]{complete, algebraic} (p.\pageref{thm:lattice_of_subcomodules}) {\footnotesize (if \(C\) is a flat \textsf{ML} module)} \\
	\(\coId_l(C)\)              & left coideals of a coalgebra \(C\)                                                                  & \hyperref[thm:lattice_of_subcomodules]{as above} (p.\pageref{thm:lattice_of_subcomodules}) \\
	\(\Sub_{\Mod^C}(M)\)        & subcomodules of a right \(C\)-comodule \(M\)                                                        & \hyperref[thm:lattice_of_subcomodules]{as above} (p.\pageref{thm:lattice_of_subcomodules}) \\[3pt]
	\(\Id_{\mathit{bi}}(B)\)    & biideals of a bialgebra \(B\)                                                                       & \hyperref[prop:biideals_complete]{complete} (p.\pageref{prop:biideals_complete}) \\
	\(\Sub_{\mathit{bi}}(B)\)   & subbialgebras of a bialgebra \(B\)                                                                  & \hyperref[prop:subbialg_algebraic]{algebraic} (p.\pageref{prop:subbialg_algebraic}) {\footnotesize (over a field)} \\[3pt]
	\(\Id_{\mathit{Hopf}}(H)\)  & Hopf ideals of a Hopf algebra \(H\)                                                                 & \hyperref[prop:Hopf_ideals_complete]{complete} (p.\pageref{prop:Hopf_ideals_complete}) \\
	\(\Sub_{\mathit{Hopf}}(H)\) & subbialgebras of a Hopf algebra \(H\)                                                               & \hyperref[prop:subHopfalg_algebraic]{algebraic} (p.\pageref{prop:subHopfalg_algebraic}) {\footnotesize (over a field)} \\
	\(\Quot(B)\)                & bialgebra (Hopf algebra) quotients of a bialgebra (Hopf algebra) \(B\) & \hyperref[prop:biideals_complete]{complete} (p.\pageref{prop:biideals_complete}) \\
	\(\qquot(B)\)               & quotients of a bialgebra (Hopf algebra) \(B\) by a right ideal coideal                        & \hyperref[prop:qquot_complete]{complete} (p.\pageref{prop:qquot_complete})  \\
	\(\qsub(B)\)                & left coideal subalgebras of a bialgebra (or a Hopf algebra) \(B\) & \hyperref[prop:qsub_algebraic]{algebraic} (p.\pageref{prop:qsub_algebraic}) {\footnotesize (if \(B\) is a flat \textsf{ML} module)} \\[3pt]
	\(\Sub_\Alg(A)\)            & subalgebras of an algebra \(A\)                                                                     & algebraic \\
	\(\Sub_{\Alg^H}(A)\)        & \(H\)-comodule subalgebras of an \(H\)-comodule algebra \(A\) & \hyperref[prop:comodule_subalg_algebraic]{algebraic} (p.\pageref{prop:comodule_subalg_algebraic}) {\footnotesize(if \(H\) is a flat \textsf{ML} module)} \\
    \end{longtable}
\end{center}
In the above table an \textsf{ML} is a abbreviation for \textit{Mittag--Leffler}.
\label{chap:lattices_end}

\chapter{Galois Theory for Hopf--Galois extensions}\label{chap:Hopf-Galois_theory}
In this chapter we introduce Galois theory for Hopf--Galois extensions.  Let
us note that the most common argument to use \(\qquot(H)\) rather than
\(\Quot(H)\) is that a noncommutative Hopf algebra might have too few (or even
no non trivial) quotient Hopf algebras.  As we noted before, there is another
important argument, which shouldn't be missed. In the case of the group Hopf
algebra \(\k[G]\) (and its dual \(\k[G]^*\) if \(G\) is finite) the poset of
generalised quotients is isomorphic to the poset of isomorphism classes of
transitive \(G\)-sets (which is anti-isomorphic to the poset of subgroups of
\(G\)), while the Hopf quotients correspond to group quotients of \(G\)
(normal subgroups).  Since in Galois theory one considers the poset of
subgroups of \(G\) (or more generally the category of \(G\)-sets, as in the
Grothendieck approach to Galois Theory, see~\cite{fb-gj:galois-theories}) it
is more natural to use the poset of generalised quotients of Hopf algebras.  

We begin with a section about the Schauenburg approach towards the Galois
theory of Hopf Galois extensions of commutative rings.  He considers
a faithfully flat \(H\)-Galois extension of the commutative base ring and
constructs a Hopf algebra \(L(A,H)\) for which \(A\) is an
\(L(A,H)\)-\(H\)-bicomodule algebra and an \(L(A,H)\)-Galois extension of the
base ring.  Then he constructs a Galois connection between \(H\)-comodule
subalgebras of \(A\) and generalised quotients of \(L(A,H)\).  In the
following section we introduce a more general construction of a Galois
correspondence for \(H\)-extensions.  Our result, in the following section,
differs significantly from Schauenburg's.  First of all, we construct a Galois
connection between subalgebras (rather than \(H\)-comodule subalgebras) of an
\(H\)-comodule algebra \(A\) and generalised quotients of \(H\).  Secondly, we
drop the assumption that the coinvariants subalgebra must be equal to the
commutative base ring.  Importantly, we also leave the assumption that the
extension is Hopf--Galois (Definition~\ref{defi:Hopf-Galois_extension}).  We
will require that both the algebra and the Hopf algebra are flat
Mittag--Leffler modules over the base ring.  This extra module theoretical
condition is always satisfied over a field.  In this context we provide
another, new, construction of the Galois connection.  The formula obtained for
the adjoint will be used later. It resembles, to some extent, the known
formula for a Galois connection between left coideal subalgebras and
generalised quotients of a Hopf algebra (see Theorem~\ref{thm:newTakeuchi}).
Let us note that applying our construction to the left \(L(A,H)\)-comodule
algebra \(A\), as considered by Schauenburg, one will get the same adjunction,
which follows from the uniqueness of Galois correspondences (see
Remark~\ref{rem:comparison_of_adjunctions}).

In section~\ref{sec:closed_elements} (on page~\pageref{sec:closed_elements})
we give two interesting results on closed elements, one for subalgebras
(Theorem~\ref{thm:closed_subalgebras}) and one for generalised quotients
(Corollary~\ref{cor:Q-Galois_closed}).  The first one shows that for an
\(H\)-Galois extension satisfying some module theoretic assumptions,
a subalgebra \(S\) is closed if and only if the following map is a bijection:
\begin{equation*}
    \can_S: S\otimes_B A\ir A\cotensor_{\psi(S)} H,\quad\can_S(a\otimes b)=ab_{(1)}\otimes b_{(2)}
\end{equation*}
We make an attempt to show a similar result for generalised quotients.
Assuming that the \(H\)-extension \(A/A^{\co\,H}\) has epimorphic canonical
map, and satisfies some additional module theoretic conditions, we show
(Corollary~\ref{cor:Q-Galois_closed}) that a generalised quotient \(\pi:H\sir
    Q\) is closed if the following map is a bijection:
\begin{equation*}
    \can_Q:A\otimes_{A^{\co\,Q}}A\sir A\otimes Q,\quad\can_Q(a\otimes b)=ab_{(1)}\otimes\pi(b_{(2)}) 
\end{equation*}
In Theorem~\ref{thm:cleft-case} we will show that for crossed products this is
a sufficient and necessary condition.  We also show, that for \(H\)-Galois
extensions over a field, the above canonical map is an isomorphism if and only
if \(Q\) is closed and the map
\(\delta_A\otimes\delta_A:A\otimes_{A^{\co\,Q}}A\sir(A\otimes
    H)\otimes_{A\otimes H^{\co\,Q}}(A\otimes H)\) is an injective map (see
Theorem~\ref{thm:Q-Galois_closed_over_field} on
page~\pageref{thm:Q-Galois_closed_over_field}). 

In section~\ref{sec:admissibility} (on page~\pageref{sec:admissibility}) we
generalise Schauenburg's theory of admissible objects.  We show that
admissible subalgebras and admissible generalised quotients are in bijective
correspondence for Hopf Galois extensions (under some module theoretic
conditions).  Let us note that we redefine admissibility of subalgebras of
a comodule algebra.  We show that our definition is equivalent with
Schauenburg's when one considers faithfully flat \(H\)-Galois extensions of
the base ring with the assumption that \(H\) has a bijective antipode (which
is also considered by Schauenburg).  Schauenburg obtained a 1-1 correspondence
between admissible \(H\)-subcomodule algebras of \(A\) and admissible
generalised quotients of \(L(A,H)\).  We show that, in our broader context,
admissible subalgebras of \(A\) are in bijective correspondence with
admissible generalised quotients of \(H\).  We also show that Schauenburg
theory of admissible objects is a special case of our result (up to our
stronger module theoretical assumptions).

\section[Schauenburg's approach]{Schauenburg's approach to Hopf--Galois extensions of commutative rings} 
\label{sec:schauenburgs_approach} Our approach to closed elements of the
Galois connection between sub algebras of an \(H\)-extension and generalised
quotients of \(H\) generalises the results of Schauenburg.  His construction
of the correspondence is different than ours and requires additional
assumptions, nevertheless these assumptions and constructions allow one to use
Hopf algebraic methods more extensively than in our approach.  For the purpose
of completeness and a better understanding we discuss here, in this
preliminary section, the approach of Schauenburg.  It is based on two
articles~\cite{ps:hopf-bigalois} and \cite{ps:gal-cor-hopf-bigal}.
Schauenburg's construction was an adaptation of an earlier work
of~\citeauthor{fo-yz:gal-cor-hopf-galois}.

First let us note that Schauenburg's approach applies only to faithfully flat
\(H\)-Galois extensions of commutative rings.  In this section we will assume
that \(A\) is an \(R\)-algebra which is faithfully flat as an \(\R\)-module.
Furthermore, we will assume that it possess an \(H\)-comodule algebra
structure making it an \(H\)-Galois extension.

Before we start with the constructions we will introduce the following
Sweedler--type notation:
\begin{equation}\label{eq:kappa_-1}
    \can_H^{-1}(1\otimes h) \eqcolon  h_{[1]}\otimes h_{[2]}\in A\otimes A
\end{equation}
where \(h\in H\) and \(\can_H\) is the \textit{canonical}
map~\eqref{eq:canonical-map}.  The above map satisfies the following
relations:
\begin{subequations}
    \begin{align}
            (hk)_{[1]}\otimes(hk)_{[2]} & = k_{[1]}h_{[1]}\otimes h_{[2]}k_{[2]} \label{eq:kappa_-1:1}\\
	    h_{[1]}h_{[2]}  & = \epsilon(h)1_A \label{eq:kappa_-1:3} \\
	    a_{(0)}(a_{(1)})_{[1]}\otimes (a_{(1)})_{[2]}  & = 1_A\otimes a \label{eq:kappa_-1:2}\\
	    h_{[1]}\otimes(h_{[2]})_{(0)}\otimes(h_{[2]})_{(1)}  & =(h_{(1)})_{[1]}\otimes(h_{(1)})_{[2]}\otimes h_{(2)} \label{eq:kappa_-1:4}\\
	    (h_{[1]})_{(0)}\otimes h_{[2]}\otimes (h_{[1]})_{(1)} & = (h_{(2)})_{[1]}\otimes (h_{(2)})_{[2]}\otimes S(h_{(1)})
    \end{align}
\end{subequations}
where \(h\in H\) and \(a\in A\).  Equation~\ref{eq:kappa_-1:3} is just
a consequence of the following equality
\(\id_A\otimes\epsilon\circ\can_H(a\otimes b)=ab\); for~\eqref{eq:kappa_-1:2}
it is enough to compute \(can_H\) of both sides and conclude that they are
equal:
\begin{alignat*}{2}
    \can_H(a_{(0)}(a_{(1)})_{[1]}\otimes (a_{(1)})_{[2]}) & = a_{(0)}{a_{(1)}}_{[1]}{{a_{(1)}}_{[2]}}_{(0)}\otimes{{a_{(1)}}_{[2]}}_{(1)} &&\\
    & = a_{(0)}\otimes a_{(1)}&\quad&\textrm{by~\eqref{eq:kappa_-1}} \\
    & = \can_H(1\otimes a)
\end{alignat*}
where \(a\in A\).  Equation~\eqref{eq:kappa_-1:4} is a consequence of the fact
that \(\can_H\) is a right \(H\)-comodule map (where \(H\)-comodule structures
on \(A\otimes A\) and \(A\otimes H\) are induced from the right tensor
factor).  Finally, the first one can now be checked in the following way:
\begin{alignat*}{2}
    \can_H(b_{[1]}a_{[1]}\otimes a_{[2]}b_{[2]}) & = b_{[1]}a_{[1]}(a_{[2]})_{(0)}(b_{[2]})_{(0)}\otimes(a_{[2]})_{(1)}(b_{[2]})_{(1)} &&\\
    & = (b_{(1)})_{[1]}(a_{(1)})_{[1]}(a_{(1)})_{[2]}(b_{(1)})_{[2]}\otimes a_{(2)}b_{(2)} &\quad&\textrm{by~\eqref{eq:kappa_-1:4}}\\
    & = 1\otimes ab &&\textrm{by~\eqref{eq:kappa_-1:3}}\\
\end{alignat*}
Now we are ready to give the main  Schauenburg's construction:
\index{associated Hopf algebra L(H,A)}
\index{L(H,A)|see{associated Hopf algebra L(H,A)}}
\begin{proposition}[{\cite[Prop.~3.5]{ps:hopf-bigalois}}]
    Let \(H\) be a Hopf algebra over a commutative ring \(\R\) and let
    \(A/\R\) be a faithfully flat \(H\)-Galois extension.  Let
    \(L(A,H)\coloneq(A\otimes A)^{co H}\), where \(A\otimes A\) is considered
    with the codiagonal coaction of \(H\):
    \[A\otimes A\sir A\otimes A\otimes H,\quad a\otimes b\selmap{}a_{(0)}\otimes b_{(0)}\otimes a_{(1)}b_{(1)}\]
    Then:
    \begin{enumerate}
	\item \(L(A,H)\) is a subalgebra of \(A\otimes A^\op\);
	\item the coalgebra structure is given by\footnote{In the following
	      formulas we use simple tensors, which is an abuse of notation,
	      since the domain of the coalgebra structure maps is \((A\otimes
		  A)^{co H}\), which might not be spanned by simple tensors.
	      We do that for the sake of clarity.}:
	    \begin{align*}
		\Delta_{L(A,H)}(x\otimes y) & = x_{(0)}\otimes\can_H^{-1}(1\otimes x_{(1)})\otimes y \\
		\epsilon_{L(A,H)}(x\otimes y) & = xy\in A^{co H}=\R \\
		S_{L(A,H)}(x\otimes y) & = y_{(0)}\otimes (y_{(1)})_{[1]}x(y_{(1)})_{[2]}
	    \end{align*}
    \end{enumerate}
    The above structure turns \(L(A,H)\) into an \(\R\)-Hopf algebra.
\end{proposition}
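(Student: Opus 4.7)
\medskip

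My plan is to verify the claims in the order stated, relying throughout on the Sweedler-type notation $\can_H^{-1}(1\otimes h) = h_{[1]}\otimes h_{[2]}$ together with the identities \eqref{eq:kappa_-1:1}--\eqref{eq:kappa_-1:4}, which encode the $H$-Galois property and will do essentially all of the heavy lifting. Faithful flatness of $A$ over $\R$ enters as follows: the inclusion $L(A,H)\subseteq A\otimes A$ is the equaliser of $\delta$ and $\id_{A\otimes A}\otimes 1_H$, so to check that an element sitting in $A\otimes A$ actually lies in $L(A,H)$ it suffices to check coinvariance, and to check equalities in $L(A,H)\otimes L(A,H)$ we may test them after applying $(L(A,H)\hookrightarrow A\otimes A)^{\otimes 2}$; moreover, faithful flatness will be invoked to identify $L(A,H)\otimes L(A,H)$ with $(A^{\otimes 2}\otimes A^{\otimes 2})^{co\,H\otimes H}$ for the codiagonal coaction, which is what lets $\Delta_{L(A,H)}$ make sense.

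First I would establish (i) by a direct coinvariance check: given $\sum_i x_i\otimes y_i$ and $\sum_j x'_j\otimes y'_j$ in $L(A,H)$, apply the codiagonal coaction to the product $\sum_{i,j} x_ix'_j\otimes y'_jy_i$ in $A\otimes A^{op}$ and use the coinvariance of $\sum_j x'_j\otimes y'_j$ (in the form $\id\otimes\id\otimes\Delta$ applied to the defining identity) to collapse the middle $H$-tensorand, then the coinvariance of $\sum_i x_i\otimes y_i$ to collapse the remaining one. The key point is that the codiagonal coaction is an algebra map for the $A\otimes A^{op}$ structure, so multiplication of coinvariants is coinvariant; I would also note $1\otimes 1\in L(A,H)$ for the unit.

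Next I would treat (ii) by first checking that $\Delta_{L(A,H)}$, $\epsilon_{L(A,H)}$ and $S_{L(A,H)}$ are well defined. For $\Delta_{L(A,H)}$, the output $x_{(0)}\otimes (x_{(1)})_{[1]}\otimes (x_{(1)})_{[2]}\otimes y$ must land in $L(A,H)\otimes L(A,H)\subseteq (A\otimes A)\otimes (A\otimes A)$; the two required coinvariance conditions translate, via \eqref{eq:kappa_-1:4}, into statements about the right $H$-colinearity of $\can_H^{-1}$ on its second factor, which are immediate, together with the coinvariance of $x\otimes y$ itself. The counit $\epsilon_{L(A,H)}(x\otimes y)=xy$ lands in $A^{co\,H}=\R$ because on coinvariants, $(xy)_{(0)}\otimes(xy)_{(1)}=x_{(0)}y_{(0)}\otimes x_{(1)}y_{(1)}=xy\otimes 1$. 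Coassociativity and the counit axiom then reduce, after a calculation involving the identities \eqref{eq:kappa_-1:1}--\eqref{eq:kappa_-1:4}, to coassociativity and counitality of $H$ together with the defining identity $h_{[1]}h_{[2]}_{(0)}\otimes h_{[2]}_{(1)}=1\otimes h$. Similarly, that $\Delta_{L(A,H)}$ and $\epsilon_{L(A,H)}$ are algebra maps is a computation using \eqref{eq:kappa_-1:1} to handle the products in $A\otimes A^{op}$.

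Finally, for the antipode, I would plug $S_{L(A,H)}$ into both antipode axioms and reduce the resulting identities to \eqref{eq:kappa_-1:2} and \eqref{eq:kappa_-1:3}; well-definedness of $S_{L(A,H)}$ into $L(A,H)$ again follows from colinearity of $\can_H^{-1}$. The main obstacle I anticipate is purely notational: the formulas for $\Delta_{L(A,H)}$ and $S_{L(A,H)}$ involve elements that are \emph{not} simple tensors in general, and so each verification has to be done either at the universal level (on a generic element $x\otimes y$ treated as a formal coinvariant) or after tensoring with $A\otimes A$ and using faithful flatness to cancel. Once one commits to reading every simple tensor as a placeholder for an implicit finite sum satisfying the coinvariance identity, every identity one needs reduces mechanically to one of the relations \eqref{eq:kappa_-1:1}--\eqref{eq:kappa_-1:4}; faithful flatness makes this legitimate. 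So the real work is bookkeeping, with no deep step beyond the $H$-Galois property itself.
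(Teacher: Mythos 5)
The thesis does not actually prove this proposition: immediately after the statement it refers the reader to \cite[Lemma~3.3 and Theorem~3.5]{ps:hopf-bigalois}, so there is no in-paper proof to match your sketch against, only the surrounding identities \eqref{eq:kappa_-1:1}--\eqref{eq:kappa_-1:4} and Lemma~\ref{lem:Schauenburg_Galois_cond}. Your plan of direct verification is the natural one, and the reductions of coassociativity, counitality, multiplicativity of \(\Delta_{L(A,H)}\) and the antipode axioms to those identities do go through. But there is one substantive gap: you treat the identification of \(L(A,H)\otimes L(A,H)\) with a submodule of \(A^{\otimes 2}\otimes A^{\otimes 2}\) (equivalently with \(H\otimes H\)-coinvariants) as a formal consequence of faithful flatness of \(A\) over \(\R\). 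It is not. To test identities after applying \((L(A,H)\hookrightarrow A\otimes A)^{\otimes 2}\), or to see that \(\Delta_{L(A,H)}\) lands in \(L(A,H)\otimes L(A,H)\) at all, you need the inclusion \((A\otimes A)^{co\,H}\subseteq A\otimes A\) to stay injective after tensoring with \(L(A,H)\), i.e.\ you need \(L(A,H)\) to be \(\R\)-flat (or the inclusion to be pure). This is precisely the nontrivial descent input in Schauenburg's argument: the isomorphism \(A\otimes A\cong (A\otimes A)^{co\,H}\otimes A\) (the case \(S=\R\) of Lemma~\ref{lem:Schauenburg_Galois_cond}), which combined with descent of flatness along the faithfully flat unit \(\R\sir A\) gives flatness of \(L(A,H)\) and makes all your "bookkeeping" legitimate. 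Omitting it, the verification cannot even be set up, so this is a missing idea rather than notation.

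A second, smaller point: your stated "key point" for (i) is false. The codiagonal coaction is \emph{not} an algebra map from \(A\otimes A^{\op}\) to \((A\otimes A^{\op})\otimes H\) for noncommutative \(H\): applying it to \((a\otimes b)(a'\otimes b')=aa'\otimes b'b\) produces the \(H\)-leg \(a_{(1)}a'_{(1)}b'_{(1)}b_{(1)}\), whereas the product of the images has \(a_{(1)}b_{(1)}a'_{(1)}b'_{(1)}\). Closure of \((A\otimes A)^{co\,H}\) under the \(A\otimes A^{\op}\) multiplication is genuinely a consequence of the two-step collapse you describe first (use coinvariance of the second factor to replace its sandwiched \(H\)-leg by \(1_H\), then coinvariance of the first); keep that argument and delete the multiplicativity claim, since anyone relying on it alone would have a false proof. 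With these two repairs -- the flatness/descent step supplied and the closure argument corrected -- the remainder of your outline is sound.
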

For the proof that this indeed defines a Hopf algebra structure on \((A\otimes
    A)^{co H}\) see \cite[Lemma~3.3 and Theorem~3.5]{ps:hopf-bigalois}.  Now,
\(A\) becomes an \(L(A,H)\)-\(H\)-\textbf{biGalois extension}.  That means
\(A/\R\) is:
\index{bicomodule}
\index{bicomodule algebra}
\index{biGalois extension}
\begin{enumerate}
    \item a left \(L(A,H)\)-comodule algebra which is an \(L(A,H)\)-Galois
	extension of \(\R\), with the \(L(A,H)\)-comodule structure given by:
\[\delta_{L(A,H)}:A\sir L(A,H)\otimes A,\quad \delta_{L(A,H)}(a)\coloneq a_{(0)}\otimes\can_H^{-1}(1\otimes a_{(1)})\]
    \item a right \(H\)-comodule algebra which is an \(H\)-Galois extension of
	\(\R\).
\end{enumerate}
Moreover the two comodule structures: left \(L(A,H)\) and right \(H\)-comodule
commute (see the diagram below) making \(A\) an \(L(A,H)\)-\(H\)-bimodule:
\begin{center}
    \begin{tikzpicture}
	\matrix[matrix of nodes,column sep=2.3cm,row sep=1cm]{
	    |(A1)| \(A\) & |(A2)| \(A\otimes H\) \\
	    |(B1)| \(L(A,H)\otimes A\) & |(B2)| \(L(A,H)\otimes A\otimes H\) \\
	};
	\draw[->] (A1) --node[above]{\(\delta_H\)} (A2);
	\draw[->] (A2) --node[right]{\(\delta_{L(A,H)}\otimes\id_H\)} (B2); 
	\draw[->] (A1) --node[left]{\(\delta_{L(A,H)}\)} (B1);
	\draw[->] (B1) --node[below]{\(\id_{L(A,H)}\otimes\delta_H\)} (B2); 
    \end{tikzpicture}
\end{center}
We will use the following Sweedler notation for bicomodules:
\index{bicomodule!Sweedler notation}
\begin{align*}
    \delta_H(a) & \coloneq a_{(0)}\otimes a_{(1)} \in A\otimes H \\ 
    \delta_{L(A,H)}(a) & \coloneq a_{(-1)}\otimes a_{(0)} \in L(A,H)\otimes A \\ 
\end{align*}
where \(a\in A\).  We will show now that \(A\) is \(L(A,H)\)-Galois.  We will
prove slightly more, since we will need this result later:
\begin{lemma}\label{lem:Schauenburg_Galois_cond}
    Let \(H\) be a flat Hopf algebra over a commutative ring \(\R\) and let
    \(A/\R\) be a faithfully flat \(H\)-Galois extension.  Finally, let
    \(S\in\Sub_\Alg(A)\).  Then the following map is a bijection:
    \[\can_{S}:A\otimes_SA\sir(A\otimes_SA)^{co\,H}\otimes A,\can_{S}(a\otimes_Sb)=a_{(0)}\otimes_S(a_{(1)})_{[1]}\otimes(a_{(1)})_{[2]}b\]
\end{lemma}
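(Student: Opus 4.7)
The strategy is to recognise $\can_S$ as a descent-type canonical map. When $S$ is a subcomodule subalgebra, or more generally interpreting $(A\otimes_S A)^{co\,H}$ as the image of $L(A,H)=(A\otimes A)^{co\,H}$ under the natural quotient $A\otimes A\sir A\otimes_S A$, the codiagonal $H$-coaction makes $A\otimes_S A$ into a relative $(A,H)$-Hopf module, and the target $(A\otimes_S A)^{co\,H}\otimes A$ is exactly the module one expects to recover $A\otimes_S A$. The candidate inverse is
\[\can_S^{-1}:(A\otimes_S A)^{co\,H}\otimes A\sir A\otimes_S A,\qquad(y'\otimes_S y'')\otimes b\selmap{}y'\otimes_S y''b,\]
which is well defined as the restriction of the right-multiplication map $A\otimes_S A\otimes A\sir A\otimes_S A$.

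The proof then splits into three checks. First, I would verify that $\can_S$ is well defined on $A\otimes_S A$ and that its image lies in $(A\otimes_S A)^{co\,H}\otimes A$. Well-definedness uses \eqref{eq:kappa_-1:1} to rewrite $(a_{(1)}s_{(1)})_{[1]}\otimes(a_{(1)}s_{(1)})_{[2]}$ as $(s_{(1)})_{[1]}(a_{(1)})_{[1]}\otimes(a_{(1)})_{[2]}(s_{(1)})_{[2]}$, combined with \eqref{eq:kappa_-1:2} in the form $s_{(0)}(s_{(1)})_{[1]}\otimes(s_{(1)})_{[2]}=1\otimes s$ to move $s\in S$ across the $\otimes_S$. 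For the image, note that $a_{(0)}\otimes(a_{(1)})_{[1]}$ is the first two components of $\delta_{L(A,H)}(a)=a_{(0)}\otimes(a_{(1)})_{[1]}\otimes(a_{(1)})_{[2]}$ and hence lies in $L(A,H)=(A\otimes A)^{co\,H}$; coinvariance under the codiagonal coaction descends to $A\otimes_S A$ along the quotient.

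The identity $\can_S^{-1}\circ\can_S=\id$ is immediate from \eqref{eq:kappa_-1:3}. The core step is $\can_S\circ\can_S^{-1}=\id$. For $y'\otimes_S y''\in(A\otimes_S A)^{co\,H}$, coinvariance reads
\[y'_{(0)}\otimes_S y''_{(0)}\otimes y'_{(1)}y''_{(1)}=y'\otimes_S y''\otimes 1_H\]
in $A\otimes_S A\otimes H$. Applying $\id_A\otimes_S\can_H^{-1}$ to the second and third factors, using the right $A$-linearity identity $\can_H^{-1}(ac_{(0)}\otimes hc_{(1)})=ah_{[1]}\otimes h_{[2]}c$ (which one derives from the right $A$-linearity of $\can_H$), converts the left-hand side into $y'_{(0)}\otimes_S(y'_{(1)})_{[1]}\otimes(y'_{(1)})_{[2]}y''$ and the right-hand side into $y'\otimes_S y''\otimes 1_A$. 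Multiplying the last slot on the right by $b$ then yields $\can_S(y'\otimes_S y''b)=y'\otimes_S y''\otimes b$, as required.

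The main obstacle is the Sweedler bookkeeping: one must carefully justify that $\can_H^{-1}$ may be applied to the middle and right factors of the triple tensor over $A\otimes_S A$, and that multiplication by $b$ on the right commutes with every rewriting above. The hypotheses that $H$ is flat and that $A/\R$ is a faithfully flat $H$-Galois extension are used to guarantee that $\can_H$ is an isomorphism, that $L(A,H)$ behaves well, and that taking coinvariants is compatible with the quotient $A\otimes A\sir A\otimes_S A$ in the sense required above.
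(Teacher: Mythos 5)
Your proposal is correct and takes essentially the same route as the paper: the same inverse $(a\otimes_S b)\otimes c\mapsto a\otimes_S bc$, with $\can_S^{-1}\circ\can_S=\id$ coming from \eqref{eq:kappa_-1:3}, and $\can_S\circ\can_S^{-1}=\id$ obtained from the coinvariance relation for $(A\otimes_SA)^{co\,H}$ together with the $\can_H^{-1}$ identities --- your right $A$-linearity identity for $\can_H^{-1}$ is exactly the combination of \eqref{eq:kappa_-1:2} and \eqref{eq:kappa_-1:1} that the paper inserts and recombines in its direct Sweedler computation. The only addition is your well-definedness discussion, which the paper silently omits; note that this part genuinely requires $S$ to be an $H$-subcomodule algebra (the setting in which the lemma is actually applied), since for a bare subalgebra neither the codiagonal coaction on $A\otimes_SA$ nor the defining formula for $\can_S$ descends through $\otimes_S$, so the ``image of $L(A,H)$'' reading does not by itself repair this.
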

If we take \(S=\R\) then \(\can_S\) is the canonical map associated with the
left \(L(A,H)\)-comodule algebra \(A\).
\begin{proof}
    This a consequence of the fundamental theorem for Hopf modules, but we
    will present a direct computation.  The inverse of \(\can_{S}\) is given
    by:
    \[\can_{S}^{-1}:(A\otimes_SA)^{co\,H}\otimes A\sir A\otimes_SA, \can_{S}^{-1}(a\otimes_S b\otimes c)=a\otimes_S bc\]
    Indeed:
    \begin{alignat*}{2}
\can_{S}^{-1}\circ\can_{S}(a\otimes b) & = a_{(0)}\otimes_S(a_{(1)})_{[1]}(a_{(1)})_{[2]}b&          &\\
			     & = a_{(0)}\otimes\epsilon(a_{(1)})b                            &\quad\quad&\textrm{by~\eqref{eq:kappa_-1:3}}\\
			     & = a\otimes b                                           &          &
    \end{alignat*}
   Now take \(a\otimes b\otimes c\in (A\otimes_SA)^{co\,H}\otimes A\) (again
   we abuse the notation, since \((A\otimes_SA)^{co\,H}\) might not be
   generated by simple tensors):
    \begin{alignat*}{2}
\can_{S}\circ\can_{S}^{-1}(a\otimes_S b\otimes c) & = \can_{S}(a\otimes_S bc)                                                                             & \quad & \\
                                        & \hspace{-1cm}= a_{(0)}\otimes_S(a_{(1)})_{[1]}\otimes(a_{(1)})_{[2]}bc                                      &       & \\
                                        & \hspace{-1cm}= a_{(0)}\otimes_S b_{(0)}(b_{(1)})_{[1]}(a_{(1)})_{[1]}\otimes(a_{(1)})_{[2]}(b_{(1)})_{[2]}c &       & \textrm{by~\eqref{eq:kappa_-1:2}}\\
                                        & \hspace{-1cm}= a_{(0)}\otimes_S b_{(0)}(a_{(1)}b_{(1)})_{[1]}\otimes(a_{(1)}b_{(1)})_{[2]}c                 &       & \textrm{by~\eqref{eq:kappa_-1:1}}\\
                                        & \hspace{-1cm}= a\otimes_S b\otimes c                                                                        &       & 
    \end{alignat*}
    Where the last equation follows since \(a\otimes b\in
	(A\otimes_SA)^{co\,H}\) and \({1_H}_{[1]}\otimes{1_H}_{[2]}=1_A\otimes 1_A\).
\end{proof}
A very important property of this construction is that the pair
\[(L(A,H),\delta_{L(A,H)})\]
is the final object in the category (see~\cite[Lem.~3.2]{ps:hopf-bigalois}) in
which objects are \(\R\)-modules \(M\) together with a right \(H\)-colinear
map \(\delta_M:A\sir M\otimes A\), and where morphisms are commutative
triangles:
\begin{center}
    \begin{tikzpicture}
	\matrix[matrix of nodes, column sep=1cm, row sep=1cm]{
	    & |(A)| \(A\) & \\
	    |(B1)| \(M\otimes A\) & & |(B2)| \(N\otimes A\) \\
	};
	\draw[->] (A) --node[above left]{\(\delta_M\)} (B1);
	\draw[->] (A) --node[above right]{\(\delta_N\)} (B2);
	\draw[->] (B1) --node[below]{\(f\otimes\id_A\)} (B2);
    \end{tikzpicture}
\end{center}
Based on the above universal property Schauenburg shows the following
\begin{proposition}[{\cite[Prop.~3.5]{ps:hopf-bigalois}}]\label{prop:Schaueburg_universal_prop}
    Let \(A\) be a faithfully flat \(H\)-Galois extension of the commutative
    base ring \(\R\).  Let \(B\) be a bialgebra which turns \(A\) into
    a \(B\)-\(H\)-biGalois extension.  Then there exists a unique isomorphism
    \(f:L(A,H)\sir B\) compatible with the \(B\)-comodule and
    \(L(A,H)\)-comodule structures on \(A\).
\end{proposition}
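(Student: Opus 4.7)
The plan is to exploit the universal property of $(L(A,H), \delta_{L(A,H)})$ recorded just above the statement. Since $B$ turns $A$ into a $B$-$H$-bicomodule, the left $B$-coaction $\delta_B \colon A \to B \otimes A$ is right $H$-colinear, so $(B,\delta_B)$ is an object of the category in which $L(A,H)$ is terminal. The universal property therefore yields a unique $R$-linear map $g \colon B \to L(A,H)$ with $(g \otimes \id_A) \circ \delta_B = \delta_{L(A,H)}$; the $f$ asked for in the statement will be $g^{-1}$, and its uniqueness is inherited from that of $g$.

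First I would argue that $g$ is bijective by comparing canonical maps. Post-composing $(g \otimes \id_A) \circ \delta_B = \delta_{L(A,H)}$ with multiplication on the right-hand factor of $A$ gives
\[
(g \otimes \id_A) \circ \can_B \;=\; \can_{L(A,H)} \colon A \otimes_R A \,\longrightarrow\, L(A,H) \otimes A .
\]
Both canonical maps are isomorphisms: $\can_B$ because $A/R$ is left $B$-Galois, and $\can_{L(A,H)}$ by Lemma~\ref{lem:Schauenburg_Galois_cond} applied with $S = R$. Hence $g \otimes \id_A$ is an isomorphism, and since $A$ is faithfully flat over $R$, $g$ itself must be an isomorphism.

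Next I would verify that $g$ respects the bialgebra structure, so that $f = g^{-1}$ is a bialgebra map. The strategy is to appeal repeatedly to the uniqueness half of the universal property. For multiplicativity, both $R$-linear maps $m_{L(A,H)} \circ (g \otimes g)$ and $g \circ m_B$ from $B \otimes B$ to $L(A,H)$ intertwine the iterated coaction $(\delta_B \otimes \id_A) \circ \delta_B \colon A \to B \otimes B \otimes A$ with $\delta_{L(A,H)}$, because $\delta_B$ is an $H$-colinear algebra map and the coproduct of $L(A,H)$ is designed precisely to make $\delta_{L(A,H)}$ coassociative; uniqueness forces the two composites to coincide. Compatibility with units is handled similarly by applying the universal property to the trivial coaction $a \mapsto 1 \otimes a$. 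For the coalgebra structure I would use the explicit formulas $\Delta_{L(A,H)}(x \otimes y) = x_{(0)} \otimes \can_H^{-1}(1 \otimes x_{(1)}) \otimes y$ and $\epsilon_{L(A,H)}(x \otimes y) = xy \in R$, and transfer them to $B$ via the already-established bijection.

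The main obstacle is keeping track of several flavours of coaction simultaneously: the codiagonal $H$-coaction on $A \otimes A$ used to define $L(A,H)$, the left $B$-coaction on $A$, and the compatibility of each with the algebra structure of $A$. The existence and bijectivity of $g$ are essentially formal consequences of the universal property together with Lemma~\ref{lem:Schauenburg_Galois_cond}; the genuine work lies in the diagram chases showing that the abstractly produced $g$ is a bialgebra morphism, where the key inputs are associativity of the bicomodule structure and the fact that $\delta_B$ is an algebra homomorphism.
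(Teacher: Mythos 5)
You should note at the outset that the thesis does not prove this proposition at all---it is quoted from \cite[Prop.~3.5]{ps:hopf-bigalois} with only the remark that it rests on the universal property of \((L(A,H),\delta_{L(A,H)})\)---so your overall skeleton (universal property, comparison of canonical maps via Lemma~\ref{lem:Schauenburg_Galois_cond} with \(S=\R\), faithfully flat descent) is exactly the intended route, and your bijectivity step is correct. However, the universal property cannot be used in the direction you use it. As stated in the text (``final object'') it is false: for \(M=\R\) with \(\delta_M=\id_A\), or for \(M=0\), there is no map \(g:M\to L(A,H)\) with \((g\otimes\id_A)\circ\delta_M=\delta_{L(A,H)}\). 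The correct content of Schauenburg's Lemma~3.2 is the opposite (corepresentability) statement: every right \(H\)-colinear map \(\delta_M:A\to M\otimes A\) factors as \((f\otimes\id_A)\circ\delta_{L(A,H)}\) for a \emph{unique} \(f:L(A,H)\to M\). Applied to \(M=B\) this hands you directly the map \(f:L(A,H)\to B\) of the statement, with uniqueness for free; your \(g\) is then \(f^{-1}\) once bijectivity is known. Existence and uniqueness of a map \(B\to L(A,H)\) intertwining the coactions is \emph{not} a formal consequence of any terminality; it would itself have to be extracted from surjectivity of \(\can_B\) together with faithful flatness of \(A\).

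The genuine gap is the claim that the comparison map is a morphism of bialgebras. The iterated coaction you need is \((\id_B\otimes\delta_B)\circ\delta_B\) (the composite you wrote, \((\delta_B\otimes\id_A)\circ\delta_B\), is not well defined), and \(g\circ m_B\) does \emph{not} intertwine it with \(\delta_{L(A,H)}\): since \(g\otimes\id_A\) is injective, the required identity is equivalent to \(a_{(-2)}a_{(-1)}\otimes a_{(0)}=a_{(-1)}\otimes a_{(0)}\) in \(B\otimes A\), which already fails for \(A=B=H\) with \(\delta_B=\Delta\) (for a group-like \(x\) it would read \(x^2\otimes x=x\otimes x\)); the same objection applies to \(m_{L(A,H)}\circ(g\otimes g)\). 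So neither composite is a morphism of the relevant category and no uniqueness argument can be invoked; the coalgebra compatibility is likewise only asserted. Proving that the comparison map respects multiplication and comultiplication is the real work in \cite[Thm~3.5]{ps:hopf-bigalois}: it uses that \(L(A,H)\) is a subalgebra of \(A\otimes A^\op\) and the identities~\eqref{eq:kappa_-1:1}--\eqref{eq:kappa_-1:4} for \(\can_H^{-1}\), not a one-line appeal to uniqueness. (If one reads the statement as asking only for an \(\R\)-linear isomorphism compatible with the two coactions, this part can be dropped, but the direction of the universal property still has to be corrected as above.)
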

Let us now state the Schauenburg construction of the Galois correspondence,
which we are going to generalise in a subsequent section.
\index{Schauenburg correspondence}
\index{biGalois extension!Schauenburg correspondence}
\begin{proposition}[{\cite[Prop.~3.2]{ps:gal-cor-hopf-bigal}}]\label{prop:Schauenburg_existence}
    Let \(A/\R\) be a faithfully flat \(H\)-Galois extension and let us
    consider \(L=L(A,H)\).  Then there exists a Galois correspondence:
    \begin{equation}\label{eq:Schauenburg_galois_corr}
	\Sub_{\Alg^H}(A)\galois{}{}\qquot(L(A,H))
    \end{equation}
    where \(\Sub_{\Alg^H}(A)\) is the poset of \(H\)-subcomodule algebras of
    \(A\), a generalised quotient \(Q\in\qquot(L(A,H))\) is mapped to \(^{co
	    Q}A\coloneq\{a\in A:\delta_{L(A,H)}(a)=1_{L(A,H)}\otimes a\}\);
    while a subalgebra \(S\in\Sub_\Alg(A)\) is mapped to
    \((A\otimes_SA)^{co\,H}\in\qquot(L(A,H))\).
\end{proposition}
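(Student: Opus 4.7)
The plan is to establish the Galois connection by checking that both maps are well-defined and antimonotonic, and then to verify the adjointness condition \eqref{eq:adjointness} using a relative version of the universal property in Proposition~\ref{prop:Schaueburg_universal_prop}.

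First, I would verify well-definedness. For $\psi(Q)={}^{co\,Q}A$: since $A$ is an $L$-$H$-bicomodule, the right $H$-coaction restricts to the $L$-coinvariants and thus to ${}^{co\,Q}A$, so it is an $H$-subcomodule; a short Sweedler calculation, using that the left $L$-coaction on $A$ is an algebra map and that $\pi:L\to Q$ is a right $L$-module and coalgebra map, then shows ${}^{co\,Q}A$ is closed under multiplication. For $\phi(S)=(A\otimes_SA)^{co\,H}$ on the $H$-subcomodule algebras $S\subseteq A$, I would put the codiagonal coaction on $A\otimes_SA$ (this descends from $A\otimes A$ precisely because $S$ is $H$-stable). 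The canonical $H$-colinear surjection $A\otimes A\twoheadrightarrow A\otimes_SA$ then restricts to a map $L(A,H)=(A\otimes A)^{co\,H}\to(A\otimes_SA)^{co\,H}$; faithful flatness of $A$ over $\R$ ensures surjectivity, by the same trick that underlies Lemma~\ref{lem:Schauenburg_Galois_cond}, since $(-)^{co\,H}$ is an equaliser and tensoring by the faithfully flat $A$ detects exactness. The kernel $I_S\subseteq L$ inherits the structure of a right ideal coideal from the bialgebra structure on $L$, so $\phi(S)\in\qquot(L)$.

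Next, antimonotonicity is immediate. If $S\subseteq S'$, then $A\otimes_{S'}A$ is a further quotient of $A\otimes_SA$, hence $I_S\subseteq I_{S'}$ and $\phi(S)\succcurlyeq\phi(S')$. If $Q\succcurlyeq Q'$ in $\qquot(L)$, meaning $Q\twoheadrightarrow Q'$, the coaction $\delta_{Q'}$ factors through $\delta_Q$, and so every element $Q$-coinvariant is also $Q'$-coinvariant, giving ${}^{co\,Q}A\subseteq{}^{co\,Q'}A$.

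The core step is the adjointness equivalence $S\subseteq\psi(Q)\iff\phi(S)\succcurlyeq Q$, which I would deduce from the following universal property of $\phi(S)$: the pair $\bigl(\phi(S),\bar\delta_S\bigr)$, where $\bar\delta_S$ is the induced coaction $A\to\phi(S)\otimes A$, is final among pairs $(C,\delta^C)$ with $C$ a coalgebra and $\delta^C:A\to C\otimes A$ a right $H$-colinear, left $S$-linear map. This is the $S$-relative version of the universal property of $L(A,H)$ recalled before Proposition~\ref{prop:Schaueburg_universal_prop}, and is proven by the same argument: one produces the map $C\otimes A\leftarrow A\otimes A$, $a\otimes b\mapsto\delta^C(a)\cdot b$, which is $H$-colinear and factors through $A\otimes_SA$ by $S$-linearity, and then takes $H$-coinvariants. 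The condition $S\subseteq\psi(Q)$ is exactly $S$-linearity of the composed coaction $A\to Q\otimes A$, and by universality this is exactly the existence of a coalgebra quotient $\phi(S)\to Q$ compatible with the respective coactions, i.e.\ $\phi(S)\succcurlyeq Q$. The main obstacle in this programme is showing that $L(A,H)\twoheadrightarrow(A\otimes_SA)^{co\,H}$ and identifying its kernel as a right ideal coideal; both steps depend essentially on the faithful flatness of $A/\R$ and on the bijectivity of the canonical map from Lemma~\ref{lem:Schauenburg_Galois_cond}.
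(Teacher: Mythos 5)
Your proposal is, in outline, a reconstruction of Schauenburg's original direct argument; the thesis itself never proves Proposition~\ref{prop:Schauenburg_existence} but quotes it from \cite{ps:gal-cor-hopf-bigal}, and the only justification it offers is indirect: Remark~\ref{rem:comparison_of_adjunctions} notes that applying Theorem~\ref{thm:existence} to \(A\) regarded as a left \(L(A,H)\)-comodule algebra yields a Galois connection with the same map \(Q\mapsto{}^{co\,Q}A\), which must agree with Schauenburg's by uniqueness of adjoints (Proposition~\ref{prop:properties-of-adjunction}(iii)). So your route is genuinely different from the paper's: the paper's abstract route buys existence from completeness and reflection of suprema alone, giving no formula for the right adjoint, whereas your route exhibits \(\phi(S)=(A\otimes_SA)^{co\,H}\) explicitly and makes the adjointness \(\phi(S)\succcurlyeq Q\iff S\subseteq\psi(Q)\) visible through a relative universal property; this is closer in spirit to Lemma~\ref{lem:Schauenburg_Galois_cond} and Proposition~\ref{prop:Schaueburg_universal_prop} and is the natural way to prove the statement over an arbitrary base ring.

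Two places need real tightening before this is a proof. First, the pivotal equivalence ``\(S\subseteq\psi(Q)\) iff the induced coaction \(A\to Q\otimes A\) is \(S\)-linear'' holds only because \(Q\) is an \(L\)-module quotient on the \emph{correct} side: one rewrites \(\pi(a_{(-1)}s_{(-1)})\otimes a_{(0)}s_{(0)}\) using the module structure of \(Q\) and then the coinvariance of \(s\), and this computation goes through for one side of ideal coideals and fails for the other. With the conventions of this chapter (\(\delta_{L(A,H)}\) as given, \(L\subseteq A\otimes A^{\op}\)) the kernel of \(L\to(A\otimes_SA)^{co\,H}\) is in fact a \emph{left} ideal coideal; already for \(A=H=\k[G]\), \(L(H,H)\cong\k[G]\) and \(S=\k[G_0]\) the two candidate quotients \(\k[G/G_0]\) and \(\k[G_0\backslash G]\) differ unless \(G_0\) is normal, and only one choice makes your equivalence true. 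So you cannot simply assert that the kernel ``inherits the structure of a right ideal coideal''; you must verify the module-coalgebra structure on \((A\otimes_SA)^{co\,H}\) and fix the sides consistently (the statement's wording is itself loose here). Second, surjectivity of \(L\to(A\otimes_SA)^{co\,H}\) does not follow from ``\((-)^{co\,H}\) is an equaliser and faithfully flat tensoring detects exactness''; the actual mechanism is that the two bijections of Lemma~\ref{lem:Schauenburg_Galois_cond} (for \(S\) and for \(R\)) intertwine the projection \(A\otimes A\to A\otimes_SA\) with \(q\otimes\id_A\), after which faithful flatness of \(A\) over \(R\) cancels the \(A\)-factor. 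The same device should replace the step in your universality argument where you take \(H\)-coinvariants of \(C\otimes A\) and identify the result with \(C\): for \(C=Q\) a non-flat quotient of \(L\) that identification is not automatic, whereas arguing at the level of \(\operatorname{Im}(\ker q_S\otimes A)\) under the canonical isomorphisms avoids it entirely.
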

Further, Schauenburg defines interesting classes of subalgebras and
generalised quotients.  Then he proves that these objects embed into closed
elements of the above Galois correspondence.  Later on we will amend this
definition, so that it will work also for \(H\)-extensions with
a noncommutative subalgebra of coinvariants.
\index{bialgebra!generalised quotients!admissibility (Schauenburg)}
\index{admissible quotients!Schauenburg version}
\index{admissible subalgebras!Schauenburg version}
\begin{definition}\label{defi:Schauenburg_admissibility}
    \begin{enumerate}
	\item Let \(C\) be a coalgebra and let \(C\sir D\) be a coalgebra
	    quotient.  We call \(D\) \textbf{left} (\textbf{right})
	    \textbf{admissible} if it is flat as an \(\R\)-module and \(C\) is
	    right (left) faithfully flat as a \(D\)-comodule.
	\item Let \(S\in\Sub_\Alg(A)\), then \(S\) is called \textbf{left}
	      (\textbf{right}) \textbf{admissible} if \(A\) is faithfully flat
	      as a left (right) \(S\)-module.
    \end{enumerate}
    \noindent Admissible objects are defined as the ones which are both left and right
    admissible.
\end{definition}
The definition of admissibility for subalgebras will have to be altered in the
next section.  We will use the following result:
\begin{proposition}[{\cite[Prop.~3.5]{ps:gal-cor-hopf-bigal}}]\label{prop:Schauneburg_admissibility_and_op}
    Let \(H\) be an \(\R\)-flat Hopf algebra with a bijective antipode.  Then
    the map: 
    \[\qquot(H)\ni H/I\selmap{}(H/I)^\op\coloneq H/S_H(I)\in\qquot(H^\op)\] 
    is a bijection with inverse \(\qquot(H^\op)\ni
	H/I\selmap{}(H/I)^\op\coloneq H/S_{H^\op}(I)\in\qquot(H^\op)\).  The
    quotient \(Q^\op\in\qquot(H^\op)\) is right (left) admissible if and only
    if \(Q\in\qquot(H)\) is left (right) admissible. 
\end{proposition}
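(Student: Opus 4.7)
The plan is to split the statement into three parts: well-definedness of \(I \mapsto S_H(I)\) as a map from right ideal coideals of \(H\) to right ideal coideals of \(H^\op\), bijectivity of the induced poset map \(\qquot(H)\sir\qquot(H^\op)\), and the equivalence of admissibility conditions. The first two parts are formal consequences of the antipode's properties; the admissibility equivalence is the substantive step.

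First I would check well-definedness. Since \(S_H\) is an algebra anti-homomorphism, if \(I\) is a right ideal of \(H\) then \(S_H(I)\) is a left ideal of \(H\), which is the same as a right ideal of \(H^\op\). For the coideal property, \(\epsilon\circ S_H=\epsilon\) gives \(S_H(I)\subseteq\ker\epsilon\), and the identity \(\Delta\circ S_H=\tau\circ(S_H\otimes S_H)\circ\Delta\) together with the fact that the induced map \(\bar S:H/I\sir H/S_H(I)\) satisfies \(\pi_{S_H(I)}\circ S_H=\bar S\circ \pi_I\) yields
\begin{equation*}
(\pi_{S_H(I)}\otimes\pi_{S_H(I)})(\Delta(S_H(I)))=(\bar S\otimes\bar S)\,\tau\,(\pi_I\otimes\pi_I)(\Delta(I))=0,
\end{equation*}
so \(S_H(I)\) satisfies the coideal criterion recalled at the start of Chapter~\ref{chap:lattices}. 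Bijectivity is then immediate: since \(S_H\) is bijective, the antipode of \(H^\op\) is \(S_H^{-1}\), so \(I\mapsto S_H(I)\) and \(J\mapsto S_{H^\op}(J)\) are mutually inverse on sets and clearly preserve inclusion, hence are mutually inverse poset isomorphisms.

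For the admissibility equivalence I would construct an equivalence of comodule categories carrying the natural right \(Q^\op\)-comodule structure on \(H^\op\) to the natural left \(Q\)-comodule structure on \(H\). The descended map \(\bar S:Q\sir Q^\op\) is an anti-coalgebra isomorphism, or equivalently a coalgebra isomorphism from \(Q\) to \((Q^\op)^{\mathrm{cop}}\), where \(\mathrm{cop}\) denotes reversed comultiplication. Composing the standard equivalence \(\delta\mapsto\tau\delta\) between right \(Q^\op\)-comodules and left \((Q^\op)^{\mathrm{cop}}\)-comodules with transport along \(\bar S\) yields an equivalence between right \(Q^\op\)-comodules and left \(Q\)-comodules. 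Tracing the natural right \(Q^\op\)-comodule structure on \(H^\op\), namely \(h\mapsto h_{(1)}\otimes \pi^\op(h_{(2)})\), through this chain produces a left \(Q\)-comodule structure on \(H\) of the form \(h\mapsto \pi(S_H^{-1}(h_{(2)}))\otimes h_{(1)}\); the \(R\)-module automorphism \(S_H^{-1}:H\sir H\) is then readily checked to intertwine this with the natural left \(Q\)-comodule structure \(h\mapsto \pi(h_{(1)})\otimes h_{(2)}\). Since faithful coflatness is preserved under equivalences of comodule categories, the conclusion is that \(H^\op\) is right faithfully coflat over \(Q^\op\) if and only if \(H\) is left faithfully coflat over \(Q\), i.e. \(Q^\op\) is left admissible iff \(Q\) is right admissible; the other pairing follows by the symmetric argument. (Flatness of \(Q^\op\) over \(\R\) is transported to flatness of \(Q\) in the same way, using the \(R\)-module isomorphism \(\bar S\).)

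The main obstacle is the bookkeeping in the admissibility step: one must carefully track how the \(\op\) and \(\mathrm{cop}\) constructions interact with left versus right comodule structures, and verify that the natural comodule structures on \(H\) and \(H^\op\) actually correspond under the constructed chain of equivalences, rather than merely being abstractly available on both sides. Flatness of \(H\) over \(\R\) is used implicitly so that the coideal criterion and the cotensor-based notion of (co)flatness behave as expected throughout.
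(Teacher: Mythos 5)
Your argument is correct. Note that the paper itself gives no proof of this proposition --- it is quoted verbatim from Schauenburg (\cite[Prop.~3.5]{ps:gal-cor-hopf-bigal}) --- so there is no in-paper argument to compare against; your reconstruction stands on its own. The first two parts are handled as one would hope: since \(S_H\) is an algebra and coalgebra anti-morphism with \(\epsilon\circ S_H=\epsilon\), the image \(S_H(I)\) of a right ideal coideal is a left ideal of \(H\), hence a right ideal of \(H^\op\), and your computation with the descended map \(\bar S\) verifies the criterion \(\Delta(S_H(I))\subseteq\ker(\pi\otimes\pi)\), \(S_H(I)\subseteq\ker\epsilon\); bijectivity is then immediate from \(S_{H^\op}=S_H^{-1}\). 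For the admissibility swap, the substantive point --- that under the equivalence of right \(Q^\op\)-comodules with left \(Q\)-comodules induced by the anti-coalgebra isomorphism \(\bar S\), the natural right \(Q^\op\)-comodule \(H^\op\) corresponds to the natural left \(Q\)-comodule \(H\) --- is exactly what needs checking, and your transported structure \(h\mapsto\pi(S_H^{-1}(h_{(2)}))\otimes h_{(1)}\) together with the intertwiner \(S_H^{-1}\) (via \(\Delta\circ S_H^{-1}=\tau\circ(S_H^{-1}\otimes S_H^{-1})\circ\Delta\)) settles it. The only step left implicit is that faithful coflatness transports along this equivalence: strictly, one should also note that left \(Q^\op\)-comodules correspond to right \(Q\)-comodules by the same recipe and that cotensor products match, \(M\cotensor_{Q^\op}N\cong\widetilde{N}\cotensor_{Q}\widetilde{M}\) via the flip; this is routine and your phrasing acknowledges it, as does your transport of \(\R\)-flatness along the module isomorphism \(\bar S\). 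Finally, be aware that Definitions~\ref{defi:Schauenburg_admissibility} and~\ref{defi:admissible_II} in the paper use opposite left/right naming conventions; your coflatness equivalence yields the stated chirality swap under either convention, so the conclusion is unaffected.
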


The main theorem concerning closed elements
of~\eqref{eq:Schauenburg_galois_corr} is the following:
\index{Schauenburg correspondence!admissibility}
\begin{theorem}[{\cite[Thm.~3.6]{ps:gal-cor-hopf-bigal}}]\label{thm:Schauenburg_Galois_closedness}
    Let \(H\) be a Hopf algebra with a bijective antipode and let \(A\) be
    a faithfully flat \(H\)-Galois extension of the base ring \(\R\).  Then
    the Galois correspondence~\eqref{eq:Schauenburg_galois_corr} restricts to
    bijection between (left, right) admissible generalised quotients of
    \(L(A,H)\) and \(H\)-subcomodule algebras \(S\) of \(A\) such that \(A\)
    is (left, right) faithfully flat over~\(S\).
\end{theorem}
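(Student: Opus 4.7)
The plan is to establish the two directions of the bijection separately, using the canonical map of Lemma~\ref{lem:Schauenburg_Galois_cond} as the main technical lever and Proposition~\ref{prop:Schauneburg_admissibility_and_op} to handle the transfer between left and right admissibility. Throughout, set \(L = L(A,H)\), and write \(\psi(S) = (A\otimes_SA)^{co\,H}\) and \(\phi(Q) = {}^{co\,Q}A\).

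First I would confirm that the relevant objects land where claimed. For an \(H\)-subcomodule algebra \(S\subseteq A\), the quotient map \(L = (A\otimes A)^{co\,H}\twoheadrightarrow (A\otimes_SA)^{co\,H}\) exhibits \(\psi(S)\) as a generalised quotient of \(L\); for a generalised quotient \(Q\) of \(L\), the set \(\phi(Q)\) is automatically stable under the right \(H\)-coaction because the \(L\)- and \(H\)-coactions on \(A\) commute, so it is an \(H\)-subcomodule subalgebra. These are straightforward bookkeeping steps.

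The next step, and the technical heart of the argument, is to show that if \(A\) is faithfully flat as a (left or right) \(S\)-module then \(Q\coloneq\psi(S)\) is (left or right) admissible. The key observation is that the map \(\can_S\) of Lemma~\ref{lem:Schauenburg_Galois_cond} is an isomorphism
\[
A\otimes_SA \;\xrightarrow{\;\cong\;}\; Q\otimes A,
\]
which exhibits \(A\otimes_SA\) as a free right \(Q\)-comodule (via the left tensor factor) with cogenerator \(A\). Faithful flatness of \(A\) over \(S\) then descends to faithful coflatness of \(A\otimes_SA\) over \(Q\), which by a further descent along \(A\) (using that \(A\) is faithfully flat over \(\R\)) gives faithful coflatness of \(L\) over \(Q\) on the appropriate side. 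To obtain the other handedness I would invoke Proposition~\ref{prop:Schauneburg_admissibility_and_op}: bijectivity of the antipode of \(H\) transports \(L(A,H)\) to \(L(A^{\op},H^{\op})\) and swaps \(\phi,\psi\) with their opposite-side analogues, so the one-sided argument applied to the opposite datum yields the missing side. Conversely, if \(Q\) is (left, right) admissible, then faithful coflatness of \(L\) over \(Q\) combined with the isomorphism above (reversed) forces \(A\) to be faithfully flat over \(\phi(Q)\), again using the antipode for the second handedness.

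Finally I would verify closedness, which is formal once faithful (co)flatness is in place. The Galois property gives \(S\subseteq\phi\psi(S)\) and \(Q\leq\psi\phi(Q)\) automatically. To upgrade these to equalities in the admissible range, tensor the inclusion \(S\subseteq\phi\psi(S)\) with \(A\) over \(S\) and use \(\can_S\) together with the definition of \(\phi\) to identify the two sides of the inclusion \(S\otimes_SA \hookrightarrow \phi\psi(S)\otimes_SA\); faithful flatness of \(A\) over \(S\) then cancels and yields \(S = \phi\psi(S)\). For \(Q = \psi\phi(Q)\) the same strategy runs dually: cotensor with \(A\) over \(Q\) and use that faithful coflatness allows one to cancel \(A\) from the resulting identification \(Q\cotensor_QA \hookrightarrow \psi\phi(Q)\cotensor_QA\), delivered by \(\can_{\phi(Q)}\).

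The main obstacle I expect is the handedness transfer in the middle paragraph: turning one-sided (co)flatness of the Hopf-theoretic datum into two-sided (co)flatness by opposite-algebra manipulations is where the bijective antipode is genuinely used, and keeping track of which coactions and which tensor factors are in play (left \(L\) versus right \(H\) on \(A\otimes A\)) is the delicate bookkeeping. Once that is set up cleanly, the faithful (co)flatness descent arguments for closedness are routine.
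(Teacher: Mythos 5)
Your overall architecture is the right one, and it matches how this result is actually established: the thesis does not reprove Schauenburg's theorem directly but recovers it (Remark~\ref{rem:admissibility}) from Theorem~\ref{thm:Galois_closeness}, whose proof runs exactly through Lemma~\ref{lem:Schauenburg_Galois_cond}, descent along the faithfully flat maps \(\R\sir A\) and \(S\sir A\), and the opposite-algebra transfer of Proposition~\ref{prop:Schauneburg_admissibility_and_op}. The genuine gap is in your converse direction. You assert that if \(Q\in\qquot(L(A,H))\) is admissible, then "faithful coflatness of \(L\) over \(Q\) combined with the isomorphism above (reversed) forces \(A\) to be faithfully flat over \(\phi(Q)\)". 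There is no isomorphism to reverse at that point: the bijectivity of \(\can_{\phi(Q)}:A\otimes_{\phi(Q)}A\sir Q\otimes A\) and the faithful flatness of \(A\) over \(\phi(Q)={}^{co\,Q}A\) are precisely what must be proved, and they are not formal consequences of coflatness. This is the content of Schneider's descent theorem for faithfully flat Hopf--Galois extensions (\cite[Thm~1.4]{hs:normal-bases}), which is exactly what is invoked at the corresponding step of Proposition~\ref{prop:Schauenburg_admissibility}(ii); without citing or reproving it, your converse direction collapses, and with it the closedness \(\psi\phi(Q)=Q\). Your cancellation for \(\psi\phi(Q)=Q\) ("cotensor with \(A\) over \(Q\) and cancel by faithful coflatness") also does not work as stated: one first needs the \(Q\)-Galois isomorphism from Schneider's theorem and then a monomorphism argument of the type of Theorem~\ref{thm:mono} and Corollary~\ref{cor:Q-Galois_closed} (or Schauenburg's original one), not a cancellation of \(A\) against the cotensor.

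In the forward direction the shape is right but the intermediate objects are off. The comodule \(A\otimes_SA\cong\psi(S)\otimes A\) is faithfully coflat over \(\psi(S)\) as soon as \(A\) is faithfully flat over \(\R\), with no input from \(S\) at all; what faithful flatness of \(A\) over \(S\) together with \(\can_S\) actually buys is faithful coflatness of \(A\) itself over \(\psi(S)\), via \(A\otimes_S(A\cotensor_{\psi(S)}V)\cong(A\otimes_SA)\cotensor_{\psi(S)}V\cong A\otimes V\), and this is then transferred to \(L\) using the \(L\)-Galois isomorphism \(A\otimes A\cong L\otimes A\) (Lemma~\ref{lem:Schauenburg_Galois_cond} with \(S=\R\)) and the two-out-of-three criterion of \cite[Rem.~1.2]{hs:normal-bases} along the faithfully flat unit \(\R\sir A\). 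You also never address \(\R\)-flatness of \(\psi(S)\), which is part of the definition of admissibility. These pieces are repairable, but as written the descent bookkeeping that constitutes the proof of this direction has not actually been carried out, and the converse direction is missing its essential ingredient.
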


\section{Galois correspondence for $H$-extensions}
We begin this section with a construction of a Galois connection between
generalised subalgebras and generalised quotients of a bialgebra which is flat
as an \(R\)-module.  This Galois connection, as reported
by~\citeauthor{ps:gal-cor-hopf-bigal} is folklore.  We put the proof here,
since it is not written elsewhere in the generality that we will need.
\index{Takeuchi correspondence!Galois connection}
\begin{proposition}
    Let \(H\) be a bialgebra, which is flat over a commutative base ring
    \(R\).  Then the maps:
	\begin{equation}\label{eq:galois-for-hopf-alg}
	    \begin{array}{ccc}
		\qsub(H)&\hspace{-.3cm}\galois{\psi}{\phi}&\qquot(H)
	    \end{array}
	\end{equation}
    where $\phi(Q)=H^{co\,Q},\psi(K)=H/K^+H$ form a \textsf{Galois
	connection}.
\end{proposition}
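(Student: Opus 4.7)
The plan is to verify the three ingredients of a Galois connection from Definition~\ref{defi:Galois-connection}: that \(\phi\) and \(\psi\) land in the stated posets, that they are antimonotonic, and that they satisfy the Galois property. For well-definedness, \(\psi(K)=H/K^+H\) lies in \(\qquot(H)\) because \(K^+H\) is evidently a right ideal contained in \(\ker\epsilon\), and \(\Delta(K^+H)\subseteq K^+H\otimes H+H\otimes K^+H\) follows from \(\Delta(K)\subseteq H\otimes K\) (the left coideal property) together with the multiplicativity of \(\Delta\). For \(\phi(Q)=H^{co\,Q}\), the quotient \(\pi\colon H\to Q=H/I\) by a right ideal coideal makes \(Q\) into a right \(H\)-module coalgebra and \(H\) into a right \(Q\)-comodule via \(\delta_Q=(\id_H\otimes\pi)\circ\Delta\); a direct Sweedler calculation exploiting \(\pi(a_{(2)}b_{(2)})=\pi(a_{(2)})\cdot b_{(2)}\) shows \(H^{co\,Q}\) is closed under multiplication, and coassociativity gives \((\id\otimes\delta_Q)\circ\Delta=(\Delta\otimes\id)\circ\delta_Q\), so for \(h\in H^{co\,Q}\) the element \(\Delta(h)\) satisfies the coinvariance equalizer after tensoring by \(H\) on the left. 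Identifying \(H\otimes H^{co\,Q}\) with this equalizer inside \(H\otimes H\) requires the \(R\)-flatness of \(H\), and yields \(\Delta(H^{co\,Q})\subseteq H\otimes H^{co\,Q}\). This is the single nontrivial use of flatness and the main (though modest) technical point.

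Antimonotonicity is immediate. The inclusion \(K\subseteq K'\) yields \(K^+H\subseteq K'^+H\), hence \(\psi(K)\succcurlyeq\psi(K')\) in \(\qquot(H)\); and \(Q\succcurlyeq Q'\) (i.e.\ \(I\subseteq I'\)) induces a surjection \(Q\twoheadrightarrow Q'\) intertwining \(\delta_Q\) and \(\delta_{Q'}\), so every \(Q\)-coinvariant is automatically a \(Q'\)-coinvariant, giving \(\phi(Q)\subseteq\phi(Q')\).

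The Galois property will then be checked by direct Sweedler calculation. Given \(k\in K\), the element \(\Delta(k)-k\otimes 1_H=k_{(1)}\otimes(k_{(2)}-\epsilon(k_{(2)})1_H)\) lies in \(H\otimes K^+\subseteq H\otimes K^+H\), since the first tensorand sits in \(K\) by the left coideal property and \(\id\otimes\epsilon\) annihilates the difference; applying \(\id\otimes\pi_K\) then yields \(\delta_{\psi(K)}(k)=k\otimes\pi_K(1_H)\), i.e.\ \(k\in\phi(\psi(K))\). Conversely, for \(Q=H/I\) and \(k\in(H^{co\,Q})^+\), applying \(\epsilon\otimes\id_Q\) to the identity \(k_{(1)}\otimes\pi(k_{(2)})=k\otimes\pi(1_H)\) gives \(\pi(k)=\epsilon(k)\pi(1_H)=0\), so \(k\in I\); since \(I\) is a right ideal this yields \((H^{co\,Q})^+H\subseteq I\), which translates precisely to \(\psi(\phi(Q))\succcurlyeq Q\) in \(\qquot(H)\). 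These two inequalities are the Galois property, completing the proof.
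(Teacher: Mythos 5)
Your proposal is correct and follows essentially the same route as the paper: well-definedness of both maps, with the one essential use of \(R\)-flatness in exactly the same place (identifying \(H\otimes H^{co\,Q}\) with the coinvariance equaliser inside \(H\otimes H\)), followed by the two Sweedler computations for the Galois inequalities, where your derivation of \((H^{co\,Q})^+\subseteq I\) via \(\epsilon\otimes\id_Q\) and the right-ideal property is just a slightly cleaner packaging of the paper's product computation. Two small imprecisions, neither fatal: the coideal property of \(K^+H\) does not follow from the left-coideal property and multiplicativity of \(\Delta\) alone—one also needs the counit identity \(\Delta(k)-k\otimes1_H\in H\otimes K^+\), which you in fact prove two paragraphs later, so the argument is complete once reordered; and in that computation it is the \emph{second} tensorand, not the first, that lies in \(K\) by the left-coideal property.
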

\begin{proof}
    First we show that the maps~\eqref{eq:galois-for-hopf-alg} are well
    defined and then we show that they form a Galois connection.  Let
    \(K\in\qsub(H)\) be a left coideal subalgebra, then \(K^+\) is a coideal:
    let us consider \(x=\Delta(k)-k\otimes 1\in H\otimes K\), where \(k\in
	K^+\).  Since \(\id\otimes\epsilon(x)=0\) and \(H\) is flat, we have
    \(x\in H\otimes K^+\).  Moreover \(x-1\otimes k\in H^+\otimes K^+\),
    because \(\epsilon\otimes\id(x)=k\).  Thus \(\Delta(k)-k\otimes1-1\otimes
	k\in H^+\otimes K^+\).  Hence \(K^+H\) is a coideal and a right
    \(H\)-ideal.  Now let \(Q\in\qquot(H)\) and let \(\pi\) be the canonical
    surjection \(\pi:H\sir Q\).  Then \(H^{\co\,Q}\) is a subalgebra: for
    \(x,y\in H^{\co\,Q}\) we have
    \begin{align*}
	(xy)_{(1)}\otimes\pi((xy)_{(2)}) & = x_{(1)}y_{(1)}\otimes\pi(x_{(2)}y_{(2)}) \\
	                         & = x_{(1)}y_{(1)}\otimes\pi(x_{(2)})y_{(2)} \\
	                         & = xy_{(1)}\otimes\pi(1)y_{(2)} \\
	                         & = xy_{(1)}\otimes\pi(y_{(2)})\\
	                         & = xy\otimes\pi(1)
    \end{align*}
    It remains to show that it is a left coideal.  For this let \(x\in
	H^{\co\,Q}\).  One easily gets that
    \[x_{(1)}\otimes {x_{(2)}}_{(1)}\otimes\pi({x_{(2)}}_{(2)})=x_{(1)}\otimes x_{(2)}\otimes\pi(1)\]
    Hence \(x_{(1)}\otimes x_{(2)}\in(H\otimes
	H)^{\co\,Q}=H\otimes(H^{\co\,Q})\), since \(H\) is a flat
    \(R\)-module.  This shows that \(H^{\co\,Q}\) is a left \(H\)-comodule
    subalgebra.

    Now let us show that these two maps indeed define a Galois connection.
    Let \(Q=H/I\in\qquot(H)\) and let
    \(\pi:H\sir Q\) be the projection map.  Then we want to show that
    \((H^{\co\,Q})^+H\subseteq I\).  For this let \(\sum_i x_iy_i\in
	(H^{\co\,Q})^+H\) be such that each \(x_i\in (H^{\co\,Q})^+\).  Then
    we have:
    \begin{equation*}
	\begin{split}
	    \pi\bigl(\sum_ix_iy_i\bigr) & =\epsilon({x_i}_{(1)}{y_i}_{(1)})\pi({x_i}_{(2)}{y_i}_{(2)}) \\
	                    & =\epsilon({x_i}_{(1)}{y_i}_{(1)})\pi({x_i}_{(2)}){y_i}_{(2)} \\
	                    & =\epsilon(x_i{y_i}_{(1)})\pi({y_i}_{(2)}) \\
	                    & =0
	\end{split}
    \end{equation*}
    Now let us take a \(K\in\qsub(H)\), and let \(\pi_K:H\sir H/K^+H\) be the
    natural projection.  Then for every \(k\in K\) we have:
    \[k_{(1)}\otimes\pi_K(k_{(2)})-k\otimes\pi_K(1)=k_{(1)}\otimes\pi_K(k_{(2)}-\epsilon(k_{(2)})1)=0\]
    Thus \(K\subseteq H^{\co\,H/K^+H}\), and indeed
    the maps~\eqref{eq:galois-for-hopf-alg} define a Galois connection.
\end{proof}

Now we prove existence of the Galois correspondence for comodule algebras
without the restriction of Schauenburg's approach.  Afterwards we will also
show how the result can be improved if the base ring happens to be a field.
In this case the correspondence has a very similar form to the correspondence
of the previous Proposition.
\index{comodule algebra!Galois correspondence}
\begin{theorem}[{Galois correspondence for \(H\)-comodule algebras}]\label{thm:existence} 
    Let \(H\) be a bialgebra and let \(A/B\) be an \(H\)-extension over
    a commutative ring \(\R\) such that \(A\) and \(H\) are flat
    Mittag--Leffler \(\R\)-modules.  Then we have a \textsf{Galois
	connection}:
    \begin{equation}\label{eq:galois-connection}
	\Sub_{\Alg}(A/B)\,\galois{\psi}{\phi}\,\qquot(H)
    \end{equation}
    where \(\phi(Q):=A^{co\,Q}\) and \(\psi\) is given by the following
    formula: 
    \begin{equation}\label{eq:psi}
	\psi(S)=\bigvee\{Q\in\qquot(H): S\subseteq A^{co\,Q}\}
    \end{equation}
    for \(S\in\Sub_\Alg(A/B)\).
\end{theorem}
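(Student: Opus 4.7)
The plan is to apply the existence theorem for Galois connections (Theorem~\ref{thm:existence-of-adjunction}) to the antimonotonic map $\phi : \qquot(H) \to \Sub_{\Alg}(A/B)$, $\phi(Q) = A^{co\,Q}$. The poset $\qquot(H)$ is complete by Proposition~\ref{prop:qquot_complete}; antimonotonicity of $\phi$ is immediate, since a larger quotient $Q_1 \succcurlyeq Q_2$ corresponds to kernels $I_1 \subseteq I_2$ and therefore imposes more constraints on coinvariants. Once $\phi$ is known to reflect arbitrary suprema into infima, formula~\eqref{eq:right-adjoint} specialises to~\eqref{eq:psi}, so the whole theorem reduces to that single reflection statement.

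To prove the reflection, I would fix a family $\{Q_i\}_{i\in I}$ in $\qquot(H)$ with kernels $I_i\in\qid(H)$ and let $J = \bigwedge_i I_i$ be the largest right ideal coideal contained in $\bigcap_i I_i$ (see~\eqref{eq:meet_in_qid}), so that $\bigvee_i Q_i = H/J$. The infimum in $\Sub_{\Alg}(A/B)$ is ordinary intersection. Writing $\rho(a) := \delta_A(a) - a\otimes 1_H$, flatness of $A$ gives $A^{co\,(H/I)} = \{a\in A : \rho(a)\in A\otimes I\}$, and the intersection property of the flat Mittag--Leffler module $A$ (Proposition~\ref{prop:Mittag-Leffler_intersection}) gives $\bigcap_i(A\otimes I_i) = A\otimes \bigcap_i I_i$. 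Setting $T := \bigcap_i A^{co\,Q_i}$, the non-trivial inclusion $T\subseteq A^{co\,(H/J)}$ reduces to $\rho(T)\subseteq A\otimes J$, a condition strictly stronger than what flatness alone delivers (namely $\rho(T)\subseteq A\otimes\bigcap_i I_i$).

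Using the intersection property a second time, I would let $N$ be the smallest $\R$-submodule of $H$ with $\rho(T)\subseteq A\otimes N$, so that by minimality $N\subseteq\bigcap_i I_i$. The key observation is that $N$ itself need not be a right ideal: it suffices to show that $N$ is a coideal, for then the right ideal $NH$ generated by $N$ is automatically a coideal as well (since $\Delta$ is an algebra map we have $\Delta(NH)\subseteq\Delta(N)\Delta(H)\subseteq H\otimes NH + NH\otimes H$, and $\epsilon(NH)=\epsilon(N)\epsilon(H)=0$), so $NH\in\qid(H)$. Because each $I_i$ is a right ideal, $N\subseteq\bigcap_i I_i$ forces $NH\subseteq\bigcap_i I_i$, whence maximality of $J$ yields $NH\subseteq J$ and therefore $\rho(T)\subseteq A\otimes N\subseteq A\otimes NH\subseteq A\otimes J$.

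The main obstacle is to show that $N$ is a coideal. Vanishing of the counit on $N$ is easy: applying $\id_A\otimes\epsilon$ to $\rho(t)$ gives $t-t=0$ for every $t\in T$, and minimality of $N$ forces $N\subseteq\ker\epsilon$. The harder part is $\Delta(N)\subseteq H\otimes N + N\otimes H$; here the plan is to set $L := \{n\in N : \Delta(n)\in H\otimes N + N\otimes H\}$ and exploit that flatness of $H$ gives $(H\otimes H)/(H\otimes N + N\otimes H)\cong (H/N)\otimes(H/N)$, so the induced map $\bar\Delta : N/L \hookrightarrow (H/N)\otimes(H/N)$ is injective. Coassociativity of $\delta_A$ applied to $\rho(t)$ yields the identity $(\id\otimes\Delta)\rho(t) = (\delta_A\otimes\id)\rho(t) + \rho(t)\otimes 1_H$, whose right-hand side manifestly lies in $A\otimes(H\otimes N + N\otimes H)$, so $(\id\otimes\bar\Delta)\rho(t)=0$. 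Flatness of $A$ then makes $\id_A\otimes\bar\Delta$ injective on $A\otimes N/L$, which forces $\rho(T)\subseteq A\otimes L$, and minimality of $N$ finally gives $L=N$. Both flat Mittag--Leffler hypotheses are essential here: the one on $A$ supplies the intersection property underlying the construction of $N$ and the description of $\bigcap A^{co\,Q_i}$, while the one on $H$ produces the identification used to define $\bar\Delta$.
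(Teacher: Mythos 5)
Your proposal is correct, and it bridges the crucial gap --- namely that $\rho(a)\in A\otimes\bigcap_i I_i$ does not immediately give $\rho(a)\in A\otimes\bigwedge_i I_i$ --- by a genuinely different route from the paper. The paper's proof handles this in three stages: first the case $A=H$, using the auxiliary Galois connection $\qsub(H)\rightleftarrows\qquot(H)$ and the fact that $K=\bigcap_i H^{\co\,Q_i}$ is again a left coideal subalgebra (this is where the flat Mittag--Leffler hypothesis on $H$ enters); then $A\otimes H$, using that every finitely generated submodule of a flat Mittag--Leffler module sits inside a projective submodule, via a dual-basis argument; and finally general $A$ by coassociativity. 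You instead construct the minimal ``coefficient submodule'' $N$ of $\rho(T)$ directly (its existence is exactly the intersection property of $A$), prove that $N$ is a coideal using coassociativity of $\delta_A$ and flatness of $A$, and then pass to the right ideal coideal $NH\subseteq\bigcap_i I_i\subseteq\bigwedge_i I_i$. I checked the two delicate steps: the identity $(\id_A\otimes\Delta)\rho(t)=(\delta_A\otimes\id_H)\rho(t)+\rho(t)\otimes 1_H$ is correct, and the deduction $\rho(T)\subseteq A\otimes L$, hence $L=N$ by minimality, goes through (with $\Delta(N)\subseteq H\otimes N+N\otimes H$ read as the kernel of $\pi_N\otimes\pi_N$, i.e.\ as canonical images, which matches the paper's ring-level definition of a coideal and is what your quotient argument actually produces). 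Two small remarks: the identification $(H\otimes H)/(H\otimes N+N\otimes H)\cong(H/N)\otimes(H/N)$ that you attribute to flatness of $H$ is automatic from right-exactness of the tensor product, so your argument appears to use the Mittag--Leffler hypothesis only on $A$ --- a modest strengthening of the statement; and you should note in passing (as the paper also leaves implicit) that $A^{\co\,Q}$ is a subalgebra containing $B$, which follows from $I_Q$ being a right ideal. Each approach buys something: the paper's detour through $\qsub(H)$ sets up machinery reused later (e.g.\ in Theorem~\ref{thm:connection_over_field}), while yours is shorter, self-contained, and isolates exactly which module-theoretic hypotheses carry the proof.
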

\begin{proof}
    We shall show that $\phi$ reflects all suprema: \(A^{co\,\bigvee_{i\in
		I}\, Q_i}=\bigcap_{i\in I}\,A^{co\,Q_i}\).  From the set of
    inequalities: \(\bigvee_{i\in I}\,Q_i\succcurlyeq Q_j\ (\forall_{j\in
	    I})\) it follows that \(A^{co\,\bigvee_{i\in I}\,
	    Q_i}\subseteq\bigcap_{i\in I}\,A^{co\,Q_i}\).  Let us fix an
    element $a\in\bigcap_{i\in I}\,A^{co\,Q_i}$.  We let $I_i$ denote the
    coideal and right ideal such that $Q_i=H/I_i$.  We identify $A\otimes I_i$
    with a submodule of $A\otimes H$, which can be done under the assumption
    that $A$ is flat over \(\R\).  We want to show that:
    \begin{equation*}
	\begin{split}	    
	    \forall_{i\in I}\;a\in A^{co\,Q_i}\ &\Leftrightarrow\ \forall_{i\in I}\;\delta(a) - a\otimes 1\in A\otimes I_i \\
	    &\Leftrightarrow\ \delta(a)-a\otimes 1\in A\otimes\bigcap_{i\in I}\,I_i\\
	    &\Leftrightarrow\ a\in A^{co\,\bigvee_{i\in I} Q_i}
    \end{split}
    \end{equation*}
    The first equivalence is clear, the second follows from the equality:
    $\bigcap_{i\in I}A\otimes I_i=A\otimes \bigcap_{i\in I} I_i$ which holds
    since flat Mittag--Leffler modules have the intersection property (cf.
    Corollary~\ref{cor:mittag-leffler}).  It remains to show that if
    $\delta(a)-a\otimes 1\in A\otimes\bigcap_{i\in I}I_i$ then
    \mbox{$\delta(a)-a\otimes 1\in A\otimes\bigwedge_{i\in I}I_i$.} The other
    inclusion is clear, i.e.  \(A\otimes\bigwedge_{i\in I}I_{i}\subseteq
	A\otimes\bigcap_{i\in I}I_{i}\) (see formula~\eqref{eq:meet_in_qid} on
    page~\pageref{eq:meet_in_qid}).  We proceed in three steps.  We first
    prove this for \(H\), then for \(A\otimes H\) and finally for a general
    comodule algebra \(A\).
    \begin{enumerate}
	\item[\textbf{(i)}]\label{item:a} For \(A=H\) this follows using the
	     Galois connection~\eqref{eq:galois-for-hopf-alg}.  Let \(h\in H\)
	     be such that \(\Delta(h)-h\otimes 1\in H\otimes\bigcap I_i\).
	     Hence for all \(i\) we have \(\Delta(h)-h\otimes 1\in H\otimes
		 I_i\), since \(H\) is a flat module, and as a consequence
	     \(h\in \bigcap_{i\in I}H^{\co\,Q_i}\).  Let \(K=\bigcap_{i\in
		     I}H^{\co\,Q_i}\in\qsub(H)\).  Now by the Galois
	     connection property for all \(i\in I\) we have: \(Q_i\leq
		 H/{H^{\co\,Q_i}}^+H\leq H/K^+H\).  Thus \(\bigvee_{i\in
		     I}Q_i\leq H/K^+H\) and as a consequence:
	     \[\bigcap_{i\in I}H^{\co\,Q_i}=K\subseteq H^{\co\,H/K^+H}\subseteq H^{\co\,\bigvee Q_i}\] 
	     Hence \(\Delta(h)-h\otimes 1\in H\otimes\bigwedge_{i\in I}I_i\). 
	\item[\textbf{(ii)}] Now the proof for \(A\otimes H\): let
	     \(x\coloneq\sum_{k=1}^na_k\otimes h_k\in A\otimes H\) be such
	     that
	     \(y\coloneq\sum_{k=1}^na_k\otimes\Delta(h_k)-\sum_{k=1}^na_k\otimes
		 h_k\otimes 1_H\in A\otimes H\otimes \bigcap_{i\in I}I_i\).
	     Let us write \(y=\sum_{k=1}^m a_k'\otimes h_k'\otimes l_k'\)
	     where \(a_k'\in A\), \(h_k'\in H\), \(l_k'\in\bigcap_{i\in
		     I}I_i\) for each \(k=1,\dots,m\). Since \(A\) is a flat
	     Mittag--Leffler module, every finitely generated submodule of
	     \(A\) is contained in a projective
	     submodule~\cite[Thm~2.9]{dh-jt:mittag-leffler}.  Let \(A_0\) be
	     a projective submodule of \(A\) which contains both
	     \(\{a_k:k=1,\dots,n\}\) and \(\{a_k':k=1,\dots,m\}\).  By
	     projectivity of \(A_0\) we can choose a "dual basis", i.e. a set
	     of elements \(e_j\in A_0\) and their "duals" \(e^j\in A_0^*\),
	     such that \(e^j(a)\) is non zero only for finitely many \(j\in J\)
	     and \(\sum_{j\in J}e_je^j(a)=a\) for every \(a\in A_0\).
	     Using~(i) above we get:
	    \[\sum_{k=1}^ne^j(a_k)\Delta(h_k)-e^j(a_k)h_k\otimes1=\sum_{k=1}^me^j(a_k')h_k'\otimes l_k'\in H\otimes\bigwedge_{i\in I}I_i\]
	    and by the dual basis property we conclude that:
	    \[a_k\otimes\Delta(h_k)-a_k\otimes h_k\otimes1\in A_0\otimes H\otimes\bigwedge_{i\in I}I_i\]
	    It follows that \(y\in A\otimes H\otimes\bigwedge_{i\in I}I_i\),
	    as we claimed.  
	\item \textit{The general case}.  Let us observe that if
	    \(a_{\mathit{(0)}}\otimes a_{\mathit{(1)}}-a\otimes 1\in
    A\otimes\bigcap_{i\in I}I_i$ then $a_{\mathit{(0)}}\otimes
		a_{\mathit{(1)}}\otimes
		a_{\mathit{(2)}}-a_{\mathit{(0)}}\otimes
		a_{\mathit{(1)}}\otimes 1\in A\otimes H\otimes\bigcap_{i\in
		    I}I_i\).  Now by (ii): \(a_{\mathit{(0)}}\otimes
		a_{\mathit{(1)}}\otimes
		a_{\mathit{(2)}}-a_{\mathit{(0)}}\otimes
		a_{\mathit{(1)}}\otimes 1\in A\otimes H\otimes\bigwedge_{i\in
		    I}I_i\).  Computing \(\id_A\otimes\epsilon\otimes\id_H\)
	    we get \(\delta(a)-a\otimes 1\in A\otimes\bigwedge_{i\in I}I_i\). 
   \end{enumerate}
 
    The formula \(\psi(S)=\bigvee\{Q\in\qquot(H): S\subseteq A^{co\,Q}\}\) is
    an easy consequence of the Galois connection properties (cf.
    formula~\ref{eq:right-adjoint} on page~\pageref{eq:right-adjoint}).
\end{proof} 
Note that the above theorem also holds for left \(H\)-comodule algebras rather
than right ones.
\begin{remark}\label{rem:comparison_of_adjunctions}
    If \(B\) is commutative and \(H\) is a projective over the base ring then
    \(A/B\) is projective \((B\otimes H)\)-Hopf Galois extension
    by~\cite[Thm~1.7]{hk-mt:hopf-algebras-and-galois-extensions}.  Thus the
    above theorem applies as well as Schauenburg's result
    Proposition~\ref{prop:Schauenburg_existence} (page
    \pageref{prop:Schauenburg_existence}).  Note that Schauenburg constructs
    a Hopf algebra \(L\) such that \(A/B\) is an \(L\text{-}H\)-biGalois
    extension and he constructs a Galois connection between quotients of \(L\)
    and intermediate \(H\)-comodule subalgebras of \(A/B\).  The value of his
    approach is that the construction of \(L\) allows us to prove more about
    the correspondence (cf.~\cite[Thm.~3.8]{ps:gal-cor-hopf-bigal}).

    Let us note that if we apply the above theorem to the left
    \(L(A,H)\)-comodule algebra we get the Schauenburg adjunction.
    Coinvariants of quotients of \(L(A,H)\) are always \(H\)-subcomodule
    subalgebras, since \(A\) is an \(L(A,H)\)-\(H\)-bicomodule algebra. Now,
    since we assume that \(H\) is a flat Mittag--Leffler module, the inclusion
    of \(H\)-comodule subalgebras of \(A\) into subalgebras
    \(\Sub_{\Alg^H}(A)\subseteq\Sub_\Alg(A)\) preserves infinite intersections
    by Theorem~\ref{thm:lattice_of_subcomodules}.  We conclude that both
    adjunctions coincide by the uniqueness of Galois connections
    (Proposition~\ref{prop:properties-of-adjunction} on
    page~\pageref{prop:properties-of-adjunction}) .

    Finally, let us note that in the Schauenburg context, the coinvariants of
    generalised quotients of \(H\) are always \(L(A,H)\)-comodule subalgebras.
\end{remark}
\index{comodule algebra!Galois correspondence!over a field}
\begin{theorem}\label{thm:connection_over_field}
    Let \(A/B\) an \(H\)-extension over a base field \(\k\), with the comodule
    structure map \(\delta_A:A\sir A\otimes H\) and let
    \(S\in\Sub_\Alg(A/B)\).  Then \(\psi(S)=H/K_S^+H\), where \(K_S\) is the
    smallest element of \(\qsub(H)\) such that \(\delta_A(S)\subseteq A\otimes
	K_S\), i.e.
    \begin{equation}\label{eq:psi_over_field}
	K_S\coloneq\bigcap\bigl\{K\in\qsub(H): \delta_A(S)\subseteq A\otimes K\bigr\}
    \end{equation}
\end{theorem}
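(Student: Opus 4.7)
The plan is to show that the supremum defining $\psi(S)$ in \eqref{eq:psi} can be rewritten via the Takeuchi-style adjunction \eqref{eq:galois-for-hopf-alg} between $\qsub(H)$ and $\qquot(H)$. The key observation I will establish is the following equivalence, which translates a coinvariance condition on $S$ into a ``subcomodule'' condition on $\delta_A(S)$:
\begin{equation*}
S\subseteq A^{co\,Q}\iff \delta_A(S)\subseteq A\otimes H^{co\,Q}\qquad(Q\in\qquot(H)).
\end{equation*}
For $(\Leftarrow)$, writing $\delta_A(s)=\sum_i a_i\otimes h_i$ with $h_i\in H^{co\,Q}$, one uses $(\mathrm{id}\otimes\pi_Q)\Delta(h_i)=h_i\otimes\pi_Q(1)$, applies $\epsilon\otimes\mathrm{id}_Q$ to deduce $\pi_Q(h_i)=\epsilon(h_i)\pi_Q(1)$, and then invokes $(\mathrm{id}_A\otimes\epsilon)\delta_A=\mathrm{id}_A$ to identify $\sum a_i\epsilon(h_i)=s$. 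For $(\Rightarrow)$ I would use coassociativity: from $s_{(0)}\otimes\pi_Q(s_{(1)})=s\otimes\pi_Q(1)$ apply $\delta_A\otimes\mathrm{id}_Q$ to get $s_{(0)}\otimes s_{(1)}\otimes\pi_Q(s_{(2)})=s_{(0)}\otimes s_{(1)}\otimes\pi_Q(1)$, which exactly expresses that $\delta_A(s)$ lies in the kernel of $\mathrm{id}_A\otimes\gamma$, where $\gamma:H\to H\otimes Q$ is the map $h\mapsto h_{(1)}\otimes\pi_Q(h_{(2)})-h\otimes\pi_Q(1)$; flatness of $A$ over the field $\k$ identifies this kernel with $A\otimes H^{co\,Q}$.

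Next I verify that $K_S$ is well-defined, namely that $K_S=\bigcap\{K\in\qsub(H):\delta_A(S)\subseteq A\otimes K\}$ is itself in $\qsub(H)$ and still satisfies $\delta_A(S)\subseteq A\otimes K_S$. The first point follows from Proposition~\ref{prop:qsub_algebraic} (the lattice $\qsub(H)$ is closed under intersections, since over a field $H$ is flat Mittag--Leffler). The second point follows because over a field the tensor product with $A$ commutes with arbitrary intersections of subspaces of $H$.

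Combining the equivalence above with the definition of $K_S$, and then applying the Galois connection \eqref{eq:galois-for-hopf-alg}, I obtain the chain
\begin{equation*}
S\subseteq A^{co\,Q}\iff\delta_A(S)\subseteq A\otimes H^{co\,Q}\iff K_S\subseteq H^{co\,Q}\iff Q\preccurlyeq H/K_S^+H,
\end{equation*}
where the last step uses precisely the adjointness~\eqref{eq:adjointness} of $(\psi',\phi')=(H/(-)^+H,\,H^{co\,(-)})$. Consequently
\begin{equation*}
\psi(S)=\bigvee\{Q\in\qquot(H):Q\preccurlyeq H/K_S^+H\}=H/K_S^+H,
\end{equation*}
which is the desired formula.

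I expect the main obstacle to be the direction $S\subseteq A^{co\,Q}\Rightarrow\delta_A(S)\subseteq A\otimes H^{co\,Q}$: one must resist the temptation to argue element-wise on the right tensor factor, and instead recognise that the coassociativity computation places $\delta_A(s)$ in the kernel of a tensored-up map, whose identification with $A\otimes H^{co\,Q}$ is exactly where the field hypothesis (or at least flatness of $A$) enters. The remaining steps are essentially manipulations of the definitions and invocations of the two already-established Galois connections.
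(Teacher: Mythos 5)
Your proof is correct and follows essentially the paper's own route: the two substantive ingredients — that \(S\subseteq A^{co\,Q}\) forces \(\delta_A(S)\subseteq A\otimes H^{co\,Q}\) (coassociativity plus flatness over the field), and that \(\delta_A(S)\subseteq A\otimes K_S\) gives \(S\subseteq A^{co\,H/K_S^+H}\) — are exactly the two inequalities the paper verifies, together with the well-definedness of \(K_S\) from closure of \(\qsub(H)\) under intersections and the connection~\eqref{eq:galois-for-hopf-alg}. The only difference is packaging: you state the adjointness equivalence and evaluate the supremum in formula~\eqref{eq:psi} directly, whereas the paper checks the unit/counit inequalities for the pair \((\phi,\,S\mapsto H/K_S^+H)\) and invokes uniqueness of Galois connections (Proposition~\ref{prop:properties-of-adjunction}(iii)); these amount to the same argument.
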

\begin{proof}
    It is enough to show that \(\phi:\qquot(H)\ni
	Q\selmap{}A^{co\,Q}\in\Sub_\Alg(A/B)\) and \(\psi':\Sub_\Alg(A/B)\ni
	S\selmap{}H/K_S^+H\in\qquot(H)\) form a Galois connection, by
    Proposition~\ref{prop:properties-of-adjunction}(iii).  Let us note that
    \(K_S\in\qsub(H)\), as given by formula~\eqref{eq:psi_over_field}, is well
    defined since the infimum in \(\qsub(H)\) is the set theoretic
    intersection, which follows from
    Proposition~\ref{thm:lattice_of_subcomodules} on
    page~\pageref{thm:lattice_of_subcomodules}.  First we show that
    \(S\subseteq A^{co\,H/K_S^+H}\).  Let \(s\in S\) and let \(\pi:H\sir
	H/K_S^+H\) be the canonical projection.  Now, by definition of \(K_S\),
    we have \(\delta_A(s)\in A\otimes K_S\).  We get the following equality in
    \(A\otimes H/K_S^+H\): \(s_{(0)}\otimes\pi(s_{(1)})=s_{(0)}\otimes
	\epsilon(s_{(1)})\pi(1)=s\otimes\pi(1)\).  Thus \(S\subseteq
	A^{co\,H/K_S^+H}\).  Now, let \(Q\in\qquot(H)\) and let
    \(K=H^{co\,Q}\).  Then \(\delta_A(A^{co\,Q})\subseteq A\otimes K\), by
    commutativity of the following diagram:
    \begin{center}
	\begin{tikzpicture}
	    \matrix[column sep=1.5cm, row sep=1cm]{
		\node (A1) {\(A^{co\,Q}\)}; & \node (A2) {\(A\otimes H\)}; & \node (A3) {\(A\otimes H\otimes H\)}; \\
		                            & \node (B2) {\(A\otimes Q\)}; & \node (B3) {\(A\otimes H\otimes Q\)};\\
	    };
	    \draw[>->] (A1) --node[above]{\(\delta_A\)} (A2);
	    \draw[->] ($(A2)+(6mm,1mm)$) --node[above]{\(\id_A\otimes\Delta\)} ($(A3)+(-1.1cm,1mm)$);
	    \draw[->] ($(A2)+(6mm,-1mm)$) --node[below]{\(\delta_A\otimes\id_H\)} ($(A3)+(-1.1cm,-1mm)$);
	    \draw[->] (A2) --node[left]{\(\id_A\otimes\pi\)} (B2);
	    \draw[->] (B2) --node[below]{\(\delta_A\otimes\id_Q\)} (B3);
	    \draw[->] (A3) --node[right]{\(\id_A\otimes\id_H\otimes\pi\)} (B3);
	\end{tikzpicture}
    \end{center}
    For \(a\in A^{co\,Q}\) we get \(a_{(0)}\otimes
	a_{(1)}\otimes\pi(a_{(2)})=a_{(0)}\otimes a_{(1)}\otimes\pi(1)\).
    Since we can choose all the elements \(a_{(0)}\) to be linearly
    independent we get that \(\delta_A(A^{\co\,Q})\subseteq A\otimes K\) and
    hence \(K_{A^{\co\,Q}}\subseteq K=H^{\co\,Q}\).  Now we get
    \(H/K_{A^{\co\,Q}}^+H\succcurlyeq H/K^+H\succcurlyeq Q\), where the last
    inequality follows from Theorem~\ref{thm:newTakeuchi} on
    page~\pageref{thm:newTakeuchi}.
\end{proof}
Note that if \(H\) is finite dimensional and \(Q\) is a Hopf algebra quotient
then \(A^{co\,Q}=\delta_A^{-1}(A\otimes H^{co\,Q})\)
(by~\cite[Thm.~1.7]{hs:hopf_galois_extensions}).  Let us note two similarities
in this proof and the previous one.  The existence of \(K_S\) is guaranteed by
flatness and the Mittag--Leffler property of \(A\) (which over fields holds
trivially).  This is due to Proposition~\ref{prop:ML_and_limits} (see also
Proposition~\ref{prop:ML_and_coefficient_modules} on
page~\pageref{prop:ML_and_coefficient_modules}).  Also note that in both proofs
we use the Galois connection~\ref{eq:galois-for-hopf-alg} (which can be proven
independently, cf.~\cite[Theorem~3.10]{ps:gal-cor-hopf-bigal}).
\index{comodule algebra!examples!G-graded algebra}
\begin{example}
    Let us take \(A\) to be the \(\k[G]\)-comodule \(\k\)-algebra defined in
    Example~\ref{ex:comodule_algebras}(iii).  The generalised quotients of
    \(\k[G]\) are of the form \(\pi:\k[G]\sir\k[G/G_0]\) for some subgroup
    \(G_0\) of \(G\) by Proposition~\ref{prop:group_algebra_quotients}
    (page~\pageref{prop:group_algebra_quotients}).  The correspondence:
    \[\Sub_\Alg(A/A_e)\,\galois{\psi}{\phi}\,\qquot(\k[G])\]
    where \(e\in G\) is the unit of \(G\), is given by
    \(\phi(\k[G/G_0])=\sum_{g\in G_0}A_g\) and \(\psi(B)\) corresponds to the
    smallest subgroup \(G_0\subseteq G\) such that \(B\subseteq\sum_{g\in
	    G_0}A_g\). 
    We can see that in this example \(\psi\phi=\id_{\qquot(\k[G])}\), but \(\phi\psi\) might
    not be an identity morphism.
\end{example}
\index{comodule algebra!Galois correspondence!field extension}
\begin{remark}
    Let \(\bE/\bF\) be a finite field extension.  Let
    \mbox{$\Sub_{\textit{field}}(\bF\subseteq \bE)$} denote the lattice of
    subfields of a field $\bE$ containing a subfield $\bF$ and
    \mbox{$\Sub_{\textit{ring}}(\bF\subseteq \bE)$} is the lattice of all
    intermediate subrings.  There is the following diagram of Galois
    connections in which the upper Galois connection is the classical one and
    the lower one is the Galois connection~\eqref{eq:galois-connection}. 
    \begin{center}
	\begin{tikzpicture}
	    \node[anchor=east] (A) at (0cm,0cm) {$\Sub_{\textit{field}}(\bE/\bF)$};
	    \node[anchor=west] (B) at (1cm,0cm) {$\Sub(\Gal\left(\bE/\bF\right))$};
	    \node[anchor=east] (C) at (0cm,-1.5cm) {$\Sub_{\textit{ring}}(\bE/\bF)$};
	    \node[anchor=west] (D) at (1cm,-1.5cm) {$\qquot(\bF\left[\Gal(\bE/\bF)\right]^*)$};
	    \draw[->] (-13mm,-3mm) -- (-13mm,-12mm);
	    \draw[->] (20mm,-3mm) --node[right]{\(\cong\)} (20mm,-12mm);
	    \draw[->,yshift=.6mm] (0cm,0cm) --node[above]{$\Gal$} (1cm,0cm);
	    \draw[<-,yshift=-.6mm] (0cm,0cm) --node[below]{$\Fix$} (1cm,0cm);
	    \draw[->,yshift=.6mm] (0cm,-1.5cm) --node[above]{$\psi$} (1cm,-1.5cm);
	    \draw[<-,yshift=-.6mm] (0cm,-1.5cm) --node[below]{$\phi$} (1cm,-1.5cm);
	\end{tikzpicture}
    \end{center}
    where the right vertical map is the isomorphism of
    Proposition~\ref{prop:dual_gruop_algebra_quotients} (on
    page~\pageref{prop:dual_gruop_algebra_quotients}) and
    \((\mathsf{Fix},\mathsf{Gal})\) is the classical adjunction of Galois
    theory of finite field extensions.  Commutativity of this diagram follows
    from the formula:
    \[\bE^{co\,\bF[G']^*}=\mathsf{Fix}(G')\] 
    It follows that $\phi$ factorises through the embedding
    $\Sub_{\textit{field}}(\bF\subseteq \bE)\subseteq
    \Sub_{\textit{ring}}(\bE/\bF)$, and thus the only closed elements of the
    Galois connection $(\phi,\psi)$ in $\Sub_{\textit{ring}}(\bE/\bF)$ are the
    ones coming from closed elements of the lattice
    $\Sub_{\textit{field}}(\bE/\bF)$.  Normal subgroups of the Galois group
    correspond to conormal quotients of $H$, i.e. quotients by a normal ideal,
    and also to normal subextensions. 
\end{remark}
\index{comodule algebra!Galois correspondence!example}
\begin{example}
    Let \(q\) be a primitive fourth root of unity and let \(\k=\bQ[q]\).  Let
    \(A\) be the \(\k\)-algebra generated by \(J,X,Z\) subject to the
    relations: \(XZ=qZX\), \(X^4=Z^{16}=1\),  \(J^2=-1\) (\(J\) commutes with
    both \(X\) and \(Z\)).  We let \(Y=Z^4\), and let \(B\) be the subalgebra
    generated by \(X\) and \(Y\).  Now let us construct the following Hopf
    algebra \(H\).  As an algebra it is generated by the elements \(c,s,t\)
    which satisfy the following relations:
    \begin{alignat*}{2}
            c^2+s^2 & = 1,  & \quad cs & = 0, \\
            ct      & = tc, & ts       & =-st \\
            t^2     & = 1
    \end{alignat*}
    The coalgebra structure is given by:
    \begin{align*}
	\Delta(c) & = c\otimes c-s\otimes s \\
	\Delta(s) & = s\otimes c+c\otimes s \\
	\Delta(t) & = t\otimes t \\
    \end{align*}
    Coassociativity of the comultiplication is a straightforward calculation:
    \begin{align*}
	\Delta\otimes\id\circ\Delta(c) & = c\otimes c\otimes c-s\otimes s\otimes c-s\otimes c\otimes s-c\otimes s\otimes s \\
	\id\otimes\Delta\circ\Delta(c) & = c\otimes c\otimes c -c\otimes s\otimes s-s\otimes c\otimes s -s\otimes s\otimes c
    \end{align*}
    and thus
    \(\Delta\otimes\id\circ\Delta(c)=\id\otimes\Delta\circ\Delta(c)\),
    similarly
    \begin{align*}
	\Delta\otimes\id\circ\Delta(s) & = c\otimes c\otimes s-s\otimes s\otimes s+s\otimes c\otimes c+c\otimes s\otimes c \\ 
	\id\otimes\Delta\circ\Delta(s) & = c\otimes c\otimes s+ c\otimes s\otimes c + s\otimes c\otimes c-s\otimes s\otimes s
    \end{align*}
    Thus \(\Delta\otimes\id\circ\Delta(s)=\id\otimes\Delta\circ\Delta(s)\).
    The counit is given by \(\epsilon(c)=1\), \(\epsilon(s)=0\) and
    \(\epsilon(t)=1\).  Finally, the antipode is defined by \(S(c)=c\),
    \(S(s)=-s\) and \(S(t)=t\).  Let us note that \(\dim_\k H=8\), the set
    \[1,t,c,s,c^2-s^2,tc,ts,t(c^2-s^2)\]
    is a linear basis.  There are four group-like elements in \(H\):
    \(1,t,c^2-s^2,t(c^2-s^2)\).  They form an abelian group isomorphic to
    \(\bZ_2\times\bZ_2\).  The Hopf subalgebra \(H\) generated by \(c\) and
    \(s\) is called \textit{circle Hopf algebra}.  The name
    \textit{trigonometric Hopf algebra} is also met in the literature.
    Furthermore, let us note that \(\k[i]\otimes_\k H\) is isomorphic to the
    group Hopf algebra \(\k[i][D_8]\), where \(D_8\) is the dihedral group of
    order \(8\), i.e. the group of symmetries of a rectangle.  For this note
    that \(c-is\) is a group-like element of order \(4\) such that
    \(t(c-is)t=(c-is)^3\).  The claim follows since \(\k[i]\otimes_\k H\) has
    \(8\) group-like elements and its dimension over \(\k[i]\) is \(8\).  In
    this identification \(t\) corresponds to a symmetry and \(c-is\) is
    a rotation.  The lattice of generalised subalgebras of \(H\) and the
    lattice of subgroups of \(D_8\) are drawn below.  We let \(\tau\) and
    \(\sigma\) be the generators of \(D_8\), such that \(\tau^2=1\),
    \(\sigma^4=1\), \(\tau\sigma\tau=\sigma^3\).
    \begin{center}
	\hspace*{-4cm}\hbox{
	    \vbox{
		\begin{tikzpicture}
		    \node (root) at (0,0) {\(1\)};
		    \node (a1) at (-3,1.5) {\(\langle\tau\rangle\)};
		    \node (a2) at (-1.5,1.5) {\(\langle\sigma^2\tau\rangle\)};
		    \node (a3) at (0,1.5) {\(\langle\sigma^2\rangle\)};
		    \node (a4) at (1.5,1.5) {\(\langle\sigma\tau\rangle\)};
		    \node (a5) at (3,1.5) {\(\langle\sigma^3\tau\rangle\)};
		    \draw[-] (root) -- (a1);
		    \draw[-] (root) -- (a2);
		    \draw[-] (root) -- (a3);
		    \draw[-] (root) -- (a4);
		    \draw[-] (root) -- (a5);

		    \node (b2) at (-1.75,3) {\(\langle \sigma^2,\tau\rangle\)};
		    \node (b3) at (0,3) {\(\langle\sigma\rangle\)};
		    \node (b4) at (1.75,3) {\(\langle \sigma^2,\sigma\tau\rangle\)};
		    \draw[-] (a1) -- (b2);
		    \draw[-] (a2) -- (b2);
		    \draw[-] (a3) -- (b2);
		    \draw[-] (a3) -- (b3);
		    \draw[-] (a3) -- (b4);
		    \draw[-] (a4) -- (b4);
		    \draw[-] (a5) -- (b4);

		    \node (D) at (0,4.5) {\(D_8\)};
		    \draw[-] (b2) -- (D);
		    \draw[-] (b3) -- (D);
		    \draw[-] (b4) -- (D);
		\end{tikzpicture}\\
		{\textit{Lattice of subgroups of \(D_8\)}}
	    }
	    \hspace*{-5.5cm}
	    \vbox{
		\begin{tikzpicture}
		    \node (root) at (0,0) {\(1\)};
		    \node (a1) at (-4,1.5) {\(\langle t\rangle\)};
		    \node (a2) at (-2,1.5) {\(\langle(c^2-s^2)t\rangle\)};
		    \node (a3) at (0,1.5) {\(\langle c^2-s^2\rangle\)};
		    \draw[-] (root) -- (a1);
		    \draw[-] (root) -- (a2);
		    \draw[-] (root) -- (a3);

		    \node (b2) at (-2.5,3) {\(\langle c^2-s^2,t\rangle\)};
		    \node (b3) at (0,3) {\(\langle c,s\rangle\)};
		    \node (b4) at (2.5,3) {\(\langle ct,st\rangle\)};
		    \draw[-] (a1) -- (b2);
		    \draw[-] (a2) -- (b2);
		    \draw[-] (a3) -- (b2);
		    \draw[-] (a3) -- (b3);
		    \draw[-] (a3) -- (b4);

		    \node (D) at (0,4.5) {\(H\)};
		    \draw[-] (b2) -- (D);
		    \draw[-] (b3) -- (D);
		    \draw[-] (b4) -- (D);
		\end{tikzpicture}\\
		{\textit{Lattice of generalised subalgebras of \(H\)}}
	    }
	}
    \end{center}
    The notation \(\langle\cdots\rangle\) denotes the subgroup/left coideal
    subalgebra generated by the listed elements\footnote{The meet in the
	lattice of right coideal subalgebras of a Hopf algebra, which is
	a flat Mittag--Leffler module, is the set theoretic intersection.
	Thus we can define the left coideal subalgebra of \(H\) generated by
	some set of elements as the intersection of all left coideal
	subalgebras which contains these elements.  This left coideal
	subalgebra will contain the elements by which it is generated.  Also
	note that if the span of generating elements is a left coideal then
	the smallest subalgebra which contains it is a left coideal
	subalgebra.}.  For example the coideal subalgebra which is generated
    by \(c^2-s^2\) is spanned by \(c^2\), \(s^2\) and \(1\).  The left coideal
    subalgebra \(\langle ct,st\rangle\) also contains \(c^2=(ct)^2\),
    \(s^2=st(-st)\).  Thus it contains \(c^2-s^2\).  The lattice of
    generalised subalgebras of \(H\) is anti-isomorphic with the lattice of
    generalised quotients of its dual \(H^*\).

    The action of \(H\) on \(A\) is defined in table~\ref{tab:H-action} below.
    \begin{table}[h]
	\begin{center}
	    \begin{tabular}{c|ccccccc}
			& \(1\) & \(X^j\) & \(J\)  &\(Z\)  & \(Z^2\)  & \(Z^3\)  & \(Y\) \\\midrule
		\(c\)   & \(1\) & \(X^j\) & \(J\)  &\(0\)  & \(-Z^2\) & \(0\)    & \(Y\) \\
		\(s\)   & \(0\) & \(0\)   & \(0\)  &\(-Z\) & \(0\)    & \(Z^3\)  & \(0\) \\
		\(t\)   & \(1\) & \(X^j\) & \(-J\) &\(Z\)  & \(Z^2\)  & \(Z^3\)  & \(Y\) \\
		\(c^2\) & \(1\) & \(X^j\) & \(J\)  &\(0\)  & \(Z^2\)  & \(0\)    & \(Y\) \\
		\(s^2\) & \(0\) & \(0\)   & \(0\)  &\(Z\)  & \(0\)    & \(Z^3\)  & \(0\) \\
	    \end{tabular}
	\end{center}
	\caption{\(H\)-action on \(A\)}
	\label{tab:H-action}
    \end{table}
    This action makes \(A\) an \(H\)-module algebra. The Hopf algebra \(H\) is
    finite dimensional, hence its dual coacts on \(A\).  Moreover, the
    subalgebra of coinvariants of the \(H^*\) coaction is equal to the
    subalgebra of invariants of the \(H\)-action.  We now show that \(A^H\) is
    the subalgebra generated by \(X\).  If \(a=a'+a''J\in A^H\) is an
    invariant element, where both \(a',a''\) belong to the subalgebra
    generated by \(X\) and \(Z\), then we must have \(a''=0\). Let
    \(a=\sum_{i=0}^{15} f_k Z^k\), where each \(f_k\) is a polynomial in
    \(X\).  Hence:
    \begin{equation*}
	0=s\cdot a = \sum_{k=0}^{7} (-1)^{k\,\modulo 2+1}f_{2k+1} Z^{2k+1}
    \end{equation*}
    In this way all the \(f_{2k+1}=0\) (\(0\leq k\leq7\)).  Moreover,
    \begin{equation*}
	a=c\cdot a = \sum_{k=0}^{3} f_{4k} Z^{4k} -\sum_{k=0}^{3} f_{4k+2}Z^{4k+2} 
    \end{equation*}
    It follows that \(f_{4k+2}=0\) for \(k=0,1,2,3\).  Thus \(A^H\subseteq
	B\).  Conversely, every element of \(B\) belongs to \(A^H\) and hence
    \(B=A^H=A^{\co\;H^*}\).  Furthermore, the extension \(A/B\) is
    a \(H^*\)-Galois.  In order to show this we compute the following matrix.
    The columns are values of \(\can:A\otimes_BA\sir\Hom_\k(H,A)\) on the
    elements written in the first row.\\[1ex]
    {
	\hfill\small\hspace*{-0.5mm}\begin{tabular}{c|c|c|c|c|c|c|c|c}
	     & \(1\otimes_B 1\) & \(1\otimes_B J\) & \(1\otimes_B Z\) & \(1\otimes_B JZ\) & \(1\otimes_B Z^2\) & \(1\otimes_BJZ^2\) & \(1\otimes_B Z^3\) & \(1\otimes_B JZ^3\) \\\bottomrule
    \(1_H\)  & \(1\)            & \(J\)            & \(Z\)            & \(JZ\)            & \(Z^2\)            & \(JZ^2\)           & \(Z^3\)            & \(JZ^3\) \\
    \(t\)    & \(1\)            & \(-J\)           & \(Z\)            & \(-JZ\)           & \(Z^2\)            & \(-JZ^2\)          & \(Z^3\)            & \(-JZ^3\) \\
    \(c\)    & \(1\)            & \(J\)            & \(0\)            & \(0\)             & \(-Z^2\)           & \(-JZ^2\)          & \(0\)              & \(0\) \\
    \(s\)    & \(0\)            & \(0\)            & \(-Z\)           & \(-JZ\)           & \(0\)              & \(0\)              & \(Z^3\)            & \(JZ^3\) \\
    \(c^2\)  & \(1\)            & \(J\)            & \(0\)            & \(0\)             & \(Z^2\)            & \(JZ^2\)           & \(0\)              & \(0\) \\
    \(ct\)   & \(1\)            & \(-J\)           & \(0\)            & \(0\)             & \(-Z^2\)           & \(JZ^2\)           & \(0\)              & \(0\) \\
    \(st\)   & \(0\)            & \(0\)            & \(-Z\)           & \(JZ\)            & \(0\)              & \(0\)              & \(Z^3\)            & \(-JZ^3\) \\
    \(c^2t\) & \(1\)            & \(-J\)           & \(0\)            & \(0\)             & \(Z^2\)            & \(-JZ^2\)          & \(0\)              & \(0\) \\
    \end{tabular}\hfill}\\[1ex]
    From this eight by eight matrix we get the following matrix (by
    normalising all the columns):
    {
	\small
    \[M=\left(\begin{array}{cccccccc}
	    1 & 1  & 1  &  1 &  1 &  1 &  1 &  1 \\
	    1 & -1 & 1  & -1 &  1 & -1 &  1 & -1 \\
	    1 & 1  & 0  &  0 & -1 & -1 &  0 &  0 \\
	    0 & 0  & -1 & -1 &  0 &  0 &  1 &  1 \\
	    1 & 1  & 0  &  0 &  1 &  1 &  0 &  0 \\
	    1 & -1 & 0  &  0 & -1 &  1 &  0 &  0 \\
	    0 & 0  & -1 & -1 &  0 &  0 &  1 & -1 \\
	    1 & 1  & 0  &  0 &  1 & -1 &  0 &  0 \\
	\end{array}\right)\]
    } Since \(\det M\neq 0\) (which we have verified using
    \textit{Python}\footnote{We used the
	\href{http://docs.scipy.org/doc/numpy}{\texttt{numpy.linalg}.} module.
	See \url{http://docs.scipy.org/doc/numpy}}) the map \(\can:
	A\otimes_BA\sir\Hom_\k(H,A)\) is injective.  It is an epimorphism
    since \(\dim_\k A\otimes_BA=\dim_\k\Hom_\k(H,A)=8\cdot\dim_\k A<\infty\).

    The image of \(\phi:\qquot(H)\sir\Sub_\Alg(A/B)\) is the following poset:
    \begin{center}
	\begin{tikzpicture}
	    \node (root) at (0,0) {\(A\)};
	    \node (a1) at (-4,1.5) {\(\langle X,Z\rangle\)};
	    \node (a2) at (-2,1.5) {\(\langle X,JZ\rangle\)};
	    \node (a3) at (0,1.5) {\(\langle J,X,Z^2\rangle\)};
	    \draw[-] (root) -- (a1);
	    \draw[-] (root) -- (a2);
	    \draw[-] (root) -- (a3);
    
	    \node (b2) at (-2.5,3) {\(\langle X,Z^2\rangle\)};
	    \node (b3) at (0,3) {\(\langle J,X,Z^4\rangle\)};
	    \node (b4) at (2.5,3) {\(\langle X,JZ^2\rangle\)};
	    \draw[-] (a1) -- (b2);
	    \draw[-] (a2) -- (b2);
	    \draw[-] (a3) -- (b2);
	    \draw[-] (a3) -- (b3);
	    \draw[-] (a3) -- (b4);
    
	    \node (D) at (0,4) {\(B\)};
	    \draw[-] (b2) -- (D);
	    \draw[-] (b3) -- (D);
	    \draw[-] (b4) -- (D);
	\end{tikzpicture}\\
	\textit{The image of \(\qquot(H^*)\) in \(\Sub_\Alg(A/B)\)}
    \end{center}
\end{example}
Note that \(A^{\langle c^2-s^2\rangle}=\langle J,X,Z^2\rangle\), \(A^{\langle
	ct,st\rangle}=A^{\langle c^2-s^2,ct,st\rangle}=\langle
    X,JZ^2\rangle\).  We will later see that the morphism
\(\phi:\qquot(H^*)\sir\Sub_\Alg(A/B)\) is a monomorphism for any finite
\(H^*\)-Galois extension. 

\section{Closed Elements}\label{sec:closed_elements}
The main aim of this section is to characterise the closed elements of the
correspondence~\eqref{eq:galois-connection}.  First we provide two new
important criteria for closedness of both: subalgebras of a comodule algebra
and generalised quotients of the Hopf algebra.  The criterion for generalised
quotients is obtained using new methods based on both poset and Hopf algebraic
methods.  As a consequence of this we get that if the Hopf algebra is finite
dimensional and the comodule algebra is a Hopf--Galois extension, then every
generalised quotient is a closed element.  Then we generalise the methods of
Schauenburg: we redefine admissibility (cf.
Definition~\ref{defi:Schauenburg_admissibility} on
page~\pageref{defi:Schauenburg_admissibility}), and we show that admissible
objects are closed elements of the Galois
connection~\eqref{eq:galois-connection}.  We were able to slightly simplify
the Schauenburg line of proof by proving a finer version
of~\cite[Prop.~3.5]{ps:gal-cor-hopf-bigal} (cf. Lemma~\ref{lem:galois_and_op}
on page~\pageref{lem:galois_and_op}).
\index{comodule algebra!Galois correspondence!closed subalgebras}
\begin{theorem}\label{thm:closed_subalgebras}
    Let \(A/B\) be an \(H\)-Galois extension such that \(A\) and \(H\) are
    flat Mittag--Leffler \(\R\)-modules, and let \(A\) be a faithfully flat
    left \(B\)-module.  Then a subalgebra \(S\in\Sub_\Alg(A/B)\) is a closed
    element of the Galois connection~\eqref{eq:galois-connection} if and only if
    the following canonical map is an isomorphism:
    \begin{equation}\label{eq:Schneider_canocial_map}
	\can_S: S\otimes_B A\ir A\cotensor_{\psi(S)} H,\quad\can_S(a\otimes b)=ab_{(1)}\otimes b_{(2)}
    \end{equation}
\end{theorem}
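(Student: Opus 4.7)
The plan is to reduce the equivalence to a single key technical lemma: for every $Q\in\qquot(H)$ the restriction of the Hopf--Galois map yields an isomorphism $\can_{\phi(Q)}\colon A^{co\,Q}\otimes_B A\xrightarrow{\sim} A\cotensor_Q H$. Once this is established, both implications of the theorem are formal. I first check that $\can_S$ really lands in $A\cotensor_{\psi(S)} H$: for $s\in S\subseteq\phi\psi(S)=A^{co\,\psi(S)}$ the identity $s_{(0)}\otimes\pi(s_{(1)})=s\otimes\pi(1_H)$, combined with the right $H$-module structure on $Q=\psi(S)$ (available since $\ker\pi$ is a right ideal of $H$), shows by a direct Sweedler calculation that $sa_{(0)}\otimes a_{(1)}$ satisfies the cotensor condition. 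Since $A$ is faithfully flat over $B$, the inclusion $S\otimes_B A\hookrightarrow A\otimes_B A$ is a monomorphism, and composing with the Hopf--Galois isomorphism $\can\colon A\otimes_B A\xrightarrow{\sim} A\otimes H$ shows that $\can_S$ is always a monomorphism; the content of the theorem is therefore surjectivity.

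For the key lemma I would describe both sides as equalisers and show the Hopf--Galois iso transports one to the other. Flatness of $A$ over $B$ turns the defining equaliser $A^{co\,Q}\hookrightarrow A\rightrightarrows A\otimes Q$ into an equaliser $A^{co\,Q}\otimes_B A\hookrightarrow A\otimes_B A \;\rightrightarrows\; A\otimes Q\otimes_B A$ with parallel arrows $\alpha_1(a\otimes b)=a_{(0)}\otimes\pi(a_{(1)})\otimes b$ and $\alpha_2(a\otimes b)=a\otimes\pi(1_H)\otimes b$. By definition $A\cotensor_Q H$ is the equaliser of $\beta_1=\delta^Q\otimes\id_H$ and $\beta_2=\id_A\otimes{}_Q\delta$ on $A\otimes H$. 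I would then introduce the isomorphism
\[
\phi\colon A\otimes Q\otimes_B A \xrightarrow{\sim} A\otimes Q\otimes H,\qquad \phi(a\otimes q\otimes b)=ab_{(0)}\otimes q\cdot b_{(1)}\otimes b_{(2)},
\]
and check that $\phi\circ\alpha_i=\beta_i\circ\can$ for $i=1,2$ (a short Sweedler computation). The map $\phi$ is an iso because it factors as $\phi=\psi\circ\tilde\phi$, where $\tilde\phi(a\otimes q\otimes b)=ab_{(0)}\otimes q\otimes b_{(1)}$ (which is $\can$ tensored trivially with $Q$, hence an iso) and $\psi(a\otimes q\otimes h)=a\otimes q\cdot h_{(1)}\otimes h_{(2)}$ has explicit inverse $a\otimes q\otimes h\mapsto a\otimes q\cdot S(h_{(1)})\otimes h_{(2)}$ (using the antipode of $H$). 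Uniqueness of equalisers then yields $\can_{\phi(Q)}\colon A^{co\,Q}\otimes_B A\xrightarrow{\sim} A\cotensor_Q H$.

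With the lemma in hand, the two directions are immediate. If $S$ is closed then $S=\phi\psi(S)=A^{co\,\psi(S)}$ and $\can_S=\can_{\phi(\psi(S))}$ is iso by the lemma. Conversely, if $\can_S$ is iso, factor it as
\[
S\otimes_B A\;\hookrightarrow\;\phi\psi(S)\otimes_B A\xrightarrow{\;\can_{\phi\psi(S)}\;} A\cotensor_{\psi(S)} H;
\]
the right-hand map is iso by the lemma, so the inclusion $S\otimes_B A\hookrightarrow\phi\psi(S)\otimes_B A$ is iso, and faithful flatness of $A$ over $B$ forces $S=\phi\psi(S)$, i.e. $S$ is closed. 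The main obstacle is the construction and verification of $\phi$: intertwining two rather different parallel pairs (one from the coaction on $A$, one from the coaction on $H$) requires careful bookkeeping between the right $H$-action on the generalised quotient $Q$ and the coaction of $A$, and it is precisely here that the full force of the Hopf--Galois hypothesis (through the bijectivity of $\can$ and the antipode of $H$ entering $\psi^{-1}$) is used. The Mittag--Leffler assumptions on $A$ and $H$ only enter indirectly, through Theorem~\ref{thm:existence}, to guarantee that $\psi(S)$ is well-defined.
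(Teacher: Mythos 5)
Your proof is correct and follows the same overall decomposition as the paper's: reduce everything to the statement that $\can_{A^{co\,Q}}:A^{co\,Q}\otimes_B A\to A\cotensor_Q H$ is an isomorphism for every $Q\in\qquot(H)$, and then deduce both directions by factoring $\can_S$ through $A^{co\,\psi(S)}\otimes_B A$ and invoking faithful flatness of $A$ over $B$ to conclude $S=A^{co\,\psi(S)}$. The one place you diverge is that the paper simply cites Schneider's result (\cite[Thm~1.4]{hs:normal-bases}) for the key isomorphism, whereas you reprove it by exhibiting both sides as equalisers and transporting one parallel pair to the other via $\can$ and the auxiliary isomorphism $\phi=\psi\circ\tilde\phi$. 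I checked your intertwining identities $\phi\circ\alpha_i=\beta_i\circ\can$ and the inverse of $\psi$ built from the antipode; they are correct, and your argument has the small virtue of making visible exactly which hypotheses enter (bijectivity of $\can$, left $B$-flatness of $A$ to preserve the equaliser defining $A^{co\,Q}$, and only the existence — not bijectivity — of the antipode). So the content is the same; you have merely inlined the proof of the lemma the paper outsources.
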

\begin{proof}
    First let us note that the map \(\can_S\) is well defined since it is
    a composition of \(S\otimes_B A\sir A^{co\,\psi(S)}\otimes_B A\), induced
    by the inclusion \(S\subseteq A^{co\psi(S)}\), with
    \(A^{co\,\psi(S)}\otimes_B A\sir A\cotensor_{\psi(S)}H\) which is an
    isomorphism by~\cite[Thm~1.4]{hs:normal-bases}.  If \(S\) is closed then
    \(S=A^{co\,\psi(S)}\), hence~\eqref{eq:Schneider_canocial_map} is an
    isomorphism.  Conversely, let us assume that \(\can_S\) is an isomorphism.
    Then we have a commutative diagram:
    \begin{center}
	\begin{tikzpicture}
	    \matrix[matrix, column sep=1.5cm, row sep=.75cm]{
		\node (A) {\(S\otimes_BA\)}; & \node (B) {\(A\cotensor_{\psi(S)} H\)};\\
		\node (C) {\(A^{co\,\psi(S)}\otimes_BA\)}; & \\
	    };
	    \draw[->] (A) --node[above]{\(\can_S\)} (B);
	    \node[rotate=-90] at ($(A)+(0cm,-7mm)$) {\(\subseteq\)};
	    \draw[->] (C) --node[below right]{\(\can_{A^{co\,\psi(S)}}\)} (B);
	\end{tikzpicture}
    \end{center}
    where both \(\can_S\) and \(\can_{A^{co\,\psi(S)}}\) are isomorphisms.
    Now, since \(A\) is a faithfully flat left \(B\)-module we must have
    \(S=A^{co\,\psi(S)}\).  Thus \(S\) is a closed element
    of~\eqref{eq:galois-connection}.
\end{proof}
\begin{theorem}\label{thm:mono}
	Let $A$ be an $H$-comodule algebra over a ring $\R$ with surjective
	canonical map and let $A$ be a $Q_1$-Galois and a $Q_2$-Galois
	extension, where $Q_1,Q_2\in \qquot(H)$.  Furthermore, let us assume
	that one of the following assumptions holds:
	\begin{enumerate}[topsep=0pt,noitemsep]
	    \item \(Q_1\) and \(Q_2\) are flat as \(\R\)-modules and
		\(\R\cong \R1_A\) as \(\R\)-modules, via the unit map
		\(1_A:\R\sir A,\;r\selr{}r1_A\) (hence \(A\) is a faithful
		\(\R\)-module); or
	    \item the unit map \(1_A:\R\sir A\) is a pure
		monomorphism (e.g.  it has a left inverse).
	\end{enumerate}
	Then the following implication holds:
	\[A^{co\,Q_1}=A^{co\,Q_2}\ \Rightarrow\ Q_1=Q_2\]
\end{theorem}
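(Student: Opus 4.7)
The plan is to reduce the statement to the equality of the defining right ideal coideals \(I_1,I_2\subseteq H\) (where \(Q_i=H/I_i\)), and to leverage the surjectivity of \(\can_H\) together with the bijectivity of each \(\can_{Q_i}\). First I would observe that, since each \(Q_i\) is a quotient of \(H\), one has \(A^{co\,H}\subseteq A^{co\,Q_i}=B\), where \(B\coloneq A^{co\,Q_1}=A^{co\,Q_2}\). Writing \(\pi_i:H\sir Q_i\) for the canonical projection and \(p_B:A\otimes_{A^{co\,H}}A\sir A\otimes_BA\) for the map induced by the inclusion \(A^{co\,H}\subseteq B\), I would check the identity \((\id_A\otimes\pi_i)\circ\can_H=\can_{Q_i}\circ p_B\), since both sides send \(a\otimes b\) to \(ab_{(0)}\otimes\pi_i(b_{(1)})\).

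Next, using that \(\can_H\) is surjective and both \(\can_{Q_1},\can_{Q_2}\) are bijective, for any \(x\in A\otimes H\) I can choose \(y\) with \(\can_H(y)=x\) and obtain the chain of equivalences
\[(\id_A\otimes\pi_1)(x)=0 \iff \can_{Q_1}(p_B(y))=0 \iff p_B(y)=0 \iff \can_{Q_2}(p_B(y))=0 \iff (\id_A\otimes\pi_2)(x)=0.\]
Hence \(\ker(\id_A\otimes\pi_1)=\ker(\id_A\otimes\pi_2)\) as submodules of \(A\otimes H\).

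Now, for any \(x\in I_1\), the element \(1_A\otimes x\in A\otimes H\) lies in \(\ker(\id_A\otimes\pi_1)\), so the above equality places it also in \(\ker(\id_A\otimes\pi_2)\), meaning \(1_A\otimes\pi_2(x)=0\) in \(A\otimes Q_2\). To conclude \(x\in I_2\), it suffices that the canonical map \(Q_2\sir A\otimes Q_2\), \(q\selmap{}1_A\otimes q\), be injective. Under assumption~\textit{(i)}, this follows because \(Q_2\) is flat and the injection \(\R\hookrightarrow A\) stays injective upon tensoring with the flat module \(Q_2\); under assumption~\textit{(ii)}, it follows directly from purity of \(\R\hookrightarrow A\), since purity is preserved by tensoring with any \(\R\)-module. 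By symmetry \(I_1=I_2\), and therefore \(Q_1=Q_2\) in \(\qquot(H)\).

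The crucial step is the two-sided cancellation in the second paragraph, which simultaneously uses the surjectivity of \(\can_H\) (to pull elements of \(A\otimes H\) back to \(A\otimes_{A^{co\,H}}A\)) and the bijectivity of each \(\can_{Q_i}\); once this equality of kernels inside \(A\otimes H\) is established, the main remaining difficulty is the descent to equality inside \(H\), which is precisely what the two alternative hypotheses are designed to accomplish.
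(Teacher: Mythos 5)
Your proof is correct, and its engine is the same as the paper's: the commutative square identifying \((\id_A\otimes\pi_i)\circ\can_H\) with \(\can_{Q_i}\) composed with the projection \(A\otimes_{A^{co\,H}}A\sir A\otimes_{B}A\), surjectivity of \(\can_H\), bijectivity of the \(\can_{Q_i}\), and the injectivity of the map \(Q_i\sir A\otimes Q_i\), \(q\selmap{}1_A\otimes q\), which is exactly what hypothesis \textit{(i)} (flatness of \(Q_i\) together with \(\R\cong\R1_A\)) or \textit{(ii)} (purity of the unit) provides. Where you differ is the finish. The paper uses the operator identity \((\can_{Q_1}\circ\can_{Q_2}^{-1})\circ(\id\otimes\pi_2)=\id\otimes\pi_1\) to construct an explicit map \(\pi:Q_2\sir Q_1\) with \(\pi\circ\pi_2=\pi_1\) -- well-definedness being precisely where flatness/purity enters -- and then checks that \(\pi\) is right \(H\)-linear and \(H\)-colinear, so that \(Q_2\succcurlyeq Q_1\) in \(\qquot(H)\); the conclusion follows by symmetry and antisymmetry of the order. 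You instead establish the equality \(\ker(\id_A\otimes\pi_1)=\ker(\id_A\otimes\pi_2)\) inside \(A\otimes H\) and specialise to elements \(1_A\otimes x\), descending directly to \(I_1=I_2\). This bypasses the construction of \(\pi\) and the (routine but necessary) linearity/colinearity verification, trading the morphism-of-quotients viewpoint for a purely ideal-theoretic one; the hypotheses are consumed at the same point in both arguments, namely in passing from a statement in \(A\otimes Q_i\) to one in \(Q_i\). Your variant is marginally more elementary, and it is in fact closer in spirit to how the paper proves the dual statement for coextensions (Theorem~\ref{thm:coGalois_mono}), where the inclusion of coideals is read off directly from the analogous operator identity; the paper's version has the small advantage of exhibiting the comparison map \(\pi\) explicitly.
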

\begin{proof}
Let $B'=A^{co\,Q_1}=A^{co\,Q_2}$.  The following diagram commutes:
\begin{center}
\begin{tikzpicture}[>=angle 60,thick]
\matrix[matrix,column sep=7mm,row sep=7mm]{
   &                             & \node(AQ1) {\(A\otimes Q_1\)};\\
   \node(A) {\(A\otimes_{B'} A\)}; & \node(B) {\(A\otimes_{A^{co\,H}}A\)}; & \node(C)   {\(A\otimes H\)};\\
   &                             & \node(AQ2) {\(A\otimes Q_2\)};\\
};
\begin{scope}
\draw[->] (A) -- node[above]{$\can_{Q_1}$} (AQ1);
\draw[->] (A) -- node[below]{$\can_{Q_2}$} (AQ2);
\draw[<<-] (A) -- (B);
\draw[->>] (B) -- node[above,pos=.4]{$\can$}(C);
\draw[->] (C) -- node[right]{$id\otimes\pi_1$}(AQ1);
\draw[->] (C) -- node[right]{$id\otimes\pi_2$}(AQ2);
\end{scope}
\end{tikzpicture}
\end{center}
The maps \(\can_{Q_1}\) and \(\can_{Q_2}\) are isomorphisms.  Let
\(f:=(\can_{Q_1}\circ\can_{Q_2}^{-1})\circ(id\otimes\pi_2)\).  By
commutativity of the above diagram, \(f\circ\can\) and
\((id\otimes\pi_1)\circ\can\) are equal.  Now, surjectivity of \(\can\) yields
the equality \((\can_{Q_1}\circ
    \can_{Q_2}^{-1})\circ(id\otimes\pi_2)=(id\otimes\pi_1)\).  We are going to
construct  $\pi:\,Q_1\ir Q_2$ such that $\can_{Q_1}\circ
\can_{Q_2}^{-1}=id\otimes \pi$ and $\pi\circ\pi_2=\pi_1$.  Define
\(\pi:Q_2\sir Q_1\) by \(\pi(\pi_2(h))=\pi_1(h)\) for \(h\in H\).  If
\(\pi_2(h)=\pi_2(h')\) for \(h,h'\in H\) then
\begin{align*}
    1_A\otimes\pi_1(h) & =(\can_{Q_1}\circ\can_{Q_2}^{-1})(1_A\otimes\pi_2(h))\\
               & =(\can_{Q_1}\circ\can_{Q_2}^{-1})(1_A\otimes\pi_2(h'))\\
               & =1_A\otimes\pi_1(h')
\end{align*}
By flatness of \(Q_1\) and the condition \(\R1_A\cong \R\) or purity of the
unit map we have \(Q_1=\R1_A\otimes Q_1\subseteq A\otimes Q_1\).  Now, it
follows that \(\pi_1(h)=\pi_1(h')\).  This shows that \(\pi:Q_2\sir Q_1\) is
well defined.  Furthermore, $\pi$ is right $H$-linear and \(H\)-colinear:
\begin{align*}
    \Delta_{Q_1}(\pi(\pi_2(h))) & = \Delta_{Q_1}(\pi_1(h)) \\
    & = \pi_1\otimes\pi_1\circ\Delta_{H}(h) \\
    & = (\pi\circ\pi_2)^{\otimes 2}\circ\Delta_{H}(h) \\
    & = \pi\otimes\pi\Delta_{Q_2}(\pi_2(h))
\end{align*}
Thus $Q_2\succcurlyeq Q_1$.  In the same way, using
\((\can_{Q_2}\circ\can_{Q_1}^{-1})\circ(id\otimes\pi_1)\) instead of \(f\), we
obtain that $Q_1\succcurlyeq Q_2$.  Now, by antisymmetry of \(\succcurlyeq\),
we get \(Q_1=Q_2\).
\end{proof}
Before deriving a corollary from the above statement let us define an important
class of generalised quotients.  It depends on the comodule algebra and we are
on the way to show that they are closed elements of our Galois correspondence.
In some cases we will be able to show that this class is equal to the subset
of closed quotients of the Hopf algebra.
\index{comodule algebra!Q-Galois}
\index{Q-Galois extension|see{comodule algebra!Q-Galois}}
\begin{definition}
    Let \(A\) be an \(H\)-comodule algebra, where \(H\) is a Hopf algebra.
    Then for a generalised quotient \(\pi:H\sir Q\), \(A/A^{co\,Q}\) is called
    \(Q\)-\textbf{Galois} if the following map is an isomorphism:
    \[\can_Q:A\otimes_{A^{co\,Q}}A\sir A\otimes Q,\quad\can(a\otimes_{A^{co\,Q}}b)=ab_{(0)}\otimes\pi(b_{(1)})\]
\end{definition}
The following corollary explains why we rather tend to think that the above
defines a class of quotients of \(H\) than subalgebras of \(A\).  
\index{comodule algebra!Galois correspondence!closed generalised quotients}
\begin{corollary}\label{cor:Q-Galois_closed}
    Let \(A\) be an \(H\)-comodule algebra with epimorphic canonical map
    \(\can_H\) such that the Galois connection~\eqref{eq:galois-connection}
    exists and let the unit map \(1_A:\R\sir A\) be a pure monomorphism.  Then
    \(Q\in \qquot(H)\) is a \textsf{closed element} of the Galois
    connection~\eqref{eq:galois-connection} if \(A/A^{co\,Q}\) is
    \(Q\)-Galois.
\end{corollary}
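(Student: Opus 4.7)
The plan is to apply Theorem~\ref{thm:mono} to the pair $(Q, Q')$ where $Q' := \psi\phi(Q) = \psi(A^{co\,Q})$. By the Galois property one has $Q \leq Q'$ in $\qquot(H)$, which unpacks to a canonical surjection $\pi : Q' \to Q$ satisfying $\pi \circ \pi_{Q'} = \pi_Q$. By Proposition~\ref{prop:properties-of-adjunction}(ii), $\phi\psi\phi = \phi$, so $A^{co\,Q'} = A^{co\,Q}$; denote this common subalgebra by $B$. If one can check all the hypotheses of Theorem~\ref{thm:mono} for the pair $(Q, Q')$, the conclusion will be $Q = Q'$, which is exactly closedness of $Q$.

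The only nontrivial hypothesis still to be verified is that $A/B$ is $Q'$-Galois. The identity $\pi \circ \pi_{Q'} = \pi_Q$ yields the factorisation $\can_Q = (\id_A \otimes \pi) \circ \can_{Q'}$ on $A \otimes_B A$, from which injectivity of $\can_{Q'}$ is immediate: the $Q$-Galois hypothesis makes $\can_Q$ injective, so $\can_{Q'}(x) = 0$ forces $\can_Q(x) = 0$ and hence $x = 0$. For surjectivity, I invoke the assumption that the total canonical map $\can : A \otimes_{A^{co\,H}} A \to A \otimes H$ is epic. Since $A^{co\,H} \subseteq B$, there is a natural surjection $p_B : A \otimes_{A^{co\,H}} A \to A \otimes_B A$, and a direct check gives $\can_{Q'} \circ p_B = (\id_A \otimes \pi_{Q'}) \circ \can$. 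The right-hand side is a composition of epimorphisms, hence epic, so $\can_{Q'}$ is epic as well, and therefore bijective.

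With $A$ both $Q$- and $Q'$-Galois over $B$, the canonical map $\can_H$ epic, and $1_A : \R \to A$ a pure monomorphism, condition~(ii) of Theorem~\ref{thm:mono} is in force and the theorem yields $Q = Q'$, i.e.\ $Q = \psi\phi(Q)$, so $Q$ is closed. The main obstacle in this plan is the surjectivity half of the $Q'$-Galois verification; this is precisely where the hypothesis that $\can_H$ is onto is required. The pure monomorphism hypothesis on $1_A$ is needed only at the final application of Theorem~\ref{thm:mono}, while the upgrade from $Q$-Galois to $Q'$-Galois is the technical heart of the argument.
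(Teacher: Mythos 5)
Your proposal is correct and follows essentially the same route as the paper's own proof: set $\widetilde Q=\psi\phi(Q)$, use the factorisations through $\can_H$ (for surjectivity) and through $\can_Q$ (for injectivity) to show $A/A^{co\,\widetilde Q}$ is $\widetilde Q$-Galois, then invoke Theorem~\ref{thm:mono} with the purity hypothesis to conclude $Q=\widetilde Q$. The only difference is presentational (the paper phrases the last step as a contradiction and frames $\widetilde Q$ as the greatest element of $\phi^{-1}(A^{co\,Q})$), so there is nothing to add.
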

This statement might be understood as a counterpart of
Theorem~\ref{thm:closed_subalgebras} for generalised quotients.
\begin{proof}
Fix a coinvariant subalgebra $A^{co Q}$ for some $Q\in \qquot(H)$ then
$\phi^{-1}(A^{co\,Q})$ is an upper-sublattice  of $\qquot(H)$ (i.e.  it is
a subposet closed under finite suprema) which has a greatest element, namely
$\widetilde Q=\psi(A^{co\,Q})$.  Moreover, $\widetilde Q$ is the only closed
element belonging to $\phi^{-1}(A^{co\,Q})$.  Both $Q\preccurlyeq\widetilde Q$
and the assumption that $A/A^{co\,Q}$ is $Q$-Galois imply that
\(A/A^{co\,\widetilde Q}\) is \(\widetilde Q\)-Galois.  To this end, we
consider the commutative diagram:
\begin{center}
\begin{tikzpicture}
    \node (A) at (0,0)	{\(A\otimes_B A\)};
    \node (B) at (3cm,0)	{\(A\otimes H\)};
    \node (C) at (0cm,-1.7cm) {\(A\otimes_{A^{co\,\widetilde Q}}A\)};
    \node (D) at (3cm,-1.7cm) {\(A\otimes \widetilde Q\)};
    \node (E) at (0cm,-3.4cm) {\(A\otimes_{A^{co\,Q}}A\)};
    \node (F) at (3cm,-3.4cm) {\(A\otimes Q\)};
    \begin{scope}[>=angle 60,thick]
	\draw[->>] (A) -- node[above]{\(\can_H\)} (B);
	\draw[->>] (A) -- (C);
	\draw[->>] (B) -- (D);
	\draw[->]  (C) -- node[below]{\(\can_{\widetilde Q}\)} (D);
	\draw[->]  (E) -- node[below]{\(\can_Q\)} node [above]{\(\simeq\)}  (F);
	\draw[->]  (C) -- node[left]{\(=\)}(E);
	\draw[->>] (D) -- (F);
    \end{scope}
\end{tikzpicture}
\end{center}
From the lower commutative square we get that $\can_{\widetilde Q}$ is
a monomorphism and from the upper commutative square we deduce that
$\can_{\widetilde Q}$ is onto.  Unless $\widetilde Q=Q$ we get a contradiction
with the previous proposition.
\end{proof}
The above result applies also to the Galois
correspondence~\eqref{eq:Schauenburg_galois_corr}, as it is the same as the
Galois connection~\ref{eq:galois-connection} for the left \(L(A,H)\)-comodule
algebra \(A\) (see Proposition~\ref{prop:properties-of-adjunction}(iii)).  For
a finite dimensional Hopf algebra \(H\) for every \(Q\) the extension
\(A/A^{co\,Q}\) is \(Q\)-Galois (see~\cite[Cor.~3.3]{ps-hs:gen-hopf-galois}).
Thus we get the following statement.
\begin{proposition}\label{prop:finite_case}
    Let \(H\) be a finite dimensional Hopf algebra over a field \(\k\).  Let
    \(A/B\) be an \(H\)-Galois extension.  Then every \(Q\in\qquot(H)\) is
    closed. 
\end{proposition}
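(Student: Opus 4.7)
The plan is to invoke Corollary~\ref{cor:Q-Galois_closed} directly, after verifying its hypotheses and identifying the missing input as a known structural result about finite dimensional Hopf--Galois extensions.

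First I would check that the setup of Corollary~\ref{cor:Q-Galois_closed} is in force. Since the base ring is a field $\k$, both $A$ and $H$ are automatically flat Mittag--Leffler $\k$-modules, so the Galois connection~\eqref{eq:galois-connection} exists by Theorem~\ref{thm:existence}. The $H$-Galois hypothesis means that $\can_H:A\otimes_{B}A\to A\otimes H$ is an isomorphism and in particular an epimorphism. Finally, assuming $A\neq 0$, the unit map $1_A:\k\to A$ is an injective $\k$-linear map between vector spaces, hence split and therefore a pure monomorphism. Thus all running assumptions of Corollary~\ref{cor:Q-Galois_closed} are satisfied.

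It then suffices to show that for every generalised quotient $\pi:H\twoheadrightarrow Q$, the extension $A/A^{co\,Q}$ is $Q$-Galois, i.e.\ that
\[
\can_Q:A\otimes_{A^{co\,Q}}A\ir A\otimes Q,\qquad a\otimes b\selmap{}ab_{(0)}\otimes\pi(b_{(1)}),
\]
is bijective. Surjectivity is easy and follows from Hopf--Galois-ness alone, by the commutative triangle
\[
A\otimes_B A\twoheadrightarrow A\otimes_{A^{co\,Q}}A\xrightarrow{\can_Q}A\otimes Q,
\]
whose composite equals $(\id_A\otimes\pi)\circ\can_H$, itself surjective. Injectivity is the genuinely non-trivial point; this is exactly \cite[Cor.~3.3]{ps-hs:gen-hopf-galois}, which relies on finite dimensionality of $H$ (and hence of $Q$) together with the faithful flatness of $A$ over $A^{co\,H}$ coming from the Nichols--Zoeller type freeness results available in the finite dimensional setting.

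Once $A/A^{co\,Q}$ is known to be $Q$-Galois, Corollary~\ref{cor:Q-Galois_closed} immediately yields that $Q$ is closed, completing the proof. The main obstacle is thus the injectivity of $\can_Q$, which is precisely the cited Schneider--Schauenburg result; modulo this input the argument is a direct application of the closedness criterion established earlier in this section.
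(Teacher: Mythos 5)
Your proposal is correct and follows essentially the same route as the paper: the paper likewise deduces the proposition from Corollary~\ref{cor:Q-Galois_closed} together with the fact (from~\cite[Cor.~3.3]{ps-hs:gen-hopf-galois}) that for finite dimensional \(H\) the extension \(A/A^{co\,Q}\) is \(Q\)-Galois for every \(Q\in\qquot(H)\). Your explicit verification of the hypotheses (flat Mittag--Leffler over a field, surjective canonical map, purity of the unit) and the splitting of \(\can_Q\)-bijectivity into easy surjectivity plus the cited injectivity are just a more detailed rendering of the same argument.
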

Consequently, the map \(\phi\) of the Galois
connection~\eqref{eq:galois-connection} is always a monomorphism, when the
conditions of the above proposition are met.

\section[Correspondence between admissible objects]{Generalisation of Schauenburg's
    correspondence of admissible objects}\label{sec:admissibility}

In this section we generalise Theorem~\ref{thm:Schauenburg_Galois_closedness}
(page~\pageref{thm:Schauenburg_Galois_closedness}).  The line of proof is
essentially the same as Schauenburg's, though many details had to be changed,
and also some additional lemmas had to be proven.  We make one
assumption which did not appear in the original approach: in order to use
Corollary~\ref{cor:Q-Galois_closed} we assume that the unit map of the
comodule algebra \(A\) is a pure monomorphism.  The usage of
Corollary~\ref{cor:Q-Galois_closed} allows as to slightly simplify the proof.
Also in our situation we do not assume, as Schauenburg did, that the
coinvariants subalgebra is a central subalgebra, hence the single faithful
flatness condition of \(A\) over the coinvariants has to be changed into
faithful flatness as both a right and a left module.

\index{opposite algebra}
\index{algebra!opposite}
\index{opposite bialgebra}
\index{bialgebra!opposite}
\index{opposite generalised quotient}
\index{bialgebra!opposite generalised quotient}
In this section we will make heavy use of the opposite multiplication on both
an \(H\)-comodule algebra \(A\) and the Hopf algebra \(H\).  The opposite
algebra of \(A\) (or \(H\)) will be denoted by \(A^\op\) (\(H^\op\)
respectively).  Note that \(H^\op\) is a bialgebra, with the same
comultiplication as \(H\).  Furthermore, it is a Hopf algebra if the antipode
of \(H\) is bijective and \(S_{H^\op}=S_H^{-1}\), where \(S_H\) denotes the
antipode of \(H\).  The right \(H^\op\)-comodule structure of \(A^\op\) is
simply: \(A^\op\ni a\sir a_{(0)}\otimes a_{(1)}\in A^\op\otimes H^\op\).
Clearly, it is an algebra map, which makes \(A^\op\) into an
\(H^\op\)-comodule algebra.

We let \((\phi^{{\op}},\psi^{{\op}})\) denote the Galois
connection~\eqref{eq:galois-connection} for the \(H^{{\op}}\)-comodule algebra
\(A^{{\op}}\).  Note that \((\phi^{{\op}},\psi^{{\op}})\) is a Galois
correspondence if and only if \((\phi,\psi)\) is, by
Lemma~\ref{lem:galois_and_op}.  We recall the following notation introduced in
Proposition~\ref{prop:Schauneburg_admissibility_and_op}: for a generalised
quotient \(Q=H/I\in\qquot(H)\) we put \(Q^{{\op}}\coloneq
    H^{{\op}}/S_{H}(I)\in\qquot(H^{{\op}})\) and we also write
\(I^{{\op}}\coloneq S_H(I)\).  Now, we are ready to adapt
Definition~\ref{defi:Schauenburg_admissibility} to the more general context of
\(H\)-extensions with noncommutative coinvariant subrings.  We only amend the
definition of admissible subalgebras of \(A\), the coalgebra part is repeated.
\index{admissible quotients}
\index{admissible subalgebras}
\index{bialgebra!generalised quotients!admissibility}
\begin{definition}\label{defi:admissible_II}
    \begin{enumerate}
	\item Let \(C\) be an \(\R\)-coalgebra and let \(C\smpr{}{D}\) be
	    a coalgebra quotient.  Then \({D}\) is called \textbf{left}
	    (\textbf{right}) \textbf{admissible} if it is \(\R\)-flat (hence
	    faithfully flat) and \(C\) is left (right) faithfully coflat
	    over~\({D}\).  For \(I\in\coId(C)\) we will say that it is
	    \textbf{left} (\textbf{right}) \textbf{admissible} if and only if
	    the quotient \(C/I\) shares this property.
    \end{enumerate}
    \medskip
    \noindent Let \(A/B\) be an \(H\)-extension, where the Hopf algebra \(H\) has
    a bijective antipode.
    \begin{enumerate}
	\item Let \(S\) belong to \(\Sub_\Alg(A/B)\).  Then~\(S\) is
	    called \textbf{right admissible} if:
	    \begin{enumerate}[topsep=0pt,noitemsep,label={(\alph*)}]
		\item \(A\) is right faithfully flat over~\(S\),
		\item the composition: 
	    \begin{equation*}
		\begin{split}
		    &\can_S:A\otimes_SA\sir A\otimes_{A^{co\,\psi(S)}}A\mpr{\can_{\psi(S)}}A\otimes\psi(S)\\
		    &a\otimes_S b\selmap{}ab_{(0)}\otimes b_{(1)},
		\end{split}
	    \end{equation*}
		    is a bijection (it is well defined since \(S\subseteq
			A^{co\,\psi(S)}\)).  
	    \end{enumerate}
	\item We call \(S\) \textbf{left admissible} if:
	    \begin{enumerate}[topsep=0pt,noitemsep,label={(\alph*)}]
		\item \(A\) is left faithfully flat over~\(S\),
		\item the composition: 
		    \begin{equation*}
			\begin{split}
			    &\can_S:A\otimes_SA\sir A\otimes_{A^{co\,\psi(S)}}A\mpr{\can_{\psi(S)}}A\otimes\psi(S)\\
			    &a\otimes_S b\selmap{}ab_{(0)}\otimes b_{(1)},
			\end{split}
		    \end{equation*}
		    is a bijection.
	    \end{enumerate}
    \end{enumerate}
    An element is called admissible if it is both left and right admissible.
\end{definition}
\begin{remark}
    If \(S\) is left or right admissible and \(A\) is faithfully flat over the
    base ring \(\R\) then \(\psi(S)\) is flat as an \(\R\)-module.
\end{remark}
Later, in Lemma~\ref{lem:symmetry_od_admissibility}, we will show that
a subalgebra \(S\) is left (right) admissible if and only if \(S^\op\subseteq
    A^\op\) is right (left) admissible. The same holds for admissibility of
generalised quotients if \(H\) is a flat Hopf algebra with a bijective
antipode (see Proposition~\ref{prop:Schauneburg_admissibility_and_op} on
page~\pageref{prop:Schauneburg_admissibility_and_op}).

The above notion of right and left admissibility for intermediate subalgebras
of an extension \(A/B\) is equivalent to the original definition of
Schauenburg when one considers \(A/B\) as a left \(L(A,H)\)-Galois extension,
and when we assume that \(B=A^{co\,H}\) is equal to the ground ring \(\R\), as
is required in Schauenburg's approach.  It is so since then the conditions
\textit{(ii.b)} and \textit{(iii.b)} are automatically satisfied.  The
condition \textit{(ii.b)} is satisfied by the way \(L(A,H)\) is constructed.
This is shown in Lemma~\ref{lem:Schauenburg_Galois_cond}
(page~\pageref{lem:Schauenburg_Galois_cond}).  The condition \textit{(iii.b)}
follows in the same way under the assumption that \(H\) has a bijective
antipode, since then \(L(A^{\op},H^{\op})\cong L(A,H)^{\op}\) by the universal
property of \(L(A,H)\): if the antipode of \(H\) is bijective then \(A\) is
right \(H\)-Galois if and only if \(A^{\op}\) is right \(H^{\op}\)-Galois
(this is a special case of Lemma~\ref{lem:symmetry_od_admissibility} on
page~\pageref{lem:symmetry_od_admissibility}).  This also holds for \(L(A,H)\)
and \(L(A,H)^{\op}\), since \(L(A,H)\) has a bijective antipode whenever \(H\)
has.  In this way \(A^{\op}/\R\) is \(L(A,H)^{\op}\text{-}H^{\op}\)-Galois.
We have \(L(A^{\op},H^{\op})\cong L(A,H)^{\op}\), by the universal property of
\(L(A,H)\) (Proposition~\ref{prop:Schaueburg_universal_prop} on
page~\pageref{prop:Schaueburg_universal_prop}). 

We will need the following version of~\cite[Rem.~1.2]{hs:normal-bases}.
\begin{remark}\label{rem:faithfully_flatness}
    Let \(A/B\) be an \(H\)-extension, such that both \(R\)-modules \(A\) and
    \(H\) are flat Mittag--Leffler.  Let \(S\in\Sub_\Alg(A/A^{co\,H})\).  Then
    the following holds:
    \begin{enumerate}[noitemsep,nosep]
	\item if \(\can_S:A\otimes_SA\sir A\otimes\psi(S)\) is an isomorphism
	      and \(A\) is right or left faithfully flat over~\(S\) then~\(S\)
	      is a closed element of~\eqref{eq:galois-connection};
	\item if the natural projection \(A\otimes_SA\eir
	      A\otimes_{A^{co\,\psi(S)}}A\) is a bijection and \(A\) is right
              or left faithfully flat over \(S\) then \(S=A^{co\,\psi(S)}\),
              i.e.  \(S\) is a closed element of \(\Sub_\Alg(A/B)\)
              in~\eqref{eq:galois-connection}.
    \end{enumerate}
\end{remark}
\begin{proof}
    First let us prove (ii).  For this let us consider the commutative diagram:
    \begin{center}
	\begin{tikzpicture}
	    \matrix[column sep=1cm,row sep=0.7cm]{
		\node (A) {\(S\)}; & \node (B) {\(A\)}; & \node (C) {\(A\otimes_SA\)};\\
		\node (D) {\(A^{co\,\psi(S)}\)}; & \node (E) {\(A\)}; & \node (F) {\(A\otimes_{A^{co\,\psi(S)}}A\)};\\
	    };
	    \draw[->,dashed] (D) -- (A);
	    \node[yshift=-7mm,rotate=90] at (B) {\(=\)};
	    \draw[->] (C) --node[above,rotate=-90]{\(\simeq\)} (F);
	    \node[xshift=10mm] at (A) {\(\subseteq\)};
	    \node[xshift=10mm] at (D) {\(\subseteq\)};
	    \draw[->] ($(B)+(2mm,1mm)$) -- ($(C)+(-7mm,1mm)$);
	    \draw[->] ($(B)+(2mm,-1mm)$) -- ($(C)+(-7mm,-1mm)$);
	    \draw[->] ($(E)+(2mm,1mm)$) -- ($(F)+(-11mm,1mm)$);
	    \draw[->] ($(E)+(2mm,-1mm)$) -- ($(F)+(-11mm,-1mm)$);
	\end{tikzpicture}
    \end{center}
    The four maps \(A\smprr{}{} A\otimes_SA\) and \(A\smprr{}{}
	A\otimes_{A^{co\,\psi(S)}}A\) send \(a\in A\) to \(a\otimes 1_A\) or
    \(1_A\otimes a\) in the respective tensor products.  Since the diagram
    commutes and since the upper row is an equaliser (by faithfully flat
    descent), the dashed arrow exists, i.e.  \(A^{co\,\psi(S)}\subseteq S\).
    We get the equality since, \(S\subseteq A^{co\,\psi(S)}\) holds by the
    Galois connection property.

    The first claim follows from \textit{(ii)} when applied to \(S\subseteq
	A^{co\,\psi(S)}\), because the natural epimorphism \(A\otimes_SA\sir
	A\otimes_{A^{\co\;\psi(S)}}A\) is an isomorphism whenever
    \(\can_S:A\otimes_SA\sir A\otimes\psi(S)\) is an isomorphism:
    \begin{center}
	\begin{tikzpicture}
	    \matrix[matrix of nodes, column sep=1cm, row sep=1cm]{
		|(A1)| \(A\otimes_SA\) & |(A2)| \(A\otimes\psi(S)\) \\
		|(B1)| \(A\otimes_{A^{\co\;\psi(S)}}A\) & \\
	    };
	    \draw[->] (A1) --node[above]{\(\can_S\)}node[below]{\(\cong\)} (A2);
	    \draw[->>] (A1) -- (B1);
	    \draw[->] (B1) --node[below right]{\(\can_{A^{\co\,\psi(S)}}\)} (A2);
	\end{tikzpicture}
    \end{center}
\end{proof}

The following lemma is a finer version of
\cite[Prop.~3.5]{ps:gal-cor-hopf-bigal}, since we are able to prove the
equality \(\psi^{\op}(S^{{\op}})=\bigl(\psi(S)\bigr)^{{\op}}\) for any
\(S\in\Sub_\Alg(A/B)\): 
\begin{lemma}\label{lem:galois_and_op}
    Let \(H\) be a Hopf algebra with a bijective antipode.  Let \(A/B\) be an
    \(H\)-extension.  Then the Galois connection~\eqref{eq:galois-connection}
    exists for the \(H^{{\op}}\)-comodule algebra \(A^{{\op}}\) if and only if
    it exists for the \(H\)-extension \(A/B\).  Furthermore, if this is the
    case
    \[\phi^{{\op}}(Q^{\op})=\bigl(\phi(Q)\bigr)^{{\op}}\text{ and }\psi^{\op}(S^{{\op}})=\bigl(\psi(S)\bigr)^{{\op}}\]
    where \(S\in\Sub_\Alg(A/B)\) and \(Q\in\qquot(H)\).
\end{lemma}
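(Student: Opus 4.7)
My plan is to reduce both formulae and the existence equivalence to the single coinvariant identity $A^{\co\,Q} = (A^{\op})^{\co\,Q^{\op}}$ as subsets of $A$, for every $Q = H/I \in \qquot(H)$. Note that $Q^{\op} = H^{\op}/S(I)$, and $S(I)$ is both a coideal and a left ideal of $H$ (the latter since $I$ is a right ideal and $S$ is an algebra anti-homomorphism), so it is the correct right ideal coideal of $H^{\op}$. Under the flatness hypotheses that produce the Galois connection via Theorem~\ref{thm:existence}, both coinvariant subalgebras admit the explicit description $\{a \in A : \delta(a) - a \otimes 1 \in A \otimes J\}$ with $J = I$ and $J = S(I)$ respectively, so the whole lemma boils down to showing these subsets coincide.

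The core step is a short calculation using coassociativity and the antipode. For $a \in A$ set $z = \delta(a) - a \otimes 1 = \sum_i a_i \otimes h_i$; coassociativity applied to $\delta(a) = a \otimes 1 + z$ gives the identity $(\id \otimes \Delta)(z) - (\delta \otimes \id)(z) = z \otimes 1$ in $A \otimes H \otimes H$. Apply the $\R$-linear map $A \otimes H \otimes H \to A \otimes H$ sending $a \otimes h \otimes k$ to $a \otimes h S(k)$. The first term on the left becomes $\sum_i a_i \otimes h_{i(1)} S(h_{i(2)}) = \sum_i a_i \epsilon(h_i) \otimes 1$, which vanishes on $A \otimes I$ because $\epsilon(I) = 0$; the second becomes $\sum_i (a_i)_{(0)} \otimes (a_i)_{(1)} S(h_i)$; and the right-hand side becomes $z$ itself. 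Assuming $z \in A \otimes I$, we therefore obtain $z = -\sum_i (a_i)_{(0)} \otimes (a_i)_{(1)} S(h_i)$, and since $S(I)$ is a left ideal each $(a_i)_{(1)} S(h_i)$ lies in $S(I)$, so $z \in A \otimes S(I)$. The reverse inclusion follows by the symmetric argument applying $a \otimes h \otimes k \mapsto a \otimes S^{-1}(k) h$ and invoking the companion identity $S^{-1}(h_{(2)}) h_{(1)} = \epsilon(h) 1$ (obtained by applying $S^{-1}$ to the standard antipode axiom together with its anti-homomorphism property); it is here that bijectivity of $S$ is essential.

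The rest is then formal. The assignment $Q \mapsto Q^{\op}$ is an order-preserving bijection $\qquot(H) \to \qquot(H^{\op})$ by Proposition~\ref{prop:Schauneburg_admissibility_and_op}, while $S \mapsto S^{\op}$ is manifestly such a bijection on the subalgebra side. The coinvariant identity states precisely that $\phi$ and $\phi^{\op}$ correspond under these bijections, which immediately yields $\phi^{\op}(Q^{\op}) = (\phi(Q))^{\op}$; combined with Theorem~\ref{thm:existence-of-adjunction}, this also shows that $\phi$ reflects arbitrary suprema iff $\phi^{\op}$ does, giving the existence equivalence. Finally, the formula for $\psi^{\op}$ drops out from the explicit description $\psi(S) = \bigvee\{Q \in \qquot(H) : S \subseteq \phi(Q)\}$ of Theorem~\ref{thm:existence}, using that $Q \mapsto Q^{\op}$ preserves suprema. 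The main obstacle throughout is the coinvariant identity itself: the two conditions $\delta(a) - a \otimes 1 \in A \otimes I$ and $\delta(a) - a \otimes 1 \in A \otimes S(I)$ are not visibly equivalent, and the coassociativity-plus-antipode trick above is what forges the bridge.
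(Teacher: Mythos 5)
Your proposal is correct, and its skeleton coincides with the paper's: reduce everything to the single identity \(\phi^{\op}(Q^{\op})=(\phi(Q))^{\op}\), transport the suprema-reflection property across the two poset isomorphisms \(S\selmap{}S^{\op}\) and \(Q\selmap{}Q^{\op}\) to get the existence equivalence via Theorem~\ref{thm:existence-of-adjunction}, and then derive \(\psi^{\op}(S^{\op})=(\psi(S))^{\op}\) from the supremum formula~\eqref{eq:psi} together with the fact that \(Q\selmap{}Q^{\op}\) preserves suprema (the paper spells this last point out as the explicit computation \(\bigwedge_{I}I^{\op}=(\bigwedge_{I}I)^{\op}\) in \(\qid(H^{\op})\), which is just your "order isomorphisms preserve suprema" observation made concrete). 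The one genuine difference is that the paper does not prove the coinvariant identity at all: it cites it from Schauenburg's Prop.~3.5. You instead prove it directly, and your calculation is sound: writing \(z=\delta(a)-a\otimes1\), coassociativity gives \((\id\otimes\Delta)(z)-(\delta\otimes\id)(z)=z\otimes1\), and applying \(a\otimes h\otimes k\selmap{}a\otimes hS(k)\) kills the first term (indeed \((\id\otimes\epsilon)(z)=0\) already by counitality, so one does not even need \(\epsilon(I)=0\) here) and converts the identity into \(z=-\sum_i(a_i)_{(0)}\otimes(a_i)_{(1)}S(h_i)\in\im(A\otimes S(I))\), with the reverse inclusion by the symmetric \(S^{-1}\) argument. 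Note that right-exactness of \(A\otimes-\) identifies \(\ker(\id_A\otimes\pi_Q)\) with \(\im(A\otimes I)\) without any flatness hypothesis, so your "explicit description" of the coinvariants, and hence the whole first assertion of the lemma, holds in full generality; your hedge about needing the hypotheses of Theorem~\ref{thm:existence} for this is unnecessary. Two small points of care: the step \(HS(I)\subseteq S(I)\) uses surjectivity of \(S\) (write \(hS(x)=S(xS^{-1}(h))\in S(IH)\subseteq S(I)\)), not merely the anti-homomorphism property, so bijectivity of the antipode enters already in the forward inclusion and not only in the reverse one; and the representation \(z=\sum_ia_i\otimes h_i\) must be chosen with \(h_i\in I\) before applying the map, which is possible precisely because \(z\) lies in the image of \(A\otimes I\). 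The net effect of your route is a self-contained proof that does not lean on the external reference, at the cost of a page of Sweedler gymnastics that the paper avoids.
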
 
\begin{proof}
    The first equation: \(\phi^{{\op}}(Q^{\op})=\bigl(\phi(Q)\bigr)^{{\op}}\)
    is proved in~\cite[Prop.~3.5]{ps:gal-cor-hopf-bigal}. 
    Both maps: \(\Sub_\Alg(A/B)\ni
        S\selmap{}S^{{\op}}\in\Sub_\Alg(A^{{\op}}/B^{{\op}})\) and
    \(\qquot(H)\ni Q\selmap{}Q^{{\op}}\in\qquot(H^{{\op}})\) are isomorphisms
    of posets (see Proposition~\ref{prop:Schauneburg_admissibility_and_op} or
    \cite[Prop.~3.5]{ps:gal-cor-hopf-bigal}).  Thus \(\phi\) reflects suprema
    if and only if \(\phi^{{\op}}\) reflects them.  This shows that the Galois
    connection exists for \(A\) if and only if it exists for \(A^\op\) due to
    Theorem~\ref{thm:existence-of-adjunction}).  It remains to show that
    \(\psi^{{\op}}(S^{{\op}})=\bigl(\psi(S)\bigr)^{{\op}}\).  First, let us
    observe that for any set \(\mathcal{O}\) of right ideal coideals we have
    \(\bigwedge_{I\in \mathcal{O}}I^{{\op}}=\left(\bigwedge_{I\in
                \mathcal{O}}I\right)^{{\op}}\) in \(\qid(H^{\op})\):
    \begin{equation}\label{eq:wedge_and_op} 
	\bigwedge_{I\in \mathcal{O}}I^{{\op}} = \sum_{\substack{J\in\qid(H^{{\op}})\\
		J\subseteq\mathop\cap\limits_{I\in \mathcal{O}}I^{{\op}}}} J 
	= \sum_{\substack{J\in\qid(H)\\J^{{\op}}\subseteq(\mathop\cap\limits_{I\in \mathcal{O}}I)^{{\op}}}}J^{{\op}} 
	= \sum_{\substack{J\in\qid(H)\\J\subseteq\mathop\cap\limits_{I\in \mathcal{O}}I}}J^{{\op}} 
	= \Bigl(\bigwedge_{I\in \mathcal{O}}I\Bigr)^{{\op}} 
    \end{equation} 
    Let \(\overline{\psi}(S)=\ker(H\sir\psi(S))\).  Now using
    formula~\eqref{eq:psi} (on page~\pageref{eq:psi}) for \(\psi\) we get:
    \begin{align}\label{eq:phi_bar} 
	\overline{\psi}(S) & =\bigwedge\left\{I:\ S\subseteq A^{co\,H/I}\right\} \\
	\overline{\psi}^{{\op}}(S^{{\op}}) & =\bigwedge\left\{I^{{\op}}:\ S^{{\op}}\subseteq (A^{{\op}})^{co\,H^{{\op}}/I^{{\op}}}\right\} \notag
    \end{align} 
    Since
    \(\bigl(A^{co\,H/I}\bigr)^{{\op}}=(A^{{\op}})^{co\,H^{{\op}}/I^{{\op}}}\)
    and \(\qid(H)\ni I\selmap{}I^{{\op}}\coloneq S_H(I)\in\qid(H^{{\op}})\) is
    a bijection, the two sets in~\eqref{eq:phi_bar} are in a bijective
    correspondence via \(I\selmap{}I^{{\op}}\).  Now, the formula
    \(\psi^{{\op}}(S^{{\op}})=\bigl(\psi(S)\bigr)^{{\op}}\) follows from
    equation~\eqref{eq:wedge_and_op}.  
\end{proof}
Another way of proving the equation
\(\psi^{{\op}}(S^{{\op}})=\bigl(\psi(S)\bigr)^{{\op}}\) is to show that the
map \(\chi:\Sub_\Alg(A^{{\op}}/B^{{\op}})\sir\qquot(H^{{\op}})\) given by
\(\chi(S)\coloneq\bigl(\psi(S^{{\op}})\bigr)^{{\op}}\), where
\(S\in\Sub_\Alg(A^{{\op}}/B^{{\op}})\), is an adjoint for \(\phi^{{\op}}\) and
conclude the argument referring to uniqueness of Galois connections.
\begin{proposition}\label{prop:Schauenburg_admissibility}
    Let \(H\) be a Hopf algebra with a bijective antipode.  Let \(A\) be an
    \(H\)-extension of \(B\) such that \(A\) is \(H\)-Galois, \(A\) is
    a faithfully flat Mittag--Leffler module, \(H\) is a flat Mittag--Leffler
    \(\R\)-module, \(A\) is faithfully flat as both a left and right
    \(B\)-module, and also faithfully flat as an \(\R\)-module and finally let
    us assume that \(1_A:\R\sir A\) is pure.  Then:
    \begin{enumerate}[noitemsep,nosep]
	\item if \(S\in\Sub(A/B)\) is right admissible then so is
	    \(\psi(S)\) and \(\phi\psi(S)=S\),
	\item if \(Q\in\qquot(H)\) is left admissible then so is
	    \(\phi(Q)\coloneq A^{co\,Q}\) and moreover \(\psi\phi(Q)=Q\).  
    \end{enumerate}
\end{proposition}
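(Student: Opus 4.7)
The strategy is to prove (i) in detail and derive (ii) from the $(-)^{\op}$ symmetry. By Lemma~\ref{lem:galois_and_op} the Galois connection for the $H^{\op}$-comodule algebra $A^{\op}$ is $(\phi^{\op},\psi^{\op})$, obtained from $(\phi,\psi)$ via the anti-involutions $Q\mapsto Q^{\op}$ and $S\mapsto S^{\op}$. By Proposition~\ref{prop:Schauneburg_admissibility_and_op} a generalised quotient $Q$ is left admissible iff $Q^{\op}$ is right admissible; the analogous interchange for subalgebras is the content of the forthcoming Lemma~\ref{lem:symmetry_od_admissibility}. All hypotheses of the proposition (bijective antipode, purity of $1_A$, two-sided faithful flatness of $A$ over $B$ and over $\R$, flat Mittag--Leffler properties) are self-dual under $(-)^{\op}$, so statement (ii) for $A$ reduces to statement (i) for $A^{\op}$, and I only need to carry out the proof of~(i).

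Assume $S \in \Sub(A/B)$ is right admissible. The closure $\phi\psi(S)=S$ is immediate from Remark~\ref{rem:faithfully_flatness}(i): admissibility provides the isomorphism $\can_S: A\otimes_S A \cong A\otimes\psi(S)$ together with right faithful flatness of $A$ over $S$, which are exactly the hypotheses of that remark.

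It remains to show that $\psi(S)$ itself is right admissible, i.e.\ (a) $\psi(S)$ is $\R$-flat and (b) $H$ is right faithfully coflat over $\psi(S)$. For (a) the admissibility isomorphism gives $A\otimes_\R\psi(S) \cong A\otimes_S A$, so tensoring a short exact sequence $0\to M'\to M$ of $\R$-modules yields
\[
A\otimes_\R\psi(S)\otimes_\R - \;\cong\; A\otimes_S(A\otimes_\R -),
\]
which is exact because $A$ is $\R$-flat and $A$ is left flat over $S$. Since $A$ is faithfully flat over $\R$, tensoring with $A$ reflects exactness, and I conclude that $\psi(S)\otimes_\R -$ is exact.

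Part (b) is the main obstacle. My plan is to combine the $H$-Galois isomorphism $A\otimes_B A \cong A\otimes H$ with the admissibility isomorphism $A\otimes_S A \cong A\otimes\psi(S)$ to rewrite $N\cotensor_{\psi(S)}H$, for a right $\psi(S)$-comodule $N$, in a form controlled by the functor $-\otimes_S A$ after tensoring with the faithfully flat witness $A$. Faithful coflatness over $\psi(S)$ should then descend from right faithful flatness of $A$ over $S$ via faithful flatness of $A$ over $\R$; purity of $1_A:\R\to A$ is needed to ensure that the relevant subobjects of $A\otimes -$ descend correctly. The delicate point is coordinating the left and right structures in the cotensor calculation, which is precisely why the hypotheses demand faithful flatness of $A$ over $B$ on both sides. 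Once this is achieved, $\psi(S)$ satisfies both admissibility conditions and the proposition follows.
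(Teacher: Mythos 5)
Your reduction of (ii) to (i) via \((-)^{\op}\) does not work.  The symmetry \(S\mapsto S^{\op}\), \(Q\mapsto Q^{\op}\) exchanges \emph{left} and \emph{right} admissibility (Proposition~\ref{prop:Schauneburg_admissibility_and_op}, Lemma~\ref{lem:symmetry_od_admissibility}), but it sends subalgebra-statements to subalgebra-statements and quotient-statements to quotient-statements: the op-dual of (i) is ``if \(S^{\op}\) is left admissible then \(\psi(S)\) is left admissible and \(\phi\psi(S)=S\)'', still a statement starting from a subalgebra, whereas (ii) starts from a generalised quotient \(Q\in\qquot(H)\) and concludes about \(\phi(Q)\) and \(\psi\phi(Q)\).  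The two halves go in opposite directions of the Galois connection and neither is obtained from the other by passing to opposite algebras; that is exactly why the op-trick is reserved for Theorem~\ref{thm:Galois_closeness}, where it only switches chirality within each half.  So (ii) needs its own argument: left admissibility of \(Q\) (flatness of \(Q\) plus faithful coflatness of \(H\) over \(Q\)) is fed into Schneider's structure theorem \cite[Thm~1.4]{hs:normal-bases} for the \(H\)-Galois extension \(A/B\) to obtain that \(A\) is faithfully flat over \(A^{co\,Q}\) and that \(A/A^{co\,Q}\) is \(Q\)-Galois, which gives left admissibility of \(\phi(Q)\); the equality \(\psi\phi(Q)=Q\) then comes from Corollary~\ref{cor:Q-Galois_closed}, and this is where surjectivity of the canonical map and purity of \(1_A:\R\sir A\) are actually used.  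None of this is present in your proposal.

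In (i), the closedness \(\phi\psi(S)=S\) via Remark~\ref{rem:faithfully_flatness}(i) and the \(\R\)-flatness of \(\psi(S)\) from \(A\otimes_SA\cong A\otimes\psi(S)\) together with faithful flatness of \(A\) over \(\R\) are correct and coincide with the argument in the text.  But the faithful coflatness of \(H\) over \(\psi(S)\) is left as a ``plan'', and the plan omits the essential ingredient: one first needs that \(A\) itself is faithfully coflat as a \(\psi(S)\)-comodule, which follows from \(\can_S\) being an isomorphism together with faithful flatness of \(A\) over \(S\) (via \cite[Rem.~1.2(2)]{hs:normal-bases}); coflatness is then transported from \(A\) to \(H\) by the chain of isomorphisms, natural in a left \(\psi(S)\)-comodule \(V\),
\[A\otimes\bigl(H\cotensor_{\psi(S)}V\bigr)\cong(A\otimes H)\cotensor_{\psi(S)}V\cong(A\otimes_BA)\cotensor_{\psi(S)}V\cong A\otimes_B\bigl(A\cotensor_{\psi(S)}V\bigr),\]
which uses the \(H\)-Galois isomorphism and flatness of \(A\) over \(B\) and over \(\R\) to commute \(\otimes\) past the cotensor, and finally faithful flatness of \(A\) over \(\R\) and over \(B\) to reflect exactness back to \(H\cotensor_{\psi(S)}-\).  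Without identifying the source of coflatness of \(A\) over \(\psi(S)\) and without this cotensor computation, part (b) of (i) remains unproved.
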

\begin{proof}
    We first prove \textit{(ii)}: \(\phi(Q)\) is left admissible by
    applying~\cite[Thm~1.4]{hs:normal-bases}
    to \(A\).  The equality \(\psi\phi(Q)=Q^{{\op}}\) follows from
    Corollary~\ref{cor:Q-Galois_closed}.  Suppose, that \(S\in\Sub(A/B)\) is
    a right admissible subalgebra.  Then by
    Remark~\ref{rem:faithfully_flatness}(i) \(S=\phi\psi(S)\) and \(A/S\) is
    \(Q\)-Galois.  The proof that \(\psi(S)\) is a faithfully flat
    \(\R\)-module and \(H\) is faithfully coflat as a right
    \(\psi(S)\)-comodule remains the same as
    in~\cite[Prop.~3.4]{ps:gal-cor-hopf-bigal}.  For the sake of completeness
    let us recall these arguments.  First of all \(\psi(S)\) is a flat
    \(\R\)-module, since we have an isomorphism \(A\otimes_{S}A\cong
	A\otimes\psi(S)\) (by right admissibility of \(S\)) and \(A\) is
    faithfully flat as a right \(S\)-module and also as an \(\R\)-module.  For
    any left \(\psi(S)\)-comodule \(V\) we have a chain of isomorphisms:
    \[A\otimes (H\cotensor_{\psi(S)}V)\cong(A\otimes H)\cotensor_{\psi(S)}V\cong(A\otimes_BA)\cotensor_{\psi(S)}H\cong A\otimes_B(A\cotensor_{\psi(S)}V)\]
    Now, \(H\) is faithfully coflat as a right \(\psi(S)\)-comodule, since
    \(A\) is faithfully flat as an \(\R\)-module, faithfully flat as a right
    \(B\)-module and faithfully coflat as a right \(\psi(S)\)-comodule
    by~\cite[Rem.~1.2(2)]{hs:normal-bases}. 
\end{proof}

Now we will show that the property of being a left/right admissible subalgebra
is symmetric with respect to taking the opposite algebra.
\begin{lemma}\label{lem:symmetry_od_admissibility}
     Let \(H\) be a Hopf algebra with a bijective antipode and let \(A\) be an
     \(H\)-comodule algebra.  Then \(S\in\Sub_\Alg(A)\) is left (right)
     admissible if and only if \(S^\op\subseteq A^\op\) is right (left)
     admissible.
\end{lemma}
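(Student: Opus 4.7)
The lemma splits into two independent claims corresponding to the two conditions in Definition~\ref{defi:admissible_II}. For the faithful-flatness condition, the identity on underlying abelian groups turns any left $S$-module structure into a right $S^{\op}$-module structure, and under this identification $A$ as a left $S$-module is carried to $A^{\op}$ as a right $S^{\op}$-module; the tensor functors $A \otimes_S {-}$ and $- \otimes_{S^{\op}} A^{\op}$ then coincide, so $A$ is left (respectively right) faithfully flat over $S$ if and only if $A^{\op}$ is right (respectively left) faithfully flat over $S^{\op}$.

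For the canonical-map condition, which is formally the same in both clauses of the definition, we must show that $\can_S$ is bijective if and only if $\can_{S^{\op}}$ is bijective. By Lemma~\ref{lem:galois_and_op}, $\psi^{\op}(S^{\op}) = \psi(S)^{\op}$, which has the same underlying set as $\psi(S)$, so the codomains of the two canonical maps are naturally identified. The flip $\tau\colon A \otimes_S A \to A^{\op} \otimes_{S^{\op}} A^{\op}$, $\tau(a \otimes b) = b \otimes a$, is well-defined: the balance relation $as \otimes b = a \otimes sb$ in $A \otimes_S A$ maps to $b \otimes as$ and $sb \otimes a$ in $A^{\op} \otimes_{S^{\op}} A^{\op}$, and these agree because $b \otimes_{S^{\op}} as = (b \cdot^{\op} s) \otimes_{S^{\op}} a = sb \otimes_{S^{\op}} a$. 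One then checks $\tau$ is a bijection of abelian groups, and a direct calculation gives
\[
    \can_{S^{\op}} \circ \tau(a \otimes b) = (b \cdot^{\op} a_{(0)}) \otimes a_{(1)} = a_{(0)} b \otimes a_{(1)} =: \widetilde{\can}_S(a \otimes b).
\]
Hence $\can_{S^{\op}}$ is bijective if and only if the ``twin'' canonical map $\widetilde{\can}_S$ is.

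It therefore remains to show that $\can_S$ and $\widetilde{\can}_S$ are simultaneously bijective; this is the step that uses bijectivity of the antipode $S_H$. Modelled on the prototype $A = H$, $S = \R$, where both canonical maps are automorphisms of $H \otimes H$ with explicit inverses involving $S_H$ and $S_H^{-1}$, I would construct mutual inverses for $\can_S$ and $\widetilde{\can}_S$ in terms of each other and the antipode of $H$. The main technical obstacle is that $\psi(S)$ is merely a right $H$-module coalgebra quotient of $H$, not in general a Hopf-algebra quotient, so the antipode manipulations must be performed at the level of $H$ and then transported along the projection $\pi\colon H \to \psi(S)$; handling this transport carefully — and showing that the formulas descend to the quotient and interact correctly with the left-$A$-linearity of both canonical maps — is the heart of the argument.
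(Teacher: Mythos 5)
Your handling of the faithful-flatness half and of the flip \(\tau\) agrees with the paper, but the proposal stops exactly where the real content of the lemma begins.  The sentence ``I would construct mutual inverses for \(\can_S\) and \(\widetilde{\can}_S\) in terms of each other and the antipode'' is a plan, not an argument: nothing is exhibited that shows the two maps are simultaneously bijective, and you yourself flag this step as ``the heart of the argument.''  In the paper this heart is an explicit isomorphism of codomains \(\alpha:A\otimes\psi(S)\to A^{\op}\otimes\psi^{\op}(S^{\op})\), \(\alpha(a\otimes\pi(h))=a_{(0)}\otimes a_{(1)}\overline{S_H}\bigl(\pi(h)\bigr)\), with inverse \(a\otimes\pi^{\op}(h)\mapsto a_{(0)}\otimes\overline{S_H}^{-1}\bigl(\pi^{\op}(h)\bigr)a_{(1)}\) (this is where bijectivity of the antipode is used), together with the verification that \(\overline{S_H}:\psi(S)\to\psi(S)^{\op}\), \(\pi(h)\mapsto\pi^{\op}(S_H(h))\), is a well-defined bijection which is \(H\)-linear over the ring map \(S_H:H\to H^{\op}\), and the check that \(\can_{S^{\op}}\circ\tau=\alpha\circ\can_S\).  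None of the ``transport along \(\pi\)'' you gesture at is actually carried out, so the lemma is not proved.

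Worse, the identification you do assert would derail the intended construction: \(\psi^{\op}(S^{\op})=\bigl(\psi(S)\bigr)^{\op}\) does \emph{not} have the same underlying set as \(\psi(S)\).  If \(\psi(S)=H/I\), then by definition \(\bigl(\psi(S)\bigr)^{\op}=H^{\op}/S_H(I)\), a quotient by a \emph{different} subspace; the two are only isomorphic via \(\overline{S_H}\), not equal, so the codomains of \(\can_S\) and \(\can_{S^{\op}}\) are not ``naturally identified.''  Moreover no identification acting on the codomain alone, of the form \(\id_A\otimes(\text{something})\), can intertwine the two maps: \(\can_S(a\otimes b)=ab_{(0)}\otimes\pi(b_{(1)})\) carries the \(H\)-leg of \(b\), while \(\can_{S^{\op}}\circ\tau(a\otimes b)=a_{(0)}b\otimes\pi^{\op}(a_{(1)})\) carries the \(H\)-leg of \(a\); the comparison forces the coaction twist \(a\otimes\pi(h)\mapsto a_{(0)}\otimes a_{(1)}\overline{S_H}(\pi(h))\).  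Writing down this \(\alpha\), proving it is well defined on the quotient and invertible (using \(S_H^{-1}\)), and checking the commuting square is precisely what is missing from your proof.
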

\begin{proof}
    Note that \(A\) is left (right) faithfully flat over \(S\) if and only if
    \(A^\op\) is right (left) faithfully flat over \(S^\op\). It remains to
    show that \(\can_S:A\otimes_SA\sir A\otimes\psi(S)\) is an isomorphism if
    and only if \(\can_{S^\op}:A^\op\otimes_{S^\op}A^\op\sir
	A^\op\otimes\psi^\op(S^\op)\) is. 

    Let \((\phi^\op,\psi^\op)\) be the Galois
    correspondence~\eqref{eq:galois-connection} for the \(H^\op\)-comodule
    algebra \(A^\op\).  Let us consider \(Q=H/I\in\qquot(H)\) with \(\pi:H\sir
	Q\) the natural projection.  Then
    \(Q^\op=H^\op/S_H(I)\in\qquot(H^\op)\), or equivalently \(Q^\op\) is
    a left generalised quotient of \(H\) (i.e. quotient by a left ideal
    coideal).  We let \(\pi^\op:H^\op\sir Q^\op\) denote the natural
    projection map.  The antipode \(S_H\) of \(H\) induces an isomorphism
    \(\overline{S_H}:Q\sir Q^\op\) which is given by
    \(\overline{S_H}(\pi(h))\coloneq\pi^\op(S_H(h))\), \(h\in H\).  Its
    inverse is given by \(Q^\op\ni\pi^\op(h)\seli\pi(S_H^{-1}(h))\in Q\).  Let
    us show that \(\overline{S_H}:Q\sir Q^\op\) is an \(H\)-module isomorphism
    (over the ring homomorphism: \(S_H:H\sir H^\op\)):
    \begin{align*}
	\overline{S_H}(\pi(h)k) & = \overline{S_H}\left(\pi(hk)\right) \\
	                      & = \pi^\op\left(S_H(hk)\right) \\
	                      & = \pi^\op(S_H(k)S_H(h)) \\
	                      & = S_H(k)\pi^\op\left(S_H(h)\right) \\
	                      & = S_H(k)\overline{S_H}(\pi(h))
    \end{align*}
    for all \(h,k\in H\).  Finally let us note that
    \(\psi^\op(S^\op)=(\psi(S))^\op\) by Lemma~\ref{lem:galois_and_op}.  We
    let \(Q=\psi(S)\), \(\pi:H\sir\psi(S)\). 
    
    We have the following commutative diagram: 
    \begin{center}
	{\hfill\begin{tikzpicture}
	    \matrix[matrix of nodes, column sep=1cm, row sep=1cm]{
		|(B1)| \(A\otimes A\) & |(B2)| \(A\otimes\psi(S)\) \\
		|(A1)| \(A^\op\otimes_{S^\op}A^\op\) & |(A2)| \(A^\op\otimes\psi^\op(S^\op)\) \\
	    };
	    \draw[->] (A1) --node[below]{\(\can_{S^\op}\)} (A2);
	    \draw[->] (B1) --node[above]{\(\can_{S}\)} (B2);
	    \draw[->] (B1) --node[left]{\(\tau\)} (A1);
	    \draw[->] (B2) --node[right]{\(\alpha\)} (A2);
	\end{tikzpicture}
	\hfill\refstepcounter{equation}\raisebox{12mm}{(\theequation)}\label{diag:symmetry_of_admissibility}\\
	}
    \end{center}
    where:
    \begin{subequations}
    \begin{alignat*}{2}
	&\can_{S}:A\otimes_{S}A\sir A\otimes\psi(S), &\quad& \can_{S}(a\otimes b)\coloneq ab_{(0)}\otimes b_{(1)} \\
	&\can_{S^\op}:A^\op\otimes_{S^\op}A^\op\sir A^\op\otimes\psi^\op(S^\op), &\quad& \can_{S^\op}(a\otimes b)\coloneq b_{(0)}a\otimes b_{(1)} \\
	&\tau:A\otimes_{S}A\sir A^\op\otimes_{S^\op}A^\op, &\quad& \tau(a\otimes b) = b\otimes a \\
	&\alpha:A\otimes\psi(S)\sir A^\op\otimes\psi^\op(S^\op), &\quad& \alpha(a\otimes\pi(h))\coloneq a_{(0)}\otimes a_{(1)}\overline{S_H}\left(\pi(h)\right)
    \end{alignat*}
    \end{subequations}
    Note that we use the convention that concatenation \(ab\) for \(a,b\in A\)
    always denotes the multiplication in \(A\).  The map \(\tau\) is a well
    defined isomorphism.  Also \(\alpha\) is well defined since
    \(\psi^\op(S^\op)=(\psi(S))^\op\) is a left \(H\)-module.  It is an
    isomorphism with inverse:
    \begin{align*}
            &\alpha^{-1}:A^\op\otimes\psi^\op(S^\op)\sir A\otimes\psi(S)\\
	    &\alpha^{-1}(a\otimes\pi^\op(h))\coloneq a_{(0)}\otimes \overline{S_H}^{-1}\left(\pi^\op(h)\right)a_{(1)}
    \end{align*}
    Indeed \(\alpha^{-1}\) is an inverse of \(\alpha\):
    \begin{align*}
	\alpha^{-1}\circ \alpha(a\otimes \pi(h)) & = \alpha^{-1}\left(a_{(0)}\otimes a_{(1)}\pi^\op(S_H(h))\right) \\
	& = \alpha^{-1}\left(a_{(0)}\otimes \pi^\op(a_{(1)}S_H(h))\right) \\ 
	& = a_{(0)}\otimes \pi\left(S_H^{-1}(a_{(2)}S_H(h))\right)a_{(1)} \\ 
	& = a_{(0)}\otimes \pi\left(hS_H^{-1}(a_{(2)})a_{(1)}\right) \\ 
        & = a\otimes\pi(h)
    \end{align*}
    The other equality \(\alpha\circ\alpha^{-1}=\id\) follows in a similar way.
    Now the claim readily follows from the above commutative diagram, since both
    \(\tau\) and \(\alpha\) are isomorphisms. 
\end{proof}

The following theorem is a generalisation of Schauenburg's
Theorem~\ref{thm:Schauenburg_Galois_closedness}.
\index{comodule algebra!Galois correspondence!admissibility}
\begin{theorem}\label{thm:Galois_closeness}
    Let \(H\) be a Hopf algebra with a bijective antipode.  Let \(A/B\) be an
    \(H\)-extension such that \(A/B\) is \(H\)-Galois and \(A\) is faithfully
    flat as both a left and right \(B\)-module.  Furthermore, suppose that
    \(A\) is a faithfully flat Mittag--Leffler \(\R\)-module and \(H\) is
    a flat Mittag--Leffler module.  Finally, let us assume that \(1_A:\R\sir
        A\) is pure.  Then the Galois connection~\eqref{eq:galois-connection}
    gives rise to a bijection between (left, right) admissible objects, hence
    (left, right) admissible objects are closed.
\end{theorem}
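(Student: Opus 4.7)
The plan is to combine Proposition~\ref{prop:Schauenburg_admissibility}, which already handles two of the four directions directly, with the opposite-algebra symmetries recorded in Lemma~\ref{lem:galois_and_op}, Lemma~\ref{lem:symmetry_od_admissibility} and Proposition~\ref{prop:Schauneburg_admissibility_and_op}, to obtain the remaining two directions for free. Before invoking these, I would first verify that all hypotheses of the theorem transfer to the $H^{\op}$-extension $A^{\op}/B^{\op}$: being flat Mittag--Leffler is preserved under the (identity on underlying modules) passage to the opposite algebra; purity of $1_A$ is unaffected; left and right faithful flatness of $A$ over $B$ swap roles and give the symmetric hypothesis on $A^{\op}/B^{\op}$; and $A/B$ being $H$-Galois translates into $A^{\op}/B^{\op}$ being $H^{\op}$-Galois because $S_H$ is bijective (the inverse canonical map on the opposite side can be written explicitly from $\can_H^{-1}$).

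Next, I would handle the bijection on right admissible objects. The forward implication is Proposition~\ref{prop:Schauenburg_admissibility}(i): a right admissible $S\in\Sub_\Alg(A/B)$ yields a right admissible $\psi(S)\in\qquot(H)$ with $\phi\psi(S)=S$. For the converse, given a right admissible $Q\in\qquot(H)$, Proposition~\ref{prop:Schauneburg_admissibility_and_op} says $Q^{\op}$ is left admissible in $\qquot(H^{\op})$; now apply Proposition~\ref{prop:Schauenburg_admissibility}(ii) to $A^{\op}/B^{\op}$ to conclude that $\phi^{\op}(Q^{\op})$ is left admissible in $\Sub_\Alg(A^{\op}/B^{\op})$ and $\psi^{\op}\phi^{\op}(Q^{\op})=Q^{\op}$. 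By Lemma~\ref{lem:galois_and_op} we have $\phi^{\op}(Q^{\op})=\phi(Q)^{\op}$ and $\psi^{\op}\phi^{\op}(Q^{\op})=(\psi\phi(Q))^{\op}$, so Lemma~\ref{lem:symmetry_od_admissibility} together with Proposition~\ref{prop:Schauneburg_admissibility_and_op} translate these into: $\phi(Q)$ is right admissible in $\Sub_\Alg(A/B)$, and $\psi\phi(Q)=Q$. This closes the cycle on the right admissible side.

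The bijection on left admissible objects is obtained by the mirror argument with the roles of (i) and (ii) interchanged: Proposition~\ref{prop:Schauenburg_admissibility}(ii) gives left adm $Q \mapsto$ left adm $\phi(Q)$ with $\psi\phi(Q)=Q$ directly, while Proposition~\ref{prop:Schauenburg_admissibility}(i) applied to $A^{\op}/B^{\op}$, combined with the same two symmetry lemmas, gives left adm $S \mapsto$ left adm $\psi(S)$ with $\phi\psi(S)=S$. The bijection on admissible (= left and right admissible) objects is then just the intersection of the two previous bijections, since $\phi$ and $\psi$ respect both the left and the right halves of admissibility. Finally, closedness of all these elements is automatic from Proposition~\ref{prop:properties-of-adjunction}(i)--(ii), since in each case they lie in the image of $\phi$ or $\psi$.

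The main obstacle is purely bookkeeping: making sure that the left/right conventions in Definition~\ref{defi:admissible_II} for subalgebras and for generalised quotients match correctly under the passage to the opposite, i.e.\ that ``left'' on one side translates to ``right'' on the other. This is exactly what the combined content of Lemma~\ref{lem:galois_and_op} (which guarantees $\psi^{\op}(S^{\op}) = \psi(S)^{\op}$ and $\phi^{\op}(Q^{\op}) = \phi(Q)^{\op}$, so that the Galois connections on the two sides really are intertwined by the opposite construction), Lemma~\ref{lem:symmetry_od_admissibility} (for subalgebras) and Proposition~\ref{prop:Schauneburg_admissibility_and_op} (for generalised quotients) is designed to discharge. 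Once that dictionary is in place, no further module-theoretic computation is needed beyond what Proposition~\ref{prop:Schauenburg_admissibility} already supplies.
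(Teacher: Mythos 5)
Your proposal is correct and follows essentially the same route as the paper's own proof: the direct halves come from Proposition~\ref{prop:Schauenburg_admissibility}, and the remaining halves are obtained by passing to the \(H^{\op}\)-extension \(A^{\op}/B^{\op}\) and translating back via Lemma~\ref{lem:symmetry_od_admissibility}, Lemma~\ref{lem:galois_and_op} and Proposition~\ref{prop:Schauneburg_admissibility_and_op}, with closedness then immediate from the identities \(\phi\psi(S)=S\) and \(\psi\phi(Q)=Q\). Your explicit check that the hypotheses transfer to the opposite extension is a point the paper leaves implicit, but otherwise the two arguments coincide.
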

Note that here we consider a right \(H\)-comodule algebras, while
\citeauthor{ps:gal-cor-hopf-bigal} considers a left \(L(A,H)\)-comodule
algebra structures.  We make one additional assumption, that the unit map
\(1_A:\R\sir A\) is a pure monomorphism, since our proof makes use of
Corollary~\ref{cor:Q-Galois_closed}.  The Lemma~\ref{lem:galois_and_op} is
stronger than the corresponding part
of~\cite[Prop.~3.5]{ps:gal-cor-hopf-bigal} which simplifies the first part of
the following proof.  Finally, (left, right) admissible generalised quotients
of \(L(A,H)\) classify (left, right) admissible \(H\)-comodule algebras, while
(left, right) admissible quotients of \(H\) classify (left, right) admissible
subalgebras of \(A\).
\begin{proofof}{Theorem~\ref{thm:Galois_closeness}}
    We let \((\phi^{{\op}},\psi^{{\op}})\) denote the Galois
    connection~\eqref{eq:galois-connection} for \(A^{{\op}}\) a right
    \(H^{{\op}}\)-comodule algebra instead of \(A\) -- an \(H\)-comodule
    algebra.

    Let \(S\subseteq A\) be left admissible.  Then \(S^{{\op}}\subseteq
	A^{{\op}}\) is right admissible by
    Lemma~\ref{lem:symmetry_od_admissibility}.  Hence
    \(\psi^{{\op}}(S^{{\op}})\) is right admissible and
    \(\phi^{{\op}}\psi^{{\op}}(S^{{\op}})=S^{{\op}}\), by
    Proposition~\ref{prop:Schauenburg_admissibility}(1).  It follows that
    \(\phi\psi(S)=S\), by Lemma~\ref{lem:galois_and_op}.
    Using~\cite[Prop.~3.5]{ps:gal-cor-hopf-bigal}, which shows that
    \(I\in\qid(H)\) is left (right) admissible if
    \(I^{{\op}}\in\qid(H^{{\op}})\) is right (left) admissible, we conclude
    that \(\psi(S)=\bigl(\psi^{{\op}}(S^{{\op}})\bigr)^{{\op}}\) is left
    admissible.

    Let \(I\) be right admissible.  Then \(I^{{\op}}\) is left admissible, and
    so is the subalgebra
    \(\phi^{{\op}}(H^{{\op}}/I^{{\op}})=\phi(H/I)^{{\op}}\).  Thus
    \(\phi(H/I)\) is right admissible, and
    \(H^{{\op}}/I^{{\op}}=\psi^{{\op}}\phi^{{\op}}(H^{{\op}}/I^{{\op}})=\bigl(\psi\phi(H/I)\bigr)^{{\op}}\).
    Hence \(H/I=\psi\phi(H/I)\), by the resulf
    of~\cite[Prop.~3.5]{ps:gal-cor-hopf-bigal}.
\end{proofof}
\begin{remark}\label{rem:admissibility}
    If we consider a faithfully flat \(H\)-Galois extension \(A/\R\), then the
    above result applied to the left \(L(A,H)\)-comodule algebra \(A\) reduces
    to Theorem~\ref{thm:Schauenburg_Galois_closedness} (on
    page~\pageref{thm:Schauenburg_Galois_closedness}).  We already pointed out
    that in this situation the two extra conditions \textit{(ii.b)} and
    \textit{(iii.b)} in our definition of admissibility (Definition
    \ref{defi:admissible_II} on page~\pageref{defi:admissible_II}) naturally
    follow in the Schauenburg context (see the paragraphs after
    Definition~\ref{defi:admissible_II}).  The missing step is that in
    Schauenburg's Theorem~\ref{thm:Schauenburg_Galois_closedness} (right, left)
    admissible generalised quotients of \(L(A,H)\) correspond to (right, left)
    admissible \(H\)-subcomodule algebras rather than just (right, left)
    admissible subalgebras of \(A\).  But right admissible subalgebras in the
    sense of Definition~\ref{defi:admissible_II} for the left
    \(L(A,H)\)-comodule algebra\footnote{Note that though
        Definition~\ref{defi:admissible_II} (on
        page~\pageref{defi:admissible_II}) is written for right comodule
        algebras we are using it here for the left \(L(A,H)\)-comodule algebra
        \(A\). For left comodule algebras their canonical map should be used
        in \textit{(ii.b)} and \textit{(iii.b)}.} \(A\) are \(H\)-subcomodules
    by Remark~\ref{rem:faithfully_flatness} (on
    page~\pageref{rem:faithfully_flatness}) as every subalgebra of
    coinvariants for quotients of \(L(A,H)\) is always an \(H\)-subcomodule,
    since \(A\) is an \(L(A,H)\)-\(H\)-bicomodule. In a similar way left
    admissible subalgebras in the sense of Definition~\ref{defi:admissible_II}
    (on page~\pageref{defi:admissible_II}) are \(H\)-subcomodules. 
\end{remark}

\section{Galois correspondence for Galois coextensions}
In this section we describe the Galois theory for Galois coextensions.  We
begin with some basic definitions.  The main theorem of this section is
Theorem~\ref{thm:coGalois_mono} (on page~\pageref{thm:coGalois_mono}) which is
a dual version of Theorem~\ref{thm:mono}.  We will need this result in the
following section for a new proof of the Takeuchi correspondence, i.e.
a bijection between generalised quotients and generalised Hopf subalgebras of
a finite dimensional Hopf algebra (Theorem~\ref{thm:newTakeuchi} on
page~\pageref{itm:newTakeuchi_finite}).  For the following two definitions we
refer to~\cite{hs:principal-homogeneuos-spaces}.

\index{module coalgebra}
\index{H-module coalgebra|see{module coalgebra}}
\begin{definition}\label{defi:module-coalgebra}
	Let \(C\) be a coalgebra and \(H\) a Hopf algebra, both over a ring
	\(\R\). We call \(C\) a left \bold{\(H\)-module coalgebra} if it is a left
	\(H\)-module such that the \(H\)-action \mbox{\(H\otimes C\sir C\)} is
	a coalgebra map:
	\begin{equation*}
	  \Delta_C(h\cdot c)=\Delta_H(h)\Delta_C(c),\quad\epsilon_C(h\cdot c)=\epsilon_H(h)\epsilon_C(c).
	\end{equation*}
	Let \(C^H\coloneq\nicefrac{C}{H^+C}\) be the \bold{invariant
	    coalgebra}.  Furthermore, we call \(C\sir C^H\) an
	\(H\)\bold{-coextension}.
\end{definition}
\index{module coalgebra!Galois coextension}
\index{Galois coextension|see{module coalgebra!Galois coextension}}
\begin{definition}\label{defi:coGalois}
    \begin{enumerate}
	\item An \(H\)-module coalgebra \(C\) is called an \bold{\(H\)-Galois
	    coextension} if the canonical map
	\[\can_H:H\otimes C\sir C\cotensor_{C^H}C,\quad h\otimes c\elmap{}hc_{(1)}\otimes c_{(2)}\]	
	is a bijection, where $C$ is considered as a left and right
	    $C^H$-comodule in a standard way. 
	\item More generally, if \(K\in\coid{l}(H)\) is a left coideal
	    then \(K^+C\) is a coideal (at least, when the base ring \(\R\) is
	    a field) and an \(H\)-submodule of \(C\).  The coextension \(C\sir
	    C^K=\nicefrac{C}{K^+C}\) is called Galois if the canonical map
	\begin{equation}\label{eq:can_K}
		\can_K:K\otimes C\sir C\cotensor_{C^K}C,\quad k\otimes c\elmap{}kc_{(1)}\otimes c_{(2)}
	\end{equation}
	is a bijection.
    \end{enumerate}
\end{definition}
A basic example of an \(H\)-module coalgebra is \(H\) itself.  Then \(H^H=\R\)
and \(H\cotensor_{H^H}H=H\otimes H\). The inverse of the canonical map is
given by \(\can_H^{-1}(k\otimes h)=kS\left(h_{(1)}\right)\otimes h_{(2)}\).
\begin{definition}
	Let \(C\) be an \(H\)-module coalgebra.  We let \(\Quot(C)=\{C/I:\
		I\text{- a coideal of }C\}\) with order relation
	\(C/I_1\succcurlyeq C/I_2\Leftrightarrow I_1\subseteq I_2\).  It is
	a complete lattice.  We let \(\Quot(\nicefrac{C}{C^H})\) denote the
	subset \(\{Q\in\Quot(C): C\succcurlyeq Q\succcurlyeq C^H\}\) in
	\(\Quot(C)\).
\end{definition}
\index{module coalgebra!Galois connection}
\begin{proposition}\label{prop:cogalois-extensions}
    Let \(C\) be an \(H\)-module coalgebra over a field~\(\k\). Then there
    exists a \textsf{Galois connection}:
    \begin{align}\label{eq:galois-for-module-coalgebras}
	\Quot(\nicefrac{C}{C^{H}})\lgalois{}{C/(I+\k1_H)^+C\ellmap{}I}\coid{l}(H)
    \end{align}
\end{proposition}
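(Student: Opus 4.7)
The plan is to apply the existence theorem for Galois connections (Theorem~\ref{thm:existence-of-adjunction}) to the antimonotonic map
\[
\phi: \coid{l}(H) \to \Quot(C/C^H),\qquad \phi(I) := C/(I+\k1_H)^+C.
\]
This reduces the task to four steps: (a) well-definedness of $\phi$, i.e.\ showing that $(I+\k1_H)^+C$ is a coideal of $C$ lying inside $H^+C$; (b) antimonotonicity of $\phi$; (c) the reflection identity $\phi(\bigvee_\alpha I_\alpha)=\bigwedge_\alpha \phi(I_\alpha)$; and (d) completeness of $\coid{l}(H)$. Steps (b) and (d) are immediate: (b) is a triviality from the inclusions $(I_1+\k1_H)^+\subseteq(I_2+\k1_H)^+$; and (d) holds over a field by Theorem~\ref{thm:lattice_of_subcomodules} applied to $H$ as a left $H$-comodule.

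Step (a) is the main obstacle, and the starting point is the identity $(I+\k1_H)^+ = \{i - \epsilon(i)1_H : i \in I\}$. Containment in $H^+C$ is immediate, and $\epsilon_C$ annihilates $(I+\k1_H)^+C$ since $C$ is an $H$-module coalgebra and $(I+\k1_H)^+\subseteq\ker\epsilon_H$. For the coideal property I would compute, for $j = i - \epsilon(i)1_H$ with $i\in I$ and $c\in C$,
\[
\Delta_C(jc) = \Delta_H(j)\Delta_C(c) = i_{(1)}c_{(1)}\otimes i_{(2)}c_{(2)} - \epsilon(i)\,c_{(1)}\otimes c_{(2)},
\]
using that $\Delta_H(i)\in H\otimes I$ by the left coideal property. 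Rewriting the right-hand factor as $i_{(2)}c_{(2)} = (i_{(2)}-\epsilon(i_{(2)})1_H)c_{(2)} + \epsilon(i_{(2)})c_{(2)}$ and applying the counit identity $i_{(1)}\epsilon(i_{(2)}) = i$, the expression collapses to
\[
i_{(1)}c_{(1)}\otimes (i_{(2)}-\epsilon(i_{(2)})1_H)c_{(2)} + jc_{(1)}\otimes c_{(2)} \;\in\; C\otimes (I+\k1_H)^+C + (I+\k1_H)^+C\otimes C,
\]
which is the desired inclusion. The delicacy here is that only the one-sided condition $\Delta_H(I)\subseteq H\otimes I$ is available, while both tensor slots need to be accounted for; the augmentation trick is what bridges this asymmetry.

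For step (c), suprema in $\coid{l}(H)$ are sums (Theorem~\ref{thm:lattice_of_subcomodules}) and infima in $\Quot(C/C^H)$ are given by quotients modulo sums of coideals (since $\coId(C)$ is closed under sums by Proposition~\ref{prop:coid-complete}). Hence the reflection identity reduces to
\[
\bigl(\textstyle\sum_\alpha I_\alpha + \k1_H\bigr)^+C \;=\; \sum_\alpha (I_\alpha+\k1_H)^+C,
\]
which in turn follows from the elementary identity $\bigl(\sum_\alpha I_\alpha + \k1_H\bigr)^+ = \sum_\alpha (I_\alpha+\k1_H)^+$: any $i = \sum_\alpha i_\alpha \in \sum_\alpha I_\alpha$ decomposes as $i - \epsilon(i)1_H = \sum_\alpha(i_\alpha - \epsilon(i_\alpha)1_H)$. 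With (a)--(d) in hand, Theorem~\ref{thm:existence-of-adjunction} produces the right adjoint explicitly by $\psi(C/J) = \bigvee\{I\in\coid{l}(H) : (I+\k1_H)^+C \subseteq J\}$, completing the construction of the Galois correspondence.
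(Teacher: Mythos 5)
Your proposal is correct and follows essentially the same route as the paper: establish completeness of \(\coid{l}(H)\) via sums, check that \((I+\k1_H)^+C\) is a coideal of \(C\) contained in \(H^+C\), and verify sup-reflection through the augmentation identity \(\bigl(\sum_\alpha I_\alpha+\k1_H\bigr)^+=\sum_\alpha(I_\alpha+\k1_H)^+\), before invoking the existence theorem for Galois connections. Your explicit computation in step (a) merely reproves directly what the paper obtains by first noting that \((I+\k1_H)^+\) is a coideal of \(H\) and then pushing it into \(C\); the key trick \(k_\alpha\mapsto k_\alpha-\epsilon(k_\alpha)1_H\) is identical.
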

\begin{proof}
    The supremum in \(\coid{l}(H)\) is given by the sum of \(\k\)-subspaces. Thus
    the lattice of left coideals is complete. Furthermore, if \(I\) is a left
    coideal then \(I+\k1_H\) is also a left coideal. Thus \((I+\k1_H)^+\) is
    a coideal of \(H\). As a consequence \((I+\k1_H)^+C\) is a coideal of
    \(C\).  
    It is enough to show that the map
    \(\coid{l}(H)\ni I\mapsto I^+C\in\coId(C)\) preserves all suprema when we
    restrict to left coideals which contain~\(1_H\). Let
    \(I_\alpha\in\coId(H)\) (\(\alpha\in\Lambda\)) be such that \(1_H\in
	I_\alpha\) for all \(\alpha\in\Lambda\). Then \((\sum_\alpha
	I_\alpha)^+=\sum_\alpha(I_\alpha^+)\).  The non trivial inclusion is
    \((\sum_\alpha I_\alpha)^+\subseteq\sum_\alpha(I_\alpha^+)\).  Let
    \(k=\sum_\alpha k_\alpha\in(\sum_\alpha I_\alpha)^+\), i.e.  \(k_\alpha\in
	I_\alpha\) for all \(\alpha\in\Lambda\) and \(\sum_\alpha
	k_\alpha\in\ker\epsilon\).  Then \(\sum_\alpha
	k_\alpha=\sum_\alpha(k_\alpha-\epsilon(k_\alpha)1_H)+\sum_\alpha(\epsilon(k_\alpha)1_H)=\sum_\alpha(k_\alpha-\epsilon(k_\alpha)1_H)\).
    Each \(k_\alpha-\epsilon(k_\alpha)1\in I_\alpha^+\) and hence
    \(k\in\sum_\alpha I_\alpha^+\). Now the proposition follows easily.
\end{proof}
\begin{theorem}\label{thm:coGalois_mono}
    Let \(C\) be an \(H\)-module coalgebra over a field \(\k\) such that the
    canonical map \(\can_H\) is injective.  Let \(K_1,K_2\) be two left
    coideals of \(H\) such that both \(\can_{K_1}\) and \(\can_{K_2}\) are
    bijections.  Then \(K_1=K_2\) whenever \(C^{K_1}=C^{K_2}\). 
\end{theorem}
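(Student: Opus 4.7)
The plan is to dualise the argument of Theorem~\ref{thm:mono} by replacing the surjective bottom map $\can_H$ (tensor product) with an \emph{injective} bottom map $\can_H$ (cotensor product) and trading quotients for subobjects. Set $D\coloneq C^{K_1}=C^{K_2}$. Since $K_i\subseteq H$ implies $K_i^+C\subseteq H^+C$, there is a canonical coalgebra surjection $D=C/K_i^+C\twoheadrightarrow C/H^+C=C^H$, and hence a canonical inclusion $C\cotensor_DC\hookrightarrow C\cotensor_{C^H}C\subseteq C\otimes C$ (any element equalising the finer $D$-coaction automatically equalises the coarser $C^H$-coaction).

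The key observation is that $\can_H$ and $\can_{K_i}$ are defined by the same formula $k\otimes c\mapsto kc_{(1)}\otimes c_{(2)}$, so $\can_{K_i}$ is the restriction of $\can_H$ to $K_i\otimes C$, and the square
\begin{center}
\begin{tikzpicture}
    \matrix[column sep=1cm,row sep=1cm]{
	\node (A) {\(K_i\otimes C\)}; & \node (B) {\(C\cotensor_DC\)}; \\
	\node (C) {\(H\otimes C\)};   & \node (D) {\(C\cotensor_{C^H}C\)};\\
    };
    \draw[->] (A) --node[above]{\(\can_{K_i}\)} (B);
    \draw[->] (C) --node[below]{\(\can_H\)} (D);
    \draw[right hook->] (A) -- (C);
    \draw[right hook->] (B) -- (D);
\end{tikzpicture}
\end{center}
commutes. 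Since $\can_{K_1}$ and $\can_{K_2}$ are bijections by hypothesis, both restrictions of $\can_H$ to $K_1\otimes C$ and to $K_2\otimes C$ have image exactly $C\cotensor_DC$.

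Now I would use the injectivity of $\can_H$ on the whole of $H\otimes C$. Given any $y_1\in K_1\otimes C$, its image $\can_H(y_1)\in C\cotensor_DC$ equals $\can_H(y_2)$ for a unique $y_2\in K_2\otimes C$; injectivity of $\can_H$ on $H\otimes C$ then forces $y_1=y_2$, so $K_1\otimes C\subseteq K_2\otimes C$ inside $H\otimes C$, and symmetrically the reverse inclusion. Thus $K_1\otimes C=K_2\otimes C$ as subspaces of $H\otimes C$.

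Finally, I would recover $K_1=K_2$ from this tensor equality via a standard vector space argument: pick any nonzero $c_0\in C$ and a linear functional $\phi:C\to\k$ with $\phi(c_0)=1$; then $\id_H\otimes\phi$ restricts to the identity-on-the-first-factor map $V\otimes C\to V$ for any subspace $V\subseteq H$, giving $K_1=(\id_H\otimes\phi)(K_1\otimes C)=(\id_H\otimes\phi)(K_2\otimes C)=K_2$. The argument has no real obstacle once the commutative square is set up correctly; the only subtlety to verify carefully is the direction of the inclusion $C\cotensor_DC\hookrightarrow C\cotensor_{C^H}C$, which is the precise dual of the quotient $A\otimes_{A^{co\,H}}A\twoheadrightarrow A\otimes_{B'}A$ appearing in the proof of Theorem~\ref{thm:mono}.
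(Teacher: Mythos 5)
Your proposal is correct and follows essentially the same route as the paper's proof: the same commutative square comparing $\can_{K_i}$ (into $C\cotensor_{C^{K_i}}C$) with the injective $\can_H$ (into $C\cotensor_{C^H}C$), the same use of injectivity to force $K_1\otimes C=K_2\otimes C$ inside $H\otimes C$, and the same field argument to conclude $K_1=K_2$. The only difference is presentational: you spell out the final step with a functional $\phi$, which the paper compresses into ``since we are over a field.''
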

\begin{proof}
    We have the following commutative diagram:
    \begin{center}
	\begin{tikzpicture}[>=angle 60,thick]
	\matrix[matrix,column sep=7mm,row sep=7mm]{
	    &                         & \node(AQ1) {\(K_1\otimes C\)};\\
	    \node(A) {\(C\cotensor_{C^{K_i}} C\)}; & \node(B) {\(C\cotensor_{C^H}C\)}; & \node(C)   {\(H\otimes C\)};\\
	    &                         & \node(AQ2) {\(K_2\otimes C\)};\\
	};
	\begin{scope}
	    \draw[<-] (A) -- node[above]{$\can_{K_1}$} (AQ1);
	    \draw[<-] (A) -- node[below]{$\can_{K_2}$} (AQ2);
	    \draw[>->] (A) -- (B);
	    \draw[<-<] (B) -- node[above,pos=.4]{$\can$}(C);
	    \draw[<-] (C) -- node[right]{$i_1\otimes\id$}(AQ1);
	    \draw[<-] (C) -- node[right]{$i_2\otimes\id $}(AQ2);
	\end{scope}
	\end{tikzpicture} 
    \end{center}
    It follows that \(i_2\otimes\id\circ
	\left(\can_{K_2}\circ\can_{K_1}^{-1}\right)=i_1\otimes\id\). Thus
    \(K_1\subseteq K_2\), since we are over a field; similarly \(K_2\subseteq
	K_1\).
\end{proof}
\index{module coalgebra!Galois connection!closed left coideal}
\begin{corollary}\label{cor:coGalois_closed_elements}
    Let \(C\) be an \(H\)-coextension such that the canonical map \(\can_H\)
    is injective. Then a left coideal~\(K\) of~\(H\) with \(1_H\in K\) is
    a closed element of the Galois
    connection~\eqref{eq:galois-for-module-coalgebras} if \(C\sir C^K\) is
    \(K\)-Galois.
\end{corollary}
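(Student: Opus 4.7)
The plan is to mimic the argument of Corollary~\ref{cor:Q-Galois_closed} in the dual setting, using Theorem~\ref{thm:coGalois_mono} as the uniqueness tool in place of Theorem~\ref{thm:mono}. Write $\Phi:\coid{l}(H)\sir\Quot(C/C^H)$ for the explicit map of the Galois connection~\eqref{eq:galois-for-module-coalgebras} sending $K$ to $C/(K+\k1_H)^+C$, and let $\Psi$ denote the adjoint provided by Proposition~\ref{prop:cogalois-extensions}. Assume $C\sir C^K$ is $K$-Galois and set $\widetilde{K}\coloneq\Psi\Phi(K)$. The Galois property yields $K\subseteq\widetilde{K}$, and the general identity $\Phi\Psi\Phi=\Phi$ (Proposition~\ref{prop:properties-of-adjunction}) together with $1_H\in K\subseteq\widetilde{K}$ gives $\Phi(\widetilde{K})=\Phi(K)$, i.e.\ $C^{\widetilde{K}}=C^{K}$ as quotient coalgebras of $C$.

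The main step is to promote the $K$-Galois property to a $\widetilde{K}$-Galois property, namely to show that $\can_{\widetilde{K}}\colon\widetilde{K}\otimes C\sir C\cotensor_{C^{\widetilde{K}}}C$ is bijective. I chase the commutative diagram whose rows are $\can_K$, $\can_{\widetilde{K}}$, $\can_H$ (in that order from top to bottom) and whose columns are the inclusions $K\otimes C\hookrightarrow\widetilde{K}\otimes C\hookrightarrow H\otimes C$ on the left and, on the right, the identity $C\cotensor_{C^K}C=C\cotensor_{C^{\widetilde{K}}}C$ followed by the inclusion $C\cotensor_{C^{\widetilde{K}}}C\hookrightarrow C\cotensor_{C^H}C$. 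The top row is bijective by assumption and the bottom row is injective by assumption; the left vertical maps are injective because we work over a field; and the right vertical inclusion is injective since the $C^H$-coactions on $C$ factor through the $C^{\widetilde{K}}$-coactions, so the defining equaliser of the former cotensor product sits inside that of the latter. A short diagram chase on the top square then forces $\can_{\widetilde{K}}$ to be surjective, and the bottom square forces it to be injective.

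With $\can_{\widetilde{K}}$ established as an isomorphism, both $K$ and $\widetilde{K}$ satisfy the hypotheses of Theorem~\ref{thm:coGalois_mono}: they are left coideals with bijective canonical maps, $\can_H$ is injective by assumption, and $C^K=C^{\widetilde{K}}$ was shown above. The theorem then forces $K=\widetilde{K}=\Psi\Phi(K)$, so $K$ is a closed element. The only delicate technical point is the injectivity of $C\cotensor_{C^{\widetilde{K}}}C\hookrightarrow C\cotensor_{C^H}C$, but this is immediate from the kernel description of cotensor products once one observes that the quotient map $C^{\widetilde{K}}\twoheadrightarrow C^H$ intertwines the respective left and right coactions; everything else is a formal diagram chase plus one invocation of Theorem~\ref{thm:coGalois_mono}.
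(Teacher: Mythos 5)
Your proof is correct and follows essentially the same route as the paper: take $\widetilde K=\Psi\Phi(K)$, note $C^{K}=C^{\widetilde K}$, use the three-row diagram with $\can_K$, $\can_{\widetilde K}$, $\can_H$ to see that $\can_{\widetilde K}$ is onto (from the square with $\can_K$) and injective (from the square with $\can_H$), and then invoke Theorem~\ref{thm:coGalois_mono} to force $K=\widetilde K$. The extra details you supply (the identity $\Phi\Psi\Phi=\Phi$ together with $1_H\in K$, and the inclusion $C\cotensor_{C^{\widetilde K}}C\subseteq C\cotensor_{C^H}C$) are exactly what the paper leaves implicit.
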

\begin{proof}
    Let \(C\) be a \(K\)-Galois coextension, for some left coideal \(K\)of
    \(H\) such that \(1_H\in K\) and let \(\widetilde K\) be the smallest
    closed left coideal such that \(K\subseteq\widetilde K\). Then we have
    the commutative diagram:
    \begin{center}
	\begin{tikzpicture}
	    \node (A) at (0,0) {$H\otimes C$};
	    \node (B) at (3cm,0) {$C\cotensor_{C^H} C$};
	    \node (C) at (0cm,-1.7cm) {$\widetilde K\otimes C$};
	    \node (D) at (3cm,-1.7cm) {$C\cotensor_{C^{\widetilde K}}C$};
	    \node (E) at (0cm,-3.4cm) {$K\otimes C$};
	    \node (F) at (3cm,-3.4cm) {$C\cotensor_{C^{K}}C$};
	    \begin{scope}[>=angle 60,thick]
		\draw[>->] (A) -- node[above]{$\can_H$} (B);
		\draw[>->] (C) -- (A);
		\draw[>->] (D) -- (B);
		\draw[->]  (C) -- node[above]{$\can_{\widetilde K}$} (D);
		\draw[->]  (E) -- node[below]{$\simeq$} node [above]{$\can_K$}  (F);
		\draw[>->] (E) --(C);
		\draw[->] (F) --node[below,rotate=90]{$=$} (D);
	    \end{scope}
	\end{tikzpicture}
    \end{center}
    From the lower commutative square it follows that \(\can_{\widetilde K}\) is
    onto, and from the upper one that it is a monomorphism. The result follows
    now from the previous theorem.
\end{proof}

\section{Takeuchi correspondence}\label{sec:hopf_alg_case}
We show a new simple proof of the Takeuchi correspondence between left coideal
subalgebras and right \(H\)-module coalgebra quotients of a finite dimensional
Hopf algebra. We also show that for an arbitrary Hopf algebra \(H\),
a generalised quotient \(Q\) is closed if and only if \(H^{co Q}\subseteq H\)
is \(Q\)-Galois.  Similarly, for a left coideal subalgebras it is closed if
and only if \(H\sir H^K\) is a \(K\)-Galois coextension. 
\index{Takeuchi correspondence}
\index{Takeuchi correspondence!Skryabin's result}
\begin{theorem}\label{thm:newTakeuchi}
    Let \(H\) be a bialgebra which is flat as an \(R\)-module and let us
    assume that the antipode of \(H\) is bijective.  Then:
	\begin{equation}
	\begin{array}{ccc}
	    \bigg\{K\subseteq H:\,K\,\hyphen\begin{array}{l}\text{\small
			left coideal}\\\text{\small
			subalgebra}\end{array}\bigg\}&\hspace{-.3cm}\lgalois{\psi}{\phi}&\hspace{-.3cm}\bigg\{H/I:\,I\,\hyphen\begin{array}{l}\text{\small
			right ideal}\\\text{\small coideal}\end{array}\bigg\}\\
	    =:\qsub(H)&&=:\qquot(H)
	\end{array}
    \end{equation}
    where $\phi(Q)=H^{co\,Q},\psi(K)=H/K^+H$ is a \textsf{Galois connection}
    which is a restriction of the Galois
    connection~\eqref{eq:galois-connection}.  Moreover, if \(H\) is a Hopf
    algebra it restricts to normal Hopf subalgebras and conormal Hopf
    quotients, and the following holds:
    \begin{enumerate}[topsep=0pt,noitemsep]
        \item[(i)] \(K\in\qsub(H)\) such that \(H\) is (left, right)
             faithfully flat over \(K\), is a closed element of the above
             Galois connection,
        \item[(ii)] \(Q\in\qquot(H)\) such that \(H\) is (left, right)
             faithfully coflat over \(Q\) and \(Q\) is flat as an \(R\)-module
             is a closed element of the Galois
             connection~\eqref{eq:galois-for-hopf-alg},
        \item[(iii)]\label{itm:newTakeuchi_finite} if \(R\) is a field and
             \(H\) is finite dimensional, then \(\phi\) and \(\psi\) are
             \textsf{inverse bijections}.
    \end{enumerate}
    The Galois correspondence restricts to a bijection between elements
    satisfying~(i) and~(ii).
\end{theorem}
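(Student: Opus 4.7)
The plan is to derive the whole theorem from results already established in the paper, applied to the canonical situation $A=H$ with the regular right $H$-coaction $\Delta\colon H\sir H\otimes H$. The Galois connection itself, together with the explicit formulas $\phi(Q)=H^{co\,Q}$ and $\psi(K)=H/K^+H$, was established in the proposition immediately preceding Theorem~\ref{thm:existence}. To identify this with a restriction of the comodule algebra Galois connection, I would note that $H^{co\,H}=R$, so $\qsub(H)\subseteq\Sub_\Alg(H/R)$ as subposets, the comodule algebra $\phi$ visibly coincides with $Q\selmap{}H^{co\,Q}$ and, by uniqueness of right adjoints (Proposition~\ref{prop:properties-of-adjunction}(iii)), the two formulas for $\psi$ must agree on $\qsub(H)$. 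The restriction to normal Hopf subalgebras and conormal Hopf ideals is then a direct verification from the definitions: normality of $K$ makes $K^+H=HK^+$ a Hopf ideal, and conormality of the quotient map is read off from the antipode axiom; the converse follows symmetrically.

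For parts (i) and (ii) I would invoke the closedness criteria already developed. For (i), apply Theorem~\ref{thm:closed_subalgebras} to the (trivially) $H$-Galois extension $H/R$: closedness of $K$ becomes equivalent to the canonical map
\[
\can_K\colon K\otimes H\sir H\cotensor_{\psi(K)}H,\qquad \can_K(k\otimes h)=kh_{(1)}\otimes h_{(2)},
\]
being a bijection. Faithful flatness of $H$ over $K$ (on either side, after dualising via $H^{\op}$ if needed) yields bijectivity by the standard Schneider-type faithfully flat descent. For (ii), Corollary~\ref{cor:Q-Galois_closed} reduces closedness of $Q$ to $H/H^{co\,Q}$ being $Q$-Galois, i.e.\ to bijectivity of $\can_Q\colon H\otimes_{H^{co\,Q}}H\sir H\otimes Q$; under faithful coflatness of $H$ over $Q$ together with flatness of $Q$ over $R$, this is again a descent argument. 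The final compatibility claim, that the bijection between closed elements restricts to a bijection between the classes in (i) and (ii), then follows because faithfully flat/coflat descent transfers through the Hopf-module equivalence given by $K\otimes_R-$ and $-\cotensor_{\psi(K)}H$.

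For part (iii), the finite-dimensional bijection over a field, I would avoid Skryabin's freeness theorem by exploiting the symmetry between extensions and coextensions. Proposition~\ref{prop:finite_case} applied to the $H$-Galois extension $H/R$ shows that every $Q\in\qquot(H)$ is closed, so $\psi$ is surjective and every $K\in\phi(\qquot(H))$ is closed by Proposition~\ref{prop:properties-of-adjunction}(i). Dually, for the regular left $H$-module coalgebra structure on $H$ the canonical map $\can_H$ is invertible with explicit inverse $k\otimes h\selmap{}kS(h_{(1)})\otimes h_{(2)}$, so Theorem~\ref{thm:coGalois_mono} together with Corollary~\ref{cor:coGalois_closed_elements} forces every $K\in\qsub(H)$ to be closed once one knows that the associated coextension $H\sir H/K^+H$ is $K$-Galois. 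Combined with Proposition~\ref{prop:properties-of-adjunction}(ii), this forces $\phi$ and $\psi$ to be inverse bijections. The main obstacle I anticipate is precisely this last point: proving in finite dimensions, without invoking any freeness-of-$H$-over-$K$ result, that each left coideal subalgebra $K$ yields a genuinely $K$-Galois coextension; this is the step where the coextension machinery developed in the preceding section must do the real work, presumably via a dimension count applied to the injective map $\can_K$.
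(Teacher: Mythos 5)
Your overall architecture for part \textit{(iii)} matches the paper's: show \(\phi\) is injective via Theorem~\ref{thm:mono} (every \(Q\) gives a \(Q\)-Galois extension of the finite-dimensional \(H\) by Proposition~\ref{prop:finite_case}), show \(\psi\) is injective via Theorem~\ref{thm:coGalois_mono}, and combine with Proposition~\ref{prop:properties-of-adjunction}(iv). But you have correctly located, and then left open, the one step that carries all the weight: proving that for \emph{every} left coideal subalgebra \(K\) of a finite-dimensional \(H\) the coextension canonical map \(\can_K:K\otimes H\sir H\cotensor_{H/K^+H}H\) is bijective, without invoking freeness of \(H\) over \(K\). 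Your suggested repair, ``a dimension count applied to the injective map \(\can_K\),'' does not close this gap: injectivity of \(\can_K\) does follow from injectivity of \(\can_H\) and of \(K\otimes H\hookrightarrow H\otimes H\), but to conclude surjectivity by counting you would need \(\dim\bigl(H\cotensor_{H/K^+H}H\bigr)=\dim K\cdot\dim H\), and computing the dimension of that cotensor product requires knowing \(\dim(H/K^+H)=\dim H/\dim K\) -- which is essentially the Nichols--Zoeller/Skryabin freeness statement you are trying to avoid. The argument would be circular.

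The paper closes the gap by a duality trick rather than a dimension count. One checks that \({}^{co\,K^*}H^*=(H/K^+H)^*\), that the defining equaliser of \(H\cotensor_{H^K}H\) dualises to the defining coequaliser of \(H^*\otimes_{{}^{co\,K^*}H^*}H^*\), and that under these identifications \((\can_K)^*=\can_{K^*}\), the comodule-algebra canonical map of the \(K^*\)-extension \({}^{co\,K^*}H^*\subseteq H^*\). Since \(H^*\) is a finite-dimensional Hopf algebra, \(\can_{K^*}\) is an isomorphism by the same result you already used for Proposition~\ref{prop:finite_case} (namely \cite[Cor.~3.3]{ps-hs:gen-hopf-galois}); dualising back, \(\can_K\) is bijective for every \(K\in\qsub(H)\), and Theorem~\ref{thm:coGalois_mono} then gives injectivity of \(\psi\). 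In other words, the coextension statement is not proved directly on the \(H\)-side at all -- it is imported from the already-established extension statement on the \(H^*\)-side. For parts \textit{(i)} and \textit{(ii)} your reduction to Theorem~\ref{thm:closed_subalgebras} and Corollary~\ref{cor:Q-Galois_closed} plus faithfully flat descent is in the right spirit, though the paper simply cites Theorem~\ref{thm:Galois_closeness} (Schauenburg) for these; that part of your proposal is not where the problem lies.
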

The points (i) and (ii) follow from Theorem~\ref{thm:Galois_closeness} (due to
Schauenburg, see also~\cite[Thm~3.10]{ps:gal-cor-hopf-bigal}), while point
\textit{(iii)} follows
from~\cite[Thm~6.1]{ss:projectivity-over-comodule-algebras}, where it is shown
that if \(H\) is finite dimensional then it is free over each of its right (or
left) coideal subalgebras
(see~\cite[Thm~6.6]{ss:projectivity-over-comodule-algebras} and
also~\cite[Cor.~3.3]{ps-hs:gen-hopf-galois}).  This theorem has a long history.
The study of this correspondence, with Hopf algebraic method, goes back
to~\cite{mt:correspondence,mt:rel-hopf-mod}.  Then Masuoka proved~\textit{(i)}
and~\textit{(ii)} for Hopf algebras over a field (with bijective antipode).
When the base ring is a field, \citet[Thm~1.4]{hs:exact-seq-qg} proved that
this bijection restricts to normal Hopf subalgebras and normal Hopf algebra
quotients.  For Hopf algebras over more general rings it was shown
by~\citet[Thm~3.10]{ps:gal-cor-hopf-bigal}.  We can present a new simple proof
of~\ref{thm:newTakeuchi}\textit{(iii)}, which avoids the Skryabin result. 
\begin{proofof}{Theorem~\ref{thm:newTakeuchi}\textit{(iii)}}
    Whenever \(H\) is finite dimensional, for every \(Q\) the extension
    \(H^{co Q}\subseteq H\) is \(Q\)-Galois
    by~\cite[Cor.~3.3]{ps-hs:gen-hopf-galois}. Using Theorem~\ref{thm:mono} we
    get that the map \(\phi\) is a monomorphism. To show that it is an
    isomorphism it is enough to prove that \(\psi\) is a monomorphism.  We now
    want to consider \(H^*\).  To distinguish \(\phi\) and \(\psi\) for \(H\)
    and \(H^*\) we will write \(\phi_H\) and \(\psi_H\)
    considering~\eqref{eq:galois-for-hopf-alg} for \(H\) and \(\phi_{H^*}\)
    and \(\psi_{H^*}\) considering \(H^*\). It turns out that
    \((\psi_H(K))^*=\phi_{H^*}(K^*)\). Now we show that
    \(\can_{K^*}=(\can_K)^*:H^*\otimes_{^{co\,K^*}{H^*}}H^*\sir K^*\otimes
	H^*\) under some natural identifications (note that we consider
    \(H^*\) as a left \(K^*\)-comodule algebra). First let us observe that
    \(^{co\,K^*}H^*=(H/K^+H)^*\):
    \begin{align*}
	^{co\,K^*}H^* & = \{f\in H^*:\,f_{(1)}|_K\otimes f_{(2)}=\epsilon|_K\otimes f\}\\
	              & = \{f\in H^*:\,\forall_{h\in H,k\in K}f_{(1)}(k)f_{(2)}(h)=\epsilon(k)f(h)\}\\
		      & = \{f\in H^*:\,f|_{K^+H}=0\}\\
		      & = \bigl(H/K^+H\bigr)^*
    \end{align*}
    Now, since \(H\cotensor_{H^K}H\) is defined by the kernel diagram of
    finite dimensional spaces:
    \[H\cotensor_{H^K}H\ir H\otimes H\mprr{}{}H\otimes(H/K^+H)\otimes H\]
    we get the cokernel diagram:
    \[H^*\otimes {^{co\,K^*}H^*}\otimes H^*\mprr{}{}H^*\otimes H^*\ir (H\cotensor_{H^K}H)^*\]
    But the above exact sequence defines \(H^*\otimes_{^{co\,K^*}H^*}H^*\).
    Now, we have a commutative diagram:
    \begin{center}
	\begin{tikzpicture}
	    \matrix[column sep=1.3cm,row sep=1cm]{
		\node (A1) {\(\bigl(H\cotensor_{H^K}H\bigr)^*\)}; & \node (A2) {\(K^*\otimes H^*\)};\\
		\node (B1) {\(H^*\otimes_{^{K^*}H^*}H^*\)}; & \\ 
	    };
	    \draw[->] (A1) --node[above]{\((\can_K)^*\)} (A2);
	    \draw[->] (B1) --node[above,rotate=90]{\(\cong\)} (A1);
	    \draw[->] (B1) --node[below right]{\(\can_{K^*}\)} (A2); 
	\end{tikzpicture}
    \end{center}
    since \((\can_K)^*(f\otimes g)(k\otimes h)=f\otimes g\circ \can_K(k\otimes
	h)=f(kh_{(1)})g(h_{(2)})=f_{(1)}(k)f_{(2)}(h_{(1)})g(h_{(2)})=(f_{(1)}\otimes
	f_{(2)}\ast g)(k\otimes h)=\can_{K^*}(f\otimes g)(k\otimes h)\). 
    
    The canonical map \(\can_{K^*}\)  is an isomorphism, since \(H^*\) is
    finite dimensional, and hence \(\can_K\) is a bijection for every left
    coideal subalgebra \(K\) of \(H\). Now the result follows from
    Theorem~\ref{thm:coGalois_mono} and
    Proposition~\ref{prop:properties-of-adjunction}(iv).
\end{proofof}
\index{Takeuchi correspondence!closed elements}
\begin{theorem}\label{thm:closed-of-qquot}
    Let \(H\) be a Hopf algebra such that \(H\) is a flat \(\R\)-module.  Then: 
    \begin{enumerate}
        \item \(Q\in\qquot(H)\) is a \textsf{closed element} of the Galois
              connection~\eqref{eq:galois-for-hopf-alg} if and only if
              \(H/H^{co\,Q}\) is a \(Q\)-Galois extension,
	\item \(K\in\qsub(H)\) is a \textsf{closed element} of the
	    Galois connection~\eqref{eq:galois-for-hopf-alg} if and only if
	    \(H\sir H^K\) is a \(K\)-Galois coextension
	    (Definition~\ref{defi:coGalois}(ii)), i.e. the map:
	    \[K\otimes H\sir H\cotensor_{H/K^+H}H,\ k\otimes h\selmap{}kh_{(1)}\otimes h_{(2)}\]
	    is an isomorphism.
    \end{enumerate}
\end{theorem}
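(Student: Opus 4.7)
The plan is to reduce both (i) and (ii) to a single key computational lemma (and its dual) based on the fact that $H$ is itself an $H$-Galois extension of $\R$, with explicit canonical map $\can(a\otimes b)=ab_{(1)}\otimes b_{(2)}$ and inverse $\can^{-1}(h\otimes k)=hS(k_{(1)})\otimes k_{(2)}$.  The final step in each case will be a faithfulness argument: since $1_H\colon\R\sir H$ is split by $\epsilon$, the ring $\R$ is a direct summand of $H$ as an $\R$-module, so $H\otimes(-)$ and $(-)\otimes H$ are faithful functors on $\R$-modules.

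For part (i), I would first prove the following key lemma: given $Q\in\qquot(H)$, set $K:=H^{co\,Q}=\phi(Q)$ and $Q':=H/K^+H=\psi\phi(Q)$; then $\can_{Q'}\colon H\otimes_K H\to H\otimes Q'$ is always an isomorphism.  Since $\can_{Q'}$ is the map induced from the iso $\can$ after quotienting the source by the $K$-balancing kernel $J=\langle ak\otimes b-a\otimes kb:k\in K\rangle$ and the target by $H\otimes K^+H$, it suffices to show $J=\can^{-1}(H\otimes K^+H)$.  The inclusion ``$\subseteq$'' follows from the direct calculation $\can(ak\otimes b-a\otimes kb)=-ak_{(1)}b_{(1)}\otimes(k_{(2)}-\epsilon(k_{(2)}))b_{(2)}$, which lies in $H\otimes K^+H$ because $k_{(2)}\in K$ by the left coideal property.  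For ``$\supseteq$'', I would compute $\can^{-1}(h\otimes kx)=hS(x_{(1)})S(k_{(1)})\otimes k_{(2)}x_{(2)}$ for $k\in K^+$ and move $k_{(2)}\in K$ across the tensor in $H\otimes_K H$, so that the antipode identity $S(k_{(1)})k_{(2)}=\epsilon(k)1_H=0$ collapses the expression to zero.

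Once the lemma is in hand, (i) is formal.  The map $\can_Q$ factors as $H\otimes_K H\xrightarrow{\can_{Q'}}H\otimes Q'\xrightarrow{\id_H\otimes\pi}H\otimes Q$, where $\pi\colon Q'\twoheadrightarrow Q$ is the natural surjection; since $\can_{Q'}$ is always an iso and $H\otimes(-)$ is faithful, $\can_Q$ is an iso iff $\pi$ is an iso, iff $Q=Q'=\psi\phi(Q)$, i.e.\ iff $Q$ is closed.

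Part (ii) should proceed by the formal dual.  The corresponding main lemma will state that for $K\in\qsub(H)$ with $K':=H^{co\,H/K^+H}=\phi\psi(K)$, the canonical map $\can_{K'}\colon K'\otimes H\to H\cotensor_{H^K}H$ is always an isomorphism, with inverse the restriction of $\theta(a\otimes b):=aS(b_{(1)})\otimes b_{(2)}$ to the cotensor.  The main obstacle of the whole proof lies here: one must verify that $\theta$ actually maps $H\cotensor_{H^K}H$ into $K'\otimes H$, which requires combining the cotensor condition $\sum a_{i,(1)}\otimes\pi(a_{i,(2)})\otimes b_i=\sum a_i\otimes\pi(b_{i,(1)})\otimes b_{i,(2)}$ (where $\pi\colon H\sir H^K$) with the antipode identity to conclude that the right $H^K$-coaction on $\sum a_iS(b_{i,(1)})$ reduces to $\sum a_iS(b_{i,(1)})\otimes\pi(1_H)$.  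Granting this, $\can_K$ factors as $K\otimes H\hookrightarrow K'\otimes H\xrightarrow{\can_{K'}}H\cotensor_{H^K}H$, and flatness of $H$ combined with the faithfulness of $(-)\otimes H$ shows that $\can_K$ is an iso iff $K=K'$, i.e.\ iff $K$ is closed.
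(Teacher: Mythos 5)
Your proposal is correct, and its computational core coincides with the paper's: both arguments rest on the same two antipode--twist isomorphisms, namely \(H\otimes_K H\cong H\otimes H/K^+H\) (with inverse \(h\otimes\overline{h'}\mapsto hS(h'_{(1)})\otimes_K h'_{(2)}\)) for part \textit{(i)}, and \(H^{co\,Q}\otimes H\cong H\cotensor_Q H\) (with inverse \(a\otimes b\mapsto aS(b_{(1)})\otimes b_{(2)}\)) for part \textit{(ii)}. Where you differ is in how these isomorphisms are obtained and how the equivalence is then closed out. The paper states the first isomorphism with only the inverse formula and a one-line remark, and cites Schneider for the second; you supply proofs. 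Your balancing-kernel computation for \textit{(i)} is exactly right, and for \textit{(ii)} you correctly isolate the only nontrivial point --- that \(a\otimes b\mapsto aS(b_{(1)})\otimes b_{(2)}\) carries \(H\cotensor_{H/K^+H}H\) into \(H^{co\,H/K^+H}\otimes H\) --- and your sketch (the cotensor relation combined with \(b_{(1)}S(b_{(2)})=\epsilon(b)1_H\), plus flatness of \(H\) to identify \(\ker(f)\otimes H\) with \(\ker(f\otimes\id_H)\)) is the standard verification and goes through.

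The second difference is more substantive. For the implications ``Galois \(\Rightarrow\) closed'' the paper invokes Corollary~\ref{cor:Q-Galois_closed} in part \textit{(i)} and Theorem~\ref{thm:coGalois_mono} in part \textit{(ii)}, whereas you obtain both directions of each equivalence at once by factoring \(\can_Q\) through \(\can_{\psi\phi(Q)}\) (respectively \(\can_K\) through \(\can_{\phi\psi(K)}\)) and using that \(\epsilon\) splits the unit, so that \(H\otimes(-)\) and \((-)\otimes H\) detect the vanishing of the kernel of \(Q'\twoheadrightarrow Q\) and of the cokernel of \(K\hookrightarrow\phi\psi(K)\). This is a genuine improvement in part \textit{(ii)}: Theorem~\ref{thm:coGalois_mono} is stated in the paper only over a field, while the present theorem is over a ring with \(H\) flat, so your split-unit argument is the cleaner way to conclude there. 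Two small points worth making explicit in a full write-up: the two cotensor products in your factorisation for \textit{(ii)} agree because \(H/(\phi\psi(K))^+H=H/K^+H\) (since \(\psi\phi\psi=\psi\)), and flatness of \(H\) is what makes \(K\otimes H\to\phi\psi(K)\otimes H\) injective.
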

Note that we do not assume that the antipode of~\(H\) is bijective as it is
done in Theorem~\ref{thm:newTakeuchi}.  The flatness of \(H\) is only
needed to show that if \(K\) is closed then \(H\sir H^K\) is a \(K\)-Galois
coextension.
\begin{proof}
    For the first part, it is enough to show that if $Q$ is closed then $H^{co
	Q}\subseteq H$ is $Q$-Galois (see
    Corollary~\ref{cor:Q-Galois_closed}).  If $Q$ is closed then $Q=H/(H^{co
	Q})^+H$.  One can show that for any $K\in\qsub(H)$ the following map
    is an isomorphism:
    \begin{equation}\label{eq:canK}
	H\otimes_{K}H\mpr{}H\otimes H/K^+H,\quad h\otimes_K h'\elmap{}hh'_{\mathit{(1)}}\otimes\overline{h'_{\mathit{(2)}}}
    \end{equation}
    Its inverse is given by $H\otimes H/K^+H\ni h\otimes
    \overline{h'}\,\elmap{}\,hS(h'_{\mathit{(1)}})\otimes_K
    h'_{\mathit{(2)}}\in H\otimes_K H$ which is well defined since \(K\) is
    a left coideal.  Plugging $K=H^{co\,Q}$ into equation~\eqref{eq:canK} we
    observe that this map is the canonical map~\eqref{eq:canonical-map}
    associated to \(Q\).

    Now, if \(H\sir H/K^+H\) is a \(K\)-Galois coextension then it follows
    from Theorem~\ref{thm:coGalois_mono}, using the same argument as in
    Corollary~\ref{cor:coGalois_closed_elements}, that \(K\) is a closed
    element.  Now, let us assume that \(K\) is closed.  Then \(K=H^{co\,Q}\) for
    \(Q=H/K^+H\). By~\cite[Thm~1.4(1)]{hs:normal-bases} we have an isomorphism:
    \begin{equation}\label{eq:cocanK}
	H^{co\,Q}\otimes H\ir H\cotensor_QH\quad k\otimes h\elmap{}kh_{(1)}\otimes h_{(2)}
    \end{equation}
    with inverse \(H\cotensor_QH\ni k\otimes h\selmap{}kS(h_{(1)})\otimes
	h_{(2)}\in H^{co\,Q}\otimes H\).  The above map is the canonical
    map~\eqref{eq:can_K} since \(K=H^{co\,Q}\) and \(Q=H/K^+H\) .
\end{proof}
Let us note that the statement (i) in the above result
generalises~\cite[Corollary~3.3(6)]{ps-hs:gen-hopf-galois}.  Now we get an
answer to the question when the bijection~\eqref{eq:galois-for-hopf-alg} holds
without extra assumptions.
\index{Takeuchi correspondence!without flatness/coflatness}
\begin{corollary}
    The bijective correspondence~\eqref{eq:galois-for-hopf-alg} holds without
    flatness/coflatness assumptions if and only if for every \(Q\in\qquot(H)\)
    \(H/H^{co\,Q}\) is a \(Q\)-Galois extension and for every \(K\in\qsub(H)\)
    \(H/H^K\) is a \(K\)-Galois coextension.
\end{corollary}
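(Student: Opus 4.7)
The plan is to read this off directly from the preceding Theorem~\ref{thm:closed-of-qquot} together with the abstract characterisation of bijective Galois connections given in Proposition~\ref{prop:properties-of-adjunction}. Recall from part~(ii) of that proposition that for any Galois connection $(\phi,\psi)$ between posets $P$ and $Q$, the maps $\phi$ and $\psi$ restrict to inverse bijections precisely on the subsets $\overline{P}$ and $\overline{Q}$ of closed elements. Consequently $(\phi,\psi)$ is itself a pair of inverse bijections $P\leftrightarrow Q$ if and only if $\overline{P}=P$ and $\overline{Q}=Q$, i.e.\ every element on both sides is closed.

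Applying this general fact to the Galois connection~\eqref{eq:galois-for-hopf-alg}, the bijectivity of the correspondence $\qsub(H)\leftrightarrow\qquot(H)$ (without any flatness or coflatness hypotheses) is equivalent to the conjunction of the two conditions: every $Q\in\qquot(H)$ is closed, and every $K\in\qsub(H)$ is closed.

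Now I invoke Theorem~\ref{thm:closed-of-qquot}: part~(i) identifies closed generalised quotients $Q$ with those for which $H/H^{co\,Q}$ is $Q$-Galois, and part~(ii) identifies closed left coideal subalgebras $K$ with those for which $H\sir H^K$ is a $K$-Galois coextension in the sense of Definition~\ref{defi:coGalois}(ii). Substituting these two characterisations into the equivalence of the previous paragraph yields exactly the statement of the corollary.

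There is no real obstacle here; the only thing one might want to double-check is that the flatness hypothesis on $H$ appearing in Theorem~\ref{thm:closed-of-qquot} is not being smuggled into the ``without flatness/coflatness assumptions'' clause. However, the clause in the corollary refers to the faithful (co)flatness conditions of Theorem~\ref{thm:newTakeuchi}(i)--(ii), which are additional hypotheses on individual objects of $\qsub(H)$ and $\qquot(H)$ used to force them into the closed part; the underlying flatness of $H$ over $R$ is a standing assumption throughout this section and is already built into the setup of the Galois connection~\eqref{eq:galois-for-hopf-alg}. With this understood, the argument above is complete.
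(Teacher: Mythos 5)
Your argument is exactly the paper's intended one: the corollary is meant to follow immediately from Theorem~\ref{thm:closed-of-qquot} together with the general fact (Proposition~\ref{prop:properties-of-adjunction}) that a Galois connection is a pair of inverse bijections precisely when every element on both sides is closed. Your closing remark about the standing $R$-flatness of $H$ versus the faithful (co)flatness conditions of Theorem~\ref{thm:newTakeuchi} is also the correct reading of the ``without flatness/coflatness assumptions'' clause.
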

For Hopf algebras which are commutative (as algebras) or whose coradical is
cocommutative, the correspondence~\eqref{eq:galois-for-hopf-alg} restricts to
a bijection between normal Hopf subalgebras and normal quotients
(\cite[Thm~3.4.6]{sm:hopf-alg}).
\begin{corollary} 
    Let \(H\) be a finite dimensional Hopf algebra and \(K\) be its left
    coideal subalgebra.  Then \(H/H^K\) is \(K\)-Galois coextension.  
\end{corollary}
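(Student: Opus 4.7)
The plan is to deduce this directly by combining the two preceding theorems, namely Theorem~\ref{thm:newTakeuchi}\textit{(iii)} and Theorem~\ref{thm:closed-of-qquot}\textit{(ii)}. The idea is that finite dimensionality forces every left coideal subalgebra to be a closed element of the Galois connection~\eqref{eq:galois-for-hopf-alg}, and closedness is precisely the $K$-Galois coextension property.

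First I would check that the hypotheses of both theorems are met. The statement is implicitly over a field (finite dimensional Hopf algebras). Since $H$ is finite dimensional, $H$ is automatically flat (even free) as an $R$-module, so Theorem~\ref{thm:closed-of-qquot} applies. Furthermore, it is a classical fact that the antipode of a finite dimensional Hopf algebra is bijective, so the hypotheses of Theorem~\ref{thm:newTakeuchi} are satisfied as well.

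Next, by Theorem~\ref{thm:newTakeuchi}\textit{(iii)}, the maps $\phi$ and $\psi$ restrict to inverse bijections on the whole posets $\qsub(H)$ and $\qquot(H)$. By Proposition~\ref{prop:properties-of-adjunction}\textit{(i)} this means $\overline{\qsub(H)}=\psi(\qquot(H))=\qsub(H)$, i.e. every $K\in\qsub(H)$ is a closed element of the Galois connection~\eqref{eq:galois-for-hopf-alg}. Finally, applying Theorem~\ref{thm:closed-of-qquot}\textit{(ii)}, the closedness of $K$ is equivalent to the canonical map
\[K\otimes H\ir H\cotensor_{H/K^+H}H,\quad k\otimes h\selmap{} kh_{(1)}\otimes h_{(2)}\]
being an isomorphism, which is exactly the definition of $H\sir H^K$ being a $K$-Galois coextension (Definition~\ref{defi:coGalois}\textit{(ii)}).

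There is no real obstacle here since all the work has already been done: this corollary is essentially a formal consequence packaged from the main theorems of the section. The only point one should verify carefully is that the chain of citations does not rely circularly on this corollary — in particular, the new proof of Theorem~\ref{thm:newTakeuchi}\textit{(iii)} given above uses only Theorem~\ref{thm:mono}, the dualisation to $H^*$, and Theorem~\ref{thm:coGalois_mono}, none of which invoke the present statement.
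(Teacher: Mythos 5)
Your proposal is correct and follows exactly the paper's own argument, which proves this corollary by combining Theorem~\ref{thm:newTakeuchi}\textit{(iii)} (every left coideal subalgebra of a finite dimensional Hopf algebra is closed, since \(\phi\) and \(\psi\) are inverse bijections) with Theorem~\ref{thm:closed-of-qquot}\textit{(ii)} (closedness of \(K\) is equivalent to the \(K\)-Galois coextension property). Your additional checks of the hypotheses (bijectivity of the antipode, flatness over a field) and the non-circularity of the citation chain are sound and only make explicit what the paper leaves implicit.
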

\begin{proof}
    Combine Theorem~\ref{thm:newTakeuchi}\textit{(iii)} and
    Theorem~\ref{thm:closed-of-qquot}\textit{(ii)}.
\end{proof}
\index{Takeuchi correspondence!universal enveloping algebra}
\index{universal enveloping algebra!Takeuchi correspondence}
\begin{example}\label{ex:Takeuchi_for_U(g)}
    Let \(\mathfrak{g}\) be a finite dimensional Lie algebra and let us
    consider the Hopf algebra \(\mathcal{U}(\mathfrak{g})\) then the Takeuchi
    correspondence~\eqref{eq:galois-for-hopf-alg} is a bijection.
\end{example}
\begin{proof}
    Takeuchi showed that~\eqref{eq:galois-for-hopf-alg} gives a bijective
    correspondence between coideal subalgebras of a commutative Hopf algebra
    \(H\) and generalised quotients over which \(H\) is faithfully coflat
    (see~\cite[Thm.~4]{mt:rel-hopf-mod}).  By
    Proposition~\ref{prop:enveloping-algebra-quotients_} and
    Theorem~\ref{thm:enveloping-algebra-quotients} we get that every
    generalised quotient of \(\mathcal{U}(\mathfrak{g})\) is of the form
    \(\mathcal{U}(\mathfrak{g})/K^+\mathcal{U}(\mathfrak{g})\) for some
    coideal subalgebra \(K\).
\end{proof}
Moreover, by Proposition~\ref{prop:enveloping-algebra-quotients_} and
Theorem~\ref{thm:enveloping-algebra-quotients} we get that every coideal
subalgebra of \(\mathcal{U}(\mathfrak{g})\) is of the form
\(\mathcal{U}(\mathfrak{h})\) where \(\mathfrak{h}\) is a Lie subalgebra of
\(\mathfrak{g}\), and every generalised quotient is of the form
\(\mathcal{U}(\mathfrak{g})/\mathcal{U}(\mathfrak{h})^+\mathcal{U}(\mathfrak{g})\).

Theorem~\ref{thm:closed-of-qquot} and Theorem~\ref{thm:connection_over_field}
give the following
\index{comodule algebra!Galois correspondence!over a field}
\begin{corollary}
    Let \(A/B\) be an \(H\)-extension over a field \(k\), where \(H\) is
    a Hopf algebra.  Then every closed
    generalised quotient \(Q\in\qquot(H)\) is of the form \(H/K^+H\) by
    Theorem~\ref{thm:connection_over_field}.  Thus \(Q\) cannot be closed
    in~\eqref{eq:galois-connection} if \(Q\) is not closed in the Galois
    connection~\eqref{eq:galois-for-hopf-alg}, i.e. when
    \(can_Q:H\otimes_{H^{co\,Q}}H\sir H\otimes Q\) is not bijective.
\end{corollary}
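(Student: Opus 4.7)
The plan is to chain together three results already available in the excerpt: the universal description of closed elements in any Galois connection (Proposition~\ref{prop:properties-of-adjunction}), the concrete formula for $\psi$ proved over a field (Theorem~\ref{thm:connection_over_field}), and the characterisation of closedness in the Takeuchi correspondence (Theorem~\ref{thm:closed-of-qquot}). The logical direction to prove is the contrapositive: if $Q \in \qquot(H)$ is closed in~\eqref{eq:galois-connection}, then $Q$ is closed in~\eqref{eq:galois-for-hopf-alg}, hence $\can_Q$ is an isomorphism.

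First I would recall, from Proposition~\ref{prop:properties-of-adjunction}(i), that in the Galois connection $\Sub_\Alg(A/B) \galois{\psi}{\phi} \qquot(H)$ the closed elements of $\qquot(H)$ are precisely $\overline{\qquot(H)} = \psi(\Sub_\Alg(A/B))$. So if $Q$ is closed, then $Q = \psi(S)$ for some $S \in \Sub_\Alg(A/B)$. Since the base ring is a field, Theorem~\ref{thm:connection_over_field} applies and gives $\psi(S) = H/K_S^{+}H$, where $K_S \in \qsub(H)$ is the smallest left coideal subalgebra with $\delta_A(S) \subseteq A \otimes K_S$. Hence every closed $Q$ is of the form $H/K^{+}H$ for some $K \in \qsub(H)$.

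Now I would invoke Proposition~\ref{prop:properties-of-adjunction}(i) a second time, this time applied to the Takeuchi connection~\eqref{eq:galois-for-hopf-alg}. Its statement identifies the closed elements of $\qquot(H)$ with the image of the adjoint $\psi \colon \qsub(H) \to \qquot(H)$, $K \mapsto H/K^{+}H$. Thus $\{H/K^{+}H : K \in \qsub(H)\}$ is exactly the set of elements closed in~\eqref{eq:galois-for-hopf-alg}. Combined with the previous paragraph, every $Q$ closed in~\eqref{eq:galois-connection} is closed in~\eqref{eq:galois-for-hopf-alg}.

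Finally I would apply Theorem~\ref{thm:closed-of-qquot}(i), which asserts that $Q$ is closed in~\eqref{eq:galois-for-hopf-alg} if and only if $H/H^{co\,Q}$ is a $Q$-Galois extension, i.e.\ $\can_Q \colon H \otimes_{H^{co\,Q}} H \to H \otimes Q$ is a bijection. Taking the contrapositive yields the corollary. There is no genuine obstacle here: the argument is essentially a bookkeeping exercise in the uniqueness/image description of closed elements in Galois connections, and the only place where the hypothesis ``over a field'' is used is in step two, where Theorem~\ref{thm:connection_over_field} forces the image of $\psi$ to sit inside $\{H/K^{+}H : K \in \qsub(H)\}$.
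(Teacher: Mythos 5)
Your proposal is correct and follows essentially the same route as the paper: the paper's own proof is just the chain "closed in~\eqref{eq:galois-connection} $\Rightarrow$ of the form $H/K^{+}H$ by Theorem~\ref{thm:connection_over_field} $\Rightarrow$ closed in~\eqref{eq:galois-for-hopf-alg} $\Leftrightarrow$ $\can_Q$ bijective by Theorem~\ref{thm:closed-of-qquot}(i)". You merely make explicit the appeal to Proposition~\ref{prop:properties-of-adjunction}(i) (closed elements coincide with the image of the adjoint), which the paper uses implicitly, so no new content or gap is involved.
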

We can show what is actually lacking to get the converse of
Corollary~\ref{cor:Q-Galois_closed}.
\index{comodule algebra!Galois correspondence!closed generalised quotients}
\begin{theorem}\label{thm:Q-Galois_closed_over_field}
    Let \(A/B\) be an \(H\)-Galois extension over a field \(\k\).  Then the
    following two conditions are equivalent:
    \begin{enumerate}
	    \item the canonical map \(\can_Q:A\otimes_{A^{\co\,Q}}A\sir A\otimes
			Q\) is a bijection, and
	    \item
	    \begin{enumerate}
                \item[(a)] \(Q\in\qquot(H)\) is closed in the Galois
                     connection~\eqref{eq:galois-connection}, and
                \item[(b)] the map
                     \(\delta_A\otimes\delta_A:A\otimes_{A^{\co\,Q}}A\sir(A\otimes
                         H)\otimes_{A\otimes H^{\co\,Q}}(A\otimes H)\) is an
                     injection.  
	    \end{enumerate}
    \end{enumerate}
\end{theorem}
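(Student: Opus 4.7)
The plan is to build a commutative square linking $\can_Q$ for the extension $A/A^{co\,Q}$ with the canonical map for $A\otimes H$ viewed as an $H$-comodule algebra via coaction on the second tensor factor. Explicitly, I would verify commutativity of the square
\[
\begin{array}{ccc}
A\otimes_{A^{co\,Q}}A & \stackrel{\can_Q}{\longrightarrow} & A\otimes Q \\
{\scriptstyle\delta_A\otimes\delta_A}\,\big\downarrow & & \big\downarrow\,{\scriptstyle\delta_A\otimes\id_Q} \\
(A\otimes H)\otimes_{A\otimes H^{co\,Q}}(A\otimes H) & \stackrel{\can_Q^{A\otimes H}}{\longrightarrow} & A\otimes H\otimes Q
\end{array}
\]
after first checking that $\delta_A\otimes\delta_A$ is well-defined; this uses $\delta_A(A^{co\,Q})\subseteq A\otimes H^{co\,Q}$, which follows from Theorem~\ref{thm:connection_over_field} since $K_{A^{co\,Q}}\subseteq H^{co\,Q}$. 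Two observations about the vertical maps will drive the argument: $\delta_A\otimes\id_Q$ is split by $\id_A\otimes\epsilon\otimes\id_Q$ and hence injective; and, working over a field, the natural map $a\otimes h\otimes k\mapsto (a\otimes h)\otimes(1\otimes k)$ yields an isomorphism $A\otimes(H\otimes_{H^{co\,Q}}H)\cong (A\otimes H)\otimes_{A\otimes H^{co\,Q}}(A\otimes H)$, under which $\can_Q^{A\otimes H}$ is identified with $\id_A\otimes\can_Q^H$.

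The direction (i)$\Rightarrow$(ii) is then immediate. Closedness of $Q$ follows from Corollary~\ref{cor:Q-Galois_closed}, whose purity hypothesis on $1_A:\k\to A$ is automatic over a field. For injectivity of $\delta_A\otimes\delta_A$, bijectivity of $\can_Q$ makes the composite $(\delta_A\otimes\id_Q)\circ\can_Q$ injective; by commutativity it equals $\can_Q^{A\otimes H}\circ(\delta_A\otimes\delta_A)$, forcing $\delta_A\otimes\delta_A$ to be injective.

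For (ii)$\Rightarrow$(i), surjectivity of $\can_Q$ follows formally from the Hopf--Galois hypothesis: the isomorphism $\can:A\otimes_BA\to A\otimes H$ composed with $\id_A\otimes\pi_Q$ factors through the projection $A\otimes_BA\twoheadrightarrow A\otimes_{A^{co\,Q}}A$, and the induced map is precisely $\can_Q$. The new content is injectivity. First I would transfer closedness of $Q$ from the $A$-correspondence to the Takeuchi correspondence~\eqref{eq:galois-for-hopf-alg} for $H$: Theorem~\ref{thm:connection_over_field} gives $Q=H/K_{A^{co\,Q}}^+H$ with $K_{A^{co\,Q}}\subseteq H^{co\,Q}$, and the Galois connection $(\psi^H,\phi^H)$ yields $Q\preccurlyeq H/(H^{co\,Q})^+H\preccurlyeq H/K_{A^{co\,Q}}^+H=Q$, so equality holds throughout. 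Theorem~\ref{thm:closed-of-qquot}(i) then shows $\can_Q^H$ is an isomorphism, hence so is $\can_Q^{A\otimes H}$. Injectivity of $\delta_A\otimes\delta_A$ from (ii.b) now makes $\can_Q^{A\otimes H}\circ(\delta_A\otimes\delta_A)$ injective, and equality with $(\delta_A\otimes\id_Q)\circ\can_Q$ together with injectivity of $\delta_A\otimes\id_Q$ forces $\can_Q$ to be injective.

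The principal bookkeeping hurdle is verifying the identification $\can_Q^{A\otimes H}\cong\id_A\otimes\can_Q^H$, since the balanced tensor product on the left is taken over the non-commutative subalgebra $A\otimes H^{co\,Q}$; this is where flatness of everything over the ground field is genuinely needed. The second delicate step is the passage from $A$-closedness to $H$-closedness of $Q$, which relies essentially on the explicit description of $\psi$ in Theorem~\ref{thm:connection_over_field}.
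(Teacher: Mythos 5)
Your proof is correct and follows essentially the same route as the paper: the same commutative square identifying \((A\otimes H)\otimes_{A\otimes H^{co\,Q}}(A\otimes H)\) with \(A\otimes(H\otimes_{H^{co\,Q}}H)\) and \(\can_Q^{A\otimes H}\) with \(\id_A\otimes\can_Q^H\), with Corollary~\ref{cor:Q-Galois_closed}, Theorem~\ref{thm:connection_over_field} and Theorem~\ref{thm:closed-of-qquot} supplying the same inputs in the same order. Your explicit chain \(Q\preccurlyeq H/(H^{co\,Q})^+H\preccurlyeq H/K_{A^{co\,Q}}^+H=Q\) for transferring closedness to the Takeuchi correspondence, and your remark that (i)\(\Rightarrow\)(ii.b) needs only injectivity of \(\delta_A\otimes\id_Q\) rather than bijectivity of \(\can_Q^H\), merely make explicit steps the paper leaves implicit.
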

The map \(\delta_A\otimes\delta_A\) is well defined since \(\delta_A\) is
a morphism of \(Q\)-comodules.  However the splitting of \(\delta_A\) given by
the counit \(\epsilon_H\), does not define a splitting of
\(\delta_A\otimes\delta_A\) in general.
\begin{proof}
    First let us assume that \(Q\) is a closed element.  Then by
    theorem~\ref{thm:connection_over_field} \(Q=H/K^+H\) for some
    \(K\in\qsub(H)\) and thus \(Q\) is closed
    in~\eqref{eq:galois-for-hopf-alg}.  Hence, by
    Theorem~\ref{thm:closed-of-qquot}(ii) the map
    \(\can_Q:H\otimes_{H^{\co\,Q}H}\sir H\otimes Q\) is a bijection.  Now let
    us consider the following commutative diagram:
    \begin{center}
	\begin{tikzpicture}
	    \matrix[matrix of nodes, column sep=1cm, row sep=1cm]{
		|(A1)| \(A\otimes_{A^{\co\,Q}}A\) & |(A2)| \((A\otimes H)\otimes_{A\otimes H^{\co\,Q}}(A\otimes H)\) & |(A3)| \(A\otimes(H\otimes_{H^{\co\,Q}}H)\) \\
		|(B1)| \(A\otimes Q\)                   &                                                & |(B3)| \(A\otimes H\otimes Q\) \\
	    };
	    \draw[->] (A1) --node[left]{\(\can_Q\)} (B1);
	    \draw[->] (A1) --node[above]{\(\delta_A\otimes\delta_A\)} (A2);
	    \draw[->] (A2) --node[above]{\(\cong\)} (A3);
	    \draw[->] (A3) --node[right]{\(\id_A\otimes\can_Q\)} (B3);
	    \draw[->] (B1) --node[below]{\(\delta_A\otimes\id_Q\)} (B3);
	\end{tikzpicture}
    \end{center}
    where \(\delta_A\) is the \(H\)-comodule structure map.  Now, since
    \(\delta_A\otimes\delta_A\) is an injection and the right vertical map is
    a bijection we conclude that the left vertical homomorphism is
    a monomorphism, and it is onto since \(A/A^{\co\,H}\) is \(H\)-Galois.

    Conversely, if \(\can_Q:A\otimes_{A^{\co\,Q}}A\sir A\otimes Q\) is
    a bijection, then by Corollary~\ref{cor:Q-Galois_closed} it is a closed
    element.  By Theorem~\ref{thm:connection_over_field} we see that
    \(Q=H/K^+H\) for some \(K\in\qsub(H)\). Thus \(Q\) is a closed element in
    the Galois connection~\ref{eq:galois-for-hopf-alg}.  By
    Theorem~\ref{thm:closed-of-qquot}(i) the map
    \(\can_Q:H\otimes_{H^{\co\,Q}}H\sir H\otimes Q\) is an isomorphism.  By
    the commutativity of the above diagram it follows that
    \(\delta_A\otimes\delta\) is an injective map. 
\end{proof}

\section{Galois theory of crossed products}
Let us recall the crossed product construction denoted by \(B\#_\sigma H\) and
introduced in Example~\ref{ex:comodule_algebras}(iii).  We describe closed
elements of the Galois connection~\eqref{eq:galois-connection} when \(A\) is
a crossed product. 

Let us note that the results presented below apply to finite Hopf--Galois
extensions of division rings, since these are always crossed products
by~\cite[Thm~8.3.7]{sm:hopf-alg}.
\index{crossed product!Galois correspondence}
\index{comodule algebra!Galois correspondence!crossed product}
\begin{theorem}\label{thm:cleft-case}
    Let \(A=B\#_\sigma H\) be an \(H\)-crossed product over a ring~\(\R\) such
    that \(B\) is a flat Mittag-Leffler \(R\)-module and \(H\) is flat as an
    \(R\)-module. Then the Galois correspondence~\eqref{eq:galois-connection}
    exists. Moreover, let us assume that the unit morphism \(1_A:R\sir
        A,\;r\selr{}r1_A\) is a pure homomorphism. Then an element
    \(Q\in\qquot(H)\) is \textsf{closed} if and only if the extension
    \(A/A^{co Q}\) is \textsf{\(Q\)-Galois}.
\end{theorem}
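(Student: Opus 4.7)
The plan is to establish the existence of the Galois connection and then prove the two implications. For the existence, observe that $A \cong B \otimes H$ as an $R$-module; combining the intersection property of the flat Mittag--Leffler module $B$ (Corollary~\ref{cor:mittag-leffler}) with flatness of $H$, and using the identification $A^{co\,Q} = B \#_\sigma H^{co\,Q}$ derived below, the reflection-of-suprema argument of Theorem~\ref{thm:existence} goes through for $A$. The forward direction ($Q$-Galois implies closed) is immediate: since $A = B \#_\sigma H$ is cleft, Theorem~\ref{thm:characterisation_of_cleftness} gives that $A/B$ is $H$-Galois, so the canonical map is surjective; combined with purity of $1_A : R \to A$, Corollary~\ref{cor:Q-Galois_closed} applies directly.

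For the converse, the central observation is the identification $A^{co\,Q} = B \#_\sigma H^{co\,Q}$, obtained by applying the exact functor $B \otimes -$ to the equaliser defining $H^{co\,Q}$, exactly as in the proof of Theorem~\ref{thm:characterisation_of_cleftness}. Consequently, for any $Q' \in \qquot(H)$, the inclusion $A^{co\,Q} \subseteq A^{co\,Q'}$ is equivalent to $H^{co\,Q} \subseteq H^{co\,Q'}$, and the supremum formula~\eqref{eq:psi} for $\psi$ collapses to $\psi(A^{co\,Q}) = H/(H^{co\,Q})^+ H$ via the Takeuchi adjoint of~\eqref{eq:galois-for-hopf-alg}. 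Thus, if $Q$ is closed in the $A$-connection, then $Q = \psi\phi(Q) = H/(H^{co\,Q})^+ H$, making $Q$ closed also in the Takeuchi correspondence for $H$, and Theorem~\ref{thm:closed-of-qquot}\emph{(i)} yields that $\can : H \otimes_{H^{co\,Q}} H \to H \otimes Q$ is an isomorphism.

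It remains to upgrade this $Q$-Galois property from $H$ to $A$. The cleaving map $\gamma : H \to A$, $\gamma(h) = 1_B \#_\sigma h$, is a convolution-invertible right $H$-comodule map inducing an isomorphism $B \otimes H \cong A$ of left $B$-modules and right $H$-comodules. In the commutative square formed by the canonical iso $\can : A \otimes_B A \to A \otimes H$ (from the $H$-Galois property of $A/B$), the natural surjection $A \otimes_B A \twoheadrightarrow A \otimes_{A^{co\,Q}} A$, and the projection $A \otimes H \twoheadrightarrow A \otimes Q$ with kernel $A \otimes I$ where $I = \ker(H \to Q)$, surjectivity of $\can_Q$ is automatic. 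Injectivity amounts to showing that $\can$ identifies the kernel of the left vertical map with $A \otimes I$. One inclusion follows from the direct computation $s_{(1)} \otimes s_{(2)} - s \otimes 1 \in A \otimes I$ for $s \in A^{co\,Q}$; the other is lifted from the Hopf-level iso via $\gamma$, using that $A^{co\,Q} = B \#_\sigma H^{co\,Q}$.

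The main obstacle is this last transfer. Since $\gamma$ is not an algebra homomorphism---the crossed product relation yields $\gamma(h)\gamma(k) = \sigma(h_{(1)}, k_{(1)}) \gamma(h_{(2)}k_{(2)})$---translating Hopf-algebra computations to $A$ introduces cocycle twists $\sigma(-,-) \in B$. Fortunately, these values lie in $B \subseteq A^{co\,Q}$, so the twists can be absorbed into the $A^{co\,Q}$-balancing when working inside $A \otimes_{A^{co\,Q}} A$, allowing the Hopf-algebraic isomorphism to be lifted to $\can_Q$ for~$A$.
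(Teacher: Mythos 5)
Your architecture coincides with the paper's in all three parts: existence via the identification \(A^{co\,Q}=B\#_\sigma H^{co\,Q}\) together with the intersection property of the flat Mittag--Leffler module \(B\); the forward implication straight from Corollary~\ref{cor:Q-Galois_closed}, using that a crossed product is \(H\)-Galois; and, for the converse, reduction to the Takeuchi correspondence~\eqref{eq:galois-for-hopf-alg}, an appeal to Theorem~\ref{thm:closed-of-qquot}(i), and a transfer back to \(A\) through the crossed-product structure. One intermediate claim is overstated: the equivalence ``\(A^{co\,Q}\subseteq A^{co\,Q'}\) iff \(H^{co\,Q}\subseteq H^{co\,Q'}\)'' needs \(B\) faithfully flat over \(R\) in the forward direction, which is not assumed. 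This is repairable, since only the backward direction is needed: it puts \(H/(H^{co\,Q})^{+}H\) into the set over which \(\psi(A^{co\,Q})\) is the supremum, so a closed \(Q\) satisfies \(Q=\psi\phi(Q)\succcurlyeq H/(H^{co\,Q})^{+}H\succcurlyeq Q\) and hence lies in the image of the Takeuchi adjoint. (The paper avoids the issue altogether by exhibiting the adjoint for \(A\) as \(\psi_H\circ\omega\), where \(\omega\) is a left adjoint of \(K\mapsto B\otimes K\), so that closed elements are automatically in the image of \(\psi_H\).)

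The genuine gap is in the transfer step, which is the technical heart of the converse and which you only gesture at. Your stated mechanism for the cocycle --- that the values \(\sigma(h_{(1)},k_{(1)})\in B\subseteq A^{co\,Q}\) can be ``absorbed into the \(A^{co\,Q}\)-balancing'' --- does not dispose of the twist: in the relevant products the cocycle value appears as a \emph{left} factor of the first tensorand, e.g. \((1\#h)(1\#k)=\bigl(\sigma(h_{(1)},k_{(1)})\#1\bigr)\bigl(1\#h_{(2)}k_{(2)}\bigr)\), and left multiplication by \(B\) on the first factor does not interact with the balanced tensor product at all. Such moves do help with \emph{well-definedness} of the comparison map, but they cannot produce the required \emph{bijectivity}. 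What is actually needed is the convolution inverse \(\sigma^{-1}\): the paper factors \(\can_Q=(\id_B\otimes\can'_Q)\circ\gamma\circ\beta\), where \(\beta:A\otimes_{A^{co\,Q}}A\to B\otimes(H\otimes_{H^{co\,Q}}H)\) is the untwisting isomorphism \(b\#h\otimes b'\#h'\mapsto (b\#h\cdot b'\#1)\otimes h'\) and \(\gamma(b\otimes h\otimes k)=b\,\sigma(h_{(1)},k_{(1)})\otimes h_{(2)}\otimes k_{(2)}\) is invertible precisely because \(\sigma\) is convolution invertible. You note that the cleaving map is convolution invertible but never deploy this where it is needed; without it the lift of injectivity from \(\can'_Q\) to \(\can_Q\) in your commutative-square argument cannot be completed.
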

\begin{proof}
    First of all, the Galois connection~\eqref{eq:galois-connection} exists,
    since we have a diagram:
    \begin{center}
	\begin{tikzpicture}
	    \matrix[column sep=1cm, row sep=1cm]{
		\node (C) {\(\Sub_\Alg(B\#_\sigma H/B)\)}; & \node (A) {\(\qsub(H)\)};  & \node (B) {\(\qquot(H)\)};\\
	    };
	    \draw[->] ($(A)+(10mm,1.5mm)$) --node[above]{\(\psi\)} ($(B) + (-11mm,1.5mm)$);
	    \draw[<-] ($(A)+(10mm,0mm)$) --node[below]{\(\phi\)} ($(B) + (-11mm,0mm)$);
	    \draw[->] ($(A)+(-10mm,0mm)$) --node[below]{\(\zeta\)} ($(C) + (15.5mm,0mm)$);
	    \draw[<-,dashed] ($(A)+(-10mm,1.5mm)$) --node[above]{\(\omega\)} ($(C) + (15.5mm,1.5mm)$);
	\end{tikzpicture}
    \end{center}
    where \((\phi,\psi)\) is the Galois connection~\eqref{eq:galois-for-hopf-alg}
    and \(\zeta(S)=B\otimes S\). The poset \(\qsub(H)\) is complete, since it
    is closed under arbitrary joins: for a family \(K_i\in\qsub(H)\) (\(i\in
	I\)) the join \(\bigvee_{i\in I}K_i\in\qsub(H)\) is equal to the
    subalgebra generated by \(\sum_{i\in I}K_i\). The map \(\zeta\) preserves
    all intersections. Thus it has a left adjoint \(\omega\), i.e. an order
    preserving map \(\omega:\Sub_\Alg(B\#_\sigma H/B)\sir\qsub(H)\) such that: 
    \[\omega(S)\subseteq K\iff S\subseteq\zeta(K)\]
    for \(S\in\Sub_\Alg(B\#_\sigma H)\) and \(K\in\qsub(H)\).  Then the Galois
    connection~\eqref{eq:galois-connection} has the form
    \((\zeta\circ\phi,\psi\circ \omega)\), since by flatness of~\(B\) we have
    \(B\otimes H^{co\,Q}=(B\otimes H)^{co\,Q}\). 

    Let \(Q\in\qquot(H)\) be a closed element of this Galois connection, i.e.
    \(Q=\psi\omega\zeta\phi(Q)\). Then it is closed element
    of~\eqref{eq:galois-for-hopf-alg}, since it belongs to the image of
    \(\psi\). Thus, by Theorem~\ref{thm:closed-of-qquot}(i), the map:
    \(\can'_{Q}:H\otimes_{H^{co\,Q}}H\sir H\otimes Q,\,h\otimes
	k\selr{}hk_{(1)}\otimes k_{(1)}\) is an isomorphism.  We have
    a commutative diagram:
    \begin{center}
	{\hfill\begin{tikzpicture}
	    \matrix[column sep=1.4cm,row sep=7mm]{
		\node (A0) {\(A\otimes_{A^{co\,Q}}A\)}; & \node (A1) {\(A\otimes Q=B\#_\sigma H\otimes Q\)}; \\
		\node (B0) {\(B\otimes(H\otimes_{H^{co\, Q}}H)\)}; &  \\
		\node (C0) {\(B\otimes(H\otimes_{H^{co\, Q}}H)\)}; & \\
	    };
	    \draw[->] (A0) --node[above]{\(\can_Q\)} (A1);
	    \draw[->] (C0) --node[below right]{\(\id_B\otimes\can'_Q\)} (A1);
	    \draw[->] (A0) --node[left]{\(\beta\)} (B0);
	    \draw[->] (B0) --node[left]{\(\gamma\)} (C0);
	\end{tikzpicture}
	\hfill\refstepcounter{equation}\raisebox{12mm}{(\theequation)}\label{diag:smash_product}\\
	}
    \end{center}
    where \(\beta:(B\#_\sigma H)\otimes (B\#_\sigma H)\sir B\otimes
	(H\otimes_{H^{co\,Q}} H)\) is defined by \(\beta(b\#_\sigma h\otimes
	b'\#_\sigma h')=(b\#_\sigma h\cdot b'\#_\sigma 1_H)\otimes h'\) is an
    isomorphism with the inverse \(\beta^{-1}(b\otimes h\otimes k)=(b\#_\sigma
	h)\otimes(1_B\#_\sigma k)\); and
    \(\gamma:B\otimes(H\otimes_{H^{co\,Q}}H)\sir
	B\otimes(H\otimes_{H^{co\,Q}}H)\) is defined by \(\gamma(b\otimes
	(h\otimes_{H^{co\,Q}}k))=b\sigma(h_{(1)},k_{(1)})\otimes(h_{(2)}\otimes_{H^{co\,Q}}k_{(2)})\).
    Note that \(\gamma\) is well defined since it equal to the composition
    \(\id_B\otimes(\can_Q')^{-1}\circ\can_Q\circ\beta^{-1}\). Observe that
    \(\gamma\) is an isomorphism with inverse \(\gamma^{-1}(b\otimes
	(h\otimes_{H^{co\,Q}}k))=b\sigma^{-1}(h_{(1)},k_{(1)})\otimes(h_{(2)}\otimes_{H^{co\,Q}}k_{(2)})\),
    which is well defined since \(A\#_{\sigma^{-1}}H\) is a crossed product,
    though it might be nonassociative and nonunital, but most importantly it
    is a comodule algebra. 
    It follows that \(\can_Q\) is an isomorphism. The converse follows from
    Corollary~\ref{cor:Q-Galois_closed}, since \(\can:A\otimes_BA\sir A\otimes
	H\) is a bijection.
\end{proof}
Now we formulate a criterion for closedness of subextensions of \(A/B\)
which generalises~\ref{thm:closed-of-qquot}(ii).
\index{crossed product!Galois correspondence}
\index{comodule algebra!Galois correspondence!crossed product}
\begin{theorem}
    Let \(A/B\) be an \(H\)-crossed product extension over a ring \(\R\)
    with~\(B\) a faithfully flat Mittag--Leffler module and~\(H\) a flat (thus
    faithfully flat) \(\R\)-module. Then a subalgebra \(S\in\Sub_\Alg(A/B)\)
    is a \textsf{closed element} of the Galois
    connection~\eqref{eq:galois-connection} if and only if the canonical map:
    \begin{equation}
	\can_S: S\otimes_B A\ir A\cotensor_{\psi(S)} H,\quad\can_S(a\otimes_B b)=ab_{(1)}\otimes b_{(2)}
    \end{equation}
    is an \textsf{isomorphism}.
\end{theorem}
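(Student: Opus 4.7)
The plan is to reduce this to Theorem~\ref{thm:closed_subalgebras} by verifying that the crossed product setting supplies all the hypotheses needed there. First, by Theorem~\ref{thm:characterisation_of_cleftness}, the extension $B\subseteq A=B\#_\sigma H$ is $H$-Galois with the normal basis property, so $A\cong B\otimes H$ as a left $B$-module. Since the counit-unit identity $\epsilon\circ u=\id_\R$ exhibits $\R$ as a direct summand of $H$, flatness of $H$ upgrades to faithful flatness, and therefore $A$ is faithfully flat as a left $B$-module. The existence of the Galois correspondence~\eqref{eq:galois-connection} in this context is already ensured by the construction in the proof of Theorem~\ref{thm:cleft-case}, so we are free to speak of $\psi(S)$ and closed elements.

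For the forward direction, if $S$ is closed then $S=A^{co\,\psi(S)}$, and the map $\can_S$ is exactly the canonical isomorphism
\[
A^{co\,\psi(S)}\otimes_B A\ir A\cotensor_{\psi(S)}H
\]
supplied by \cite[Thm~1.4]{hs:normal-bases}, which applies since $A/B$ is $H$-Galois with $A$ faithfully flat over $B$.

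For the converse, suppose $\can_S$ is an isomorphism. Consider the factorisation
\begin{center}
\begin{tikzpicture}
\matrix[column sep=1.5cm, row sep=.75cm]{
\node (A) {\(S\otimes_BA\)}; & \node (B) {\(A\cotensor_{\psi(S)} H\)};\\
\node (C) {\(A^{co\,\psi(S)}\otimes_BA\)}; & \\
};
\draw[->] (A) --node[above]{\(\can_S\)} (B);
\node[rotate=-90] at ($(A)+(0cm,-7mm)$) {\(\subseteq\)};
\draw[->] (C) --node[below right]{\(\simeq\)} (B);
\end{tikzpicture}
\end{center}
where the inclusion comes from $S\subseteq A^{co\,\psi(S)}$ (Galois property) and the bottom arrow is the Schneider isomorphism. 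Since both $\can_S$ and the diagonal are bijective, the vertical inclusion $S\otimes_B A\hookrightarrow A^{co\,\psi(S)}\otimes_B A$ is an isomorphism. Faithful flatness of $A$ as a left $B$-module then forces $S=A^{co\,\psi(S)}$, so $S$ is closed.

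The only subtlety is confirming that the Schneider isomorphism and the Galois correspondence are available under the present weaker hypothesis (in which $B$, rather than $A$, is assumed flat Mittag--Leffler): this is handled respectively by the normal-basis structure of a crossed product together with flatness of $H$, and by the factorisation $(\zeta\circ\phi,\psi\circ\omega)$ already exploited in Theorem~\ref{thm:cleft-case}. Once these are in place the argument is essentially a direct application of the machinery developed earlier in the chapter.
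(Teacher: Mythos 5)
Your overall skeleton is the right one and the converse direction (two-out-of-three in the triangle through \(A^{co\,\psi(S)}\otimes_BA\), then faithful flatness of \(A\) as a left \(B\)-module) is exactly the argument of Theorem~\ref{thm:closed_subalgebras} and of the paper's own proof. The preparatory observations are also fine: \(B\) flat gives the \(H\)-Galois property with normal basis via Theorem~\ref{thm:characterisation_of_cleftness}, \(H\) is faithfully flat because \(\R\) splits off via \(\epsilon\), hence \(A\cong B\otimes H\) is left \(B\)-faithfully flat, and the Galois connection exists by the factorisation \((\zeta\circ\phi,\psi\circ\omega)\).

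The load-bearing step, however, is your assertion that \(A^{co\,\psi(S)}\otimes_BA\sir A\cotensor_{\psi(S)}H\) is an isomorphism "by \cite[Thm~1.4]{hs:normal-bases}", and this is precisely what you have not justified. Note that \(\psi(S)\) is an arbitrary generalised quotient of \(H\): it need not be flat over \(\R\), and \(H\) need not be (faithfully) coflat over it, so the hypotheses under which Schneider's theorem is invoked elsewhere in this thesis (compare Proposition~\ref{prop:Schauenburg_admissibility} and Theorem~\ref{thm:newTakeuchi}(ii), where admissibility/coflatness is assumed) are not available. Indeed, in this theorem the paper deliberately uses \cite[Thm~1.4]{hs:normal-bases} only to say that the map is \emph{well defined}, and then \emph{proves} its bijectivity from the crossed-product structure: writing \(A^{co\,\psi(S)}=B\#_\sigma K\) with \(K=H^{co\,\psi(S)}\) (flatness of \(B\)), observing that \(K\), being of the form \(H^{co\,Q}\), is automatically closed in the connection~\eqref{eq:galois-for-hopf-alg}, so that by Theorem~\ref{thm:closed-of-qquot}(ii) the coextension canonical map \(K\otimes H\sir H\cotensor_{\psi(S)}H\) is bijective, and then transporting this to \(A\) by the explicit isomorphisms \(\alpha\) and \(\gamma\) built from the weak action and the convolution-invertible cocycle \(\sigma\). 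Your closing remark that the Schneider isomorphism "is handled by the normal-basis structure of a crossed product together with flatness of \(H\)" names the right ingredients but is exactly the computation that has to be carried out; as written, the proof has a gap at this point. Either verify carefully that the hypotheses of \cite[Thm~1.4]{hs:normal-bases} are met for an arbitrary, possibly non-flat, generalised quotient \(\psi(S)\) in this setting, or supply the cocycle-twist argument reducing the claim to Theorem~\ref{thm:closed-of-qquot}(ii).
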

\begin{proof}
    First let us note that the map \(\can_S\) is well defined since it is
    a composition of \(S\otimes_B A\sir A^{co\,\psi(S)}\otimes_B A\), induced by
    the inclusion \(S\subseteq A^{co\psi(S)}\), with \(A^{co\,\psi(S)}\otimes_B
	A\sir A\cotensor_{\psi(S)}H\) of \cite[Thm~1.4]{hs:normal-bases}.

    Let us assume that \(\can_S\) is an isomorphism.  We let
    \(K=H^{co\,\psi(S)}\). Since \(B\) is a flat \(\R\)-module we have
    \(\left(B\#_\sigma H\right)^{co\,\psi(S)}=B\#_\sigma K\). The following
    diagram commutes:
    \begin{center}
	\hfill\begin{tikzpicture}
	    \matrix[matrix, column sep=1.5cm, row sep=.75cm]{
		\node[anchor=east] (A) {\(S\otimes_B A\)};
		& \node[anchor=west] (B) {\(A\cotensor_{\psi(S)} H\)};\\
		\node[anchor=east] (C) {\(B\#_\sigma K\otimes H\)};
		& \node[anchor=west] (D)
		{\(B\otimes\left(H\cotensor_{\psi(S)}H\right)\)};\\ };
	    \draw[->] (A) --node[above]{\(\can_S\)} (B); \draw[->] (C)
	    --node[below]{\(\beta\)} (D); \draw[->] (A)
	    --node[left]{\(\alpha\)} ($(A)+(0,-1)$); \draw[->] (B)
	    --node[above,rotate=-90]{\(\simeq\)} ($(B)+(0,-1)$); 
	\end{tikzpicture}
	\hfill\refstepcounter{equation}\raisebox{12mm}{(\theequation)}\label{diag:1}\\
    \end{center}
    where \(\alpha:S\otimes_BA\sir\left(B\#_\sigma K\right)\otimes_B\left(B\#_\sigma
	    H\right)\sir B\#_\sigma K\otimes H\) is given by
    \[\alpha\left(a\otimes_Bb\#_\sigma h\right)=\left(a\cdot b\#_\sigma 1_H\right)\otimes h\quad\text{for }a\in S\subseteq B\#_\sigma K,\ b\#_\sigma h\in A\] 
    It is well defined since \(K\) is a left comodule subalgebra of \(H\).
    The second vertical map \(\left(B\#_\sigma H\right)\cotensor_{\psi(S)}
	H\sir B\otimes\left(H\cotensor_{\psi(S)}H\right)\) is the natural
    isomorphism, since \(B\) is flat over \(\R\). The map \(\beta:B\#_\sigma
	K\otimes H\sir B\otimes\left(H\cotensor_{\psi(S)}H\right)\) is defined
    by \(\beta(a\#_\sigma k\otimes h)=\left(a\#_\sigma k\cdot 1_B\#_\sigma
	    h_{(1)}\right)\otimes h_{(2)}\). Note that
    \(\beta=\id_B\otimes\can_K\circ\gamma\) where \(\gamma:\left(B\#_\sigma
	    K\right)\otimes H\sir\left(B\#_\sigma K\right)\otimes H\),
    \(\gamma\left(a\#_\sigma k\otimes h\right)\coloneq a\sigma(k_{(1)},
	h_{(1)})\otimes k_{(2)}\otimes h_{(2)}\) is an isomorphism, since
    \(\sigma\) is convolution invertible, while \(\can_K\) denotes the canonical
    map~\eqref{eq:cocanK}. Furthermore, the map \(\can_K\) is an isomorphism
    by Theorem~\ref{thm:closed-of-qquot}(ii), since \(K:=H^{co\,\psi(S)}\).
    By commutativity of the above diagram it follows that \(\alpha\) is an
    isomorphism. Now, let us consider the commutative diagram:
    \begin{center}
	\begin{tikzpicture}
	    \matrix[matrix, column sep=1.5cm, row sep=.75cm]{
		\node (A) {\(S\otimes_B\left(B\#_\sigma H\right)\)}; & \node (B) {\(B\#_\sigma K\otimes H\)};\\
		\node (C) {\(\left(B\#_\sigma K\right)\otimes_B \left(B\#_\sigma H\right)\)}; & \\
	    };
	    \draw[->] (A) --node[above]{\(\alpha\)} (B);
	    \draw[->] (A) -- (C);
	    \draw[->] (C) --node[below right]{\(\simeq\)} (B);
	\end{tikzpicture}
    \end{center}
    Thus \(S=B\#_\sigma K\) is indeed closed, since \(\alpha\) is an
    isomorphism and \(B\#_\sigma H\) is a faithfully flat \(B\)-module under
    the assumptions made.

    If \(S\) is closed then \(S=A^{co\,\psi(S)}=B\#_\sigma
	(H^{co\,\psi(S)})\), since \(B\) is a flat as an \(\R\)-module. One
    easily checks that \(\alpha\) is an isomorphism with inverse:
    \[\alpha^{-1}(a\#_\sigma k\otimes h)=\left(a\#_\sigma k\right)\otimes_B\left(1_B\#_\sigma h\right)\]
    The left coideal subalgebra \(K=H^{co\,\psi(S)}\) is a closed element
    of~\eqref{eq:galois-for-hopf-alg} hence by
    Theorem~\ref{thm:closed-of-qquot}(ii) \(\can_K\) is an isomorphism and
    thus \(\beta\) is an isomorphism. It follows from~\eqref{diag:1} that
    \(\can_S\) is an isomorphism as well.
\end{proof}

\chapter{Coring approach}\label{chap:coring_approach}
In this chapter we show, how the concept we have presented of Galois Theory
includes classical Galois theory in Field Theory, and which part of classical
theory can be covered by the theory developed.  In the first section we recall
T.Maszczyk approach to Galois extensions using corings.  He showed that
a field extension \(\bE/\bF\) with Galois group \(G\) is a Galois extension if
and only if there is a (concrete) isomorphism of corings
\(\bE\otimes_\bF\bE\cong\Map(G,\bE)\).  Hence the latter is a Galois coring in
the sense of \cite{tb:the_structure_of_corings} (see
also~\cite{rw:from_galois_ext_to_galois_comodules}).  In the next sections we
are going to generalise his approach to a noncommutative setting.  We will
first develop a Galois correspondence between subalgebras of an \(H\)-module
algebra and \(H\)-module subalgebras of \(H\) in
Section~\ref{sec:Galois_connection_for_H-mod_alg} (on
page~\pageref{sec:Galois_connection_for_H-mod_alg}).  Then we will replace the
Galois group acting on the field \(\bE\) with a Hopf algebra action on
a domain \(A\) and the coring \(\Map(G,A)\) with \(\Hom_\k(H,A)\).  For a left
\(H\)-module domain \(A\), for which the action satisfies the requirements of
Definition~\ref{defi:mono_action} (on page~\pageref{defi:mono_action}), we
show that \(\Hom_\k(H,A)\) has a coring structure (see
Proposition~\ref{prop:hom_coring} on page~\pageref{prop:hom_coring}).  In
Definition~\ref{defi:mono_action} we require that \(H\) has a basis of
elements which act on \(A\) as monomorphisms.  There is also a canonical
coring homomorphism from the Sweedler coring:
\begin{equation}\label{eq:coring_can}
    \can:A\otimes_{A^H}A\ir\Hom_\k(H,A),\;\can(a\otimes_{A^H} b)=\bigl(H\ni h\selmap{}a(h\cdot b)\in A\bigr)
\end{equation}
where \(A^H\coloneq\{a\in H:\forall_{h\in H}h\cdot a =\epsilon(h)a\}\).  Then
we construct a Galois connection between subalgebras of \(A\) and quotient
corings of \(\Hom_\k(H,A)\),  which extends the Galois connection between
subalgebras of \(A\) and generalised subalgebras of \(H\) (cf.
Proposition~\ref{prop:coring_connection} on
page~\pageref{prop:coring_connection}) via: \(\qsub(H)\ni
    K\selmap{}\Hom_\k(K,A)\in\Quot(\Hom_\k(H,A))\), where
\(\Quot(\Hom_\k(H,A))\) is a complete lattice of quotient corings.
Interestingly, this extension gives rise to a bijection between closed
subalgebras of \(H\) and closed quotient corings of \(\Hom_\k(H,A)\) (see
Corollary~\ref{cor:coring_closed} on page~\pageref{cor:coring_closed}).
Furthermore, if we assume that the above canonical map~\eqref{eq:coring_can}
is an epimorphism then a coring \(\Hom_\k(K,A)\in\Quot(\Hom_\k(H,A))\) is
a closed element if the canonical map:
\begin{equation*}\label{eq:coring_can_K}
    \can:A\otimes_{A^K}A\ir\Hom_\k(K,A),\;\can(a\otimes_{A^K} b)=\bigl(K\ni k\selmap{}a(k\cdot b)\in A\bigr)
\end{equation*}
is an isomorphism, where \(A^K\coloneq\{a\in A:\forall_{k\in K} k\cdot
	a=\epsilon(k)a\}\).  Moreover, if \(H\) is a finite dimensional Hopf
algebra and \(A\) is \(\Hom_\k(H,A)\)-Galois, i.e.  the homomorphism
\(\can\)~\eqref{eq:coring_can} is an isomorphism, then a subalgebra \(S\) is
closed if the map:
\[\can_S:A\otimes_SA\sir A\otimes_{A^{\Psi(S)}}A\sir\Hom(\Psi(S),A)\] 
is an isomorphism.

\section{Classical Galois theory and corings}
Here we quote the Maszczyk approach to Galois theory using corings.

\index{Maszczyka approach}
\begin{theorem}[{\cite[Corollary~2.3]{tm:galois-struct}}]\label{thm:coring}
    Let $\bF\subseteq \bE$ be a finite field extension with the Galois group
    $G=\Gal(\bE/\bF)$.  Then $\bF\subseteq \bE$ is a Galois extension if and
    only if the map:
    \begin{center}
	\begin{tikzpicture}
	    \matrix[matrix of nodes,column sep=1cm]{
	    |(A)| $\can:\bE\otimes_{\bF}\bE$	& |(B)| $\Map(G,\bE)$\\
	    |(C)| $\phantom{\can:}e_1\otimes_\bF e_2$		& |(D)| $\hspace{2mm}(g\mapsto e_1g(e_2))$\\
	    };
	    \begin{scope}
	    \draw[->,>=angle 60,thick] (A) -- (B);
	    \draw[|->,rotate=-90,>=angle 60,thick] (C) -- (D);
	    \end{scope}
	\end{tikzpicture}
    \end{center}
    is a bijection.
\end{theorem}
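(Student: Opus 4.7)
The plan is to prove the biconditional by a dimension count together with Dedekind's linear-independence argument, essentially mirroring the proof of Proposition~\ref{prop:can_H_for_field_ext} (to which the statement reduces once one identifies $\Map(G,\bE)$ with $\bE\otimes\bF[G]^*$ via $f\mapsto\sum_{g\in G}f(g)\otimes\delta_g$, where $\{\delta_g\}$ is the dual basis in $\bF[G]^*$; under this identification the map $\can$ becomes the Hopf--Galois canonical map attached to the $\bF[G]^*$-coaction on $\bE$ induced, via Proposition~\ref{prop:H-comodule_algebras}, by the action of $G$ on $\bE$ by field automorphisms).

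For the direction \emph{bijective $\Rightarrow$ Galois}: both sides are finite-dimensional $\bF$-vector spaces of dimensions $\dim_\bF(\bE\otimes_\bF\bE)=[\bE:\bF]^2$ and $\dim_\bF\Map(G,\bE)=|G|\cdot[\bE:\bF]$, so bijectivity of $\can$ forces $|G|=[\bE:\bF]$, which is the classical numerical criterion for $\bE/\bF$ being Galois.

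For the converse \emph{Galois $\Rightarrow$ bijective}, I would assume $\bE/\bF$ is Galois, set $n=[\bE:\bF]=|G|$, enumerate $G=\{g_1,\dots,g_n\}$ and fix an $\bF$-basis $\{e_1,\dots,e_n\}$ of $\bE$. Since the dimensions now agree, it suffices to prove injectivity. An element of $\ker\can$ has the form $\sum_i a_i\otimes e_i$ with $a_i\in\bE$, and evaluating at $g_j$ yields $\sum_i g_j(e_i)a_i=0$ for every $j$. The coefficient matrix $\mathbb{A}_{ji}=g_j(e_i)$ is invertible by Dedekind's lemma on the $\bE$-linear independence of distinct field automorphisms, so each $a_i=0$. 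This Dedekind-independence step is the only substantive ingredient in the whole argument, and it is precisely the one already invoked in the proof of Proposition~\ref{prop:can_H_for_field_ext}; the remainder is pure bookkeeping.
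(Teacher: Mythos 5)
Your proof is correct and follows essentially the same route the paper intends: the thesis explicitly omits the proof of Theorem~\ref{thm:coring} on the grounds that it is "very similar to the proof of Proposition~\ref{prop:can_H_for_field_ext}", and your argument — identifying \(\Map(G,\bE)\) with \(\bE\otimes\bF[G]^*\), counting dimensions to get \([\bE:\bF]=|G|\) in one direction, and combining the dimension count with Dedekind's independence of automorphisms for injectivity in the other — is precisely that proof transported to the coring formulation. Nothing is missing.
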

We will omit the proof as it is very similar to the proof of
Proposition~\ref{prop:can_H_for_field_ext} (on
page~\pageref{prop:can_H_for_field_ext}).
Furthermore, in~\cite[Proposition 2.5]{tm:galois-struct} it is shown that
$\can$ is an isomorphism of corings.
\index{coring}
\begin{definition}
    Let \(A\) be a ring (not necessarily commutative).  Then
    an \(A\)-\bold{coring} is a triple \((\mathcal{K},\Delta,\epsilon)\), where
    \(\mathcal{K}\) is an \(A\)-bimodule together with two \(A\)-bilinear
    maps: \(\Delta:\mathcal{K}\sir\mathcal{K}_A\otimes_A\mathcal{K}\) and
    \(\mathcal{K}\sir A\) such that the  following diagrams commute: 
    \begin{center}
	\begin{tikzpicture}
	    \matrix[column sep=1.5cm, row sep=1cm]{
		\node (A1) {\(\mathcal{K}\)};    & \node (A2) {\(\mathcal{K}{_A\otimes_A}\mathcal{K}\)};\\
		\node (B1) {\(\mathcal{K}{_A\otimes_A}\mathcal{K}\)}; & \node (B2) {\(\mathcal{K}{_A\otimes_A}\mathcal{K}{_A\otimes_A}\mathcal{K}\)};\\
	    };
	    \draw[->] (A1) --node[above]{\(\Delta\)} (A2);
	    \draw[->] (A1) --node[left]{\(\Delta\)} (B1);
	    \draw[->] (A2) --node[right]{\(\Delta{_A\otimes_A}\id_\mathcal{K}\)} (B2);
	    \draw[->] (B1) --node[below]{\(\id_\mathcal{K}{_A\otimes_A}\Delta\)} (B2);
	\end{tikzpicture}
    \end{center}
    \begin{center}
	\begin{tikzpicture}
	    \matrix[column sep=1cm,row sep=1cm]{
		\node (A1) {\(\mathcal{K}\)}; & \node (A2) {\(\mathcal{K}{_A\otimes_A}\mathcal{K}\)}; \\
		                    & \node (B) {\(A{_A\otimes_A}\mathcal{K}\)};\\
	    };
	    \draw[->] (A1) --node[above]{\(\Delta\)} (A2);
	    \draw[->] (A1) --node[below left]{\(\cong\)} (B);
	    \draw[->] (A2) --node[right]{\(\epsilon{_A\otimes_A}\id_\mathcal{K}\)} (B);
	\end{tikzpicture}
	\begin{tikzpicture}
	    \matrix[column sep=1cm,row sep=1cm]{
		\node (A1) {\(\mathcal{K}\)}; & \node (A2) {\(\mathcal{K}{_A\otimes_A}\mathcal{K}\)}; \\
		                              & \node (B) {\(\mathcal{K}{_A\otimes_A}A\)};\\
	    };
	    \draw[->] (A1) --node[above]{\(\Delta\)} (A2);
	    \draw[->] (A1) --node[below left]{\(\cong\)} (B);
	    \draw[->] (A2) --node[right]{\(\id_\mathcal{K}{_A\otimes_A}\epsilon\)} (B);
	\end{tikzpicture}
    \end{center}
    In other words an \(A\)-coring is a comonoid in the monoidal category of
    \(A\)-bimodules.  A morphism of corings
    \(f:(\mathcal{K},\Delta_\mathcal{K},\epsilon_\mathcal{K})\ir(\mathcal{K}',\Delta_{\mathcal{K}'},\epsilon_{\mathcal{K}'})\)
    is an \(A\)-bilinear map \(f:\mathcal{K}\sir\mathcal{K}'\) such that:
    \(\Delta_{\mathcal{K'}}\circ f = f\otimes f\circ \Delta_\mathcal{K}\) and
    \(\epsilon_{\mathcal{K}'}\circ f=\epsilon_\mathcal{K}\).
\end{definition}
\index{coring!Sweedler coring}
\begin{example}\label{ex:corings}
    Here we give two important examples of corings.
    \begin{enumerate}
	\item Let $\bE/\bF$ a be field extension then $\bE\otimes_\bF\bE$ is
	      a coring, called the \emph{Sweedler coring}, with the
	      comultiplication:
	      \[\Delta:\bE\otimes_\bF\bE\ni e'\otimes_\bF e\lelmap{} (e\otimes_\bF 1_\bE)\otimes_\bE(1_\bE\otimes_\bF e')\in (\bE\otimes_\bF\bE)\otimes_\bE(\bE\otimes_\bF\bE)\] 
	      and counit map $m:\bE\otimes_\bF\bE\mpr{}\bE$, the multiplication
	      map of $\bE$.  
	\index{field extension!coring|see{coring!field extension}}
	\index{coring!field extension}
	\item Let \(\bE/\bF\) be a field extension and let $G$ be a finite
	      subgroup of $\Gal(\bE/\bF)$.  Then $\Map(G,\bE)$ is a coring
	      with comultiplication induced by the multiplication of $G$,
	      $m:G\times G\mpr{}G$.
	      \[\Delta:\Map(G,\bE)\lmpr{m^*}\Map(G\times G,\bE)\simeq \Map(G,\bE)\otimes_\bE\Map(G,\bE)\]
	      where $\Map(G,\bE)\otimes_\bE\Map(G,\bE)\simeq \Map(G\times
	      G,\bE)$ is given by
	      \[\phi_1\otimes_\bE\phi_2\mapsto \Big((g_1,g_2)\;\mapsto\;\phi_1(g_1)g_2\big(\phi_2(g_2)\big)\Big)\] 
	      and the counit $\epsilon:\Map(G,\bE)\mpr{}\bE$ given by
	      $\epsilon(\phi)=\phi(e_G)$, where $e_G$ denotes the identity of
	      the group $G$.
    \end{enumerate}
\end{example}
The coring $\Map(G,\bE)$ was introduced by T.~Maszczyk.  He observed that the
canonical map $\can$ of Theorem~\ref{thm:coring} is a morphism of corings
(see~\cite[Proposition~2.5]{tm:galois-struct}).
\begin{proposition}
Let $\bF\subseteq \bE$ be a finite field extension and $G\leq Gal(\bE/\bF)$ then the map 
\begin{center}
    \begin{tikzpicture}
	\matrix[matrix of nodes,column sep=1cm]{
	    |(A)| $\can:\bE\otimes_{\bF}\bE$          & |(B)| $\Map(G,\bE)$\\
	    |(C)| $\phantom{\can:}e_1\otimes_\bF e_2$ & |(D)| $\hspace{2mm}(g\mapsto e_1g(e_2))$\\
	};
	\begin{scope}
	    \draw[->,>=angle 60,thick] (A) -- (B);
	    \draw[|->,rotate=-90,>=angle 60,thick] (C) -- (D);
	\end{scope}
    \end{tikzpicture}
\end{center}
is a morphism of corings.
\end{proposition}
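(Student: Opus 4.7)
The plan is to verify the three defining properties of a coring morphism in turn: $\bE$-bilinearity (well-definedness over the tensor product), compatibility with counits, and compatibility with comultiplications. Each reduces to a direct unwinding of the coring data once the $\bE$-bimodule structures on both sides are made explicit.

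First I fix the bimodule structures. On the Sweedler coring $\bE \otimes_\bF \bE$ the left and right actions act on the respective tensor factors. On $\Map(G, \bE)$, the left action is pointwise, $(e\cdot\phi)(g) = e\,\phi(g)$, while the right action must be twisted by the Galois action, $(\phi\cdot e)(g) = \phi(g)\cdot g(e)$; this is forced in order that the $\bE$-balanced tensor product $\Map(G,\bE)\otimes_\bE\Map(G,\bE)$ be consistent with the identification $\Map(G,\bE)\otimes_\bE\Map(G,\bE)\cong\Map(G\times G,\bE)$ used in Example~\ref{ex:corings}. Well-definedness of $\can$ over $\bE\otimes_\bF\bE$ then follows because each $g\in G$ fixes $\bF$, so $g(\alpha e_2)=\alpha g(e_2)$ for $\alpha\in\bF$. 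Bilinearity is immediate: $\can(e\,e_1\otimes_\bF e_2)(g)=e\,e_1 g(e_2)=(e\cdot\can(e_1\otimes_\bF e_2))(g)$ and $\can(e_1\otimes_\bF e_2 e)(g)=e_1 g(e_2)g(e)=(\can(e_1\otimes_\bF e_2)\cdot e)(g)$.

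Counit compatibility is then transparent: the counit of the Sweedler coring is the multiplication $m(e_1\otimes_\bF e_2)=e_1e_2$, while the counit on $\Map(G,\bE)$ is evaluation at the identity $e_G$, and $e_G$ acts as the identity automorphism, so $\epsilon_{\Map}(\can(e_1\otimes_\bF e_2))=e_1\,e_G(e_2)=e_1e_2$. For comultiplicativity I evaluate both sides at an arbitrary $(g_1,g_2)\in G\times G$ through the isomorphism $\Map(G,\bE)\otimes_\bE\Map(G,\bE)\cong\Map(G\times G,\bE)$. On one side, $\Delta_{\Map}(\can(e_1\otimes_\bF e_2))(g_1,g_2)=\can(e_1\otimes_\bF e_2)(g_1 g_2)=e_1\,(g_1 g_2)(e_2)$. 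On the other, applying $\can\otimes\can$ to $\Delta_{\mathrm{Sw}}(e_1\otimes_\bF e_2)=(e_1\otimes_\bF 1)\otimes_\bE(1\otimes_\bF e_2)$ yields the pair of functions $(g_1\mapsto e_1)\otimes_\bE(g_2\mapsto g_2(e_2))$, whose image in $\Map(G\times G,\bE)$ is $(g_1,g_2)\mapsto e_1\cdot g_1(g_2(e_2))$. The two expressions coincide by the group action axiom $(g_1g_2)(e_2)=g_1(g_2(e_2))$.

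The main subtlety lies in pinning down the twisted right $\bE$-action on $\Map(G,\bE)$ together with the consequent form of the isomorphism $\Map(G,\bE)\otimes_\bE\Map(G,\bE)\cong\Map(G\times G,\bE)$; without the twist the $\bE$-balance would conflict with the Galois action appearing inside $\can$, and the comultiplication on $\Map(G,\bE)$ (induced by $m^{*}$) would not match $\Delta_{\mathrm{Sw}}$ under $\can$. Once this bookkeeping is in place, every remaining verification reduces to the associativity of the group action, and so no serious computation is required.
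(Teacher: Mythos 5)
Your proof is correct and complete. The paper itself gives no proof of this proposition---it simply defers to Maszczyk's article---so there is no argument to compare against; your direct verification (well-definedness over $\otimes_\bF$ from $G\leq\Gal(\bE/\bF)$, bilinearity against the twisted right $\bE$-action, counit compatibility via $e_G=\mathrm{id}$, and comultiplicativity reducing to $(g_1g_2)(e_2)=g_1(g_2(e_2))$) is exactly the right content and is carried out correctly. One remark: you rightly identify that the isomorphism $\Map(G,\bE)\otimes_\bE\Map(G,\bE)\cong\Map(G\times G,\bE)$ must send $\phi_1\otimes_\bE\phi_2$ to $(g_1,g_2)\mapsto\phi_1(g_1)\,g_1(\phi_2(g_2))$, since only this version is balanced over the twisted right action and makes $\Delta_{\Map}$ match $\Delta_{\mathrm{Sw}}$ under $\can$; the formula printed in the paper's example, with $g_2$ applied to $\phi_2(g_2)$, is a typo, and your bookkeeping in effect corrects it.
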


Under the assumption that \(\bF\subseteq\bE\) is a finite Galois extension
with the Galois group \(Gal(\bE/\bF)=G\) the coring \(\Map(G,\bE)\) can be
realised by many Hopf algebras, i.e. there are many Hopf algebras \(H\) such
that \(\Map(G,\bE)\simeq \bE\otimes H\) as \(\bE\)-corings.  Let \(\bE/\bF\)
be a finite separable field extension. Then Hopf algebras \(H\) for which
\(\bE/\bF\) is \(H\)-Galois are classified as forms of \(\widetilde\bE[G]\),
where \(\widetilde\bE\) is the normal closure of \(\bE\) and
\(G\coloneq\Gal(\widetilde\bE/\bF)/\Gal(\widetilde\bE/\bE)\)
(see~\cite{cg-bp:separable_field_extensions}). 

The canonical map of corings \mbox{$\bE\otimes_\bF \bE\mpr{}\Map(G,\bE)$} is
closely related to the canonical map of the $\bK[G]^*$-Hopf--Galois extension
$\bF\subseteq \bE$ (where \(\bK\subseteq\bF\) is a finite field extension).
The \(\bK[G]^*\)-comodule structure map
\(\delta:\bE\mpr{}\bE\otimes_\bK\bK[G]^*\) is given by
\begin{center}
    \begin{tikzpicture}
	\matrix[matrix of math nodes,column sep=1cm]{
	    |(A)| \phantom{\otimes_\bF}\bE & |(B)| \bE\otimes_{\bF}\bE & |(C)| \Map(G,\bE) & |(D)| \bE\otimes_\bK \bK[G]^*\\
	    |(E)| \phantom{\otimes_\bF}e   & |(F)| 1\otimes_\bF e      & |(G)| (g\mapsto g(e))   & |(H)| e_{(0)}\otimes e_{(1)}\\
	};
	\begin{scope}
	    \draw[->,>=angle 60,thick] (A) -- (B); 
	    \draw[->,>=angle 60,thick] (B) -- node[above,pos=.4]{$\can$} (C);
	    \draw[->,>=angle 60,thick] (C) -- node[above]{$\simeq$} (D);
	    \draw[|->,>=angle 60,thick] ($(F)-(1.7cm,0)$) -- (F); 
	    \draw[|->,>=angle 60,thick] (F) -- +(1.8cm,0);
	    \draw[|->,>=angle 60,thick] (G) -- +(2.2cm,0);
	\end{scope}
    \end{tikzpicture}
\end{center}
where \(G=\Gal(\bE/\bF)\) and the last isomorphism is given by
\[\bE\otimes_\bK \bK[G]^*\sir{}\Map(G,\bE),\quad e\otimes f\selmap{}\bigl(G\ni g\selmap{}f(g)e\in\bE\bigr)\]
The coaction \(\delta\) is a \(\bK\)-linear algebra homomorphism.  The
composition:
\[\bE\otimes_{\bF}\bE\mpr{\can}\Map(G,\bE)\cong\bE\otimes_\bF\bK[G]^*\]
is the canonical map of \(\bE\) as a \(\bK[G]^*\)-comodule algebra.  Thus we
can realise the isomorphism of corings \(\can\) as a canonical map of
a Hopf--Galois extension. Note that there is no canonical Hopf algebra:
\(\bK\) was any field such that \(\bF\) is its finite extension.  For an
infinite group \(G\) we cannot use the canonical coring \(\Map(G,\bE)\) but we
can still use Hopf algebras. 

\section{Lattices of subcorings and quotient corings}\label{sec:corings_lattices}
First we focus on the lattices of subcorings and quotient corings.  We show
that both of them are complete. Then  we prove that there is an epimorphism
from the lattice of subcorings of the coring \(\Map(G,\bE)\), considered in
the previous section, to the lattice of congruences of \(G\) when considered
as a semigroup.  We also show that there is an epimorphism from the lattice of
coideals of \(\Map(G,\bE)\) to the lattice of submonoids of \(G\).

\index{subcoring}
\index{coring!subcoring}
\begin{definition}
    A \textbf{subcoring} $\mathcal{K}'$ of an $A$-coring $\mathcal{K}$ is an
    $A$-coring $\mathcal{K}'$ such that $\mathcal{K}'$ is a subbimodule of
    $\mathcal{K}$ and the inclusion $\mathcal{K}'\subseteq \mathcal{K}$ is
    a morphism of $A$-corings.
\end{definition}
\noindent If $\mathcal{K}$ is pure as a left and right $A$-module then a subcoring is
a subbimodule such that $\Delta|_{\mathcal{K}'}$ takes values in $\mathcal{K}'{_A\otimes_A} \mathcal{K}'\subseteq
\mathcal{K}{_A\otimes_A} \mathcal{K}$.
\index{coring!lattice of subcorings}
\begin{proposition}
    Subcorings of an $A$-coring $\mathcal{K}$ form a complete lattice ordered by inclusion.
\end{proposition}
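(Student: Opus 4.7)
The plan is to mirror the strategy used earlier for subcoalgebras in Theorem~\ref{thm:subcoalgebras_algebraic}: rather than verify the lattice axioms directly, I will show that the poset of subcorings of $\mathcal{K}$ is closed under arbitrary sums and contains a least element, and then invoke Remark~\ref{rem:complete_lattice} to conclude completeness. The join of a family $(\mathcal{K}_i)_{i\in I}$ of subcorings will be defined to be the sum $\sum_{i\in I}\mathcal{K}_i$ of $A$-subbimodules, while the meet will be defined indirectly as
\[
\bigwedge_{i\in I}\mathcal{K}_i \coloneq \sum\bigl\{\mathcal{K}'\subseteq\mathcal{K}:\mathcal{K}'\text{ is a subcoring and }\mathcal{K}'\subseteq\bigcap_{i\in I}\mathcal{K}_i\bigr\},
\]
which is well defined once joins are shown to exist, provided the set on the right-hand side is non-empty, which it is because the zero $A$-subbimodule is always a subcoring (the structure maps are forced to be zero and the coassociativity and counit diagrams commute vacuously).

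The main step is therefore to check that $\sum_{i\in I}\mathcal{K}_i$ is a subcoring of $\mathcal{K}$. It is clearly an $A$-subbimodule, and it carries a counit by restriction of $\epsilon_{\mathcal{K}}$. The crucial point is to verify that the comultiplication $\Delta_{\mathcal{K}}$ restricts appropriately, i.e.\ that
\[
\Delta_{\mathcal{K}}\Bigl(\sum_{i\in I}\mathcal{K}_i\Bigr)\subseteq\Bigl(\sum_{i\in I}\mathcal{K}_i\Bigr)\otimes_A\Bigl(\sum_{i\in I}\mathcal{K}_i\Bigr).
\]
This is straightforward for an element $x=\sum_k x_{i_k}$ with $x_{i_k}\in\mathcal{K}_{i_k}$, since by $A$-bilinearity $\Delta_{\mathcal{K}}(x)=\sum_k\Delta_{\mathcal{K}}(x_{i_k})$, and each summand lies in $\mathcal{K}_{i_k}\otimes_A\mathcal{K}_{i_k}$ by the subcoring assumption on $\mathcal{K}_{i_k}$; the canonical map $\mathcal{K}_{i_k}\otimes_A\mathcal{K}_{i_k}\sir(\sum_i\mathcal{K}_i)\otimes_A(\sum_i\mathcal{K}_i)$ induced by inclusion then places everything in the desired subbimodule. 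The restrictions of $\Delta_{\mathcal{K}}$ and $\epsilon_{\mathcal{K}}$ automatically satisfy coassociativity and counitality, since these identities hold in $\mathcal{K}$.

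Once arbitrary joins are shown to exist and a bottom element $\{0\}$ (as well as the top element $\mathcal{K}$) is identified, Remark~\ref{rem:complete_lattice} (page~\pageref{rem:complete_lattice}) gives meets by the formula above and completeness of $(\Sub_{\textit{coring}}(\mathcal{K}),\wedge,\vee)$. The only genuine subtlety, which I expect to be the main obstacle, is the interaction between the sum of subbimodules and the tensor product over $A$: the canonical map
\[
\sum_{i\in I}\bigl(\mathcal{K}_i\otimes_A\mathcal{K}_i\bigr)\ir\Bigl(\sum_{i\in I}\mathcal{K}_i\Bigr)\otimes_A\Bigl(\sum_{i\in I}\mathcal{K}_i\Bigr)
\]
need not be injective in general, but this is harmless here because we only need that $\Delta_{\mathcal{K}}(x_{i_k})$ has a representative in the left-hand side whose image under this map equals $\Delta_{\mathcal{K}}(x_{i_k})$ viewed in $\mathcal{K}\otimes_A\mathcal{K}$; purity assumptions, as in the remark after the definition of a subcoring, make the verification particularly transparent, but they are not strictly required for the conclusion that the sum is closed under $\Delta_{\mathcal{K}}$ in the sense required by the definition of a subcoring.
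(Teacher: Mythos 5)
Your strategy coincides with the paper's: joins are sums of $A$-subbimodules, the bottom element $\{0\}$ together with Remark~\ref{rem:complete_lattice} yields arbitrary meets by the formula you give, and the only real work is showing that a sum of subcorings is again a subcoring. On the lattice-theoretic side you are in fact more explicit than the paper, which only treats binary sums and leaves arbitrary joins, the least element and the meet formula implicit.

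The substantive step you skip is precisely the one the paper's proof is devoted to. A subcoring is required to carry its own coassociative counital comultiplication $\Delta':\mathcal{K}'\rightarrow\mathcal{K}'\otimes_A\mathcal{K}'$ compatible with the inclusion; by the remark following the definition this is equivalent to ``$\Delta_{\mathcal{K}}$ restricts'' only under purity, since $\mathcal{K}'\otimes_A\mathcal{K}'\rightarrow\mathcal{K}\otimes_A\mathcal{K}$ need not be injective. So for the sum one must actually produce a well-defined map $\sum_i\mathcal{K}_i\rightarrow\bigl(\sum_i\mathcal{K}_i\bigr)\otimes_A\bigl(\sum_i\mathcal{K}_i\bigr)$, and since the decomposition $x=\sum_k x_{i_k}$ is not unique, one has to check that $\sum_k\Delta_{i_k}(x_{i_k})$, viewed in $\bigl(\sum_i\mathcal{K}_i\bigr)\otimes_A\bigl(\sum_i\mathcal{K}_i\bigr)$, is independent of the chosen decomposition. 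The paper does this by factoring the map $\Delta'\oplus\Delta''$ on $\mathcal{K}'\oplus\mathcal{K}''$ through the quotient onto $\mathcal{K}'+\mathcal{K}''$: the kernel is spanned by the pairs $(l,-l)$ with $l\in\mathcal{K}'\cap\mathcal{K}''$, and these are killed because $\Delta'(l)$ and $\Delta''(l)$ have equal images in $\mathcal{K}\otimes_A\mathcal{K}$ and the canonical map $(\mathcal{K}'+\mathcal{K}'')\otimes_A(\mathcal{K}'+\mathcal{K}'')\rightarrow\mathcal{K}\otimes_A\mathcal{K}$ is asserted there to be injective. Your closing comment that the non-injectivity of $\sum_i(\mathcal{K}_i\otimes_A\mathcal{K}_i)\rightarrow\bigl(\sum_i\mathcal{K}_i\bigr)\otimes_A\bigl(\sum_i\mathcal{K}_i\bigr)$ is ``harmless'' concedes exactly the point at issue: exhibiting a preimage of $\Delta_{\mathcal{K}}(x)$ shows only that the image of $\Delta_{\mathcal{K}}$ lands in the image of the tensor square of the sum, which is the purity-dependent reformulation, not the definition. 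Supplying the well-definedness check closes the gap; coassociativity and counitality then follow as you say, modulo the analogous injectivity into $\mathcal{K}\otimes_A\mathcal{K}\otimes_A\mathcal{K}$, a point the paper also passes over.
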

\begin{proof}
    Let $\mathcal{K}'$ and $\mathcal{K}''$ be subcorings with structure maps
    $\Delta',\epsilon'$ and $\Delta'',\epsilon''$, respectively.  Then
    $\mathcal{K}'+\mathcal{K}''$ is a subcoring of the coring $\mathcal{K}$:  
    \begin{center}
	\begin{tikzpicture}
	    \matrix[matrix of math nodes,column sep=1cm,row sep=5mm]{
		|(A)| (\mathcal{K}'\otimes \mathcal{K}')\oplus (\mathcal{K}''\otimes \mathcal{K}'') & |(B)| (\mathcal{K}'\oplus \mathcal{K}'')\otimes (\mathcal{K}'\oplus \mathcal{K}'') \\
		|(C)| \mathcal{K}'\oplus \mathcal{K}''                                  & \\
		|(E)| \mathcal{K}'+\mathcal{K}''                                   & |(D)| (\mathcal{K}'+\mathcal{K}'')\otimes (\mathcal{K}'+\mathcal{K}'') \\
		|(F)| \mathcal{K}                                                  & |(G)| \mathcal{K}\otimes \mathcal{K} \\
	    };	
	    \begin{scope}
		\draw[->] (A) -- (B);
		\draw[->] (C) --node[left] {$\Delta'\oplus\Delta''$} (A);
		\draw[->] (B) -- (D);
		\draw[->] (D) --node[right] {$i\otimes i$} (G);
		\draw[<-] (E) -- (C);
		\draw[->,dashed] (E) --node[above]{$\exists$} (D);
		\draw[->] (F) --node[below]{$\Delta$} (G);
		\draw[->] (E) --node[left]{$i$} (F);
	    \end{scope}
	\end{tikzpicture}
    \end{center}
    The kernel of the composition $f:\mathcal{K}'\oplus
    \mathcal{K}''\mpr{}(\mathcal{K}'+\mathcal{K}'')\otimes(\mathcal{K}'+
    \mathcal{K}'')$ contains the kernel of the map $\mathcal{K}'\oplus
    \mathcal{K}''\mpr{}\mathcal{K}'+\mathcal{K}''$.  Thus the map
    $\mathcal{K}'+\mathcal{K}''\mpr{}(\mathcal{K}'+\mathcal{K}'')\otimes(\mathcal{K}'+
    \mathcal{K}'')$ exists.  Let $(l,-l)\in\ker(\mathcal{K}'\oplus
    \mathcal{K}''\mpr{}\mathcal{K}'+\mathcal{K}'')$ (where $l\in
    \mathcal{K}'\cap \mathcal{K}''$).  Then
    $f(l,-l)=(l_{\mathit{(1)'}}-l_{\mathit{(1)''}})\otimes(l_{\mathit{(2)'}}-l_{\mathit{(2)''}})$,
    where we write $\Delta'(l)=l_{\mathit{(1)'}}\otimes l_{\mathit{(2)'}}$ and
    $\Delta''(l)=l_{\mathit{(1)''}}\otimes l_{\mathit{(2)''}}$.  Then
    $i\otimes i\circ f(l,-l)=0$.  It remains to observe that the map $i\otimes
    i:(\mathcal{K}'+\mathcal{K}'')\otimes(\mathcal{K}'+\mathcal{K}'')\mpr{}\mathcal{K}\otimes
    \mathcal{K}$ is a monomorphism, which follows from the commutativity of
    the diagram: 
    \begin{center}
	\begin{tikzpicture}
	    \matrix[matrix of math nodes, column sep=1.5cm,row sep=5mm]{
		|(A)| \mathcal{K}'+\mathcal{K}'' & |(B)| (\mathcal{K}'+\mathcal{K}'')\otimes(\mathcal{K}'+\mathcal{K}'') \\
		|(C)| \mathcal{K}                & |(D)| \mathcal{K}\otimes \mathcal{K} \\
	    };
	    \begin{scope}
		\draw[->] (A) --node[above]{$\Delta'+\Delta''$} (B);	
		\draw[->] (A) --node[left]{$i$} (C);  
		\draw[->] (B) --node[right]{$i\otimes i$} (D);  
		\draw[->] (C) --node[below]{$\Delta$} (D);	
	    \end{scope}
	\end{tikzpicture}
    \end{center}
\end{proof}
\index{coring!coideal}
\begin{definition}
    Let \(\mathcal{K}\) be an \(A\)-coring.  Then a coideal of \(\mathcal{K}\)
    is a kernel of a morphism of \(A\)-corings.
\end{definition}
If \(I\) is pure as a left and right \(A\)-module then a coideal is an
\(A\)-subbimodule \(I\) such that \(\Delta(I)\subseteq I{_A\otimes_A}
    \mathcal{K}+\mathcal{K}{_A\otimes_A}I\) and \(I\subseteq\ker\epsilon\).
Note that by~\cite[17.17]{tb-rw:corings-and-comodules} the above definition is
equivalent to~\cite[17.14]{tb-rw:corings-and-comodules}, where coideals are
defined as kernels of morphisms of corings which are onto.
\index{coring!lattice of quotients}
\begin{proposition}
    Let \(\mathcal{K}\) be an \(A\)-coring.  Then the poset
    \(\Quot(\mathcal{K})\) of quotient \(A\)-corings is complete. 
\end{proposition}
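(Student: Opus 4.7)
The strategy is to mirror the argument used for coalgebras in Proposition~\ref{prop:coid-complete}, exploiting the anti-isomorphism $\Quot(\mathcal{K})\cong\coId(\mathcal{K})^{op}$ induced by $\mathcal{K}/I_1\succcurlyeq\mathcal{K}/I_2\iff I_1\subseteq I_2$. By Remark~\ref{rem:complete_lattice}, it is enough to show that $\coId(\mathcal{K})$ is closed under arbitrary sums. So I would take a family $(I_\lambda)_{\lambda\in\Lambda}$ of coideals, with the associated quotient corings $(\mathcal{K}/I_\lambda,\Delta_\lambda,\epsilon_\lambda)$ and coring projections $\pi_\lambda:\mathcal{K}\sir\mathcal{K}/I_\lambda$, set $I:=\sum_{\lambda\in\Lambda}I_\lambda$, and construct a coring structure on $\mathcal{K}/I$ making the bimodule projection $\pi:\mathcal{K}\sir\mathcal{K}/I$ a coring morphism.

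For the counit, since each $\pi_\lambda$ is a coring map we have $\epsilon(I_\lambda)=\epsilon_\lambda(\pi_\lambda(I_\lambda))=0$, so $\epsilon$ vanishes on the whole sum $I$ and factors through $\pi$ as some bimodule map $\bar\epsilon:\mathcal{K}/I\sir A$. For the comultiplication, I would show that $(\pi\otimes_A\pi)\circ\Delta$ vanishes on $I$. Given $x\in I$, write $x=\sum_{j=1}^n x_j$ with $x_j\in I_{\lambda_j}$. Because $I_{\lambda_j}\subseteq I$, the projection $\pi$ factors through $\pi_{\lambda_j}$ via some bimodule map $q_j:\mathcal{K}/I_{\lambda_j}\sir\mathcal{K}/I$, and since $\pi_{\lambda_j}$ is a coring map,
\[
(\pi\otimes_A\pi)\circ\Delta(x_j)=(q_j\otimes_A q_j)\circ(\pi_{\lambda_j}\otimes_A\pi_{\lambda_j})\circ\Delta(x_j)=(q_j\otimes_A q_j)\circ\Delta_{\lambda_j}(\pi_{\lambda_j}(x_j))=0.
\]
Summing over $j$ yields $(\pi\otimes_A\pi)\circ\Delta(x)=0$, so this composition descends to a bimodule map $\bar\Delta:\mathcal{K}/I\sir\mathcal{K}/I\otimes_A\mathcal{K}/I$.

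It remains to verify coassociativity and the counit axioms for $(\mathcal{K}/I,\bar\Delta,\bar\epsilon)$. Each of these is an equality of bimodule maps with domain $\mathcal{K}/I$, which lifts to the corresponding identity on $\mathcal{K}$ via the surjection $\pi$; since $\pi$ is an epimorphism in the category of $A$-bimodules, the identities transfer to the quotient. Hence $\mathcal{K}/I$ is a coring, $\pi$ is a coring morphism, and $I\in\coId(\mathcal{K})$. Applying Remark~\ref{rem:complete_lattice} then gives the completeness of $\coId(\mathcal{K})$ and dually of $\Quot(\mathcal{K})$.

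The main technical obstacle lies in the non-commutative setting: the tensor functor $-\otimes_A-$ is not exact in general, so one cannot directly argue, as one could for a field-based coalgebra, that $\pi\otimes_A\pi$ has kernel $I\otimes_A\mathcal{K}+\mathcal{K}\otimes_A I$ and thus appeal to the pure-submodule characterisation of coideals. The factorisation trick $\pi=q_j\circ\pi_{\lambda_j}$ sidesteps this by transferring the vanishing of the comultiplication through each $\pi_{\lambda_j}$ individually, using only that each $\mathcal{K}/I_{\lambda_j}$ is already a coring.
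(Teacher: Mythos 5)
Your proposal is correct and follows essentially the same route as the paper: reduce to showing the coideals are closed under arbitrary sums, build the coring structure on $\mathcal{K}/I$ by checking that $\epsilon$ and $(\pi\otimes_A\pi)\circ\Delta$ vanish on $I$ (the latter termwise via the quotient corings $\mathcal{K}/I_{\lambda_j}$), and transfer coassociativity and the counit axioms along the epimorphism $\pi$. The only difference is cosmetic — you make explicit the factorisation $\pi=q_j\circ\pi_{\lambda_j}$ that the paper leaves implicit, and your closing remark correctly identifies why the argument must route through the quotient corings rather than through a kernel computation for $\pi\otimes_A\pi$.
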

\begin{proof}
    It is enough to show that the dual poset of coideals of
    \(\mathcal{K}\) is complete.  In order to show this we prove that it is
    closed under arbitrary suprema.  Let \((J_i)_{i\in I}\) be a family of coideals of the
    coring \(\mathcal{K}\).  We let \(J\coloneq\sum_{i\in I}J_i\),
    \(\pi_i:\mathcal{K}\sir\mathcal{K}/J_i\) and
    \(\pi:\mathcal{K}\sir\mathcal{K}/J\) be the natural projections.
    According to~\cite[17.14]{tb-rw:corings-and-comodules} it is enough to
    show that there exists a coring structure on \(\mathcal{K}/J\).
    Clearly, we have a diagram:
    \begin{center}
	\begin{tikzpicture}
	    \matrix[column sep=1.5cm, row sep=9mm]{
		\node (A) {\(\mathcal{K}\)};   & \node (B) {\(\mathcal{K}{_A\otimes_A}\mathcal{K}\)};\\
		\node (C) {\(\mathcal{K}/J\)}; & \node (D) {\(\mathcal{K}/J{_A\otimes_A}\mathcal{K}/J\)};\\
	    };
	    \draw[->] (A)   --node[above]{\(\Delta\)} (B);
	    \draw[->>] (A)  --node[left]{\(\pi\)} (C);
	    \draw[->>] (B)  --node[right]{\(\pi\otimes\pi\)} (D);
	    \draw[->,dashed] (C) --node[below]{\(\overline\Delta\)} (D);
	\end{tikzpicture}
    \end{center}
    The map \(\overline{\Delta}\) exists since for any \(\sum_{i}x_i\in
	J\), where \(x_i\in J_i\,\forall i\in I\), we have
    \(\pi_i\otimes\pi_i\circ\Delta(x_i)=\Delta_i\circ\pi_i(x_i)=0\).  We
    get \(\pi\otimes\pi\circ\Delta(\sum_{i}x_i)=0\) and hence
    \(\ker\pi\subseteq\ker(\pi\otimes\pi\circ\Delta)\).  Furthermore,
    \(\overline{\Delta}\) is coassociative.  For this we consider the
    following commutative diagram:
    \begin{center}
	\begin{tikzpicture}
	    \matrix[column sep=1.5cm, row sep=1.2cm]{
		\node (A1) {\(\mathcal{K}\)};   & \node (A2) {\(\mathcal{K}^{\otimes_A2}\)};   & \node (A3) {\(\mathcal{K}^{\otimes_A3}\)};\\
		\node (B1) {\(\mathcal{K}/J\)}; & \node (B2) {\(\mathcal{K}/J^{\otimes_A2}\)}; & \node (B3) {\(\mathcal{K}/J^{\otimes_A3}\)};\\
	    };
	    \draw[->] (A1)   --node[above]{\(\Delta\)} (A2);
	    \draw[->] ($(A2)+(5mm,1mm)$) --node[above]{\(\Delta\otimes\id\)} ($(A3)+(-5mm,1mm)$);
	    \draw[->] ($(A2)+(5mm,-0.7mm)$) --node[below]{\(\id\otimes\Delta\)} ($(A3)+(-5mm,-0.7mm)$);

	    \draw[->>] (A1)  --node[left]{\(\pi\)} (B1);
	    \draw[->>] (A2)  --node[left]{\(\pi^{\otimes2}\)} (B2);
	    \draw[->>] (A3)  --node[right]{\(\pi^{\otimes3}\)} (B3);

	    \draw[->] (B1) --node[below]{\(\overline\Delta\)} (B2);
	    \draw[->] ($(B2)+(7mm,1mm)$) --node[above]{\(\overline{\Delta}\otimes\id\)} ($(B3)+(-7mm,1mm)$);
	    \draw[->] ($(B2)+(7mm,-0.7mm)$) --node[below]{\(\id\otimes\overline{\Delta}\)} ($(B3)+(-7mm,-0.7mm)$);
	\end{tikzpicture}
    \end{center}
    Since \(\pi\) is an epimorphism and both
    \(\overline{\Delta}\otimes\id\circ\overline{\Delta}\) and
    \(\id\otimes_A\overline{\Delta}\circ\overline{\Delta}\) makes the outer
    diagrams commute, they must be equal.  The counit is constructed in
    a similar way:
    \begin{center}
	\begin{tikzpicture}
	    \matrix[column sep=1cm,row sep=5mm]{
		\node (A) {\(\mathcal{K}\)};   & \\
		& \node (B) {\(A\)};\\
		\node (C) {\(\mathcal{K}/J\)}; & \\
	    };
	    \draw[->] (A) --node[above right]{\(\epsilon\)} (B);
	    \draw[->,dashed] (C) --node[below right]{\(\overline{\epsilon}\)} (B);
	    \draw[->>] (A) --node[left]{\(\pi\)} (C);
	\end{tikzpicture}
    \end{center}
    The counit \(\overline{\epsilon}\) exists and is unique such that the
    above diagram commutes since \(\ker\pi\subseteq\ker\epsilon\).  The
    counit axiom can be proved by considering the following diagram:
    \begin{center}
	\begin{tikzpicture}
	    \begin{scope}[style={>=angle 60,thin}]
		\node (A1) at (0cm,0cm) {\(\mathcal{K}/J^{\otimes_A2}\)};
		\node (A2) at (2cm,1.75cm) {\(\mathcal{K}/J\)};
		\node (A3) at (4cm,0cm) {\(\mathcal{K}/J\)};

		\draw[->] ($(A1)+(7mm,1mm)$) --node[above]{\(\overline{\epsilon}\otimes\id\)} ($(A3)+(-3.9mm,1mm)$);
		\draw[->] ($(A1)+(7mm,-0.5mm)$) --node[below]{\(\id\otimes\overline{\epsilon}\)} ($(A3)+(-3.9mm,-0.5mm)$);
		\draw[->] (A2) --node[above left]{\(\overline{\Delta}\)} (A1);
		\draw[->] (A2) --node[above right]{\(=\)} (A3);

		\node (B1) at (0cm,3.5cm) {\(\mathcal{K}^{\otimes_A2}\)};
		\node (B2) at (2cm,5.25cm) {\(\mathcal{K}\)};
		\node (B3) at (4cm,3.5cm) {\(\mathcal{K}\)};

		\draw[->] ($(B1)+(7mm,1mm)$) --node[above]{\(\epsilon\otimes\id\)} ($(B3)+(-3.9mm,1mm)$);
		\draw[->] ($(B1)+(7mm,-0.5mm)$) --node[below]{\(\id\otimes\epsilon\)} ($(B3)+(-3.9mm,-0.5mm)$);
		\draw[->] (B2) --node[above left]{\(\Delta\)} (B1);
		\draw[->] (B2) --node[above right]{\(=\)} (B3);

		\draw[->>] (B1) -- (A1);
		\draw[->>] (B2) -- (A2);
		\draw[->>] (B3) -- (A3);
	    \end{scope}
	\end{tikzpicture}
    \end{center}
    where all the vertical arrows are the respective projection maps \(\pi\).
    From the above diagram we deduce that
    \begin{align*}
	\overline{\epsilon}\otimes\id\circ\overline{\Delta}\circ\pi & = \pi\\
	\id\otimes\overline{\epsilon}\circ\overline{\Delta}\circ\pi & = \pi\\
    \end{align*}
    The map \(\pi\) is an epimorphism, thus \(\overline{\epsilon}\) is
    indeed the counit.  Clearly, \(J\) is the join of \((J_i)_{i\in I}\) in
    \(\coId(\mathcal{K})\).  Hence the poset \(\coId(\mathcal{K})\) is
    complete with respect to the join and thus it is a complete lattice.
\end{proof}
\begin{proposition}\label{prop:ge1}
    There is a Galois epimorphism from the lattice of subcorings of
    $\Map(G,\bE)$ to the lattice of congruences of $G$, denoted as $\Con(G)$,
    where $G$ is considered as a semigroup.
\end{proposition}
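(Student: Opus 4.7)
I will construct an antitone Galois connection $(\Phi,\Psi)$ between $\Sub_{\mathit{Coring}}(\Map(G,\bE))$ and $\Con(G)$ and then show that the left component $\Phi$ is surjective. The right adjoint is
\[
\Psi:\Con(G)\sir\Sub_{\mathit{Coring}}(\Map(G,\bE)),\quad \Psi(\theta):=\Map(G/\theta,\bE),
\]
embedded in $\Map(G,\bE)$ as the subspace of maps factoring through the quotient semigroup map $p_\theta:G\sir G/\theta$. Because $\theta$ is a semigroup congruence, $G/\theta$ inherits a semigroup structure and $p_\theta$ is a semigroup homomorphism, so pullback along $p_\theta$ is a morphism of $\bE$-corings (it intertwines the comultiplications $m^*$ and the counits $\mathrm{ev}_{e_G}$); hence $\Psi(\theta)$ is a subcoring. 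Clearly $\theta\subseteq\theta'$ implies $\Psi(\theta')\subseteq\Psi(\theta)$, so $\Psi$ is antimonotone.

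The left adjoint is
\[
\Phi:\Sub_{\mathit{Coring}}(\Map(G,\bE))\sir\Con(G),\quad \Phi(\mathcal{K}'):=\bigl\{(g_1,g_2)\in G\times G:\,\phi(g_1)=\phi(g_2)\ \forall\,\phi\in\mathcal{K}'\bigr\}.
\]
This is evidently an equivalence relation; the content is to show it is a semigroup congruence. Suppose $(g_1,g_2),(h_1,h_2)\in\Phi(\mathcal{K}')$ and $\phi\in\mathcal{K}'$. Since $\mathcal{K}'$ is a subcoring, $\Delta(\phi)\in\mathcal{K}'{_\bE\otimes_\bE}\mathcal{K}'$, so we may write $\Delta(\phi)=\sum_i\psi_i\otimes\chi_i$ with $\psi_i,\chi_i\in\mathcal{K}'$. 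Using the coring isomorphism $\Map(G,\bE){_\bE\otimes_\bE}\Map(G,\bE)\cong\Map(G\times G,\bE)$ together with the defining identity $\Delta(\phi)(g,h)=\phi(gh)$, the equation $\phi(g_1h_1)=\Delta(\phi)(g_1,h_1)$ becomes a bilinear expression in the values $\psi_i(g_1)$ and $\chi_i(h_1)$; substituting $\psi_i(g_1)=\psi_i(g_2)$ and $\chi_i(h_1)=\chi_i(h_2)$ (available by the congruence hypothesis) yields $\phi(g_1h_1)=\phi(g_2h_2)$. Antimonotonicity of $\Phi$ is immediate: a bigger $\mathcal{K}'$ imposes more constraints.

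The Galois property $\mathcal{K}'\subseteq\Psi(\theta)\iff\theta\subseteq\Phi(\mathcal{K}')$ unfolds to the tautology that ``every $\phi\in\mathcal{K}'$ is constant on $\theta$-classes'' is equivalent to ``$(g_1,g_2)\in\theta$ implies $\phi(g_1)=\phi(g_2)$ for all $\phi\in\mathcal{K}'$''. To obtain surjectivity of $\Phi$, I will prove $\Phi\circ\Psi=\mathrm{id}_{\Con(G)}$. The inclusion $\theta\subseteq\Phi(\Psi(\theta))$ is the Galois property applied to $\mathcal{K}'=\Psi(\theta)$; for the reverse inclusion, given $[g_1]\neq[g_2]$ in $G/\theta$ the characteristic function on $\{[g_1]\}\subseteq G/\theta$ lies in $\Map(G/\theta,\bE)=\Psi(\theta)$ and separates $g_1$ from $g_2$, so $(g_1,g_2)\notin\Phi(\Psi(\theta))$. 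Since $\Psi$ is then a section of $\Phi$, the map $\Phi$ is a (split) epimorphism in the category of posets.

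The main obstacle is the congruence check for $\Phi(\mathcal{K}')$: one must carefully track the identification $\Map(G,\bE){_\bE\otimes_\bE}\Map(G,\bE)\cong\Map(G\times G,\bE)$ from Example~\ref{ex:corings}(ii) and the compatibility of the $\bE$-bimodule structure with the evaluations at elements of $G$, so that the bilinear expression arising from $\Delta(\phi)$ genuinely decouples into independent dependence on the first and second arguments. Once this identification is in hand, the remaining verifications are formal and amount to unpacking the definitions of a subcoring and of a Galois connection.
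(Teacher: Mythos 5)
Your overall strategy is the same as the paper's: the same pair of maps ($\Phi(\mathcal{K}')$ the relation ``not separated by $\mathcal{K}'$'', $\Psi(\theta)$ the image of $\Map(G/\theta,\bE)$ in $\Map(G,\bE)$), the same tautological Galois property, and the same use of an indicator function with values in $\bE$ to get $\Phi\circ\Psi=\id$. The one step where you go beyond the paper --- which merely declares that the two maps are well-defined antimonotonic morphisms of posets --- is the verification that $\Phi(\mathcal{K}')$ is a semigroup congruence, and that is exactly where your argument has a gap.

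The identification of Example~\ref{ex:corings}(ii) carries a Galois twist: $\phi_1\otimes_\bE\phi_2$ corresponds to $(g,h)\mapsto\phi_1(g)\,g\bigl(\phi_2(h)\bigr)$, the twist being forced if the map is to be balanced over $\bE$ for the right action $(\phi\cdot e)(g)=\phi(g)g(e)$. Hence, writing $\Delta(\phi)=\sum_i\psi_i\otimes\chi_i$ with $\psi_i,\chi_i\in\mathcal{K}'$, one has
\[
\phi(g_1h_1)=\sum_i\psi_i(g_1)\,g_1\bigl(\chi_i(h_1)\bigr),
\]
so the dependence on $g_1$ enters not only through the scalars $\psi_i(g_1)$ but also through the automorphism $g_1$ applied to $\chi_i(h_1)$. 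Your substitution therefore yields $\sum_i\psi_i(g_2)\,g_1(\chi_i(h_2))$, which is not $\phi(g_2h_2)$: the expression does \emph{not} decouple into independent dependence on the two arguments. This is precisely the obstacle you name at the end, but you never discharge it, and the naive decoupling is genuinely false. The repair uses the sub-\emph{bimodule} property of $\mathcal{K}'$ rather than only $\Delta(\phi)\in\mathcal{K}'\otimes_\bE\mathcal{K}'$: for every $e\in\bE$ the function $\psi_i\cdot e:g\mapsto\psi_i(g)g(e)$ again lies in $\mathcal{K}'$, so $(g_1,g_2)\in\Phi(\mathcal{K}')$ gives the stronger identity $\psi_i(g_1)g_1(e)=\psi_i(g_2)g_2(e)$ for all $e\in\bE$; taking $e=\chi_i(h_1)=\chi_i(h_2)$ makes the summands match and closes the computation. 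Be aware that the same twist affects the claim (asserted equally without proof by the paper) that the image of $\Map(G/\theta,\bE)$ is stable under the right $\bE$-action, so that step deserves the same scrutiny before one accepts that $\Psi(\theta)$ is a subcoring.
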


As a reminder, a congruence is an equivalence relation which is compatible with
all the algebraic operations, in this case, only with multiplication, i.e. if
\(g_1\) and \(h_1\) are congruent and \(g_2\) and \(h_2\) are then so is the
pair \(g_1g_2\) and \(h_1h_2\), where \(g_i,h_i\in G\), \(i=1,2\).  The lattice
of congruences of a semigroup is dually isomorphic to the lattice of its
quotient semigroups.

\begin{proof}
    Let $C$ be a subcoring of $\Map(G,\bE)$.  Then we define the corresponding
    congruence $\theta_C$ of $G$ as follows:
    \[x\theta_Cy\ \Leftrightarrow\ \mathop\forall\limits_{C\ni f:\,G\tikz[baseline=-.03cm]\draw[->,>=angle 60,thin] (0,2pt) -- (4mm,2pt); \bE}\ f(x)=f(y).\]
    For any $\theta\in \Con(G)$ we can associate a subcoring
    $\Map(G/\theta,\bE)$.  We identify $\Map(G/\theta,\bE)$ with its image under
    the map
    \[\Map(G/\theta,\bE)\lmpr{}\Map(G,\bE)\] 
    induced by the quotient map $G\mpr{}G/\theta$.  We use the following notation:
    \begin{align}
    \Sub_{\textit{coring}}(\Map(G,\bE)) & \lmpr{}\Con(G)\notag\\[-.2cm]    C                             & \lelmap{}\ \theta_C\notag\\[.1cm]
    \Con(G)                             & \lmpr{}\Sub_{\textit{coring}}(\Map(G,\bE))\notag\\[-.2cm]    \theta & \lelmap{}\ C_\theta\notag
    \end{align}
    It is straightforward to see that we have just defined antimonotonic
    morphisms of posets.  We check now the Galois epi property: $C\subseteq
    C_{\theta_C}$ and $\theta=\theta_{C_{\theta}}$.  Let us prove the first
    property: let $x\in C$ then it is easy to see that one can factorise the
    map $x$ through $G/\theta_C$ so $x$ belongs to $\Map(G/\theta_C,\bE)$ and
    thus $C\subseteq \Map(G/\theta_C,\bE)$.  Now we will show that
    $\theta=\theta_{C_\theta}$.  Let us suppose that $x\theta y$.  Then for
    all $f\in C_\theta=\Map(G/\theta,\bE)$, $f(x)=f(y)$ thus by the definition
    of $\theta_{C_\theta}$ we have $x\theta_{C_\theta}y$.  Now if
    $x\theta_{C_\theta}y$ and $x\,\neg\theta\, y$ then one can construct
    a map, as any field $\bE$ has at least two elements, from $G$ to $\bE$
    which factorises through $G/\theta$ and such that $f(x)\neq f(y)$.  But
    this contradicts our assumption $x\theta_{C_\theta}y$ so $x$ and $y$ must
    belong to the same congruence class of $\theta$.
\end{proof}
\begin{proposition}\label{prop:ge2}
    Let \(G\) be a finite group.  Then there exists a Galois epimorphism from
    the lattice of coideals of the coring \(\Map(G,\bE)\) to the lattice of
    submonoids of \(G\).
\end{proposition}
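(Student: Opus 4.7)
The plan is to repeat the basis-counting analysis of Proposition~\ref{prop:dual_gruop_algebra_quotients}, adapted to the coring setting: classify the $\bE$-subbimodules of $\Map(G,\bE)$ by subsets $S\subseteq G$, then identify which subsets give coideals. Using the Dirac basis $\delta_g(h)=\delta_{g,h}$, I will set up the two maps
\[
I\selmap{}M_I\coloneq\{g\in G:\delta_g\notin I\},\qquad M\selmap{}I_M\coloneq\bigoplus_{g\notin M}\bE\delta_g,
\]
and show they are mutually inverse antimonotonic bijections between $\coId(\Map(G,\bE))$ and the lattice of submonoids of $G$.

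The first and hardest step is to verify that every $\bE$-subbimodule of $\Map(G,\bE)$ is of the form $V_S\coloneq\bigoplus_{g\in S}\bE\delta_g$ for some $S\subseteq G$. Reading off the bimodule structure from the coring isomorphism $\bE\otimes_\bF\bE\cong\Map(G,\bE)$ of Theorem~\ref{thm:coring}, the left $\bE$-action is pointwise while the right action is twisted by the $G$-action, $(f\cdot e)(g)=f(g)\,g(e)$. Consequently each $\bE\delta_g$ is a $1$-dimensional simple subbimodule on which the right action factors through the field automorphism $g$. Since distinct elements of $G\leq\Gal(\bE/\bF)$ induce distinct automorphisms of $\bE$ (Dedekind's lemma on independence of characters), the family $\{\bE\delta_g\}_{g\in G}$ consists of pairwise non-isomorphic simples, and it is a standard fact that every subbimodule of a direct sum of pairwise non-isomorphic simples is a direct subsum.

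Second, I will translate the coring axioms into conditions on $S$. A direct calculation from Example~\ref{ex:corings}(ii) gives
\[
\Delta(\delta_g)=\sum_{g_1g_2=g}\delta_{g_1}\otimes_\bE\delta_{g_2},\qquad\epsilon(\delta_g)=\delta_{g,e_G}.
\]
Since $V_S$ is pure as a left and right $\bE$-module (we are over a field), the characterisation of coideals recalled after the definition reduces $V_S\in\coId(\Map(G,\bE))$ to the pair of conditions $V_S\subseteq\ker\epsilon$ and $\Delta(V_S)\subseteq V_S\otimes_\bE\Map(G,\bE)+\Map(G,\bE)\otimes_\bE V_S$. Using the analogous direct sum decomposition $\Map(G,\bE)\otimes_\bE\Map(G,\bE)=\bigoplus_{g_1,g_2}\bE(\delta_{g_1}\otimes_\bE\delta_{g_2})$, the first condition amounts to $e_G\notin S$ and the second to the requirement that, for each $g\in S$ and every factorisation $g=g_1g_2$, at least one of $g_1,g_2$ lies in $S$. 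Setting $M\coloneq G\setminus S$, these read precisely $e_G\in M$ and $M\cdot M\subseteq M$, i.e.\ $M$ is a submonoid of $G$. The assignments $I\leftrightarrow M_I$ are thus order-reversing and mutually inverse, producing a Galois connection in which every element is closed and hence the required Galois epimorphism (in fact isomorphism); since $G$ is finite the submonoids are automatically subgroups, in agreement with the classification of $\qid(\k[G]^*)$ in Proposition~\ref{prop:dual_gruop_algebra_quotients}.
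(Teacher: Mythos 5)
Your proof is correct, but it takes a genuinely different (and in fact stronger) route than the paper's. The paper never classifies the coideals of \(\Map(G,\bE)\): it simply defines \(\theta(I)=\bigcap_{f\in I}\ker f\) and \(\xi(G_0)=\ker\bigl(\Map(G,\bE)\sir\Map(G_0,\bE)\bigr)\), checks that \((\theta,\xi)\) is a Galois connection with \(\theta\xi=\id\) on submonoids and \(\xi\theta\geq\id\) on coideals, which is exactly what is needed for \(\theta\) to be a Galois epimorphism and nothing more. You instead first prove that \emph{every} \(\bE\)-subbimodule of \(\Map(G,\bE)\) is a partial sum \(\bigoplus_{g\in S}\bE\delta_g\), using that the rank-one pieces \(\bE\delta_g\) are pairwise non-isomorphic simple bimodules; note that for this you only need that distinct elements of \(G\leq\Gal(\bE/\bF)\) are distinct automorphisms of \(\bE\), which is the definition of a subgroup of \(\Aut(\bE)\) rather than an application of Dedekind's lemma. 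Reading off the coideal conditions on the support set \(S\) then gives the sharper conclusion that \(\theta\) and \(\xi\) are mutually inverse bijections, i.e.\ every coideal of the coring is of the form \(\xi(M)\) for a submonoid \(M\). The paper's proof leaves open whether \(\theta\) is injective, and the remark following it (contrasting the coring with \(\k[G]^*\), whose right ideals are spanned by the \(\delta_g\)) suggests the author did not observe that the twisted right \(\bE\)-action forces the same rigidity on subbimodules of \(\Map(G,\bE)\); your argument shows the two situations are in fact parallel. The only hypothesis you silently use is the faithfulness of the \(G\)-action on \(\bE\), which is automatic here since the coring of Example~\ref{ex:corings}(ii) is only defined for \(G\leq\Gal(\bE/\bF)\).
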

\begin{proof}
    Let $I$ be a coideal of $\Map(G,\bE)$ then $\bigcap_{f\in I} \ker f$,
    where $\ker f$ denotes the set of elements of $G$ which are mapped to $0$,
    is a submonoid of $G$.  It contains the identity of the group $G$, because
    $I\subseteq \ker\epsilon$, where $\epsilon$ is the evaluation at the
    identity of $G$.  Now let us observe that $\bigcap_{f\in I} \ker f$ is
    closed under multiplication: if $g_1$ and $g_2$  belong to all of the
    kernels of elements of $I$ then for any $f\in I$
    $f(g_1g_2)=\Delta(f)(g_1,g_2)=0$, because $\Delta(f)\in I\otimes_\bE
    \Map(G,\bE)\;+\;\Map(G,\bE)\otimes_\bE I$.  The map
    $\theta:I\mapsto\bigcap_{f\in I}\ker f$ reverses the order.  The second
    map of the Galois connection is given as follows.  For any submonoid $G_0$
    of $G$, $\Map(G_0,\bE)$ forms a coring (the identity element of $G$ is
    needed to set the counit as evaluation on it).  $\Map(G_0,\bE)$ is
    a quotient coring of $\Map(G,\bE)$ via the restriction map $f\mapsto
    f|_{G_0}$.  Thus we have a map $\xi$ from submonoids of $G$ to coideals of
    the coring $\Map(G,\bE)$ defined as $\xi(G_0)=\ker\bigl
    (\Map(G,\bE)\mpr{}\Map(G_0,\bE)\bigr)$ which reverses the order.
    Furthermore, one can easily verify that 
    \[\theta\xi=\id_{\Sub_{\textit{mono}}(G)}\quad and\quad \xi\theta\geq \id_{\coId(\Map(G,\bE))}\]
    hence $\theta$ is an epimorphism which is a part of the Galois connection
    $(\theta,\xi)$.  The second inequality follows since \(f\in\xi\theta(I)\)
    if and only if \({\bigcap_{g\in I}\ker g}\subseteq\ker f\).
\end{proof}
In an analogous result for the Hopf algebra \(\k[G]^*\) we were able to show
that every generalised quotient comes from a subgroup of \(G\)
(Proposition~\ref{prop:dual_gruop_algebra_quotients} on
page~\pageref{prop:dual_gruop_algebra_quotients}).  The reason behind this is
that right ideals of \(\k[G]^*\) are always spanned by some of the
\(\delta_g\in\Map(G,\bE)\) (\(\delta_g(h)=1\) if and only if \(h=g\),
otherwise it is \(0\)).

\section{Galois connection for $H$-module algebras}\label{sec:Galois_connection_for_H-mod_alg}
We let \(A\#H\) denote the smash product of \(A\) and \(H\), where \(A\) is an
\(H\)-module algebra (see
Example~\ref{ex:comodule_algebras}\ref{itm:crossed_product}).  The underlying
vector space of \(A\#H\) is \(A\otimes H\).  We will denote by \(a\#h\) (for
\(a\in A\) and \(h\in H\) the simple tensor \(a\otimes h\) as an element of
the smash product \(A\#H\)).  The multiplication is defined by the rule:
\(a\#h\cdot b\#k=a(h_{(1)}\cdot b)\#h_{(2)}k\) for \(a,b\in A\) and \(h,k\in
    H\).  We will denote by \(\Sub_{\Alg^H}(H)\) the poset of right coideal
subalgebras of~\(H\).  
\begin{lemma}\label{lem:invariants}
    Let \(A\) be an \(H\)-module algebra.  Then for any
    \(K\in\Sub_{\Alg^H}(H)\) we have:
    \[\bigl\{a\in A:\forall_{k\in K}\ k\cdot a=\epsilon(k)a\bigr\}=A\cap\Cent_{A\#H}(1_A\#K)\]
    where \(\Cent_{A\#H}(1_A\#K)=\bigl\{x\in A\#H:\forall_{k\in K}\  x\cdot
	    1\#k=1\#k\cdot x\bigr\}\) is the centraliser of \(1_A\#K\) in
    \(A\#H\).
\end{lemma}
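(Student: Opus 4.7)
The plan is to unwind both sides by a direct computation with the smash product multiplication, and read off the equivalence from the counit axiom.

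First I would identify $A$ with its image $A \# 1_H \subseteq A \# H$ and compute the two products defining the centraliser condition. For $a \in A$ and $k \in K$, using $\Delta(1_H) = 1_H \otimes 1_H$ and the formula $a \# h \cdot b \# l = a(h_{(1)} \cdot b) \# h_{(2)} l$, one gets
\[
    (a \# 1_H)(1_A \# k) = a \# k, \qquad (1_A \# k)(a \# 1_H) = (k_{(1)} \cdot a) \# k_{(2)}.
\]
Hence $a \in A \cap \Cent_{A\#H}(1_A \# K)$ if and only if
\begin{equation}\label{eq:centraliser-cond}
    a \otimes k = (k_{(1)} \cdot a) \otimes k_{(2)} \quad \text{in } A \otimes H, \text{ for every } k \in K.
\end{equation}

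For the forward inclusion, assume $k \cdot a = \epsilon(k) a$ for all $k \in K$. Then the right hand side of \eqref{eq:centraliser-cond} becomes $\epsilon(k_{(1)}) a \otimes k_{(2)} = a \otimes \epsilon(k_{(1)}) k_{(2)} = a \otimes k$ by the counit axiom for $H$, so \eqref{eq:centraliser-cond} holds. For the reverse inclusion, apply $\id_A \otimes \epsilon_H$ to both sides of \eqref{eq:centraliser-cond}: the right hand side becomes $(k_{(1)} \cdot a) \epsilon(k_{(2)}) = k \cdot a$ by the counit axiom applied inside the $H$-action, while the left hand side yields $\epsilon(k) a$. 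Thus $k \cdot a = \epsilon(k) a$ for every $k \in K$.

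There is essentially no obstacle: the only content is the counit axiom combined with the definition of the smash product, and the coideal subalgebra hypothesis on $K$ plays no role in this particular lemma (it is only needed elsewhere to ensure $1_A \# K$ behaves well inside $A \# H$). I would conclude with the remark that the argument shows the equality holds for any subspace $K \subseteq H$, not merely for $K \in \Sub_{\Alg^H}(H)$.
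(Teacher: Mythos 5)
Your computation is correct and is essentially the paper's own proof: both reduce the centraliser condition to the identity $a\#k=(k_{(1)}\cdot a)\#k_{(2)}$ in $A\#H$ and obtain the nontrivial inclusion by applying $\id_A\otimes\epsilon$.

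However, your closing remark is wrong: the coideal hypothesis on $K$ is not idle. In your forward inclusion the step $(k_{(1)}\cdot a)\otimes k_{(2)}=\epsilon(k_{(1)})a\otimes k_{(2)}$ applies the hypothesis ``$k'\cdot a=\epsilon(k')a$ for $k'\in K$'' to the left tensor legs $k_{(1)}$ of $\Delta(k)$, and for an arbitrary subspace $K\subseteq H$ these legs need not lie in $K$. It is precisely the right-coideal condition $\Delta(K)\subseteq K\otimes H$ (which holds for $K\in\Sub_{\Alg^H}(H)$ as defined in the paper) that legitimises this. For a genuine failure, take $H=\k[G]$ with $G$ acting on $A$ by automorphisms and $K=\k(g+h)$ for distinct $g,h\in G$: the invariance condition reads $g\cdot a+h\cdot a=2a$, whereas the centraliser condition $a\otimes(g+h)=(g\cdot a)\otimes g+(h\cdot a)\otimes h$ forces $g\cdot a=a$ and $h\cdot a=a$ separately, which is strictly stronger. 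Only the reverse inclusion (centraliser into invariants, via $\id_A\otimes\epsilon$) is valid for arbitrary subspaces.
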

\begin{proof}
    The condition \(a\#1\cdot 1\#k=1\#k\cdot a\#1\) translates to
    \(a\#k=(k_{(1)}\cdot a)\# k_{(2)}\).  When we compute
    \(\id_A\otimes\epsilon\) of this equality we get \(k\cdot
	a=\epsilon(k)a\).  The other inclusion is obvious.
\end{proof}
\index{module algebra!Galois connection}
\begin{proposition}\label{prop:H-module_galois_connection}
    Let \(A\) be a left \(H\)-module algebra over a field \(\k\) and let
    \(B=A^H\).  We define two order-reversing morphisms:
    \begin{align*}
	\Phi:\Sub_{\Alg^H}(H)\sir\Sub_\Alg(A/B), & \ \Phi(K)\coloneq A^K=\bigl\{a\in A:\forall_{k\in K}\ k\cdot a=\epsilon(k)a\bigr\}\\
	\intertext{for \(K\in\Sub_{\Alg^H}(H)\) a right coideal subalgebra and} 
    \Psi:\Sub_\Alg(A/B)\sir\Sub_{\Alg^H}(H), & \ \Psi(S)\coloneq H\cap\Cent_{A\#H}(S\#1_H)
    \end{align*} 
    for \(S\in\Sub_\Alg(A/B)\).  Then \((\Phi,\Psi)\) is a Galois connection. 
\end{proposition}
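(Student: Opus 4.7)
The plan is to establish, in this order: (i) $\Phi$ and $\Psi$ take values in the claimed posets, (ii) both are antimonotonic, and (iii) the Galois inequalities $K\subseteq\Psi\Phi(K)$ and $S\subseteq\Phi\Psi(S)$ hold. Antimonotonicity is immediate from the defining formulas (a bigger $K$ forces more invariance conditions, and a bigger centralising set cuts out fewer elements), so the substance is in (i) and (iii).

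For well-definedness of $\Phi$: $A^K$ contains $1_A$ because $k\cdot 1_A=\epsilon(k)1_A$, is closed under addition trivially, and is closed under multiplication using the module algebra identity $k\cdot(aa')=(k_{(1)}\cdot a)(k_{(2)}\cdot a')$ together with the fact that $K$ is a subcoalgebra so $k_{(1)}\otimes k_{(2)}\in K\otimes K$ (this is where being a coideal subalgebra, not merely a subalgebra, is used). Moreover $A^H\subseteq A^K$ since $K\subseteq H$, so $\Phi(K)\in\Sub_\Alg(A/B)$. For $\Psi$: using Lemma~\ref{lem:invariants}'s companion computation, $h\in\Psi(S)$ is equivalent to $(h_{(1)}\cdot s)\#h_{(2)}=s\#h$ for all $s\in S$, and $1\#H\subseteq A\#H$ is a subalgebra, so $\Psi(S)$ is a subalgebra of $H$.

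The one step that requires a genuine computation is showing $\Psi(S)$ is a right coideal. Fix $h\in\Psi(S)$ and write $\Delta(h)=\sum_j a_j\otimes b_j$ with the $b_j$ linearly independent over $\k$. Applying $\id_A\otimes\Delta$ to the identity $(h_{(1)}\cdot s)\otimes h_{(2)}=s\otimes h$ and using coassociativity to rewrite the left-hand side as $\sum_j((a_j)_{(1)}\cdot s)\otimes(a_j)_{(2)}\otimes b_j$, then pairing the last tensor factor with a functional dual to $b_l$ in the span of $\{b_j\}$, yields $((a_l)_{(1)}\cdot s)\otimes(a_l)_{(2)}=s\otimes a_l$ for every $l$ and every $s\in S$. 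Thus each $a_l\in\Psi(S)$, so $\Delta(h)\in\Psi(S)\otimes H$ as required. This is the only place the ground field hypothesis is really used, and it is the main (mild) obstacle.

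Finally, the Galois inequalities: for $k\in K$ and $a\in\Phi(K)=A^K$, since $K$ is a right coideal subalgebra we have $k_{(1)}\in K$, so $k_{(1)}\cdot a=\epsilon(k_{(1)})a$, giving $(k_{(1)}\cdot a)\#k_{(2)}=a\#\epsilon(k_{(1)})k_{(2)}=a\#k$, hence $k\in\Psi\Phi(K)$. For $s\in S$ and $h\in\Psi(S)$, apply $\id_A\otimes\epsilon$ to $(h_{(1)}\cdot s)\otimes h_{(2)}=s\otimes h$ to obtain $h\cdot s=\epsilon(h)s$, so $s\in A^{\Psi(S)}=\Phi\Psi(S)$. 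This verifies the characterisation~\eqref{eq:galprop} of a Galois connection and concludes the proof.
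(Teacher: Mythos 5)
Your proof is correct and follows essentially the same route as the paper: the only substantive step, that $\Psi(S)$ is a right coideal, is handled exactly as in the paper's proof (apply $\id_A\otimes\Delta_H$ to $(h_{(1)}\cdot s)\otimes h_{(2)}=s\otimes h$ and use linear independence over the field), and the Galois inequalities are verified the same way.

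One slip to fix in the well-definedness of $\Phi$: an element $K\in\Sub_{\Alg^H}(H)$ is a \emph{right coideal} subalgebra, so $\Delta(K)\subseteq K\otimes H$, not $K\otimes K$ as you assert; $K$ need not be a subcoalgebra. The multiplicativity of $A^K$ still goes through with the correct property, since only the first tensor leg needs to land in $K$: for $a,a'\in A^K$ and $k\in K$ one has $k\cdot(aa')=(k_{(1)}\cdot a)(k_{(2)}\cdot a')=\epsilon(k_{(1)})\,a\,(k_{(2)}\cdot a')=a\,(k\cdot a')=\epsilon(k)\,aa'$, using $k_{(1)}\in K$. This is the same one-sided fact you correctly invoke later when proving $K\subseteq\Psi\Phi(K)$.
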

Let us note that \(\Phi(K)\) is the largest subalgebra of \(A\) such that
the \(K\)-action is left \(\Phi(K)\)-linear.
\begin{proof}
    Let \(k\in H\) and \(a\in A\).  We have 
    \[\Phi(K)=\left\{a\in A:\forall_{k\in K}\ a\#1\cdot1\#k=1\#k\cdot a\#1\right\}\]
    by Lemma~\ref{lem:invariants}.  It is clear that \(\Phi(K)\) is
    a subalgebra of \(A\).  Let us show that for \(S\in\Sub_\Alg(A/B)\),
    \(\Psi(S)\) is a right coideal subalgebra of \(H\).  From the definition
    of \(\Psi\) it follows that \(\Psi(S)\) is a subalgebra of \(H\). 
    Now, for \(k\in\Psi(S)\) and for all \(a\in S\) we have \((k_{(1)}\cdot
	a)\#k_{(2)}=a\#k\).  Applying \(\id_A\otimes\Delta_H\) we get
    \(\bigl({k_{(1)}}_{(1)}\cdot a\bigr)\#{k_{(1)}}_{(2)}\otimes
	k_{(2)}=a\#k_{(1)}\otimes k_{(2)}\).  It follows that
    \(\Delta_H(k)\in\Psi(S)\otimes H\).  Thus \(K\) is a right coideal
    subalgebra of \(H\).

    Clearly \(\Phi\) and \(\Psi\) are anti-monotone maps.  The Galois
    properties \(\Psi\Phi\geq\id_{\Sub_{\mathit{Hopf}}(H)}\) and
    \(\Phi\Psi\geq\id_{\Sub_\Alg(A)}\) are easily verified.
\end{proof}
Note that \(\Phi(K)\) is a \(K\)-submodule of \(A\):
\begin{equation*}
    k'\cdot(k\cdot a) = (k'k)\cdot a = \epsilon(k'k)a = \epsilon(k')\epsilon(k)a = \epsilon(k')k\cdot a        
\end{equation*}
for \(k,k'\in K\) and \(a\in\Phi(K)\).  Furthermore, if \(K\) is a normal
Hopf subalgebra then \(\Phi(K)\) is an \(H\)-submodule: 
\[k\cdot(h\cdot a)=(h_{(1)}S(h_{(2)})kh_{(3)})\cdot a=(h_{(1)}\epsilon(S(h_{(2)})kh_{(3)}))\cdot a=\epsilon(k)h\cdot a\]
for any \(k\in K\) and \(h\in H\).  Let us note that if \(K\) is commutative,
\(A^K\) is an \(H\)-submodule, and if moreover \(A\) is a faithful
\(H\)-module then \(K\) is one dimensional.  For every \(h\in H\), \(k\in K\)
and \(a\in A^K\) we have \(k\cdot(h\cdot a)=\epsilon(k)h\cdot a\).  The action
is faithful so we must have \(kh=\epsilon(k)h\).  Then for any non-zero
\(k,k'\in K\) we have \(0\neq kk'=\epsilon(k)k'=\epsilon(k')k\) (it is non
zero since the action is faithful).  

Let \(V\) be a \(\k\)-vector space and let \(W\subseteq V\) then
\(W^\perp\coloneq\{f\in V^*:f|_W=0\}\).  We also will use the same notation
for \(W\subseteq V^*\), \(W^\perp\coloneq\bigcap_{f\in W}\ker f\).  Since we
will use these two maps only if \(V\) is finite dimensional this should not
lead to any confusion (under the identification \(({V^*})^*=V\) these two maps
represent the same morphisms from \(\Sub(V^*)\) to \(\Sub(V)\)).

Let us note that if \(H\) is finite dimensional then \(A\) is a right
\(H\)-comodule algebra if and only if \(A\) is a left \(H^*\)-module
algebra.  Furthermore, for any \(Q\in\qquot(H)\) let \(I_Q\) be the right
ideal coideal of \(H\) such that \(Q=H/I_Q\).  Then we have
\(A^{co\,Q}=A^{I_Q^{\perp}}\) (\(I_Q^{\perp}\) is a right coideal subalgebra
of \(H^*\)).

Let \((\Phi,\Psi)\) be the Galois connection between \(\qsub(H)\) and
\(\Sub_\Alg(A)\), where \(\Phi(K)=A^{K}\).  It exists since
\(\bigvee_{\alpha\in I}K_\alpha\) is equal to the algebra generated by all
the \(K_\alpha\) (\(\alpha\in I\)) and thus \(A^{\bigvee_{\alpha\in I}
	K_{\alpha}}=\bigcap_{\alpha\in I}A^{K_\alpha}\).  Let us note that
\(\phi=\Phi\circ\alpha\), where \(\alpha:\qquot(H)\sir\Sub_{\Alg^H}(H)\),
\(\alpha(Q)\coloneq Q^*\). 
\begin{lemma}
    Let \(A\) be an \(H\)-comodule algebra over a field \(\bK\), with \(H\)
    a finite dimensional Hopf algebra.  Let us also assume that \(\phi\) (and
    thus \(\Phi\)) is injective (this holds for example if \(A\) is
    \(H\)-Galois).  Let \(S\in\Sub_\Alg(A)\).  Then
    \(I_{\psi(S)}=\Psi(S)^{\perp}\). 
\end{lemma}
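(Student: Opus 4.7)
The plan is to leverage the uniqueness of Galois adjoints (Proposition~\ref{prop:properties-of-adjunction}(iii)) together with finite-dimensional duality between sub-objects of \(H\) and \(H^*\).

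First I would set up the annihilator map \(\alpha \colon \qquot(H) \to \Sub_{\Alg^{H^*}}(H^*)\), \(\alpha(H/I) \coloneq I^\perp\). By Propositions~\ref{prop:right_coideals-right_ideals} and~\ref{prop:coideals-subalgebras}, if \(I\) is a right ideal coideal of \(H\) then \(I^\perp\) is simultaneously a right coideal and a subalgebra of \(H^*\), and conversely every right coideal subalgebra of \(H^*\) arises this way. Finite dimensionality then makes \(\alpha\) a bijection, and since \(I_1 \subseteq I_2 \Leftrightarrow I_1^\perp \supseteq I_2^\perp\), \(\alpha\) is an order-isomorphism when \(\qquot(H)\) carries \(\succcurlyeq\) and \(\Sub_{\Alg^{H^*}}(H^*)\) carries inclusion.

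Next I would record the identity \(\phi = \Phi \circ \alpha\), which is the content of the sentence \(A^{co\,Q} = A^{I_Q^\perp}\) noted immediately before the lemma (since \(\Phi(K) = A^K\) by definition and \(I_Q^\perp\) is exactly the right coideal subalgebra of \(H^*\) corresponding to the \(H^*\)-module algebra structure on \(A\)). Now I would verify that the pair \((\phi, \alpha^{-1} \circ \Psi)\) is a Galois connection: for \(Q \in \qquot(H)\) and \(S \in \Sub_\Alg(A/B)\),
\[
\phi(Q) \supseteq S \iff \Phi(\alpha(Q)) \supseteq S \iff \alpha(Q) \subseteq \Psi(S) \iff Q \preccurlyeq \alpha^{-1}(\Psi(S)),
\]
where the middle equivalence is the adjointness of \((\Phi, \Psi)\) and the last uses that \(\alpha\) is an order-isomorphism.

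By the uniqueness part of Proposition~\ref{prop:properties-of-adjunction}(iii), the adjoint of \(\phi\) is uniquely determined, so \(\psi = \alpha^{-1} \circ \Psi\), i.e.\ \(\Psi(S) = \alpha(\psi(S)) = I_{\psi(S)}^\perp\). Applying \((-)^\perp\) and using the finite-dimensional identity \(J^{\perp\perp} = J\) gives \(\Psi(S)^\perp = I_{\psi(S)}\), as desired. There is no serious obstacle here; the only thing to watch carefully is the bookkeeping of order conventions (\(\succcurlyeq\) on \(\qquot(H)\) versus inclusion on the subalgebra posets) to ensure \(\alpha\) is order-preserving rather than order-reversing, which is why the identity \(\phi = \Phi \circ \alpha\) (rather than \(\phi = \Phi \circ \alpha^{-1}\)) is the right one to use.
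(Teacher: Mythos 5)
Your proposal is correct and follows essentially the same route as the paper: the paper's proof likewise introduces the poset isomorphism \(\alpha(Q)=I_Q^\perp\), records \(\phi=\Phi\circ\alpha\), and deduces \(\Psi=\alpha\circ\psi\) from uniqueness of Galois adjoints. You have merely written out the adjointness verification and the final application of \((-)^{\perp\perp}=\mathrm{id}\) that the paper leaves implicit.
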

\begin{proof}
    We have the following diagram:
    \begin{center}
	\begin{tikzpicture}
	    \matrix[column sep=1cm,row sep=1cm]{
		\node (A) {\(\Sub_\Alg(A)\)}; & \node (B) {\(\qquot(H)\)};\\
		& \node (C) {\(\Sub_{\Alg^H}(H^*)\)};\\
	    };
	    \draw[->] ($(A)+(9mm,0.7mm)$) --node[above]{\(\psi\)} ($(B)+(-11mm,0.7mm)$);
	    \draw[->] ($(B)+(-11mm,-0.7mm)$) --node[below]{\(\phi\)} ($(A)+(9mm,-0.7mm)$);
	    \draw[->] ($(A)+(-3mm,-2mm)$) --node[below left]{\(\Psi\)} ($(C)+(-11mm,2.5mm)$);
	    \draw[<-] ($(A)+(0mm,-2mm)$) --node[right]{\(\ \ \Phi\)} ($(C)+(-8mm,2.5mm)$);
	    \draw[<-] (C) --node[left]{\(\cong\)}node[right]{\(\alpha\)} (B);
	\end{tikzpicture}
    \end{center}
    where \(\alpha(Q)=I_Q^\perp\) is a poset isomorphism.  Since
    \(\phi=\Phi\circ\alpha\) we also have \(\Psi=\alpha\circ\psi\) and
    thus \(I_{\psi(S)}=\Psi(S)^\perp\).
\end{proof}

\section{Maszczyk's approach in a noncommutative way}
In this section all algebras are over a fixed base field \(\k\).  Furthermore,
throughout this section we assume that both \(H\) and \(A\) are finite
dimensional over \(\k\).
\index{action through monomorphisms}
\index{module algebra!action through monomorphisms}
\begin{definition}\label{defi:mono_action}
    Let \(A\) be an \(H\)-module algebra with \(H\) finite dimensional.  We
    say that \(H\) \bold{acts through monomorphisms} if there exists a basis
    of \(H\), \(\{h_i\in H:i\in I\}\) for which the following implication
    holds: \(h_i\cdot a=0\Rightarrow a=0\) for all \(i\in I\) and \(a\in A\).
\end{definition}
Note that the definition requires that \(\epsilon(h_i)\neq0\) by letting
\(a=1\).  For example if \(G\) is a group acting by automorphisms on an
algebra \(A\), then it induces a \(\k[G]\)-action through monomorphisms on
\(A\).  Let us note the following theorem:
\index{module algebra!division rings}
\begin{theorem}[{\cite[Thm~8.3.7]{sm:hopf-alg}}]
    Let \(A\) be a left \(H\)-module algebra, where \(A\) is a division ring
    and \(H\) is finite dimensional.  Then the following conditions are
    equivalent:
    \begin{enumerate}
	\item \(A/A^H\) is \(H^*\)-Galois,
	\item \([A:A^H]_r=\dim H\) or \([A:A^H]_l=\dim H\),
	\item \(A^H\subseteq A\) has the normal basis property,
	\item \(A\cong A^H\#_\sigma H^*\) is a crossed product (see
	    Example~\ref{ex:comodule_algebras}(iii)),
	\item \(A\#H\) is simple.
    \end{enumerate}
\end{theorem}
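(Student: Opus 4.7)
The plan is to reduce the theorem to the characterisation of cleftness already proved (Theorem~\ref{thm:characterisation_of_cleftness}) for the equivalence \textit{(i)}--\textit{(iv)}, then handle \textit{(ii)} by a dimension count, and \textit{(v)} via a Morita context.

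First I would observe that $A^H$ is itself a division ring: given $0\neq a\in A^H$, one computes $h\cdot a^{-1}=\epsilon(h)a^{-1}$ using $h\cdot(aa^{-1})=(h_{(1)}\cdot a)(h_{(2)}\cdot a^{-1})=a(h\cdot a^{-1})$ together with $h\cdot 1=\epsilon(h)$. In particular $A^H$ is flat (even free) over itself, so the hypotheses of Theorem~\ref{thm:characterisation_of_cleftness} are satisfied. Using Proposition~\ref{prop:H-comodule_algebras} to identify left $H$-module algebra structures with right $H^*$-comodule algebra structures (and $A^H=A^{co\,H^*}$), Theorem~\ref{thm:characterisation_of_cleftness} immediately yields \textit{(i)}$\iff$\textit{(iii)}$\iff$\textit{(iv)}.

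For \textit{(i)}$\iff$\textit{(ii)}, the key point is that over a division ring $A^H$ every module is free, so $[A:A^H]_r$ and $[A:A^H]_l$ are well-defined cardinals, and since $A$ is finite-dimensional over $\k$ they are finite. The canonical map $\can\colon A\otimes_{A^H}A\sir A\otimes H^*$ is a homomorphism between finite-dimensional $\k$-vector spaces with $\dim_\k(A\otimes H^*)=\dim_\k A\cdot\dim_\k H$ and $\dim_\k(A\otimes_{A^H}A)=[A:A^H]_r\cdot\dim_\k A=[A:A^H]_l\cdot\dim_\k A$ (both equal, since both compute the same $\k$-dimension). Thus the dimensions match exactly when $[A:A^H]_r=\dim H$, and in that situation it suffices to verify one of injectivity/surjectivity; this injectivity can be proved by a Dedekind-style linear independence argument for the action, analogous to the one used in Proposition~\ref{prop:can_H_for_field_ext}. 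Conversely, \textit{(i)} forces the dimensions to agree, giving~\textit{(ii)}.

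For \textit{(i)}$\iff$\textit{(v)} I would use the Morita context between $A^H$ and $A\#H$ afforded by the bimodule $A$, with structural map $\lambda\colon A\#H\sir\End((A)_{A^H})$ defined by $\lambda(a\#h)(b)=a(h\cdot b)$. The standard fact is that $\lambda$ is surjective (hence an isomorphism, by a dimension count since $A^H$ is a division ring) precisely when $A/A^H$ is $H^*$-Galois; granted this, \textit{(i)} implies $A\#H\cong\End((A)_{A^H})$, which is a full matrix ring over the division ring $A^H$, and therefore simple, giving~\textit{(v)}. For the converse \textit{(v)}$\Rightarrow$\textit{(i)}, simplicity of $A\#H$ forces $\lambda$ to be injective (its kernel is a two-sided ideal of $A\#H$, and $\lambda\neq 0$), and then the ideal $\im(\mu)\subseteq A\#H$ produced by the Morita pairing $\mu\colon A\otimes_{A^H}A\sir A\#H$ must be all of $A\#H$; strictness of the Morita context then yields bijectivity of $\mu$, which is equivalent to the canonical map $\can$ being an isomorphism. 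The main obstacle is this last implication: one has to show that simplicity of $A\#H$ alone produces enough of a Morita pairing to force the Galois condition, which requires careful use of the trace-like elements built from a left integral in $H$ and the nondegeneracy of the $A^H$-bilinear form $A\otimes A\sir A^H$ arising from the integral (exploiting, once more, that $A$ is a division ring so no torsion phenomena intervene).
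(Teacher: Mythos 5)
The paper does not actually prove this statement --- it is quoted from Montgomery precisely because the equivalences are nontrivial for division rings, and the only use made of it is the remark that finite Hopf--Galois extensions of division rings are automatically crossed products. Measured against that, your proposal has a structural gap: Theorem~\ref{thm:characterisation_of_cleftness} does \emph{not} ``immediately yield (i)$\iff$(iii)$\iff$(iv)''. What it says is that being a crossed product is equivalent to the \emph{conjunction} of the Galois property and the normal basis property, i.e. (iv)$\iff$[(i) and (iii)]. From this you get (iv)$\Rightarrow$(i) and (iv)$\Rightarrow$(iii), but nothing lets you pass from (i) to (iii) or (iv); for general $H$-extensions neither implication holds, and the fact that a Galois extension of a division ring automatically has a normal basis (equivalently, is a crossed product) is exactly the substantive content being asserted --- it is the reason the paper cites the theorem at all. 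Even granting your (i)$\iff$(ii) and (i)$\iff$(v), the cycle of implications never closes back onto (iii) and (iv). The missing step is, e.g., (i)$\Rightarrow$(iii), which in Montgomery goes through the Kreimer--Takeuchi theorem ($H^*$-Galois with $H$ finite dimensional forces $A$ to be finitely generated projective, hence free of rank $\dim H$, over the division ring $A^H$) plus a relative Hopf-module argument.

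There are also gaps in the two equivalences you do address. The hypotheses do not include $\dim_\k A<\infty$ (think of a finite group acting on a field that is infinite over $\k$), so the count $\dim_\k(A\otimes_{A^H}A)=[A:A^H]_r\cdot\dim_\k A$ is unavailable, as is your parenthetical identification of $[A:A^H]_r$ with $[A:A^H]_l$ --- the ``or'' in (ii) is there because left and right dimensions of division ring extensions can genuinely differ. The correct comparison is of ranks of free \emph{left $A$-modules}: $A\otimes_{A^H}A$ is free of rank $[A:A^H]_r$ and $A\otimes H^*$ is free of rank $\dim H$, and an injective or surjective $A$-linear map between free left modules of equal finite rank over a division ring is bijective. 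Even so, your appeal to a ``Dedekind-style independence argument'' for injectivity of $\can$ does not transfer: Dedekind's lemma concerns distinct automorphisms of a (commutative) field, whereas for a general Hopf action on a division ring the injectivity/surjectivity of $A\#H\sir\End(A_{A^H})$ is obtained from the Morita context built on a nonzero left integral $t\in H$, including the nontrivial verification that the trace map $a\selmap{}t\cdot a$ is nonzero (hence onto the division ring $A^H$). That is the same machinery you defer to in (i)$\iff$(v) and explicitly flag as the main obstacle, so as written the proposal reliably establishes little beyond (iv)$\Rightarrow$(i) and (iv)$\Rightarrow$(iii).
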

An \(H\)-action on an algebra \(A\), which has a normal basis, satisfies the
requirements of Definition~\ref{defi:mono_action} if there exists a basis
\(\{h_i\in H:i\in I\}\) of \(H\) such that \(Hh_i=H\) for all \(i\in I\),
since the action is induced from the right coaction of \(H^*\) on \(H\).  
\begin{lemma}\label{lem:domain}
    Let \(A\) be a domain and an \(H\)-module algebra, such that the
    \(H\)-action is through monomorphisms (Definition~\ref{defi:mono_action}).
    Let \(\phi_1,\phi_2:H\rightarrow A\) be linear maps.  If for all \(h,k\in
	H\), \(\phi_1(h_{(1)})h_{(2)}\cdot\phi_2(k)=0\) then \(\phi_1=0\) or
    \(\phi_2=0\). 
\end{lemma}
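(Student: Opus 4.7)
The plan is to prove the contrapositive: assume $\phi_2 \neq 0$ and derive $\phi_1 = 0$. Pick $k_0 \in H$ with $c \coloneq \phi_2(k_0) \neq 0$ and define $f_c \in \Hom_\k(H,A)$ by $f_c(h) \coloneq h \cdot c$. The hypothesis evaluated at $k = k_0$ becomes the identity $\phi_1 \ast f_c = 0$ in the convolution algebra $\Hom_\k(H,A)$ (with $\ast$ denoting convolution), and since $f_c(1_H) = c \neq 0$, we have $f_c \neq 0$. Thus the problem reduces to showing that $f_c$ is a left non-zero-divisor in the convolution algebra whenever $c \neq 0$. Note, for later use, that $c \mapsto f_c$ is an algebra homomorphism $A \to \Hom_\k(H,A)$, expressing the dual $H^*$-comodule-algebra structure on $A$.

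Setting $h = 1_H$ in the relation $\phi_1(h_{(1)})(h_{(2)} \cdot c) = 0$ immediately yields $\phi_1(1_H) \cdot c = 0$, so $\phi_1(1_H) = 0$ by the domain property of $A$. To extend this vanishing to all of $H$, I would argue by induction along the coradical filtration $\{H_n\}_{n \geq 0}$ of $H$. For $h \in H_n$, the coproduct decomposes as
\[
    \Delta(h) \;=\; h \otimes 1_H \,+\, 1_H \otimes h \,+\, \Delta'(h),
\]
with $\Delta'(h) \in H_{n-1} \otimes H_{n-1}$ in the pointed connected case. Substituting into $(\phi_1 \ast f_c)(h) = 0$, the term $\phi_1(1_H)(h \cdot c)$ vanishes by the base case, and the correction $\Delta'(h)$ vanishes by the inductive hypothesis $\phi_1|_{H_{n-1}} = 0$; what remains is $\phi_1(h) \cdot c = 0$, whence $\phi_1(h) = 0$.

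The main obstacle is handling Hopf algebras $H$ that are not pointed connected, where the coradical $H_0$ strictly contains $\k \cdot 1_H$ and the clean ``principal plus lower-order'' coproduct decomposition fails. In this more general setting the base case must be established componentwise along the simple subcoalgebras of $H_0$, and here the action-through-monomorphisms hypothesis becomes decisive: the chosen basis $\{h_i\}$ of $H$ has each $h_i$ acting injectively on $A$, so $h_i \cdot c \neq 0$ whenever $c \neq 0$. Combined with the domain property of $A$, this supplies the cancellations needed to control the generic Sweedler expansion $\Delta(h_i) = \sum_{j,l} c^i_{jl}\, h_j \otimes h_l$ and force $\phi_1(h_i) = 0$ once the lower strata have been dealt with. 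Reconciling the coradical filtration with the monomorphism-adapted basis is the technically delicate step, but the interplay between $A$ being a domain, the basis acting injectively, and the Hopf-algebraic coproduct identities is what ultimately drives the proof.
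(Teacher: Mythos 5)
You have assembled all the ingredients of a complete proof and then stopped one line short of it. Your reduction is correct: for a fixed $k_0$ with $c=\phi_2(k_0)\neq 0$ the hypothesis reads $\phi_1\ast f_c=0$ in the convolution algebra $\Hom_\k(H,A)$, and, as you note, $c\mapsto f_c$ is a \emph{unital} algebra homomorphism $A\to\Hom_\k(H,A)$ (unitality because $f_{1_A}(h)=h\cdot 1_A=\epsilon(h)1_A$ is the convolution unit, multiplicativity by the measuring axiom). The missing observation is that in this section $A$ is a finite-dimensional domain, hence a division ring, so $c$ is \emph{invertible} in $A$; consequently $f_c$ is invertible in $\Hom_\k(H,A)$ with inverse $f_{c^{-1}}$, and $\phi_1=(\phi_1\ast f_c)\ast f_{c^{-1}}=0$. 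This closes the argument for an arbitrary finite-dimensional $H$, makes the entire coradical-filtration machinery unnecessary, and incidentally shows that the action-through-monomorphisms hypothesis plays no role in this particular lemma under the standing finite-dimensionality assumptions.

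As written, however, the proposal does have a genuine gap: the induction only works for connected $H$ (coradical equal to $\k 1_H$), and the plan for the general case --- arguing ``componentwise along the simple subcoalgebras'' and reconciling this with the monomorphism-adapted basis --- is not carried out and does not obviously go through. On a simple matrix subcoalgebra with comatrix basis $e_{ij}$ the relation becomes the matrix identity $\bigl[\phi_1(e_{ik})\bigr]\cdot\bigl[(e_{kj}\cdot c)\bigr]=0$ over $A$, and one would have to prove the right-hand matrix invertible; the monomorphism-adapted basis gives no direct handle on this, since it need not be compatible with the coradical filtration or with the simple subcoalgebras. For comparison, the paper's own proof takes a different route: it expands $h_{(1)}\otimes h_{(2)}=\sum_i k_i\otimes h_i$ in the monomorphism basis $\{h_i\}$ and asserts that $\sum_i\phi_1(k_i)(h_i\cdot c)\neq 0$ whenever some $\phi_1(k_i)\neq 0$, on the grounds that each individual term is nonzero ($A$ a domain and $h_i\cdot c\neq 0$). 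That step itself is delicate --- a sum of nonzero elements of a domain can vanish --- so your convolution-invertibility route, once completed as above, is arguably the cleaner and more robust of the two.
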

\begin{proof}
    Assume that there exists \(k\in H\) such that \(\phi_2(k)\neq0\).  Then for
    every \(h\in H\), \(\phi_1(h_{(1)})\otimes h_{(2)}=0\), since otherwise
    \(\phi_1(h_{(1)})h_{(2)}\cdot\phi_2(k)\neq0\) (note that we can always
    write \(h_{(1)}\otimes h_{(2)}=\sum_{i\in I} k_i\otimes h_i\) where
    \(h_i,k_i\in H\) and \(\{h_i:i\in I\}\) is a basis of elements guaranteed
    by Definition~\ref{defi:mono_action}).  It follows that for every \(h\in
	H\), \(\phi_1(h)=\phi_1(h_{(1)})\epsilon(h_{(2)})=0\).
\end{proof}
\index{module algebra!Galois connection!coring construction}
\begin{proposition}\label{prop:hom_coring}
    Let \(A\) be a domain and an \(H\)-module algebra such that \(H\)-acts
    through monomorphisms.  Then \(\Hom_\k(H,A)\) is an \(A\)-coring.  The
    \(A\)-bimodule structure is given by:
    \begin{align*}
	(a\cdot\phi)(h)  & =a\phi(h)\\
	(\phi\cdot a)(h) & =\phi(h_{(1)})(h_{(2)}\cdot a)
    \end{align*}
    The comultiplication is given by the commutative diagram:
    \begin{center}
	\begin{tikzpicture}
	    \matrix[column sep=0.7mm,row sep=1.4cm]{
		\node (A) {\(\Hom_\bK(H,A)\)}; &                                   & \node (B) {\(\Hom_\bK(H,A)\otimes_A\Hom_\bK(H,A)\)}; \\
		                               & \node (C) {\(\Hom_\bK(H\otimes H,A)\)}; & \\
	    };
	    \draw[dashed,->] (A) --node[above]{\(\Delta\)} (B);
	    \draw[->] (A) --node[below left]{\(\Hom_\bK(m,A)\)} (C);
	    \draw[->] (B) --node[above left]{\(\simeq\)}node[below right]{\(\alpha\)} (C);
	\end{tikzpicture}
    \end{center}
    where the isomorphism
    \(\alpha:\Hom_\bK(H,A)\otimes_A\Hom_\bK(H,A)\sir\Hom_\bK(H\otimes H,A)\) is
    given by: \(\alpha(\phi_1\otimes_A\phi_2)=\left(h\otimes
	    k\selmap{}\phi_1(h_{(1)})h_{(2)}\cdot\phi_2(k)\right)\). 
    The counit is given by \(\epsilon(\phi)=\phi(1_H)\).
\end{proposition}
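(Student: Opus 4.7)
The plan proceeds in three stages. First, I would verify that the stated formulas equip $\Hom_\k(H,A)$ with an $A$-bimodule structure. Associativity of the right action reduces, via a Sweedler computation, to coassociativity of $\Delta_H$ combined with the $H$-module algebra identity $h \cdot (ab) = (h_{(1)} \cdot a)(h_{(2)} \cdot b)$, while compatibility of the left and right actions is immediate from the definitions. The same two ingredients show that $\alpha$ is $A$-balanced: both $\alpha(\phi_1 \cdot a \otimes_A \phi_2)$ and $\alpha(\phi_1 \otimes_A a \cdot \phi_2)$, evaluated at $h \otimes k$, collapse to $\phi_1(h_{(1)})(h_{(2)} \cdot a)(h_{(3)} \cdot \phi_2(k))$, so $\alpha$ descends to a well-defined map from $\Hom_\k(H,A) \otimes_A \Hom_\k(H,A)$.

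The key obstacle, and the bulk of the work, is establishing that $\alpha$ is a bijection. Injectivity is the heart of the matter. Given an element $x = \sum_{i=1}^n \phi_i^1 \otimes_A \phi_i^2$ in $\ker \alpha$, I would first reduce to a normal form by exploiting the freeness of $\Hom_\k(H,A)$ as a left $A$-module (with a basis dual to a basis of $H$) to make the $\phi_i^2$ linearly independent over $\k$. The vanishing condition $\sum_i \phi_i^1(h_{(1)})(h_{(2)} \cdot \phi_i^2(k)) = 0$ for all $h, k \in H$ then, by a pivoting argument in the spirit of Lemma~\ref{lem:domain}, and exploiting both that $A$ is a domain and that $H$ acts through monomorphisms, forces each $\phi_i^1$ to vanish, hence $x = 0$. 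Surjectivity of $\alpha$ can then be deduced from injectivity by a dimension count, since $\dim_\k \Hom_\k(H \otimes H, A) = (\dim_\k H)^2 \dim_\k A$ equals the $\k$-dimension of $\Hom_\k(H,A) \otimes_A \Hom_\k(H,A)$ once the latter is identified as a free right $A$-module of rank $(\dim_\k H)^2$; alternatively, surjectivity can be verified directly by exhibiting explicit preimages of a basis of $\Hom_\k(H \otimes H, A)$ built from dual bases $\{h_j\} \subset H$ and $\{h^j\} \subset H^*$.

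With $\alpha$ an $A$-bimodule isomorphism, I would set $\Delta := \alpha^{-1} \circ \Hom_\k(m_H, A)$ and $\epsilon(\phi) := \phi(1_H)$. Coassociativity then follows by transporting associativity of $m_H$ through the iterated isomorphism
\[
\Hom_\k(H,A) \otimes_A \Hom_\k(H,A) \otimes_A \Hom_\k(H,A) \;\cong\; \Hom_\k(H^{\otimes 3}, A);
\]
both $(\Delta \otimes_A \id) \circ \Delta$ and $(\id \otimes_A \Delta) \circ \Delta$ correspond to precomposition with the two equal maps $m_H \circ (m_H \otimes \id_H) = m_H \circ (\id_H \otimes m_H) : H^{\otimes 3} \to H$. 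The counit axioms translate dually into the unit axioms $m_H(1_H \otimes -) = \id_H = m_H(- \otimes 1_H)$ for $H$ and follow routinely.
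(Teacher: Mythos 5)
Your proposal is correct and follows essentially the same route as the paper: verify the bimodule structure, prove injectivity of $\alpha$ using the domain and monomorphism hypotheses (the paper's Lemma~\ref{lem:domain}), conclude bijectivity by the dimension count $(\dim_\k H)^2\cdot\dim_\k A<\infty$, and obtain coassociativity and the counit axioms by transporting associativity and unitality of $m_H$ through the identification with $\Hom_\k(H^{\otimes 3},A)$. If anything, your treatment of injectivity on general (non-simple) tensors via a normal form in a free left $A$-module basis is slightly more careful than the paper's bare appeal to the lemma, which is stated only for simple tensors.
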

\begin{proof}
    The right \(A\) module structure is associative since \(\Hom_\k(H,A)\) is
    an \(H\)-module algebra:
    \begin{align*}
            \left((\phi\cdot a)\cdot b\right)(h) & =\left(\phi\cdot a\right)(h_{(1)})h_{(2)}\cdot b \\
                           & =\phi(h_{(1)})(h_{(2)}\cdot a)(h_{(3)}\cdot b) \\
                           & =\phi(h_{(1)})h_{(2)}\cdot (ab) \\
                           & =\left(\phi\cdot(ab)\right)(h)
    \end{align*}
    for any \(a,b\in A\) and \(h\in H\).  Clearly \(\Hom_\k(H,A)\) is a left
    and right \(A\)-module.  It is an \(A\)-bimodule, since both \(A\) module
    structures commute.

    Now, let us note that, by Lemma~\ref{lem:domain}, \(\alpha\) is
    a monomorphism.  It is an isomorphism since the dimension of both domain
    and codomain is \((\dim_\bK H)^2\cdot\dim_\bK A<\infty\).  We show that
    \(\Delta\) is coassociative.  For this we consider the following diagram:
    \begin{center}
	\hspace*{-1.5cm}\begin{tikzpicture}
	    \matrix[column sep=1cm, row sep=1cm]{
				    &                                            & \node (B3) {\(\Hom_\k(H^{\otimes2},A)\otimes_A\Hom_\k(H,A)\)}; & \\
      \node (A1) {\(\Hom_\k(H,A)\)}; & \node (A2) {\(\Hom_\k(H,A)^{\otimes_A2}\)}; & \node (A3) {\(\Hom_\k(H,A)^{\otimes_A3}\)};                   & \node (A4) {\(\Hom_\k(H^{\otimes3},A)\)}; \\
				    &                                            & \node (C3) {\(\Hom_\k(H,A)\otimes_A\Hom_\k(H^{\otimes2},A)\)}; & \\
	    };
	    \draw[->] (A1) --node[above]{\(\Delta\)} (A2);
	    \draw[->] ($(A2)+(15mm,1.1mm)$)  --node[above]{\(\Delta\otimes_A\id\)} ($(A3)+(-15mm,1.1mm)$);
	    \draw[->] ($(A2)+(15mm,-0.5mm)$) --node[below]{\(\id\otimes_A\Delta\)} ($(A3)-(15mm,0.5mm)$);
	    \draw[->] (A2) --node[above left]{\(\Hom_\k(m,A)\otimes_A\id\)}  (B3);
	    \draw[->] (A2) --node[below left]{\(\id\otimes_A\Hom_\k(m,A)\)} (C3);
	    \draw[->] (A3) --node[right]{\(\alpha\otimes_A\id \)} (B3);
	    \draw[->] (A3) --node[right]{\(\id\otimes_A\alpha\)} (C3);
	    \draw[->] (B3) --node[above right]{\(\beta_1\)} (A4);
	    \draw[->] (C3) --node[below right]{\(\beta_2\)} (A4);
	\end{tikzpicture}
    \end{center}
    where \(\beta_1(\Phi\otimes_A\psi)(h\otimes k\otimes
	l)=\Phi(h_{(1)}\otimes k_{(1)})\left((h_{(2)}k_{(2)})\cdot
	    \psi(l)\right)\), and \(\beta_2(\phi\otimes_A\Psi)(h\otimes
	k\otimes l)=\phi(h_{(1)})\left(h_{(2)}\cdot\Psi(k\otimes
	    l)\right)\). 
    Both maps \(\beta_1\) and \(\beta_2\) are monomorphism, which follows
    using the same argument as in the proof of Lemma~\ref{lem:domain}.  Thus
    both \(\beta_1\circ\alpha\otimes_A\id\) and
    \(\beta_2\circ\id\otimes_A\alpha\) are monomorphisms.  Now, \(\Delta\) is
    coassociative if the following equality holds:
    \[\beta_1\circ\Hom_\k(m,A)\otimes_A\id\circ\Delta=\beta_2\circ\id\otimes_A\Hom_\k(m,A)\]
    By definition of \(\Delta\) we have: 
    \begin{equation}\label{eq:counit} 
	\phi_{(1)}(h_{(1)})\left(h_{(2)}\cdot\phi_{(2)}(k)\right)=\phi(hk)
    \end{equation}
    We claim that 
    \[\left(\beta_1\circ\Hom_\k(m,A)\otimes_A\id\circ\Delta\right)(\phi)(h\otimes k\otimes l)=\phi(hkl)\]
    This follows from commutativity of the following diagram:
    \begin{center}
	\hspace*{-1cm}\begin{tikzpicture}
	    \matrix[column sep=2.5cm, row sep=1.2cm]{
		&                                           & \node (A3){\(\Hom_\k(H^{\otimes3},A)\)};\\
		& \node (B2){\(\Hom_\k(H^{\otimes2},A)\)}; & \node (B3){\(\Hom_\k(H^{\otimes2},A)^{\otimes_A2}\)};\\
		\node (C1){\(\Hom_\k(H,A)\)}; & \node (C2){\(\Hom_\k(H,A)^{\otimes_A2}\)};     & \node[color=gray] (C3){\(\Hom_\k(H,A)^{\otimes_A3}\)}; \\
	    };
	    \draw[->] (C1) --node[below]{\(\Delta\)} (C2);
	    \draw[->,color=gray] (C2) --node[below]{\(\Delta\otimes_A\id \)} (C3);
	    \draw[->] (C1) --node[above left]{\(\Hom_\k(m,A)\)} (B2);
	    \draw[->] (B2) --node[above left]{\(\Hom_\k(m\otimes\id,A)\)} (A3);
	    \draw[->] (C2) --node[fill=white]{\(\alpha\)} (B2);
	    \draw[->] (C2) --node[fill=white]{\(\Hom_\k(m,A)\otimes\id\)} (B3);
	    \draw[->,color=gray] (C3) --node[right]{\(\alpha\otimes\id \)} (B3);
	    \draw[->] (B3) --node[right]{\(\beta_1\)} (A3);
	\end{tikzpicture}
    \end{center}
    The parallelogram diagram commutes since:
    \begin{center}
	\hspace*{-1cm}\begin{tikzpicture}
	    \matrix[column sep=2.5cm, row sep=1.5cm]{
		\node (C){\(\left(h\otimes k\mapsto\phi(h_{(1)})\,h_{(2)}\cdot\phi(k)\right)\)}; & \node (D){\(\Bigl(h\otimes k\otimes l\mapsto \phi(h_{(1)}k_{(1)})\,(h_{(2)}k_{(2)})\cdot \psi(l)\Bigr)\)};\\
		\node (A){\(\phi\otimes_A\psi\)};                           & \node (B){\(\phi\circ m\otimes_A\psi\)}; \\
	    };
	    \draw[|->] (A) --node[fill=white]{\(\alpha\)} (C);
	    \draw[|->] (A) --node[below]{\(\Hom_\k(m,A)\otimes_A\id\)} (B);
	    \draw[|->] (C) --node[above]{\(\Hom_\k(m\otimes\id,A)\)} (D);
	    \draw[|->] (B) --node[fill=white]{\(\beta_1\)} (D);
	\end{tikzpicture}
    \end{center}
    In a similar way we compute
    \(\left(\beta_2\circ\id\otimes_A\Hom_\k(m,A)\circ\Delta\right)(\phi)(h\otimes
	k\otimes l)=\phi(hkl)\).  Thus, indeed,
    \(\beta_1\circ\Hom_\k(m,A)\otimes_A\id\circ\Delta=\beta_2\circ\id\otimes_A\Hom_\k(m,A)\circ\Delta\),
    and hence \(\Delta\) is coassociative.  The counit axiom follows from
    Equation~\eqref{eq:counit}.
\end{proof}
\index{module algebra!Galois extension}
\begin{definition}
    Let \(A\) be an \(H\)-module algebra, which is a domain and such that
    \(H\) acts through monomorphisms (Definition~\ref{defi:mono_action}).  If
    the canonical map:
    \index{module algebra!cannonical map}
    \begin{equation}\label{eq:canonical_map}
	\can:A\otimes_{A^H}A\ir\Hom_\k(H,A),\quad \can(a\otimes a')=\left(h\selmap{}a(h\cdot a')\right)
    \end{equation}
    is a bijection then the extension \(A/A^H\,\) is called
    \(\Hom_\k(H,A)\)-Galois.
\end{definition}

\index{module algebra!cannonical map!coring morphism}
\begin{proposition}
    Let \(A\) be an \(H\)-module algebra, which is a domain and such that
    \(H\) acts through monomorphisms.  Then the canonical
    map~\eqref{eq:canonical_map} is a morphism of corings.
\end{proposition}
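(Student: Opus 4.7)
The plan is to verify the three defining properties of a coring morphism: $A$-bilinearity with respect to the bimodule structures, compatibility with the counits, and compatibility with the comultiplications. Recall that the Sweedler coring $A \otimes_{A^H} A$ has counit equal to the multiplication $m$ and comultiplication $\Delta_{\mathrm{Sw}}(a \otimes b) = (a \otimes 1) \otimes_A (1 \otimes b)$, while the coring structure on $\Hom_\k(H,A)$ is the one from Proposition~\ref{prop:hom_coring}.

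First I would check $A$-bilinearity. For the left action, $\can(ca \otimes b)(h) = ca(h \cdot b) = c \cdot \can(a \otimes b)(h)$, using the definition of the left $A$-action on $\Hom_\k(H,A)$. For the right action, using $h \cdot (bc) = (h_{(1)} \cdot b)(h_{(2)} \cdot c)$ (which holds because $A$ is an $H$-module algebra), we get $\can(a \otimes bc)(h) = a(h_{(1)} \cdot b)(h_{(2)} \cdot c) = \can(a \otimes b)(h_{(1)})(h_{(2)} \cdot c) = (\can(a \otimes b) \cdot c)(h)$, matching the right $A$-action from Proposition~\ref{prop:hom_coring}. The counit compatibility is immediate: $\epsilon(\can(a \otimes b)) = \can(a \otimes b)(1_H) = a(1_H \cdot b) = ab = m(a \otimes b)$.

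The comultiplication compatibility is the most substantive step, although it is still a short calculation. Rather than work directly with $\Delta$, I would use that the isomorphism $\alpha: \Hom_\k(H,A) \otimes_A \Hom_\k(H,A) \to \Hom_\k(H \otimes H, A)$ is a monomorphism (by Lemma~\ref{lem:domain}), so it suffices to verify the equality after applying $\alpha$. By the defining property of $\Delta$ in Proposition~\ref{prop:hom_coring}, $\alpha(\Delta(\can(a \otimes b)))(h \otimes k) = \can(a \otimes b)(hk) = a((hk) \cdot b)$. On the other hand, $\alpha((\can \otimes_A \can)(\Delta_{\mathrm{Sw}}(a \otimes b)))(h \otimes k) = \can(a \otimes 1)(h_{(1)}) \cdot (h_{(2)} \cdot \can(1 \otimes b)(k)) = a \epsilon(h_{(1)}) (h_{(2)} \cdot (k \cdot b)) = a(h \cdot (k \cdot b))$. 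These agree by associativity of the $H$-action, so $\Delta \circ \can = (\can \otimes_A \can) \circ \Delta_{\mathrm{Sw}}$.

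There is no real obstacle here; the only point requiring any care is making sure that $\can \otimes_A \can$ is well-defined on the balanced tensor product $A \otimes_{A^H} A \otimes_{A^H} A$, which is automatic from the $A$-bilinearity verified in the first step (and the fact that $A^H \subseteq A$ is a subring with respect to which everything is balanced).
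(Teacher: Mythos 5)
Your proposal is correct and follows essentially the same route as the paper: comultiplication compatibility is reduced, via the embedding \(\alpha\) and the defining relation \(\alpha\circ\Delta=\Hom_\k(m,A)\), to the identity \(a\,\epsilon(h_{(1)})\,(h_{(2)}k)\cdot b = a\,(hk)\cdot b\), and the counit check is the same one-line computation. The only difference is that you also spell out the \(A\)-bilinearity of \(\can\), which the paper leaves implicit; that is a harmless (indeed welcome) addition rather than a divergence.
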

\begin{proof}
    First we show that \(\can\) is compatible with comultiplication, i.e.
    that the following diagram commutes:
    \begin{center}
	\begin{tikzpicture}
	    \matrix[column sep=2cm, row sep=7mm]{
		\node (A) {\(A\otimes_{A^H}A\)};                 & \node (B) {\(\Hom_\k(H,A)\)};\\
		& \node (E) {\(\Hom_\k(H^{\otimes2},A)\)};\\
		\node (C) {\((A\otimes_{A^H}A)\otimes_A(A\otimes_{A^H}A)\)}; & \node (D) {\(\Hom_\k(H,A)\otimes_A\Hom_\k(H,A)\)};\\
	    };
	    \draw[->] (A) --node[above]{\(\can\)} (B);
	    \draw[->] (C) --node[below]{\(\can\otimes_A\can\)} (D);

	    \draw[->] (A) -- (C);
	    \draw[->] (B) --node[right]{\(\Hom_\k(m,A)\)} (E);
	    \draw[->] (D) --node[right]{\(\alpha\)} (E);
	\end{tikzpicture}
    \end{center}
    For this we take \(x\otimes y\in A\otimes_{A^H}A\).  Then
    \[\Hom_\k(m,A)\circ\can(x\otimes y)=\left(h\otimes k\selmap{}x\,(hk)\cdot y\right)\]
    while
    \begin{align*}
	\can\otimes_A\can((x\otimes1)\otimes_A(1\otimes y) & = \left((h\selmap{}x\,h\cdot1)\otimes_A(k\selmap{}k\cdot y)\right)
    \end{align*}
    and thus \(\alpha\circ\can\otimes_A\can\left((x\otimes1)\otimes_A(1\otimes
	    y)\right)=\left(h\otimes
	    k\selmap{}x\,\epsilon(h_{(1)})(h_{(2)}k)\cdot y\right)\).  Thus
    the equality follows.  Moreover, the morphism \(\can\) is compatible with
    the counit: \(\epsilon\circ\can(x\otimes y)=xy=\epsilon(x\otimes y)\).
\end{proof}
If \(K\) is a right coideal (unital) subalgebra of \(H\) then
\(\Hom_\k(K,A)\) is a quotient coring of \(\Hom_\k(H,A)\).  The quotient
map is induced by the inclusion \(i:K\subseteq H\) and the
comultiplication is given by:
\begin{center}
    \begin{tikzpicture}
	\matrix[column sep=1cm,row sep=1cm]{
	    \node (A) {\(\Hom_\k(K,A)\)}; &                                 & \node (B) {\(\Hom_\k(K,A)\otimes_A\Hom_\k(K,A)\)}; \\
	    & \node (C) {\(\Hom_\k(K\otimes K,A)\)}; & \\
	};
	\draw[dashed,->] (A) --node[above]{\(\Delta\)} (B);
	\draw[->] (A) --node[below left]{\(\Hom_\k(m,A)\)} (C);
	\draw[->] (B) --node[above left]{\(\simeq\)}node[below right]{\(\alpha\)} (C);
    \end{tikzpicture}
\end{center}
where \(\alpha\) is defined by \(\alpha(\phi_1\otimes_A\phi_2)\left(h\otimes
	k\right)=\phi_1(h_{(1)})\,h_{(2)}\cdot\phi_2(k)\).  It is well defined
since \(K\) is a right coideal and it is an isomorphism by the same argument
as the one used to prove Lemma~\ref{lem:domain}.  The counit of \(\Hom(K,A)\)
is given by \(\epsilon(\phi)=\phi(1_H)\) (\(K\) is a unital subalgebra hence
\(1_H\in K\)).
\index{module coalgebra!Galois connection}
\begin{proposition}\label{prop:coring_connection}
    Let \(A\) and \(H\) be as above.  Then there exists a Galois connection
    \((\Theta,\Upsilon)\):
    \begin{center}
	\hfill\begin{tikzpicture}
	    \matrix[column sep=1cm, row sep=1cm]{
		\node (A) {\(\Sub_\Alg(A)\)}; & \node (B) {\(\qsub(H)\)};\\
		& \node (C) {\(\Quot(\Hom_\k(H,A))\)};\\
	    };
	    \draw[->] (B) --node[right]{\(\Hom(-,A)\)} (C);
	    \draw[->] ($(A)+(10mm,1mm)$) --node[above=-0.5mm]{\(\Psi\)} ($(B)+(-10mm,1mm)$);
	    \draw[<-] ($(A)+(10mm,-0.3mm)$) --node[below]{\(\Phi\)} ($(B)+(-10mm,-0.3mm)$);
	    \draw[->] ($(A)+(.5mm,.7mm)+(0mm,-4mm)$) --node[above right=-1mm]{\(\Upsilon\)} ($(C)+(.5mm,.7mm)+(-17mm,2.7mm)$);
	    \draw[<-] ($(A)+(-.5mm,-.7mm)+(0mm,-4mm)$) --node[below left=-1mm]{\(\Theta\)} ($(C)+(-.5mm,-.7mm)+(-17mm,2.7mm)$);
	\end{tikzpicture}
	\hfill\refstepcounter{equation}\raisebox{1cm}{(\theequation)}\label{diag:coring_connection}
    \end{center}
    where \(\Upsilon\coloneq\Hom_\k(-,A)\circ\Psi\). 
\end{proposition}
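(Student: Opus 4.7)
The plan is to apply the existence theorem for Galois connections (Theorem~\ref{thm:existence-of-adjunction}) to the antimonotone map $\Upsilon = \Hom_\k(-,A) \circ \Psi$. Since $\Sub_\Alg(A)$ is an algebraic, hence complete, lattice, it suffices to verify that $\Upsilon$ is antimonotone and that it reflects all suprema into infima. The right adjoint $\Theta$ is then automatically determined by
\[\Theta(C) = \bigvee\{S \in \Sub_\Alg(A) : \Upsilon(S) \succcurlyeq C\}.\]

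First I would check antimonotonicity. The map $\Psi$ is antimonotone by Proposition~\ref{prop:H-module_galois_connection}, so it is enough to show that $\Hom_\k(-,A) : \qsub(H) \to \Quot(\Hom_\k(H,A))$ is monotone. An inclusion $K_1 \subseteq K_2$ of right coideal subalgebras induces, by restriction, surjective coring morphisms $\Hom_\k(H,A) \twoheadrightarrow \Hom_\k(K_2,A) \twoheadrightarrow \Hom_\k(K_1,A)$, whence $\Hom_\k(K_2,A) \succcurlyeq \Hom_\k(K_1,A)$ in $\Quot$. Composing with the antimonotone $\Psi$ yields an antimonotone $\Upsilon$.

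The main step is showing that $\Upsilon$ reflects suprema into infima. For a family $(S_i)_{i \in I}$ in $\Sub_\Alg(A)$, Lemma~\ref{lem:continuity} applied to the Galois connection $(\Phi,\Psi)$ gives $\Psi(\bigvee_i S_i) = \bigwedge_i \Psi(S_i)$, and this infimum is the set-theoretic intersection $\bigcap_i \Psi(S_i)$, since intersection is the meet in $\qsub(H)$ (cf.\ Proposition~\ref{prop:qsub_algebraic}). It remains to verify the equality $\Hom_\k(\bigcap_i K_i, A) = \bigwedge_i \Hom_\k(K_i, A)$ in $\Quot(\Hom_\k(H,A))$. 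By the description of the lattice of quotient corings in Section~\ref{sec:corings_lattices}, the meet in $\Quot$ corresponds to the sum of the associated coideals in $\coId(\Hom_\k(H,A))$. Under the natural identification $\Hom_\k(H,A) \cong H^* \otimes A$, the coideal determined by the quotient $\Hom_\k(K,A)$ is precisely $K^\perp \otimes A$. Since $H$ is finite dimensional, finite-dimensional duality yields $(\bigcap_i K_i)^\perp = \sum_i K_i^\perp$ in $H^*$, and therefore $(\bigcap_i K_i)^\perp \otimes A = \sum_i (K_i^\perp \otimes A)$, giving the required identification.

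The main obstacle I expect is the last step: checking that the join of the coideals $K_i^\perp \otimes A$ in $\coId(\Hom_\k(H,A))$ is actually the tensor-factored coideal $(\sum_i K_i^\perp) \otimes A$, rather than a strictly larger coideal built from the richer $A$-bicomodule structure on $\Hom_\k(H,A)$. This requires combining the explicit formula for sums of coring coideals from Section~\ref{sec:corings_lattices} with the finite-dimensionality of $H$ to pass from the tensor factorisation in $H^*$ to that in $H^* \otimes A$. Once this identification is secured, Theorem~\ref{thm:existence-of-adjunction} delivers $\Theta$ via the formula above and establishes the Galois connection~\eqref{diag:coring_connection}.
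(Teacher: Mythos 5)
Your proposal is correct, and its skeleton coincides with the paper's: both reduce the problem to showing that \(\Hom_\k(-,A)\circ\Psi\) reflects suprema into infima and then invoke the existence theorem for Galois connections, with the whole weight resting on the identity that the annihilator of \(\bigcap_iK_i\) inside \(\Hom_\k(H,A)\) equals the sum of the annihilators of the \(K_i\). Where you genuinely differ is in how that identity is established. The paper works directly with the subspaces \(\{f\in\Hom_\k(H,A):f|_{K_i}=0\}\): it first reduces an arbitrary intersection \(\bigcap_iK_i\) to a finite one (the intersections stabilise in the finite-dimensional \(H\)), then handles two subalgebras at a time by choosing an explicit complement decomposition \(H=(K_1\cap K_2)\oplus B_1\oplus B_2\oplus C\) and constructing by hand a splitting of \(K_1^\perp\oplus K_2^\perp\to(K_1\cap K_2)^\perp\). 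You instead pass through the identification \(\Hom_\k(H,A)\cong H^*\otimes A\), observe that the coideal cut out by \(\Hom_\k(K,A)\) is \(K^\perp\otimes A\) with \(K^\perp\subseteq H^*\), and then apply the standard finite-dimensional duality \((\bigcap_iK_i)^\perp=\sum_iK_i^\perp\) in \(H^*\) before tensoring with \(A\). The obstacle you flag — that the join of the coideals \(K_i^\perp\otimes A\) in \(\coId(\Hom_\k(H,A))\) might exceed \((\sum_iK_i^\perp)\otimes A\) — is not a real one: the paper's completeness result for \(\Quot\) of a coring shows the join of coideals is the plain sum of the underlying subbimodules, and tensoring with \(A\) commutes with sums of subspaces, so your identification goes through. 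Your route is arguably cleaner, since it replaces the explicit complement construction by a one-line appeal to duality in \(H^*\) and only uses finite-dimensionality of \(H\) rather than of \(\Hom_\k(H,A)\); the paper's argument has the merit of staying entirely inside \(\Hom_\k(H,A)\) without invoking the tensor decomposition.
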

\begin{proof}
    The Galois connection \((\Phi,\Psi)\) was constructed in
    Proposition~\ref{prop:H-module_galois_connection}.  To prove that there
    exist Galois connection \((\Theta,\Upsilon)\) it is enough to show
    that \(\Hom_\k(-,A)\) preserves all infima, since all the posets are
    complete.  First let us note that
    \(\Hom_\k(-,A):K\selmap{}\Hom_\k(K,A)\) preserves the order: an
    inclusion \(K_1\subseteq K_2\) induces an epimorphism of corings:
    \(\Hom_\k(K_2,A)\sir\Hom_\k(K_1,A)\) which makes the following
    diagram commute.
    \begin{center}
	\begin{tikzpicture}
	    \matrix[column sep=3mm, row sep=5mm]{
		& \node (A2) {\(\Hom_\k(H,A)\)}; & \\
		\node (B1) {\(\Hom_\k(K_2,A)\)}; &                                 & \node (B3) {\(\Hom_\k(K_1,A)\)};\\
	    };
	    \draw[->>] (A2) -- (B1);
	    \draw[->>] (A2) -- (B3);
	    \draw[->>] (B1) -- (B3);
	\end{tikzpicture}
    \end{center}
    Let \((K_i)_{i\in I}\) be a family of right ideal subalgebras.  Their meet
    in \(\qsub(H)\) is equal to \(\bigcap_{i\in I}K_i\) (see the proof of
    Proposition~\ref{thm:lattice_of_subcomodules}).  We claim that the
    following equility holds \(\bigwedge_{i\in
	    I}\Hom_\k(K_i,A)=\Hom_\k(\bigcap_{i\in I}K_i,A)\).  
    If we show that 
    \begin{equation}\label{eq:intersection}
	\sum_{i}\left\{f\in\Hom_\k(H,A): f|_{K_i}=0\right\}\,=\,\left\{f\in\Hom_\k(H,A): f|_{\bigcap_{i}K_i}=0\right\}
    \end{equation}
    then we get that the sequence:
    \begin{equation*}
	0\sir\left\{f\in\Hom_\k(H,A): f|_{\bigcap_{i}K_i}=0\right\}\sir\Hom_\k(H,A)\sir\bigwedge_{i}\Hom_\k(K_i,A)\sir0
    \end{equation*}
    is exact and the claim follows.  Then we get that
    \(\Hom_\k(-,A)\circ\Psi\) reflects suprema into infima, and hence there
    exists a Galois connection \((\Theta,\Upsilon)\).  Thus it remains to
    prove~\eqref{eq:intersection}.  For this let us observe that an infinite
    intersection of subspaces \(K_i\ (i\in I)\) of a finite dimensional vector
    space \(\Hom_\k(H,A)\) is equal to a finite intersection of \(K_i\ (i\in
	I_0)\), where \(I_0\subseteq I\), \(|I_0|<\infty\) (any intersection
    can be computed as an intersection of a chain \(K_{i_0}\supsetneq
	K_{i_0}\cap K_{i_1}\supsetneq\cdots\) (\(I=\{i_0,i_1,\dots\}\), in
    which every step is a proper inclusion until it stabilises) and this must
    stabilise after finitely many steps, since the space \(\Hom_\k(H,A)\) is
    finite dimensional).  Assuming that~\eqref{eq:intersection} holds for any
    finite intersections we have: 
    \[\sum_{i\in I}K_i^{\perp}\subseteq\left(\bigcap_{i\in I}K_i\right)^{\perp}=\left(\bigcap_{i\in I_0}K_i\right)^\perp=\sum_{i\in I_0}K_i^{\perp}\subseteq\sum_{i\in I}K_i^{\perp}\]
    and thus \eqref{eq:intersection} follows.  To
    prove~\eqref{eq:intersection} in the finite case it is enough to consider
    \(I=\{1,2\}\).  The claim follows since both \(K_1^\perp+K_2^\perp\) and
    \((K_1\cap K_2)^\perp\) are cokernels of the inclusion \((K_1+K_2)^\perp
	\sir K_1^\perp\oplus K_2^\perp,\ (f\selmap{}f\oplus f)\).  In both
    cases the cokernel map sends \(f\oplus g\) to \(f-g\).  It is clear that
    the sequence
    \[0\sir(K_1+K_2)^\perp\sir K_1^\perp\oplus K_2^\perp\sir K_1^\perp+K_2^\perp\sir0\]
    is exact.  It remains to show that
    \[0\sir(K_1+K_2)^\perp\sir K_1^\perp\oplus K_2^\perp\sir (K_1\cap K_2)^\perp\sir0\]
    is exact.  Here the difficulty lies in showing that the map
    \(K_1^\perp\oplus K_2^\perp\ni f\oplus g\selmap{}f-g\in (K_1\cap
	K_2)^\perp\) is an epimorphism.  We can write \(H\) as a direct sum
    \(A\oplus B_1\otimes B_2\oplus C\) where \(A,B_i,C\) are subspaces such
    that \(A=K_1\cap K_2\), \(B_i\oplus A= K_i\) and \(C\) is the complement
    of \(K_1+K_2\) in \(H\).  Let \(f\in(K_1\cap K_2)^\perp\), and let \(g\) be
    such that \(g|_{B_1}=-f|_{B_1}\) and \(g|_{A\oplus B_2\oplus C}=0\).  Then
    \(f=(f+g)-g\) and \(f+g\in K_1^\perp\) and \(g\in  K_2^\perp\).
\end{proof}
\index{module coalgebra!Galois connection}
\begin{corollary}\label{cor:coring_closed}
    Under the assumptions of the previous Proposition, the morphism
    \(\Hom_\k(-,A)\) restricts to a bijection from the closed elements of
    \(\qsub(H)\) in \((\Phi,\Psi)\) to the closed elements of
    \(\Quot(\Hom_\k(H,A))\) in \((\Theta,\Upsilon)\). 
\end{corollary}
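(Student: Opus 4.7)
The plan is to combine a soft argument about images of the two Galois connections with a direct linear-algebraic argument that $\Hom_\k(-,A)$ is already injective on all of $\qsub(H)$. By Proposition~\ref{prop:properties-of-adjunction}\,(i) applied to $(\Phi,\Psi)$ and to $(\Theta,\Upsilon)$, the closed elements of $\qsub(H)$ coincide with $\Psi(\Sub_\Alg(A))$, and the closed elements of $\Quot(\Hom_\k(H,A))$ coincide with $\Upsilon(\Sub_\Alg(A))$. Since by the very definition of the Galois connection $(\Theta,\Upsilon)$ in Proposition~\ref{prop:coring_connection} we have $\Upsilon = \Hom_\k(-,A)\circ\Psi$, the map $\Hom_\k(-,A)$ carries $\Psi(\Sub_\Alg(A))$ onto $\Upsilon(\Sub_\Alg(A))$. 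This already takes care of surjectivity of the restricted map on closed elements.

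For injectivity, I shall argue that $\Hom_\k(-,A)\colon \qsub(H)\to\Quot(\Hom_\k(H,A))$ is injective on the whole poset, which is strictly stronger than what is needed. Given two coideal subalgebras $K_1,K_2\in\qsub(H)$, the equality $\Hom_\k(K_1,A)=\Hom_\k(K_2,A)$ in $\Quot(\Hom_\k(H,A))$ means that the kernels of the quotient maps $\Hom_\k(H,A)\sir\Hom_\k(K_i,A)$ coincide, i.e.\ the annihilators
\[
    K_i^{\perp,A}\coloneq\{f\in\Hom_\k(H,A):\,f|_{K_i}=0\}
\]
agree. Fixing a $\k$-basis $\{a_\alpha\}$ of $A$ and using the canonical isomorphism $\Hom_\k(H,A)\cong H^*\otimes_\k A$, every $f\in\Hom_\k(H,A)$ decomposes uniquely as $f=\sum_\alpha f_\alpha\otimes a_\alpha$ with $f_\alpha\in H^*$, and $f|_{K_i}=0$ is equivalent to $f_\alpha|_{K_i}=0$ for every $\alpha$. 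Hence $K_i^{\perp,A}=K_i^\perp\otimes_\k A$, where $K_i^\perp\subseteq H^*$ is the ordinary annihilator. An equality $K_1^\perp\otimes_\k A=K_2^\perp\otimes_\k A$ inside $H^*\otimes_\k A$, combined with $A\neq 0$ and faithful flatness of $A$ over the field $\k$, forces $K_1^\perp=K_2^\perp$ as subspaces of $H^*$. Finite dimensionality of $H$ (Theorem~\ref{thm:duality_for_subspace}, Corollary~\ref{cor:dualit_for_subspaces}) implies $K\mapsto K^\perp$ is injective on subspaces of $H$, and therefore $K_1=K_2$.

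Putting the two steps together, $\Hom_\k(-,A)$ restricted to the closed elements of $\qsub(H)$ is a bijection onto the closed elements of $\Quot(\Hom_\k(H,A))$, as claimed. There is no serious obstacle: the soft half is immediate from the uniqueness of adjoints and the factorisation $\Upsilon=\Hom_\k(-,A)\circ\Psi$, and the only mildly non-trivial input is the annihilator computation, which reduces to $A$ being a nonzero vector space over~$\k$ (hence faithfully flat) and the finite-dimensional duality on subspaces of~$H$.
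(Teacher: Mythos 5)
Your proof is correct and follows essentially the same route as the paper: both arguments rest on the factorisation \(\Upsilon=\Hom_\k(-,A)\circ\Psi\) together with Proposition~\ref{prop:properties-of-adjunction}(i), which identifies the closed elements as the images of \(\Psi\) and \(\Upsilon\) respectively, and on the injectivity of \(\Hom_\k(-,A)\) on \(\qsub(H)\). The only difference is that the paper dismisses that injectivity as ``clear'', whereas you substantiate it with the annihilator computation \(K^{\perp,A}=K^\perp\otimes_\k A\) and finite-dimensional duality — a worthwhile addition, and valid here since \(H\) is assumed finite dimensional throughout the section.
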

\begin{proof}
    Clearly, \(\Hom_\k(-,A):\qsub(H)\sir\Quot(\Hom_\k(H,A))\) is an injective
    map, which maps closed elements of \(\qsub(H)\) (in \((\Phi,\Psi)\)) to
    closed elements of the poset \(\Quot(\Hom_\k(H,A))\) (in
    \((\Theta,\Upsilon)\)).  If \(\mathcal{K}\in\Quot(\Hom_\k(H,A))\) is
    a closed element then there exists \(B\in\Sub_\Alg(A)\) such that
    \(\mathcal{K}=\Upsilon(B)=\Hom_\k(\Psi(B),A)\).  Thus \(\Hom_\k(-,A)\) is
    indeed a bijection between the sets of closed elements.
\end{proof}
\begin{proposition}\label{prop:mono_coring}
    Let \(A\) be a domain and an \(H\)-module algebra, such that the
    \(H\)-action is through monomorphisms (Definition~\ref{defi:mono_action}).
    Moreover, let us assume that the canonical map~\eqref{eq:canonical_map} is
    an epimorphism.  Let \(K,K'\in\qsub(H)\) be such that \(\can_K\) and
    \(\can_{K'}\) are isomorphisms.  Then \(\Hom_\k(K,A)=\Hom_\k(K',A)\) whenever
    \(A^{K}=A^{K'}\)
\end{proposition}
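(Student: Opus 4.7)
The plan is to follow the line of argument used for Theorem~\ref{thm:mono} and its dual Theorem~\ref{thm:coGalois_mono}, adapted to the coring setting. First I would assemble, for each of $K$ and $K'$, the commutative square relating the global canonical map $\can\colon A\otimes_{A^H}A\to\Hom_\k(H,A)$ to the local one $\can_K\colon A\otimes_{A^K}A\to\Hom_\k(K,A)$. The left vertical map is the natural projection $\pi_K\colon A\otimes_{A^H}A\to A\otimes_{A^K}A$ induced by the inclusion $A^H\subseteq A^K$, and the right vertical map is the coring epimorphism $r_K\colon\Hom_\k(H,A)\to\Hom_\k(K,A)$ given by restriction along $K\subseteq H$. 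Commutativity is immediate from the explicit formula $\can(a\otimes_{A^H}b)(h)=a(h\cdot b)$.

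Next I would exploit the hypothesis $A^K=A^{K'}$, which forces $\pi_K$ and $\pi_{K'}$ to be the same map of $A$-bimodules. Combining the two commutative squares and inverting the isomorphisms $\can_K$ and $\can_{K'}$ produces a morphism $\can_{K'}\circ\can_K^{-1}\colon\Hom_\k(K,A)\to\Hom_\k(K',A)$ fitting into a diagram which, read off from the outer boundary, gives
\[
r_{K'}\circ\can \;=\; (\can_{K'}\circ\can_K^{-1})\circ r_K\circ\can.
\]
Since $\can$ is assumed to be surjective, it may be cancelled on the right to yield $r_{K'}=(\can_{K'}\circ\can_K^{-1})\circ r_K$. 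As $\can_{K'}\circ\can_K^{-1}$ is an isomorphism, this identity forces $\ker r_K=\ker r_{K'}$, which already gives the equality $\Hom_\k(K,A)=\Hom_\k(K',A)$ of quotient corings of $\Hom_\k(H,A)$.

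For the strengthening to $K=K'$ itself one observes that $\ker r_K=\{f\in\Hom_\k(H,A):f|_K=0\}$, that is, the annihilator of $K$ in $\Hom_\k(H,A)$. If $K\neq K'$, pick $k\in K\bigtriangleup K'$, extend a basis of the smaller of the two to a basis of $H$ containing $k$, and define $f\in\Hom_\k(H,A)$ to send $k$ to $1_A$ and every other basis vector to $0$; this $f$ lies in exactly one of $K^\perp$ and $(K')^\perp$, contradicting $\ker r_K=\ker r_{K'}$. Hence $K=K'$ as left coideal subalgebras, and the conclusion follows.

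The only delicate step is the coherence check that $\can_{K'}\circ\can_K^{-1}$ really fits into the right-hand column of the combined diagram; this is precisely where the assumption $A^K=A^{K'}$ is used, since it identifies $A\otimes_{A^K}A$ with $A\otimes_{A^{K'}}A$ so that the two squares can be glued along a common middle object. The role of the hypothesis that $H$ acts through monomorphisms and that $A$ is a domain is already absorbed into the fact that $\Hom_\k(H,A)$ carries a coring structure and that $r_K,r_{K'}$ are honest coring epimorphisms (cf.\ Proposition~\ref{prop:hom_coring}); no further use of it is expected in this argument.
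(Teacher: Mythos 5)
Your argument is correct and follows essentially the same route as the paper: the same commutative squares relating $\can$ to $\can_K$ and $\can_{K'}$, glued along $A\otimes_{A^K}A=A\otimes_{A^{K'}}A$, with surjectivity of $\can$ used to cancel it and the invertibility of $\can_K,\can_{K'}$ forcing the two restriction maps to have the same kernel, hence equal quotient corings. The only differences are cosmetic: the paper additionally checks that $\can_K\circ\can_{K'}^{-1}$ is a coring morphism (not needed once the coideals coincide), while you append a strengthening to $K=K'$ that the proposition does not claim.
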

Let us note that the above proposition holds even if \(A\) or \(H\) are
infinite dimensional over the base field \(\k\).
\begin{proof}
    Let \(i:K\subseteq H\) and \(i':K'\subseteq H\) be the inclusions.  The
    proposition follows from the commutative diagram: 
    \begin{center}
	\begin{tikzpicture}
	    \matrix[column sep=1cm, row sep=1cm]{
		&                             & \node (A3) {\(\Hom_\k(K,A)\)};\\
		\node (B1) {\(\substack{A\otimes_{A^K}A\\=A\otimes_{A^{K'}}A}\)}; & \node (B2) {\(A\otimes_{A^H}A\)}; & \node (B3) {\(\Hom_\k(H,A)\)};\\
		&                             & \node (C3) {\(\Hom_\k(K',A)\)};\\
	    };
	    \draw[->>] (B2) --node[above]{\(\can_H\)} (B3);
	    \draw[->>] (B2) -- (B1);
	    \draw[->] (B3) --node[right]{\(\Hom_\k(i,A)\)} (A3);
	    \draw[->] (B3) --node[right]{\(\Hom_\k(i',A)\)} (C3);
	    \draw[->] (B1) --node[above left]{\(\can_K\)}node[below right]{\(\simeq\)} (A3);
	    \draw[->] (B1) --node[below left]{\(\can_{K'}\)}node[above right]{\(\simeq\)} (C3);
	\end{tikzpicture}
    \end{center}
    The map \(f=\can_K\circ\can_{K'}^{-1}\) is a map of corings what easily
    follows from commutativity of the diagram:
    \begin{center}
	\hspace*{-3mm}\begin{tikzpicture}
	    \begin{scope}
		\node (A1) at (0cm,0cm) {\(\Hom_\k(K',A){_A\otimes_A}\Hom_\k(K',A)\)};
		\node (A2) at (3.5cm,1.75cm) {\(\Hom_\k(H,A){_A\otimes_A}\Hom_\k(H,A)\)};
		\node (A3) at (7cm,0cm) {\(\Hom_\k(K,A){_A\otimes_A}\Hom_\k(K,A)\)};
		\draw[->>] (A1) --node[below]{\(f\otimes f\)} (A3);
		\draw[->>] (A2) --node[pos=0.6,above left]{\(\Hom_\k(i',A)\otimes\Hom_\k(i',A)\)} (A1);
		\draw[->>] (A2) --node[pos=0.6,above right]{\(\Hom_\k(i,A)\otimes\Hom_\k(i,A)\)} (A3);
		\node (B1) at (0cm,4.5cm) {\(\Hom_\k(K',A)\)};
		\node (B2) at (3.5cm,6.25cm) {\(\Hom_\k(H,A)\)};
		\node (B3) at (7cm,4.5cm) {\(\Hom_\k(K,A)\)};
		\draw[->>] (B1) --node[pos=0.6,above]{\(f\)} (B3);
		\draw[->>] (B2) --node[above left]{\(\Hom_\k(i',A)\)} (B1);
		\draw[->>] (B2) --node[above right]{\(\Hom_\k(i,A)\)} (B3);
		\draw[->] (B1) --node[left]{\(\Delta_{\Hom_\k(K',A)}\)} (A1);
		\draw[->] (B2) --node[pos=0.6,right]{\(\Delta_{\Hom_\k(H,A)}\)} (A2);
		\draw[->] (B3) --node[right]{\(\Delta_{\Hom_\k(K,A)}\)} (A3);
	    \end{scope}
	\end{tikzpicture}
    \end{center}
    In this way we have proved that \(\Hom_\k(K',A)\geq\Hom_\k(K,A)\) in the
    poset \(\Quot(\Hom_\k(H,A))\).  In a similar way we get
    \(\Hom_\k(K,A)\geq\Hom_\k(K',A)\) and thus they are equal.
\end{proof}
\index{module coalgebra!Galois connection!closed elements}
\begin{corollary}\label{cor:coring_closed_el}
    Let \(H\) and \(A\) be as in Proposition~\ref{prop:mono_coring}.  Let
    \(K\in\qsub(H)\) be such that \(\can_K\) is an isomorphism.  Then
    \(\Hom_\k(K,A)\) is a closed element of \(\Quot(\Hom_\k(H,A))\) in
    \((\Theta,\Psi)\) and thus, by Corollary~\ref{cor:coring_closed}, \(K\) is
    a closed element of the lattice \(\qsub(H)\) in \((\Phi,\Psi)\). 
\end{corollary}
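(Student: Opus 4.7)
The plan is to reduce the statement to Proposition~\ref{prop:mono_coring} via the closure of $K$ in the Galois connection $(\Phi,\Psi)$, and then invoke Corollary~\ref{cor:coring_closed}. Let $\widetilde{K}\coloneq\Psi\Phi(K)$ be the smallest closed element of $\qsub(H)$ containing $K$. Since $(\Phi,\Psi)$ is a Galois connection we have $\Phi(\widetilde K)=\Phi\Psi\Phi(K)=\Phi(K)$, that is $A^{\widetilde K}=A^{K}$, so in particular $A\otimes_{A^{\widetilde K}}A=A\otimes_{A^K}A$.

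The first and crucial step is to verify that $\can_{\widetilde K}$ is itself an isomorphism. For this I would use the commutative ladder
\begin{center}
\begin{tikzpicture}
\matrix[column sep=1cm,row sep=8mm]{
\node (A1) {\(A\otimes_{A^H}A\)}; & \node (A2) {\(\Hom_\k(H,A)\)}; \\
\node (B1) {\(A\otimes_{A^{\widetilde K}}A\)}; & \node (B2) {\(\Hom_\k(\widetilde K,A)\)};\\
\node (C1) {\(A\otimes_{A^K}A\)}; & \node (C2) {\(\Hom_\k(K,A)\)};\\
};
\draw[->] (A1) --node[above]{\(\can_H\)} (A2);
\draw[->] (B1) --node[above]{\(\can_{\widetilde K}\)} (B2);
\draw[->] (C1) --node[above]{\(\can_K\)} (C2);
\draw[->>] (A1) -- (B1);
\draw[->>] (A2) -- (B2);
\node[rotate=-90] at ($(B1)!0.5!(C1)$) {\(=\)};
\draw[->>] (B2) -- (C2);
\end{tikzpicture}
\end{center}
in which the right column consists of restriction epimorphisms and the left column is induced by the inclusions of coinvariants. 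Injectivity of $\can_{\widetilde K}$ follows from the lower square, since $\can_K$ is an isomorphism by hypothesis and the vertical identification on the left identifies the two tensor products. Surjectivity follows from the upper square, because $\can_H$ is epic by the standing hypothesis of Proposition~\ref{prop:mono_coring} and the right vertical map is also epic.

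Having established that $\can_{\widetilde K}$ is an isomorphism, I would apply Proposition~\ref{prop:mono_coring} with $K'=\widetilde K$: the equality $A^K=A^{\widetilde K}$ then forces $\Hom_\k(K,A)=\Hom_\k(\widetilde K,A)=\Upsilon(\Phi(K))$ as quotient corings of $\Hom_\k(H,A)$. In particular $\Hom_\k(K,A)$ lies in the image of $\Upsilon$, so by Proposition~\ref{prop:properties-of-adjunction}(i) it is a closed element of $\Quot(\Hom_\k(H,A))$ in $(\Theta,\Upsilon)$. Finally, Corollary~\ref{cor:coring_closed} asserts that $\Hom_\k(-,A)$ restricts to a bijection between the closed elements of $\qsub(H)$ and those of $\Quot(\Hom_\k(H,A))$, and the injectivity of $\Hom_\k(-,A)$ proved in Proposition~\ref{prop:coring_connection} together with $\Hom_\k(K,A)=\Hom_\k(\widetilde K,A)$ gives $K=\widetilde K$, i.e.\ $K$ is closed in $(\Phi,\Psi)$.

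The main obstacle is the first step, verifying that $\can_{\widetilde K}$ is an isomorphism; everything else is formal bookkeeping in the Galois connection. The subtlety is that while $A^K=A^{\widetilde K}$ trivially identifies the domains of $\can_K$ and $\can_{\widetilde K}$, their codomains differ, so one must exploit the surjectivity of $\can_H$ (part of the hypothesis of Proposition~\ref{prop:mono_coring}) rather than just the isomorphism $\can_K$ to pass from $K$ up to $\widetilde K$.
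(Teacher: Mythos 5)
Your proof is correct and follows essentially the same route as the paper: both define $\widetilde K$ as the closure of $K$, use the same three-row commutative ladder to get surjectivity of $\can_{\widetilde K}$ from the upper square (via $\can_H$ epi and restriction epi) and injectivity from the lower square (via $\can_K$ iso), and conclude $\Hom_\k(K,A)=\Hom_\k(\widetilde K,A)$. The only cosmetic difference is that you invoke Proposition~\ref{prop:mono_coring} for that last identification, whereas the paper reads off directly from the lower square that $\Hom_\k(i,A)$ is an isomorphism.
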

\begin{proof}
    Let \(\widetilde{K}=\Psi\circ\Theta(\Hom_\k(K,A))\). 
    Then, by Corollary~\ref{cor:coring_closed}, \(\widetilde{K}\) is the
    smallest closed element covering \(K\) in \(\qsub(K)\) and thus
    \(A^{\widetilde{K}}=A^K\).  Let \(i:K\subseteq\widetilde{K}\) be the
    inclusion.  We have the following commutative diagram:
    \begin{center}
	\begin{tikzpicture}
	    \matrix[column sep=1cm,row sep=1cm]{
		\node (A1) {\(A\otimes_{A^H}A\)};              & \node (A2) {\(\Hom_\k(H,A)\)};\\
		\node (B1) {\(A\otimes_{A^{\widetilde{K}}}A\)}; & \node (B2) {\(\Hom_\k(\widetilde{K},A)\)};\\
		\node (C1) {\(A\otimes_{A^K}A\)};              & \node (C2) {\(\Hom_\k(K,A)\)};\\
	    };
	    \draw[->>] (A1) --node[above]{\(\can_H\)} (A2);
	    \draw[->]  (B1) --node[above]{\(\can_{\widetilde{K}}\)} (B2);
	    \draw[->]  (C1) --node[below]{\(\can_{K}\)}node[above]{\(\simeq\)} (C2);
	    \draw[->>] (A1) -- (B1);
	    \draw[->>] (A2) -- (B2);
	    \draw[->>] (B2) --node[right]{\(\Hom_\k(i,A)\)} (C2);
	    \draw[->]  (B1) --node[left]{\(=\)} (C1);
	\end{tikzpicture}
    \end{center}
    From the upper commutative square it follows that \(\can_{\widetilde{K}}\)
    is an epimorphism.  From the lower commutative square we get that it is
    also a monomorphism, thus it is an isomorphism.  We get that
    \(\Hom_\k(i,A)\) is an isomorphism.  Thus
    \(\Hom_\k(K,A)=\Hom_\k({\widetilde{K}},A)\) is closed. 
\end{proof}

\index{module coalgebra!Galois connection!closed elements}
\begin{theorem}
    Let \(H\) be a finite dimensional Hopf algebra, \(A\) -- a domain and an
    \(H\)-module algebra, such that \(H\) acts through monomorphisms.
    Furthermore, let us assume that \(A\) is \(\Hom_\k(H,A)\)-Galois.  Then
    \(S\in\Sub_\Alg(A)\) is a closed element of the Galois connection
    \((\Phi,\Psi)\) if the map
    \[\can_S:A\otimes_SA\sir A\otimes_{A^{\Psi(S)}}A\sir\Hom(\Psi(S),A)\] 
    is an isomorphism and \(A\) is
    faithfully flat as a right or left \(S\)-module.  Conversely, if \(S\) is
    closed then \(\can_S\) is an isomorphism.
\end{theorem}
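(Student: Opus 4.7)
The plan is to reduce the statement to the dual $H^*$-comodule algebra framework, where analogous results are already established. Since $H$ is finite dimensional, Proposition~\ref{prop:H-comodule_algebras} gives a bijective correspondence between left $H$-module algebra structures on $A$ and right $H^*$-comodule algebra structures, under which $A^H = A^{co\,H^*}$. Choosing a basis $\{e_i\}$ of $H$ with dual basis $\{e_i^*\}$ of $H^*$, one has the natural $A$-bilinear isomorphism $\Hom_\k(H,A) \cong A \otimes H^*$ under which the canonical map of~\eqref{eq:canonical_map}, $\can(a\otimes b)(h) = a(h\cdot b)$, becomes the comodule-algebra canonical map $a\otimes b \mapsto ab_{(0)}\otimes b_{(1)}$. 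Hence $A/A^H$ is $\Hom_\k(H,A)$-Galois if and only if it is $H^*$-Galois in the sense of Definition~\ref{defi:Hopf-Galois_extension}.

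For a right coideal subalgebra $\Psi(S) \subseteq H$, the annihilator $\Psi(S)^\perp \subseteq H^*$ is a coideal (Proposition~\ref{prop:coideals-subalgebras}) and a right ideal of $H^*$ (using $\Delta(\Psi(S)) \subseteq \Psi(S)\otimes H$ together with the convolution formula), so $\psi(S) := H^*/\Psi(S)^\perp \in \qquot(H^*)$ is a generalised quotient. By finite-dimensionality, $(\Psi(S)^\perp)^\perp = \Psi(S)$, which gives $A^{\Psi(S)} = A^{co\,\psi(S)}$. The same basis yields $\Hom_\k(\Psi(S),A) \cong A \otimes \Psi(S)^* \cong A \otimes \psi(S)$, under which $\can_S$ becomes precisely the canonical map $\can_{\psi(S)}\colon A\otimes_S A \to A\otimes \psi(S)$ attached to the generalised quotient $\psi(S)$.

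For the forward direction, suppose $\can_S$ is an isomorphism and $A$ is (left or right) faithfully flat over $S$. The map $\can_S$ factors as
\[
A\otimes_S A \xrightarrow{\;g\;} A\otimes_{A^{\Psi(S)}} A \xrightarrow{\;\can_{\psi(S)}\;} \Hom_\k(\Psi(S),A),
\]
where $g$ is the natural surjection induced by $S\subseteq A^{\Psi(S)}$. Since $\can_S = \can_{\psi(S)}\circ g$ is injective and $g$ is surjective, $g$ is forced to be injective, hence an isomorphism. Applying Remark~\ref{rem:faithfully_flatness}(ii) then yields $S = A^{\Psi(S)} = \Phi\Psi(S)$, so $S$ is closed.

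For the converse, suppose $S$ is closed, so $S = A^{\Psi(S)}$ and the map $g$ above is the identity. Because $A/A^H$ is $H^*$-Galois with $H^*$ finite dimensional, Proposition~\ref{prop:finite_case} (which rests on \cite[Cor.~3.3]{ps-hs:gen-hopf-galois}) guarantees that for every $Q\in\qquot(H^*)$ the extension $A/A^{co\,Q}$ is $Q$-Galois. Applied to $Q=\psi(S)$, this says $\can_{\psi(S)}\colon A\otimes_{A^{\Psi(S)}} A \to A\otimes \psi(S)$ is an isomorphism, which, translated back, means $\can_S$ is an isomorphism. The main technical obstacle is the bookkeeping carried out in the second paragraph: one must verify carefully that $\Psi(S)^\perp$ defines a generalised quotient of $H^*$ whose coinvariants and canonical map coincide with the invariants and canonical map attached to $\Psi(S)$ on the module side, so that the results for $H^*$-Galois extensions apply.
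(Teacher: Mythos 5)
Your proposal is correct and follows essentially the same route as the paper: dualise to the right $H^*$-comodule algebra picture, identify $\Hom_\k(\Psi(S),A)\cong A\otimes\psi(S)$ so that $\can_S$ becomes the canonical map of the generalised quotient $H^*/\Psi(S)^\perp$, invoke \cite[Cor.~3.3]{ps-hs:gen-hopf-galois} to get that this canonical map is an isomorphism, and then use Remark~\ref{rem:faithfully_flatness}(ii) together with faithful flatness over $S$ for the forward direction. The only cosmetic difference is that the paper phrases the duality for an arbitrary $K\in\qsub(H)$ with $K^*\in\qquot(H^*)$ and argues that $\can_{K^*}$ is epi before upgrading to iso, while you specialise directly to $K=\Psi(S)$; the content is identical.
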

\begin{proof}
    Since \(H\) is finite dimensional, \(A\) is a right \(H^*\)-comodule
    algebra, and \(A^{co\,H^*}=A^H\).  For \(K\in\qsub(H)\)
    the dual \(K^*\in\qquot(H^*)\) and we have a commutative diagram:
    \begin{center}
	\begin{tikzpicture}
	    \matrix[column sep=1cm, row sep=1cm]{
		\node (A1) {\(A\otimes_{A^K}A\)}; & \node (A2) {\(\Hom(K,A)\)}; \\
		\node (B1) {\(A\otimes_{A^{co\,K^*}}A\)}; & \node (B2) {\(A\otimes K^*\)}; \\ 
	    };
	    \draw[->] (A1) --node[above,rotate=90]{\(=\)} (B1);
	    \draw[->] (A1) --node[above]{\(\can_K\)} (A2);
	    \draw[->] (A2) --node[above,rotate=-90]{\(\cong\)} (B2);
	    \draw[->] (B1) --node[below]{\(\can_{K^*}\)} (B2); 
	\end{tikzpicture}
    \end{center}
    Since \(\can_H\) is an isomorphism, \(\can_{H^*}\) is an isomorphism as
    well, hence \(\can_{K^*}\) is  an epimorphism and
    by~\cite[Cor.~3.3]{ps-hs:gen-hopf-galois} it is an isomorphism.  By the
    above commutative diagram \(\can_K\) is an isomorphism.  Hence, if
    \(S=A^{\Psi(S)}\) then \(\can_{\Psi(S)}\) is an isomorphism.  Now, if
    \(\can_S\) is an isomorphism, then since \(\can_{\Psi(S)}\) is an
    isomorphism we must have \(A\otimes_SA=A\otimes_{A^{\Psi(S)}}A\), since
    \(A\) is a domain and by Remark~\ref{rem:faithfully_flatness}(ii) we have
    \(S=A^{\Psi(S)}\). 
\end{proof}
\label{chap:coring_approach_end}

\bibliographystyle{plainnat}
\bibliography{Mat}
\printindex
\end{document}